\patchcmd{\ttlh@hang}{\parindent\z@}{\parindent\z@\leavevmode}{}{}
\patchcmd{\ttlh@hang}{\noindent}{}{}{}
\setlist{nolistsep}
\title{Hyperbolic volume estimates\\ via train tracks}   
\author{Antonio De Capua}             
\newcommand{\cvd}{\hfill$\Box$\par\vspace{2.5ex}}
\renewcommand{\epsilon}{\varepsilon}
\newcommand{\R}{\mathbb R}
\newcommand{\sph}{\mathbb S}
\newcommand{\Hy}{\mathbb H}
\newcommand{\cc}{\mathbf C}
\newcommand{\pc}{\mathbf P}
\newcommand{\mc}{\mathbf M}
\newcommand{\ac}{\mathbf A}
\newcommand{\acc}{\mathbf{AC}}
\newcommand{\ce}{\mathbf C \mathrm E}
\newcommand{\cf}{\mathbf C \mathrm F}
\newcommand{\br}{\mathcal B}
\newcommand{\rot}{\mathrm{rot}}
\newcommand{\idx}{\mathrm{index}}
\newcommand{\seg}{\mathrm{seg}}
\newcommand{\hs}{\mathrm{hs}}
\newcommand{\hl}{\mathrm{hl}}
\newcommand{\pa}{{\mathbf{P}_+}}
\newcommand{\ma}{{\mathbf{M}_+}}
\newcommand{\cnr}{{\mathcal C}}
\newcommand{\rar}{{\mathcal R}}
\newcommand{\utw}{{\mathcal U}}
\newcommand{\trk}{{\mathcal T}}
\newcommand{\up}{\mathord{\uparrow}}
\newcommand{\dn}{\mathord{\downarrow}}
\newcommand{\Isom}{\mathrm{Isom}}
\newcommand{\vol}{\mathrm{vol}}
\newcommand{\inte}{\mathrm{int}}
\renewcommand{\emptyset}{\varnothing}
\newcommand{\nw}[1]{\textsf{\nohyphens{\textit{#1}}}}
\newcommand{\mcg}{\mathrm{Mod}}
\newcommand{\step}[1]{\vspace{1ex}\noindent\ul{Step #1:}}
\newcommand{\nei}{{\mathcal N}}
\newcommand{\ul}[1]{\underline{#1}}
\newcommand{\ol}[1]{\overline{#1}}
\newcommand{\core}{\mathrm{core}}
\newcommand{\E}{\mathcal{E}}
\newcommand{\F}{\mathcal{F}}
\newcommand{\e}{\mathrm{E}}
\newcommand{\f}{\mathrm{F}}
\newcommand{\tl}{\triangleleft}
\newcommand{\tr}{\triangleright}
\newlength{\larghnumeribox}
\newlength{\larghnumeri}
\newlength{\larghtitoli}
\newlength{\origparindent}
\newlength{\origparskip}
\theoremstyle{plain}
\newtheorem{defin}{Definition}[section]
\newtheorem{prop}[defin]{Proposition}
\newtheorem{theo}[defin]{Theorem}
\newtheorem{lemma}[defin]{Lemma}
\newtheorem{coroll}[defin]{Corollary}
\newtheorem{rmk}[defin]{Remark}
\theoremstyle{nonumberplain}
\newtheorem{proof}{Proof}
\theoremstyle{empty}
\newtheorem{claim}{Claim}
\theoremstyle{nonumberplain}
\newtheorem{theono}{Theorem}
\numberwithin{figure}{chapter}
\begin{document}

\baselineskip=18pt plus1pt

\setcounter{secnumdepth}{3}
\setcounter{tocdepth}{3}

\maketitle  

\begin{romanpages}
\begin{originality}
I declare that the work in this thesis is, to the best of my knowledge, original and my own work, except where otherwise indicated, cited, or commonly known. Much the external material employed is adapted from \cite{masurminskyii}, \cite{masurminskyq}, \cite{mosher} and, above all, \cite{mms}.
Roughly, the said material is concentrated as follows:
\begin{itemize}
\item Chapter \ref{cha:background}, \S \ref{sec:traintracks}, \S \ref{sec:traintracksmore}, and \S \ref{sub:goodbehaviour} include a large amount of previously existing definitions and fundamental results: their sources will be stated more or less individually;
\item part of \S \ref{sub:twistcurvebound} is inspired by an original idea found in \cite{masurminskyq};
\item \S \ref{sub:conclusion} revisits an argument introduced in \cite{mms} in a different setting;
\item the applications described in \S \ref{sec:dw} are based on technical tools introduced in \cite{dynnikovwiest}.
\end{itemize}

This thesis has not been submitted for a degree at another university.

\vspace{15ex}

Antonio De Capua\\
Oxford, 1 September 2016
\end{originality}

\begin{acknowledgementslong}
I am deeply grateful to  Prof.\ Marc Lackenby for having been an exceptional supervisor. I greatly appreciate how, back when I had just begun my DPhil, he took care of orienting me across the research environment at Oxford, and made sure that my background was solid enough before I started any research. During these years he has promptly aided me when I have been in need for new ideas, or hints, to carry on my work; and assisted me in the key steps of my course. On the other hand, his balanced supervision has been crucial for me to start learning how to carry out independent research.

I wish to thank all the people with whom I have had profitable conversations for my work and understanding of the topic, including my {\it viva} examiners: Saul Schleimer and Panos Papazoglou; people I met at academic events: Bert Wiest, Jeffrey Brock, Ian Agol, Javier Aramayona; my examiners in occasion of my Transfer and Confirmation of Status: Martin Bridson and Cornelia Drutu; and anyone who have contributed to broadening my horizons. Thanks to Prof.\ Minhyong Kim for being my advisor at Merton College. Also, I am very indebted to Prof.\ Bruno Martelli, for giving me the opportunity to speak in my old university, and to Prof.\ Roberto Frigerio, who has been my supervisor at the time of my Bachelor and left me enthusiastic about low-dimensional topology.

It has been an honor for me to collaborate with Professors Alexander Ritter, Ulrike Tillman, and the already mentioned Papazoglou, Lackenby, and Drutu in undergraduate teaching. My teaching experience at Exeter College with the latter has been particularly rewarding and educational for me, and I am looking forward to starting my job as a Lecturer there in few weeks.

I am grateful to Merton College and to the Mathematical Institute for having been two enviable environments where to work and spend my everyday life in Oxford: thanks to their staff, for their professionalism, their time and effort. Moreover, I wish to acknowledge the irreplaceable financial support of the Scatcherd European Fund, along with the aforementioned institutions.
\end{acknowledgementslong}
\begin{abstractlong}
We give a method of computing distances between certain points in the pants graph of a surface $S$, up to multiplicative and additive constants. More precisely, we consider splitting sequences of train tracks on $S$, such that the vertex set of each track in the sequence subdivides $S$ into pieces which are pairs of pants, or simpler than that. It is possible to regard the sequence given by the vertex set of each track as a path along the edges of a graph, which is naturally quasi-isometric to the pants graph of $S$: and we show how to estimate the distance between two points along this path.

The present work is inspired by a result of Masur, Mosher and Schleimer according to which, if the vertex sets along a splitting sequence fill $S$, then they give a quasi-geodesic path in the marking graph; and the distance between the extremes of this path is given, up to constants, by the number of \emph{splits} occurring in the sequence.

However, their result cannot hold for the pants graph: it may well be that a high number of splits in the splitting sequence make the vertex sets span a high distance in some annular projection; and, despite this, these sets cover no similarly high distances in the pants graph.

We work to treat this discrepancy: we describe a machinery that, given a train track splitting sequence, produces first a new one where the moves only contributing to annular distance are grouped altogether; and then a further one, the \emph{untwisted sequence}. This latter sequence resembles the former, but the distance it spans in any annular subsurface projection is controlled by the pants graph distance. After these constructions, we prove a distance formula by showing that the untwisted sequence is suitable for application of the same arguments conceived by Masur, Mosher and Schleimer.

Thanks to a result of J.F.~Brock, our distance estimates in the pants graph reflect into hyperbolic volume estimates for pseudo-Anosov mapping tori. We give a couple of results in this area: the first one uses I.~Agol's \emph{maximal splitting sequence}, the second one revisits I.~Dynnikov and B.~Wiest's \emph{interval identifications systems} and their \emph{transmission} to give an estimate of the hyperbolic volume for a solid torus minus a closed braid. We also sketch how one may regard this latter result as independent of the train track machinery.
\end{abstractlong}

\tableofcontents{}

\vfill
\noindent\textsf{Last edit on \today}
\end{romanpages}

\addcontentsline{toc}{chapter}{Preface}
\chapter*{Preface}

Possibly the most interesting questions related to the topology and geometry of surfaces are the ones arising when looking for bonds among geometric properties of the many spaces and groups which may be associated to a surface. In particular one may think of the \emph{Teichm\"uller space} (with either the Teichm\"uller or the Weil-Petersson metric), the \emph{mapping class group}, and a number of graphs: the \emph{curve graph}, the \emph{arc graph}, the \emph{pants graph} and the \emph{marking graph}. A summary of the network of natural maps and quasi-isometries between these objects may be found in \cite{duchin}, for instance. As this network of maps helps understanding one object's geometry through another one, the mentioned graphs give combinatorial, more manageable models for the Teichm\"uller space and the mapping class group. The pants graph, in particular, has been shown by J. Brock (\cite{brock1}) to be quasi-isometric to the Teichm\"uller space with the Weil-Petersson metric.

In the setting described above \emph{train tracks}, introduced by W.P. Thurston, have been employed for several steps towards a concrete comprehension of the surface-related graphs. A train track on a surface is a 1-complex which, just like a railway network, may be travelled smoothly in infinitely many different ways, making different choices when a switch is met. Travelling along a train track one may describe loops, or infinite paths with bounded curvature which can be straightened to geodesics. The simplest loops one may describe when travelling along a train track are finitely many, and they are called \emph{vertex cycles}.

When a train track is repeatedly altered via \emph{elementary moves} to get a so-called \emph{splitting sequence}, the change produced on the set of vertex cycles seems to `proceed towards a definite direction' in the surface-related graphs listed above. While a summary of the results in this area is given in \S \ref{sec:role_train_tracks}, there in one in particular, Theorem 6.1 in \cite{mms}, which motivates this work (here it is stated imprecisely):
\begin{theono}
Given a splitting sequence $\bm\tau$ of train tracks on a surface $S$, whose vertex sets fill $S$, each of these sets is a vertex of $\mc(S)$, the marking graph of $S$. There is a constant $Q=Q(S)$ such that the sequence of vertex sets of tracks in $\bm\tau$ moves along a $Q$-quasi-geodesic in $\mc(S)$.
\end{theono}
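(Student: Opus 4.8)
The plan is to deduce the statement from the Masur--Minsky distance formula for the marking complex, which splits the quasi-geodesic claim into a coarsely Lipschitz upper bound and a matching lower bound. Write $\mu_i$ for the vertex set of $\tau_i$; since every $\tau_i$ fills $S$ its vertex cycles fill $S$ and pairwise intersect a bounded number of times, so $\mu_i$ determines a vertex of $\mc(S)$ up to bounded ambiguity (and the two standard models of the marking graph differ by a bounded amount, so it does not matter which we use). After reparametrising the sequence so that consecutive terms differ by a single split, it then suffices to show $\tfrac{1}{Q}\,s(i,j)-Q \le d_{\mc(S)}(\mu_i,\mu_j)\le Q\,s(i,j)+Q$ for all $i<j$, where $s(i,j)$ is the number of splits occurring between indices $i$ and $j$; by uniformity of the argument we may as well take $i=0$, $j=n$.

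\emph{Upper bound.} A single elementary move $\tau_i \rightsquigarrow \tau_{i+1}$ changes the vertex set only boundedly in $\mc(S)$: the vertex cycles of $\tau_{i+1}$ are carried by $\tau_i$, hence meet those of $\tau_i$ a bounded number of times, and two markings with bounded intersection number lie at bounded distance in $\mc(S)$. Since between two consecutive splits only a bounded number of shifts can occur (finiteness of combinatorial types of subdivided track), we get $d_{\mc(S)}(\mu_0,\mu_n)\le Q_0\, s(0,n)+Q_0$. This already shows the reparametrised path has length comparable to its split count.

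\emph{Lower bound --- the main point.} By the distance formula it is enough to find a threshold $K$ with $\sum_{W\subseteq S}\bigl[\,d_W(\mu_0,\mu_n)\,\bigr]_K \gtrsim s(0,n)$, the sum over all essential subsurfaces $W$ of $S$ (annuli included), $d_W$ the subsurface projection distance and $[\,\cdot\,]_K$ the cutoff. The mechanism is that a splitting sequence restricts --- after deleting the branches lying outside $W$ and cleaning up --- to a splitting sequence of tracks filling $W$, and such a sequence projects to an unparametrised $Q$-quasi-geodesic in $\mathcal{C}(W)$; hence $d_W(\mu_0,\mu_n)$ is, up to constants, the number of "essential" moves of the sequence visible inside $W$. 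One then introduces a bookkeeping that assigns to each split of the original sequence the subsurface(s) in which it is genuinely visible, controls the nesting and overlap of these assignments, and concludes that the $s(0,n)$ splits are, up to multiplicative and additive error and the threshold $K$, distributed among the contributions $\bigl[\,d_W(\mu_0,\mu_n)\,\bigr]_K$.

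The hard part is exactly this last bookkeeping: proving that the restriction of a splitting sequence to a subsurface is again essentially a splitting sequence, that the induced projections to $\mathcal{C}(W)$ are quasi-geodesics, and --- most delicately --- that summing over all $W$ recovers the full split count without over-counting, i.e.\ that many splits cannot "hide" by being individually inessential in every subsurface. This demands a careful analysis of large branches, of which subsurface a given split "belongs to", and of how carried subtracks sit inside one another; it is the technical core of Theorem~6.1 of \cite{mms}, and, as explained in the Preface, it is precisely the point at which the pants graph behaves differently, motivating the constructions of this thesis.
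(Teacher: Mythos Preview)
First, a framing point: this theorem is Theorem~6.1 of \cite{mms}, quoted in the Preface and again as Theorem~\ref{thm:mms_main}; the paper does not give its own proof of it. However, the paper's proof of the pants-graph analogue (Proposition~\ref{prp:hardttbound} in \S\ref{sub:conclusion}) explicitly follows the structure of the \cite{mms} argument, so one can read off what that argument actually does.

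Your upper bound is fine. Your lower bound has the right overall shape --- distance formula plus a subsurface bookkeeping that assigns splits to subsurfaces --- and you correctly flag that the bookkeeping is the hard part. But the mechanism you describe for extracting a split count from a single subsurface is wrong. You write that the induced sequence in $W$ is an unparametrised quasi-geodesic in $\cc(W)$, ``hence $d_W(\mu_0,\mu_n)$ is, up to constants, the number of `essential' moves of the sequence visible inside $W$.'' This does not follow: an \emph{unparametrised} quasi-geodesic in $\cc(W)$ can stall indefinitely, so arbitrarily many splits could be visible in $W$ while $d_W$ stays bounded. The curve-complex projection alone cannot see how many splits occurred.

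The actual \cite{mms} argument (visible in the paper's Lemmas~\ref{lem:mms613}--\ref{lem:mms616} and the surrounding text) fills this gap with two ingredients you don't mention. First, an induction on $\xi(X)$: one bounds $|\mathcal T_X(J_X)|$ in terms of $d_{\ma(X)}(J_X)$ assuming the same bound holds for all proper $Y\subsetneq X$. Second --- and this is the point the Preface singles out --- the \emph{local finiteness} of $\ma(X)$: on a ``straight'' subinterval (one with no large subsurface projection), bounded $\ma(X)$-distance means only finitely many marking vertices are visited, and combined with Claim~6.14 (Lemma~\ref{lem:mms614} here: equal vertex sets force boundedly many splits between) this bounds the split count. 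The inductive subsurfaces then absorb the remaining splits via the inductive hypothesis. Your proposal never invokes local finiteness, and this is precisely the step that fails for the pants graph and motivates the entire thesis.
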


The definition of marking graph used in \cite{mms} is not the most common definition, to be found in \cite{masurminskyii}, but it gives a quasi-isometric graph, and is fitted for families of curves arising as vertex sets. We may apply a similar trick for the pants graph $\pc(S)$, so that the vertex sets of a large family of train tracks are vertices of this quasi-isometric, different version of $\pc(S)$: we denote it $\pa(S)$. Actually, the vertices of $\pc(S)$ will inject to the vertices of $\pa(S)$.

One may think, then, that a similar result as the one above may hold in $\pa(S)$. But there must be some phenomenon which obstruct the vertex sets along a train track splitting sequence from giving a quasi-geodesic in this graph.

The key to understanding this obstruction is the hierarchy machinery, and Theorem 6.12 in particular, of \cite{masurminskyii}. It applies for estimating distances both in the pants graph and in the marking graph of a fixed surface $S$; however, while distances in the marking graph are estimated via a summation over all distances induced in the curve complexes of subsurfaces of $S$, \emph{including annuli}, annuli are to be excluded when estimating distances in the pants graph. 

This gap between the two summations suggests that, in order to generalize the result from \cite{mms} to a statement valid for the pants graph, one has to control the contribution given by annuli in the summation. This is the idea behind the main theorem of this work i.e. Theorem \ref{thm:core}, here stated in a simplified way:

\begin{theono}
Let $\bm\tau=(\tau_j)_{j=0}^N$ be a splitting sequence of train tracks with their vertex sets $V(\tau_j)\in\pa(S)$ for all $0\leq j\leq N$. Then there is a number $A>1$, depending on $S$, such that
$$\frac{1}{A}|\utw(\rar\bm\tau)|-A\leq d_{\pa(S)}(V(\tau_0),V(\tau_N))\leq A|\utw(\rar\bm\tau)|+A.$$
\end{theono}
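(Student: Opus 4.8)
The plan is to derive Theorem~\ref{thm:core} by playing the distance formula of Masur and Minsky against the quasi-geodesic count of Masur, Mosher and Schleimer. Recall that Theorem~6.12 of \cite{masurminskyii}, specialised to the pants graph and hence to its quasi-isometric model $\pa(S)$, asserts that $d_{\pa(S)}(\mu,\nu)$ is comparable, up to uniform multiplicative and additive constants, to $\sum_{W}[d_W(\mu,\nu)]_K$, the sum ranging over the essential \emph{non-annular} subsurfaces $W\subseteq S$ (including $S$ itself), with $[x]_K$ equal to $x$ when $x\geq K$ and $0$ otherwise; while the argument behind Theorem~6.1 of \cite{mms}, revisited in \S\ref{sub:conclusion}, shows that for a splitting sequence with filling vertex sets the number of splits is comparable to the analogous sum taken over \emph{all} subsurfaces, annuli included. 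The whole purpose of the passage $\bm\tau \to \rar\bm\tau \to \utw(\rar\bm\tau)$ is to reconcile these two sums: rearranging isolates the splits that only feed annular projections, and untwisting then replaces the offending annular data by data bounded in terms of $d_{\pa(S)}$, so that the annular terms in the \cite{mms} count can no longer outrun the pants-graph distance.

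\textbf{Upper bound.} Each elementary move along a splitting sequence changes the vertex set by a bounded amount in the marking graph, hence in $\pa(S)$; so the vertex sets of the tracks in $\utw(\rar\bm\tau)$ trace a path in $\pa(S)$ of length at most a constant times $|\utw(\rar\bm\tau)|$. Its two ends must then be bridged to $V(\tau_0)$ and $V(\tau_N)$ at bounded cost: the rearrangement $\rar$ reorders moves while fixing the extreme tracks, and the untwisting alters a track only in its twisting (transversal) data, which is invisible to $\pa(S)$. Concatenating yields $d_{\pa(S)}(V(\tau_0),V(\tau_N))\leq A\,|\utw(\rar\bm\tau)|+A$.

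\textbf{Lower bound.} For this, the substantive, direction I would run the \cite{mms} argument on $\utw(\rar\bm\tau)$ itself: by the constructions of the previous sections this is again a splitting sequence whose vertex sets fill $S$ and enjoy the good behaviour of \S\ref{sub:goodbehaviour}, so the count bounds $|\utw(\rar\bm\tau)|$ above, up to uniform constants, by $\sum_{W}[d_W(u_0,u_N)]_K$ over \emph{all} subsurfaces $W$, where $u_0,u_N$ denote the ends of $\utw(\rar\bm\tau)$. I would then split this into its non-annular and its annular part. For non-annular $W$ the subsurface projections change only boundedly when one passes between $u_0,u_N$ and $V(\tau_0),V(\tau_N)$ (both $\rar$ and the untwisting act trivially, up to bounded error, on non-annular projections), so by the Masur--Minsky formula the non-annular part is bounded, up to constants, by $d_{\pa(S)}(V(\tau_0),V(\tau_N))$. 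The annular part is bounded by the same quantity by construction, this being precisely the defining feature of the untwisted sequence: the distance it spans in each annular projection is controlled by the pants-graph distance of the original ends. Summing the two parts gives $|\utw(\rar\bm\tau)|\leq A\,d_{\pa(S)}(V(\tau_0),V(\tau_N))+A$, the desired lower bound.

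\textbf{Expected main obstacle.} The bookkeeping of the various constants and of the threshold $K$ --- which must be chosen uniformly in $S$ and compatibly across the two quoted theorems --- is routine. The real difficulty lies in justifying that $\utw(\rar\bm\tau)$ is a legitimate input for the \cite{mms} machinery: one has to verify that it is genuinely a splitting sequence, that its vertex sets fill $S$ and are well behaved under subsurface projection, and, above all, that its annular subsurface projections are dominated by the pants-graph distance rather than by the number of its splits, and that this domination is not destroyed by the earlier rearrangement step. Securing this control is the technical heart of the work, and it is what the constructions of \S\ref{sub:twistcurvebound} and the surrounding sections are designed to deliver.
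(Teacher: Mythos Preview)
Your architecture is right, but there is a gap that breaks both directions: the untwisted sequence does \emph{not} end near $V(\tau_N)$ in $\pa(S)$. Its last track is $\phi_r(\tau_N)$, where $\phi_r$ is the composition of Dehn twists $D_{\gamma_t}^{\epsilon_t(2\mathsf{K}_0+4-m_t)}$ from Definition~\ref{def:untwistedsequence}; the exponents $2\mathsf{K}_0+4-m_t$ can be arbitrarily large in absolute value, and the curves $\gamma_t$ are twist curves at \emph{intermediate} stages of the sequence, not at stage $N$. So $\phi_r$ is a genuine mapping class that in general moves $V(\tau_N)$ far in $\pa(S)$ and in non-annular subsurface projections alike. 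Your assertion that ``untwisting alters only transversal data, invisible to $\pa(S)$'' is therefore incorrect, and so is the claim that $d_W(u_{N'},V(\tau_N))$ is bounded for non-annular $W$.

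Consequently the paper organises both bounds differently. For the upper bound (Proposition~\ref{prp:easyttbound}) one traces along $\rar\bm\tau$, which \emph{does} share endpoints with $\bm\tau$; the substance is then Lemma~\ref{lem:pantsboundunderdt}, which says each Dehn interval, however long, contributes bounded $\pa$-distance, so the path has $\pa$-length governed by the number of non-Dehn pieces, which is at most $|\utw(\rar\bm\tau)|$. For the lower bound, applying \cite{mms} Theorem~6.1 as a black box to $\utw(\rar\bm\tau)$ would at best give $|\utw(\rar\bm\tau)|\lesssim d_{\ma(S)}(u_0,u_{N'})$, and Proposition~\ref{prp:locallyfinite} relates $d_{\ma}$ to $d_{\pa}$ only through a nonlinear function $\Psi_S$ --- so you would get a polynomial bound, not the linear one claimed. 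The paper instead \emph{re-runs} the inductive argument on $\xi(X)$ from \cite{mms} (Proposition~\ref{prp:hardttbound} and its key claim), invoking Proposition~\ref{prp:locallyfinite} only on ``straight'' subintervals where the $\pa$-diameter is already bounded by a fixed constant $\mathsf{R}_1$ (Lemma~\ref{lem:mms613}): there $\Psi'_S(\mathsf{R}_1)$ is itself a constant, and local finiteness of $\ma(X)$ can be exploited exactly as in the original proof. This localisation of the $\ma$-versus-$\pa$ comparison is the idea your outline is missing.
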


In this theorem, $\rar$ and $\utw$ are operations which turn a splitting sequence into a new one, and $|\cdot|$ denotes the number of elementary moves which are \emph{splits} (i.e. the non-invertible kind of elementary move). Loosely speaking, the difference between $\rar\bm\tau$ and $\bm\tau$ is that the elementary moves are performed in a different order: this way, every time there is annulus in $S$ such that $\bm\tau$ spans a high distance in the annulus' curve complex, in $\rar\bm\tau$ this distance appears as the result of a splitting sequence which realizes a series of many Dehn twists in a row.

$\utw(\rar\bm\tau)$ is obtained from $\rar\bm\tau$ via removal of the majority of these Dehn twists, in such a way that a splitting sequence is obtained anyway. This operation kills all overly high annulus contributions in the summation of Theorem 6.12 of \cite{masurminskyii} but produces a new splitting sequence which retains several properties of the old one.

Not only is our theorem similar in spirit to the aforementioned one from \cite{mms} but, once the operations $\rar$ and $\utw$ are defined, it may be proved using essentially the same line of proof. A major adaptation is necessary as the proof in \cite{mms} makes use of local finiteness in $\mc(S)$, while neither $\pc(S)$ nor $\pa(S)$ has this property; however, our sequence $\utw(\rar\bm\tau)$ has the property that, if a subsequence of it gives bounded distance in $\pa(S)$, it gives bounded distance in $\ma(S)$, too: this is the aim of our constructions.

It must be stressed that the three steps necessary to get the pants distance --- i.e. $\rar$, $\utw$ and $|\cdot|$ --- may be computed algorithmically from $\bm\tau$ using the definitions and proofs included in this work. So we give an effective way to compute the distance covered in the pants graph by a splitting sequence. More generally, given any two vertices of $\pc(S)$, one may always define a splitting sequence whose endpoints in $\pa(S)$ lie close to the selected vertices.

This way, the machinery in this work will be useful to get distances in the Weil-Petersson metric, too, as noted above; but we do not develop this aspect. We consider, instead, an application of the above for computation of volumes of hyperbolic 3-manifolds. According to a theorem proven in \cite{brock2}, the hyperbolic volume of a mapping torus over a surface $S$, defined by a pseudo-Anosov $\psi:S\rightarrow S$, is given (up to constants) by the minimum displacement induced in $\pc(S)$ by $\psi$.

Although that result requires extracting the \emph{minimum} displacement, which is a hard operation in general, it is possible to derive a couple of interesting corollaries from our machinery: a step towards effective computation of volume of hyperbolic mapping tori. In \cite{agol_pa}, given $\psi$, a standard method is described to get an infinite splitting sequence which is `preperiodic up to application of $\psi$', i.e. it may be subdivided into a `preperiod' followed by chunks of equal length, such that all entries in a given chunk are obtained from the previous one applying $\psi$. In Theorem \ref{thm:agol_volume} we prove that the `period' $\bm\rho$ in this sequence is a splitting sequence such that the distance in $\pa(S)$ between its extremes is close to the requested minimum. So the hyperbolic volume of the mapping torus is, up to additive and multiplicative constants, given by $|\utw(\rar\bm\rho)|$.

We also sketch a way of estimating hyperbolic volume for complements of closed braids in a solid torus, which are a special case of mapping tori. In this case there is another splitting sequence to be considered, the one deriving from the \emph{trasmission} and \emph{relaxing} technique for \emph{interval identification systems}, as defined in \cite{dynnikovwiest}. We explain briefly that their formalism is equivalent to train tracks except that, in order to compute pants distance, no rearrangement operation on the same line as $\rar$ is needed; and we give a few considerations about how hyperbolic volume relates with this formalism.

An outline of the contents of this work follows.

Chapter 1 traces the background the present work is built on. After giving some definitions about surfaces and 3-manifolds, aimed mainly at fixing the most basic terminology, in \S \ref{sec:graphs} we sum up all that is necessary to know about the curve graph, the marking graph and the pants graph. In particular we describe what is a subsurface projection and we give the statement of Theorem 6.12 of \cite{masurminskyii}. The section also includes some original work, as we define $\pa(S)$ and prove that it is quasi-isometric to $\pc(S)$.

In \S \ref{sec:hypvolumehistoric} we outline some previous work about estimates of hyperbolic 3-volume. In particular we focus on the \emph{guts} approach by Agol, and on the way it reflects on the volume estimates for link complements found by Lackenby and by Futer--Kalfagianni--Purcell. Then we switch to the setting of mapping tori, and give the full statement of the volume estimate in terms of distance in the pants graph, as found by Brock. Finally, in \ref{sec:role_train_tracks} we give a very short outline of the results about surface-related graphs which involve train tracks.

Chapter 2 is concerned with the main content of this work: the proof of Theorem \ref{thm:core}. After giving the basic definitions about train tracks, carried curves, and elementary moves, in \S \ref{sec:traintracksmore} we give some more involved, but still general constructions. The first one is a `subsurface projection' for train tracks, the \emph{induction} as defined in \cite{mms}. We also prove a list many basic properties of induced train tracks, in particular the ones induced on an annular subsurface. The second construction is a different viewpoint on elementary moves: an elementary move may be considered as the result of cutting the track open along a \emph{zipper}. This viewpoint will be convenient to describe a number of rearrangement procedures of elementary moves. In the third subsection we explain how to reduce train tracks to the kind treated in \cite{mms}, where each connected component of the track complement has a boundary with a corner on each component. In the fourth subsection we recover some lemmas concerning diagonal extensions of train tracks from the work of Masur and Minsky, revisiting them in terms of almost tracks or induced train tracks, according to our needs.

Once all the necessary terminology is given, in \S \ref{sub:goodbehaviour} we list the results about train tracks and geodicity of splitting sequences in the curve graph and in the marking graph, which are relevant to the present work.

In \S \ref{sec:twistcurves} we perform a deep analysis of \emph{twist curves}, in order to define the rearrangement $\rar$. We have already mentioned before that, given a splitting sequence $\bm\tau$, and the collections $V(\tau_j)$ of vertex sets for each track in the sequence, we wish to control the growth of $d_X\left(V(\tau_j),V(\tau_{j'})\right)$ for $X\subset S$ an annulus. It turns out that this distance may be high only if the core curve $\gamma$ of $X$ is a twist curve at some point of the splitting sequence. We consider twist curves as curves that, move after move, are able to produce high powers of a Dehn twist (or of a Dehn twist inverse). However, the moves producing these Dehn twists may be very sparse along $\bm\tau$, so it is convenient, for us to have a control on them, to group them consecutively. This requires a fair amount of work, because it is necessary to analyse minutely all the possible dynamics a splitting sequence may show around $\gamma$. In particular we set up some conventions to avoid the ambiguities caused by having every elementary move defined only up to isotopies. Then we show that twist moves are determined by \emph{twist modelling functions}, and we analyze how the evolution of $\bm\tau$ causes a movement in $\cc(X)$, the annulus' `curve complex', which proceeds always in the same direction, when $\gamma$ is a twist curve.

We define the \emph{rotation number} to quantify `the number of entire Dehn twists taking place about a twist curve', however sparse they may be, and finally we give a procedure to concentrate almost all the rotation number in a chunk of the splitting sequence where nothing occurs but Dehn twists about $\gamma$. We then define $\rar\bm\tau$ applying this procedure to all annuli $X$ which show a high distance $d_X\left(V(\tau_0),V(\tau_N)\right)$ between the extremes of the splitting sequence.

The last subsection bounds the number of these annuli in terms of the pants distance covered by the splitting sequence. It is inspired by an idea employed in \cite{masurminskyq}. This bound is necessary only as a technical step, because Theorem \ref{thm:core} would result into a better (linear) bound as an immediate corollary.

In \S \ref{sec:traintrackconclusion} we introduce the \emph{untwisted sequence} $\utw(\rar\bm\tau)$: the idea in its definition is that it shall closely mimic $\rar\bm\tau$, except that the chunks of sequence expressing Dehn twists will have a capped length. The splitting sequences $\utw(\rar\bm\tau)$ and $\rar\bm\tau$ begin with the same track and end with different ones; however, we prove that they share several properties, and the distances they cover in $\pa(S)$ are bounded in terms of one another. Crucially, $\utw(\rar\bm\tau)$ covers distances in $\ma(S)$ which are bounded in terms of the ones covered in $\pa(S)$.

This allows us to proceed to the proof of Theorem \ref{thm:core}: in general we need to subdivide a splitting sequence $\bm\tau$ into chunks such that each vertex set fills the same subsurface of $S$. Then, with an interplay between $\rar\bm\tau$ and $\utw(\rar\bm\tau)$, we are finally able to revisit the proof of Theorem 6.1 in \cite{mms} to suit the pants graph setting.

In Chapter 3 we connect Theorem 6.1 with the problem of estimating the hyperbolic volume of mapping tori. We first prove Theorem \ref{thm:agol_volume} about Agol's maximal splitting sequence, as mentioned before; then, in \S \ref{sec:dw}, we turn to the simpler case of punctured discs, and mapping tori which may be described as the complement of a closed braid in a solid torus. We describe how to turn Dynnikov and Wiest's transmissions of interval identification systems into a train track splitting sequence and prove Corollaries \ref{cor:dwgivesdistance} about pants distance and \ref{cor:dwgivesvolume} about hyperbolic volume.

Then we sketch some further properties, including Proposition \ref{prp:futervolume}, which is a volume formula in terms of words in the braid group $B_n$ under a suitable generating set, and of which David Futer has an independent proof.
\chapter{Background}\label{cha:background}

\section{Essentials on surfaces and 3-manifolds}

\subsection{Surfaces}\label{sub:surfaces}
These lines are conceived to fix once and for all the most basic objects treated in this work. When the term is used with no specification, a \nw{surface} $S$ is an oriented, connected, differentiable 2-manifold, possibly non-compact, without boundary unless otherwise specified. Even when any of these implicit specifications is not met (and in that case we will always make it explicit), the 2-manifolds we deal with are always of \nw{finite type}, i.e. homeomorphic to a \emph{compact} 2-manifold (possibly) with boundary, (possibly) with finitely many points removed (from its interior, if it has any boundary). For a surface $S$ with no extra specifications, we also require that the \nw{complexity} $\xi(S)\coloneqq 3(\text{genus}-1)+ (\#\text{punctures})$ is $\geq 1$.

\begin{defin} \label{def:hyperbolic}
Let $M$ be an orientable, connected, finite-type, smooth $n$-manifold ($n\geq 2$), with no boundary. A \nw{hyperbolic structure} on $M$ is an atlas of charts $\phi_i:U_i\rightarrow \Hy^n$, where each $U_i\subseteq M$ is an open subset, such that each change of chart $\phi_j\circ\phi_i^{-1}:\phi_i(U_i\cap U_j)\rightarrow \phi_j(U_i\cap U_j)$ is the restriction of an element of $\Isom^+(\Hy^n)$. This atlas is also required to cover the whole $M$, and to be maximal.
\end{defin}

Any surface will be understood to be endowed with a complete hyperbolic metric, \emph{not necessarily with finite area}, which makes it isometric to the quotient $\Hy^2/\Gamma$ for a subgroup $\Gamma<\mathrm{Isom}^+(\Hy^2)$, $\Gamma\cong \pi_1(S)$, acting freely and properly discontinuously. A \emph{puncture}, in this work, shall be considered as a purely topological concept, unrelated with the hyperbolic metric. Given a surface $S$ and one of its punctures, we say that $S$ has a \nw{cusp} there if the puncture has a neighbourhood with finite area. Note that the area of $S$ is infinite if and only if $S$ has a puncture which is not a cusp: then we say that $S$ has a \nw{funnel} at that puncture.

By \nw{curve} on a surface $S$ we mean an embedding ${\mathbb S}^1\hookrightarrow S$ which, unless otherwise specified, is defined \emph{only up to isotopies}. A curve is \nw{essential} if it can be homotoped neither into a disc, nor into a peripheral annulus. A \nw{multicurve} is a collection of pairwise disjoint and non-isotopic essential curves: it is a well-known fact that a multicurve comprises at most $\xi(S)$ different curves.

Given any two essential curves $\alpha_1,\alpha_2$ on a surface $S$, their \nw{intersection number} $i(\alpha_1,\alpha_2)$ is the minimum number of intersection points between $\alpha_1,\alpha_2$ attained when deforming both curves within their respective isotopy classes --- if $\alpha_1,\alpha_2$ are isotopic, then $i(\alpha_1,\alpha_2)=0$. If $A_1,A_2$ are two finite collections of essential curves on $S$, then $i(A_1,A_2)\coloneqq \sum_{\alpha_1\in A_1,\alpha_2\in A_2} i (\alpha_1,\alpha_2)$.

Given $S$ a surface, a \nw{subsurface} $X$ is either the entire $S$ or a connected 2-submanifold \emph{with boundary}, of finite type, with the following requests:
\begin{itemize}
\item each puncture of $X$ is also a puncture of $S$;
\item $\partial X$ consists of a collection of components of $\partial S$ and smooth essential curves in $S$;
\item $\mathrm{int}(X)$ is not homemorphic to a pair of pants $S_{0,3}$.
\end{itemize}
Note that this definition of subsurface includes closed annuli and complements of open annuli in $S$, as their boundary contains a pair of isotopic curves in $S$. In general, we allow two components of $\partial X$ to be isotopic curves (but, even in this case, we assume that distinct components have distinct realizations in $S$).

Subsurfaces, similarly as curves, are to be considered up to isotopies in $S$. In a few occasion we will drop the connectedness condition, and in those cases we make this explicit.

In sections 8.1--8.3, \cite{thurstonnotes}, the \emph{limit set} $L_\Gamma\subseteq \partial\ol{\Hy^2}$ of $\Gamma$ is defined; together with the \emph{domain of discontinuity} $D_\Gamma\coloneqq \partial\ol{\Hy^2}\setminus L_\Gamma$. Define the \nw{convex core} of $S$ as $\core(S)\coloneqq H(L_\Gamma)/\Gamma$, where $H(L_\Gamma)$ is the convex hull of $L_\Gamma$ in $\Hy^2$. It is a hyperbolic surface with totally geodesic boundary, and finite area. Also, define the \nw{funnel compactification} of $S$ as the quotient $\bar S\coloneqq (\Hy^2\cup D_\Gamma)/\Gamma$. Since it is shown that the action of $\Gamma$ on $\Hy^2\cup D_\Gamma$ is free and properly discontinuous (Proposition 8.2.3 in the cited work), $\bar S$ is a surface, with boundary unless $\bar S=S$. Note that $\bar S$ may still have punctures (the cusps of $S$), so it is \emph{not} compact in general.

There is a natural diffeomorphism $r:\bar S\rightarrow \core(S)$, isotopic to $\mathrm{id}_{\bar S}$. It restricts to a diffeomorphism $S\rightarrow \inte\left(\core(S)\right)$. Both $\bar S,\core(S)$ are not compact in general because they add a circular boundary to a given puncture of $S$ only if all its neighbourhoods have infinite area --- or, equivalently, if $S$ has a closed geodesic encircling the puncture; again equivalently, if and only if the puncture cannot be identified with the quotient of a point in $L_\Gamma$ (a parabolic one).

A \nw{peripheral annulus} of $S$ is a 2-submanifold diffeomorphic to $\mathbb S^1\times [0,1)$, bounded by an inessential curve, which serves as a closed neighbourhood for a puncture of $S$. It is \emph{not} a subsurface of $S$.

The definition of $\bar S$ depends, even topologically, on the hyperbolic metric on $S$. The \nw{compactification} of $S$, instead, is taken to be a compact surface with boundary $S_\bullet \coloneqq S\setminus \inte(P)$, where $P$ is a collection of disjoint peripheral annuli, one for each topological puncture of $S$. In particular, when a topological puncture is a funnel for the hyperbolic metric, one can choose a peripheral annulus bounded by an inessential closed geodesic: with this choice we get $\partial\core(S)\subseteq \partial S_\bullet$. So, $S_\bullet$ is a compact surface with boundary where `each puncture of $S$ is turned into a boundary component'. It is possible to identify $r_\bullet:S\rightarrow \inte(S_\bullet)$ via a diffeomorphism, homotopic to $\mathrm{id}_S$. No preferred metric is to be considered on $S_\bullet$.

A \nw{(properly embedded) arc} on $S$ is a smooth embedding $\rho:[0,1]\hookrightarrow S_\bullet$, with $\rho^{-1}(\partial S_\bullet)=\{0,1\}$; an arc is \nw{essential} if no homotopy relative to $\partial S_\bullet$ turns it into a point, or a path contained in $\partial S_\bullet$. We identify arcs with their restriction in $S$; more precisely, we identify $\rho$ with $r_\bullet^{-1}\circ \rho|_{(0,1)}:(0,1)\rightarrow S$. Similarly as curves, arcs are usually considered to be defined up to isotopies relative to $\partial S_\bullet$. A collection of pairwise disjoint and non-isotopic (rel. $\partial S_\bullet$) arcs in $S_\bullet$ (which is the same as saying, in $S$), consists of at most $3|\chi(S)|$ arcs (see Remark 1.1 in \cite{przytycki}).

With an abuse of notation, sometimes subsurfaces $X$ will be identified with the corresponding $\inte(X)$. A distinction between the two concepts will be done only when it is relevant and not self-evident.

Note that, for $X$ a subsurface of $S$, the natural map $\pi_1(X)\rightarrow \pi_1(S)$ is an injection. If $S\cong \Hy^2/\Gamma$, let $\Gamma_X<\Gamma$ be the subgroup which is identified with $\pi_1(X)$ under the isomorphism $\pi_1(S)\cong\Gamma$, and denote $S^X\coloneqq \Hy^2/\Gamma_X$, which is clearly a covering space for $S$: it is a surface if $X$ is non-annular, and diffeomorphic to $\R\times\mathbb S^1$ otherwise.

There is a natural, isometric inclusion $X\hookrightarrow S^X$, which will be enforced to consider $X$ both as a subsurface of $S$ and of $S^X$. If $\partial X\subseteq \core(S)$, then the subsurfaces $\core(S^X)$ and $X\cap\core(S)$ are isotopic in $S^X$. If $X$ has geodesic boundary (and still consisting of distinct curves), then the two actually coincide.

\subsection{Hyperbolic 3-manifolds}\label{sec:hyp3mflds}

By \nw{3-manifold} in this work we mean, unless explicitly stated otherwise, an oriented, connected, differentiable 3-manifold, possibly non-compact, without boundary unless otherwise specified, and always of \emph{finite type}, i.e.\ homeomorphic to a compact 3-manifold (possibly) with boundary, (possibly) with a 2-manifold (with or without boundary) removed from its boundary.

We refer to \cite{bonahon2002geometric} for a more precise discussion of most of the facts we are about to state. In general, (as found by H.~Kneser in 1929 and improved later), a 3-manifold $M$ [possibly with boundary, with some extra conditions: in the following sentences we use square brackets to refer to the adaptations due to cover the case of manifolds with boundary] admits a subdivision as a connect sum $M_1\#\ldots \# M_k$ of \nw{prime} 3-manifolds, i.e. 3-manifolds which do not admit any non-trivial further subdivision as a connect sum of 3-manifolds; and the factors of this subdivision are uniquely determined. Prime manifolds are, in particular, \nw{irreducible} i.e. they contain no embedded, essential 2-sphere.

Any irreducible manifold $M$, different from $\mathbb S^2\times\mathbb S^1$, contains a canonical collection $F$ of essentially embedded tori [and annuli with their boundary along $\partial M$] such that each connected component of $M\setminus F$ is one of the following (not pairwise exclusive): \nw{Seifert-fibred} (i.e. a $\mathbb S^1$-bundle over a 2-orbifold); [an interval bundle over a surface;] atoroidal [and anannular] (i.e. contains no essentially embedded torus [nor annulus with its boundary along $\partial M$]). This is called the \nw{JSJ decomposition} of $M$, after W.~Jaco, P.~Shalen and K.~Johannson.

W.P.~Thurston's Geometrization Conjecture, now a theorem by G. Perelman (see \cite{cao2006complete}; \cite{bonahon2002geometric}, Conjecture 4.1; \cite{friedl}, \S 6), implies that each connected component in the JSJ decomposition of an irreducible manifold admits a \emph{complete}, homogeneous Riemannian metric (i.e. a \emph{geometric structure}) with totally geodesic boundary.

In particular, one has:
\begin{claim}
Let $M$ be a 3-manifold which is closed, or is the interior of a compact 3-manifold whose boundary consists of tori. Suppose $M$ is irreducible, atoroidal, and not Seifert-fibered. Then $M$ admits a complete hyperbolic metric.
\end{claim}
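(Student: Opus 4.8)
The plan is to deduce the claim directly from the Geometrization Theorem quoted just above, by running through Thurston's list of eight model geometries and discarding the seven non-hyperbolic ones using the three hypotheses in turn. First I would reduce to a single geometric piece. Since $M$ is irreducible it is in particular not $\sph^2\times\sph^1$, so the JSJ machinery of the quoted results applies: $M$ carries a canonical (possibly empty) collection $F$ of essential tori, together with essential annuli with boundary along $\partial M$ in the bounded case, such that each component of the complement $M\cut F$ is Seifert-fibred, an interval bundle over a surface, or atoroidal-and-anannular. Because $M$ is atoroidal, $F$ contains no tori; and under our standing hypotheses (boundary consisting of tori, or empty, and $M$ not Seifert-fibred) the characteristic-submanifold theory shows that $M$ contains no essential annuli either — an essential annulus with ends on a torus boundary component would force $M$ to be toroidal or Seifert-fibred. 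Hence $F=\emptyset$ and $M$ is itself a single JSJ piece; it is not Seifert-fibred, and it is not an interval bundle with torus or empty boundary (those being $T^2\times I$ or the twisted $I$-bundle over the Klein bottle, both Seifert-fibred), so it is atoroidal and anannular. By the Geometrization Theorem it therefore admits a complete homogeneous metric, modelled on one of $\sph^3$, $\mathbb E^3$, $\sph^2\times\R$, $\Hy^2\times\R$, $\mathrm{Nil}$, $\widetilde{\mathrm{SL}_2}$, $\mathrm{Sol}$, or $\Hy^3$ (with the torus ends of $M$, if any, appearing as cusps).

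It then remains to eliminate the first seven geometries. A manifold carrying $\sph^2\times\R$ geometry contains a non-separating essential $2$-sphere, contradicting irreducibility; a manifold carrying $\sph^3$ geometry is a spherical space form, and a manifold carrying $\mathbb E^3$, $\mathrm{Nil}$, $\Hy^2\times\R$, or $\widetilde{\mathrm{SL}_2}$ geometry is Seifert-fibred — both excluded by hypothesis. Finally, in the range we consider every $\mathrm{Sol}$-manifold is closed (all lattices in $\mathrm{Sol}$ are cocompact) and is either a torus bundle over $\sph^1$ with Anosov monodromy or a union of two twisted $I$-bundles over the Klein bottle; in either description it contains an essential torus (the fibre, respectively the splitting torus), contradicting atoroidality. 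The only surviving geometry is $\Hy^3$, so $M$ admits a complete hyperbolic metric.

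I expect the only genuinely delicate point to be bookkeeping in the cusped case rather than any substantial new mathematics: one must check that no non-hyperbolic, non-Seifert geometry can arise for the interior of a compact manifold with torus boundary, which comes down precisely to the two facts used above — that $\mathrm{Sol}$ has no non-compact finite-volume quotients, and that the interval-bundle and Seifert JSJ pieces that can have torus boundary are all Seifert-fibred — together with the routine observation that an essential annulus would already have contradicted the hypotheses. Everything else is a direct appeal to the Geometrization Theorem and to the classification of the eight three-dimensional geometries. An alternative route would be to cite Thurston's hyperbolization theorem for Haken manifolds and Perelman's theorem in the non-Haken case, but channelling the whole statement through Geometrization keeps the dependencies uniform with the discussion preceding the claim.
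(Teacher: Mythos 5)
The paper does not actually prove this claim: it simply records it as an ``in particular'' consequence of the Geometrization Theorem, with a pointer to Thurston's hyperbolization theorem for Haken manifolds as the historically earlier, partial result. Your proposal therefore does genuine work that the paper leaves implicit, and the route you take --- empty JSJ decomposition, apply Geometrization to get a single geometric piece, then eliminate the seven non-hyperbolic model geometries --- is exactly the argument the paper is tacitly invoking. The argument is sound. Two small polishing remarks. First, when you dispose of the annular case, the cleanest phrasing is that a compact orientable irreducible $3$-manifold with incompressible toral boundary that contains an essential annulus is either toroidal or Seifert-fibred (the compressible-boundary case being the solid torus, already Seifert-fibred); you assert this, and it is a standard part of characteristic submanifold theory, but it is worth flagging as the one nontrivial input beyond Geometrization itself. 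Second, your elimination of $\sph^3$ is slightly elliptical: spherical space forms are themselves Seifert-fibred (six of the eight geometries --- $\sph^3$, $\mathbb E^3$, $\sph^2\times\R$, $\Hy^2\times\R$, $\mathrm{Nil}$, $\widetilde{\mathrm{SL}_2}$ --- yield only Seifert-fibred manifolds), so all six are killed by the single ``not Seifert-fibred'' hypothesis, $\mathrm{Sol}$ is killed by atoroidality plus cocompactness of its lattices exactly as you say, and $\Hy^3$ survives. Making that uniform makes the case analysis read more smoothly, but there is no gap.
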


A proof of this fact, with an extra hypothesis, was found by Thurston himself (\cite{thurstonhaken}). This overview should be sufficient to communicate the dominant role played by hyperbolic geometry in 3-manifolds, which Thurston correctly foresaw.

Also, in the statement of the Geometrization Theorem, one may require that each of the connected components of $M\setminus F$ has \emph{finite volume} under the specified geometric structure: there is only a handful of exceptional manifolds whose JSJ components cannot satisfy this further request; and none of them is hyperbolic, anyway. So, from now on, we say that a 3-manifold is \nw{hyperbolic} if it may be endowed with a finite-volume, complete hyperbolic structure.

Mostow's Rigidity Theorem in its classical form (Theorem 5.7.2 in \cite{thurstonnotes}) is a strong statement of uniqueness for these structures:
\begin{claim}
If $M_1^n$ and $M_2^n$ are two complete hyperbolic $n$-manifolds, for $n\geq 3$, with finite volume, and $\phi:\pi_1(M_1^n)\rightarrow \pi_1(M_2^n)$ is an isomorphism, then there exists a unique isometry $\psi: M_1^n\rightarrow M_2^n$ inducing $\phi$.
\end{claim}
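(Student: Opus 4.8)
The plan is to run the classical argument through the sphere at infinity $\partial\overline{\Hy^n}\cong\sph^{n-1}$, which for $n\geq 3$ is rigid enough to force conformality. Write $M_i^n=\Hy^n/\Gamma_i$ with $\Gamma_i<\Isom^+(\Hy^n)$ identified with $\pi_1(M_i^n)$. Since complete hyperbolic manifolds are aspherical, $\phi$ is induced by a homotopy equivalence $f:M_1^n\to M_2^n$; lift it to a $\phi$-equivariant map $\tilde f:\Hy^n\to\Hy^n$. The first step is to arrange that $\tilde f$ is a quasi-isometry of $\Hy^n$: in the closed case this is immediate, and in the finite-volume case one works outside the cusp neighbourhoods (thick--thin decomposition) and controls $\tilde f$ there separately --- this is Thurston's ``pseudo-isometry'' device, cf.\ \cite{thurstonnotes}. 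Because $\Hy^n$ is Gromov-hyperbolic, stability of quasigeodesics then lets $\tilde f$ extend to a homeomorphism $\partial\tilde f:\sph^{n-1}\to\sph^{n-1}$ which is quasiconformal and $\phi$-equivariant: $\partial\tilde f\circ\gamma=\phi(\gamma)\circ\partial\tilde f$ on $\sph^{n-1}$ for every $\gamma\in\Gamma_1$.

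The heart of the proof is to upgrade $\partial\tilde f$ from quasiconformal to \emph{conformal}, i.e.\ to the boundary value of an element of $\Isom^+(\Hy^n)$; this is exactly where $n\geq 3$ is used. One route is ergodic: a quasiconformal map of $\sph^{n-1}$ is differentiable almost everywhere with a measurable field of dilatation ellipsoids, and the $\Gamma_1$-action on $\sph^{n-1}\times\sph^{n-1}$ (equivalently, the geodesic flow on the finite-volume manifold $M_1^n$) is ergodic; invariance of the ellipsoid field under this action forces it to be almost everywhere round, so $\partial\tilde f$ is conformal. An alternative route is via the Gromov norm: normalise simplicial volume so that $\vol(M)=v_n\,\|M\|$ with $v_n$ the volume of the regular ideal $n$-simplex; since $f$ is a homotopy equivalence $\|M_1^n\|=\|M_2^n\|$, and straightening fundamental cycles forces the straightened version of almost every simplex to be a regular ideal simplex, whose combinatorial rigidity then propagates to show that $\partial\tilde f$ carries vertex sets of regular ideal simplices to vertex sets of regular ideal simplices, hence is M\"obius.

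Once $\partial\tilde f$ is the boundary value of some $\psi_0\in\Isom^+(\Hy^n)$, the equivariance above becomes $\psi_0\gamma\psi_0^{-1}=\phi(\gamma)$ in $\Isom^+(\Hy^n)$ (an isometry being determined by its action on $\sph^{n-1}$), so $\psi_0$ descends to an isometry $\psi:M_1^n\to M_2^n$ inducing $\phi$. For uniqueness, if $\psi'$ also induces $\phi$ then $\psi^{-1}\psi'$ is an isometry of $M_1^n$ inducing $\mathrm{id}$ on $\pi_1$; its lift to $\Hy^n$ commutes with all of $\Gamma_1$ and stays a bounded distance from $\mathrm{id}_{\Hy^n}$, hence fixes the limit set of $\Gamma_1$, which is all of $\sph^{n-1}$ since $\Gamma_1$ has finite covolume; so the lift is $\mathrm{id}$ and $\psi'=\psi$.

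The main obstacle is the conformality step: the parts before it are soft (asphericity together with the coarse geometry of $\Hy^n$) and the parts after it are bookkeeping, but deducing that a quasiconformal conjugacy between two finite-covolume lattices in $\Isom^+(\Hy^n)$ is M\"obius is where the genuine content --- and the hypothesis $n\geq 3$ --- sits, whichever of the two routes one takes. A secondary technical point, specific to the non-compact case, is making $\tilde f$ a bona fide quasi-isometry in the presence of cusps.
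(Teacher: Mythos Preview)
The paper does not prove this claim: it is quoted as background (Mostow's Rigidity Theorem) and attributed to \cite{thurstonnotes}, Theorem 5.7.2, with no argument supplied. Your sketch is a faithful outline of the two standard proofs found there and in the literature --- Mostow's original route (lift a homotopy equivalence to a pseudo-/quasi-isometry of $\Hy^n$, extend to a $\phi$-equivariant quasiconformal boundary map, and use ergodicity of the $\Gamma_1$-action to force conformality) and Gromov's route via simplicial volume and rigidity of regular ideal simplices. The identification of the conformality step as the crux, the role of $n\geq 3$, and the handling of cusps in the non-compact case are all correctly flagged. As a sketch it is sound; there is nothing in the paper to compare it against beyond the citation.
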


\section{Graphs attached to a surface}\label{sec:graphs}
\subsection{Coarse geometry}
We are going to deal with several (in)equalities up to multiplicative and additive constants. Given four numbers $x,y\geq 0$, $Q\geq 1$ and $q\geq 0$, we write $x\leq_{(Q,q)} y$ (or $y\geq_{(Q,q)} x$) to mean $x\leq Qy+q$; and $x=_{(Q,q)} y$ to mean $x\leq_{(Q,q)} y \leq_{(Q,q)} x$. We also write $x\leq_Q y$, $x=_Q y$ to mean $x\leq_{(Q,Q)} y$, $x=_{(Q,Q)} y$ respectively.

Let $\mathbf G$ be a graph, with $\mathbf G^0$ the set of its vertices. $\mathbf G^0$ is turned into a geodesic metric space by assigning length one to each of its edges and defining, for each pair of vertices $x,y\in \mathbf G^0$, their distance $d_{\mathbf G}(x,y)$ to be the length of the shortest edge path connecting them. When $A,B \subseteq \mathbf G^0$ are non-empty, we define $d_{\mathbf G}(A,B)\coloneqq\mathrm{diam}_{\mathbf G} (A\cup B)$.

A map $g:I\rightarrow\mathbf G^0$, where $I=J\cap\mathbb Z$ for $J\subseteq \R$ an interval (possibly $J=\R$), is a \nw{$Q$-quasigeodesic} if $d_{\mathbf G}(g(a),g(b))=_Q |a-b|$ for all $a,b\in I$.

A map $g$ as above is a \nw{$Q$-unparametrized quasigeodesic} if there is an increasing map $\rho:I'\rightarrow I$, where $I'=J'\cap \mathbb Z$ for $J'\subseteq \R$ again an interval, such that: $\min \rho(I')=\min I,\max \rho(I')=\max I$; $g\circ \rho$ is a $Q$-quasigeodesic; if $a,b\in I$ are such that $\rho(\iota)\leq a\leq b\leq \rho(\iota+1)$ then $d_{\mathbf G}(g(a),g(b))\leq Q$.

In both definitions, $g$ is also allowed to be a \emph{multi-valued function}, i.e. a function with values in the power set, $I\rightarrow \mathcal P(\mathbf G^0)$; but in this case we require, in addition, $\mathrm{diam}_{\mathbf G}(g(i))\leq Q$ for all $i\in I$.

Note that in this work, when referring to a vertex $v$ of one of the graphs $\mathbf G$ we are going to define, we write $v\in \mathbf G$ (when we really mean $v\in\mathbf G^0$, as above). More generally, every time $\mathbf G$ is implicitly regarded as a set, we mean the set of its vertices, $\mathbf G^0$.

\subsection{The curve complex}

The parent of all graphs attached to a surface $S$ is the curve graph, extensively studied in \cite{masurminskyi}, \cite{masurminskyii} and further work. The graphs, albeit defined for $S$ a surface, can also be considered for surfaces with boundary, via $\cc(S)\coloneqq \cc\left(\mathrm{int}(S)\right)$ and similar identifications.

\begin{defin}
The \nw{curve complex} of $S$, denoted $\cc(S)$, is a simplicial complex whose vertices correspond to isotopy classes of essential curves in $S$.
\begin{itemize}
\item If $\xi(S)> 4$, there is an edge between two vertices if and only if the corresponding curves can be isotoped to be disjoint.
\item If $S\cong S_{1,1}$, there is an edge between two vertices if and only if the corresponding curves, when isotoped into minimal position, intersect in 1 point.
\item If $S\cong S_{0,4}$, there is an edge between two vertices if and only if the corresponding curves, when isotoped into minimal position, intersect in 2 points.
\end{itemize}
A collection of $n+1$ vertices in $\cc(S)$ spans a $n$-simplex if and only if any two of them are connected by an edge.

For $X$ a subsurface of $S$ which is not an annulus, the curve complex is defined just as if it were a stand-alone surface: $\cc(X)\coloneqq \cc\left(\inte(X)\right)$. The obvious map $\cc^0(X)\rightarrow \cc^0(S)$ is well defined and is an injection.

A special definition is needed when $X$ is an annular subsurface of $S$. Each vertex of $\cc(X)$ will represent an isotopy class, with fixed endpoints, of arcs (paths) properly embedded into $\ol{S^X}$ with an endpoint on each component of $\partial\ol{S^X}$. In this graph, too, there is an edge between two vertices if and only if the corresponding arcs can be isotoped, \emph{fixing their endpoints}, to have no intersection in $S^X$. The same construction as above applies for higher-dimensional skeleta of this graph.
\end{defin}

The most direct descendant of the curve graph is the \emph{arc graph}:

\begin{defin}
Suppose $S$ has punctures. The \nw{arc complex} of $S$, denoted $\ac(S)$, is a simplicial complex whose vertices correspond to classes of essential, properly embedded arcs in $S_\bullet$ under the equivalence relation given by isotopies fixing $\partial S_\bullet$ setwise; and there is an edge between any two vertices if the corresponding arcs can be isotoped to be disjoint. A collection of $n+1$ vertices in $\ac(S)$ spans a $n$-simplex if and only if any two of them are connected by an edge.

The \nw{arc and curve complex} $\acc(S)$ is defined as follows. Its $1$-skeleton is obtained from the $1$-skeleton of the disjoint union $\ac(S)\sqcup\cc(S)$ by adding an edge between a pair of vertices $\alpha\in\ac(S)$ and $\gamma\in\cc(S)$ every time $\alpha,\gamma$, may be seen as an arc and a curve, respectively, in $S_\bullet$, which can be isotoped (relatively to the boundary, resp.) to be disjoint. Again, a collection of $n+1$ vertices in $\acc(S)$ spans a $n$-simplex if and only if any two of them are connected by an edge.
\end{defin}

\begin{defin}\label{def:subsurfaceproj}
For $X\subseteq S$ a subsurface, the \nw{subsurface projection} $\pi_X:\cc(S)\rightarrow \mathcal P(\cc(X))$ is defined as follows (see \cite{masurminskyii}, \S 2.3, 2.4). 

\begin{itemize}
\item If $X$ is not an annulus: first of all, there is a natural map $\psi_X:\acc^0(X)\rightarrow \mathcal{P}\left(\cc^0(X)\right)$ which maps each $\alpha\in\cc^0(X)$ to $\{\alpha\}$ and each $\beta\in\ac^0(X)$ to the set of all isotopy classes of connected components of $\partial\nei(\beta\cup \partial X)$ which are essential in $X$ (they are 1 or 2). Here $\nei(\beta\cup \partial X)$ is a narrow, regular neighbourhood in $X$. One extends $\psi_X:\mathcal{P}\left(\acc^0(X)\right)\rightarrow \mathcal{P}\left(\cc^0(X)\right)$ naturally by defining $\psi_X(A)$ as the union of $\psi_X(a)$ over $a\in A$.

There is also a natural map $\pi'_X:\cc^0(S)\rightarrow \mathcal P\left(\acc^0(X)\right)$. Given $\alpha\in \cc^0(S)$: if $\alpha\in\cc^0(X)$ then, simply, $\pi'_X(\alpha)\coloneqq \{\alpha\}$. If $\alpha$ can be isotoped to lie completely out of $X$, then $\pi'_X(\alpha)\coloneqq\emptyset$. Otherwise $\alpha$ intersects $\partial X$ essentially; in this case, identify $\alpha$ with a representative of its isotopy class minimizing the number of intersection points with $\partial X$, and set $\pi'_X(\alpha)\coloneqq\alpha\cap X$, considered as a subset of $\ac^0(X)$ (minimality of the number of intersection points implies that $\alpha\cap X$ consists of essential arcs only).

Now, for $\alpha\in\cc^0(S)$, let $\pi_X(\alpha)\coloneqq \psi_X\left(\pi'_X(\alpha)\right)$. For $A\subseteq\cc^0(S)$ we define again $\pi_X(A)\coloneqq\bigcup_{\alpha\in A} \pi_X(\alpha)$.

\item If $X$ is an annulus: given $\alpha\in \cc^0(S)$, set $\pi_X(\alpha)=\emptyset$ if $\alpha$ does not intersect the core curve of $X$ essentially (including the case of $\alpha$ \emph{being} the core curve of $X$). Else, consider the preimage $\tilde\alpha$ of $\alpha$ in $S^X$ under the covering map $S^X\rightarrow S$: $\tilde\alpha$ consists of an infinite family of disjoint, quasi-geodesic paths. In particular each of them has two well-defined endpoints on $\partial\ol{S^X}$. Let then $\pi_X(\alpha)$ be the set of all connected components in $\tilde\alpha$ which connect the two connected components of $\partial\ol{S^X}$.
\end{itemize}

For $A,B\subseteq \cc^0(S)$ we use the shorthand notation $d_X(A,B)\coloneqq d_{\cc(X)}(\pi_X(A),\pi_X(B))$.
\end{defin}

Note that the subsurface projection of any curve onto any subsurface, if nonempty, has always diameter $\leq 1$.

Some literature, including \cite{mms} which will be employed many times in the present work, define the subsurface projection to be the map denoted $\pi'_X$ above. However, $\acc(X)$ includes $\cc(X)$ quasi-isometrically and these two versions of projection commute with that inclusion (again up to quasi-isometries): this is the only relevant aspect for us so we can stick with our definition, even if this might require changing some of the quasi-equality constants in the statements that will be quoted.

We recall a couple of useful lemmas about distances in curve complexes:

\begin{lemma}[Lemma 1.2, \cite{bowditch}]\label{lem:cc_distance}
If $S$ is a surface and $\alpha_1,\alpha_2\in \cc(S)$, then
$$
d_{\cc(S)}(\alpha_1,\alpha_2)\leq F\left(i(\alpha_1,\alpha_2)\right)
$$
for a function $F:\mathbb N\rightarrow\mathbb N$ with $F(n)=O(\log n)$, and independent of $S$.
\end{lemma}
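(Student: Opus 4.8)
The plan is to prove a logarithmic bound by iterating a surgery which, at the price of a universally bounded jump in $\cc(S)$, roughly halves the geometric intersection number with $\alpha_2$. The key step I would isolate is a \emph{halving lemma}: if $\beta_1,\beta_2$ are essential curves in minimal position with $i(\beta_1,\beta_2)=m$, then there is an essential curve $\gamma$ with $d_{\cc(S)}(\beta_1,\gamma)\le D_0$ and $i(\gamma,\beta_2)\le\tfrac12 m+C_0$, where $D_0,C_0$ are absolute constants. Granting this, I would set $\gamma_0=\alpha_1$ and, as long as $i(\gamma_k,\alpha_2)$ exceeds a universal threshold $n_0$, let $\gamma_{k+1}$ be the curve produced by the lemma applied to $(\gamma_k,\alpha_2)$. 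Then $i(\gamma_k,\alpha_2)\le i(\alpha_1,\alpha_2)/2^{k}+2C_0$, so the threshold is reached after $K\le\lceil\log_2 i(\alpha_1,\alpha_2)\rceil+C_1$ steps; combining $d_{\cc(S)}(\alpha_1,\gamma_K)\le D_0K$ with a bounded base case $d_{\cc(S)}(\gamma_K,\alpha_2)\le C_2$ yields $d_{\cc(S)}(\alpha_1,\alpha_2)\le D_0K+C_2=O\!\left(\log i(\alpha_1,\alpha_2)\right)$, with every constant independent of $S$, which gives a function $F$ of the asserted form.

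For the halving lemma itself I would cut $\beta_2$ at the $m$ points of $\beta_1\cap\beta_2$ into arcs, fix one of them $b$ with endpoints $x,y$, and note that $x,y$ split $\beta_1$ into arcs $e_1,e_2$ carrying $m_1,m_2$ of the remaining $m-2$ intersection points, so $m_1+m_2=m-2$. The two closed curves $\gamma^{(i)}:=e_i\cup b$ ($i=1,2$) are built from a sub-arc of $\beta_1$ and the arc $b$ of $\beta_2$; pushing the $e_i$-part slightly off $\beta_1$ and the $b$-part slightly off $\beta_2$, all crossings with $\beta_1$ are created in small neighbourhoods of $x$ and $y$, so $i(\beta_1,\gamma^{(i)})\le 2$ and hence $d_{\cc(S)}(\beta_1,\gamma^{(i)})\le D_0$ for an absolute $D_0$; while $i(\beta_2,\gamma^{(i)})\le m_i+C_0$ since $b$ contributes nothing and $e_i$ meets $\beta_2$ in $m_i$ points. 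As $\min\{m_1,m_2\}\le\tfrac12(m-2)$, choosing $\gamma$ to be the $\gamma^{(i)}$ with the smaller $m_i$ gives the lemma, provided $\gamma$ is essential; it is never isotopic to $\beta_2$ (that would force $i(\beta_2,\gamma)=m$) and, should it bound a disc or a peripheral annulus, this pins the complexity of $S$ to a universal bound, a regime in which the whole statement is trivial with a large enough additive constant.

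For the base case I would observe that if $i(\alpha_1,\alpha_2)\le n_0$ for a fixed $n_0$ then $d_{\cc(S)}(\alpha_1,\alpha_2)$ is bounded by a universal constant: either the curves are already disjoint, or a boundary component of a regular neighbourhood $\nei(\alpha_1\cup\alpha_2)$ is an essential curve disjoint from both, or $S$ has so small complexity that only finitely many topological types occur --- in the last case, including $S\cong S_{1,1}$ and $S\cong S_{0,4}$, one checks directly, using the special edge rules there, that curves with intersection number $\le n_0$ are a bounded distance apart. Iterating the halving lemma down to this base case and adding up the estimates, as above, produces $F(n)=\lceil\log_2 n\rceil+C$ for $n\ge1$ (and $F(0)=1$) with $C$ absolute, which is $O(\log n)$ and independent of $S$.

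The part I expect to give the most trouble is making the surgery genuinely contractive rather than merely decreasing: the whole passage from a linear to a logarithmic bound rests on the pigeonhole identity $m_1+m_2=m-2$ surviving intact after the $O(1)$ corrections near $x$ and $y$, so I would need to keep careful track of those neighbourhoods and of the side of $\beta_1$ from which $b$ approaches. The second delicate point is the bookkeeping around essentiality and low-complexity surfaces: one has to be sure that the finitely many degenerate configurations (surgered curve inessential, or $S$ too simple for the disjointness argument) can be absorbed into the additive constant uniformly in $S$, rather than forcing the constant to depend on $S$.
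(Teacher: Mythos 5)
The paper does not prove this lemma; it cites Bowditch's Lemma 1.2, so your proposal must be judged against the standard arguments in the literature (Bowditch, Hempel). Your overall strategy matches those: surger roughly half of $\beta_1$ onto an arc $b$ of $\beta_2$ at bounded cost in $\cc(S)$, and iterate $O(\log n)$ times until a bounded base case is reached. The pigeonhole identity $m_1+m_2=m-2$, the geometric decay of the additive error, and the base-case reduction via a boundary component of $\nei(\alpha_1\cup\alpha_2)$ (or finiteness when $\xi(S)$ is small) are all fine.

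The gap is in the essentiality step, and the justification you give is simply false. You assert that if the surgered curve $\gamma=e_i\cup b$ is inessential then $\xi(S)$ is bounded by a universal constant. That $\gamma$ cannot bound a disc is correct --- a disc with boundary $e_i\cup b$ would be a bigon between $\beta_1$ and $\beta_2$, contradicting minimal position --- but $\gamma$ can be puncture-parallel on a surface of arbitrarily high complexity, and for certain arcs $b$ \emph{both} candidates $\gamma^{(1)},\gamma^{(2)}$ are. Concretely: let $\beta_1$ separate off a twice-punctured disc $P\subset S$ and let $b$ be any arc of $\beta_2\setminus\beta_1$ lying in $P$. Since there are no bigons, $b$ is an essential arc in $P$ from $\partial P$ to itself, hence separates the two punctures; both $\gamma^{(1)}=e_1\cup b$ and $\gamma^{(2)}=e_2\cup b$ are then puncture-parallel, while $\xi(S)$ can be as large as you like. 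Moreover, even when only one of $\gamma^{(1)},\gamma^{(2)}$ is inessential, it may be the one with the smaller $m_i$, so ``take the one with smaller $m_i$'' no longer gives a halving; it is not enough to note that not both can be inessential. To close the gap you need to choose $b$ with more care --- for instance, show that if both candidates are inessential then the side of $\beta_1$ containing $b$ is a twice-punctured disc (forcing $\beta_1$ to separate), note that $\beta_2$ has arcs on both sides of a separating $\beta_1$, pick $b$ on the other side, and treat the case where the small-$m_i$ curve is still peripheral --- or adopt Bowditch's actual surgery, which takes $b$ approaching $\beta_1$ from one side so that $\beta_1,\gamma^{(1)},\gamma^{(2)}$ cobound a pair of pants, and deduces essentiality from the topology of that pair of pants. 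Until this is done the halving lemma, and hence the whole induction, is not established.
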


\begin{rmk}\label{rmk:subsurf_inters_bound}
An observation which will be useful in several occasions when applying Lemma \ref{lem:cc_distance} above is the following: if $X\subseteq S$ is a non-annular subsurface and $\alpha_1,\alpha_2$ are curves in $\cc(S)$ with $\pi_X(\alpha_1),\pi_X(\alpha_2)\not=\emptyset$, then for any $\alpha'_1\in\pi_X(\alpha_1),\alpha'_2\in\pi_X(\alpha_2)$ the intersection number $i(\alpha'_1,\alpha'_2)\leq 4 i(\alpha_1,\alpha_2)+4$.

We may identify $\alpha_1,\alpha_2$ with two representatives that intersect transversely, realize the intersection number between their isotopy classes, and minimize the number of intersection points with $\partial X$. Let $\nei_1(\partial X)$, $\nei_2(\partial X)$ be two narrow, regular neighbourhoods of $\partial X$ in $S$, with $\bar\nei_1(\partial X) \subseteq \nei_2(\partial X)$. By definition of subsurface projection, for $i=1,2$, we may consider $\alpha'_i$ to be realized as the union of a set $a_i$ consisting of one or two parallel `copies' of a connected component of $(\alpha_i\cap X)\setminus\nei_i(\partial X)$; and a set $b_i$ which, if nonempty, consists of one or two segments of $\left(\partial\bar \nei_i(\partial X)\right)\cap X$. The set $b_i$ is empty (and $a_i=\alpha'_i$) exactly when $\alpha'_i=\alpha_i$. For the purposes of the following discussion, we may suppose that, for each choice of $i,j\in\{1,2\}$, each connected component of $a_i\cap \nei_j(\partial X)$ contains a point of $a_i\cap\partial X$.

Each intersection point between $\alpha'_1$ and $\alpha'_2$ is either:
\begin{itemize}
\item an intersection point between $a_1$ and $a_2$, which is to say, a `copy' of an intersection point between $\alpha_1$ and $\alpha_2$ which is contained in $X$; considering that each of $a_1$, $a_2$ consists of at most two parallel `copies' of an essential arc or curve in $X$, each intersection point between $\alpha_1$, $\alpha_2$ admits at most 4 `copies' among the intersection points between $a_1$, $a_2$.
\item an intersection point between $a_1$ and $b_2$: $a_1\cap b_2$ consists of at most $4$ points, each close to a different extremity of an arc constituting $a_1$.
\end{itemize}

Necessarily, $b_1\cap b_2=b_1\cap a_2=\emptyset$. Hence our claim.
\end{rmk}

\begin{lemma}[\S 2.4, \cite{masurminskyii}]\label{lem:annulus_distance}
If $X\subset S$ is an annulus and $\alpha_1,\alpha_2\in\cc^0(X)$ ($\alpha_1\not=\alpha_2$), then $d_{\cc(X)}(\alpha_1,\alpha_2)=1+i(\alpha_1,\alpha_2)$. Moreover $\cc(X)$ is quasi-isometric to $\mathbb Z$.
\end{lemma}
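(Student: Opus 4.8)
The plan is to reduce the whole statement to an elementary count in the universal cover of the compact annulus $\ol{S^X}$. Identify that cover with the strip $\Pi\coloneqq\R\times[0,1]$, with deck group generated by the horizontal translation $T(x,t)=(x+1,t)$, so that $\R\times\{0\}$ and $\R\times\{1\}$ cover the two components of $\partial\ol{S^X}$. A vertex of $\cc(X)$, being an embedded arc of $\ol{S^X}$ joining its two boundary components, lifts to a properly embedded arc of $\Pi$ from $\R\times\{0\}$ to $\R\times\{1\}$ whose $T$-translates are pairwise disjoint; since in a disk any two properly embedded arcs with the same pair of boundary endpoints are isotopic rel endpoints, I would first replace each vertex by the unique geodesic (straight-segment) representative for the flat metric on $\ol{S^X}$ lifted from $\Pi$. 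Two distinct straight segments of this kind cross at most once, and never in the interior if they share a boundary endpoint, so after this normalization no bigons or half-bigons occur and the straight representatives realize all the relevant geometric intersection numbers.

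With straight representatives everything becomes explicit. Write the lift of $\alpha_i$ as the segment from $(p_i,0)$ to $(q_i,1)$ with $p_i\in[0,1)$; then the full preimage of $\alpha_2$ in $\Pi$ is the family of segments from $(p_2+k,0)$ to $(q_2+k,1)$, $k\in\mathbb Z$, and a one-line computation shows that the $k$-th of these meets the lift of $\alpha_1$ — transversally, in the open strip, in exactly one point — precisely when $k$ lies strictly between $p_1-p_2$ and $q_1-q_2$. Hence
$$i(\alpha_1,\alpha_2)=\#\bigl\{k\in\mathbb Z:\ k\text{ lies strictly between }p_1-p_2\text{ and }q_1-q_2\bigr\},$$
and in particular $\alpha_1,\alpha_2$ span an edge of $\cc(X)$ exactly when there is no such $k$. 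Writing $w(\beta)\coloneqq q_\beta$ for the straight lift of a vertex $\beta$, this count equals $|w(\alpha_1)-w(\alpha_2)|$ up to an additive error at most $2$ coming from the fractional parts of the endpoints; since $w$ increases by $1$ under a Dehn twist about the core curve of $X$, the integer part of $w$ is coarsely onto $\mathbb Z$, and once the distance formula below is proved it follows at once that $\lfloor w\rfloor$ is a quasi-isometry $\cc(X)\to\mathbb Z$.

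It remains to prove $d_{\cc(X)}(\alpha_1,\alpha_2)=1+i(\alpha_1,\alpha_2)$, which I would do by induction on $m\coloneqq i(\alpha_1,\alpha_2)$. \emph{Upper bound.} If $m=0$ the (distinct) vertices are adjacent. If $m\ge1$, put $\alpha_1,\alpha_2$ in minimal position, let $P$ be one endpoint of $\alpha_1$, let $x$ be the point of $\alpha_1\cap\alpha_2$ nearest to $P$ along $\alpha_1$, and let $\beta$ be the arc obtained by following $\alpha_1$ from $P$ to $x$ and then $\alpha_2$ from $x$ to its endpoint on the other boundary component; resolving the corner at $x$ on the appropriate side makes $\beta$ an embedded arc joining the two boundary components, disjoint in the interior from $\alpha_2$ and meeting $\alpha_1$ in at most $m-1$ points, so by the inductive hypothesis $d(\alpha_1,\alpha_2)\le d(\alpha_1,\beta)+1\le\bigl(1+(m-1)\bigr)+1=m+1$. \emph{Lower bound.} Using the count above one checks that $i(c,\cdot)$ changes by at most $1$ across any edge of $\cc(X)$: passing from $b$ to an adjacent $b'$ displaces the two endpoints of the relevant open interval by $p_b-p_{b'}$ and $q_b-q_{b'}$, both of which lie in $[-1,1]$ and, by the edge condition, have weakly the same sign, so the number of enclosed integers changes by at most $1$. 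Since adjacent distinct vertices have intersection number $0$, along any geodesic $\alpha_1=\beta_0\sim\dots\sim\beta_\ell=\alpha_2$ (with $\ell\ge1$) we get $m=i(\beta_0,\beta_\ell)\le\ell-1$, i.e. $d=\ell\ge m+1$.

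The delicate point is entirely the bookkeeping of the additive constant $1$: making sure the upper-bound surgery really drops the intersection number and really lands at a legitimate vertex distinct from $\alpha_1,\alpha_2$, and that the displacement estimate in the lower bound is genuinely $\le1$ rather than $\le2$. Both hinge on tracking precisely when two of the straight segments share a boundary endpoint and what the corner resolution does nearby — this is what Masur--Minsky dismiss as ``easy to check,'' and the linear model of the first two paragraphs is exactly what turns it into a finite, elementary verification. Everything else — connectivity of $\cc(X)$, the quasi-isometry with $\mathbb Z$, and the coarse estimates needed later in the paper — falls out of the intersection count.
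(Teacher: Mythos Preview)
The paper does not include its own proof of this lemma: it is quoted from \S2.4 of Masur--Minsky and used as a black box, so there is nothing in the paper to compare your argument against. Your proof is correct and is precisely the standard one --- lift to the strip, straighten, count crossings, then run the surgery induction for the upper bound and the $1$-Lipschitz estimate on $i(c,\cdot)$ across edges for the lower bound --- with the bookkeeping of the ``$+1$'' handled correctly. The one point left implicit that deserves a sentence: in the surgery step you need $\beta\neq\alpha_2$ so that $d(\beta,\alpha_2)=1$ rather than $0$; this holds because either $P$ is not an endpoint of $\alpha_2$ (so $\beta$ and $\alpha_2$ already differ at an endpoint), or $P$ is shared, in which case your lift computation gives $q_\beta=q_{\alpha_2}\pm1\neq q_{\alpha_2}$.
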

If $\alpha_1,\alpha_2\in \cc(X)$ for $X\subset S$ an annular subsurface, $i(\alpha_1,\alpha_2)$ is defined to be the minimum number of intersection points between $\alpha_1,\alpha_2$ attained when deforming both arcs within their respective isotopy class with fixed endpoints. Extreme points on $\partial\ol{S^X}$ do not count as intersection points.

Theorem 1.1 from \cite{masurminskyi} asserts that there is a $\delta=\delta(S)>0$ such that $\cc(S)$ is $\delta$-hyperbolic. It has been proved later (see e.g. \cite{bowditch2014uniform}, Theorem 1.1, or \cite{hensel20151}, Theorem 1.1) that there exists a universal value $\delta>0$ such that $\cc(S)$ is $\delta$-hyperbolic for all surfaces $S$. In Lemma \ref{lem:annulus_distance} above we have recalled that, for $X\subseteq S$ an annular subsurface, $\cc(X)$ is quasi-isometric to $\mathbb Z$. This implies, as recalled by Lemma 6.6 from \cite{mms}, that a reverse triangle inequality will hold for distances in it. Here we rephrase it in a way that refers to this setting only:
\begin{lemma}\label{lem:reversetriangle}
For any surface $S$ as above and any $Q>0$ there is a constant $\mathsf{R}_0=\mathsf{R}_0(S,Q)$ such that, if $X\subseteq S$ is a subsurface and $f:[l,m]\rightarrow \mathcal P\left(\cc^0(X)\right)$ is a $Q$-unparametrized quasi-geodesic, then for any $l\leq a\leq b\leq c\leq m$, if $\alpha=f(a),\beta=f(b),\gamma=f(c)$ then
$$
d(\alpha,\beta)+d(\beta,\gamma)\leq d(\alpha,\gamma)+\mathsf{R}_0.
$$
\end{lemma}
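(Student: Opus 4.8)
The plan is to reduce the statement to a purely coarse fact about $\mathbb Z$ (or about any $Q'$-quasi-line), then transport it back along the quasi-isometries guaranteed by Lemmas \ref{lem:annulus_distance} and the discussion around $\delta$-hyperbolicity. First I would dispose of the case where $X$ is \emph{not} an annulus: then $\cc(X)$ is $\delta$-hyperbolic with $\delta$ universal, and an unparametrized $Q$-quasi-geodesic in a $\delta$-hyperbolic space fellow-travels an honest geodesic at distance depending only on $\delta$ and $Q$ (stability of quasi-geodesics); the reverse triangle inequality $d(\alpha,\beta)+d(\beta,\gamma)\leq d(\alpha,\gamma)+\mathsf R_0$ then holds because along a geodesic it is an equality, and the fellow-traveling constant absorbs the error. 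Actually, one can be even more economical: the statement as used in this thesis only ever needs the annular case with real force, and for non-annular $X$ the weaker bound $d(\alpha,\beta)+d(\beta,\gamma)\leq 3\,d(\alpha,\gamma)+\mathsf R_0$ already suffices for most applications; but to match the clean multiplicative-constant-free form I would invoke thin triangles as above.

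For the annular case, which is the real content, I would argue as follows. By Lemma \ref{lem:annulus_distance} there is a quasi-isometry $\Phi:\cc(X)\to\mathbb Z$ with constants depending only on $S$ (in fact universal). The composition $\Phi\circ f:[l,m]\to\mathcal P(\mathbb Z)$ is then a $Q'$-unparametrized quasi-geodesic in $\mathbb Z$, with $Q'=Q'(S,Q)$, and its values have diameter $\leq Q'$. Now in $\mathbb Z$ an unparametrized quasi-geodesic is, up to a bounded reparametrization, monotone: more precisely, I would show that there is $C=C(Q')$ such that for the reparametrizing map $\rho$ witnessing the unparametrized-quasi-geodesic condition, $\Phi\circ f\circ\rho$ is a genuine $Q'$-quasi-geodesic in $\mathbb Z$, hence monotone up to an error $C$, and between consecutive $\rho(\iota),\rho(\iota+1)$ the $f$-values stay within $Q'$. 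Picking integer points $n_a\in\Phi f(a)$, $n_b\in\Phi f(b)$, $n_c\in\Phi f(c)$ that lie on the quasi-geodesic image, monotonicity gives $|n_a-n_b|+|n_b-n_c|\leq|n_a-n_c|+C'$ for a constant $C'=C'(Q')$; pulling back through the quasi-isometry $\Phi$ and collecting all the bounded errors yields the claimed inequality with $\mathsf R_0=\mathsf R_0(S,Q)$.

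The key step --- and the one I expect to be the main obstacle --- is making precise the assertion that \emph{an unparametrized quasi-geodesic in a quasi-line is monotone up to bounded error}. The subtlety is that the reparametrization $\rho$ in the definition of unparametrized quasi-geodesic is only required to have $g\circ\rho$ a quasi-geodesic and to keep $g$ within $Q$ on each step interval $[\rho(\iota),\rho(\iota+1)]$; it is \emph{not} assumed monotone as a map to $\mathbb Z$ after pushing forward, and $g$ itself may backtrack wildly off the sampled points. What saves us is precisely that the target is (quasi-isometric to) $\mathbb Z$: a quasi-geodesic $h:I'\to\mathbb Z$ must have $|h(i+1)-h(i)|\leq Q'$ and $|h(j)-h(i)|\geq |j-i|/Q' - Q'$, which forces $h$ to be strictly increasing (or strictly decreasing) outside a window of size $O(Q'^2)$, and in particular forces $h(I')$ to be contained in a bounded neighbourhood of the interval $[\min h,\max h]$ traversed monotonically. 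Once that lemma about $\mathbb Z$ is in hand, the rest is bookkeeping with the quasi-isometry constants and the multi-valued-function diameter bound $Q$; this is routine and I would relegate it to a line or two. One should also double-check the degenerate cases $a=b$ or $b=c$, where the inequality is immediate from $d(f(a),f(a))=0$ and the diameter bound.
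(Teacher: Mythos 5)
The paper itself offers no proof: Lemma~\ref{lem:reversetriangle} is presented as a rephrasing of Lemma~6.6 of \cite{mms}, which is cited but not re-derived, and the paper's only addition is the remark that uniform hyperbolicity of curve complexes lets $\mathsf{R}_0$ depend on $Q$ alone. So your proposal is not being compared against an alternative proof; it fills a gap the thesis leaves implicit, and the argument you give is the correct, standard one. In the non-annular case, the detail your sentence ``the fellow-traveling constant absorbs the error'' compresses is the following: the reparametrized quasi-geodesic restricted to the parameter interval running from (a bounded neighbourhood of the preimage of) $\alpha$ to $\gamma$ is itself a $Q$-quasi-geodesic with endpoints at distance $\le Q$ from $\alpha$ and $\gamma$, so by the Morse lemma $\beta$ lies within $K(\delta,Q)+O(Q)$ of a geodesic $[\alpha,\gamma]$; taking a nearest point $\beta'\in[\alpha,\gamma]$ then gives $d(\alpha,\beta)+d(\beta,\gamma)\le d(\alpha,\beta')+d(\beta',\gamma)+2\,d(\beta,\beta')=d(\alpha,\gamma)+O(K)$, with no separate need for quasi-monotonicity of nearest-point projection (that issue only arises if one compares against the geodesic spanning the whole quasi-geodesic rather than the sub-segment between $\alpha$ and $\gamma$). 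Your reduction to $\mathbb Z$ in the annular case is also correct, though strictly redundant: a space quasi-isometric to $\mathbb Z$ with uniform constants is $\delta$-hyperbolic for a $\delta$ depending only on those constants, so the hyperbolic argument already covers annuli uniformly --- which is also exactly why the paper can claim $\mathsf{R}_0=\mathsf{R}_0(Q)$. The one factual slip is your aside that ``the statement as used in this thesis only ever needs the annular case with real force'': both Proposition~\ref{prp:locallyfinite} and Corollary~\ref{cor:hardttbound} apply the lemma to non-annular subsurfaces, so that case is genuinely used; since your proof covers it anyway, this does not affect correctness.
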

Uniform hyperbolicity of curve complexes implies that, in this statement, $\mathsf{R}_0$ may well be considered as depending on $Q$ only.

\subsection{Pants and marking graphs}

Two of the most immediate descendants of the curve complex are the pants graph and the marking graph. We define them as following \cite{masurminskyii}, \S 2, \S 6 and \S 8.

A \nw{pants decomposition} for a surface $S$ with $\xi(S)\geq 4$ is a maximal collection of essential, pairwise disjoint isotopy classes of curves $\{\alpha_1,\ldots,\alpha_n\}$ in $S$. Equivalently, once a set of pairwise disjoint representatives for $\{\alpha_1,\ldots,\alpha_n\}$ is chosen, its complement in $S$ will consist of a number of pairs of pants $\cong S_{0,3}$.

We say, instead, that a \emph{possibly infinite} collection $A\subseteq \cc(S)$ \nw{fills} $S$ if any other essential curve on $S$ intersects a curve in $A$. If $A=\{\alpha_1,\ldots,\alpha_n\}$ a finite set, then this is equivalent to saying that, once a set of representatives has been chosen for $\{\alpha_1,\ldots,\alpha_n\}$, with pairwise minimal number of intersection points, its complement in $S$ consists of a number of topological open discs $\cong S_{0,1}$ and 1-punctured discs $S_{0,2}$.

Given a collection of curves on $S$, either finite or infinite, there is always a subsurface of $S$ they fill; and it is unique \emph{up to isotopies in $S$}. So when we speak of the subsurface filled by the given collection, we mean its isotopy class or any representative of that class.

A \nw{complete marking} is a collection of pairs $\{p_1=(\alpha_1,t_1),\ldots,p_n=(\alpha_n,t_n)\}$ such that $\{\alpha_1,\ldots,\alpha_n\}$ is a pants decomposition of $S$ and, for each $i$, $t_i\subset \cc\left(\nei(\alpha_i)\right)$ (called \nw{transversal}) is a nonempty set of diameter $1$. A complete marking is \nw{clean} if, for each $i$, a curve $\beta_i\subset S$ exists such that: $\nei(\alpha_i\cup\beta_i)$ is either a 1-punctured torus or a 4-punctured sphere; $\pi_{\alpha_i}(\beta_i)=t_i$; and $\beta_i$ is disjoint from $\alpha_j$ for all $j\not=i$. If such $\beta_i$'s exists, they are unique.

If a complete marking is not clean, a clean marking which is \nw{compatible} with it is one whose base pants decomposition is the same, while each transversal set has the minimum distance possible from the original one, among all the clean complete markings with the same base pants decomposition. For a subdomain $Y\subset S$, the projection $\pi_Y(\mu)$ of a marking $\mu$ is defined as follows: when $Y$ is an annulus whose core is one of the $\alpha_i$, then $\pi_Y(\mu)=t_i$. When $Y$ is any other subsurface (including other annuli), $\pi_Y(\mu)=\bigcup_i \pi_Y(\alpha_i)$. A simplified version of this definition is given for pants decomposition.

\begin{defin}
The \nw{marking graph} $\mc(S)$ of a surface $S$ is a graph whose vertices correspond to all possible clean complete markings on $S$. There is a vertex between each couple of markings $\left((\alpha_1,\pi_{\alpha_1}(\beta_1)),\ldots,(\alpha_n,\pi_{\alpha_n}(\beta_n))\right)$ and\linebreak $\left((\alpha'_1,\pi_{\alpha'_1}(\beta'_1)),\ldots,(\alpha'_n,\pi_{\alpha'_n}(\beta'_n))\right)$ that are obtained from each other with one of these moves:
\begin{itemize}
\item twist: the only difference between the two markings is that $\beta'_i$ is obtained from $\beta_i$ by performing a twist or a half twist (depending on $\#\alpha_i\cap\beta_i$) around $\alpha_i$;
\item flip: all the couples are the same, except for one where $\alpha_i$ and $\beta_i$ have had their roles swapped, and the complete marking thus obtained has been then replaced with a compatible clean one.
\end{itemize}

The \nw{pants graph} $\pc(S)$ of $S$ is a graph whose vertices correspond to all possible pants decompositions $S$. Two vertices are joined by an edge if and only if the corresponding pants decompositions can be completed to clean complete markings which are obtained from each other with a flip move.
\end{defin}

Note that, if $S\cong S_{0,4}$ or $S_{1,1}$, then $\pc(S)$ coincides with the $1$-skeleton of $\cc(S)$.

Distances in the pants and the marking graphs are usually investigated via subsurface projections (Theorem 6.12 from \cite{masurminskyii}): consistently with Definition \ref{def:subsurfaceproj}, the subsurface projection of a pants decomposition is just the union of the projections of all curves constituting it.

\begin{theo}\label{thm:mmprojectiondist}
There exists a constant $M_6(S)$ such that, given $M>M_6$, there are constants $e_0, e_1$ only depending on $M$ and $S$ such that, for any pair of complete clean markings $\mu_I, \mu_T$ on $S$:
$$
\sum_{X\subseteq S} [d_X(\mu_I, \mu_T)]_M =_{(e_0,e_1)} d_{\mc}(\mu_I, \mu_T);
$$
and, for any pair of pants decompositions $p_I, p_T$,
$$
\sum_{\substack{X\subseteq S\\ X\text{ is not an annulus}}} [d_X(p_I, p_T)] =_{(e_0,e_1)} d_{\pc}(p_I, p_T).
$$

Here $[x]_M\coloneqq x$ if $x\geq M$, and $0$ otherwise. The summations shall be meant over $X\subseteq S$ subsurfaces, where each isotopy class of subsurfaces is counted only once.
\end{theo}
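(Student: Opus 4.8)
The plan is to deduce both quasi-inequalities from the \emph{hierarchy} machinery of \cite{masurminskyii}: given the two endpoints --- a pair of markings, or a pair of pants decompositions --- build a hierarchy $H$ of tight geodesics joining them, resolve $H$ into an edge path in $\mc(S)$ (resp.\ $\pc(S)$), and show that this path is an efficient quasigeodesic whose length is comparable to $\sum_X[d_X(\mu_I,\mu_T)]_M$. Recall that a hierarchy $H$ is a collection of tight geodesics $\{G_Y\}$, one in each curve complex $\cc(Y)$ of a \emph{domain} $Y\subseteq S$, organized by a forward/backward subordinacy relation, with a distinguished main geodesic in $\cc(S)$; a domain $Y$ appears precisely as a complementary component of a simplex on a geodesic lying above it.

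First I would invoke the existence of $H$ and the \emph{resolution} construction of \cite{masurminskyii}, producing an edge path $\rho_H=(v_0,\ldots,v_n)$ in $\mc(S)$ from $\mu_I$ to $\mu_T$ in which each move advances exactly one geodesic $G_Y$ by a step or accompanies the birth or death of a domain; a counting argument then gives $|\rho_H|=_c|H|$, where $|H|\coloneqq\sum_Y|G_Y|$, since births and deaths number $O(|H|)$. Next comes the \emph{Large Link} estimate: a domain with $|G_Y|\geq M$ is exactly a subsurface with $d_Y(\mu_I,\mu_T)$ large, on such domains $|G_Y|=_c d_Y(\mu_I,\mu_T)$, and the remaining domains contribute boundedly and number $O(|H|)$, so $|H|=_c\sum_X[d_X(\mu_I,\mu_T)]_M$ provided $M$ exceeds a threshold $M_6(S)$. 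Chaining these, $d_{\mc}(\mu_I,\mu_T)\leq|\rho_H|=_c\sum_X[d_X(\mu_I,\mu_T)]_M$, which is one direction.

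For the reverse direction I would show that $\rho_H$ is a quasigeodesic in $\mc(S)$, so that $|\rho_H|\leq_c d_{\mc}(\mu_I,\mu_T)$ as well. The mechanism is twofold: along \emph{any} edge path in $\mc(S)$ the subsurface projections $\pi_X$ are coarsely Lipschitz --- consecutive markings differ by one elementary move, hence admit representatives of uniformly bounded geometric intersection, so their projections to every subsurface $X$ are uniformly close by Lemma \ref{lem:cc_distance}, Remark \ref{rmk:subsurf_inters_bound} and Lemma \ref{lem:annulus_distance} (a twist move moves the relevant annular projection by exactly one) --- while along $\rho_H$ the hierarchy structure precludes backtracking, so each contribution $d_Y(\mu_I,\mu_T)$ is spent essentially once; quantifying that a path which is Lipschitz on all projections and makes monotone progress through the hierarchy cannot be much longer than the distance between its endpoints yields the quasigeodesic property. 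For $\pc(S)$ one repeats the scheme with $H$ resolved into a path of pants decompositions: the resolution simply ignores the geodesics supported on annular domains (twisting leaves a pants decomposition unchanged), so its length is comparable to the sum of $|G_Y|$ over non-annular domains, and correspondingly to $\sum_X[d_X(p_I,p_T)]$ with $X$ ranging over non-annular subsurfaces.

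The hard part is the pair of deep estimates underneath: the Large Link Lemma --- which is the true source of the threshold $M_6(S)$, since below it a hierarchy may carry domains whose geodesics have bounded but nonzero length, and these must not be seen by the truncation $[\,\cdot\,]_M$ --- and the no-backtracking estimate that makes $\rho_H$ a quasigeodesic. Both rest on the uniform hyperbolicity of curve complexes, on the Bounded Geodesic Image theorem of \cite{masurminskyii} (a geodesic of $\cc(S)$ missing a subsurface projects into it with bounded image), and on a Behrstock-type inequality for overlapping subsurfaces, so that the \emph{active windows} of distinct domains along $\rho_H$ are essentially linearly ordered and have bounded overlap. A secondary but unavoidable chore is tracking the additive and multiplicative constants and, above all, fixing $M$ large enough that all of these estimates apply simultaneously.
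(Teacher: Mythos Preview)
The paper does not prove this theorem at all: it is quoted verbatim as Theorem 6.12 of \cite{masurminskyii} (together with the adaptation in \S 8 of that paper for the pants graph), and is used as a black box throughout. So there is no ``paper's own proof'' to compare against.

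Your outline is a fair high-level summary of the Masur--Minsky argument itself: build a complete hierarchy between the two markings, resolve it into an elementary-move path, and use the Large Link Lemma to identify $|H|$ with the thresholded sum over subsurface projections; the reverse inequality is the statement that resolutions are quasigeodesics. Two small corrections: the ``no-backtracking'' quasigeodesic property of resolutions in \cite{masurminskyii} is not derived from a Behrstock-type inequality (that came later) but from the internal time-ordering of slices and the $\Sigma^\pm$ machinery there; and for the pants graph, the adaptation is not merely ``ignore annular geodesics'' in an existing resolution but rather to resolve a hierarchy \emph{without annuli} directly --- this is the content of \S 8 of \cite{masurminskyii}, and is where the non-annular restriction on the sum genuinely originates.
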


\subsection{The quasi-pants graph}
We will never really employ the given definition of marking graph: we will use a construction given in \cite{mms} instead. Let $k_1$ be the maximum self-intersection number of a complete clean marking on $S$, and let $\ell_1$ be the maximum intersection number between any two complete clean markings on $S$ obtained from each other via an elementary move. Fix any $k\geq k_1, \ell\geq \ell_1$. Then we can redefine $\ma(S)$ to be the graph whose vertices consist of collections of essential, distinct isotopy classes of curves on $S$ which fill the surface and have self-intersection number $\leq k$; and there is an edge between any two vertices corresponding to collections of curves intersecting in at most $\ell$ points. This graph, depending on the parameters $k,\ell$, is shown to be quasi-isometric to $\mc(S)$ (with constants depending on $k,\ell$). And the first formula in Theorem \ref{thm:mmprojectiondist} holds for estimating distances in $\ma(S)$ as well, if one chooses suitable $M\geq M_6(S,k,\ell)$; $e_j=e_j(S,M,k,\ell)$ ($j=0,1$) and the functions $M_6,e_0,e_1$ are suitably defined.

With the pants graph, we perform a similar construction:
\begin{defin}\label{def:quasipants}
A \nw{quasi-pants graph} $\pa(S)$ is defined as follows. Fix two parameters $k,\ell\geq 0$.
\begin{itemize}
\item Each vertex of the graph represents a collection $\{\alpha_1,\ldots,\alpha_m\}$ of essential, distinct isotopy classes of curves of $S$ such that, when a set of representatives minimizing the number of mutual intersection is chosen, this number is $\leq k$; and with one of the following, equivalent, properties:
\begin{enumerate}[label=\alph*)]
\item each isotopy class in $\cc(S)$ either is one of the $\alpha_j$, or intersects one of the $\alpha_j$ (however the representatives are chosen);
\item given a set of representatives for the $\{\alpha_1,\ldots,\alpha_m\}$, minimizing the number of mutual intersection, its complement in $S$ is a collection of topological open discs, 1-punctured discs, and pairs of pants;
\item Let $X$ be the subsurface of $S$ filled by $\{\alpha_1,\ldots,\alpha_m\}$ (possibly one which is not connected and with annular components); then $S\setminus X$ is a collection of (closed) pairs of pants.
\end{enumerate}
In particular, vertices include all pants decompositions of $S$.
\item There is an edge between two vertices $\mu$ and $\nu$ if and only if $i(\mu,\nu)\leq\ell$.
\end{itemize}
We only consider values of $\ell$ for which the graph is connected and each vertex has distance at most $1$ from a pants decomposition.
\end{defin}

\begin{rmk}\label{rmk:ell1}
There is a number $\ell_1\geq 0$ such that all values $\ell>\ell_1$ are acceptable for the last sentence of the above definition. Two pants decompositions which are adjacent in the pants graph have mutual intersection number $1$ or $2$: thus, for $\ell\geq 2$, the subgraph of $\pa(S)$ having pants decompositions as vertices is connected, because among its edges there are all the ones of $\pc(S)$, which is connected.

Moreover, note that orbits under the action of $\mcg(S)$ on $\pa(S)$ are finitely many, so there is a finite bound for $m=\max_{\mu\in\pa(S)}\min_{p\in\pc(S)} i(\mu,p)$. A suitable value for $\ell_1$ is then just $\max\{2,m\}$.
\end{rmk}

The parameters $k,\ell$ for $\ma(S)$ and $\pa(S)$ will be fixed once and for all after having introduced train tracks (see Remark \ref{rmk:pickparameters}). Meanwhile we prove the following:
\begin{lemma}\label{lem:pantsquasiisom}
The injection $\iota:\pc^0(S)\rightarrow\pa^0(S)$ of the vertex set of the first graph into the second one is a quasi-isometry, with constants depending on $S,k,\ell$. In particular, the second formula in Theorem \ref{thm:mmprojectiondist} holds for distance estimation in $\pa(S)$ as well, if we choose $M\geq M_6(S,k,\ell)$; $e_j=e_j(S,M,k,\ell)$ ($j=0,1$) for suitably defined functions $M_6,e_0,e_1$.
\end{lemma}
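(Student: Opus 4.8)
The plan is to show that $\iota:\pc^0(S)\to\pa^0(S)$ is a quasi-isometry by verifying the two standard conditions: that $\iota$ coarsely preserves distances, and that its image is coarsely dense. Coarse density is immediate from the last sentence of Definition \ref{def:quasipants}, since we only admit values of $\ell$ for which every vertex of $\pa(S)$ lies within distance $1$ of a pants decomposition; combined with Remark \ref{rmk:ell1}, such $\ell$ exist. So the work is entirely in the distance comparison, and it splits into an \emph{upper} bound $d_{\pa}(\iota p,\iota q)\leq_{?} d_{\pc}(p,q)$ and a \emph{lower} bound $d_{\pc}(p,q)\leq_{?} d_{\pa}(\iota p,\iota q)$.

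For the upper bound, I would argue that $\iota$ is coarsely Lipschitz: if $p,q\in\pc^0(S)$ are adjacent in $\pc(S)$, then by definition their representatives can be completed to clean markings differing by a flip, and in particular $i(p,q)$ is bounded by a constant depending only on $S$ (it is $1$ or $2$ on the curves actually moved, hence uniformly bounded overall). Since we take $\ell\geq\ell_1$ with $\ell_1$ at least this bound, $p$ and $q$ are at distance $\leq 1$ in $\pa(S)$; concatenating a geodesic in $\pc(S)$ then gives $d_{\pa}(\iota p,\iota q)\leq d_{\pc}(p,q)$.

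The lower bound is the substantive direction, and here I would route everything through Theorem \ref{thm:mmprojectiondist} rather than argue directly in the graphs. The key point is that the non-annular subsurface projections of a vertex $\mu\in\pa^0(S)$ and of a nearby pants decomposition $p$ (with $i(\mu,p)$ bounded by $k+\ell$, say) agree up to bounded error in every $\cc(X)$: this follows because a bound on intersection number forces, via Remark \ref{rmk:subsurf_inters_bound} and Lemma \ref{lem:cc_distance}, a bound on $d_X(\mu,p)$ for all non-annular $X\subseteq S$. Hence for any two vertices $\mu,\nu\in\pa^0(S)$ with nearby pants decompositions $p_\mu,p_\nu$, one has $d_X(\mu,\nu)=_{?}d_X(p_\mu,p_\nu)$ for all non-annular $X$, with constants depending on $S,k,\ell$. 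Feeding this into the second formula of Theorem \ref{thm:mmprojectiondist} (applied with a threshold $M\geq M_6(S,k,\ell)$ large enough to absorb these bounded errors when passing between $[\,\cdot\,]_M$ thresholds — this is the usual ``thresholds are coarsely insensitive to bounded perturbation'' observation, and forces the stated dependence of $M,e_0,e_1$ on $k,\ell$), we get $d_{\pc}(p_\mu,p_\nu)=_{?}\sum_{X\text{ non-annular}}[d_X(\mu,\nu)]$; and the right-hand side should then be identified, again up to constants, with a combinatorial distance in $\pa(S)$ between $\mu$ and $\nu$. That last identification, $d_{\pa}(\mu,\nu)=_{?}\sum_{X\text{ non-annular}}[d_X(\mu,\nu)]$, is the natural ``distance formula for $\pa(S)$'' and is exactly what one also needs to justify the final sentence of the lemma; I expect it to follow by the same Masur--Minsky hierarchy argument, since $\pa(S)$ only enlarges the vertex set and edge set of $\pc(S)$ in a way controlled by intersection number, so hierarchies resolving a pants-graph path still resolve a $\pa(S)$-path with the same projection data.

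The main obstacle I anticipate is precisely the bookkeeping of constants when changing the threshold $M$: the bounded discrepancy $d_X(\mu,p_\mu)\leq C(S,k,\ell)$ can flip a term $[d_X(\mu,\nu)]_M$ between $0$ and something of size $\leq M$, so one must choose $M$ large relative to $C$ and then absorb the finitely-many-bounded-terms error into $e_1$; there is also the subtlety that the number of subsurfaces $X$ with $d_X>0$ for a fixed pair is finite but not a priori bounded, which is handled in the standard proof of Theorem \ref{thm:mmprojectiondist} and which I would simply quote. A secondary technical point is confirming that the quasi-isometry constants genuinely depend only on $S,k,\ell$ (and on the chosen $M$), as claimed — this is a matter of tracking that every estimate above used only intersection-number bounds, Lemma \ref{lem:cc_distance}, Remark \ref{rmk:subsurf_inters_bound}, and Theorem \ref{thm:mmprojectiondist}, none of which introduces hidden dependence on the pair of vertices.
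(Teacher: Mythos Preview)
Your coarse-density and upper-bound arguments match the paper. The lower bound, however, has a genuine gap: the argument is circular. You reduce to showing $\sum_{X\text{ non-annular}}[d_X(p,q)]_M \leq_{?} d_{\pa}(\iota p,\iota q)$ --- one direction of a distance formula for $\pa(S)$ --- and justify it by saying ``hierarchies resolving a pants-graph path still resolve a $\pa(S)$-path with the same projection data.'' But hierarchy \emph{resolution} produces a path whose length is comparable to $\sum[d_X]$; that gives $d_{\pa}\leq_{?}\sum[d_X]$, the \emph{opposite} inequality. The direction you actually need --- that not too many subsurfaces have large projection relative to a $\pa$-geodesic --- does not come from resolution, and if you try to reduce it to the known $\pc$-case by replacing each vertex of a $\pa$-geodesic with a nearby pants decomposition $p_i$, you are left needing $d_{\pc}(p_i,p_{i+1})$ bounded, which is precisely the statement you set out to prove.

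The paper breaks this circle by constructing an explicit quasi-inverse $\Phi:\pa^0(S)\to\pc^0(S)$ with no appeal to Theorem~\ref{thm:mmprojectiondist}: for $\mu\in\pa^0(S)$, let $X(\mu)$ be the subsurface $\mu$ fills and take $\Phi(\mu)$ to be a pants decomposition containing $\partial X(\mu)$ and minimising $i(\mu,\Phi(\mu))$, chosen $\mcg(S)$-equivariantly. Finiteness of $\mcg(S)$-orbits on $\pa^0(S)$ bounds $i(\mu,\Phi(\mu))$ and $d_{\pa}(\mu,\Phi(\mu))$ uniformly. The key step is Lipschitz-ness of $\Phi$: if $d_{\pa}(\mu,\nu)=1$ then $i(\mu,\nu)\leq\ell$ determines $\nu$ up to a finite equivariant family $A(\mu)$ \emph{and} Dehn twists about components of $\partial X(\mu)$; since $\partial X(\mu)\subset\Phi(\mu)$, such twists fix $\Phi(\mu)$, and equivariance gives $d_{\pc}(\Phi(\mu),\Phi(\nu))=d_{\pc}(\Phi(\mu),\Phi(\nu'))$ for some $\nu'\in A(\mu)$, bounded by orbit-finiteness. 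This yields $d_{\pc}(p,q)\leq b'\,d_{\pa}(\iota p,\iota q)$ directly. Only \emph{afterwards} does the paper invoke your bounded-projection observation $d_X(\mu,\Phi(\mu))\leq\beta$ for non-annular $X$, using it to transport the $\pc$-distance formula along the now-established quasi-isometry --- the reverse of the logical order you propose.
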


In order for the mentioned formula to make sense, we are extending naturally to vertices of $\pa(S)$ our notion of subsurface projection, and the notation $d_X$. A similar statement concerning the graphs we have denoted $\ma(S)$ and $\mc(S)$ is implicitly used in \cite{mms}, \S 6.

\begin{proof}
It suffices to show that $\iota$ is a quasi-isometric embedding because, by definition of $\pa(S)$, the $1$-neighbourhood of $\iota\left(\pc(S)\right)$ is the entire $\pa(S)$.

The inequality $d_\pa(\iota(p),\iota(q))\leq d_\pc(p,q)$ holds for any $p,q\in\pc(S)$: if two vertices of $\pc(S)$ are connected by an edge then so are their images in $\pa(S)$, by Remark \ref{rmk:ell1} above.

We only need to prove an inequality in the opposite direction. To do so, we build a map $\Phi:\pa(S)\rightarrow\pc(S)$ which will turn out to be a quasi-inverse of $\iota$: for $\mu\in \pa(S)$ let $X(\mu)$ be the (possibly disconnected) subsurface of $S$ filled by $\mu$. Set $\Phi(\mu)$ to be a pants decomposition including all curves of $\partial X(\mu)$ and chosen so that the intersection number between it and $\mu$ is minimal among all pants decompositions with the specified condition. We may suppose that $\Phi$ is $\mcg(S)$-equivariant: $\Phi(\psi\cdot\mu)=\psi\cdot\Phi(\mu)$ for all $\psi$ self-homeomorphism of $S$. As the orbits of $\mcg(S)$ in $\pa^0(S)$ are finitely many, there are two numbers $a,a'$ such that $i(\mu,\Phi(\mu))\leq a$ and $d_\pa(\mu,\Phi(\mu))\leq a'$ for all $\mu$. Note that $\Phi\circ\iota=\mathrm{id}_{\pc(S)}$.

Take any $\mu,\nu\in\pa(S)$ with $d_\pa(\mu,\nu)=1$; then $i(\mu,\nu)\leq\ell$. Let $X$ be the (possibly disconnected) subsurface of $S$ filled by $\mu$. The bound on the intersection number yields that there is a finite family $A(\mu)\subseteq \pa(S)$ such that $\nu$ is obtained from an element of $A(\mu)$ by Dehn twisting about some components of $\partial X$. The association $\mu\mapsto A(\mu)$ can be supposed to be equivariant under homeomorphisms of $S$. Again by finiteness of the number of orbits, there is a global upper bound $b$ for the size of $A(\mu)$, and also a bound $b'$ for $d_\pc(\Phi(\mu),\Phi(\nu))$ for any $\mu\in\pa(S),\nu\in A(\mu)$; and therefore for any $\mu,\nu$ at distance $1$ from each other.

This means that, more generally, $d_\pc(\Phi(\mu),\Phi(\nu))\leq b' d_\pa(\mu,\nu)$ for any $\mu,\nu$: given a geodesic connecting the two, we get an upper bound for the length of a path joining the images of the vertices under $\Phi$. In particular, if $p,q\in\pc(S)$, then $d_\pc(p,q)\leq b' d_\pa(\iota(p),\iota(q))$. The bound $b'$ depends only on $S,k,\ell$.

We now prove the second part of the lemma's statement. 
Given any $\mu\in\pa(S)$ and $X\subseteq S$ a non-annular subsurface, $i(\mu,\Phi(\mu))\leq a$ implies that, given any $\alpha'_1\in \pi_X(\mu)$, $\alpha'_2\in \pi_X(\Phi(\mu))$, we have $i(\alpha'_1,\alpha'_2)\leq 4a+4$ by Remark \ref{rmk:subsurf_inters_bound}, therefore $d_X\left(\mu,\Phi(\mu)\right)\leq \max \left\{F(k),F(4a+4)\right\}$ by Lemma \ref{lem:cc_distance}. Hence there is number $\beta=\beta(S)$ such that, for any $\mu,\nu\in\pa(S)$, $|d_X(\mu,\nu)-d_X(\Phi(\mu),\Phi(\nu))|\leq \beta$.

Given $M\geq \max\{M_6(S),\beta(S)\}$, where $M_6$ is defined as in Theorem \ref{thm:mmprojectiondist}, we get the following chain of inequalities. We neglect indices on summations, as they will always range over all isotopy classes of non-annular subsurfaces $X\subset S$; and we write simply $d_X$ for $d_X(\mu,\nu)$ and $d_X\Phi$ for $d_X(\Phi(\mu),\Phi(\nu))$.
\begin{eqnarray*}
 & d_\pa(\mu,\nu)\leq d_\pa(\Phi(\mu),\Phi(\nu)) +2a' \leq d_\pc(\Phi(\mu),\Phi(\nu)) +2a'\leq  & \\
 & e_0(M)\sum [d_X\Phi]_M + e_1(M)+2a' \leq e_0(M)\sum [d_X+\beta]_M + e_1(M)+2a' \leq  & \\
 & e_0(M)\sum \left([d_X]_{M-\beta}+\beta_X\right) + e_1(M)+2a' \leq e_0(M)(1+\beta)\sum [d_X]_{M-\beta} + e_1(M)+2a'. & 
\end{eqnarray*}
Here we have denoted with $\beta_X$ a quantity which is equal to $\beta$ only if $[d_X]_{M-\beta}\not=0$; else it is also $0$. For the other inequality:
\begin{eqnarray*}
 & d_\pa(\mu,\nu) \geq d_\pa(\Phi(\mu),\Phi(\nu)) -2a' \geq (b')^{-1} d_\pc(\Phi(\mu),\Phi(\nu)) -2a' \geq  & \\
 & (b'e_0(M))^{-1}\sum [d_X\Phi]_M -e_1(M)-2a' \geq (b'e_0(M))^{-1}\sum [d_X-\beta]_M -e_1(M)-2a' \geq & \\
 & (b'e_0(M))^{-1} \frac{M}{M+\beta}\sum [d_X]_{M+\beta} -e_1(M)-2a'. & 
\end{eqnarray*}

So, we have that a formula as in Theorem \ref{thm:mmprojectiondist} holds for distances in $\pa(S)$, too. Denoting with a $'$ the quantities related with $\pa(S)$ rather than $\pc(S)$, we choose $M_6'(S)=\max\{M_6(S),\beta(S)\}+\beta(S)$; $e_0'(M)=e_0(M)\max\left\{1+\beta,b'\frac{M+\beta}{M}\right\}$, $e_1'(M)=e_1(M)+2$. Dependence of these new parameters from $k,\ell$ is hidden in the constants $a'$, $\beta$ and $b'$.
\end{proof}

\section{Hyperbolic volume estimates}\label{sec:hypvolumehistoric}

In theory, given a hyperbolic 3-manifold --- we know from Mostow Rigidity that no 3-manifold is hyperbolic in two different ways --- one may work out its volume by triangulating it with ideal tetrahedra, and then looking for a solution for Thurston's gluing consistency equations (see \cite{thurstonnotes}, Chapter 4). But, other than a precise computation of volume from a given, fixed hyperbolic 3-manifold, it is interesting to understand if one may work out the volume from some topological `parameter' in a given class of 3-manifold, albeit not precisely but only up to multiplicative and additive constants. Here we give two examples of this.

\subsection{Links complements in $\mathbb S^3$, and the guts approach}

Let $K\subset \mathbb S^3$ be a link (a smooth embedding of a number of disjoint copies of $\mathbb S^1$). It is a result of W.P. Thurston (see for instance Theorem 10.5.1, \cite{kawauchi}) that every \emph{knot} $K$ in $\mathbb S^3$ satisfies one and only one of the following: it is a torus knot; it is a satellite knot (including composite knots); its complement admits a complete hyperbolic metric with finite volume.

So, when a knot is prime, it is `likely' to fall in the third case (from now on: we say, simply, that the knot, or link, is \nw{hyperbolic}). There is no classification for \emph{links} that works as well as the one given above for knots, but from the underlying idea that a `majority' of link is hyperbolic is still true, via Thurston's Geometrization (see \S\ref{sec:hyp3mflds}). Many explicit classes of hyperbolic links are known (see e.g. \cite{adams} for an account), and it is a matter of interest how the hyperbolic volume of the complement is related with properties which be readable from a link diagram.

Results in this field have been established by Lackenby --- for links which admit an alternating diagram, see \cite{lackenby} --- and by Futer--Kalfagianni--Purcell --- for Montesinos links and for a subclass of the closed braids called \emph{positive}, see \cite{futerkalfapurcell}. They all descend from a theorem of Ian Agol (\cite{agol}, made sharper in \cite{agolimproved}), which reads as follows. If $S$ is a surface bounded by $K$, which `does not admit simplifications' in $\mathbb S^3\setminus K$, and $M\coloneqq \mathbb S^3\setminus \nei(S)$, then $\vol(\mathbb S^3\setminus K)\geq a\chi(\mathrm{guts}(M))$, for $a$ a constant and $\mathrm{guts}(M)$ being the union of the atoroidal and anannular components arising from the JSJ decomposition.

Lackenby's result is that, if a hyperbolic link $K$ has a prime, alternating diagram $D$ then $\vol(\mathbb S^3\setminus K)=_A t(D)$ for $A$ a constant and $t(D)$ the number of \emph{twist regions} in the diagram, i.e. maximal concatenations of bigons in $\mathbb S^2\setminus D$. A crossing of $D$ which is not adjacent to a bigon counts as a crossing. The theorem is established via a careful choice of a surface $S$ bounding $K$ and obtained from $D$ and then using Agol's theorem for the lower bound, and a clever triangulation for the upper bound. The result of Futer--Kalfagianni--Purcell is established via a heavy generalization of Lackenby's machinery.

\subsection{Mapping tori, and pants distance}\label{sub:mappingtori}

Given $S$ a surface and $\psi:S\rightarrow S$ is a suitable homeomorphism (or mapping class), the corresponding \nw{mapping torus} is a 3-manifold $M \cong S\times I/\sim_\psi$, where $I=[0,1]$, and $\sim_\psi$ is the equivalence relation that identifies each point $(x,0)$, $x\in S$, with $(\psi(x),1)$. A mapping torus is in particular a fibre bundle over $\mathbb S^1$, with fibre $S$. The map $\psi$ is called the \nw{monodromy} of the mapping torus.

We distinguish among three kinds of behaviour for $\psi$:
\begin{itemize}
\item $\psi$ has \nw{finite order} if some $n>0$ exists such that $\psi^n$ is isotopic to $\mathrm{id}_S$;
\item $\psi$ is \nw{reducible} if there is a multicurve on $S$ which is fixed by $\psi$, up to isotopy;
\item $\psi$ is \nw{pseudo-Anosov} if neither it has finite order nor it is reducible.
\end{itemize}

It is worth recalling some basic concepts: we follow the summary given in \cite{hyperbfibermfld}. Further details, with partially different conventions, may be found in \cite{cassonbleiler}, chapters 3--6 or in \cite{penner}, \S 1.6, \S 1.7, Chapter 3. A \nw{geodesic lamination} on a surface $S$ is a closed subset of $S$ which is a disjoint union of geodesics, called \nw{leaves} of the lamination.

Let $\lambda$ be a geodesic lamination; consider a function $\mu:T(\lambda)\rightarrow\R_{\geq 0}$, where $T(\lambda)$ is the set of all compact 1-manifolds embedded in $S$ and intersecting the leaves of $\lambda$ transversely (the 1-manifolds' boundaries, in particular, are disjoint from $\lambda$). We say that $\mu$ is a \nw{transverse measure} on $\lambda$ if it has the following properties. The function $\mu$ is $\sigma$-additive, meaning that, for each countable family $\{\alpha_i\}_{i\in\mathbb N}\subseteq T(\lambda)$ such that $(i\not=j\Rightarrow \alpha_i\cap\alpha_j=\partial\alpha_i\cap\partial\alpha_j)$ and that $\alpha\coloneqq \bigcup\alpha_i\in T(\lambda)$, we have $\mu(\alpha)=\sum \mu(\alpha_i)$. Given $\alpha_0,\alpha_1\in T(\lambda)$ two manifolds which are isotopic via a continuous family of $\alpha_t\in T(\lambda)$, $t\in [0,1]$, we have $\mu(\alpha_0)=\mu(\alpha_1)$. If $\alpha\in T(\lambda)$ is actually disjoint from $\lambda$, then $\mu(\alpha)=0$.

A \nw{measured lamination} is a pair $(\lambda,\mu)$ where $\lambda$ is a lamination and $\mu$ is a tranverse measure for $\lambda$, \emph{with full support} i.e. $\mu(\alpha)\not=0$ for all $\alpha\in T(\lambda)$ not disjoint from $\lambda$. We say that a lamination is \nw{minimal} if each half-leaf of $\lambda$ is dense in $\lambda$ and that it \nw{fills} $S$ if $S\setminus\lambda$ consists of a number of contractible connected components and peripheral annuli. Rephrasing a classical theorem of Thurston (cf. \cite{thurston_pa}, Theorem 4; \cite{flp}, Theorem 1.6; or \cite{hyperbfibermfld}, Theorem 2.5):

\begin{theo}
The map $\psi:S\rightarrow S$ is a pseudo-Anosov homeomorphism if and only if there exist a pair of minimal measured laminations $(\lambda^s,\mu^s)$, $(\lambda^u,\mu^u)$ filling $S$, a constant $c>1$ and a homeomorphism $\psi'$, isotopic to $\psi$, such that
$$
(\psi(\lambda^s),\psi_*\mu^s)=(\lambda^s,c^{-1}\mu^s)\quad\text{and}\quad
(\psi(\lambda^u),\psi_*\mu^u)=(\lambda^u,c\mu^u).
$$
The constant $c$ is unique, and the two measured laminations are unique up to scaling of the assigned transverse measure. They are called \nw{stable lamination} and \nw{unstable lamination}, respectively.
\end{theo}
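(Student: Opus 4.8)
This is Thurston's classification theorem, so the plan is to recall the classical two-sided argument: one implication is essentially formal, while the other rests on Thurston's compactification of Teichm\"uller space (or, in a vein closer to this thesis, on invariant train tracks).

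\textbf{The formal direction.} Suppose $\psi$ admits such a pair $(\lambda^s,\mu^s),(\lambda^u,\mu^u)$ of filling minimal measured laminations with $\psi_*\mu^u=c\mu^u$, $c>1$. A measured lamination is determined up to isotopy by the function $\gamma\mapsto i(\gamma,\lambda)$ on $\cc^0(S)$, and this function is $\mcg(S)$-equivariant; so $\psi^n$ would multiply it by $c^n$, which rules out $\psi^n$ being isotopic to $\mathrm{id}_S$ and shows $\psi$ has infinite order. If $\psi$ fixed a multicurve $\gamma$ up to isotopy, then, since $\lambda^u$ fills, $i(\gamma,\lambda^u)>0$, while $i(\gamma,\lambda^u)=i(\gamma,\psi^n\lambda^u)=c^n\,i(\gamma,\lambda^u)$ (using that $\psi^n\gamma$ is isotopic to $\gamma$), forcing $c^n=1$: a contradiction. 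Hence $\psi$ is pseudo-Anosov. For uniqueness, $[\lambda^u]$ and $[\lambda^s]$ are the attracting and repelling fixed points of $\psi$ acting on $\pmf(S)$; $\psi$-invariance of the positive number $i(\lambda^s,\lambda^u)$ forces the two dilatations to agree, pinning down $c$, and any further $\psi$-invariant projective lamination would give a third fixed point, contradicting the resulting North--South dynamics, so the laminations are unique up to scaling of their transverse measures.

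\textbf{The substantive direction.} Assume $\psi$ is pseudo-Anosov; working with the convex core if $S$ has funnels, I would use that $\overline{\teich(S)}=\teich(S)\sqcup\pmf(S)$ is homeomorphic to a closed ball carrying a continuous $\mcg(S)$-action, so that Brouwer produces a fixed point $x$ of $\psi$. If $x\in\teich(S)$, then $\psi$ is realised by an isometry of a finite-area hyperbolic surface, whose isometry group is finite, so $\psi$ has finite order --- contradicting the hypothesis. Thus $x=[\lambda^u]\in\pmf(S)$. If $\lambda^u$ failed to fill $S$, the boundary of a regular neighbourhood of its support would be an essential multicurve fixed by $\psi$ up to isotopy, making $\psi$ reducible --- again contradicting the hypothesis; so $\lambda^u$ fills, and one checks it is then minimal. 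Linearising the $\psi$-action on the ray of $\mf(S)$ over $[\lambda^u]$ yields the dilatation $c$ with $\psi_*\mu^u=c\mu^u$, and $c\neq 1$ because $\psi$ is not of finite order; applying the same to $\psi^{-1}$ gives a filling minimal measured lamination $(\lambda^s,\mu^s)$ with $\psi_*\mu^s=(c')^{-1}\mu^s$. Since each of $\lambda^s,\lambda^u$ fills, $i(\lambda^s,\lambda^u)>0$, and its $\psi$-invariance gives $c=c'$ together with transversality of the two laminations. Finally, turning the pair into a pair of transverse measured foliations equips $S$ with a singular Euclidean structure in which $\psi$ is isotopic to an affine map $\psi'$ stretching one foliation by $c$ and contracting the other by $c^{-1}$; this is the Markov-partition packaging of FLP, and in the present framework it may instead be extracted from a $\psi$-invariant train track, the Perron--Frobenius eigenvalue of whose transition matrix is $c$.

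\textbf{Main obstacle.} Everything of substance lies in the second direction, and there the heavy inputs are (i) that Thurston's compactification is a ball with a continuous mapping class group action --- which underpins the fixed-point step --- and (ii) upgrading the purely topological statement ``$\psi$ fixes a filling lamination'' to the affine model $\psi'$ with exact dilatation $c$. Each is a genuine slice of the theory of measured foliations; by contrast, once (i) is available, the case analysis excluding interior and non-filling boundary fixed points is routine.
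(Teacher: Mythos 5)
The paper does not prove this theorem: it is stated as a recalled classical result with pointers to \cite{thurston_pa}, \cite{flp}, and \cite{hyperbfibermfld}, so there is no internal proof to compare against. Your outline follows the standard FLP/Thurston route and is essentially the argument those references give: the formal direction via naturality of the geometric intersection pairing (the computations $i(\gamma,\lambda^u)=c^n i(\gamma,\lambda^u)$ from $\psi^n\gamma\simeq\gamma$ are correct), and the substantive direction via Brouwer on $\overline{\teich(S)}\sqcup\pmf(S)$, excluding interior and non-filling boundary fixed points, then building the affine model. You are also honest about where the real work lies: establishing that the compactification is a ball with a continuous $\mcg(S)$-action, and upgrading a $\psi$-invariant filling lamination to the singular-Euclidean/Markov-partition model with exact dilatation $c$. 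A few steps are asserted rather than argued and would need care in a full write-up — that the fixed boundary lamination is minimal (not just filling), that $\lambda^s\ne\lambda^u$ and the two are transverse rather than merely both filling, and that the ray over $[\lambda^u]$ can actually be linearised — but since the paper itself simply cites the theorem, the appropriate resolution is to replace the proof by citation as the paper does, or to expand precisely those points if an independent proof is wanted.
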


\begin{rmk}\label{rmk:power_pa}
As an immediate consequence, if $\psi$ is pseudo-Anosov and $n\not=0$, then $\psi^n$ is also pseudo-Anosov.
\end{rmk}

In \cite{hyperbfibermfld} it is proved that each of the three possibilities listed above has a precise consequence on the JSJ characterization of a mapping torus:
\begin{theo}\label{thm:mappingtorushyperbolic}
Let $M$ be a mapping torus, equal to $\faktor{S\times I}{\sim_\psi}$ where $S$ is a surface, and $\psi$ is a self-homeomorphism of $S$. Then
\begin{itemize}
\item $M$ is Seifert-fibered if and only if $\psi$ has finite order;
\item $M$ contains an essential embedded torus if and only if $\psi$ is reducible;
\item $M$ is hyperbolic if and only if $\psi$ is pseudo-Anosov.
\end{itemize}
\end{theo}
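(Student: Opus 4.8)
The plan is to follow the classical argument (as in \cite{hyperbfibermfld}): we extract the three equivalences from the Nielsen--Thurston trichotomy for $\psi$ together with the geometrization statement of \S\ref{sec:hyp3mflds} (the Claim on irreducible, atoroidal, non-Seifert-fibred manifolds), proving each ``only if'' direction directly and deducing the ``if'' directions formally. First we record the fixed features of $M$. Since $\xi(S)\geq 1$ the surface carries a complete hyperbolic metric with $\widetilde S\cong\Hy^2\cong\R^2$, so the infinite cyclic cover of $M$ dual to the fibre is $S\times\R$ and $\widetilde M\cong\R^3$; hence $M$ is aspherical, so $\pi_2(M)=0$ and $M$ is irreducible. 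Extending $\psi$ over the boundary circles of $S_\bullet$ presents $M$ as the interior of a compact $3$-manifold whose boundary is a union of tori, so $M$ satisfies the hypotheses of the Claim except possibly for being toroidal or Seifert-fibred. Finally $\pi_1(M)\cong\pi_1(S)\rtimes_{\psi_*}\mathbb Z$ with $\pi_1(S)$ normal and quotient $\mathbb Z$; as $\xi(S)\geq 1$, the group $\pi_1(S)$ has no nontrivial normal cyclic subgroup, and a self-homeomorphism of $S$ inducing an inner automorphism of $\pi_1(S)$ is isotopic to $\mathrm{id}_S$.

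For the ``only if'' directions: (1) if $\psi$ has finite order, isotope it to a periodic diffeomorphism of order $n$; the orbits of the suspension flow on $M$ are then circles and they assemble into a Seifert fibration over the hyperbolic $2$-orbifold $S/\psi$ (orbits through points of period $<n$ being the exceptional fibres). (2) If $\psi$ is reducible, isotope it to permute the components of a geodesic multicurve realizing the invariant isotopy classes; each orbit of components suspends to an embedded torus (or Klein bottle, whose regular-neighbourhood boundary is a torus), and since the components are essential and $\xi(S)\geq 1$, cutting $S$ along one of them gives a piece which is not an annulus, so the corresponding torus in $M$ is incompressible and not boundary-parallel, i.e.\ essential.

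(3) If $\psi$ is pseudo-Anosov, by the Claim it suffices to show $M$ is atoroidal and not Seifert-fibred. Suppose $T\subset M$ were an essential torus, so $\mathbb Z^2\cong\pi_1(T)\hookrightarrow\pi_1(M)$. As $\pi_1(S)$ contains no $\mathbb Z^2$, the subgroup $\pi_1(T)\cap\pi_1(S)$ is cyclic, and it is nontrivial (otherwise $\mathbb Z^2$ would inject into the quotient $\mathbb Z$); let $c\in\pi_1(S)$ generate it. Taking $(\gamma,m)\in\pi_1(T)$ with $m\neq 0$ and using that $\pi_1(T)$ is abelian, one finds $\psi_*^m$ fixes the conjugacy class of $c$, so $\psi^m$ fixes the free homotopy class of the loop $c$; moreover $c$ is essential and, as $T$ is not boundary-parallel, non-peripheral. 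With $(\lambda^s,\mu^s)$ the stable measured lamination this gives $i(c,\lambda^s)=i(\psi^m c,\lambda^s)=c^{\,m}\, i(c,\lambda^s)$, which with $c>1$ and $m\neq 0$ forces $i(c,\lambda^s)=0$; but $\lambda^s$ fills $S$, so $c$ would be inessential or peripheral --- a contradiction. If instead $M$ were Seifert-fibred, $\pi_1(M)$ would contain a normal infinite cyclic subgroup $N$ (a regular fibre, of infinite order because $\pi_1(M)\supseteq\pi_1(S)$); then $N\cap\pi_1(S)$ is a normal cyclic subgroup of $\pi_1(S)$, hence trivial, so $N$ injects into $\pi_1(M)/\pi_1(S)=\mathbb Z$, and normality of $N$ forces $\psi_*^m$ to be inner for some $m\neq 0$, i.e.\ $\psi^m$ to be isotopic to $\mathrm{id}_S$ --- impossible, since $\psi^m$ is again pseudo-Anosov by Remark \ref{rmk:power_pa} and hence not of finite order.

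The converses follow formally. Every self-homeomorphism of $S$ is of finite order, reducible, or pseudo-Anosov, and a finite-volume hyperbolic $3$-manifold is atoroidal and not Seifert-fibred. Hence, if $M$ is hyperbolic then $\psi$ can be neither of finite order nor reducible --- either would contradict this via (1) or (2) --- so $\psi$ is pseudo-Anosov. If $M$ is Seifert-fibred it is not hyperbolic, and the normal-cyclic-subgroup computation of (3) applies verbatim to yield some $m\neq 0$ with $\psi^m$ isotopic to $\mathrm{id}_S$, so $\psi$ has finite order. If $M$ contains an essential torus it is not hyperbolic, so $\psi$ is of finite order or reducible; in the finite-order case $M$ is Seifert-fibred over $S/\psi$ and an essential torus in it must be vertical (a horizontal one would force $S/\psi$ to be Euclidean), hence projects to an essential curve in $S/\psi$ whose preimage is a $\psi$-invariant essential multicurve, so $\psi$ is reducible in that case too. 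I expect part (3) to be the main obstacle: the other parts are a direct construction or a short group-theoretic step, whereas ruling out essential tori and Seifert fibrations for a pseudo-Anosov mapping torus is where the dynamics of $\psi$ (the filling property and scaling of $(\lambda^s,\mu^s)$) and the structure of $\pi_1(S)\rtimes_{\psi_*}\mathbb Z$ are genuinely used --- with the deep input, that an irreducible atoroidal non-Seifert-fibred such $M$ is actually hyperbolic, supplied by the Claim of \S\ref{sec:hyp3mflds} (Thurston, \cite{thurstonhaken}).
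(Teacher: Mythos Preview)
The paper does not give its own proof of this theorem: it is quoted as background and attributed to \cite{hyperbfibermfld}, so there is nothing to compare against beyond noting that your outline is precisely the classical argument that reference contains. Your proof is essentially correct.

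Two points of execution. First, in the atoroidal step you use $c$ both for the generator of $\pi_1(T)\cap\pi_1(S)$ and for the stretch factor of $\psi$; the displayed equation ``$i(c,\lambda^s)=i(\psi^m c,\lambda^s)=c^{\,m}\, i(c,\lambda^s)$'' is unreadable as written --- rename one of them. Second, the assertion ``as $T$ is not boundary-parallel, $c$ is non-peripheral'' is where a reader will pause: what you need is that if $c\in\pi_1(S)$ were peripheral then the entire $\mathbb Z^2$ subgroup $\pi_1(T)$ would be conjugate into the fundamental group of a boundary torus of the compact model of $M$, and in an irreducible manifold with incompressible toral boundary an incompressible torus whose $\pi_1$ lies in a boundary component is boundary-parallel. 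One sentence to this effect would close the gap.

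The final paragraph is correct but could be organized more cleanly: you prove ``Seifert $\Rightarrow$ finite order'' by re-invoking the normal-cyclic-subgroup argument already embedded in the pseudo-Anosov case, and then separately handle ``finite-order and toroidal $\Rightarrow$ reducible'' via vertical tori. It would be tidier to isolate the normal-cyclic-subgroup lemma (Seifert $\Rightarrow$ $\psi^m\simeq\mathrm{id}$ for some $m$) once, before the pseudo-Anosov case, and then let the trichotomy do the rest.
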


In \cite{brock1}, \cite{brock2}, Brock proves (in particular) the following result.
\begin{theo}\label{thm:brockmappingtori}
Let $S$ be a surface. Two constants $e_2=e_2(S), e_3=e_3(S)$ exist such that, if $\psi:S\rightarrow S$ is a pseudo-Anosov homeomorphism and $M=S\times I/\sim_\psi$ is the corresponding mapping torus (which is hyperbolic), then
$$
\vol(M) =_{(e_2,e_3)} |\psi|.
$$
Here, $|\psi|$ denotes the translation distance of the map induced by $\psi$ on $\pc(S)$.
\end{theo}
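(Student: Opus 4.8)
The plan is to recover both halves of $\vol(M)=_{(e_2,e_3)}|\psi|$ from the geometry of the cyclic cover of $M$ dual to the fibration, together with the distance formula of Theorem~\ref{thm:mmprojectiondist}; this is, in outline, what Brock does in \cite{brock1} and \cite{brock2}. Let $N:=\Hy^3/\Gamma$ be the cover of $M$ corresponding to the fibre subgroup $\pi_1(S)\hookrightarrow\pi_1(M)$, so that $N\cong S\times\R$ topologically, a lift $\hat\psi$ of the monodromy acts on $N$ as an isometry, and $M=N/\langle\hat\psi\rangle$; hence $\vol(M)$ equals the volume of an $\langle\hat\psi\rangle$-fundamental domain in $N$ (the finitely many cusps of $M$, of finite volume when $S$ is punctured, being harmless). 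The manifold $N$ is doubly degenerate, and one checks that its two ending laminations are precisely the stable and unstable laminations $\lambda^s,\lambda^u$ of $\psi$; in particular they fill $S$ and are permuted by the $\hat\psi$-action. Throughout, fix a pants decomposition $\mu$ lying coarsely on the axis of $\psi$ in $\pc(S)$, so that $d_{\pc}(\mu,\psi(\mu))=_{(1,O(1))}|\psi|$.

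For the upper bound I would use a sweepout. Take an efficient edge-path $\mu=p_0,p_1,\dots,p_L=\psi(\mu)$ in $\pc(S)$ with $L=d_{\pc}(\mu,\psi(\mu))$, and realise it by a sweepout of one $\hat\psi$-period of $N$ by simplicial hyperbolic (or pleated) surfaces $f_0,\dots,f_L$, with $f_j$ carrying $p_j$ to geodesics and $f_L=\hat\psi\circ f_0\circ\psi^{-1}$. Consecutive surfaces differ by a single elementary move, so they cobound a region of volume at most a constant $v_1=v_1(S)$ (the Thurston--Canary estimate for the region trapped between interpolating pleated surfaces), and the $L$ regions together sweep across a fundamental domain. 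Hence $\vol(M)\le v_1 L$, which with the choice of $\mu$ gives $\vol(M)\le v_1|\psi|+O(1)$. (Equivalently one can feed $Q(\psi^{-n}(X),\psi^{n}(X))$ into Brock's convex-core-volume inequality $\vol(\core Q(X,Y))\le_{A}d_{WP}(X,Y)$, divide by $2n$, let $n\to\infty$, and use that these quasi-Fuchsian manifolds converge geometrically to $N$ together with the quasi-isometry $\pc(S)\simeq(\teich(S),d_{WP})$ of \cite{brock1}.)

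For the lower bound I would invoke the model-manifold theorem of Minsky and Brock--Canary--Minsky: there is a constant $K=K(S)$ and a $K$-bi-Lipschitz homeomorphism from (the thick part of) $N$ onto a model $M_\nu$ assembled, out of a hierarchy $H$ of tight geodesics running from $\lambda^u$ to $\lambda^s$, from \emph{blocks} and \emph{Margulis tubes}; since $H$ and the construction can be taken $\psi$-equivariant, $\vol(M)$ is comparable to the $M_\nu$-volume of one $\hat\psi$-period. Every block has volume in a fixed interval $[v_0,v_1]$, and the blocks in one period are indexed by the non-annular combinatorial data of $H$; by the hierarchy form of the distance formula their number is comparable to $\sum_{W\subseteq S\text{ non-annular}}[d_W(\mu,\psi(\mu))]_M$, which by the second formula of Theorem~\ref{thm:mmprojectiondist} is comparable to $d_{\pc}(\mu,\psi(\mu))$, hence to $|\psi|$. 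Discarding the (nonnegative) tube volumes already gives $\vol(M)\ge v_0\cdot\#\{\text{blocks per period}\}\ge_{(e_2,e_3)}|\psi|$, as required.

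The subtle point — and the reason the \emph{pants} graph, rather than the marking graph, is the right target, which is exactly the phenomenon this thesis is built around — is the bookkeeping for annular subsurfaces: a curve $\gamma$ whose $M$-geodesic is very short is forced to have $d_\gamma(\lambda^u,\lambda^s)$ enormous (Minsky's a priori length bounds), yet its Margulis tube carries only bounded volume, and at most $\xi(S)$ curves are short at any one height; so the annular terms, which dominate the marking distance $|\psi|_{\mc}$, are seen neither by the volume nor by the pants distance, and one checks that the tubes in a period contribute only $O(|\psi|)$ to $\vol(M)$. Making all of this precise — chiefly the equivariant version of the ending-lamination/model theorem and the matching of model blocks with the non-annular sum in Theorem~\ref{thm:mmprojectiondist} — is where the real work lies; the rest is soft coarse geometry. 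In \cite{brock1} and \cite{brock2} Brock bypasses the full model theorem, obtaining the upper bound from the quasi-Fuchsian core-volume inequality and the lower bound from a direct argument bounding how fast a sweepout of $M$ can move in $\pc(S)$ per unit of swept volume.
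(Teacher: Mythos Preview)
The paper does not prove this theorem: it is quoted from \cite{brock1}, \cite{brock2} as a background result and used as a black box (notably in Theorem~\ref{thm:agol_volume} and Corollary~\ref{cor:dwgivesvolume}). There is therefore nothing in the paper to compare your argument against.

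Your sketch is a reasonable high-level account of why the result is true, and you correctly flag at the end the one historical mismatch: Brock's papers predate the Minsky and Brock--Canary--Minsky model-manifold theorem, so his lower bound does not go through a block count. In \cite{brock2} the lower bound comes instead from a bounded-image argument for sweepouts of $M$ by simplicial hyperbolic surfaces --- one shows that a sweepout surface of bounded area can only move a bounded distance in $\pc(S)$ before the swept volume increases by a definite amount --- combined with the convex-core estimate from \cite{brock1}. Your model-manifold route is a legitimate alternative proof (and is now the standard way to think about it), but if you want to attribute the argument to Brock you should present the sweepout/bounded-image version rather than the block-counting one.
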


Given a homeomorphism $\psi:S\rightarrow S$, the \nw{translation distance} induced by $\psi$ in $\pc(S)$ is the quantity $|\psi|_{\pc(S)}\coloneqq\min \{d(v,\psi\cdot v)|v\in\text{vertices of }\pc(S)\}$. The \nw{stable translation distance} induced by $\psi$ is, instead, $|\psi|_{\pc(S)}^{st}\coloneqq\lim_{n\rightarrow\infty} d(v,\psi^n\cdot v)/n$ where $v$ is any fixed vertex of $\pc(S)$. This latter quantity is well-defined and not depending on $v$, for general facts about metric spaces --- see \cite{bridson}, Chapter II.6, \S 6.6. Consequently, via triangle inequality,  one has $|\psi|_{\pc(S)}^{st}\leq |\psi|_{\pc(S)}$.

\begin{rmk}\label{rmk:stable_dist_pc}
We prove that there is a constant $e_4=e_4(S)$ such that, if $\psi\in\mcg(S)$ is pseudo-Anosov, then
$$|\psi|_{\pc(S)}=_{e_4} |\psi|_{\pc(S)}^{st}$$.

Theorem 3.2 in \cite{brock1} shows that there is a map $Q:\pc^0(S)\rightarrow \mathrm{Teich}(S)$, equivariant under the action of $\mcg(S)$ on the two metric spaces. Here $\mathrm{Teich}(S)$ is the Teichm\"uller space of $S$, equipped with the Weil-Petersson metric. Let $|\psi|_{WP}$, $|\psi|_{WP}^{st}$ be the translation distance and the stable translation distance, respectively, induced by $\psi$ in $\mathrm{Teich}(S)$; they are defined in an entirely similar way as in the pants graph. The theorem we have just mentioned implies that $|\psi|_{WP}$, $|\psi|_{\pc(S)}$ are equal up to multiplicative and additive errors; and the same is true of $|\psi|_{WP}^{st}$, $|\psi|_{\pc(S)}^{st}$.

In addition to what we have already noted earlier, we have to prove that $|\psi|_{\pc(S)}\leq_{e_4} |\psi|_{\pc(S)}^{st}$. We claim that $|\psi|_{WP}\leq |\psi|_{WP}^{st}$, which implies the desired inequality.

By Theorem 1.1 in \cite{daskalopoulos}, there is a unique $\psi$-equivariant, complete geodesic $g$ in $\mathrm{Teich}(S)$ i.e.\ an axis for the action of $\psi$. Let $x\in g$: then $|\psi|_{WP}\leq d_{WP}(x,\psi\cdot x)$, and note that $d_{WP}(x,\psi^n\cdot x)=n\cdot d_{WP}(g,\psi\cdot g)$ because the distance between $x$, $\psi^n\cdot x$ is realized as the segment of $g$ between these two points. Therefore also $|\psi|_{WP}^{st}=d_{WP}(g,\psi\cdot g)$ and the claim is proved.

An equality similar to the one just shown holds for translation distance and stable translation distance in $\cc(S)$, due to the reverse triangle inequality in Lemma \ref{lem:reversetriangle}.
\end{rmk}

\section{The role of train tracks}\label{sec:role_train_tracks}

\emph{Train tracks} on a surface $S$ will be properly defined in \S \ref{sec:traintracks}. Informally, a train track is a 1-complex on $S$ with the property that each vertex is `smoothed out' i.e. there is one selected edge incident to the vertex, such that one may proceed smoothly from this edge to any other one. Train tracks were introduced by Thurston (see \cite{thurstonnotes}, \S 8.9) to study geodesic laminations: informally, given a lamination, it is always possible to `squeeze' bands consisting of parallel segments of its leaves, and turn the lamination into a train track (\cite{penner}, Theorem 1.6.5);  as a particular case, one may do this for a (multi)curve. We say, then, that the train track \emph{carries} a lamination or a (multi)curve when one may draw a family of smooth paths along the track which is isotopic to the lamination or (multi)curve.

A track $\tau$, in general, will carry a huge quantity of different laminations and (multi)curves. The set $\cc(\tau)\subseteq \cc(S)$ of the curves carried by $\tau$, in particular, includes ones which travel along the edges of $\tau$ a high number of times: but there is a finite family $V(\tau)\subseteq\cc(\tau)$ collecting the simplest ones. A \emph{split} is a move on a train track which turns it into a new one, $\tau'$, with $\cc(\tau')\subseteq\cc(\tau)$; and in general, each curve in $\cc(\tau')$ traverses the branches of $\tau'$ fewer times than the ones in $\tau$.

What makes train tracks particularly interesting, then, is that a \emph{splitting sequence} of train tracks i.e. a sequence $(\tau_j)_{j\geq 0}$ of iterated splits on a train track, will change the set $V(\tau_j)$ so that, by following them, we move through $\cc(S)$ keeping, roughly, always the same direction. To start with, splitting sequences were used to prove the hyperbolicity of the curve complex in \cite{masurminskyi}. Some formal statements are given in \S \ref{sub:goodbehaviour}, but we stress here that Masur and Minsky proved in \cite{masurminskyq} that $\left(V(\tau_j)\right)_{j\geq 0}$ is an unparametrized quasi-geodesic in the curve complex. This property is complemented by the fact that $\cc(S)\setminus\cc(\tau_0)$ is quasi-convex, as shown in \cite{notcarried}. Recent work \cite{mms} of Masur, Mosher and Schleimer --- the one that motivates this thesis --- has shown that if some extra, mild hypotheses are met, then this sequence is a \emph{true} quasi-geodesic in $\ma(S)$. 

Furthermore, it is worth pointing out that the reason which motivated the introduction of train tracks is also reflected into the behaviour at infinity of a splitting sequence. The sets of all laminations carried by $\tau_j$ are also a decreasing family in $j$. The monograph \cite{mosher} of Mosher explores how, and in what circumstances, the splitting sequence `converges' to a lamination (Mosher actually uses the language of \emph{foliations} instead). Also notably, the paper \cite{hamenstadt} of Hamenst\"adt uses train track splitting sequences to construct a natural identification of the Gromov boundary of $\cc(S)$ with a suitable subspace of the space of geodesic laminations.

Train tracks and their splitting sequences, then, provide a combinatorial, concrete way of understanding some aspects of the geometry of the surface-related graphs defined in \S \ref{sec:graphs} and of their closest relatives, e.g. Teichm\"uller spaces. The present thesis follows this current.
\chapter{Train tracks and pants distance}

\section{Train tracks: basics}\label{sec:traintracks}

\subsection{Definition. Tie neighbourhoods}\label{sub:traintrackdefin}

Our basic definitions are largely inspired by the ones of \cite{mms} and of \cite{mosher}, but they will not coincide entirely with those.
\begin{defin}\label{def:pretrack}
A \nw{pretrack} on a surface $S$ (resp. on $S^X$ where $X\subseteq S$ is a non-peripheral annulus) is a 1-complex $\tau$ smoothly, properly embedded in $S$ (resp. $S^X$) such that, for each of its vertices $v$, there are a tangent line $L\subset T_vS$ (resp. $T_vS^X$) and a compact neighbourhood $S$ (resp. $S^X$) $\supset U\ni v$, such that the following is true. $U$ is homeomorphic to a disc; the boundary of $U$ is piecewise smooth; $U$ includes no other vertex of $\tau$ and intersects no edge which is not incident with $v$; $\tau\cap\bar U$ is a union of smooth, properly embedded paths in $U$, with their endpoints on $\partial\bar U$, each passing through $v$ and such that its tangent line at $v$ is $L$. Vertices of $\tau$ are called \nw{switches}, and edges are called \nw{branches}. 

A pretrack is \nw{semigeneric} if, for any switch $v$, there is a neighbourhood $U$ as above, with the following extra property: there is a point $x\in\partial\bar U$ such that, for each of the aforementioned smooth paths which make up $\tau\cap U$, $x$ is one of its endpoints. A pretrack is \nw{generic} if each switch (vertex) is $3$-valent.

\ul{All pretracks in the present work are (at least) semigeneric}: we will not specify it again. We will use the adjective `semigeneric' only when we wish to stress that a pretrack is not necessarily generic.

Fix a branch $b$ of a pretrack $\tau$, and consider the family $F$ of all closed segments contained in $b$ such that exactly one of their endpoints is a switch of $\tau$. Moreover, consider the smallest equivalence relation on $F$ which contains the inclusion relation $\subseteq$: it partitions $F$ into two equivalence classes that, in a self-descriptive manner, are called \nw{branch ends}. Occasionally, we use the term `branch end' also in reference to a fixed element of $F$. 

Consider a switch $v$ of $\tau$ and the above construction of the neighbourhood $U$ and the point $x$. There is only one branch end that reaches $v$ from the direction of $x$: it will be called \nw{large}. All other branch ends at $v$ are called \nw{small}. A branch is large or small if both its ends are; it is \nw{mixed} if its ends are of opposite kinds.

We denote $\br(\tau)$ the set of all branches of $\tau$. A pretrack $\sigma$ is a \nw{subtrack} of $\tau$ if $\sigma\subseteq \tau$ as sets.
\end{defin}

Again, let $X\subseteq S$ be a non-peripheral annulus. Occasionally, we will use the term \nw{generalized pretrack} for $\tau=\bar \tau\cap S^X$ , where $\bar\tau$ is a 1-complex smoothly, properly embedded in $\ol{S^X}$ such that all its vertices $v$ \emph{which lie in $S^X$} have a tangent line $L$ and a compact neighbourhood $U$ as in the above definition. In a generalized pretrack, only vertices lying in $S^X$ will be called \emph{switches}, while all edges are still called \emph{branches}. The concept of \emph{subtrack} is easily extended: in particular, among the subtracks of a non-compact pretrack, there may be some which are only generalized pretracks.

Any semigeneric pretrack $\tau$ can be endowed with a \emph{tie neighbourhood}: the following construction is a variation of the one given in \cite{mms}.
 
\begin{defin}
Fix $\epsilon>0$ small. Given any representative $e$ of a branch end in a semigeneric pretrack $\tau$, let $v$ be the switch which serves as an endpoint of $e$: a \nw{branch end rectangle} is a smooth, orientation-preserving embedding $R_e: [a,1+\epsilon]\times[-1,+1]\rightarrow S$ (resp. $S^X$), where $-1<a<1$, such that:
\begin{itemize}
\item $R_b([a,1]\times\{0\})=e$, $R_b(1,0)=v$;
\item $R_b([1-\epsilon,1+\epsilon]\times[-1,1])$ is a compact neighbourhood of $v$ testifying (like the $U$ used in Definition \ref{def:pretrack}) that the graph $\tau$ satisfies the condition defining a semigeneric pretrack at the vertex $v$;
\item a branch of $\tau$ intersects the image of $R_e$ if and only if $v$ is one of its switches;
\item for all $a\leq t\leq 1+\epsilon$, the arc $\alpha_t^e\coloneqq R_e(\{t\}\times[-1,1])$ is transverse to any smooth path embedded in $\tau$ and intersecting $\alpha_t^e$.
\end{itemize}

Let now $b$ be a branch of $\tau$. A \nw{branch rectangle} for $b$ is a map $R_b:[-1-\epsilon,1+\epsilon]\times[-1,1]\rightarrow S$ (resp. $S^X$) such that, for two suitable numbers $-1<a_1<a_2<1$, the maps $R_{e_1}:[a_1,1+\epsilon]\times[-1,1]\rightarrow S$ (resp. $S^X$) defined by restriction of $R_b$, and $R_{e_2}:[-a_2,1+\epsilon]\times[-1,1]\rightarrow S$ (resp. $S^X$) defined by $R_{e_2}(x,y)\coloneqq R_b(-x,-y)$ are branch end rectangles relative to the branch end representatives $e_1\coloneqq R_b([a_1,1+\epsilon]\times\{0\})$ and $e_2\coloneqq R_b([-1-\epsilon,a_2]\times\{0\})$, respectively. A branch rectangle is \emph{not} an embedding exactly when the two switches that delimit the branch $b$ coincide: in this case, its image is not really diffeomorphic to a rectangle.

\begin{figure}
\def\svgwidth{.65\textwidth}
\begin{center}
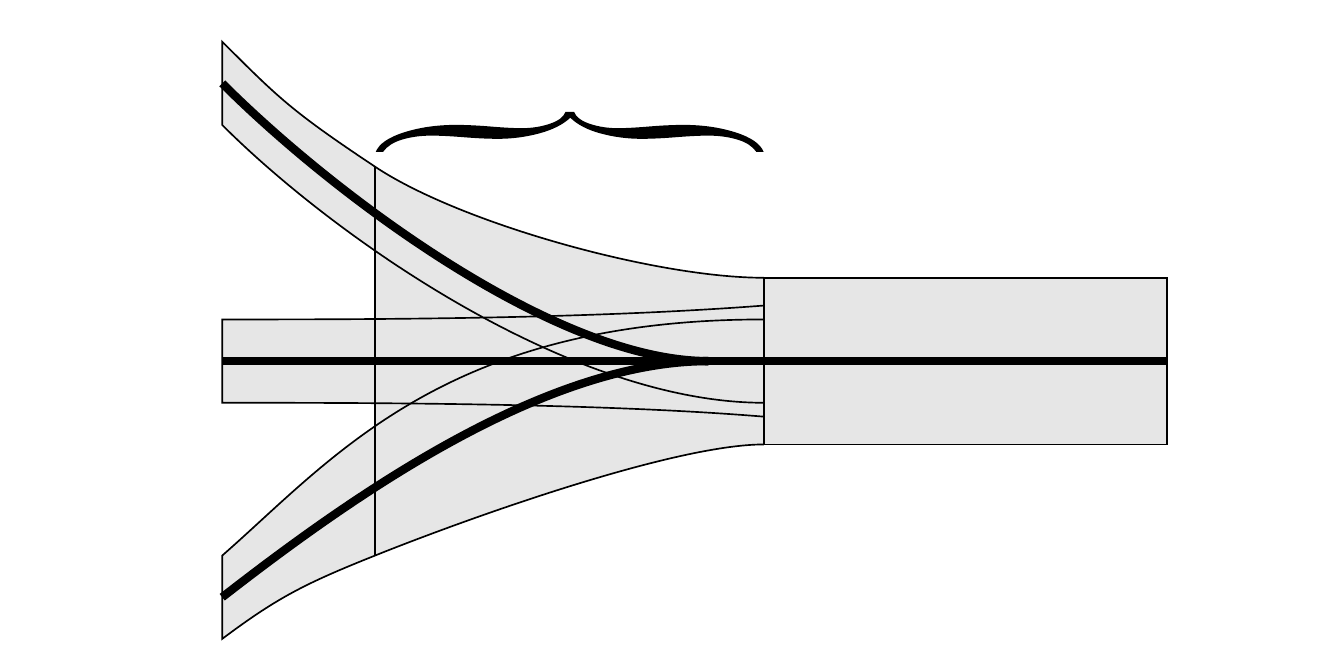
\end{center}
\caption{\label{fig:tienbh}This is the local picture of how the images of the functions $R_e$ overlap in the neighbourhood of a switch of $\tau$. The dashed vertical lines are two examples of ties. The tie neighbourhood $\bar\nei(\tau)$ is coloured in grey.}
\end{figure}

A \nw{tie neighbourhood} for $\tau$, denoted $\bar\nei(\tau)$, is specified by a family of branch end rectangles $\{R_e\}_{e\in E}$ where $E$ is a family of branch end representatives such that $\tau=\bigcup_{e\in E} e$, no branch end has two distinct representatives in $E$ and, if the union of $e_1,e_2\in E$ is a branch, then $e_1\cap e_2$ consists of more than one point. We list now a number of `consistency' conditions that the branch end rectangles $R_e$ are required to meet. Taking care of avoiding confusion, here and in the rest of this work we use $R_e$ to denote both a branch end rectangle and its image; we do the same for branch rectangles.

\begin{itemize}
\item The intersection of any two branch end rectangles $R_e, R_{e'}$ ($e,e'\in E$) is either connected or empty. More precisely, if $e\cup e'$ is a branch of $\tau$, or $e,e'$ meet at a switch of $\tau$, then $R_e\cap R_{e'}\not=\emptyset$ (so this set is connected); if neither of these is true, we require $R_e\cap R_{e'}=\emptyset$.
\item When $R_e\cap R_{e'}\not=\emptyset$, we require that the set $\alpha_t\coloneqq R_e(\{t\}\times [-1,1])\cap R_{e'}$ is connected for all $t$ for which it is defined and not empty. Same for $\alpha'_t\coloneqq R_{e'}(\{t\}\times [-1,1])\cap R_{e}$. Furthermore, the families $\{\alpha_t\}_t$, $\{\alpha'_t\}_t$ are required to define the same foliation of $R_e\cap R_{e'}$, possibly with some leaves degenerating to single points.
\end{itemize}
Suppose now that, at a switch $v$ of $\tau$, a large branch end representative $e\in E$ is meeting a collection of small branch end representatives $e_1,\ldots,	e_k\in E$. The following requests ensure that the rectangles get assembled as shown in Figure \ref{fig:tienbh}.
\begin{itemize}
\item $R_e(\{1\pm\epsilon\}\times[-1,1])\supseteq R_{e_i}(\{1\mp\epsilon\}\times[-1,1])$ for all $i=1,\ldots, k$ (here the signs $\pm,\mp$ mean that this expression summarizes two equalities).
\item For $i\not=j$, the segments $R_{e_i}(\{1-\epsilon\}\times[-1,1])$ and $R_{e_j}(\{1-\epsilon\}\times[-1,1])$ are disjoint.
\item There are two indices $1\leq i+,i-\leq k$ such that $R_e([1-\epsilon,1+\epsilon]\times \{1\})=R_{e_{i+}}([1-\epsilon,1+\epsilon]\times \{-1\})$ and $R_e([1-\epsilon,1+\epsilon]\times \{-1\})=R_{e_{i-}}([1-\epsilon,1+\epsilon]\times \{1\})$.
\end{itemize}
Suppose, finally, that the union of $e_1,e_2\in E$ is a branch $b$ of $\tau$.
\begin{itemize}
\item There must exist a branch rectangle $R_b$ that restricts to $R_{e_1},R_{e_2}$ in the sense specified by the above definition.
\end{itemize}

Usually we identify $\bar\nei(\tau)$ with the union of all branch end rectangles that constitute it. We denote with $\nei(\tau)$ the interior of $\bar\nei(\tau)$: it is an open, regular neighbourhood of $\tau$.

An arc $\alpha \subseteq \partial\bar\nei(\tau)$ that, when intersected with any $R_e$ ($e\in E$), is the image of a vertical segment $\{t\}\times[-1,1]$ is called a \nw{tie}. All ties intersect $\tau$ transversally and together they specify a foliation of $\bar\nei(\tau)$.

The boundary $\partial\bar\nei(\tau)$ can be subdivided into $\partial_v\bar\nei(\tau)$ which consists of the smooth segments of boundary which are also segments of ties; and $\partial_h\bar\nei(\tau)$ which consists of the remaining segments.
\end{defin}

A tie neighbourhood for a generalized pretrack $\tau$ on $S^X$ may be defined with a slight generalization of this construction. In this case, if $b$ is a branch of $\tau$ which is not compact and is obtained as the intersection of an edge $\bar b$ of $\bar\tau$ with $S^X$, we define a branch rectangle for $b$ as an embedding $R_b:[a,1+\epsilon]\times[-1,1]\rightarrow \bar S^X$, with $R_b([a,1+\epsilon]\times\{0\})=\bar b$, satisfying similar conditions as branch end rectangles, plus the extra request that $R_b\cap \partial\ol{S^X}=R_b(\{a\}\times[-1,1])$. The generalized definition of tie neighbourhood continues by revisiting the above constructions in a natural way.

We now define a different version of `neighbourhood' of a pretrack $\tau$: rather than right-angled corners, this time we want all corners in the boundary to be cusps, and coinciding with the vertices of $\tau$. Note that the following definition does not give a genuine neighbourhood of $\tau$; moreover, if $\tau$ is not generic, the interior of this `neighbourhood' may have more connected components than $\tau$.

First of all, when $\tau$ is a generic pretrack, there is a natural bijection between connected components of $\partial_v\bar\nei(\tau)$ and switches of $\tau$. If $\tau$ is not generic, instead, for each connected component of $\partial_v\bar\nei(\tau)$ there is a canonical choice of an associated switch of $\tau$, but this choice is not injective.

Given any component $V$ of $\partial_v\bar\nei(\tau)$, associated to a switch $v$ of $\tau$, let $\eta_1(V),\eta_2(V)$ be the two components of $\partial_h\bar\nei(\tau)$ which share an endpoint with $V$ (they may coincide). Let $\theta_1(V),\theta_2(V)$ be two smooth arcs which connect each endpoint of $V$ to $v$, are transverse to all ties they encounter, meet $\tau$ only at $v$, and are chosen so that $\eta_j(V)\cup\theta_j(V)$ is a smooth arc for $j=1,2$. There is a triangle $T_V\subseteq\bar\nei(\tau)$ whose edges are $V,\theta_1(V),\theta_2(V)$. 

We define $\nei_0(\tau)\coloneqq \nei(\tau)\setminus\left(\bigcup_V T_V\right)$, where the union is over all the connected components $V$ of $\partial_v\bar\nei(\tau)$. So $\nei_0(\tau)$ includes the interiors of all branches of $\tau$ but not the switches, which are the corners of $\partial\bar\nei_0(\tau)$. The smooth segments of $\partial\bar\nei_0(\tau)$ biject naturally with the connected components of $\partial_h\bar\nei(\tau)$.

One may also define a retraction $c_\tau:\bar\nei(\tau)\rightarrow \tau$ as follows. If $p\in \bar\nei_0(\tau)$, define $c_\tau(p)$ to be the only point of $\tau$ contained in the tie along $p$. If $p\in T_V$ for a component $V$ of $\partial_v\bar\nei(\tau)$ as above, instead, we define $c_\tau(p)$ as $c_\tau(r_V(p))$: here $r_V: T_V\rightarrow \theta_1(V)\cup\theta_2(V)$ is a retraction whose fibres are transverse to the ties of $\bar\nei(\tau)$. The map $c_\tau$ is called a \nw{tie collapse} for $\tau$ (even though its actual definition is a bit more involved than what the name suggests).

Given a tie neighbourhood $\bar\nei(\tau)$ of a pretrack $\tau$ in $S$ (resp. $S^X$ where $X$ is a non-peripheral annulus), and $\sigma$ a subtrack of $\tau$, there exists a tie neighbourhood $\bar\nei(\sigma)\subset \bar\nei(\tau)$ (not unique) such that each tie of $\bar\nei(\sigma)$ is a sub-tie of $\bar\nei(\tau)$, with the property that: if $b\in\br(\tau)$ is contained in $\sigma$ (note that $b$ may not be a branch there) and has a branch rectangle $R_b$ in $\bar\nei(\tau)$, then $R_b([-1+\epsilon ,1-\epsilon]\times[-1,1])\subseteq \bar\nei(\sigma)$ and $R_b([-1+\epsilon,1-\epsilon]\times\{-1,1\})\subseteq \partial_h\bar\nei(\sigma)$: informally, this property means that $\partial \bar\nei(\sigma)$ contains (roughly) as much as possible of $\partial\bar\nei(\tau)$.

\begin{figure}
\def\svgwidth{.65\textwidth}
\begin{center}
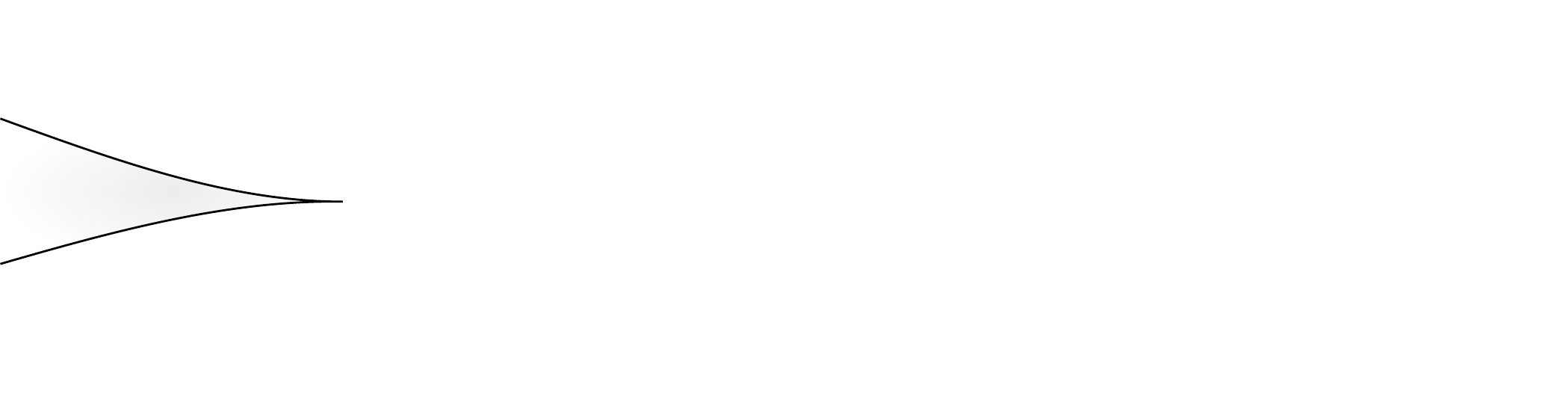
\end{center}
\caption{\label{fig:cornertypes} The possible local pictures for a 2-submanifold $C\subseteq S$ around a point of $\partial C$ which is not smooth. $C$ is shaded in grey. We need to distinguish whether the angle $\partial C$ forms at that point is convex (a. and b.) or concave (c. and d.); and whether the two segments delimiting the angle meet tangentially (a. and d.) or transversely (b. and c.). These are the only two pieces of information from these local pictures which are invariant under smooth isotopies of $S$.}
\end{figure}

\begin{defin}
Let $C$ be a 2-submanifold of a surface (possibly with boundary) $S$, such that $\partial C$ is piecewise smooth. With reference to Figure \ref{fig:cornertypes}, we define the \nw{index} of $C$ as
\begin{eqnarray*}
\idx(C) & \coloneqq & \chi(C) - (\#\text{corners of type a})/2 - (\#\text{corners of type b})/4\\
 & & + (\#\text{corners of type c})/4 + (\#\text{corners of type d})/2.
\end{eqnarray*}
\end{defin}

The index is additive: given two submanifolds as above $C',C''$, which only share a portion of their boundaries, $\idx(C'\cup C'')=\idx(C')+\idx(C'')$. Note that the 2-submanifolds considered here include, for any pretrack $\tau$, all compact connected components of $\bar\nei(\tau)$ and of $S\setminus \nei(\tau)$, plus all closures of connected components of $\nei_0(\tau)$, and of $S\setminus\bar \nei_0(\tau)$, which are compact. The possibility of foliating $\bar\nei(\tau)$ into ties implies that the index of any of its connected components is $0$. Closures of connected components of $\nei_0(\tau)$ have zero index, too.

\begin{defin}
A \emph{compact} pretrack $\tau$ on a surface $S$, entirely contained in the compactification $S_\bullet$, is:
\begin{itemize}
\item an \nw{almost track} if each connected component of $S_\bullet\setminus\nei(\tau)$ either has negative index or is a peripheral annulus; and, for each peripheral annulus $P$ among these, $\partial P$ is isotopic to a closed geodesic of $S$ (i.e. $P$ gives a funnel in $S$);
\item a \nw{train track} if each connected component of $S_\bullet\setminus\nei(\tau)$ has negative index;
\item a \nw{cornered train track} if it is a train track and each connected component of $\partial\left(S\setminus\nei(\tau)\right)$ has a corner.
\end{itemize}
\end{defin}

Note that, on a surface $S$ with finite hyperbolic area, $\tau$ is an almost track if and only if it is a train track, because $S$ has no closed geodesic encircling a puncture. The reason why we require peripheral annuli in almost tracks to be funnels is to prevent lifts of a train track from behaving pathologically at infinity (see \S \ref{sub:induced}), and is the only reason why the hyperbolic structure on $S$ is relevant for us.

Here is another definition which is, in some sense, symmetrical:
\begin{defin}
Let $S$ be a surface, even one with boundary. When each connected component $C\subseteq \inte(S)\setminus\nei(\tau)$, for $\tau$ a pretrack is \emph{homeomorphic} to a disc or a once-punctured disc, we say that the pretrack \nw{fills $S$}.
\end{defin}

\subsection{Carrying}

We will have a particular care for arcs and curves `contained' in pretracks and train tracks:
\begin{defin}
A \nw{(bounded, infinite, biinfinite, periodic) train path} along a pretrack $\tau$ is a smooth immersion $f:A\rightarrow \tau$, where:
\begin{itemize}
\item $A=[m_1,m_2]$, $[0,+\infty)$, $\R$ or $\faktor{\mathbb R}{m_3\mathbb Z}$, respectively, according to the adjective\linebreak ($m_1,m_2,m_3\in\mathbb Z$, with $m_1<m_2$ and $m_3>0$);
\item $f^{-1}(\text{switches})=A\cap\mathbb Z$.
\end{itemize}
\end{defin}

\begin{defin}\label{def:carried}
Let $\tau$ be a pretrack on a surface $S$, with $\bar\nei(\tau)$ a tie neighbourhood; let $\sigma$ be another pretrack; let $\beta$ be a curve or a multicurve; let $\delta$ be a properly embedded arc in $\bar\nei(\tau)$, with $\partial\delta=\delta\cap \partial_v\bar\nei(\tau)$, to be considered up to isotopy leaving the endpoints fixed.

An inclusion map $f:\sigma$ (resp. $\beta,\delta$) $\hookrightarrow\bar\nei(\tau)$, with its image transverse to each tie it encounters, and ambient isotopic to $\sigma$ (resp. to $\beta$; or isotopic to $\delta$ with fixed endpoints) in $S$ is called a \nw{carried realization}.

The pretrack $\sigma$ (resp. the (multi)curve $\beta$ or arc $\delta$) is \nw{carried} by $\tau$ if it admits a carried realization. 

We will often talk, more loosely, of carried realization referring simply to the image of $f$.

The pretrack $\sigma$ (resp. curve/multicurve $\beta$ or arc $\delta$) \nw{traverses} a branch $b$ if $\mathrm{im}(f)\cap \alpha_t^b\not=\emptyset$ for all $t\in[-1,1]$. If a (multi)curve or arc $\beta$ traverses a branch $b$, the \nw{multiplicity} of the traversing is the number of points in $\mathrm{im}(f)\cap \alpha_t^b$ for any $t\in[-1,1]$ (we may use expressions like \emph{traverses once, twice\ldots}). The \nw{carrying image} of a carrying injection is the union of the branches of $\tau$ which are traversed by $\sigma$ (resp. $\beta$, $\delta$). We will denote it with $\tau.\sigma$ ($\tau.\beta$, $\tau.\delta$ resp.).

A pretrack $\sigma$ is \nw{fully carried} if it is carried and $\tau.\sigma=\tau$. It is \nw{suited} to $\tau$ if it is carried and $f:\sigma\hookrightarrow \bar\nei(\tau)$ is a homotopy equivalence.

We denote $\cc(\tau)\subseteq\cc(S)$ the set of isotopy classes of curves carried by $\tau$.
\end{defin}

\begin{defin}\label{def:trainpathrealization}
Let $f:\beta\hookrightarrow\bar\nei(\tau)$ be a carried realization of a (multi)curve $\beta\in \cc(S)$ in a pretrack $\tau$, as specified in Definition \ref{def:carried} above. Then $f$ may be homotoped, keeping each point along the same tie of $\bar\nei(\tau)$, to a map $f'$ whose image is entirely contained in $\tau$: this new map is not injective anymore, but it is still an immersion. A suitable reparametrization of $f'$, then, defines a (collection of) periodic train path(s), which we call a \nw{train path realization} of $\beta$. The image of a train path realization of $\beta$ is the carrying image $\tau.\beta$.

Let now $f:\delta\hookrightarrow\bar\nei(\tau)$ be a carried realization of an arc $\delta$ in a pretrack $\tau$. Let $V,W$ be the (possibly coinciding) components of $\partial_v\bar\nei(\tau)$ where the endpoints of $\delta$ lie; $v,w$ be the switches of $\tau$ associated with $V,W$ respectively; and $\alpha_v,\alpha_w$ be the ties of $\bar\nei(\tau)$ through $v,w$ respectively. Define $\delta_{trim}$ as the longest segment of $\delta$ such that $f$ maps its extremes to points of $\alpha_v$, $\alpha_w$ respectively; and let $f_{trim}$ be the restriction of $f$ to $\delta_{trim}$.

Similarly as above, $f_{trim}$ may be homotoped, keeping each point along the same tie of $\bar\nei(\tau)$, to a map $f'_{trim}$ whose image is entirely contained in $\tau$ and can be reparametrized to get a bounded train path along $\tau$. We call the latter, again, a \nw{train path realization} of $\delta$; its image is $\tau.\delta$.
\end{defin}

\begin{rmk}
If $\tau$ is a \emph{generic} pretrack, then $\tau$ is a deformation retract of $\bar\nei_0(\tau)$, which is in turn a deformation retract of $\bar\nei(\tau)$. This makes it possible to ask more --- and we will --- from a carrying injection $f:\sigma\hookrightarrow \bar\nei(\tau)$ (or $f:\beta\hookrightarrow \bar\nei(\tau)$, $f:\delta\hookrightarrow \bar\nei(\tau)$), up to altering $f$ via isotopies which still map each point of $\sigma$ (resp. $\beta$, $\delta$) along the same tie as $f$ does.
\begin{itemize}
\item For a pretrack $\sigma$ and for a (multi)curve $\beta$, we require the image of $f$ to be contained in $\bar\nei_0(\tau)$. This means that a train path realization of $\beta$ is obtained just by reparametrizing $c_\tau\circ f$, while for a pretrack $\sigma$ we have $\tau.\sigma=c_\tau\circ f(\sigma)$.
\item For an embedded arc $\delta$, we require $\mathrm{im}(f)\cap \bar\nei_0(\tau)$ to be connected. This implies that, given $f_{trim}$ as defined above, one gets a train path realization of $\delta$ by reparametrizing $c_\tau\circ f_{trim}$.
\end{itemize}
\end{rmk}

\begin{rmk}\label{rmk:idx_of_nei_diff}
Let $\tau,\sigma$ be pretracks such that the injection $\sigma\hookrightarrow\nei(\tau)$ is a carrying map; choose a tie neighbourhood $\bar\nei(\sigma)\subseteq \bar\nei(\tau)$. Then each compact component of $\bar\nei(\tau)\setminus\nei(\sigma)$ has zero index.

This is because $\bar\nei(\tau)\setminus\nei(\sigma)$ can be subdivided into a family of rectangles and triangles with two (outward) right corners and a cusp; and they both have zero index. The triangles, in particular, may arise when $\partial_h\bar\nei(\sigma)$ and $\partial_h\bar\nei(\tau)$ have segments in common.
\end{rmk}

In the case of a train track, there is little space available in deciding \emph{how} to get something carried: the following statement summarizes Propositions 3.5.2 and 3.6.2 from \cite{mosher}:
\begin{prop}\label{prp:carryingunique}
Let $\sigma,\tau$ be two train tracks on a surface $S$, with $\sigma$ determined up to isotopies of $S$. Then, given any two carrying injections $f_1,f_2:\sigma\hookrightarrow\bar\nei(\tau)$, $f_1$ and $f_2$ are homotopic through carrying injections. The same is true for two carrying injections of a curve $\alpha\in\cc(S)$ carried by $\tau$. In particular the carrying image $\tau.\sigma$ or $\tau.\alpha$ is uniquely determined, and so is the train path corresponding to $\alpha$, up to reparametrization.
\end{prop}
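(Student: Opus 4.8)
The plan is to reduce the statement to a combinatorial invariant of the carried object and then check that this invariant is unchanged under isotopies of $S$. As a first, harmless step I would normalise the two given carrying injections: since $\tau$ is generic, $\tau$ is a deformation retract of $\bar\nei_0(\tau)$, which in turn is a deformation retract of $\bar\nei(\tau)$, so using only isotopies that keep each point on its own tie (the kind allowed in the remark following Definition~\ref{def:trainpathrealization}) I may assume $f_1(\sigma),f_2(\sigma)\subseteq\bar\nei_0(\tau)$. After this normalisation $c_\tau\circ f_i$ is a genuine immersion whose image is the carrying image $\tau.\sigma$ (resp.\ $\tau.\alpha$), and the problem splits into two parts: (i) that $\tau.\sigma$ (resp.\ $\tau.\alpha$) does not depend on $i$; and (ii) that $f_i$ is recovered from $c_\tau\circ f_i$, up to homotopy through carrying injections.

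For (ii) I would encode a carrying injection combinatorially. For a carried curve $\alpha$, $f$ assigns to each branch $b\in\br(\tau)$ the multiplicity $w_b\geq 0$ with which $\alpha$ traverses $b$; these weights satisfy the switch conditions (at each switch the weight of the large branch end equals the sum of those of the small ones), and conversely a standard ``rebuild the carried curve from its weights'' argument shows that the weight system determines $f$ up to homotopy through carrying injections, with $\tau.\alpha=\bigcup_{w_b>0} b$. For a carried train track $\sigma$ I would instead record, for each branch of $\sigma$, the train path along $\tau$ onto which $c_\tau\circ f$ sends it --- a carrying map $\sigma\to\tau$ in the sense of \cite{mosher} --- and again this data reconstructs $f$ up to the permitted homotopies.

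The crux is to show that this combinatorial data is an isotopy invariant: since each $f_i(\sigma)$ is ambient isotopic in $S$ to $\sigma$ by the definition of carrying injection, $f_1(\sigma)$ and $f_2(\sigma)$ are ambient isotopic to each other, and I must deduce that they induce the same weights (resp.\ the same carrying map). The mechanism is that $w_b$ is a genuine geometric intersection number in $S$: extending a tie $\alpha_t^b$ through the interior of $b$ to a properly embedded arc or simple closed curve $\kappa_b$ of $S$ meeting $\tau$ only at that one point, any carried realization of $\alpha$ meets $\kappa_b$ in exactly $w_b$ points, and this is the minimal number in the isotopy class of $\alpha$: an excess intersection would produce an innermost bigon, whose part lying outside $\nei(\tau)$ is a subsurface of $S\setminus\nei(\tau)$ whose index --- by additivity and the sign conventions for $\idx$ --- would be non-negative, contradicting that every complementary region of a train track has negative index. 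Hence $w_b=i(\alpha,\kappa_b)$ depends only on $[\alpha]$. For a carried train track $\sigma$ I would then pin down the carrying map branch by branch, or apply the curve statement to a finite family of carried curves sweeping out all branches of $\sigma$ (for instance the vertex cycles $V(\sigma)$, together with a few more); either way parts (i) and (ii) follow, and uniqueness of the train path of $\alpha$ up to reparametrisation is immediate once $c_\tau\circ f$ is unique up to homotopy through immersions into $\tau$.

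The step I expect to be the main obstacle is precisely this invariance claim --- that no isotopy of $\alpha$ (or of $\sigma$) in $S$ can alter which branches are traversed, or with what multiplicities. This is the one place where the train track hypothesis is genuinely used rather than merely the pretrack axioms: the negative-index condition on complementary regions is what forbids the bigons, monogons and cornerless annuli across which a carried realization could otherwise be slid onto a different carrying image. I would therefore want to isolate it as a clean lemma phrased in terms of $\idx$ of the closures of the components of $S\setminus\nei(\tau)$ (and of the triangles $T_V$, which have index $0$), rather than arguing it afresh inside the main proof.
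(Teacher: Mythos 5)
The paper does not actually give a proof of this proposition: it records it as a restatement of Propositions 3.5.2 and 3.6.2 from \cite{mosher}, whose proofs (like the closely related arguments in the paper itself, e.g.\ Proposition~\ref{prp:paths_in_univ_cover}, Corollary~\ref{cor:distinctends}, and point~\ref{itm:uniquecarrying} of Remark~\ref{rmk:annulusinducedbasics}) pass to the universal cover. The negative-index hypothesis there converts directly into the statement that train paths along $\tilde\tau\subset\Hy^2$ are uniform quasi-geodesics, so two homotopic bi-infinite train paths have the same endpoints on $\partial\ol{\Hy^2}$ and must coincide; this is what forces the carrying image and the carrying map to be unique. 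Your proposal instead tries to recognise each weight $w_b$ as an honest geometric intersection number $i(\alpha,\kappa_b)$ with an auxiliary curve or arc $\kappa_b$ meeting $\tau$ in exactly one point, on the branch $b$, and then argues that carried position is efficient position against such a $\kappa_b$.

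The genuine gap is in the construction of $\kappa_b$. On a closed surface $S$ (which is allowed here, since $\xi(S)\ge 1$ includes genus $\ge 2$) the two components of $\partial_h\bar\nei(\tau)$ adjacent to $b$ may lie on the boundaries of two \emph{distinct} complementary regions of $\tau$; prolonging the tie $\alpha_t^b$ on each side then lands in two different components of $S\setminus\nei(\tau)$, and there is no way to close the arc up into a simple closed curve without crossing $\tau$ again. This is the generic situation: for a complete train track on a genus-2 surface the complementary regions are four cusped triangles, and branches routinely separate two different ones. The hypotheses of the proposition do not include birecurrence, so you cannot appeal to transverse recurrence to supply even a dual curve \emph{through} $b$, and such a dual curve would in general cross $\tau$ in several branches, making $i(\alpha,\kappa_b)$ a linear combination of weights that you would still have to untangle. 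A secondary worry is the carried-track case, where you propose to use $V(\sigma)$ ``together with a few more'' curves: without recurrence of $\sigma$ the vertex cycles need not traverse every branch, so the reduction to the curve case is not automatic. The clean repair is exactly the universal-cover route the paper relies on: there the obstruction you want to use (no bigons) becomes the non-existence of embedded disks bounded by a quasi-geodesic train path, which is where the negative-index condition is genuinely used.
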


We will need a slight generalization:
\begin{coroll}\label{cor:carryingunique}
Let $\tau$ be an almost track on a surface $S$. Then any two carrying injections of another almost track $\sigma$, or of a curve $\alpha\in\cc(S)$, in $\bar\nei(\tau)$ are homotopic through carrying injections.
\end{coroll}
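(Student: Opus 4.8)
The plan is to reduce the statement to Proposition \ref{prp:carryingunique} by turning $\tau$ (and then $\sigma$) into genuine train tracks on a suitable auxiliary surface, without disturbing the tie neighbourhood $\bar\nei(\tau)$ or the objects carried by it, and then transferring back the homotopies produced there.

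First, if already every component of $S_\bullet\setminus\nei(\tau)$ has negative index, then $\tau$ is a train track and so is $\sigma$: by Remark \ref{rmk:idx_of_nei_diff} the compact components of $\bar\nei(\tau)\setminus\nei(\sigma)$ have zero index, and $\bar\nei(\tau)$ is compact, so every component of $S_\bullet\setminus\nei(\sigma)$ must contain at least one component of $S_\bullet\setminus\nei(\tau)$ and, index being additive, has negative index; hence the corollary is exactly Proposition \ref{prp:carryingunique} in this case. Otherwise $\tau$ has peripheral-annulus complementary components $A_1,\dots,A_r$ ($r\geq 1$), each with a boundary circle $c_j\subseteq\partial_h\bar\nei(\tau)$ isotopic to a closed geodesic, so $S$ has funnels. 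I would set $S^\flat\coloneqq S_\bullet\setminus\bigcup_j\inte(A_j)$, a compact surface whose new boundary circles are the $c_j$, and let $S'$ be obtained from $S^\flat$ by doubling along $c_1,\dots,c_r$ only (and turning the remaining boundary circles, which come from cusps of $S$, into punctures). Then $\tau\subseteq\bar\nei(\tau)\subseteq S^\flat\subseteq S'$; the complementary component of $\tau$ across each $c_j$ is (a collar of) the mirror copy $\overline{S^\flat}$, a compact surface of negative Euler characteristic with smooth boundary, hence of negative index, while the other complementary components are the negative-index ones inherited from $S$. Thus $\tau$ is a genuine train track on $S'$; by the argument just given (now with $\tau$ a train track on $S'$) so is $\sigma$; and any $\alpha\in\cc(S)$ carried by $\tau$ is essential in $S'$, since a curve carried by a train track bounds no monogon and, being essential in $S$, is isotopic neither to a funnel geodesic of $S$ nor to a puncture of $S'$.

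It then remains to transfer carrying injections and homotopies between them. A carrying injection $f\colon\sigma\hookrightarrow\bar\nei(\tau)$ in the sense of Definition \ref{def:carried} for $S$ is also one for $S'$, because an ambient isotopy of $S$ carrying $f(\sigma)$ onto $\sigma$ extends by the identity to $S'$; so Proposition \ref{prp:carryingunique}, applied to the train tracks $\sigma,\tau$ on $S'$, yields a homotopy $(f_t)$ through carrying injections (in $S'$) between any two given ones. Since every stage $f_t$ is an embedding of $\sigma$ with image inside $\bar\nei(\tau)\subseteq S^\flat$, the family $(f_t)$ is an isotopy of the compact $1$-complex $\sigma$ inside $S$, so by the isotopy extension theorem each $f_t$ is ambient isotopic in $S$ to the initial one, i.e.\ each $f_t$ is a carrying injection in the sense of Definition \ref{def:carried} for $S$; the curve case is identical. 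The delicate point — and the place where the funnel hypothesis in the definition of almost track is really used — is precisely this last transfer: because the complementary annuli we excised are peripheral, passing to $S'$ and back creates no new ambient isotopy classes (a Dehn twist about the core of a peripheral annulus is isotopically trivial), which is what makes the conclusion of Proposition \ref{prp:carryingunique} over $S'$ say the right thing over $S$.
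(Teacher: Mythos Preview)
Your overall strategy matches the paper's: pass to an auxiliary surface $S'$ on which $\tau$ becomes a genuine train track, apply Proposition~\ref{prp:carryingunique} there, and transfer back. The paper's $S'$ is much simpler than your double, though: it fixes a homotopy $h\colon\sigma\times[0,1]\to S$ from $f_1$ to $f_2$ (which exists since both are ambient-isotopic to $\sigma$), observes that $\mathrm{im}(h)$ is compact, removes one point from each peripheral annulus of $S$ chosen outside $\mathrm{im}(h)\cup\bar\nei(\tau)$, and lets $S'$ be $S$ minus those points. Each peripheral annulus now carries an extra puncture and hence has negative index, so $\tau$ and $f_1(\sigma)$ are train tracks on $S'$; since $h$ already avoids the removed points it is a homotopy in $S'$; and since $\bar\nei(\tau)\subseteq S'\subseteq S$, the homotopy through carrying injections produced over $S'$ lives automatically in $S$.

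Your doubling construction has a gap in the forward transfer. The claim that ``an ambient isotopy of $S$ carrying $f(\sigma)$ onto $\sigma$ extends by the identity to $S'$'' is not well-posed: $S$ is not a subsurface of $S'$, so there is nothing to extend. What is actually needed is that the isotopy can be chosen with support in $\inte(S^\flat)$, after which it does extend by the identity across the mirror copy. That is true --- since $\inte(S^\flat)\hookrightarrow S$ is a diffeomorphism that is the identity on a compact core containing $\sigma$ and $f(\sigma)$ --- but you have not argued it. Your parenthetical about Dehn twists along the $c_j$ being trivial in $S$ does not help here: in your $S'$ the curves $c_j$ are \emph{essential} (they separate the two halves of the double), so those twists are nontrivial there, and one still needs to explain why no such twist discrepancy arises. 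You have also located the ``delicate point'' in the wrong place: your backward transfer via isotopy extension is fine; it is the forward step that needs work. The paper's puncturing trick sidesteps the entire issue.
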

\begin{proof}
Fix two carrying injections $f_1,f_2:\sigma$ (or $\alpha$) $\rightarrow\bar\nei(\tau)$; then there is a map $h: \sigma \times [0,1]\rightarrow S$ such that $h|_{\sigma\times\{0\}}=f_1$ and $h|_{\sigma\times\{1\}}=f_2$. As the domain of $h$ is compact, its image also is. Hence there is a union $P$ of peripheral annuli for $S$, one for each puncture, disjoint from each other and from $\mathrm{im}(h)\cup\bar\nei(\tau)$.

Let $\Sigma \in P$ be a finite set of points, one for each connected component; let then $S'\coloneqq S\setminus \Sigma$. This is a surface on its own, with a hyperbolic metric which is not related with the one of $S$. However $\tau\subseteq S'$ is a train track, as this is a property which is independent of the metric.

For the case of a carried almost track $\sigma$: $f_1(\sigma)\subseteq S'$ is a train track because, if we pick a tie neighbourhood $\nei(f_1(\sigma))\subseteq\nei(\tau)$, then any connected component of $S'\setminus\nei(f_1(\sigma))$ is a gluing of some connected components of $S'\setminus\nei(\tau)$ (at least one of them) with some of $\bar\nei(\tau)\setminus \nei(f_1(\sigma))$; the latter have zero index because of Remark \ref{rmk:idx_of_nei_diff}. Hence $S'\setminus\nei(f_1(\sigma))$ has negative index.

For the case of a carried curve $\alpha$, $f_1(\alpha)\subset S'$ is still essential.

The map $h$ serves as a homotopy between $f_1$ and $f_2$ also in $S'$. So, with an application of the above proposition, $f_1$ and $f_2$ are actually homotopic through carrying injections; hence the same property is true in $S$.
\end{proof}

The set $\cc(\tau)$, for $\tau$ a train track, is `generated' by few curves. To understand this we need the following notion:
\begin{defin}\label{def:transversemeasure}
Let $\tau$ be an almost track on a surface $S$. A \nw{transverse measure} on $\tau$ is a map $\mu:\br(\tau)\rightarrow \R_{\geq0}$ with the following property. For each switch $v$ of $\tau$, if $b$ is the branch having a large end at $v$, and $b_1,\ldots,b_m$ are the branches having a small end there (we list any of those branches \emph{twice} if it has both ends there), then $\mu(b)=\sum_{i=1}^m \mu(b_i)$. We denote the set of such measures with $\mathcal M(\tau)$. A transverse measure can be equally seen as an element of $\R^{|\br(\tau)|}$. More precisely, the subset $C$ of $\R^{|\br(\tau)|}$ consisting of transverse measures is a cone with its summit at the origin. Also define ${\mathcal M}_{\mathbb Q}(\tau)\coloneqq {\mathcal M}(\tau)\cap \mathbb Q^{|\br(\tau)|}$, i.e. the set of transverse measures which assign a rational weight to each branch.

Given $\alpha\in\cc(\tau)$ and a train path realization $f:\alpha\rightarrow \tau$, we can define the transverse measure $\mu_\alpha$ by setting, for each branch $b$ of $\tau$, $\mu_\alpha(b)=$ number of connected components of $f^{-1}(b)$ (i.e. the number of times a carried realization of $\alpha$ traverses $b$). It is an almost immediate consequence of Proposition \ref{prp:carryingunique} that this measure depends only on the isotopy class of $\alpha$.
\end{defin}

The following is a simplified version of Theorem 3.7.1 from \cite{mosher}, or Theorem 1.7.7 from \cite{penner}.
\begin{prop}\label{prp:measurecurvecorresp}
Let $\tau$ be a train track on $S$. Define\footnote{$WMC$ stands for \emph{weighted multicurves}.}
$$WMC(\tau)\coloneqq \left\{\left((\gamma_1,a_1),\ldots,(\gamma_m, a_m)\right)\left|\begin{array}{l}
m\in\mathbb N; \\
\gamma_j\in\cc(\tau)\text{ for all } j \text{ and are pairwise disjoint};\\
a_j\in \mathbb Q_{>0}\text{ for all } j
\end{array}\right.\right\}.$$
Let also 

Then the map
\begin{eqnarray*}
WMC(\tau) & \longrightarrow & {\mathcal M}_{\mathbb Q}(\tau)\setminus \{0_\tau\} \\
\left\{(\gamma_1,a_1),\ldots,(\gamma_m, a_m)\right\} & \longmapsto & \sum_{j=1}^m a_j\mu_{\gamma_j}
\end{eqnarray*}
is a bijection ($0_\tau$ denotes the zero transverse measure).
\end{prop}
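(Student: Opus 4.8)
The plan is to establish the bijection by constructing an inverse map and checking it is well-defined on both sides. I would argue as follows.

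\textbf{Surjectivity / constructing the inverse.} Given a nonzero rational transverse measure $\mu\in{\mathcal M}_{\mathbb Q}(\tau)\setminus\{0_\tau\}$, first clear denominators: by scaling by a suitable positive integer $q$ we may assume $\mu$ is \emph{integral}, $\mu(b)\in\mathbb N$ for every branch $b$. For an integral measure the standard procedure is to pick, in each tie, exactly $\mu(b)$ points (where $b$ is the branch that tie crosses), and to join these points up across switches: the switch condition $\mu(b)=\sum_{i=1}^m\mu(b_i)$ is exactly what guarantees that the incoming strands on the large side can be matched bijectively with the outgoing strands on the small side. This yields a $1$-manifold carried by $\tau$, i.e.\ a disjoint union of simple closed curves (embedded because the arcs in each tie are disjoint and we may take the matching to be order-preserving across each switch). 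Collecting isotopic components, we obtain a weighted multicurve; dividing the weights back by $q$ gives an element of $WMC(\tau)$ whose associated measure is the original $\mu$. I would also remark that each resulting $\gamma_j$ is essential: a carried curve bounding a disc or once-punctured disc would force $S$ to contain a complementary region of nonnegative index, contradicting that $\tau$ is a train track — so the output genuinely lies in $WMC(\tau)$. Here the one subtlety is non-uniqueness of the matching at a switch producing a priori different $1$-manifolds; but Proposition \ref{prp:carryingunique} (uniqueness of carrying up to homotopy through carrying injections) shows the resulting multicurve is independent of these choices, so the inverse is well-defined.

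\textbf{Injectivity.} Suppose two weighted multicurves $\{(\gamma_j,a_j)\}$ and $\{(\gamma'_k,a'_k)\}$ give the same measure $\mu=\sum_j a_j\mu_{\gamma_j}=\sum_k a'_k\mu_{\gamma'_k}$. The quickest route is to recover the data from $\mu$ directly: a carried realization of $\sum a_j\gamma_j$ (take integer multiples, as above) is, by Proposition \ref{prp:carryingunique}, determined up to isotopy by $\tau$ alone once the number of strands in each tie is fixed — and that number is $\mu(b)$. Hence the underlying $1$-manifold, together with its strand-multiplicities, is determined by $\mu$; its isotopy classes of components are the $\gamma_j$ (appearing with positive rational weight $a_j$ dictated by how many parallel copies carry what multiple), so both weighted multicurves coincide. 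Pairwise disjointness of the $\gamma_j$ is automatic from this picture since they are realized as disjoint components of an embedded $1$-manifold in $\bar\nei(\tau)$.

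\textbf{Well-definedness of the forward map and the main obstacle.} Finally one checks the forward map lands in ${\mathcal M}_{\mathbb Q}(\tau)\setminus\{0_\tau\}$: each $\mu_{\gamma_j}$ is a transverse measure (the switch condition is immediate from counting strands through the large vs.\ small ties of a carried realization, as recorded in Definition \ref{def:transversemeasure}), nonnegative rational combinations of these are again in ${\mathcal M}_{\mathbb Q}(\tau)$, and the sum is nonzero because $m\geq 1$ (or, if one allows $m=0$, it is excluded) and every carried curve traverses at least one branch. The step I expect to be the real work is the surjectivity construction — specifically verifying that the strand-matching at switches can be carried out \emph{compatibly and without creating inessential or non-embedded components}, and organizing the reduction from rational to integral measures cleanly; everything else reduces to invoking Proposition \ref{prp:carryingunique} and bookkeeping. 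This is essentially the content of Theorem 3.7.1 of \cite{mosher} / Theorem 1.7.7 of \cite{penner}, so I would present the argument at the level of a guided recollection of that proof.
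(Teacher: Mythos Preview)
The paper does not prove this proposition: it is stated as a simplified version of Theorem 3.7.1 in \cite{mosher} (equivalently Theorem 1.7.7 in \cite{penner}) and simply cited. Your outline is a correct sketch of the standard argument from those references --- the strand-matching construction for integral measures, reduction from rational to integral by clearing denominators, and the appeal to uniqueness of carrying --- so there is nothing to compare against beyond noting that you have reproduced the cited proof rather than anything original to this paper.
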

\begin{coroll}\label{cor:measurecurvecorresp}
Let $\tau$ be an almost track on $S$. Then the above map is injective.
\end{coroll}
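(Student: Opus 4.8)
The plan is to reduce to the train track case already settled in Proposition~\ref{prp:measurecurvecorresp}, via the same puncturing device used in the proof of Corollary~\ref{cor:carryingunique}. The only way an almost track $\tau$ on $S$ can fail to be a train track is that some components of $S_\bullet\setminus\nei(\tau)$ are peripheral annuli; by definition of almost track these are funnels, and the point of the trick is that introducing a puncture inside each such funnel repairs the defect. Note first that $\mathcal M(\tau)$, $\mathcal M_{\mathbb Q}(\tau)$, and the indexing set $WMC(\tau)$ together with the map $\{(\gamma_j,a_j)\}\mapsto\sum_j a_j\mu_{\gamma_j}$ are all that is at stake, and that $\mathcal M_{\mathbb Q}(\tau)$ depends only on $\br(\tau)$ and the switch conditions, so it is literally the same object no matter which ambient surface we regard $\tau$ as living in.

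Let $\Sigma\subset S$ be a finite set containing exactly one point in the interior of each component $P$ of $S_\bullet\setminus\nei(\tau)$ which is a peripheral annulus, chosen so that $\Sigma\cap\bar\nei(\tau)=\emptyset$ (possible since such a $P$ meets $\bar\nei(\tau)$ at most along $\partial_h\bar\nei(\tau)$). Put $S'\coloneqq S\setminus\Sigma$, with its own complete hyperbolic metric, unrelated to that of $S$. I would first check that $\tau$ is a genuine \emph{train track} on $S'$: a component of $S'_\bullet\setminus\nei(\tau)$ is obtained from a component $C$ of $S_\bullet\setminus\nei(\tau)$ by deleting a point if $C$ met $\Sigma$, and coincides with $C$ otherwise; deleting an interior point drops $\chi$, hence $\idx$, by $1$ without creating corners, so the former funnel annuli (index $0$) acquire index $-1$ while every other component keeps its negative index. (Equivalently, argue verbatim as in the proof of Corollary~\ref{cor:carryingunique} using additivity of the index and Remark~\ref{rmk:idx_of_nei_diff}.)

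Next I would transfer the data from $S$ to $S'$. Any curve carried by $\tau$ on $S$ has a carried realization inside $\bar\nei(\tau)\subset S'$, so it is carried by $\tau$ on $S'$ as well; it remains essential on $S'$, for if a curve with a representative in $\bar\nei(\tau)$ bounded a disc or were peripheral on $S'$, the same would hold on $S$ (the points of $\Sigma$ sit outside $\bar\nei(\tau)$, in funnels), contradicting essentiality on $S$; distinct isotopy classes on $S$ stay distinct on $S'$ because an isotopy on $S'$ is in particular an isotopy on $S$; disjointness of the $\gamma_j$ persists because disjoint carried realizations again lie in $\bar\nei(\tau)\subset S'$; and the transverse measure $\mu_{\gamma_j}$ is the same combinatorial count of traversings of branches, by Corollary~\ref{cor:carryingunique} on $S$ and Proposition~\ref{prp:carryingunique} on $S'$ applied to the one fixed neighbourhood $\bar\nei(\tau)$. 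These facts give an injection $WMC_S(\tau)\hookrightarrow WMC_{S'}(\tau)$ which commutes with the two measure maps $\Psi_S,\Psi_{S'}$.

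Finally, by Proposition~\ref{prp:measurecurvecorresp} the map $\Psi_{S'}\colon WMC_{S'}(\tau)\to\mathcal M_{\mathbb Q}(\tau)\setminus\{0_\tau\}$ is a bijection, in particular injective; since $\Psi_S$ equals $\Psi_{S'}$ precomposed with the above injection, $\Psi_S$ is injective, which is the assertion. (It is not surjective in general: a measure coming from a curve that winds around a funnel is essential on $S'$ but not carried by $\tau$ on $S$, which is exactly why only injectivity survives.) The step needing the most care is the bookkeeping in the third paragraph --- that ``carried'', ``essential'', ``non-isotopic'', and ``disjoint'' all behave well under the puncturing --- and this is precisely where one uses that $\Sigma$ has been pushed off $\bar\nei(\tau)$ into the funnels; the rest is a one-line index computation or a citation.
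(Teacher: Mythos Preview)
Your proof is correct and follows essentially the same approach as the paper: puncture each peripheral annulus component of $S\setminus\bar\nei(\tau)$ to obtain a surface $S'$ on which $\tau$ is a genuine train track, then invoke Proposition~\ref{prp:measurecurvecorresp}. The paper's proof is slightly terser and argues by contradiction, but your careful verification that the attributes ``carried'', ``essential'', ``non-isotopic'', and ``disjoint'' all transfer correctly under the puncturing is exactly the content the paper leaves implicit.
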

\begin{proof}
If the given map is not injective, one may find two distinct collections\linebreak $\left\{(\gamma_1,a_1),\ldots,(\gamma_m, a_m)\right\}$ and $\left\{(\gamma'_1,a'_1),\ldots,(\gamma'_{m'}, a_{m'})\right\}$ such that $\sum_{j=1}^m a_j\mu_{\gamma_j}=\linebreak \sum_{j=1}^{m'} a'_j\mu_{\gamma'_j}$. Let $\ul{\gamma_1},\ldots,\ul{\gamma_m},\ul{\gamma'_1},\ldots,\ul{\gamma'_{m'}}$ be carried realizations of the isotopy classes $\gamma_1,\ldots,\gamma_m$, $\gamma'_1,\ldots,\gamma'_{m'}$. We repeat the construction seen in the proof of Corollary \ref{cor:carryingunique}: let $\Sigma\subset S$ be a finite set consisting of a point for each peripheral annulus among the components of $S\setminus\bar\nei(\tau)$, and let $S'\coloneqq S\setminus \Sigma$. Then $\tau$ is a train track in $S'$ and the curves $\ul{\gamma_i},\ul{\gamma'_j}$ are all essential in $S'$, and carried by $\tau$: we identify them with their respective classes in $\cc_{S'}(\tau)$ (i.e. the subset of $\cc^0(S')$ consisting of all curves carried by $\tau$).

For $\alpha\in \cc_{S'}(\tau)$, let $\mu'_\alpha$ be the measure it induces on $\tau$ as a train track in $S'$. Then $\sum_{j=1}^m a_j\mu'_{\ul{\gamma_j}}=\sum_{j=1}^{m'} a'_j\mu'_{\ul{\gamma'_j}}$, but this contradicts the above proposition.
\end{proof}

\begin{defin}
Let $\tau$ be a train track on a surface $S$. It is shown that the cone $C$ specified above is the convex hull of a bounded number of rays. Pick the smallest such set $\{r_1,\ldots,r_l\}$ of rays: each $r_j$ will correspond to the real multiples of a single $\mu_{\gamma_j}$, for a $\gamma_j\in\cc(\tau)$. We denote $V(\tau)=\{\gamma_1,\ldots,\gamma_l\}$ the \nw{vertex set} of $\tau$; an element of this set is called a \nw{vertex cycle}.

If $\tau$ is an almost track on $S$, then remove an extra point from each of the components of $S\setminus\nei(\tau)$ which are peripheral annuli, to get a surface $S'$. Now $\tau\subset S'$ is a train track, and has vertex set $\{\gamma_1,\ldots,\gamma_l\}\subseteq \cc(S')$. Up to changing their order, we may suppose that for an index $0\leq k\leq l$ the curves $\gamma_{k+1},\ldots \gamma_l$ are inessential in $S$, while the remaining ones define isotopy classes $[\gamma_1],\ldots,[\gamma_k]\in\cc(S)$. We define then $V(\tau)\coloneqq\{[\gamma_1],\ldots,[\gamma_k]\}$.
\end{defin}

\subsection{More about tracks and curves}

\begin{defin}
Let $\tau$ be any pretrack on a surface $S$. A carried curve or arc $\gamma$ is \nw{wide} if its carried realization may be given an orientation such that each branch $b$ is either: traversed by $\gamma$ at most once; or traversed twice, in such a way that each segment of $\gamma\cap R_b$ appears to the right of the other.

We denote $W(\tau)\subset \cc(\tau)$ the set of wide carried curves of $\tau$.
\end{defin}

Note that, for $\tau$ an almost track, $V(\tau)\subseteq W(\tau)$ because it is quite easy to decompose $\mu_\gamma$ into a sum of measures represented by simpler curves if $\gamma$ is not wide (Lemma 2.8 in \cite{mms}).

\begin{lemma}\label{lem:vertexsetbounds}
There are bounds $N_0, N_1, N_2, C_0$ depending on $S$ such that, for any almost track $\tau$, the set $W(\tau)$ has no more than $N_0$ elements and its diameter is no larger than $C_0$, $\tau$ has at most $N_1$ branches, and $S\setminus\nei(\tau)$ consists at most $N_2$ connected components.

For a matter of convenience, we suppose all these bounds to be increasing with $\xi(S)$ (by enlarging them when necessary).
\end{lemma}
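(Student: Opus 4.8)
The plan is to read everything off the index (Euler-characteristic) bookkeeping of the complement $S_\bullet\setminus\nei(\tau)$, together with Corollary~\ref{cor:measurecurvecorresp} and Lemma~\ref{lem:cc_distance}.

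\emph{Step 1: the combinatorial bounds $N_1,N_2$.} Since $\bar\nei(\tau)$ is foliated by ties, each of its components has index $0$, so by additivity of the index
$$\chi(S)=\chi(S_\bullet)=\sum_i\idx(D_i),$$
where $D_1,D_2,\dots$ are the closures of the components of $S_\bullet\setminus\nei(\tau)$. For an almost track each $D_i$ is either a peripheral annulus, with smooth boundary and hence $\idx(D_i)=\chi(D_i)=0$, or has negative index with all corners of type~a, so $\idx(D_i)=\chi(D_i)-c_i/2<0$, where $c_i$ counts the cusps of $\partial D_i$. The only complementary regions with $\chi(D_i)>0$ are discs, each with $c_i\ge 3$; writing $d$ for their number and using that the remaining negative-index regions contribute $\le 0$ to $\sum_i\chi(D_i)$, the displayed identity gives $\sum_i c_i\le 2d-2\chi(S)$, which together with $\sum_i c_i\ge 3d$ forces $d\le 2|\chi(S)|$ and hence total cusp number $C:=\sum_i c_i\le 6|\chi(S)|$. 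Each negative-index region has $\idx\le-\tfrac12$, so there are at most $2|\chi(S)|$ of them, and at most one peripheral annulus per puncture of $S$; this bounds the number of components of $S\setminus\nei(\tau)$, giving $N_2$. Assuming (as is standard) that every switch has valence $\ge 3$ — a valence-$\le 2$ vertex merely subdivides a branch — each switch contributes $\mathrm{val}(v)-2\ge 1$ cusps to $\partial\bar\nei_0(\tau)$, so $2|\br(\tau)|-2\cdot(\#\,\text{switches})=\sum_v(\mathrm{val}(v)-2)=C\le 6|\chi(S)|$ and $\#\,\text{switches}\le C$; therefore $|\br(\tau)|\le 9|\chi(S)|=:N_1$.

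\emph{Steps 2 and 3: the bounds $N_0$ and $C_0$.} If $\gamma\in W(\tau)$ then, by definition of wide, a carried realization of $\gamma$ traverses each branch at most twice, i.e.\ the transverse measure $\mu_\gamma$ lies in $\{0,1,2\}^{|\br(\tau)|}$; since $\gamma\mapsto\mu_\gamma$ is injective on curves carried by $\tau$ (Corollary~\ref{cor:measurecurvecorresp}), we get $|W(\tau)|\le 3^{|\br(\tau)|}\le 3^{N_1}=:N_0$. For the diameter, take $\gamma_1,\gamma_2\in W(\tau)\subseteq\cc(S)$ and realize both simultaneously as carried realizations inside $\bar\nei(\tau)$; counting intersections branch rectangle by branch rectangle (at most $\mu_{\gamma_1}(b)\mu_{\gamma_2}(b)\le 4$ each, after an isotopy confined to the rectangle) and adding the bounded contribution of each of the finitely many switch regions shows $i(\gamma_1,\gamma_2)\le K_0$ for some $K_0=K_0(S)$. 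Lemma~\ref{lem:cc_distance} then gives $d_{\cc(S)}(\gamma_1,\gamma_2)\le F(K_0)=:C_0$, so $\mathrm{diam}_{\cc(S)}W(\tau)\le C_0$. All four constants are explicit functions of $|\chi(S)|$, and after enlarging them they may be taken increasing in $\xi(S)$, as required.

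\emph{Main obstacle.} The routine pieces — Step 2, and the appeal to Lemma~\ref{lem:cc_distance} — are immediate. The real work is the bookkeeping of Step 1: setting up the index additivity correctly, getting the cusp identity $\sum_v(\mathrm{val}(v)-2)=C$ right and disposing of low-valence switches; and the explicit intersection estimate needed in Step 3. Neither is deep, but both demand care.
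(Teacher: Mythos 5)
Your proof is correct, and for the combinatorial bounds $N_1$, $N_2$ it follows the paper's route: index additivity over the complement of the tie neighbourhood, with the observation that peripheral-annulus components are bounded by the number of punctures. You make the bookkeeping explicit where the paper leaves it qualitative; and you (rightly) flag the need to exclude valence-$2$ switches, a convention the paper leaves implicit. Where you genuinely diverge is in deriving $N_0$ and $C_0$: the paper argues that once $N_1, N_2$ are bounded there are only finitely many almost tracks up to diffeomorphism of $S$, that $\tau\mapsto W(\tau)$ is equivariant, and that the combinatorial constraints on wide curves then force $W(\tau)$ to be finite with bounded diameter — a compactness/equivariance argument that gives no explicit constants. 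You instead use injectivity of $\gamma\mapsto\mu_\gamma$ (Corollary~\ref{cor:measurecurvecorresp}) together with the pointwise constraint $\mu_\gamma\in\{0,1,2\}^{\br(\tau)}$ coming from wideness to get $|W(\tau)|\le 3^{N_1}$, and an explicit branch-by-branch intersection count fed into Lemma~\ref{lem:cc_distance} to get $C_0$. Both routes are sound; yours trades the slick finiteness argument for explicit, computable bounds, which is a reasonable preference and arguably more informative, at the cost of a little extra care (you do need carried realizations to be isotopable so that each pair of strands meets at most once per branch rectangle, which is the step you are right to call out as requiring care).
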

\begin{proof}
Compactness of almost tracks implies that the number of switches and branches in any of them is finite; and so is the number of connected components of $S \setminus \nei(\tau)$. This means that $\idx(S_\bullet)$ is the sum of indices of the connected components of $S_\bullet\setminus \nei(\tau)$ (because $\idx\left(\bar\nei(\tau)\right)=0$, as it is a finite gluing of rectangles). These components all have negative index, except for some peripheral annuli (they are at most as many as the punctures of $S$). But this poses an upper bound first of all on their number, and then also on the number of smooth edges in their boundary. Eventually, this yields that there is a \emph{uniform} bound on the number of branches and of switches of $\tau$, in terms of the topology of $S$.

As a consequence, there are finitely many distinct almost tracks up to diffeomorphisms of $S$; and the sets of wide curves are equivariant under diffeomorphisms. The definition of wide curve poses combinatorial constraints yielding that, for each almost track $\tau$, they are finitely many isotopy classes of them.

These remarks are enough to prove all claims in the statement.
\end{proof}

Given a curve $\gamma$, carried by a pretrack $\tau$, a \nw{wide collar} $A_\gamma$ for $\gamma$ is an open annulus $A_\gamma\subseteq S$, with compact closure, such that $\tau.\gamma$ constitutes one of the two components of $\partial\bar A_\gamma$, and the other component is an embedding of $\mathbb S^1$ belonging to the isotopy class $\gamma$. On top of this, we can require that $A_\gamma$ does not contain any switch of $\tau$.

A curve $\gamma\in\cc(\tau)$ has a wide collar if and only if it is wide carried. Suppose first that $\gamma$ admits a wide collar $A_\gamma$. Then a suitable realization of the core of $A_\gamma$, close to $\tau.\gamma$, is a carried realization of $\gamma$ and shows that $\gamma$ is wide carried. Conversely, if $\gamma$ is wide carried, one can always arrange for a carried realization, $\ul\gamma$, to have the property that, for each $b\in\br(\tau)$ traversed by $\gamma$, the segment $\ul\gamma\cap R_b([-1,1]\times[-1,1])$ has $b$ to its right. Then $\tau.\gamma$ and $\ul\gamma$ bound together a wide collar, obtained as a union of sub-ties in $\nei(\tau)$. 

There is a notion of canonical realization even for curves which are not carried, as pointed out in \cite{mms}:
\begin{defin}\label{def:efficientposition}
A multicurve $\gamma$ on $S$ is in \nw{efficient position} with respect to a train track $\tau$ if $\gamma$ is embedded in $S$, transversely to $\tau$, with the following restrictions. Let $\nei(\gamma)$ be a regular neighbourhood of $\gamma$ containing no corners of $\partial \bar\nei(\tau)$.
\begin{itemize}
\item the chosen embedding of $\gamma$ does not include any switch of $\tau$;
\item for each branch $b$, the intersection of $\gamma\cap R_b$ (when nonempty) is a collection of ties, or it is transverse to all ties (in this latter case we say that $b$ is \nw{traversed} by $\gamma$, similarly as for carried multicurves);
\item for each connected component $C$ of $S_\bullet\setminus\nei(\tau)$, each connected component of $\ol{C\setminus\nei(\gamma)}$ either has negative index or is a rectangle.
\end{itemize}

Such a multicurve is \nw{wide} if, moreover, there is a choice for the orientation of its components with the following properties.
\begin{itemize}
\item If $b$ is a branch such that $\gamma$ traverses $R_b$ transversally to the ties, $\gamma\cap R_b$ is either a single segment, or two segments each appearing to the right of the other.
\item Fix any component $C$ of $S_\bullet\setminus\nei(\tau)$, and any two points $P_1,P_2$ of $\gamma\cap \partial C$ which are consecutive along $\partial C$. Let $\alpha_1,\alpha_2$ be the connected components of $\gamma\cap C$ the two points belong to. Then either $\alpha_1=\alpha_2$ or, with the given orientations on $\gamma$, $\alpha_1,\alpha_2$ appear to the right of each other.
\end{itemize}
\end{defin}

Carried realizations of curves and arcs are a particular case of efficient position; the opposite extreme case is the following. An arc or curve $\gamma\in\acc(S)$ is \nw{dual} to a pretrack $\tau$ if is (isotopic to one) in efficient position, with no branch $b$ such that $\mathrm{im}(R_b)$ is traversed transversally to the ties. We denote $\cc^*(\tau)\subset \cc(S)$ the set of curves in $S$ which are dual to $\tau$.

\begin{defin}\label{def:recurrent}
An almost track $\tau$ is \nw{recurrent} if for each branch $b$ of $\tau$ is traversed by a periodic train path in $\tau$. Note that, for $\tau$ a train track, this is equivalent to saying that each branch is traversed by some carried curve of $\tau$.

The track $\tau$ is \nw{transversely recurrent} if if for each branch $b$ of $\tau$ there is a dual curve $\gamma$ intersecting $b$.

The track is \nw{birecurrent} if it both recurrent and transversely recurrent.
\end{defin}

It can be shown (see \cite{penner}, \S 1.3) that a train track $\tau$ is recurrent if and only if there is a carried multicurve whose carrying image is the entire $\tau$. Similarly, $\tau$ is transversely recurrent if and only if there is a family of pairwise disjoint simple closed loops in $S$, dual to $\tau$, possibly including isotopic pairs, and such that, for each branch $b\in\br(\tau)$, one of the specified realizations intersects $b$.

One of the main theorems in \cite{mms} is that efficient position always exists under some mild conditions on the train track (Theorem 4.1):
\begin{theo}\label{thm:efficientpositionexists}
Let $\tau$ be a birecurrent, cornered train track on $S$; and let $\gamma\subset \cc(S)$ be a multicurve. Then $\gamma$ has an isotopy representative in efficient position with $\tau$.

Given two different realizations $\gamma_1,\gamma_2$ of $\gamma$ in efficient position, they can be transformed into each other via a finite sequence of elementary operations (described in the original statement).
\end{theo}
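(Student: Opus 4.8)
The statement has two halves --- existence of an efficient-position representative and its essential uniqueness --- and I would prove them in that order, since the existence argument carries the main geometric content.

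\textbf{Existence.} The plan is a monotone reduction. First put $\gamma$ in general position with respect to the tie neighbourhood $\bar\nei(\tau)$: isotope it so that it is embedded, misses every switch, is transverse to $\tau$ and to $\partial_v\bar\nei(\tau)$, and --- by an innermost-arc cleanup inside each branch rectangle $R_b$ --- so that $\gamma\cap R_b$ is a disjoint union of ties and of arcs transverse to every tie. This is always possible and changes nothing essential. To such a position attach a complexity, for instance the number $\#\big(\gamma\cap\partial_h\bar\nei(\tau)\big)$, possibly refined by a secondary quantity such as $\#(\gamma\cap\tau)$ and ordered lexicographically; the point is only that it takes values in a well-ordered set. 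Now suppose $\gamma$ is not yet in efficient position (Definition \ref{def:efficientposition}). The failure must occur in some complementary region $C$ of $S_\bullet\setminus\nei(\tau)$: one of the pieces of $\overline{C\setminus\nei(\gamma)}$ has non-negative index and is not a rectangle. An index computation --- using that each such piece is bounded by arcs of $\gamma$, arcs of $\partial_h\bar\nei(\tau)$, and the (type-a) cusps of $C$ coming from switches --- shows that an innermost such piece is a disc of one of a few explicit shapes (a monogon cut off by a single arc of $\gamma$, a bigon swallowing one cusp, or a disc meeting $\partial_h\bar\nei(\tau)$ in a single arc). In every case, pushing $\gamma$ across that disc is an isotopy of $\gamma$ in $S$ which strictly lowers the complexity. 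Since it cannot decrease forever, after finitely many such moves $\gamma$ reaches efficient position.

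\textbf{Where the hypotheses enter.} The cornered hypothesis is what guarantees that every complementary region of $S_\bullet\setminus\nei(\tau)$ carries a cusp, on which the index bookkeeping above --- and the finiteness of the list of bad discs --- relies. Birecurrence plays the role it plays in \cite{mms}: transverse recurrence furnishes dual curves meeting every branch, which keeps the reduction from sliding $\gamma$ entirely off a branch into a configuration whose complexity has no meaningful lower bound, and recurrence ensures that the terminal, irreducible position genuinely realizes efficient position rather than merely being stuck. \emph{The main obstacle of the existence part is precisely the case analysis just sketched}: one must check that \emph{every} way efficient position can fail produces an innermost bad disc, and that the across-the-disc isotopy really strictly decreases the chosen complexity without creating a worse defect elsewhere. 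This forces the complexity to be chosen with care --- a single naive count will not do, because sliding an arc of $\gamma$ over a switch need not change $\#(\gamma\cap\tau)$ --- and matching the reduction moves to a well-founded complexity is the technical heart of the proof.

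\textbf{Uniqueness.} Given two efficient positions $\gamma_1,\gamma_2$ of the same multicurve $\gamma$, choose an ambient isotopy of $S$ carrying one to the other and make the resulting one-parameter family $(\gamma_t)_{t\in[0,1]}$ generic with respect to $\tau$ and $\bar\nei(\tau)$. Then for all but finitely many $t$ the position $\gamma_t$ is ``almost efficient'', and at each exceptional time a single local event occurs: a tangency of $\gamma_t$ with $\tau$, a tangency with a tie, an arc of $\gamma_t$ sweeping across a switch, or the birth or death of a bad disc. One then massages the family --- absorbing transient bad discs with the same across-a-disc isotopies as in the existence argument --- so that the surviving events are exactly the elementary operations listed in \cite{mms}, and the net passage from $\gamma_1$ to $\gamma_2$ is a finite composition of them. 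The delicate point here is keeping the family from straying far from efficient position, since a generic homotopy between two efficient positions is not itself through efficient positions; this is handled exactly as the termination argument above, applied parametrically in $t$.
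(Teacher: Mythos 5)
This theorem is stated in the paper as Theorem 4.1 of \cite{mms}; the thesis does not prove it but imports it wholesale, so there is no ``paper's own proof'' to compare against, and your proposal is best judged on its own terms.

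Your outline follows the standard template for such results --- put $\gamma$ in general position with respect to $\bar\nei(\tau)$, attach a well-founded complexity, push across innermost bad discs, and for uniqueness make a generic homotopy transverse to the strata of bad positions --- and this is indeed the right shape of argument. However, the proposal has a genuine gap, one you candidly flag yourself: the classification of ``bad discs'' and the verification that each across-the-disc isotopy strictly decreases a well-chosen complexity without spawning new defects is precisely the technical content of the theorem, and you do not carry it out. This matters because the naive choices do fail: pushing across a disc with a type-a corner (sweeping an arc of $\gamma$ past a switch) can leave $\#(\gamma\cap\tau)$ unchanged while increasing intersections with ties elsewhere, and a lexicographic refinement by $\#(\gamma\cap\tau)$ only works if you verify that no move decreases the primary quantity at the cost of increasing the secondary one inside a region you haven't looked at yet. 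You also invoke birecurrence only in the vague form ``it plays the role it plays in \cite{mms}''; in the actual argument transverse recurrence is what prevents $\gamma$ from being slid into a component of $S\setminus\nei(\tau)$ where the complexity has no meaningful floor, and recurrence is needed so that the terminal position really is efficient rather than stuck at a configuration where no monotone move applies but the index condition still fails. For the uniqueness half, the claim that the generic events in a one-parameter family are ``exactly the elementary operations listed in \cite{mms}'' is the entire content; without enumerating the codimension-one strata (tangency of $\gamma$ with a tie, tangency with $\tau$, passage of an arc of $\gamma$ through a switch, birth/death of a rectangle) and matching each one to a listed operation, the argument does not close. In short: correct skeleton, but the load-bearing case analysis is missing, and it cannot be waved away since it is where the hypotheses \emph{cornered} and \emph{birecurrent} actually do their work.
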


\subsection{Elementary moves}

Some alterations of pretracks are considered to be \nw{elementary moves}. We describe them in Figures \ref{fig:ttcombing} and \ref{fig:ttsplitting}, referring to \S 3.12 and 3.13 of \cite{mosher} for formal definitions.

\begin{figure}
\begin{center}
\def\svgwidth{.7\textwidth}
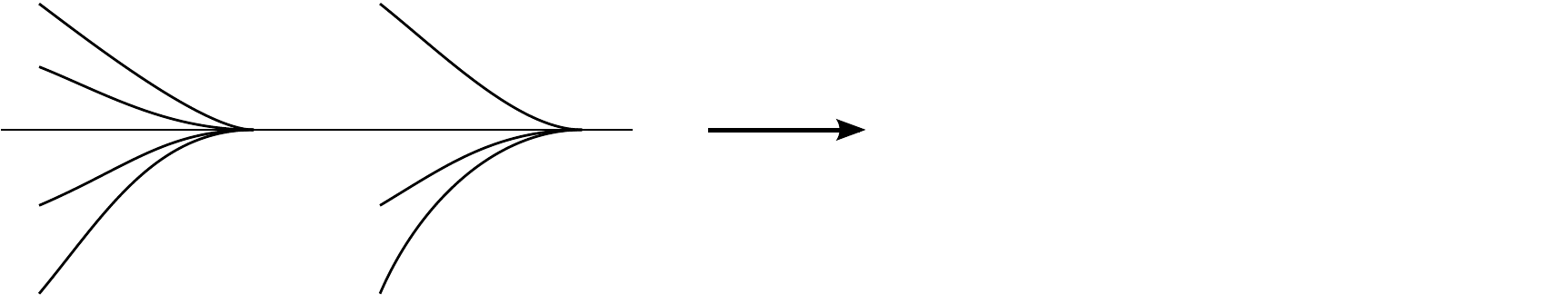

\vspace{1ex}\includegraphics[width=.7\textwidth]{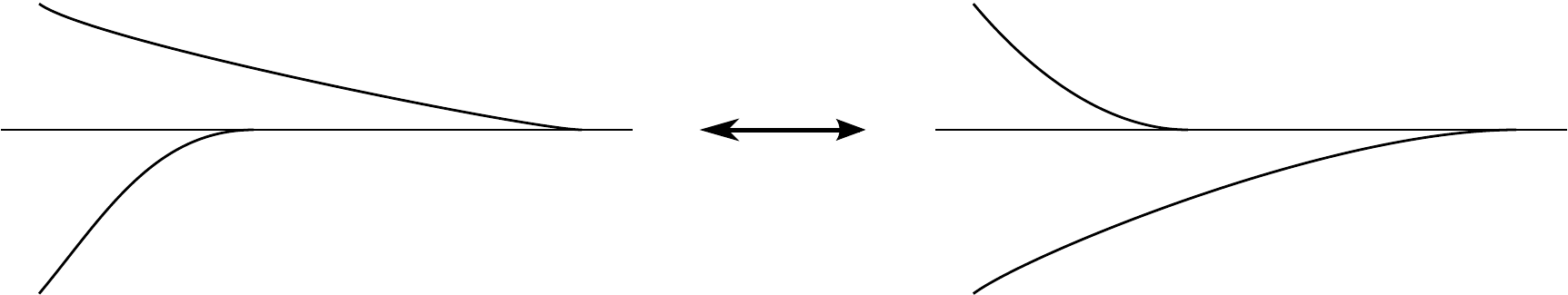}
\end{center}
\caption{\label{fig:ttcombing}The upper picture represents a \nw{comb move} in a semigeneric pretrack; this move, in particular, shrinks a mixed branch to a point. The inverse of a comb move will be called \nw{uncomb}. The notion of comb move does not make sense in the setting of generic train track, where it is replaced by the one of \nw{slide}, depicted in the lower picture: given a mixed branch, its large end `moves past' the small end, and in so doing it replaces the old mixed branch with a new one. The effect of a slide can be cancelled, up to isotopy, with a further slide.}
\end{figure}
\begin{figure}
\begin{center}
\def\svgwidth{.7\textwidth}
\vspace{1ex}\includegraphics[width=.7\textwidth]{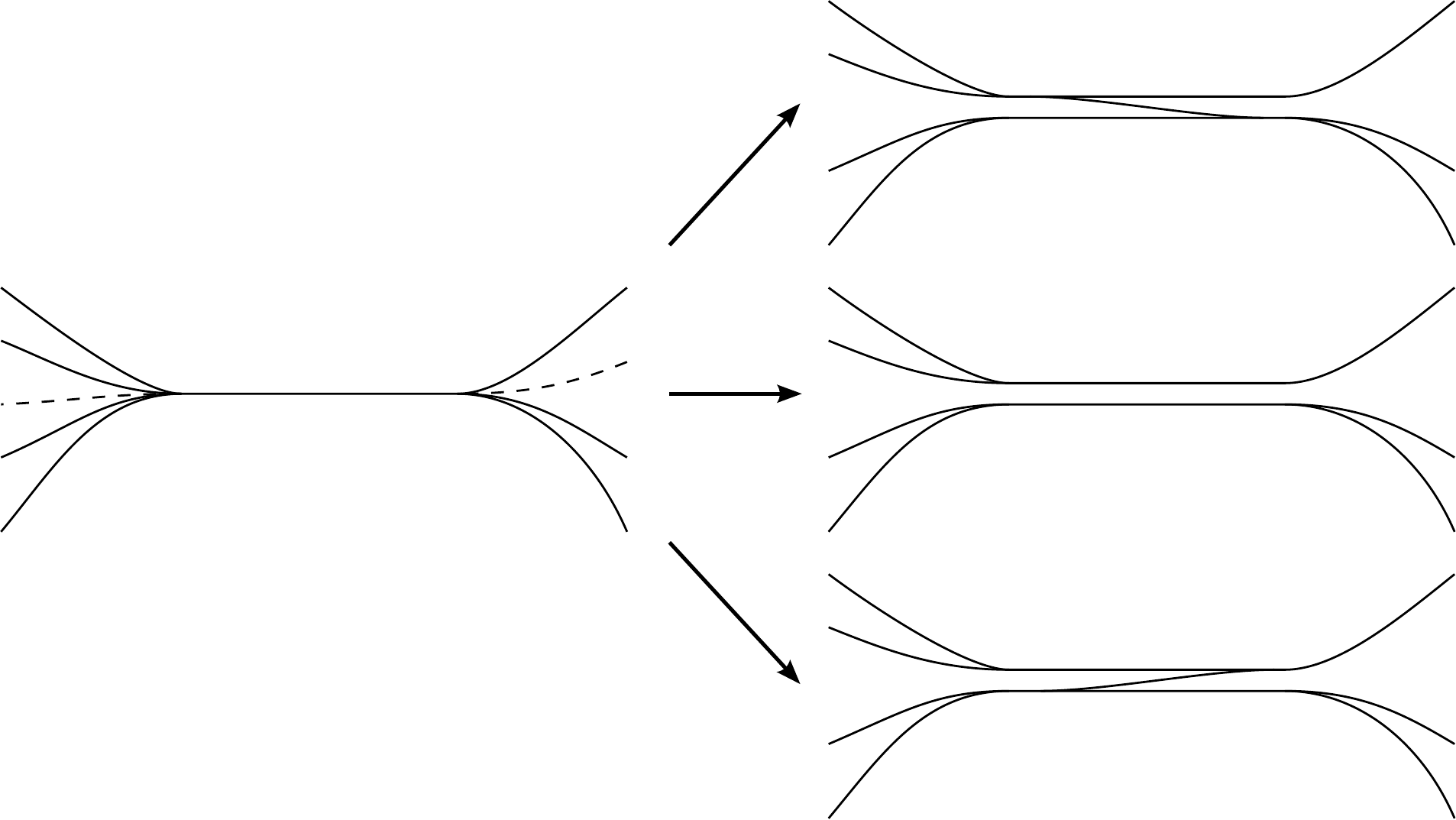}
\end{center}
\caption{\label{fig:ttsplitting} A \nw{split} is an elementary move acting on a large branch. The easiest case is the one of the \nw{central split} (middle arrow) which is intuitively understood as cutting along the large branch with a pair of scissor; the scissors enter a branch end from the gap between two small branches, and exit on the opposite side through a similar gap. The two copies of the split branch may be connected via a `diagonal' new branch: if the latter is placed as shown in the top arrow, we speak of a \nw{right split}; if it is as in the bottom arrow, it will be called a \nw{left split}. Left and right splits are collectively called \nw{parity splits}. In a more neutral way, we will refer to the property of a split being left, right or central also as its \nw{parity}; but we will not generate confusion.}
\end{figure}

\begin{defin}
A sequence of almost tracks $\bm\tau=(\tau_j)_{j=0}^N$, $N\in\mathbb N$, or $(\tau_j)_{j=0}^{+\infty}$, with all $\tau_j$ lying on the same surface $S$ is a \nw{splitting sequence} if, for all $0\leq j <N$, $\tau_{j+1}$ is obtained from $\tau_j$ via a comb, uncomb or split move. In the case of generic almost tracks, instead, we require each element of the sequence to be obtained from the previous one via a slide or a split (we may use the adjective \emph{generic} in reference to splitting sequences, too, in order to mark this difference). We denote with $|\bm\tau|$ the number of splits the splitting sequence $\bm\tau$ includes.

Given two indices $0\leq k\leq l$ ($\leq N$), we denote by $\bm\tau(k,l)=(\tau_j)_{j=k}^l$ the splitting sequence obtained from $\bm\tau$ by considering only the entries indexed by the indices between $k$ and $l$. When referring to splitting sequences, we use the term \nw{subsequence} only to refer to a sequence $\bm\tau(k,l)$ i.e. a subsequence never skips intermediate entries.

Given two splitting sequences $\bm\sigma$ and $\bm\tau$, with the last entry of $\bm\sigma$ isotopic to the first entry of $\bm\tau$, we denote by $\bm\sigma*\bm\tau$ the splitting sequence obtained by adjoining the entries of $\bm\tau$ at the end of $\bm\sigma$, in the given order and excluding the first one (so as not to have a repetition).
\end{defin}

Note that any semigeneric train track can be converted into a generic one via some uncomb moves. Actually, a splitting sequence of semigeneric train tracks can be converted into a splitting sequence of generic ones (i.e. not only each track may be made generic, but comb and uncomb moves may be replaced with slides in a suitable manner), and vice versa.

Combs, uncombs and slides are to be regarded as invertible and unessential moves, whereas splits in some sense `downgrade' the train track; this is made precise by the following statement, which collects Propositions 3.12.2 and 3.14.1 from \cite{mosher}:
\begin{prop}\label{prp:carriediffsplit}
Given two train tracks $\tau,\tau'$ on a surface $S$, $\tau'$ is fully carried by $\tau$ if and only if $\tau$ and $\tau'$ are, respectively, the beginning and the end of a splitting sequence.

Moreover, $\tau'$ is suited to $\tau$ if and only if $\tau$ and $\tau'$ are, respectively, the beginning and the end of a splitting sequence with no central splits.

Finally, $\tau'$ and $\tau$ carry each other if and only if $\tau$ and $\tau'$ are the extremes of a splitting sequence with no splits.
\end{prop}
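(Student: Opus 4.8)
Each assertion is an equivalence, and I would prove the two directions separately, handling the three statements at once by keeping track of which elementary moves are allowed. The ``if'' directions go by induction on the length of the splitting sequence, using a single-step analysis of each kind of elementary move together with the fact that the relevant relation (``fully carried'', ``suited'', ``carry each other'') composes. The ``only if'' directions go by a ``pull tight'' procedure: starting from a carried realization of $\tau'$ in $\bar\nei(\tau)$, one reads off a forced elementary move, performs it, and repeats, controlling the process by a complexity that strictly drops.

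\textbf{The ``if'' directions.} Let $\bm\tau=(\tau_j)_{j=0}^N$ be a splitting sequence with $\tau_0=\tau$, $\tau_N=\tau'$. From the local models in Figures~\ref{fig:ttcombing} and~\ref{fig:ttsplitting} I would record three single-step facts: (a) if $\tau_{j+1}$ is obtained from $\tau_j$ by any split, then $\tau_{j+1}$ is fully carried by $\tau_j$ --- the split picture places $\bar\nei(\tau_{j+1})$ inside $\bar\nei(\tau_j)$ transverse to the ties, with the two copies of the split branch (and the diagonal, if present) traversing the rectangle of the split branch, while every other branch of $\tau_j$ is left untouched, so the carrying image is all of $\tau_j$; (b) if moreover the split is a parity split, the carrying injection is a homotopy equivalence, a parity split not altering the homotopy type of the track, whereas a central split strictly lowers it; (c) if $\tau_{j+1}$ comes from a comb, uncomb or slide, then $\tau_j$ and $\tau_{j+1}$ carry each other, these moves being invertible and supported in a disc. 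Then I would observe that each relation composes: carrying is transitive because a carried realization of $\tau'$ in $\bar\nei(\tau_{N-1})\subseteq\cdots\subseteq\bar\nei(\tau_0)$ can be isotoped transverse to all ties simultaneously, with the carrying images composing to everything when every stage is fully carried; being a homotopy equivalence is stable under composition; and ``carry each other'' is transitive by concatenating the two chains. An induction on $N$ then gives the ``if'' halves --- splits allowed, only parity splits allowed, no splits allowed, respectively.

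\textbf{The ``only if'' directions.} Suppose $\tau'$ is fully carried by $\tau$ and fix a carried realization, isotoped so that $\bar\nei(\tau')\subseteq\bar\nei(\tau)$ with each tie of $\bar\nei(\tau')$ a sub-tie of $\bar\nei(\tau)$; by Proposition~\ref{prp:carryingunique} this position is essentially canonical, and by Remark~\ref{rmk:idx_of_nei_diff} the complement $\bar\nei(\tau)\setminus\nei(\tau')$ is a union of zero-index pieces (rectangles and right-cornered triangles with a cusp). Two cases. If the carrying injection is already an isotopy --- equivalently, every branch of $\tau$ is traversed exactly once and the complement collapses onto $\partial_h\bar\nei(\tau')$ --- then some combs and slides carry $\tau$ onto $\tau'$ and we are done. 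Otherwise there is a large branch $b$ of $\tau$ at which the strands of $\tau'$ inside the rectangle $R_b$ ``cross over themselves'', and the pattern of these strands at the two ends of $b$ dictates a specific split (left, right, or central) of $\tau$; performing it gives $\tau_1$ with $\tau\to\tau_1$ an elementary move, $\tau'$ still fully carried by $\tau_1$, and a strictly smaller complexity. Since the complexity is a bounded-below integer, the process terminates at a track isotopic to $\tau'$; concatenating with the final combs and slides yields the desired splitting sequence. For the refinement: if $\tau'$ is suited to $\tau$, then no central split can ever be forced, since a central split would strictly lower the homotopy type and $\tau'$ could not then be homotopy equivalent to the intermediate track, so the sequence uses only parity splits, combs and slides. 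If in addition $\tau$ is carried by $\tau'$, then by part~1 there is also a splitting sequence $\tau'\to\tau$; concatenating gives a splitting sequence from $\tau$ to an isotopic copy of itself, which cannot contain any split (a split strictly ``downgrades'' the track and in particular is not invertible), and hence neither half does; so $\tau\to\tau'$ consists only of combs and slides.

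\textbf{Main obstacle.} Essentially all the content is in the ``only if'' direction: choosing the complexity so that it provably drops under the forced move, running the case analysis that shows which of left/right/central split is dictated by the local strand pattern around a large branch of $\tau$ (and that such a branch exists whenever the carrying is not an isotopy), and verifying that the minimal configuration really is an isotopic copy of $\tau$. The supporting facts ``a split strictly shrinks $\cc(\,\cdot\,)$'' and ``central split $\Leftrightarrow$ non-homotopy-equivalence, parity split $\Leftrightarrow$ homotopy equivalence'' are local checks from the models. I expect the termination argument and the forced-split analysis to be the crux; the rest is bookkeeping with tie neighbourhoods. This is in substance the argument behind Propositions~3.12.2 and~3.14.1 of~\cite{mosher}, which one could also simply cite.
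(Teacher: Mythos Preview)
The paper does not prove this proposition: it is stated as a direct quotation of Propositions~3.12.2 and~3.14.1 of \cite{mosher}, with no argument given. So there is nothing to compare against on the paper's side; your closing remark that one ``could also simply cite'' Mosher is in fact exactly what the paper does.

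Your outline is essentially the standard argument and is sound in spirit. Two small points are worth tightening. First, in the third statement you invoke part~1 to produce splitting sequences in both directions, but part~1 requires \emph{full} carrying, whereas the hypothesis is only that $\tau,\tau'$ carry each other. You would need the short extra step: if $\tau$ carries $\tau'$ and $\tau'$ carries $\tau$, then composing the two carrying maps shows $\tau$ is carried by the subtrack $\tau.\tau'$ of itself, and uniqueness of carrying (Proposition~\ref{prp:carryingunique}) forces $\tau.\tau'=\tau$, so the carrying is full. Second, the assertion that a splitting sequence from $\tau$ to an isotopic copy of itself cannot contain a split deserves one line of justification --- e.g.\ via Remark~\ref{rmk:decreasingmeasures}, since a split strictly shrinks $\cc(\cdot)$ while an isotopy does not change it. With these two patches your sketch is correct and matches the approach in \cite{mosher}.
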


\begin{rmk}\label{rmk:decreasingmeasures}
If a pretrack $\tau'$ is carried by another pretrack $\tau$, one can pick tie neighbourhoods $\nei(\tau')\subseteq \nei(\tau)$, with the ties of the former obtained as restriction of the ones of the latter. As a consequence, any curve that is carried by $\tau'$ is also carried by $\tau$: $\cc(\tau')\subseteq \cc(\tau)$. In the case of almost tracks, where uniqueness of carrying holds, given any $\alpha\in\cc(\tau')$ and the transverse measures $\mu'_\alpha$, $\mu_\alpha$ induced by it on $\tau'$, $\tau$ respectively, we have $\max_{b\in\br(\tau')}\left(\mu'_\alpha(b)\right)\leq \max_{b\in\br(\tau)}\left(\mu_\alpha(b)\right)$ and $\min_{b\in\br(\tau')}\left(\mu'_\alpha(b)\right)\leq \min_{b\in\br(\tau)}\left(\mu_\alpha(b)\right)$.

The above proposition, then, implies that the sequence of sets $\left(\cc(\tau_j)\right)_j$ along a splitting sequence is decreasing; and the equality $\cc(\tau_j)=\cc(\tau_{j+1})$ holds every time the two tracks are obtained from each other with an elementary move which is not a split.
\end{rmk}

\begin{rmk}\label{rmk:recurrence_at_extremes}
Let $\bm\tau=(\tau_j)_{j=0}^N$ be a train track splitting sequence.
\begin{itemize}
\item If $\tau_N$ is recurrent then so are all entries of $\bm\tau$.
\item If $\tau_0$ is transversely recurrent then so are all entries of $\bm\tau$ (cfr. \cite{penner}, Lemma 1.3.3(b)).
\end{itemize}
\end{rmk}

\begin{rmk}\label{rmk:pickparameters}
After introducing almost tracks and elementary moves, we can finally fix the parameters $k=k(S),\ell=\ell(S)$ employed in the definitions of $\ma(S)$ and the quasi-pants graph $\pa(S)$, as promised after Remark \ref{rmk:ell1}.

Here are the choices in detail for $\pa(S)$. We let $k'$ be the maximum self-intersection number of a collection $V(\tau)$ over all almost tracks $\tau$ on $S$ (which is finite because almost tracks are finitely many different ones up to diffeomorphisms of $S$; see also Lemma \ref{lem:vertexsetbounds}), and we let $k(S)\coloneqq k'$. This means that an almost track vertex set is a vertex of $\pa(S)$ if and only if it cuts $S$ as specified in either of the conditions a), b), c) of Definition \ref{def:quasipants}.

As for the choice of $\ell$, let $\ell_0$ be the maximum intersection number between $V(\tau)$ and $V(\sigma)$, for $\tau$ any almost track and $\sigma$ obtained from $\tau$ with a split or taking a subtrack (this is again finite because all possible pairs are finitely many up to diffeomorphism). Then define $\ell\coloneqq\max\{\ell_0,\ell_1\}$ for $\ell_1$ the one defined in Remark \ref{rmk:ell1}: this means that a splitting of almost tracks induces a displacement along an edge in $\pa(S)$.

When $X\subseteq S$ is a non-annular subsurface, we would like that, every time $V(\tau)$ is a vertex of $\pa(S)$, $\pi_X V(\tau)$ is a vertex of $\pa(X)$: so it will be understood that, when $X$ is regarded as a subsurface of $S$, then the bound on self-intersection number for vertices of $\pa(X)$ is not taken to be $k(X)$ but $k(X,S)\coloneqq \max\{k(X),4k(S)+4\}$ instead (this works, because of Remark \ref{rmk:subsurf_inters_bound}).

The choices for $\ma(S)$ shall be made with the same spirit: given any train track, if its vertex set fills $S$ then it must be a vertex of the graph. The vertex sets of any pair of train tracks related via an elementary move must be represented by vertices at distance $\leq 1$; and any vertex must have distance $\leq 1$ from a complete clear marking.

Again, we wish also that, when $X\subseteq S$ is a non-annular subsurface and $V(\tau)$ is a vertex of $\ma(S)$, then also $\pi_X V(\tau)$ is a vertex of $\ma(X)$. So, when $X$ is regarded as a subsurface of $S$, we pick a bound for the self-intersection number differently from the case in which $X$ is a stand-alone surface.

These choices for $\ma(S)$ and $\ma(X)$ do not coincide with the ones given at the beginning of \S 6 in \cite{mms} as they give higher constants, but they do not alter the validity of the results in that work, and in particular of their Theorem 6.1, as we will see.
\end{rmk}

\section{Train tracks: more constructions}\label{sec:traintracksmore}
\subsection{Lifting and inducing an almost track}\label{sub:induced}

If $\tau$ is an almost track on $S$, and $X\subseteq S$ is a subsurface, we denote with $\tau^X$ the pre-image of $\tau$ in $S^X$; it is a pretrack. The universal cover of $S$ will be denoted $\tilde S$; similarly, the notation $\tilde\tau$ will denote the pre-image of $\tau$ in $\tilde S$ via the covering map $\tilde S\rightarrow S$.

\begin{rmk}\label{rmk:negativeindexincover}
\textit{All connected components of $S^X\setminus \nei(\tau^X)$ or of $\tilde S\setminus\tilde\tau$ which are compact have negative index.}

The best way to see this (focusing on $S^X\setminus \nei(\tau^X)$, as the case of the universal cover is identical) is to take $\sigma$ an almost track which fills $S$, has $\tau$ as a subtrack, and has the property that each connected component of $S\setminus \nei(\sigma)$ which is a \emph{smooth} peripheral annulus is also a connected component of $S\setminus \nei(\tau)$.

Each compact connected component of $S^X\setminus \nei(\sigma^X)$ which is not a smooth peripheral annulus, then, is diffeomorphic to a suitable connected component of $S\setminus \nei(\sigma)$; as such, it has negative index.

So the compact connected components of $S^X\setminus \nei(\tau^X)$ are obtained from gluing a handful of negative index components of $S^X\setminus \nei(\sigma^X)$ with, possibly, some components as in Remark \ref{rmk:idx_of_nei_diff}. So they have negative index.
\end{rmk}

Some adaptations of the results from \S 3.3 in \cite{mosher} will be now given, to rule the behaviour of $\tilde\tau$ and $\tau^X$ at infinity.

\begin{prop}\label{prp:paths_in_univ_cover}
Let $\tau$ be an almost track on a surface $S$. Then
\begin{itemize}
\item all train paths along $\tilde\tau$ are embedded;
\item the images of any two train paths in $\tilde\tau$ intersect in a connected set.
\end{itemize}
\end{prop}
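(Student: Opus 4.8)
The plan is to transplant the arguments of \S3.3 of \cite{mosher}, where both assertions are established for train tracks whose complementary regions are all compact of negative index, and to upgrade them to the present, more permissive setting; the genuinely new point is the control of the \emph{non-compact} components of the complement of $\nei(\tilde\tau)$, which is exactly what the funnel clause in the definition of almost track is there for. \textbf{Reduction.} First I would replace $\tau$ by an almost track $\tau^+\supseteq\tau$ that fills $S$ and has $\tau$ as a subtrack (such a $\tau^+$ exists, as in the proof of Remark \ref{rmk:negativeindexincover}); since every train path along $\tilde\tau$ is in particular a train path along $\widetilde{\tau^+}$, it suffices to treat $\tau^+$, so I may assume $\tau$ fills $S$. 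By Remark \ref{rmk:negativeindexincover} every compact component of $\tilde S\setminus\tilde\tau$ has negative index, while by the funnel clause the non-compact ones are tame --- half-open bands and cusp neighbourhoods, rather than pieces spiralling around a funnel --- so that train paths along $\tilde\tau$ behave as uniform quasigeodesics and nothing pathological happens at infinity.

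\textbf{Embeddedness.} I would argue that a non-injective train path is impossible. A train path $p$ along $\tilde\tau$ can never reverse at a switch (coming into a switch along a branch end and leaving along the same one is a cusp, not a smooth passage), and it is never periodic: a periodic train path would project to a smooth closed curve carried by $\tau$ which lifts to a loop in $\tilde S$, hence is null-homotopic in $S$ --- but an almost track carries no null-homotopic curve, since all its complementary regions have non-positive index and so it is incompressible. Now suppose $p(s)=p(t)=:q$ with $s<t$. Inspecting the local picture at $q$ (an interior point of a branch, or a switch, the two cases being similar) one sees that, because there is a unique large branch end at a switch, the two passages of $p$ through $q$ must share an \emph{entire} branch end and hence overlap along a whole branch. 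Propagating this overlap across successive branches and switches, and using that only finitely many integer parameters lie in $[s,t]$, one is forced after finitely many steps to make $p$ reverse at some switch --- a contradiction. Hence every train path along $\tilde\tau$ is embedded.

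\textbf{Connected intersections.} For two train paths $p,p'$, the same ``shared large end'' inspection shows that wherever $|p|$ meets $|p'|$ they share a whole branch, so $|p|\cap|p'|$ is a disjoint union of closed sub-arcs of the (now embedded) arcs $|p|$ and $|p'|$. If it had two components $J_1,J_2$, consecutive along $|p|$, the sub-arc of $|p|$ and the sub-arc of $|p'|$ joining $J_1$ to $J_2$ would bound a disc $D\subseteq\tilde S$ whose boundary has exactly two corners, both cusps at switches (where $|p|$ and $|p'|$ branch off tangentially along distinct small branch ends), so $\idx(D)=0$. Were $D$ free of complementary regions of $\nei(\tilde\tau)$ and of stray arcs of $\tilde\tau$, we would have $D\subseteq\bar\nei(\tilde\tau)$ and the two boundary arcs would be homotopic rel endpoints in the graph $\tilde\tau$, hence equal --- contradicting that they leave the first switch along distinct branch ends. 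So $D$ must contain a complementary region of $\nei(\tilde\tau)$; combining $\idx(D)=0$ with Remark \ref{rmk:negativeindexincover}, the additivity of the index, the ties-foliation of $\bar\nei(\tilde\tau)$, and, if necessary, an induction on the number of complementary regions inside $D$, one reaches a contradiction exactly as in \cite{mosher}. Therefore $|p|\cap|p'|$ is connected.

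\textbf{Main obstacle.} The delicate part is not the compact combinatorics --- that is Mosher's --- but making sure the non-compact complementary components of $\nei(\tilde\tau)$, and likewise those of $\nei(\tau^X)$ in the annular and subsurface covers needed in the rest of \S\ref{sub:induced}, do not wrap pathologically around the funnels of $S$ (resp.\ $S^X$); without the funnel clause such a component could spoil both the uniform quasigeodesic property of train paths and the index bookkeeping, whereas with it every estimate reduces to the compact case treated in \cite{mosher}.
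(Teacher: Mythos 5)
Your overall strategy matches the paper's: adapt Mosher's Propositions 3.3.1 and 3.3.2, with the observation that the funnel clause keeps non-compact complementary components of $\nei(\tilde\tau)$ from causing trouble while Remark \ref{rmk:negativeindexincover} gives negative index for the compact ones; the paper's own proof is in fact a one-line citation to those two propositions of \cite{mosher}. The reduction to a filling track is harmless but not needed, since the index count only ever involves a compact disc bounded by arcs of train paths, inside which every complementary component is automatically compact. Your bigon/index argument for connected intersections is Mosher's.

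There is, however, a gap in the embeddedness argument. After noting that the two passages of $p$ through $q=p(s)=p(t)$ share the large branch end, you propagate the overlap and claim this \emph{forces} a reversal. But propagation can terminate in a \emph{divergence} at some later switch $q'$, where both passages arrive via the large end of $q'$ and exit via two \emph{different} small ends; no reversal occurs there. Instead, the segment of $p$ between the two visits to $q'$ closes up into an embedded loop $\ell\subset\tilde\tau$ which is smooth at $q'$ (when the original passages at $q$ were co-oriented) or has exactly one cusp at $q'$ (when they were anti-oriented). Your "not periodic, hence not null-homotopic" step covers the smooth case, but the cusped loop is covered neither by that step nor by the reversal step. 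What handles it — and what Mosher actually uses for embeddedness — is exactly the index argument you invoke only for connected intersections: $\ell$ bounds a disc $D\subseteq\tilde S$ with $\idx(D)\ge 1/2>0$, while $D$ is tiled by pieces of $\bar\nei(\tilde\tau)$ (index $0$) and compact complementary regions (index $<0$), a contradiction. Taking $t-s$ minimal produces the embedded sub-loop directly, with at most one cusp, so the propagation is unnecessary. (A minor related point: a periodic train path in $\tilde\tau$ need not project to an \emph{embedded}, hence "carried", curve in $S$, so the null-homotopy step should also be phrased via the embedded sub-loop and the disc it bounds rather than via carrying.)
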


\begin{prop}
In the same setting as the above proposition,
\begin{itemize}
\item if $\rho, \rho'$ are two infinite train paths which have finite Haudorff distance in $\tilde S$, then they eventually coincide;
\item if $\rho, \rho'$ are two biinfinite train paths which have finite Haudorff distance in $\tilde S$, then they coincide entirely.
\end{itemize}
\end{prop}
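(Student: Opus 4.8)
The plan is to follow \cite{mosher}, \S3.3, adapting it in two ways suited to the present setting: Remark~\ref{rmk:negativeindexincover} will replace the negative-index property of complementary regions used there for finite-area surfaces, and the \emph{non-compact} complementary regions of $\tilde\tau$ in $\tilde S=\Hy^2$ will have to be controlled separately --- which is where the funnel clause in the definition of an almost track is needed. So first I would handle the reductions. Pulling back the metric gives $\tilde S=\Hy^2$, and by Proposition~\ref{prp:paths_in_univ_cover} every train path along $\tilde\tau$ is a properly embedded ray or line; from the uniformly bounded geometry of $\tilde\tau$ (it covers the compact $\tau\subseteq S_\bullet$) together with the index condition forbidding backtracking, it is moreover a uniform quasigeodesic --- cf.\ \cite{mosher} --- so it has well-defined ideal endpoint(s), and two such paths at finite Hausdorff distance share them. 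This reduces the second bullet to the first: cut $\rho,\rho'$ at the parameter $0$; after reversing $\rho'$ if needed, the forward sub-rays limit to a common ideal point and lie at finite Hausdorff distance, likewise the backward sub-rays, so by the first bullet $\imm\rho\cap\imm\rho'$ --- a connected subset of the line $\imm\rho$, by Proposition~\ref{prp:paths_in_univ_cover} --- contains a forward and a backward sub-ray of $\imm\rho$, hence equals $\imm\rho=\imm\rho'$, and the two parametrizations then differ by an integer shift. For the first bullet, $\imm\rho\cong[0,+\infty)$ and $\imm\rho\cap\imm\rho'$ is closed and connected, so it suffices to prove this intersection is \emph{unbounded}: it is then a common sub-ray on which $\rho$ and $\rho'$ agree up to a shift, i.e.\ they eventually coincide.

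Suppose then, for contradiction, that $\imm\rho\cap\imm\rho'$ is bounded. Passing to sub-rays I may take $\rho,\rho'$ disjoint; being at finite Hausdorff distance they limit to a common ideal point $\xi$ and cobound an infinite strip $\Omega\subseteq\Hy^2$ of bounded width. The heart of the matter is an index count. Exhaust $\Omega$ by compact disks $D_n$, each bounded by an initial segment of $\rho$, an initial segment of $\rho'$, a fixed base arc, and a short crosscut $\kappa_n$ from $\rho$ to $\rho'$ of length $\le H$ (available by finite Hausdorff distance), so that $\kappa_n$ crosses $\tilde\tau$ transversally in at most $K=K(H,\tau)$ points. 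Cut $D_n$ along $\tilde\tau\cup\kappa_n$ and use additivity of the index: the pieces lying inside $\bar\nei(\tilde\tau)$ contribute $0$; each compact complementary region of $\tilde\tau$ contained in $D_n$ contributes a uniformly negative amount $\le-\delta<0$ (the index is multiplicative under the finite covers $\tilde Y\to Y$ of complementary regions of $\tau$, and $\idx(Y)<0$ by Remark~\ref{rmk:negativeindexincover}); and the $O(K)$ remaining pieces, those abutting $\kappa_n$ or the base, have index bounded above by a constant depending only on $H$ and $\tau$. As $D_n$ is a disk with a bounded number of boundary corners, $\idx(D_n)=O(1)$, whence
$$\#\{\text{compact complementary regions of }\tilde\tau\text{ contained in }D_n\}=O(1)$$
uniformly in $n$, so $\Omega$ meets only finitely many compact complementary regions of $\tilde\tau$.

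Beyond these, $\inte\Omega$ meets $\tilde\tau$ only inside tie neighbourhoods and along frontiers of non-compact complementary regions, and this last case is, I expect, the main obstacle. Using the almost-track hypothesis one shows that a non-compact complementary region of $\tilde\tau$ in $\Hy^2$ is either the lift of a funnel --- a half-plane whose frontier is a \emph{single} biinfinite train path --- or the universal cover of a negative-index complementary region of $\tau$, no two of whose frontier lines lie at finite Hausdorff distance (otherwise their images would be freely homotopic boundary curves, forcing that region to be an annulus, ruled out by negative index or by the funnel clause). Since disjoint sub-rays of a single line run to its two distinct ideal endpoints, in neither case can disjoint sub-rays of $\rho$ and $\rho'$, both limiting to $\xi$, lie on the frontier of one such region; so far enough out $\inte\Omega$ is actually disjoint from $\tilde\tau$, that far sub-strip lies in a single complementary region $C$, and $C$ is then a corner-free infinite strip --- contradicting the dichotomy just stated if $C$ is non-compact, and impossible if $C$ is compact since an infinite strip is not compact. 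This contradiction shows $\imm\rho\cap\imm\rho'$ is unbounded and finishes the proof. Remark~\ref{rmk:negativeindexincover} only supplies information about the \emph{compact} complementary regions of $\tilde\tau$, so the bookkeeping for the funnel- and cusp-lifts in $\Hy^2$, which do not arise in the finite-area setting of \cite{mosher}, is exactly the delicate part, and is precisely what the funnel condition in the definition of an almost track is designed to make possible; a secondary technical point is verifying the uniform quasigeodesic property of train paths in $\tilde\tau$ used in the reduction.
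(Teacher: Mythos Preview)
The paper gives no argument of its own here --- it simply says the proof is ``exactly the same as Propositions 3.3.1 and 3.3.2 in \cite{mosher}''. So your write-up is filling in details the paper omits, and the index-counting core of your argument is broadly the Mosher strategy.

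There is, however, an ordering problem. You invoke the uniform quasigeodesic property of train paths in $\tilde\tau$ to produce a common ideal endpoint and to run the reduction from the biinfinite case to the infinite one. In the paper that quasigeodesic statement is Proposition~\ref{prp:paths_quasi_geod}, which is placed \emph{after} the proposition you are proving; and in \cite{mosher} the ordering is the same (3.3.2 precedes 3.3.3). Mosher's proof of 3.3.2 does not use quasigeodesicity at all --- the index argument suffices on its own. So your appeal to quasigeodesicity is at best out of logical order and, depending on how one reads the dependencies, risks circularity. The fix is easy: drop the ideal-endpoint language entirely. Two disjoint properly embedded rays at finite Hausdorff distance in $\Hy^2$ cobound a strip-like region regardless of any asymptotic information, and the reduction of the biinfinite case to the infinite case needs only connectedness of $\imm\rho\cap\imm\rho'$ from Proposition~\ref{prp:paths_in_univ_cover}, not a shared endpoint at infinity. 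Your own final sentence flags the quasigeodesic step as a ``secondary technical point''; in fact it is one you should simply excise.

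Your index exhaustion and your analysis of the non-compact complementary regions (funnel lifts versus lifts of negative-index regions) are the right places to focus the adaptation from Mosher's finite-area setting, and your identification of the funnel clause as what rules out parallel boundary lines of a single complementary region is correct in spirit. One point to tighten: the bound $\#(\kappa_n\cap\tilde\tau)\le K(H,\tau)$ on the crosscuts deserves a sentence --- it follows from cocompactness of $\tau$ in $S_\bullet$ and transversality, but is not automatic.
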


The proof of these two propositions are exactly the same as Propositions 3.3.1 and 3.3.2 in \cite{mosher}.

\begin{prop}\label{prp:paths_quasi_geod}
Let $\tau$ be an almost track on a surface $S$. Then there exists $\lambda\geq 1$ (depending on $\tau$) such that every train path in $\tilde\tau$ is a $\lambda$-quasigeodesic in $\tilde S\cong\Hy^2$ with the hyperbolic metric.
\end{prop}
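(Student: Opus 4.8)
The plan is to compare the \emph{combinatorial length} of a train path (the number of branches it traverses) with the hyperbolic distance between its endpoints. First I would fix the hyperbolic metric on $S$ and realize $\tau$ smoothly inside it; since $\tau$ is compact it has finitely many branch-types, so all branches have length in some interval $[\ell_{\min},\ell_{\max}]$ with $0<\ell_{\min}$, and bounded geodesic curvature. Lifting, $\tilde\tau\subset\tilde S\cong\Hy^2$, and by Proposition \ref{prp:paths_in_univ_cover} every train path in $\tilde\tau$ is an embedded $C^1$ curve whose branches have these bounded lengths. Up to deck transformations there are only finitely many train paths of each fixed combinatorial length (the combinatorial data is finite), and $d_{\tilde S}$ between endpoints is deck-invariant, so it is enough to produce $c=c(\tau)>0$ with $d_{\tilde S}(\rho(0),\rho(n))\ge cn-c$ for every train path $\rho$ of combinatorial length $n$: applying this to sub-arcs $\rho|_{[s,t]}$, together with the trivial bound $d_{\tilde S}(\rho(0),\rho(n))\le\ell_{\max}n$ and a (bi-Lipschitz) reparametrization by arclength, yields the $\lambda$-quasigeodesic property with $\lambda$ depending on $\tau$.

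To get that estimate I would isolate two ingredients. (a) \emph{Near-geodesic train paths have length controlled by distance:} if $\rho$ lies in the $R$-neighbourhood of the geodesic $\gamma$ joining $\rho(0)$ to $\rho(n)$, then covering $\gamma$ by $O(\operatorname{length}(\gamma)/R)$ balls of radius $2R$ that together cover $\rho$, and using that each such ball meets only boundedly many branches of $\tilde\tau$ (proper discontinuity of the deck action, plus the bound on $\#\br(\tau)$ from Lemma \ref{lem:vertexsetbounds}) while $\rho$, being embedded, crosses each branch at most once, one gets $\operatorname{length}(\rho)=O(\operatorname{length}(\gamma))$ and hence $n=O(d_{\tilde S}(\rho(0),\rho(n)))$. (b) \emph{Train paths do not wander:} there is $R=R(\tau)$ such that every train path stays in the $R$-neighbourhood of the geodesic joining its endpoints. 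Granting (a) and (b), the estimate follows at once.

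The heart is (b), which I would prove by contradiction: if it fails, then for every $R$ some train path $\rho$ reaches distance $>R$ from that geodesic $\gamma$. A divide-and-conquer argument using $\delta$-thinness of triangles in $\Hy^2$ converts a deep excursion into a long near-backtrack: $\rho$ contains two consecutive sub-train-paths $\rho|_{[a,s]}$ and $\rho|_{[s,b]}$ whose images lie within a uniform (order-$\delta$) Hausdorff distance of one another, traversed in \emph{opposite} directions, with combinatorial lengths tending to infinity as $R\to\infty$. Extend both, in each direction, to biinfinite train paths of $\tilde\tau$ (any branch end may be prolonged to a legitimate train path, so no recurrence hypothesis is needed); these two biinfinite paths then have finite Hausdorff distance, hence coincide by the proposition immediately following Proposition \ref{prp:paths_in_univ_cover}. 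But two long sub-arcs that are consecutive in $\rho$ and carried by one common biinfinite branch-sequence with opposite orientations must then share a branch of $\tilde\tau$, so $\rho$ traverses that branch twice, contradicting embeddedness of $\rho$ (Proposition \ref{prp:paths_in_univ_cover}). Thus $R$ can be chosen as required, and the resulting $\lambda$ depends on $\tau$ through $\ell_{\min},\ell_{\max}$, the curvature bound, the local-finiteness count, and $R$.

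The steps I expect to be the real work are, first, the quantitative hyperbolic-geometry lemma that extracts a controlled near-backtrack of combinatorial length genuinely growing with the excursion depth $R$ (one must keep careful track of constants through the thin-triangle estimates), and second, the combinatorial bookkeeping that a near-backtrack of an embedded train path along a common biinfinite branch-sequence really forces a repeated branch. This second point is where embeddedness of train paths — and hence, via Proposition \ref{prp:paths_in_univ_cover}, the almost-track hypothesis on $\tau$ — is used in an essential way; the rest of the argument is the kind of covering/counting estimate that goes through for any compact $1$-complex of this type.
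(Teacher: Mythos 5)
Your route is genuinely different from the paper's. The paper's argument is a short sketch: enlarge $\tau$ to a filling almost track isotoped into $\core(S)$, show that $\tilde\tau$ (or an extension $\sigma^\infty$ of it) is quasi-isometric to the convex set $\tilde K$ lying over $\core(S)$, and then cite Mosher's Proposition 3.3.3 verbatim. You are instead attempting a direct argument: combinatorial length versus ambient distance, plus a thin-triangle contradiction to show train paths cannot wander. A self-contained proof of this kind would be valuable, but as written step (b) has a real gap.

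There are two problems with (b). The first is a circularity in the ``divide-and-conquer'' that is supposed to turn a deep excursion into a near-backtrack. For thin triangles to force $\rho|_{[a,s]}$ and $\rho|_{[s,b]}$ to fellow-travel near the turning point, one must already know that each of them stays uniformly close to the geodesic joining its own endpoints --- which is exactly the assertion of (b). This can be repaired: take $\rho$ of \emph{least combinatorial length} among all train paths leaving the $R$-neighbourhood of their endpoint geodesic; then the proper sub-paths $\rho|_{[0,s]}$ and $\rho|_{[s,n]}$ do stay $R$-close to $[\rho(0),\rho(s)]$ and $[\rho(s),\rho(n)]$, and thin triangles give a near-backtrack within $2R+O(\delta)$ over a segment of length roughly $D-R-O(\delta)$, where $D$ is the excursion depth. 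You should say something like this; as stated, the fellow-travelling claim is unsupported.

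The second problem is more serious and is where the proposal genuinely fails. From two consecutive sub-paths $\sigma_1,\sigma_2\subset\rho$ that fellow-travel within some constant $C$ over $L$ branches without sharing a branch, you propose to extend them to biinfinite train paths and invoke the uniqueness statement after Proposition \ref{prp:paths_in_univ_cover}. But that statement requires \emph{finite Hausdorff distance of the entire biinfinite paths}, and arbitrary biinfinite prolongations of $\sigma_1,\sigma_2$ are free to diverge the instant they leave the fellow-travelling region; bounded fellow-travelling over a long but finite segment gives no control at all on the Hausdorff distance of the extensions. What is actually needed is a genuinely combinatorial local statement: there is $L_0=L_0(\tau,C)$ such that two embedded train paths in $\tilde\tau$ that stay within $C$ of each other over more than $L_0$ branches must share a branch. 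This is where the negative-index (no monogon/bigon) hypothesis on the almost track does its work --- the thin strip $\Omega$ between $\sigma_1$, $\sigma_2$ and a short closing arc is a disc with a bounded number of outward corners, so $\idx(\Omega)$ is bounded below, while tiling $\Omega$ by lifted complementary regions and tie-neighbourhood pieces forces $\idx(\Omega)$ to become arbitrarily negative if no branch is shared and $L\to\infty$. Your proposal replaces this index input with the biinfinite-path uniqueness proposition, and that substitution does not close the argument. The paper's route, via the quasi-isometry of $\tilde\tau$ with $\tilde K$ and Mosher's proof, bypasses the issue entirely; if you want a self-contained proof you will need to supply the missing no-long-bigon lemma explicitly.

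Step (a) and the preliminary reductions look fine: compactness of $\tau$ gives it positive injectivity radius, local finiteness of $\tilde\tau$ then bounds how many branches a fixed-radius ball can meet, and embeddedness of $\rho$ (Proposition \ref{prp:paths_in_univ_cover}) lets you count.
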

\begin{proof}[Sketch]
The proof of this statement is substantially the same as the proof of Proposition 3.3.3 in \cite{mosher}. Rather than repeating their proof, we give some instructions to adapt it to our setting.

Replace $\tau$ with an almost track which fills $S$, has the original $\tau$ as a subtrack, and chosen so that the collection of funnels with smooth boundary appearing as connected components of $S\setminus \nei(\tau)$ remains unvaried. By isotoping $\tau$, make sure that $\tau\subseteq \core(S)$ and that the closure of each connected component of $\core(S)\setminus \tau$ is a closed disc. This request is equivalent to demanding that $\tau\cup \partial\core(S)$ is a connected pretrack on $S$, entirely contained in $\core(S)$.

Let $\tilde K$ be the preimage of $\core(S)$ in the universal cover $\tilde S$: it is connected and convex, by definition of $\core(S)$. The same arguments as in the proof of Proposition 3.3.3 in \cite{mosher} can be used to show that:
\begin{itemize}
\item if $\core(S)$ is compact, then $\tilde\tau$ is quasi-isometric to $\tilde K$;
\item if $\core(S)$ has punctures, then there exists a pretrack $\sigma^\infty\subseteq \tilde K$ which contains $\tilde\tau$ as a subtrack and is quasi-isometric to $\tilde K$.
\end{itemize}

After that, one proves that any train path $\delta$ in $\tilde\tau$ is a quasigeodesic, following the original proof verbatim.
\end{proof}

As a consequence:
\begin{coroll}\label{cor:distinctends}
Let $\tau$ be an almost track on a surface $S$ and let $X\subseteq S$ be a subsurface. Every infinite train path in $\tau^X$ with noncompact image, and every infinite train path in $\tilde\tau$, has a unique limit point on $\partial\ol{S^X}$ ($\partial\ol{\tilde S}$, resp.).

If two paths as above have the same limit point, then they eventually coincide.

A biinfinite train path in $\tilde\tau$ has distinct endpoints on $\partial\ol{\tilde S}$, and there are no two distinct paths with the same pair of endpoints.
\end{coroll}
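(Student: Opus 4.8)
The statement combines the three preceding propositions with standard facts about quasigeodesics in $\Hy^2$, so the plan is to settle the case of $\tilde\tau$ first and then reduce the case of $\tau^X$ to it by lifting to the universal cover.

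\textbf{The universal cover.} By Proposition \ref{prp:paths_in_univ_cover} any infinite or biinfinite train path $\rho$ in $\tilde\tau$ is embedded, and by Proposition \ref{prp:paths_quasi_geod} it is a $\lambda$-quasigeodesic in $\tilde S\cong\Hy^2$; in particular $d(\rho(0),\rho(t))\to\infty$, so $\rho$ is a proper map with noncompact image. A proper $\lambda$-quasigeodesic ray in $\Hy^2$ stays within bounded Hausdorff distance of the genuine geodesic ray with the same starting point and, by $\delta$-hyperbolicity (the Morse stability lemma), accumulates on $\partial\ol{\tilde S}$ at exactly the ideal endpoint of that geodesic ray; this yields the unique limit point. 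If two infinite train paths $\rho_1,\rho_2$ in $\tilde\tau$ share a limit point $\xi\in\partial\ol{\tilde S}$, then as quasigeodesic rays to the same ideal point they are at finite Hausdorff distance in $\tilde S$, so the second clause of the (unlabelled) proposition preceding Proposition \ref{prp:paths_quasi_geod} forces them to coincide eventually. The same reasoning shows a biinfinite train path is a $\lambda$-quasigeodesic line, hence Morse-close to the geodesic through its two ideal endpoints, which are therefore distinct; and two biinfinite train paths with the same endpoint pair are Morse-close to the same geodesic line, hence at finite Hausdorff distance, hence coincide entirely by that same proposition.

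\textbf{The covering $S^X$.} The covering $\tilde S\to S$ factors through $S^X$, and the preimage of $\tau^X$ under $\tilde S\to S^X$ is exactly $\tilde\tau$, so $\tilde S\to S^X$ restricts to a covering $\tilde\tau\to\tau^X$. Given an infinite train path $\rho$ in $\tau^X$ with noncompact image, I would pick a lift $\tilde\rho$ in $\tilde\tau$ (still an infinite train path, since switches lift to switches), which by the previous paragraph converges to a unique $\tilde\xi\in\partial\ol{\tilde S}$. The point needing real work is that $\tilde\xi$ lies in the domain of discontinuity $D_{\Gamma_X}$: granting this, $\tilde\xi$ descends to a point $\zeta\in D_{\Gamma_X}/\Gamma_X=\partial\ol{S^X}$, and $\rho\to\zeta$ by continuity of the natural projection $\Hy^2\cup D_{\Gamma_X}\to\ol{S^X}$. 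Here is where the almost-track hypotheses enter. Since $\tau\subseteq S_\bullet$, the lift $\tau^X$ avoids a peripheral-annulus neighbourhood of every puncture of $S^X$ that lies over a puncture of $S$; every cusp of $S^X$ lies over a cusp of $S$, so no train path in $\tau^X$ escapes into a cusp and $\tilde\xi$ is not a parabolic fixed point of $\Gamma_X$. Nor can $\tilde\xi$ be a conical limit point of $\Gamma_X$: a train path with noncompact image traverses infinitely many distinct branches of the locally finite complex $\tau^X$ and hence leaves every compact set, whereas if $\tilde\xi$ were conical then $\tilde\rho$, being within bounded distance of a geodesic ray asymptotic to $\partial\ol{H(L_{\Gamma_X})}$, would project into a bounded neighbourhood of $\core(S^X)$; after deleting the forbidden (funnel- and cusp-) neighbourhoods this neighbourhood is compact, contradicting the noncompactness of the image of $\rho$. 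Therefore $\tilde\xi\in D_{\Gamma_X}$.

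\textbf{Coincidence on $S^X$.} Finally, if $\rho_1,\rho_2$ are infinite train paths in $\tau^X$ with noncompact image and the same limit point $\zeta\in\partial\ol{S^X}$, I would choose lifts $\tilde\rho_1,\tilde\rho_2$ in $\tilde\tau$ converging to ideal points $\tilde\xi_1,\tilde\xi_2$ over $\zeta$; translating $\tilde\rho_2$ by the element of $\Gamma_X$ sending $\tilde\xi_2$ to $\tilde\xi_1$, the two lifts converge to a common ideal point, so the $\tilde\tau$ case makes them eventually coincide, and projecting down so do $\rho_1,\rho_2$. The main obstacle throughout is the one isolated above: pinning the limit point of a lifted train path to $D_{\Gamma_X}$ rather than $L_{\Gamma_X}$, which is exactly the behaviour at infinity that the funnel condition in the definition of an almost track is designed to rule out; everything else is bookkeeping with quasigeodesic stability and the preceding propositions.
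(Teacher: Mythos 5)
Your proof is correct and follows essentially the same strategy as the paper's: reduce to the $\tilde\tau$ case via quasigeodesic stability, then show that the ideal endpoint of a lifted noncompact infinite path in $\tau^X$ must lie in $D_{\Gamma_X}$ because otherwise the projected path is eventually trapped in a compact subset of $S^X$, after using the almost-track condition (in particular $\tau\subseteq S_\bullet$) to separate $\tau^X$ from the cusps. The only presentational difference is that you split $L_{\Gamma_X}$ into conical and parabolic cases, whereas the paper handles both at once by observing that a quasigeodesic to any point of $L_{\Gamma_X}$ eventually lies in $\bar\nei_\epsilon\left(H(L_{\Gamma_X})\right)$ and hence projects into the compact set $(N\setminus P)/\Gamma_X$ away from the equivariant horoball system $P$.
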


\begin{proof}
All the statements concerning train paths in $\tilde\tau$ are easy consequences of the above propositions.

As for an infinite path $\rho$ in $\tau^X$ with noncompact image: let $\pi_1(X)\cong\Gamma_X<\Isom(\tilde S)$ be the group such that $S^X=\tilde S/\Gamma_X$. Let $P$ be a disjoint, $\Gamma_X$-equivariant union of an open horoball for each point of $\partial\tilde S$ which is parabolic under the action of $\Gamma_X$, and with the property that $\tau^X\cap P=\emptyset$: $\left(H(L_{\Gamma_X})\setminus P\right)/\Gamma_X$ is a compact surface with boundary. 

Let $\tilde\rho$ be any lift of $\rho$ to $\tilde S$: $\tilde\rho$ is a quasigeodesic in $\tilde S$, as seen in the previous proposition. If its endpoint at infinity belongs to the limit set $L_{\Gamma_X}$ then there is an $\epsilon>0$ such that $\tilde\rho$, with the exception of an initial segment, falls entirely in the closed neighbourhood $N\coloneqq\bar\nei_\epsilon\left(H(L_{\Gamma_X})\right)$. Therefore $\rho$ lies eventually in the compact set $(N\setminus P)/ \Gamma_X$, which means that it is compact itself.

So the endpoint at infinity of $\tilde\rho$ belongs to the domain of discontinuity $D_{\Gamma_X}$; let $\tilde A$ be an open neighbourhood of this point such that $\tilde A\cap \tilde S$ gets mapped isometrically via the covering map $\tilde S\cup D_{\Gamma_X}\rightarrow \ol{S^X}$; and let $A$ be its image in $\ol{S^X}$. Then $\rho\cap A$, which excludes only an initial segment of $\rho$, is a quasigeodesic in $S^X$, and has a unique limit point on $\partial\ol{S^X}$.

If two infinite paths $\rho,\rho'$ in $\tau^X$ with noncompact image approach the same endpoint in $\partial\ol{S^X}$, then one may choose lifts $\tilde\rho,\tilde\rho'$ to $\tilde S$ which also approach the same point in $\partial\ol{\tilde S}$, and so they eventually coincide. But then, $\rho, \rho'$ also do.
\end{proof}

\begin{rmk}\label{rmk:limitsetconsistency}
If $\tau$ carries $\tau'$, then all train paths in $(\tau')^X$ are carried by $\tau^X$, too.

In particular, the set of points of $\partial S^X$ which are endpoints for some infinite, noncompact train path in $(\tau')^X$ is a subset of the similarly defined set for $\tau^X$.
\end{rmk}

The concept of subsurface projection finds a version for train tracks in \cite{mms}. Before we introduce it, we need a slight generalization of Definition \ref{def:carried}. If $X$ is an annular subsurface of a surface $S$ and $\beta\in \cc(X)$, a \emph{carried realization} $f:\beta \hookrightarrow \bar\nei(\tau^X)$ is specified by the same hypotheses as in the definition of carried realization of a curve, plus the extra one that the endpoints of $f(\beta)$ in $\partial\ol{S^X}$ are the same as the ones of $\beta$. This gives sense to the adjective \emph{carried}, hence to the notation $\cc(\tau^X)$ for the subset of $\cc^0(X)$ consisting of all arcs carried by $\tau^X$. We can adapt similarly the content of Definition \ref{def:trainpathrealization} to speak of a \emph{train path realization} of an arc $\beta$ carried by $\tau^X$.

\begin{defin}
Given $\tau$ an almost track on a surface $S$ and $X\subseteq S$ a subsurface, the \nw{track induced by $\tau$} on $X$ is $\tau|X$, the subtrack of $\tau^X$ consisting of those branches belonging to the carrying image of some element of $\cc(\tau^X)$.
\end{defin}

Note that $\cc(\tau|X)=\cc(\tau^X)$ and that, if $X$ is not an annulus, then $\cc(\tau|X)=\cc(\tau)\cap\cc^0(S^X)= \cc(\tau)\cap\cc^0(X)$. If $X$ is an annulus, then each element of $\cc(\tau)$ which intersects $X$ essentially lifts to an element of $\cc(\tau|X)$, but it is not true in general that an element of $\cc(\tau|X)$ projects to an element of $\cc(\tau)$.

Following \cite{mms}, we define $V(\tau|X)\subset \cc(\tau|X)$ to be the set of \emph{wide} carried arcs. Again, the definition of wide carried arc here is not any different from the definition of wide carried curve in an almost track.

\begin{lemma}\label{lem:inducedisonsurface}
If $X$ is not an annulus, then the induced track $\tau|X$ is an almost track on $S^X$. Moreover, if $r:\ol{S^X}\rightarrow \core(S^X)=X\cap\core(S)$ is a retraction, then $r(\tau|X)\eqqcolon\rho$ is a pretrack contained in $\inte(X)$ which is an almost track in $S$, too.
\end{lemma}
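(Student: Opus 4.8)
I would prove the two assertions in turn, leaning throughout on the index bookkeeping already set up in Remarks~\ref{rmk:idx_of_nei_diff} and~\ref{rmk:negativeindexincover}, and on the structure of $S^X$, when $X$ is not an annulus, as the convex core $\core(S^X)=X\cap\core(S)$ --- a genuine finite-type surface with $\chi<0$ --- with funnels and cusps attached. (This is exactly where non-annularity of $X$ is used: for $X$ an annulus $S^X\cong\R\times\sph^1$ has no negative-index core and the almost-track condition cannot hold, which is why annuli are treated separately.) For orientation: the first part is local-to-$S^X$, the second transports the result down to $S$ via the retraction $r$.

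\textbf{First assertion.} Since $\tau|X$ is a subtrack of the pretrack $\tau^X$, it is itself a semigeneric pretrack on $S^X$. The first point needing care is that $\tau|X$ is \emph{compact}. Here I would argue that $\tau$, being an almost track, may be taken disjoint from neighbourhoods of the cusps of $S$, so that $\tau^X$ (hence $\tau|X$) avoids neighbourhoods of the cusps of $S^X$; and that, since every branch of $\tau|X$ is by definition traversed by a \emph{closed} (periodic) train path of $\tau^X$, while such paths are quasigeodesics (Proposition~\ref{prp:paths_quasi_geod}) and hence stay within bounded distance of their geodesic representatives in $\core(S^X)$, the set $\tau|X$ is contained in a bounded neighbourhood of $\core(S^X)$; being also closed in $S^X$, it is compact, so (choosing the peripheral annuli of $S^X$ accordingly) $\tau|X\subseteq (S^X)_\bullet$. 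It then remains to examine the components of $S^X\setminus\nei(\tau|X)$: choosing $\bar\nei(\tau|X)\subseteq\bar\nei(\tau^X)$ as in the discussion after Remark~\ref{rmk:idx_of_nei_diff}, each such component is assembled, with $\idx$ adding up, from compact components of $S^X\setminus\nei(\tau^X)$ (of negative index by Remark~\ref{rmk:negativeindexincover}), from zero-index pieces of $\bar\nei(\tau^X)\setminus\nei(\tau|X)$ (Remark~\ref{rmk:idx_of_nei_diff}), and possibly from pieces running out to the funnel ends of $S^X$. A component meeting a negative-index piece has negative index; a component meeting none of them is a peripheral annulus sitting at a funnel (or already-excised cusp) end of $S^X$, and is a funnel since the bounding curve of such an end is isotopic to a closed geodesic. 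This gives the almost-track condition for $\tau|X$ on $S^X$.

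\textbf{Second assertion.} Recall from~\S\ref{sub:surfaces} that the natural retraction $r\colon\ol{S^X}\to\core(S^X)$ is a diffeomorphism isotopic to the identity, restricting to a diffeomorphism $S^X\to\inte(\core(S^X))=\inte(X\cap\core(S))$. Hence $\rho=r(\tau|X)$ is the diffeomorphic image of the compact semigeneric pretrack $\tau|X$ under this map, so it is again a compact semigeneric pretrack, and $\rho\subseteq\inte(X\cap\core(S))\subseteq\inte(X)$. To see $\rho$ is an almost track in $S$, note that the components of $(X\cap\core(S))\setminus\nei(\rho)$ correspond under $r$ to those of $\core(S^X)\setminus\nei(\tau|X)$, which by the previous paragraph are negative-index or collars of $\partial\core(S^X)$ (the traces of the funnel components on the core). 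Gluing these to $S\setminus(X\cap\core(S))$ along $\partial(X\cap\core(S))$ --- which, assuming $X$ has geodesic boundary, equals $\partial X$ --- and again adding indices, each component of $S\setminus\nei(\rho)$ either inherits a strictly negative index from an interior negative-index piece, or is a peripheral annulus of $S$ obtained by capping a $\partial X$-collar with an annular piece of $\core(S)\setminus X$ that runs out to a funnel of $S$, hence is itself a funnel of $S$. This is the almost-track condition for $\rho$ in $S$.

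\textbf{Main obstacle.} I expect the genuine work to be (i) the compactness of $\tau|X$, i.e.\ controlling exactly how far $\tau|X$ can penetrate the funnels of $S^X$, for which one really must use the recurrence-by-definition of $\tau|X$ together with the end behaviour of train paths from Proposition~\ref{prp:paths_quasi_geod} and Corollary~\ref{cor:distinctends}; and (ii) the bookkeeping in the second part confirming that every non-negative-index component of $S\setminus\nei(\rho)$ is one of the funnel-type peripheral annuli permitted by the definition of almost track. Both steps depend on $X$ being non-annular, so that $S^X$ has the core-plus-funnels structure used throughout.
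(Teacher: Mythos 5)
Your overall architecture matches the paper's --- the same index bookkeeping via Remarks~\ref{rmk:idx_of_nei_diff} and~\ref{rmk:negativeindexincover}, the same two-stage passage from $S^X$ to $S$ --- and the explicit compactness check for $\tau|X$ via Proposition~\ref{prp:paths_quasi_geod} and Corollary~\ref{cor:distinctends} is a careful addition that the paper leaves implicit. (The paper instead works with $\rho=r(\tau|X)$ directly on $\core(S^X)$, which sidesteps compactness; the reversal of order is harmless.)

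The genuine gap is in the first assertion. After the index argument rules out bad compact complementary components, you are left with components that are peripheral annuli of $S^X$, and these may sit either at funnel ends or at cusp ends of $S^X$; cusp ends do occur, precisely when $X$ has punctures. Your sentence ``is a funnel since the bounding curve of such an end is isotopic to a closed geodesic'' merely asserts the conclusion that must be proved --- for a cusp end the peripheral loop is exactly the kind of curve that is \emph{not} isotopic to any closed geodesic --- and the parenthetical ``(or already-excised cusp)'' does not fill this in, since the definition of almost track forbids cusp-type peripheral annuli rather than ignoring them. The paper's proof handles precisely this case by pushing down to $S$: a smooth-boundary peripheral annulus around a cusp of $\core(S^X)$ would furnish a periodic train path in $\tau|X\subseteq\tau^X$ isotopic to its boundary, whose projection under $S^X\to S$ is a periodic train path of $\tau$ encircling a cusp of $S$, contradicting $\tau$ being an almost track. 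Some argument of this kind is needed; without it the first assertion, and hence the second, is not established. Your argument for the second assertion also quietly presupposes the funnel-only conclusion (``an annular piece of $\core(S)\setminus X$ that runs out to a funnel of $S$''), so the same gap propagates there.
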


Let $p:S^X\rightarrow S$ be the covering map. In the statement we have already implicitly identified $p^{-1}(X)$ with $X$, and $\core(S^X)$ with $X\cap \core(S)$, using the fact that these pairs are isometric via $p$. As it is noted in \cite{mms}, Lemma 3.1 and subsequent remarks, $\tau|X$ need not be a train track when $\tau$ is.

\begin{proof}
We prove directly that $\rho$ is an almost track in $S^X$; clearly this will mean that $\tau|X$ is, too. Also, we may suppose that $X$ has geodesic boundary\footnote{If $X$ has two isotopic boundary components in $S$, their geodesic representatives end up coinciding. But this is no cause for concern, as the boundary components of $X\subseteq S^X$ are pairwise not isotopic.}. 

We claim that the compact connected components of $\core(S^X)\setminus\nei(\rho)$ either have negative index or are compact peripheral annuli in $X$. If one of these components, $C$ say, were not either of these two alternatives, then $\idx(C)\geq 0$ and $C$ would lie away from $\partial \core(S^X)$. But then, consider $r^{-1}(C)$: it would be contained in $S^X$, hence it would be a compact connected component of $S^X\setminus\nei(\tau^X)$. So $\idx\left(r^{-1}(C)\right)<0$, because it would be the gluing of patches as in Remark \ref{rmk:negativeindexincover} plus other ones ruled by Remark \ref{rmk:idx_of_nei_diff}. This is a contradiction.

We will now exclude that $\core(S^X) \setminus \nei(\rho)$ includes a connected component which is a cusp $P$ with smooth boundary $\partial P$. If there is one, then there are a periodic train path in $\rho$, and one in $\tau|X$, which (up to the due reparametrization) are isotopic to $\partial P$; necessarily, there is no closed geodesic in $S^X$ which is isotopic to $\partial P$. 

There is a chain of inclusions of closed subsets $p(P)\subseteq p\left(\core(S^X)\right)\subseteq \core(S)\subseteq S$ (again, remember that $p(P)\cong P$ and $p\left(\core(S^X)\right)\cong \core(S^X)$); note that there cannot be any closed geodesic in $S$ which is isotopic to $\partial p(P)$, either.

On the other hand, $\partial P=p(\partial P)$ is also an embedded loop in $S$, isotopic to a (reparametrized) train path along $p(\tau^X)\subseteq \tau$. However, since $\tau$ is an almost track, is does not admit periodic train paths encircling cusps, hence a contradiction.

The argument carried out so far proves that any smooth connected component of $\partial\bar\nei(\rho)$ which is inessential in $\core(S^X)$ (which is the same as saying, inessential in $S^X$) is parallel to a closed geodesic, one of the connected components of $\partial\core(S^X)$. This means that $\rho$ is an almost track in $S^X$, and so is $\tau|X$.

Now we prove that $p(\rho)$ (which can be identified with $\rho$ itself) is an almost track on $S$. Each connected component of $S\setminus p\left(\nei(\rho)\right)$ is either contained in $X$ and, in this case, it is a copy of a connected component of $p^{-1}(X)\setminus \nei(\rho)$; or it contains entirely a closed geodesic, a connected component of $\partial X$. None of these complementary components may transgress the conditions that make $p(\rho)$ an almost track in $S$.
\end{proof}

\begin{rmk}\label{rmk:annulusinducedbasics}
Let $\gamma\in\cc(S)$, let $X$ be a regular neighbourhood of $\gamma$ and let $\tau$ be an almost track on $S$. We list here a few basic facts about the induced track $\tau|X$.

\begin{enumerate}
\item If $\gamma$ admits a carried realization in $\tau|X$ then it admits one in $\tau$. Note, though, that the first one is certainly embedded as a train path (see below), whereas the latter need not be. The converse is not true in general: if $\gamma$ is carried by $\tau$ but $\cc(\tau^X)=\emptyset$ then $\gamma$ is not carried by $\tau|X$. On the other hand, as we note in point \ref{itm:gammacarriedininduced} below, $\gamma$ is carried by $\tau|X$ if $\cc(\tau^X)\not=\emptyset$.

\item\label{itm:embeddedcore} \textit{When $\gamma$ is carried by $\tau^X$, it is embedded as a train path. The realization as a train path is unique up to obvious reparametrizations.}

Uniqueness of carrying of $\gamma$ provided the existence of a carried realization is just a consequence of the uniqueness of carrying of $\gamma$ in $\tau$ (Corollary \ref{cor:carryingunique}). 

We prove that the train path realization of $\gamma$ is embedded by contradiction. Suppose first that $\tau^X.\gamma$ is not a simple curve: then (forgetting about it being a pretrack) it is a compact 1-complex with two or more distinct simple cycles. As $S^X$ is a planar surface, this means that $\ol{S^X}\setminus\nei(\tau^X.\gamma)$ consists of at least $3$ connected components. Considering that $\tau^X.\gamma$ keeps away from $\partial\ol{S^X}$, and that $\gamma$ is not nullhomotopic in $S^X$, two of these components include each one component of $\partial\ol{S^X}$ and are topological annuli, possibly with a bunch of outward right angles in their boundaries. So their index is not positive.

Any other connected component $C$ is also a compact connected component of $S^X\setminus\nei(\tau^X.\gamma)$, and it must have $\idx(C)<0$ because it is a gluing of regions as in Remarks \ref{rmk:negativeindexincover} and \ref{rmk:idx_of_nei_diff}. This means that $\ol{S^X}$ is a gluing of regions with index $\leq 0$, and one of them has certainly index $<0$. But this is impossible since $\idx(\ol{S^X})=0$.

So $\tau^X.\gamma$ is a simple curve, not nullhomotopic in $S^X$. If $\gamma$ traversed any branch there more than once, it would traverse at least twice all of them, and always keeping the same orientation. But this means that $\gamma$ would not be a generator for $\pi_1(S^X)$, thus it contradicts the definition of $X$.

Before we continue with more observations, we give a definition:
\begin{defin}\label{def:eorientation}
Let $\tau$ be an almost track, let $\gamma\in \cc(S)$ and let $X$ be a regular neighbourhood of $\gamma$. Suppose that $\gamma$ is carried by $\tau$ (and by $\tau^X)$ and that $e$ is a branch end of $\tau^X$ such that $e\cap \tau^X.\gamma=\{v\}$, where $v$ is the switch of $\tau^X$ which serves as endpoint of $e$. Consider a train path $\beta$ in $\tau^X$ which traverses the branch containing $e$ pointing towards $v$, and then follows $\gamma$ until it reaches $v$ again. The \nw{$e$-orientation} on $\gamma$ is the orientation specified by the path $\beta$.
\end{defin}

\item\label{itm:uniquecarrying} \textit{Uniqueness of carrying on $\tau^X$ for an element $\alpha\in \cc(\tau^X)$ holds, similarly as for train tracks and almost tracks.}

It is enough to show that there is only one train path realization of $\alpha$, up to reparametrization. Let then $\ul\alpha_1,\ul\alpha_2$ be two train path realizations of $\alpha$: they have the same endpoints on $\partial\ol{S^X}$. Consider each of these two realizations as the union of two infinite train paths approaching opposite components of $\partial\ol{S^X}$: then, by Corollary \ref{cor:distinctends}, they coincide outside a compact subset of $S^X$.

Since $\ul\alpha_1,\ul\alpha_2$ are homotopic arcs, there are lifts $\tilde{\ul\alpha}_1,\tilde{\ul\alpha}_2$ of the two on $\tilde S$ which approach the same pair of points at infinity. By Corollary \ref{cor:distinctends} $\tilde{\ul\alpha}_1,\tilde{\ul\alpha}_2$ must coincide, and so do $\ul\alpha_1,\ul\alpha_2$.

\item\label{itm:windaboutgamma} \textit{Let $\alpha\in\cc(\tau^X)$. If $\alpha$ traverses at least one of the branches of $\tau^X$ twice, then $\gamma$ is carried by $\tau^X$ and all branches traversed more than once by $\alpha$ must belong to $\tau^X.\gamma$.}

The carrying image $\tau^X.\alpha$ is a generalized pretrack. Among the connected components of $S^X\setminus\nei(\tau^X.\alpha)$ there is no topological closed disc: if there is one, which we call $D$, then lift $\alpha$ to an arc $\tilde\alpha$ in $\tilde S$. This is homotopic, with fixed extremes, to a biinfinite train path along $\tilde\tau$: let $\tilde\tau.\tilde\alpha$ be its image. $D$ lifts homeomorpically to a connected component of $\tilde S\setminus\nei(\tilde\tau.\tilde\alpha)$; but the presence of a topological disc among them is a contradiction to Proposition \ref{prp:paths_in_univ_cover}, according to which train paths along $\tilde\tau$ are embedded.

Encircle each component of $\partial\ol{S^X}$ with a smooth loop which intersects $\nei(\tau^X.\alpha)$ in exactly one tie. Let $Y$ be the compact annulus delimited by these two loops in $S^X$. As a consequence of the previous paragraph, each connected component of $Y\setminus \nei(\tau^X.\alpha)$ includes part of $\partial Y\setminus\nei(\tau^X.\alpha)$, which consists of exactly 2 components. The connected components of $Y\setminus \nei(\tau^X.\alpha)$, then, are at most 2.

If $Y\setminus \nei(\tau^X.\alpha)$ is connected, then $\idx\left(Y\setminus \nei(\tau^X.\alpha)\right)=0$, by index additivity since $\idx(Y)=\idx\left(\bar\nei(\tau^X.\alpha)\right)=0$. This means that $Y\setminus \nei(\tau^X.\alpha)$ is a rectangle i.e. $\partial\bar\nei(\tau^X.\alpha)$ consists of two smooth components, and they are both isotopic to $\alpha$: so $\alpha$ will be embedded.

If, instead, the components of $Y\setminus \nei(\tau^X.\alpha)$ are two, then each of them is necessarily homeomorphic to a disc. So $\left((\tau^X.\alpha)\cap Y\right)\cup\partial Y$ is a 3-valent graph, hence with the property that $2\#\text{(edges)}=3\#\text{(vertices)}$, which cuts the annulus $Y$ into two cells. 

Computation of the Euler characteristic yields that the graph has $6$ edges and $4$ vertices. This implies that $\tau^X.\alpha$ is a generalized pretrack with two switches, two compact branches, and two more branches heading towards the two opposite components of $\partial\ol{S^X}$.

\begin{figure}[h]
\centering
\includegraphics[width=.25\textwidth]{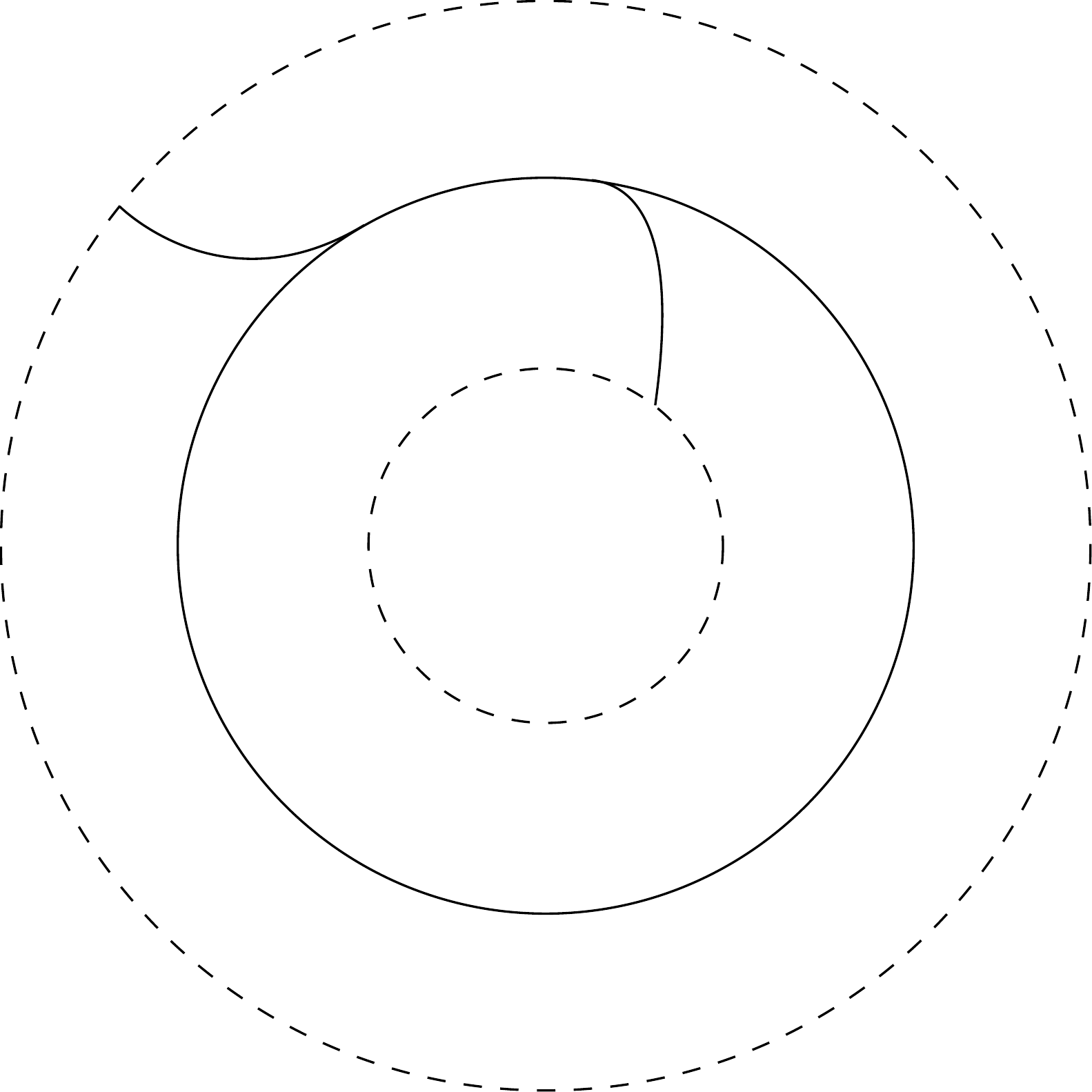}
\caption{\label{fig:arcinannularcover}The only possibility for $\tau^X.\alpha$ (Remark \ref{rmk:annulusinducedbasics}, point \ref{itm:windaboutgamma}) provided that $\alpha$ does not have an embedded realization as a train path, up to a diffeomorphism of $S^X$ (possibly an orientation-reversing one). The arc $\alpha$ may wind around the core curve an arbitrarily high number of times.}
\end{figure}

There is only one diffeomorphism type of generalized pretrack in $S^X$ with these properties and that carries a path from one component of $\partial\ol{S^X}$ to the other, traversing all branches of the pretrack: $\tau^X.\alpha$ must belong to this class (Figure \ref{fig:arcinannularcover}). So $\tau^X.\alpha$, in particular, carries $\gamma$.

Necessarily $\alpha$, after traversing the first infinite branch of $\tau^X.\alpha$, winds around $\tau^X.\gamma$ traversing each branch always in the same direction, and eventually leaves it to traverse the remaining infinite branch of $\tau^X.\alpha$.

The structure of an arc $\alpha\in \cc(\tau^X)$ when $\gamma$ is carried can therefore be summarized as follows.

\item \label{itm:horizontalstretch} \textit{Suppose that $\tau^X$ carries $\gamma$. Let $\alpha\in\cc(\tau^X)$ and let $\ul\alpha$ be a realization of $\alpha$ as a train path along $\tau^X$. Then $\ul\alpha$ consists of the concatenation of three segments $\rho_1,\beta,\rho_2$ such that: $\beta$ traverses branches of $(\tau|X).\gamma$, always in the same direction; $\rho_1$ and $\rho_2$ are embedded train paths, connecting $\tau^X.\gamma$ with distinct components of $\partial\ol{S^X}$ and not traversing any branch in $\tau^X.\gamma$. The branch ends of $\rho_1\rho_2$ which are adjacent to $\tau^X.\gamma$ induce opposite orientations on $\gamma$.}

We will denote $\beta=\hs(\tau,\alpha)$ and call it the \nw{horizontal stretch} of $\alpha$ in $\tau$. We have that $\alpha\in V(\tau^X)$ if and only if $\hs(\tau,\alpha)$ is embedded. In this case we may abusively identify $\hs(\tau,\alpha)$ with its image $\tau^X.\gamma\cap\tau^X.\alpha$.

The paths $\rho_1,\rho_2$ also deserve a name. When $\tau^X$ carries $\gamma$, we call an \nw{outgoing ramp} a train path $\rho:[0,+\infty)\rightarrow \tau^X$ such that $\rho(0)\in\tau^X.\gamma$, and converging to a point in $\partial\ol{S^X}$ without traversing any branch in $\tau^X.\gamma$. An \nw{ingoing ramp} is a train path $\rho:(-\infty,0]\rightarrow \tau^X$ such that the map $\rho'(x)\coloneqq \rho(-x)$ is an outgoing ramp.

\item\label{itm:dataforramp} \textit{Suppose two distinct outgoing ramps for $\tau^X$ diverge towards the same point of $\partial\ol{S^X}$. Then their initial branch ends give opposite orientations to $\gamma$.}

Let $\rho_1,\rho_2:[0,+\infty)\rightarrow \tau^X$ be the two ramps. By Corollary \ref{cor:distinctends} they eventually coincide so the subtrack $\sigma=(\tau^X.\gamma)\cup\mathrm{im}(\rho_1)\cup\mathrm{im}(\rho_2)$ has, among the components of $S^X\setminus\nei(\sigma)$, a compact component which is a compact topological disc. It has at least two outward corners in the boundary --- at the confluence of $\rho_1,\rho_2$ --- but, as it is a union of components as in Remarks \ref{rmk:idx_of_nei_diff} and \ref{rmk:negativeindexincover}, its index is negative and the corners in the boundary must be at least three more. This is possible only if it has corners both at $\rho_1(0)$ and at $\rho_2(0)$, which means that their initial branch ends induce opposite orientations on $\gamma$.

\item \textit{The elements of $V(\tau^X)$ are exactly the ones of $\cc(\tau^X)$ which are embedded as train paths and, if $\tau|X$ does not carry $\gamma$, then $V(\tau|X)=\cc(\tau|X)$} (cfr. Lemma 3.5 in \cite{mms}). \textit{If $\tau|X$ carries $\gamma$ then, for any $\alpha\in\cc(\tau|X)$, there exist $\beta\in V(\tau|X)$, $m\in\mathbb Z$, such that $\alpha=D_X^m(\beta)$. Here $D_X$ is the self-diffeomorphism \emph{of $S^X$} given by the Dehn twist about its core curve $\gamma$.}

The first sentence is just a direct consequence of point \ref{itm:windaboutgamma} above. As for the second one: consider the decomposition given in the above point. So $\alpha$ fails to be wide carried if and only if $\hs(\tau,\alpha)$ is not an embedded train path, and this occurs if and only if it traverses all branches of $\tau^X.\gamma$. But then one between $D_X(\ul\alpha)$ and $D_X^{-1}(\ul\alpha)$ traverses each branch of $\tau^X.\gamma$ once less than $\ul\alpha$. Repeat Dehn twisting until the arc thus obtained has an embedded horizontal stretch.

\item \label{itm:gammacarriedininduced} A similar argument as in the point above yields a weaker form for the missing implication in point 1: if $\gamma$ is carried by $\tau$ (hence by $\tau^X$, too) and $\cc(\tau^X)\not=\emptyset$ then $\gamma$ is carried by $\tau|X$. Given any $\alpha\in\cc(\tau^X)$ then, up to replacing $\alpha$ with the correct $D_X^{\pm 1}(\alpha)\in\cc(\tau^X)$, we can suppose that it traverses all branches of $\tau^X.\gamma$. So those branches are part of $\tau|X$.

\item \label{itm:diambound}\textit{Given any two $\alpha_1,\alpha_2\in V(\tau|X)$, $i\coloneqq i(\alpha_1,\alpha_2)\leq 4$ --- hence $d_{\cc(X)}(\alpha_1,\alpha_2)\leq 5$.}

Consider any two carried realizations $\ul\alpha_1,\ul\alpha_2$ which realize the intersection number $i$ between their isotopy classes; we identify them with their images in $S^X$. We may suppose that $i>0$. Perform a series of isotopies on $\ul\alpha_1,\ul\alpha_2$, transversely to the ties of $\nei(\tau|X)$, resulting in two new realizations $\ul\alpha'_1,\ul\alpha'_2$. These isotopies shall be performed in such a way that each transverse intersection between $\ul\alpha_1$ and $\ul\alpha_2$ turns into an entire \emph{segment} of intersection between $\ul\alpha'_1$ and $\ul\alpha'_2$, without introducing new connected components of intersection. Informally, we may say that the local picture around each component of $\ul\alpha'_1\cap\ul\alpha'_2$ is the same as the one around a large branch in a pretrack.

Let then $\sigma\coloneqq \ul\alpha'_1\cup\ul\alpha'_2$: it is a generalized pretrack, carried by $\tau|X$. It has no mixed branches: the existence of one would imply that one of the two arcs traverses some branch of $\tau|X$ more than once and this contradicts the fact that they are embedded as train paths. Fix a tie neighbourhood $\bar\nei(\sigma)\subseteq \nei(\tau|X)$: then each compact connected component $C$ of $S^X\setminus \nei(\sigma)$ has negative index because, as $\alpha_1,\alpha_2$ traverse each branch of $\tau|X$ at most once and we have chosen representatives which intersect minimally, $C$ includes at least one compact component of $S^X\setminus\nei(\tau|X)$; Remarks \ref{rmk:idx_of_nei_diff} and \ref{rmk:negativeindexincover} apply. So this means that, if a component of $S^X\setminus \sigma$ has compact closure, then it is a topological disc and contains at least three outward cusps in its boundary.

Now, $(\ul\alpha'_1\cup\ul \alpha'_2\cup \partial Y)\cap Y$ as a graph has $2i+4$ vertices (out of which $4$ lie along $\partial Y$ and the others are switches), and they are all trivalent. The edges of the graph are $i+2(i+1)+4$ (here they are counted as large branches + small branches + edges $\partial Y$ is cut into). By an Euler characteristic computation, this graph cuts $Y$ into $i+2$ regions, each homeomorphic to a disc. Out of these, exactly $4$ have an edge along $\partial Y$; so the remaining $i-2$ ones are identifiable with the connected components of $S^X\setminus\sigma$ with compact closure.

We have noted that each of these latter regions has at least $3$ outward cusps in its boundary; moreover, out of the four regions which are adjacent to $\partial Y$, exactly two must contain an outward cusp. In total, the number of outward cusps in $\bar\nei_0(\sigma)$ is $2i$, because each large branch of $\sigma$ provides two of them. Hence $2i\geq 3(i-2)+2$ and $i\leq 4$.
\end{enumerate}
\end{rmk}

\begin{lemma}\label{lem:decreasingfilling}
Let $\tau$ be an almost track on $S$. Then the subsurface $S'$ of $S$ --- not necessarily a connected one, and possibly including annular components --- filled by the curves in $V(\tau)$ is also the subsurface filled by $\cc(\tau)$.

Moreover, if $C^1,\ldots, C^k$ are the connected components of $S'$ then, for each $i$ such that $C^i$ is non-annular, $V(\tau|C^i)=V(\tau)\cap\cc(C^i)$.
\end{lemma}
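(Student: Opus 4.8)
The plan is to establish the two assertions in sequence, relying on the measure-theoretic description of $\cc(\tau)$ furnished by Proposition \ref{prp:measurecurvecorresp} and its Corollary \ref{cor:measurecurvecorresp}, together with the definition of $V(\tau)$ as the curves corresponding to the extremal rays of the cone $\mathcal M(\tau)$. For the first claim, write $S''$ for the subsurface filled by all of $\cc(\tau)$ and $S'$ for the one filled by $V(\tau)$. Since $V(\tau)\subseteq\cc(\tau)$, a curve intersecting something in $V(\tau)$ certainly intersects something in $\cc(\tau)$, so $S'\subseteq S''$ (up to isotopy). The content is the reverse inclusion, i.e.\ that any $\alpha\in\cc(\tau)$ is contained, up to isotopy, in $S'$; equivalently, that $\alpha$ is disjoint from each boundary curve of $S'$ and from each curve lying outside $S'$. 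The key point is that the transverse measure $\mu_\alpha$ is a nonnegative combination $\sum a_j\mu_{\gamma_j}$ of the extremal measures $\mu_{\gamma_j}$ with $\gamma_j\in V(\tau)$ (this is just the statement that $\mathcal M(\tau)$ is the convex hull of the rays $r_j$, applied to the rational point $\mu_\alpha$); hence a branch $b$ is traversed by $\alpha$ only if it is traversed by some $\gamma_j\in V(\tau)$. Therefore the carrying image $\tau.\alpha$ is contained in $\bigcup_j \tau.\gamma_j$, so a carried realization of $\alpha$ can be isotoped into any regular neighbourhood of $\bigcup_j \tau.\gamma_j$, which deformation retracts into $S'$. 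Thus $\alpha$ is carried by a track contained in $S'$, whence $\alpha$ lies in $S'$ up to isotopy; since this holds for every $\alpha\in\cc(\tau)$, $S''\subseteq S'$.

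For the second claim, fix a non-annular component $C^i$ of $S'$. Since $S^{C^i}$ is a covering of $S$ and the inclusion $C^i\hookrightarrow S^{C^i}$ is the one used to regard $C^i$ inside both $S$ and $S^{C^i}$, the remark following Definition \ref{def:eorientation} (or rather the discussion right after the definition of $\tau|X$) gives, for $C^i$ non-annular, $\cc(\tau|C^i)=\cc(\tau)\cap\cc^0(C^i)$. By Lemma \ref{lem:inducedisonsurface}, $\tau|C^i$ is an almost track on $S^{C^i}$, so it has a well-defined vertex set $V(\tau|C^i)$, the curves corresponding to the extremal rays of $\mathcal M(\tau|C^i)$. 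The inclusion $V(\tau|C^i)\supseteq V(\tau)\cap\cc(C^i)$ should follow by noting that if $\gamma_j\in V(\tau)$ happens to lie in $C^i$, then $\mu_{\gamma_j}$, viewed as a measure on $\tau|C^i$, is still extremal: any decomposition $\mu_{\gamma_j}=\mu_1+\mu_2$ in $\mathcal M(\tau|C^i)$ transports, via $\cc(\tau|C^i)=\cc(\tau)\cap\cc^0(C^i)\subseteq\cc(\tau)$ and Proposition \ref{prp:measurecurvecorresp} applied in $\tau|C^i$ and in $\tau$ (after puncturing away peripheral annuli as in Corollary \ref{cor:measurecurvecorresp}), to a decomposition of $\mu_{\gamma_j}$ inside $\mathcal M(\tau)$, which must be trivial by extremality of $\gamma_j$ there. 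For the reverse inclusion $V(\tau|C^i)\subseteq V(\tau)\cap\cc(C^i)$: an extremal curve $\beta$ of $\mathcal M(\tau|C^i)$ is carried by $\tau$ and lies in $C^i$; writing $\mu_\beta=\sum_j a_j\mu_{\gamma_j}$ over $\gamma_j\in V(\tau)$, each $\gamma_j$ appearing with $a_j>0$ must satisfy $\tau.\gamma_j\subseteq\tau.\beta\subseteq C^i$, hence $\gamma_j\in\cc(\tau)\cap\cc^0(C^i)=\cc(\tau|C^i)$, and then $\mu_\beta=\sum_j a_j\mu_{\gamma_j}$ is a decomposition within $\mathcal M(\tau|C^i)$; extremality of $\beta$ forces a single term, so $\beta=\gamma_j\in V(\tau)$ for some $j$.

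The main obstacle I anticipate is handling the interplay between measures on $\tau|C^i$ (living on the cover $S^{C^i}$) and measures on $\tau$ (living on $S$), in particular making sure that the correspondence between weighted multicurves and rational transverse measures is being applied to genuine train tracks — which may require the puncturing trick of Corollary \ref{cor:measurecurvecorresp} to pass from almost tracks to train tracks — and checking that the identification $\cc(\tau|C^i)=\cc(\tau)\cap\cc^0(C^i)$ respects the induced transverse measures branch-by-branch (so that a curve carried by $\tau$ and lying in $C^i$ induces the same measure whether computed in $\tau$ restricted to branches inside $C^i$ or in $\tau|C^i$). Once that bookkeeping is in place, the extremal-ray argument is purely formal: extremality is detected by the impossibility of writing the measure as a sum of two nonzero transverse measures, and the set of branches available to a carried curve inside $C^i$ is the same in both tracks, so the two cones coincide on the relevant face and their extremal rays match up.
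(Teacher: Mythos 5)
Your argument for the first assertion has a genuine gap. After observing that $\tau.\alpha \subseteq \sigma := \bigcup_j \tau.\gamma_j$, you claim that a regular neighbourhood of $\sigma$ ``deformation retracts into $S'$'' and deduce that $\alpha$ is isotopic into $S'$. But $\nei(\sigma)$ is a subsurface of $S$ determined by the combinatorics of the subtrack $\sigma$, whereas $S'$ is determined by the isotopy classes of the curves in $V(\tau)$; there is no \emph{a priori} reason that $\nei(\sigma)$ should be ambient-isotopic into $S'$. Justifying it would require showing that any essential curve $\delta$ disjoint from every $\gamma_j$ can be realised disjointly from $\nei(\sigma)$, which in turn requires the uniqueness part of the measure--multicurve bijection (to rule out, for instance, $\delta$ being carried by $\sigma$ with a nontrivial measure) --- exactly the ingredient the paper invokes to bypass this isotopy question altogether. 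The paper's route is cleaner and different from yours: it decomposes $\mu_\alpha = \sum_{i,j} c^i_j\,\mu_{\alpha^i_j}$, where each $\alpha^i_j$ is obtained by pushing a carried curve of $\tau\|i$ down through the covering $S^{C^i}\to S$ into a curve \emph{literally embedded} in $C^i$, the whole family being pairwise disjoint; the bijection of Proposition~\ref{prp:measurecurvecorresp} then forces this decomposition to agree with $\{(\alpha,1)\}$, so $\alpha=\alpha^i_j$ for a single pair $(i,j)$ and lies in $C^i\subseteq S'$, with no retraction argument at all.

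This gap also leaks into your reverse inclusion for the second assertion, where ``$\tau.\gamma_j\subseteq\tau.\beta\subseteq C^i$'' takes for granted exactly the kind of containment the first assertion is meant to supply. The forward inclusion $V(\tau|C^i)\supseteq V(\tau)\cap\cc(C^i)$ and the extremal-ray reasoning are in the right spirit, but the bookkeeping you flag at the end is where most of the work actually lives: the paper devotes the bulk of its Step~1 to introducing punctured surfaces $Y^i$, genuine train tracks $\tau^i$ on them, and linear maps $f^i:\mathcal M_{\mathbb Q}(\tau^i)\to\mathcal M_{\mathbb Q}(\tau)$, precisely so that the extremality comparison and the decomposition of $\mu_\alpha$ are carried out on honest train tracks where Proposition~\ref{prp:measurecurvecorresp} applies. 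You mention but do not carry out the paper's Step~2 (the puncturing reduction from almost tracks to train tracks), so the almost-track case is left untreated as well.
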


\begin{proof}
\step{1} case of $\tau$ a train track.

Use the notation $\tau\|i$ to mean $\tau|C^i$ if $C^i$ is not an annulus, and the core curve of $S^{C^i}$ otherwise.

We identify each $C^i$ with its homeomorphic copy in $S^{C^i}$: this copy is a deformation retract of $\ol{S^{C^i}}$. With this convention, for each $v\in V(\tau)$ there is exactly one $i=i(v)$ such that $C^i$ contains a curve isotopic to $v$, and this curve either is essential there (for $C^i$ not an annulus) or is the core curve (for $C^i$ an annulus). Also, $v$ is carried by the respective $\tau\|i(v)$, enforcing the identification $\cc(S^{C^{i(v)}})=\cc(C^{i(v)})$.

For each $1\leq i\leq k$, let $Y^i\subset S^{C^i}$ be a surface (\emph{not} an essential subsurface of $S^{C^i}$ according to the definition used so far), defined as follows:
\begin{itemize}
\item if $C^i$ is not an annulus, fix a collection of peripheral annuli in $S^{C^i}$, one for each topological puncture, disjoint from each other and from $\bar\nei(\tau\|i)$; and remove one point from each of them (similarly as it is done in Corollary \ref{cor:carryingunique});
\item if $C^i$ is an annulus, remove one point from each connected component of $C^i\setminus \tau\|i$.
\end{itemize}
 
Let $\tau^i=\tau\|i$ setwise, but considered as a train track on $Y^i$. Then we may identify ${\mathcal M}_{\mathbb Q}(\tau^i)={\mathcal M}_{\mathbb Q}(\tau\|i)$ and there is a natural, linear map $f^i:{\mathcal M}_{\mathbb Q}(\tau^i)\rightarrow {\mathcal M}_{\mathbb Q}(\tau)$.

For each element $v \in V(\tau)$, $\tau^{i(v)}$ carries a unique curve $\tilde v$, essential in $Y^{i(v)}$, such that the chain of natural maps $Y^{i(v)}\hookrightarrow S^{C^{i(v)}} \twoheadrightarrow \inte(C^{i(v)})\hookrightarrow S$ sends $\tilde v$ to a loop homotopic to $v$.

For each $\alpha\in\cc(\tau)$ we denote $\mu_\alpha$ the corresponding measure in ${\mathcal M}_{\mathbb Q}(\tau)$; while, for $\beta\in\cc(\tau^i)$, we denote $\nu^i_\beta$ the corresponding measure in ${\mathcal M}_{\mathbb Q}(\tau^i)$ (see Definition \ref{def:transversemeasure}). For all $v\in V(\tau)$, $f^{i(v)}(\nu^{i(v)}_{\tilde v})=\mu_v$, implying in particular that $\tilde v\in V(\tau^{i(v)})$: if $\nu^{i(v)}_{\tilde v}$ could be written nontrivially as a sum in ${\mathcal M}_{\mathbb Q}(\tau^i)$, then also $\mu_v$ could be written nontrivially as a sum in in ${\mathcal M}_{\mathbb Q}(\tau)$. If $C^{i(v)}$ is not an annulus then, by definition of $i(v)$, $v\in \cc(\tau\|i(v))$ and $\nu^{i(v)}_v=\nu^{i(v)}_{\tilde v}$ under the identification ${\mathcal M}_{\mathbb Q}(\tau^i)={\mathcal M}_{\mathbb Q}(\tau\|i)$, so $v\in V(\tau\|i(v))$. In particular this proves the inclusion $V(\tau\|i)\supseteq V(\tau)\cap \cc(C^i)$ for all $1\leq i\leq k$ such that $C^i$ is not an annulus.

Fix now $\alpha\in \cc(\tau)$; then in ${\mathcal M}_{\mathbb Q}(\tau)$ there is an equality $\mu_\alpha=\sum_{v\in V(\tau)} a_v\mu_v$ for $a_v\in {\mathbb Q}_{\geq 0}$; note that $\mu_\alpha=\sum_{i=1}^k f^i(\nu^i)$ where $\nu^i\in {\mathcal M}_{\mathbb Q}(\tau^i)$, $\nu^i= \sum_{v\in V(\tau) \text{ s.t. } i(v)=i} a_v\nu_{\tilde v}$.

As a consequence of Proposition \ref{prp:measurecurvecorresp}, for each $1\leq i\leq k$ it is also possible to write $\nu^i=\sum_{j=1}^{s(i)} c^i_j \nu_{\beta^i_j}$ for $c^i_j\in{\mathbb Q}_{> 0}$ and $\{\beta^i_1,\ldots,\beta^i_s\}\subset \cc(\tau^i)$ a family of pairwise disjoint curves. But each train path realization $\ul\beta^i_j$ of $\beta^i_j$ in $\tau^i$ is also a train path in $\tau\|i$; as such, it is homotopic to an embedded loop $\hat \beta^i_j$, entirely contained in $C^i$, and possibly homotopic into a puncture of $C^i$.

The covering map $\hat p^i:S^{C^i}\rightarrow S$ turns this into a homotopy in $S$. So each $\hat p^i(\hat \beta^i_j)$, which can be considered to coincide with $\hat \beta^i_j$, is a simple closed curve in $S$, homotopic to the periodic train path $\hat p^i(\ul\beta^i_j)$ along $\tau^i$. Also, $\hat p^i(\hat \beta^i_j)$ is an essential curve in $S$, as no train path in $\tau$, which is a train track, may represent a null-homotopic or puncture-homotopic curve.

For each $i$, $j$, let $\alpha^i_j$ be the isotopy class of $\hat p(\hat\beta^i_j)$. This defines a family of isotopy classes of curves in $S$ which are admit pairwise disjoint realizations, and each of them may be supposed to lie entirely in the respective $C^i$. So $\mu_\alpha=\sum_{i=1}^k f^i(\nu^i) = \sum_{i=1}^k \sum_{j=1}^{s(i)} c^i_j \mu_{\alpha^i_j}$. The bijection guaranteed by Proposition \ref{prp:measurecurvecorresp}, then, implies that this double summation has only one entry, identical to the original $\mu_\alpha$. This implies that $\alpha$ is contained in $S'$, so $\cc(\tau)$ fills $S'$ as well as $V(\tau)$.

If $\alpha\in V(\tau\|i)$ for $C^i$ not an annulus, suppose that $\alpha\not\in V(\tau)$. Anyway $\alpha\in\cc(\tau)$, and the argument above proves, in particular, that in the expression $\mu_\alpha=\sum_{v\in V(\tau)} a_v\mu_v$ the coefficients $a_v\not=0$ all have the same $i(v)\eqqcolon i$. But then $\nu^i_\alpha=\sum_{v\in V(\tau)\cap \cc(C^i)} a_v\nu_v$ implying that only one $a_v\not=0$ because $\alpha$ is a vertex cycle for $\tau\|i$. So $V(\tau\|i)\subseteq V(\tau)\cap \cc(C^i)$.

\step{2} $\tau$ is only an almost track on $S$.

Similarly as above, let $T$ be the surface obtained from $S$ by picking a collection $P$ of peripheral annuli, one for each topological puncture, disjoint from each other and from $\bar\nei(\tau)$, and removing one point from each of them. Then $\tau_T=\tau$ is a train track on $T$, and $\cc(\tau_T)$ and $V(\tau_T)$ fill the same, possibly disconnected, subsurface $T'$ of $T$; up to isotopies, we may suppose that, every time a connected component of $\partial T'$ is isotopic to one of $\partial P$, they actually coincide. Let then $T''$ be the subsurface of $S$ consisting of all connected components of $T'\cup P$ which are not peripheral annuli. There is a natural bijection between the connected components of $T'$ and the ones of $T''$, and we claim that $T''$ is the subsurface of $S$ filled by both $V(\tau)$ and $\cc(\tau)$.

By definition, $V(\tau)$ may be identified with the subset of $V(\tau_T)$ consisting of all curves in this latter set which are not homotopically trivial in $S$; i.e. the ones that are not homotopic to a connected component of $\partial P$. An application of the same idea as in the proof of Corollary \ref{cor:carryingunique} guarantees that, among these elements of $V(\tau_T)$, no two distinct ones become isotopic in $S$. Fix an embedding in $T'$ for all elements in $V(\tau_T)$.

We prove first that, given any curve $\alpha\in\cc(T'')\subseteq \cc(S)$ there is an element of $V(\tau)$ that intersects it essentially. Isotope $\alpha$ so that it lies entirely in $T'$, and to be disjoint from all the chosen embeddings in $T'$ of elements in $V(\tau_T)\setminus V(\tau)$. Consider what happens in the surface $T$: since $V(\tau_T)$ fills $T'$, there exists an element of $V(\tau_T)$ which intersects $\alpha$ however $\alpha$ is isotoped within $T'$. This element is necessarily an element of $V(\tau)$, because we have already excluded the remaining elements of $V(\tau_T)$.

Now note that each element of $\cc(\tau)$ admits an embedding contained in $T'$, and is essential in $S$: so it has actually an embedding in $T''$. This means that $\cc(\tau)$ fills $T''$ again, because it cannot fill any higher complexity surface properly containing $T''$.

It has been proved above that, for any $Z'$ non-annular connected component of $T'$, $V(\tau_T|Z')=V(\tau_T)\cap \cc(Z')$. Let $Z''$ be the corresponding connected component of $T''$: then $(T^{Z'},\tau_T|Z')$ and $(S^{Z''},\tau|Z'')$ are easily identified. Discarding, in both sides of the given equality, any curve homotopic to a component of $\partial P$, we have $V(\tau|Z'')=V(\tau)\cap \cc(Z'')$ as required.
\end{proof}

\begin{rmk}
For $Y\subseteq X$ nested subsurfaces of $S$, $\tau$ an almost track on $S$, $\bm\tau=(\tau_j)_{j=1}^N$ a splitting sequence of almost tracks, the following facts, following partly from the above lemma, hold. Recall that the parameters for $\pa$ and $\ma$, chosen in Remark \ref{rmk:pickparameters}, are suited to `work well' with subsurface projections.

\begin{itemize}
\item If the collection of curves $\pi_X\left(V(\tau)\right)$ fills $X$, or the collection $V(\tau)$ does, then $\pi_Y\left(V(\tau)\right)$ fills $Y$. Also, if $V(\tau)$ or $\pi_X\left(V(\tau)\right)$ is a vertex of $\pa(X)$, then $\pi_Y\left(V(\tau)\right)$ is a vertex of $\pa(Y)$. These two facts are seen more easily using the version a) in Definition \ref{def:quasipants} and the definition of filling in terms of constraints on the intersection pattern between curves; the self-intersection number of the considered families of curves is no cause for concern, thanks to the choices made in Remark \ref{rmk:pickparameters}.
\item For $1\leq j\leq N$, the sets $\cc(\tau_j)$ make up a decreasing family (Remark \ref{rmk:decreasingmeasures}): so the lemma yields that, along the sequence $\bm\tau$, the subsurfaces of $S$ filled by $V(\tau_j)$ are also a decreasing family with respect to inclusion. If $X$ is a fixed subsurface of $S$, the same is true of the subsurfaces of $S^X$ (or of $X$ itself, via Lemma \ref{lem:inducedisonsurface}) filled by $V(\tau_j|X)$.
\item If, along a splitting sequence, for a fixed subsurface $X\subseteq S$, $\pi_X V(\tau_j)$ is a vertex of $\ma(X)$, then all $\pi_X V(\tau_{j'})$, for $j'<j$, are. And similarly with $\pa(S)$, using the version c) in Definition \ref{def:quasipants}.
\item The same as above is true for $V(\tau_j|X)$ instead of $\pi_X V(\tau_j)$. In particular the indices $j$ such that $V(\tau_j|X)$ is a vertex of $\pa(X)$ and of $\ma(X)$ include all the ones in the accessible interval $I_X$ (see \S \ref{sub:goodbehaviour} below).
\item The surface filled by $V(\tau|X)$ is a subsurface of the one filled by $\pi_X V(\tau_j)$.
\end{itemize}
\end{rmk}

The second statement of Lemma \ref{lem:decreasingfilling} may be slightly generalized: let $X\subseteq S$ be a subsurface such that $\partial X$ is essentially disjoint from $S'$. Then $V(\tau|X)=V(\tau)\cap\cc(X)$. The proof is the same as the one developed above, except that $X$ replaces all connected components of $S'$ which it contains.

\begin{lemma}\label{lem:induction_vertices_commute}
Let $X\subset S$ be a subsurface, and let $\tau$ be an almost track on $S$.
\begin{enumerate}
\item For $X$ not an annulus, let $\gamma\in W(\tau|X)$, $\delta\in \pi_X W(\tau)$, and suppose that $Y\subseteq X$ is a non-annular subsurface such that $\pi_Y(\gamma),\pi_Y(\delta)\not=\emptyset$: then 
$$d_{\cc(X)}(\gamma,\delta)\leq F(8 N_1(S^X))\quad\text{and}\quad d_Y(\gamma,\delta)\leq F\left(32 N_1(S^X) + 4\right)$$
where the function $F$ is the one defined by Lemma \ref{lem:cc_distance} and $N_1$ is defined in Lemma \ref{lem:vertexsetbounds}.
\item If $X$ is an annulus, then $\pi_X W(\tau)\subseteq V(\tau|X)$.
\item There is a bound $C_1=C_1(S)$ such that the following is true, for $X$ not an annulus. If $V(\tau|X)$ is a vertex of $\pa(X)$ then also $\pi_X V(\tau)$ is; and\linebreak $d_{\pa(X)}\left(\pi_X V(\tau),V(\tau|X)\right)\leq C_1$. Similarly for $\ma(X)$ instead of $\pa(X)$.
\end{enumerate}
\end{lemma}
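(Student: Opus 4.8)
I would treat the three parts in order; parts~1 and~2 are the technical core and part~3 is essentially bookkeeping plus one genuine point.

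\emph{Part 1.} I would prove this by realising every curve and arc involved inside the tie neighbourhood of the induced track and counting strands. By Lemma~\ref{lem:inducedisonsurface}, $\tau|X$ is an almost track on $S^X$, so Lemma~\ref{lem:vertexsetbounds} bounds its number of branches by $N_1(S^X)$. A wide carried curve or arc crosses each branch rectangle in at most two strands, so $\gamma\in W(\tau|X)$ contributes at most two strands per branch. For $\delta\in\pi_X W(\tau)$, write $\delta$ as a component of $\psi_X(\pi'_X(\alpha))$ with $\alpha\in W(\tau)$: if $\alpha$ is isotopic into $X$ then $\alpha\in\cc(\tau)\cap\cc^0(X)=\cc(\tau|X)$ and, being a projection of a wide curve, is wide carried by $\tau|X$ (two strands per branch); if $\alpha$ crosses $\partial X$ essentially then, by the theory of induced tracks, each arc of $\pi'_X(\alpha)=\alpha\cap X$ is carried by $\tau|X$ and, being a sub-arc of a lift of the wide curve $\alpha$, crosses each branch at most twice, so $\psi_X$ of it --- the doubled arc together with segments running along $\partial X$, away from $\tau|X\subseteq\inte(X)$ --- contributes at most four strands per branch rectangle of $\tau|X$. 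In either case $\delta$ isotopes into $\bar\nei(\tau|X)$, so $i(\gamma,\delta)\le 2\cdot 4\cdot N_1(S^X)$, and Lemma~\ref{lem:cc_distance} gives $d_{\cc(X)}(\gamma,\delta)\le F(8N_1(S^X))$. The bound on $d_Y(\gamma,\delta)$ then follows from Remark~\ref{rmk:subsurf_inters_bound}, which replaces the intersection number by $4i(\gamma,\delta)+4\le 32N_1(S^X)+4$, followed by Lemma~\ref{lem:cc_distance} applied in $Y$.

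\emph{Part 2.} Let $X$ be an annulus with core $\gamma$, let $\alpha\in W(\tau)$, and let $\delta$ be a representative of an element of $\pi_X(\alpha)$, i.e.\ a component of the preimage of $\alpha$ in $S^X$ joining the two components of $\partial\ol{S^X}$. Since $S^X\to S$ restricts homeomorphically to each branch, over a branch rectangle of $\tau$ carrying $\mu_\alpha\le 2$ strands of $\alpha$ each preimage branch rectangle of $\tau^X$ carries at most two strands of the total preimage of $\alpha$; hence $\delta$ crosses each branch of $\tau^X$ at most twice, and when it crosses one twice the two strands are lifts of the two strands of $\alpha$ downstairs, which run in opposite directions because $\alpha$ is wide --- so $\delta$ is wide carried by $\tau^X$. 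If $\delta$ traverses no branch twice it is an embedded train path; if it does, Remark~\ref{rmk:annulusinducedbasics}, point~\ref{itm:windaboutgamma}, shows $\gamma$ is carried by $\tau^X$ and $\delta$ winds about $\tau^X.\gamma$, and point~\ref{itm:horizontalstretch} says its horizontal stretch $\hs(\tau,\delta)$ runs along the branches of $\tau^X.\gamma$ always in the same direction, so wideness forces it to cross each such branch at most once; that is, $\hs(\tau,\delta)$ is embedded, whence so is $\delta$. By the characterisation of $V(\tau^X)$ in Remark~\ref{rmk:annulusinducedbasics}, $\delta\in V(\tau^X)=V(\tau|X)$, which is precisely $\pi_X W(\tau)\subseteq V(\tau|X)$.

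\emph{Part 3.} First, $\pi_X V(\tau)$ has self-intersection at most $4k(S)+4$ by Remark~\ref{rmk:subsurf_inters_bound}, which is at most the bound $k(X,S)$ imposed on vertices of $\pa(X)$ (and of $\ma(X)$) viewed inside $S$ by Remark~\ref{rmk:pickparameters}. Next, I would show that $\pi_X V(\tau)$ and $V(\tau|X)$ fill the same subsurface $Z$ of $X$: the inclusion of the subsurface filled by $V(\tau|X)$ into that filled by $\pi_X V(\tau)$ is already known, and the reverse uses $\cc(\tau|X)=\cc(\tau)\cap\cc^0(X)$, Lemma~\ref{lem:decreasingfilling} applied to $\tau|X$ on $S^X$, and the fact that the arcs $v\cap X$ occurring in $\pi'_X(v)$ for $v\in V(\tau)$ are carried by $\tau|X$ and so stay, up to isotopy, inside $Z$. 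Given this, if $V(\tau|X)$ is a vertex of $\pa(X)$, i.e.\ $X\setminus Z$ is a union of pairs of pants, then the same holds for $\pi_X V(\tau)$, so it too is a vertex of $\pa(X)$ (and likewise for $\ma(X)$). Finally, for the distance: Part~1 together with the cardinality bounds $|V(\tau|X)|\le N_0(S^X)$ and $|\pi_X V(\tau)|\le 2N_0(S)$ from Lemma~\ref{lem:vertexsetbounds} gives $i\bigl(V(\tau|X),\pi_X V(\tau)\bigr)\le C'$ for a constant $C'=C'(S)$; since there are only finitely many almost tracks on $S^X$ up to diffeomorphism, hence finitely many $\mcg(X)$-orbits of induced vertex sets $V(\tau|X)$, each of these (being a vertex of $\pa(X)$) has finite stabiliser in $\mcg(X)$, and only finitely many multicurves of self-intersection $\le k(X,S)$ lie within intersection $C'$ of a given one, there are finitely many $\mcg(X)$-orbits of pairs $\bigl(V(\tau|X),\pi_X V(\tau)\bigr)$; as $d_{\pa(X)}$ is $\mcg(X)$-invariant it takes finitely many values on these, and $C_1$ is the maximum of these values, and of the analogous ones for $\ma(X)$, over the finitely many $X\subseteq S$ up to $\mcg(S)$.

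The hard part is the claim in Part~3 that $\pi_X V(\tau)$ fills no more of $X$ than $V(\tau|X)$ does: the two collections are built by genuinely different recipes --- subsurface projection via $\psi_X$ on one side, restriction of carrying on the other --- and the equality of filled subsurfaces hinges on the arcs cut out by the vertex cycles being carried not merely by the full preimage $\tau^X$ (which may have infinitely many branches) but by the induced track $\tau|X$, i.e.\ running only along branches that lie in carrying images of genuine elements of $\cc(\tau^X)$. This is the one place where the definition of $\tau|X$ must be exploited with care, and it also underlies the assertion used in Part~1 that the arcs $\alpha\cap X$ are $\tau|X$-carried.
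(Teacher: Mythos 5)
Parts 1 and 2 follow essentially the paper's approach. In Part 1 you get the right bound $i(\gamma,\delta)\le 8N_1(S^X)$ by the same per-branch-rectangle count, although your phrase ``$\delta$ isotopes into $\bar\nei(\tau|X)$'' overstates what is true: the projected curve may exit $\bar\nei(\tau|X)$ along branches of $\tau^X\setminus\tau|X$, and the paper is careful to say only that $\delta$ is transverse to the ties it meets. The count survives because intersection points only occur inside $\bar\nei(\tau|X)$, where $\gamma$ lives. Part 2 is the same observation the paper makes in a single sentence, unpacked using Remark~\ref{rmk:annulusinducedbasics}, and your use of ``opposite directions in the horizontal stretch'' to force embeddedness is correct.

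Part 3 is where you diverge from the paper, and there are two genuine gaps. First, the claim that $\pi_X V(\tau)$ and $V(\tau|X)$ fill the same subsurface is not what the paper proves; the paper instead notes $V(\tau|X)\subseteq\cc(\tau)$, deduces that $V(\tau)$ is a vertex of $\pa(W)$ for a suitable $W\supseteq X$, and then projects. Your justification of the filling equality rests on the assertion that the arcs $v\cap X$ are carried by $\tau|X$, and this is precisely what is \emph{not} true in general: such an arc is transverse to ties of $\bar\nei(\tau^X)$ but may run over branches of $\tau^X$ that are not in $\tau|X$. You flag this yourself at the end, but the way you would close it --- proving those arcs are $\tau|X$-carried --- is not available.

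Second, and more seriously, the finiteness argument for the distance bound does not go through as written. Both intermediate claims are false: a vertex $\mu$ of $\pa(X)$ need not have finite $\mcg(X)$-stabiliser (a pants decomposition is stabilised by the full $\mathbb{Z}^{\xi(X)}$ of Dehn twists about its curves), and there are in general infinitely many multicurves $\nu$ of bounded self-intersection with $i(\mu,\nu)\le C'$ (if $T_\beta$ stabilises $\mu$ then $\{T_\beta^n(\nu_0)\}_{n\in\mathbb Z}$ is such an infinite family whenever $\nu_0$ crosses $\beta$). The intersection bound alone therefore does not give finitely many $\mcg(X)$-orbits of pairs; what would give it is the finiteness of diffeomorphism types of triples $(S,X,\tau)$, but that is a different argument from the one you wrote. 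The paper avoids the issue entirely by invoking the distance formula (Theorem~\ref{thm:mmprojectiondist} via Lemma~\ref{lem:pantsquasiisom}) with a threshold $M$ chosen above $f(S)=F(32N_1(S)+4)$ so that every non-annular summand $[d_Y(\pi_XV(\tau),V(\tau|X))]_M$ vanishes by Part~1, giving $d_{\pa(X)}\le e_1=:C_1$ directly. For the $\ma(X)$ statement the paper additionally bounds each annular term by $5$ using Part~2 together with Remark~\ref{rmk:annulusinducedbasics}, point~\ref{itm:diambound} (the fact that $(\tau|X)|Y$ is a subtrack of $\tau|Y$). That annular step has no counterpart in your argument and is the one place where the lemma genuinely needs the annular technology you developed in Part~2.
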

In the statement of this lemma we are implicitly using the identification of $\cc(X)$ with $\cc(S^X)$. Moreover, we are identifying $\pa(S^X)=\pa(X)$ using the constants for $X$ a subsurface of $S$ (see Remark \ref{rmk:pickparameters}).

\begin{proof}
In order to prove Claim 1, let $X_0\subset S^X$ be a compact connected 2-submanifold with boundary such that $\bar\nei(\tau|X)\cup X\subset \inte(X_0)$ and the inclusions $X\subseteq X_0\subseteq S^X$ are homotopy equivalences. Let $p:S^X\rightarrow S$ be the covering map. Let $\gamma\in W(\tau|X), \beta\in W(\tau)$, and fix a carried realization of each of these two curves in the respective tie neighbourhoods (which are subsets of $S^X$ and of $S$, respectively).

One way to get a representative of $\pi_X(\beta)$ in $\cc(S^X)$ is as follows. We suppose that $\beta$ is not essentially disjoint from $X$, else Claim 1 is trivial: under this assumption, let $\beta^X$ be the union of all connected components of $p^{-1}(\beta)$ that intersect $X$ essentially: $\beta^X$ is either a single curve or a collection of essential arcs in $S^X$.

Then we can realize $\pi_X(\beta)$ as a family of curves in $X_0$: it will consist of all curves which arise as connected components of $X_0\cap\partial\left(\bar\nei(\beta')\cup \bar\nei(\partial X_0)\right)$ for $\beta'$ a connected component of $\beta^X$, and are essential in $S^X$. If the regular neighbourhoods specified in this expression is narrow enough, we have $\partial\bar\nei(\beta^X)\subseteq \bar\nei(\tau^X)$ and transverse to all ties it encounters; and $\bar\nei(\partial X_0)\cap \bar\nei(\tau|X)=\emptyset$. 

As a result, let $\delta\in\pi_X(\beta)$: the above realization of $\pi_X(\beta)$ gives a representative of $\delta$ which is transverse to all ties of $\bar\nei(\tau|X)$ it encounters; however, it is possible that $\delta$ has nonempty intersection with $\partial_h\bar\nei(\tau|X)$. Since $\beta$ is wide in $\tau$, a component of $\beta^X$ intersects at most twice any tie of $\bar\nei(\tau^X)$, and then, $\delta$ intersects at most $4$ times any tie in $\bar\nei(\tau|X)$.

Recall that $\gamma$ is wide carried in $\bar\nei(\tau|X)$: so we may suppose that, within each branch rectangle $\mathrm{im}(R_b)$ of $\bar\nei(\tau|X)$, there are no more than $8$ intersection points between $\gamma$ and $\delta$. No intersection point can be found outside $\bar\nei(\tau|X)$, so $i(\gamma,\delta)\leq 8N_1(S^X) \leq 8N_1(S)$. Recalling Lemma \ref{lem:cc_distance}, $d_{\cc(S^X)}(\gamma,\delta)\leq F\left( i(\gamma,\delta)\right)$.

By Remark \ref{rmk:subsurf_inters_bound}, for any $Y$ nonannular subsurface of $X$, if $\xi_1\in \pi_Y\left(V(\tau)\right)=\pi_Y\left(\pi_X\left(V(\tau)\right)\right)$ and $\xi_2\in \pi_Y\left(V(\tau|X)\right)$, then $i(\xi_1, \xi_2)\leq 32 N_1(S^X) + 4$, so
$$d_{\cc(Y)}\left(\pi_X V(\tau),V(\tau|X)\right)\leq F\left(32 N_1(S^X) + 4\right)\leq F\left(32 N_1(S) + 4\right)\eqqcolon f(S).$$

For Claim 2, in which $X$ is an annulus instead: just note that, every time $\beta$ is wide carried by $\tau$, $\pi_X(\beta)$ is wide carried by $\tau^X$. 

For Claim 3: apply the identification of $\tau|X$ with an almost track in $X$ as specified by Lemma \ref{lem:inducedisonsurface}, for ease of notation. If $V(\tau|X)$ is a vertex of $\pa(X)$ then $V(\tau|X)$ fills a --- possibly disconnected --- subsurface $X'$ of $X$ such that $X\setminus X'$ is a collection of pairs of pants (Definition \ref{def:quasipants}); but $V(\tau|X)\subseteq \cc(\tau)$, so there is a surface $X\subseteq W\subseteq S$ such that $\cc(\tau)$ fills a possibly disconnected subsurface $W'$ of $W$ with $W\setminus W'$ a collection of pairs of pants. $V(\tau)$ fills the same $W'$ (Lemma \ref{lem:decreasingfilling}), so it is a vertex of $\pa(W)$; and then, $\pi_X V(\tau)$ is a vertex of $\pa(X)$ (see Lemma \ref{lem:decreasingfilling} and the Remark following it).

Let $M> \max\{M_6(S),f(S)\}$. Then, by Theorem \ref{thm:mmprojectiondist} and Lemma \ref{lem:pantsquasiisom},
$$
d_{\pa(X)}\left(\pi_X V(\tau),V(\tau|X)\right)\leq 
e_0 \sum_{\substack{Y\subseteq X\\ Y\text{ not an annulus}}} \left[d_Y\left(\pi_X V(\tau), V(\tau|X)\right)\right]_M + e_1
$$
where $e_0,e_1$ are both functions of $M$ and $S$; but the summation is empty. This proves the existence of the claimed constant $C_1(S)$ holding for the case of $\pa(X)$.

One deals with $\ma(S)$ in an entirely similar way. One shows that $V(\tau|X)$ is a vertex of $\ma(X)$ similarly, and the distance estimate goes also along the same lines, except that the annular subsurface contributions also have to be taken into account, and this is done with the following observation.

For $Y$ an annular subsurface of $X$, making use of Claim 2, $\pi_Y\left(\pi_X V(\tau)\right)=\pi_Y V(\tau) \subseteq V(\tau|Y)$; and $\pi_Y V(\tau|X)\subseteq V((\tau|X)|Y)$. Therefore $d_Y\left(\pi_X V(\tau), V(\tau|X)\right)\leq d_{\cc(Y)} \left( V(\tau|Y), V((\tau|X)|Y)\right)$. However $(\tau|X)|Y$ is a subtrack of $\tau|Y$, so we can simply apply point \ref{itm:diambound} in Remark \ref{rmk:annulusinducedbasics}, which gives $d_{\cc(Y)} \left( V(\tau|Y), V((\tau|X)|Y)\right)\leq 5$.

Theorem \ref{thm:mmprojectiondist}, applied with $M> \max\{5,M_6(S),f(S)\}$, concludes.
\end{proof}

\begin{defin}
Let $\sigma,\tau$ be train tracks on a surface $S$, and let $X\subseteq S$ be a subsurface. Then the following notation will be employed:
\begin{eqnarray*}
d_{\cc}(\sigma,\tau) & \coloneqq & d_{\cc(S)}\left(V(\sigma),V(\tau)\right) \\
d_{X}(\sigma,\tau) & \coloneqq & d_X\left(V(\sigma),V(\tau)\right) = d_{\cc(X)}\left(\pi_X V(\sigma),\pi_X V(\tau)\right)
\end{eqnarray*}
provided that neither of the collections involved in the distance measurement is empty, and
\begin{eqnarray*}
d_{\ma}(\sigma,\tau) & \coloneqq & d_{\ma(S)}\left(V(\sigma),V(\tau)\right) \\
d_{\pa}(\sigma,\tau) & \coloneqq & d_{\pa(S)}\left(V(\sigma),V(\tau)\right) \\
d_{\ma(X)}(\sigma,\tau) & \coloneqq & d_{\ma(X)}\left(V(\sigma|X),V(\tau|X)\right) \\
d_{\pa(X)}(\sigma,\tau) & \coloneqq & d_{\pa(X)}\left(V(\sigma|X),V(\tau|X)\right)
\end{eqnarray*}
provided that the sets considered on the right hand side are actual vertices of the considered graphs.
\end{defin}

A closing remark for this subsection: in general, a train train track split reflects into an operation on induced train tracks which is either taking a subtrack, or performing splits (maybe more than one). This behaviour is controlled slightly better with a theorem from \cite{mms} which will be stated in a few pages.

\subsection{Multiple moves specified by an arc}
We now describe an operation on an almost track called an \emph{unzip}. Informally, an unzip for an almost track $\tau$ consists of cutting it open along a path that begins at a point of $\partial_v\bar\nei(\tau)$ and, proceeding transversally to the ties, ends at an interior point of $\nei(\tau)$. An example of unzip can be seen in Figure \ref{fig:multiplesplit}, e.g. from the first to the second picture. The effect on an unzip can always be expressed in terms of elementary moves, but a number of arguments in this work will be explained more conveniently by usage of this alternative formalism.

\begin{defin}\label{def:zipper}
Let $\tau$ be an almost track on a surface $S$. A \nw{zipper} in $\tau$ is a smooth embedding $\kappa:[-\epsilon,t]\rightarrow \bar\nei(\tau)$, for $t>0$ and $\epsilon>0$ small, such that the following are true.
\begin{itemize}
\item $\kappa(-\epsilon)$ lies along a connected component of $\partial_v \bar\nei(\tau)$ --- call this component $Z$.
\item $\kappa$ is transverse to all ties of $\bar\nei(\tau)$ it encounters.
\item $\kappa^{-1}(\text{ties through switches of }\tau)=\mathbb Z\cap [-\epsilon,t]$.
\item Let $\kappa_P\coloneqq c_\tau\circ\kappa|_{[0,t]}$; it is a smooth path along $\tau$, not necessarily embedded. If $\kappa(t)$ lies on the same tie as a switch of $\tau$, then the branch end $\kappa_P\left((t-\epsilon,t]\right)$ is small.
\end{itemize}

In order to describe the effect of an unzip note that, if $\kappa_P$ is an embedding itself, then there is an open set $\nei(\kappa)$, with $\mathrm{im}(\kappa_P)\subseteq \bar\nei(\kappa)\subseteq \bar\nei(\tau)$ and with the following properties:
\begin{itemize}
\item $\bar\nei(\kappa)$ is diffeomorphic to a triangle;
\item one edge of the triangle is contained in $Z$, and the opposite vertex is $\kappa_P(t)$;
\item the remaining two edges are transverse to all ties they encounter;
\item on the other hand, the interiors $s_1,s_2$ of each of these two edges intersects $\tau$ tangentially in a number of closed segments: i.e. in a neighbourhood of each intersection segment between $\tau$ and $s_i$, $\tau\cup s_i$ looks like a neighbourhood of a large branch in a pretrack.
\end{itemize}

These properties imply the following. Let $B$ be the connected component of $\partial\bar\nei(\kappa)\setminus \tau$ which intersects $Z$, and let $C\coloneqq\partial\bar\nei(\kappa)\setminus B$ --- in other words, $C$ is the maximum connected subset of $\partial\bar\nei(\kappa)$ which contains $\kappa_P(t)$ and has its extremes along $\tau$. Then, the tangential intersection property implies that $\tau\cup C$ is a pretrack whose complementary bigon regions are all adjacent to the image of $\kappa_P$.

So, again if $\kappa_P$ is an embedding, we define the \nw{unzip along $\kappa$} as $\tau''=(\tau\cup C)\setminus \nei(\kappa)$. It is an almost track and has a tie neighbourhood $\bar\nei(\tau'')\subseteq \bar\nei(\tau)$ with $\partial\bar\nei(\tau)\setminus Z\subseteq \partial\bar\nei(\tau'')$. Note, however, that there is no direct correlation between $\nei(\kappa)$ and $\nei(\tau'')$.

If $\kappa_P$ is not an embedding, the unzip is defined inductively on $\lceil t\rceil$. Let $t'=\zeta-2\epsilon$, for $\zeta\in \mathbb Z_+$ chosen such that $\kappa_P|_{[-\epsilon,t']}$ is an embedding instead. One can suppose, up to tie-transverse isotopies, that $\kappa|_{[-\epsilon,t']}=\kappa_P|_{[-\epsilon,t']}$. Let $\tau'$ be the unzip of $\tau$ along $\kappa|_{[-\epsilon,t']}$, and let $\kappa'=\kappa\cap\bar\nei(\tau')$, reparametrized so as to be a zipper for $\tau'$: the unzip $\tau''$ of $\tau$ along $\kappa$ is then defined as the unzip of $\tau'$ along $\kappa'$.
\end{defin}

\begin{defin}\label{def:multiplesplit}
A \nw{large multibranch} for an almost track $\tau$ in a surface $S$ is a carried realization $\beta$ of a carried arc (i.e. $\beta$ is \emph{not} to be considered up to isotopy) in $\bar\nei(\tau)$, whose endpoints belong to distinct components of $\partial_v\bar\nei(\tau)$.

A \nw{splitting arc} is an embedded large multibranch traversing exactly one large branch --- equivalently it does not traverse any small branch. It may traverse any number of mixed branches.

Let $\beta$ be a large multibranch. Specify a splitting parity (left, right, or center); if the parity is not center, also specify two distinct points $P_1,P_2$ lying along $\beta_{trim}$ (see Definition \ref{def:trainpathrealization}), but not contained in the same ties as a switch of $\tau$; they will be called \nw{anchors}. 

The \nw{multiple split} with respect to the specified data is given by the following construction.

\begin{itemize}
\item In case the specified parity is not center (see Figure \ref{fig:multiplesplit}): let $\kappa_j:[-\epsilon,t_j]\rightarrow \bar\nei(\tau)$, $j=1,2$, be two zippers, beginning at the opposite endpoints of $\beta$, following $\beta$ and ending at $P_1,P_2$ respectively; the pairing of the endpoints of $\beta$ with the anchors is to be done so that each the images of $(\kappa_1)_P$ and $(\kappa_2)_P$ overlap. 

Unzip $\tau$ along $\kappa_1$ to get a new almost track that we call $\tau_1$. Note that $P_1$ is a switch of $\tau_1$. We may suppose that the tie $\alpha_2$ of $\bar\nei(\tau)$ through $P_2$ is not entirely contained in $\bar\nei(\tau_1)$. 

Then it is possible to find two zippers $\kappa_{21},\kappa_{22}$ in $\bar\nei(\tau_1)$ with the following property. On the one hand, $\kappa_{21},\kappa_{22}$ are both isotopic to $\kappa_2$ via an isotopy which keeps each point along the same tie of $\bar\nei(\tau)$ (equivalently, $c_\tau\circ\kappa_{21}|_{[0,t_2]},c_\tau\circ\kappa_{22}|_{[0,t_2]}$ are both reparametrizations of $(\kappa_2)_P$); on the other hand, the two are \emph{not} isotopic to each other via a similar tie-transverse isotopy in $\bar\nei(\tau_1)$.

Finally, if the parity specified for the multiple split is right, let $\kappa'_2$ be the one between $\kappa_{21},\kappa_{22}$ that, after intersecting the tie through $P_1$, traverses the small branch end to the right of $P_1$. If the parity is left instead, let $\kappa'_2$ be the other of those two zippers. The multiple split of $\tau$ with the aforementioned data is, then, defined to be the unzip of $\tau_1$ along $\kappa'_2$.

\item In case the specified parity is center: $\beta$ traverses at least one large branch: choose one, $b$. Let $P_1,P_2$ be distinct points (also called \emph{anchors}, even if in this case they are part of the specified data) belonging to the same component of $\beta\cap R_b([-1,1]\times[-1,1])$. Let $\kappa_1,\kappa_2$ be two zippers, with each of them following $\beta$ from an endpoint to $P_1$ and $P_2$, respectively; this time, however, we choose the two zippers to be \emph{disjoint}. Unzip $\tau$ along $\kappa_1$ and along $\kappa_2$ to get a new track which we call $\tau'$. Here, $\beta\cap\bar\nei(\tau')$ traverses a single branch, which is large. The central multiple split of $\tau$ along $\beta$ is, then, the central split of $\tau'$ along this large branch.
\end{itemize}

If $\beta$ is a splitting arc such that $b$ is the only large branch $\beta$ traverses, the \nw{wide split} along $\beta$ with a given parity is defined to be the multiple split along $\beta$, according to the given parity, choosing any two anchors $P_1,P_2$ which lie along ties in $R_b([-1,1]\times[-1,1])$.
\end{defin}

\begin{rmk}
We have defined a large multibranch to be a fixed carried realization, rather than an isotopy class, of a carried arc. This has been convenient for the definitions of anchors, and for the description of what a multiple split is supposed to do. However, if one perturbs a large multibranch under an isotopy keeping each point along the same tie, and the anchors in particular, the result of the multiple split is the same.
\end{rmk}

\begin{figure}[htbp]
\begin{center}
\def\svgwidth{\textwidth}
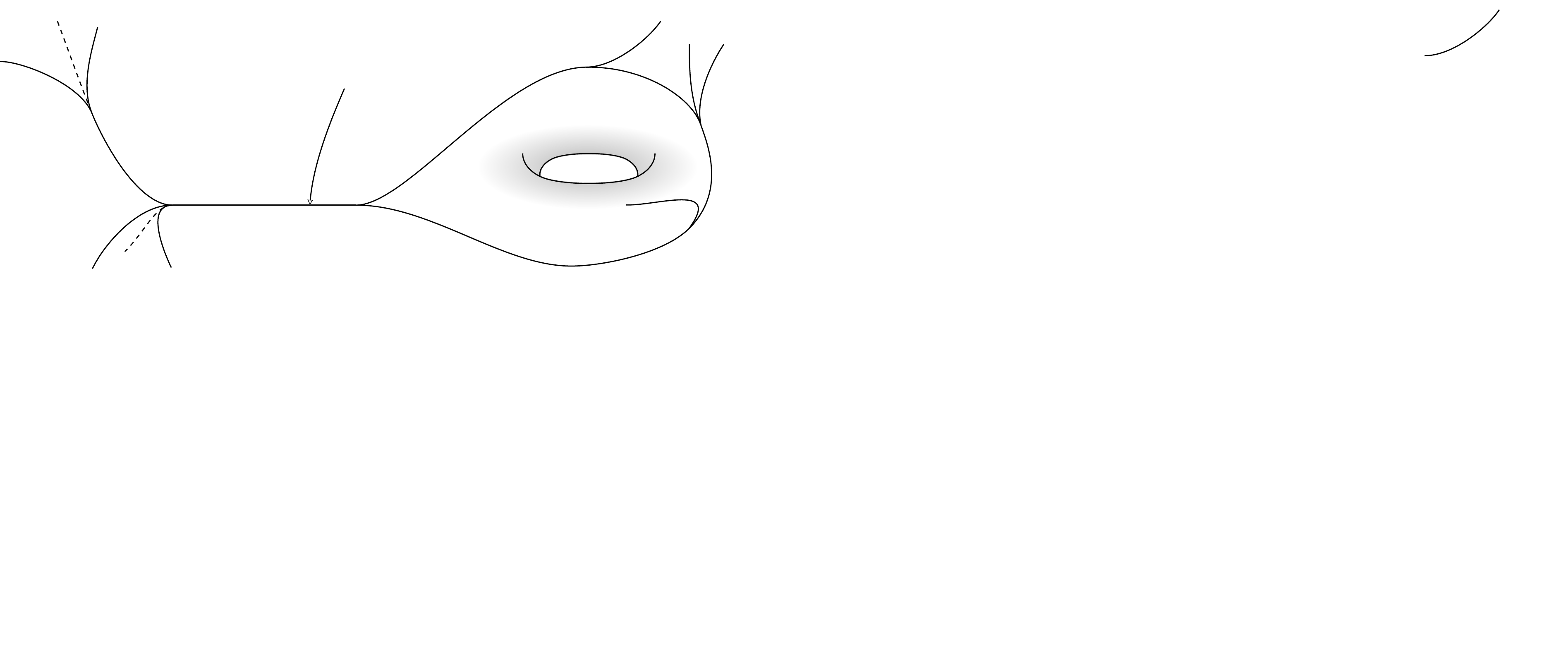
\end{center}
\caption{\label{fig:multiplesplit}An example of a left multiple split. Here the large multibranch $\beta$ is the one with endpoints specified in the first picture and traversing twice the branch $b$ (this information suffices to figure out what it looks like in $\bar\nei(\tau)$, up to isotopies which are inconsequential for the construction). The anchors are $P_1,P_2$; $P_2$ in the picture is meant to lie along the upper segment in $\beta\cap R_b$. In the second picture, the zipper $\kappa_1$ following $\beta$ from a suitable endpoint to $P_1$ has been unzipped, resulting in the definition of two points $P_{21},P_{22}$ in place of the old $P_2$. The points $P_{21}, P_{22}$ are endpoints for two substantially different zippers $\kappa_{21},\kappa_{22}$ which both begin at the remaining endpoint of $\beta$ and project to the old zipper $\kappa_2$ for $\tau$. In the third picture, the zipper $\kappa'_2=\kappa_{22}$ has been unzipped, too. The choice of $\kappa_{22}$ rather than $\kappa_{21}$ is determined by the specified parity.}
\end{figure}

\begin{rmk}\label{rmk:generic_move_as_unzip}
The given definition of wide split includes, as a special case, the one of a `standard' split as defined by Figure \ref{fig:ttsplitting}. If $b$ is the branch to split, one may choose a suitable splitting arc which is entirely contained in $R_b$.

The result of a wide split is always the same as a splitting sequence involving exactly one split.

If $\tau$ is a \emph{generic} almost track, then any elementary move on $\tau$ which is not a central split is also the result of an unzip along a \emph{single} zipper $\kappa: [-\epsilon, 1+\epsilon]\rightarrow \bar\nei(\tau)$: a \emph{large} zipper defined on this domain will produce a parity split, and a \emph{small} one will produce a slide (see below for the definitions).
\end{rmk}

\begin{rmk}\label{rmk:zipvssplit}
If $\tau'$ is the unzip of an almost track $\tau$ along a zipper $\kappa:[-\epsilon,t]\rightarrow \bar\nei(\tau)$, then $\tau'$ is itself an almost track, clearly carried by $\tau$ and therefore obtained from it via a splitting sequence (Proposition \ref{prp:carriediffsplit}). Some remarks will ease the incoming estimates for the number of splits necessary to get $\tau'$ from $\tau$. If $t\in (0,1)$ then $\tau'$ is isotopic to $\tau$; whereas, if $t\in[1,2)$, then:
\begin{itemize}
\item if $\kappa_P\left((1-\epsilon,1]\right)$ is a small branch end, then $\tau'$ is comb equivalent to $\tau$;
\item if $\kappa_P\left((1-\epsilon,1]\right)$ is a large branch end, and all small branch ends of $\tau$ incident to $\kappa_P(1)$ lie on the same side of $\kappa_P$, then $\tau'$ is obtained from $\tau$ with a parity split, plus possibly some comb/uncomb moves;
\item if $\kappa_P\left((1-\epsilon,1]\right)$ is a large branch end, and small branch ends of $\tau$ incident to $\kappa_P(1)$ lie on both sides of $\kappa_P$, then $\tau'$ is obtained from $\tau$ with two parity splits, plus possibly some comb/uncomb moves.
\end{itemize}

More generally, suppose that $\kappa:[-\epsilon,t]\rightarrow \bar\nei(\tau)$ is a zipper with no restriction on its length, but with the property that $\kappa_P\left((\zeta-\epsilon,\zeta]\right)$ is a small branch end for all integers $1\leq \zeta\leq t$ (we call it a \nw{small zipper}). Then the result of the unzipping is comb equivalent to $\tau$.

If one supposes instead that $\kappa_P\left((\zeta-\epsilon,\zeta]\right)$ is a large branch end for all integers $1\leq \zeta\leq t$ (we call it a \nw{large zipper}), then necessarily $\kappa_P$ is an embedding. A branch can be traversed twice only if, for some value of $\zeta$, $\kappa_P\left((\zeta-\epsilon,\zeta]\right)$ is a small branch end.

Similarly as before, there is a difference in behaviour depending on whether the branches incident to $\kappa_P([1,t])$ all lie on the same side of it or not. In the first case, the unzip along $\kappa$ is comb equivalent to an almost track obtained from $\tau$ with one parity wide split; in the second case, the number of wide splits needed is $2$. A particular case of the first behaviour is when there is only 1 branch sharing a switch with $\tau([1,t])$.

Given a zipper $\kappa:[-\epsilon,t]\rightarrow\bar\nei(\tau)$ which is neither large nor small itself, it is always possible to take the maximal $\zeta\in \mathbb N$ such that $\kappa|_{[-\epsilon,\zeta+\epsilon]}$ is either a large or a small zipper. Let $\tau'$ be the unzip of $\tau$ along $\kappa|_{[-\epsilon,\zeta+\epsilon]}$; $\kappa|_{[\zeta+\epsilon, t]}$ admits a natural reparametrization as $\kappa':[-\epsilon,t']$, for some $t'\leq t-\zeta$ which makes it into a zipper for $\tau'$: unzipping $\tau'$ along $\kappa'$ is the same as unzipping $\tau$ along $\kappa$. 

Recursively, one can decompose the unzip along $\kappa$ into a sequence of unzips along zippers that are alternatively small and large. An upper bound for the number of large zippers involved is given by the number of $\zeta\in \mathbb N$ such that $\kappa_P([\zeta,\zeta+1])$ is a large branch of $\tau$. We call this decomposition \nw{canonical}.
\end{rmk}

Our notion of wide split agrees with the one defined in \cite{mosher}, \S 3.13. We are interested in Proposition 3.13.3 in that work: it uses the fact that, given two comb equivalent train tracks, there is a canonical identification between the respective families of splitting arcs. This correspondence may be found with our definitions as well, so the said Proposition holds for us:

\begin{prop}\label{prp:combpersistence}
Let $\tau,\sigma$ be two comb equivalent train tracks, and $\alpha,\beta$ be splitting arcs, for $\tau$ and for $\sigma$ respectively, which correspond to each other under the equivalence of the two tracks. Let $\tau'$ be the train track obtained from $\tau$ by splitting along $\alpha$ according to a given parity; and let $\sigma'$ be the train track obtained from $\sigma$ by splitting along $\beta$ according to the same parity.

Then $\tau',\sigma'$ are comb equivalent.
\end{prop}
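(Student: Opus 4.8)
Comb equivalence is by definition generated by single comb and uncomb moves, and a comb move and its inverse act in the same way on splitting arcs, so it suffices to prove the statement when $\sigma$ is obtained from $\tau$ by exactly one comb move, collapsing a mixed branch $m$ to a point. I would fix tie neighbourhoods $\bar\nei(\tau)$ and $\bar\nei(\sigma)$ that agree outside a small disc $D$ containing $m$ and its two endpoint switches, and which inside $D$ realise the standard local picture of Figure \ref{fig:ttcombing}. Outside $D$ the branch ends of $\tau$ and of $\sigma$ are canonically identified, and this identification is exactly what furnishes the bijection between splitting arcs: a trimmed splitting arc is determined by the large branch it traverses, the (cyclically) ordered list of mixed branches it runs along, and its two endpoint data on $\partial_v\bar\nei(\tau)$, none of which is disturbed by collapsing a \emph{mixed} branch to a point. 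Thus $\alpha$ corresponds to $\beta$, and a short inspection of the local model shows the small branch ends at the large switch of $\alpha$'s large branch occur in the same left/right order as those at $\beta$'s, so the bijection preserves parity.

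The heart of the argument is then to run the split through this local model using the unzip formalism of Definitions \ref{def:zipper} and \ref{def:multiplesplit}. By Remark \ref{rmk:generic_move_as_unzip} a parity split along $\alpha$ is the unzip of $\tau$ along a zipper $\kappa_1$ followed by the unzip along the zipper $\kappa_2'$ (and in the central case, two disjoint zippers followed by a central split, handled analogously). I would transport $\kappa_1,\kappa_2'$ across the comb move: outside $D$ the carrying images $(\kappa_i)_P$ are literally the same smooth paths along $\tau$ and along $\sigma$, while inside $D$ either the comb move does not meet the zippers at all — when $m$ is disjoint from $\beta_{trim}$ — in which case both unzips are supported away from $D$ and the original comb move on $m$ survives unchanged to exhibit $\tau'$ and $\sigma'$ as comb equivalent; or $m$ is among the mixed branches the zippers run beside, in which case the point is that "comb then unzip" and "unzip then comb" agree up to comb equivalence. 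Concretely, perform the unzip first on $\tau$ and record, via the bookkeeping of Remark \ref{rmk:zipvssplit}, which auxiliary comb/uncomb moves are hidden inside it; one finds the identical record on the $\sigma$ side together with one leftover occurrence of the original comb on the image of $m$, since collapsing a mixed branch commutes with cutting transversally to the ties along a path running alongside it.

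Concatenating these moves produces a sequence of comb and uncomb moves from $\tau'$ to $\sigma'$, so the two are comb equivalent; equivalently, by Proposition \ref{prp:carriediffsplit}, they are the extremes of a splitting sequence with no splits. The routine but genuinely fiddly step — and the main obstacle — is the configuration in which $m$ is incident to the switch at the large end of the branch that $\alpha$ splits: there one must check, in the finitely many arrangements of small branch ends on either side of $\alpha$, that the left/right choice of $\kappa_2'$ made for $\tau$ matches the one made for $\sigma$ under the canonical bijection, and that the one-versus-two-split dichotomy of Remark \ref{rmk:zipvssplit} resolves the same way on both sides, so that the leftover moves are indeed comb and uncomb moves and nothing more. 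For generic train tracks the corresponding statement with slides in place of combs follows by the standard translation between comb/uncomb moves and slides.
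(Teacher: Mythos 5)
This proposition is not actually proved in the paper: it is cited as Proposition~3.13.3 of \cite{mosher}, with the single remark that the canonical identification of splitting arcs across a comb equivalence carries over to the paper's definitions. Your proposal therefore supplies an argument where the paper supplies a reference, and the overall strategy you sketch --- reduce to a single comb move, choose compatible tie neighbourhoods that differ only in a small disc $D$ around the collapsed mixed branch $m$, transport the zippers across, and argue that ``comb then unzip'' and ``unzip then comb'' agree up to comb equivalence --- is the right one and is in the same spirit as what the paper leans on Mosher for.

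Two points need tightening. First, your initial dichotomy is imprecise: ``$m$ disjoint from $\beta_{trim}$'' does not imply that the unzips are supported away from $D$, since $\beta_{trim}$ can run through a branch end sharing a switch with an end of $m$ without traversing $m$ itself, and then the zippers do enter $D$ even though $m\cap\beta_{trim}=\emptyset$. You effectively acknowledge this by flagging the case ``$m$ incident to the switch at the large end of $b$'' separately, but as written the first case reads as unconditional; you should split instead according to whether $\beta_{trim}$ meets $D$. Second, and more substantively, that flagged case is exactly where the content of the proposition lives: collapsing $m$ reshuffles the small branch ends at the merged switch adjacent to $b$, so one must genuinely verify that the anchor $P_1$, the left/right choice of $\kappa_2'$, and the one-versus-two-split count of Remark~\ref{rmk:zipvssplit} transport consistently, and that the residual moves are combs/uncombs and nothing else. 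You assert this is a finite check and I believe it, but an assertion is not a verification, so the proposal as written has a real (if routine) gap there. Filling in that local case analysis would complete the proof.
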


Also, a recursive application of Proposition 3.13.4 in that monograph gives the following:
\begin{prop}\label{prp:deleteslidings}
Given a train track splitting sequence $\bm\tau$, let $(j_r)_{r=1}^R$ be the indices such that the move from $\tau_{j_r-1}$ to $\tau_{j_r}$ is a split. Then there is a sequence $\bm\sigma=(\sigma_r)_{r=0}^R$ where $\sigma_0=\tau_0$, $\sigma_r$ is comb equivalent to $\tau_{j_r}$ and $\sigma_r$ is obtained from $\sigma_{r-1}$ via a wide split, of the same parity as the split between $\tau_{j_r-1}$ and $\tau_{j_r}$.
\end{prop}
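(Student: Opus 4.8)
The plan is to prove the statement by induction on $r$, using Proposition \ref{prp:combpersistence} as the only real engine, together with the bookkeeping observation that between two consecutive splits of $\bm\tau$ only comb, uncomb and slide moves occur, so that the two tracks bracketing such a chunk are comb equivalent. The point is that Proposition \ref{prp:combpersistence} already packages the single-step version of the claim (it plays the role of Mosher's Proposition 3.13.4 in our setting), once one remembers, via Remark \ref{rmk:generic_move_as_unzip}, that a standard split along a large branch $b$ is literally the wide split along the splitting arc $b\subset R_b$.

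\textbf{Steps, in order.} First I would set $\sigma_0\coloneqq\tau_0$, which discharges the base case. For the inductive step, assume $\sigma_{r-1}$ has been built; I claim it is comb equivalent to $\tau_{j_r-1}$. For $r=1$ this holds because all moves in $\bm\tau(0,j_1-1)$ are comb, uncomb or slide moves, so $\tau_0=\sigma_0$ is comb equivalent to $\tau_{j_1-1}$; for $r\geq 2$ it follows from the inductive hypothesis that $\sigma_{r-1}$ is comb equivalent to $\tau_{j_{r-1}}$, combined with the fact that $\bm\tau(j_{r-1},j_r-1)$ consists only of non-split moves (so $\tau_{j_{r-1}}$ and $\tau_{j_r-1}$ are comb equivalent) and with transitivity of comb equivalence. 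Next I would read the split $\tau_{j_r-1}\to\tau_{j_r}$ off: it is performed along a large branch $b$ of $\tau_{j_r-1}$ with some parity $p_r$, and by Remark \ref{rmk:generic_move_as_unzip} this equals the wide split of $\tau_{j_r-1}$ along $b$ (viewed as a splitting arc) with parity $p_r$. Using the canonical identification between the families of splitting arcs of two comb equivalent train tracks --- the correspondence recalled, in our formalism, just before Proposition \ref{prp:combpersistence}, following \cite{mosher}, \S 3.13 --- let $\alpha_r$ be the splitting arc of $\sigma_{r-1}$ corresponding to $b$, and define $\sigma_r$ to be the wide split of $\sigma_{r-1}$ along $\alpha_r$ with parity $p_r$. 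Then $\sigma_r$ is obtained from $\sigma_{r-1}$ by a wide split of the same parity as the split $\tau_{j_r-1}\to\tau_{j_r}$, by construction, and Proposition \ref{prp:combpersistence} --- applied to the comb equivalent pair $(\tau_{j_r-1},\sigma_{r-1})$, the corresponding splitting arcs $b$ and $\alpha_r$, and the common parity $p_r$ --- gives that $\sigma_r$ is comb equivalent to $\tau_{j_r}$. This closes the induction and yields $\bm\sigma=(\sigma_r)_{r=0}^R$ with all the required properties.

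\textbf{Main obstacle.} The only delicate point I anticipate is ensuring that the canonical identification of splitting arcs is robust enough for the recursion: it must be defined relative to an arbitrary comb equivalence (not just a single comb or slide move) and be compatible with composition of comb equivalences, so that transporting $b$ all the way across from $\tau_{j_r-1}$ to $\sigma_{r-1}$ is legitimate and the wide split along the resulting arc really lands in the comb equivalence class of $\tau_{j_r}$. This is, however, precisely what is absorbed into the statement of Proposition \ref{prp:combpersistence} (and the remark preceding it, which asserts that the correspondence survives our slightly modified definitions); once that is granted, the remainder of the argument is a routine induction with no further genuine difficulty, and in particular the minor variations between the semigeneric and the generic settings (comb/uncomb versus slide) cause no trouble because Proposition \ref{prp:combpersistence} is stated uniformly in terms of comb equivalence.
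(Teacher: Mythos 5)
Your proof is correct and fills in an argument where the paper only points to the literature: the paper's own ``proof'' of this proposition is a single sentence citing a recursive application of Proposition 3.13.4 in Mosher's monograph, while you build the induction entirely out of Proposition \ref{prp:combpersistence} (the paper's internalized version of Mosher's 3.13.3) together with the canonical correspondence of splitting arcs across a comb equivalence. The two routes are essentially the same in content --- Mosher's 3.13.4 is derived from 3.13.3 --- but your version has the advantage of being self-contained within the paper's stated machinery and of making explicit the base case, the transitivity step showing $\sigma_{r-1}$ is comb equivalent to $\tau_{j_r-1}$, and the invocation of Remark \ref{rmk:generic_move_as_unzip} identifying a standard split with a wide split along an arc in $R_b$. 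The one genuinely delicate point is the one you flag in your ``main obstacle'': transporting a splitting arc across a many-move comb equivalence must yield a splitting arc and must compose correctly, so that Proposition \ref{prp:combpersistence} applies to the accumulated comb equivalence at each stage rather than just to a single comb or slide. Since Proposition \ref{prp:combpersistence} is already stated for an arbitrary comb-equivalent pair, and the preceding paragraph asserts the canonical correspondence survives the paper's definitions, the induction closes without any further work; note also that ``parity'' in the paper's sense includes the central case, so your argument covers central splits as well.
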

A sequence of wide splits will be called a \nw{wide splitting sequence}.

\subsection{Cornerization of train tracks}
The purpose of the following discussion is to connect the most common version of train track found in the literature (which corresponds to the definition we have given) to the notion of cornered train track, which is the notion of train track given and developed in \cite{mms}.

\begin{defin}
Given a train track $\tau$ on a surface $S$, a train track $\tau'$ is a \nw{cornerization} of $\tau$ if:
\begin{itemize}
\item $\tau'$ is cornered;
\item $\tau$ is obtained from $\tau'$ via a sequence of central splits and comb equivalences;
\item if $\tau$ is transversely recurrent, then $\tau'$ also is.
\end{itemize}
\end{defin}
We have not required explicitly that if $\tau$ is recurrent, then $\tau'$ also has to be: this is indeed a direct consequence of the second bullet.

\begin{lemma}\label{lem:cornerization}
Any train track $\tau$ on $S$ admits a cornerization.
\end{lemma}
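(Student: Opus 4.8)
The plan is to build $\tau'$ from $\tau$ by a reverse-engineering procedure: every component $C$ of $S\setminus\nei(\tau)$ that is a train track complementary region \emph{without} a corner on some boundary component is "damaged" into one with corners, at the cost of introducing the inverse of some central splits and comb/uncomb moves. First I would classify the complementary regions $C$ of $\bar\nei(\tau)$: since $\tau$ is a train track, each compact $C$ has $\idx(C)<0$, so $\chi(C)\le 0$ and $C$ carries at least two cusps' worth of negative index in the formula for $\idx$; the failure of the cornered condition means some smooth component of $\partial(S\setminus\nei(\tau))$ — i.e.\ some component of $\partial_h\bar\nei(\tau)$ glued along a boundary circle of $C$ — has no cusp on it at all. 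The idea is that such a "smooth side" of $\partial C$ is, by Remark \ref{rmk:zipvssplit} run backwards, exactly the trace left on $\tau$ by a central split applied to a track $\tau'$ that did have a corner there. So the construction is: locate a branch $b$ of $\tau$ lying along a cusp-free boundary arc of a complementary region, and perform the \emph{inverse} of a central split along $b$ (equivalently, glue the two sides of that boundary arc together across a new large branch), possibly preceded by uncomb moves to put $\tau$ into a position where this inverse central split is legal.

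Next I would make this into an induction on a complexity measure — I expect the number of cusp-free boundary components of complementary regions of $\bar\nei(\tau)$, weighted appropriately, to work, or more simply $-\sum_C\idx(C)$ together with the count of smooth boundary circles. Each inverse central split strictly decreases this measure (it merges a cusp-free boundary arc of one region into the interior of a branch, turning two regions into one or adding a corner), so after finitely many steps I reach a cornered track $\tau''$ with $\tau$ recovered from $\tau''$ by central splits and comb equivalences. By construction the sequence of moves producing $\tau$ from $\tau''$ consists only of central splits and comb/uncomb moves, which gives the second bullet of the definition of cornerization.

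For the third bullet I would invoke Remark \ref{rmk:recurrence_at_extremes}: if $\tau$ is transversely recurrent, then since $\tau$ is obtained from $\tau''$ by a splitting sequence (central splits plus combs — combs are invertible, so this really is a splitting sequence in both directions up to comb equivalence), transverse recurrence of the \emph{first} track $\tau''$ follows from transverse recurrence of $\tau$ by running the implication in the appropriate direction; more carefully, a dual curve to $\tau$ pulls back through the tie collapses to a curve dual to $\tau''$ meeting every branch, because central splitting only removes the possibility of certain dual arcs. (If the naive direction of Remark \ref{rmk:recurrence_at_extremes} goes the wrong way, I would instead argue directly: a family of pairwise disjoint dual loops for $\tau$ realizing transverse recurrence can be isotoped to be dual to $\tau''$ since the only change is the insertion of a large branch in the interior of the tie neighbourhood, which any efficiently-positioned dual loop can be made transverse to.)

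The main obstacle I anticipate is the \emph{legality} of the inverse central split: an inverse central split along a branch $b$ is only defined when $b$ is large and the two small branch ends emanating from each endpoint of $b$ are arranged so that identifying the two sides yields a genuine semigeneric pretrack — one has to first apply uncomb moves (or, in the generic setting, slides) to bring the configuration into standard form, and one must check that doing so does not destroy the corners already present or create new cusp-free sides, so that the induction's complexity measure really does go down. Handling the case where a complementary region is an annulus or has several cusp-free boundary circles, and checking that "the inverse central split exists" in every such local picture (there are finitely many up to diffeomorphism, so this is a finite check, but it needs the semigeneric-vs-generic bookkeeping of \S\ref{sub:traintrackdefin} and Figure \ref{fig:ttsplitting}), is where the real work lies; everything else is bookkeeping with the index formula and Remark \ref{rmk:zipvssplit}.
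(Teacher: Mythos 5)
Your high-level plan --- fold (inverse central split) to introduce corners, inducting on the number of cusp-free boundary components of $\partial\bar\nei(\tau)$ --- matches the paper's strategy, and your complexity measure is essentially the one the paper uses (the count of smooth components of $\partial\bar\nei(\tau)$). But the real content of the lemma is the third bullet of the definition, preservation of transverse recurrence, and that is exactly where your proposal has a gap.

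You anticipate that Remark~\ref{rmk:recurrence_at_extremes} may go the wrong way, and it does: it says the \emph{first} entry being transversely recurrent forces all later ones to be, i.e.\ transverse recurrence propagates forward along a splitting sequence. Here $\tau''$ is the first entry and $\tau$ the last, so the remark gives you nothing. Your fallback direct argument --- ``a family of pairwise disjoint dual loops for $\tau$ realizing transverse recurrence can be isotoped to be dual to $\tau''$ since the only change is the insertion of a large branch in the interior of the tie neighbourhood'' --- also does not hold up. The new large branch $f$ is not created inside the old tie neighbourhood; the fold is across a complementary region $C$. More seriously, an \emph{arbitrary} fold gives no control over what happens to the dual curves: a loop dual to $\tau$ that passed through $C$ near the arc you pinch along could, after the identification, end up running along $f$ rather than across its ties, and would then no longer be dual to $\tau'$; nor is there any reason the new branch $f$ is met by any dual curve at all.

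The missing idea is to \emph{use the dual curves to guide the fold}. The paper locates a dual curve $\ul{\gamma_i}$ that meets the cusp-free boundary component $\alpha$ and meets $\tau$ in at least two points, and takes the pinching arc $g$ to be a sub-arc of $\ul{\gamma_i}$ running from $p\in\ul{\gamma_i}\cap\tau.\alpha$ across $C$ to the nearest $q\in\ul{\gamma_i}\cap\tau$. Because $g\subset\ul{\gamma_i}$ and the dual family is pairwise disjoint, the other dual curves stay away from the modification and remain dual to $\tau'$; and $\ul{\gamma_i}$ itself (or small slides of it past the new switches) crosses $f$ along a tie, so $f$ is hit. Showing such a $\ul{\gamma_i}$ exists takes a short argument (one can slide a dual curve meeting $\tau$ in a single point on a large or mixed branch of $\tau.\alpha$ past a switch), and there is one genuine special case you do not mention: a smooth loop component $\lambda$ of $\tau$ such that every dual curve meets $\lambda$ at most once is handled by a separate local modification in a handle (Figure~\ref{fig:cornerizehandle}). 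Without the dual-curve guidance your induction does not obviously terminate with a \emph{transversely recurrent} cornered track; that, rather than the semigeneric bookkeeping, is where the work lies.
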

\begin{proof}
We focus on the case when $\tau$ is transversely recurrent. Let $\ul\gamma=\{\ul\gamma_1,\ldots,\ul\gamma_m\}$ be a set of essential curves in $S$ as specified after Definition \ref{def:recurrent}: pairwise disjoint, dual to $\tau$, such that for each branch $b$ of $\tau$ there is a $\ul\gamma_i$ intersecting $b$. It is not required that two of them are not isotopic.

Suppose that $\tau$ is not cornered. We will build a transversely recurrent train track $\tau'$ such that $\partial\bar\nei(\tau')$ has fewer smooth components than $\partial\bar\nei(\tau)$, and $\tau$ is obtained from $\tau'$ with a central split. This will prove our claim by induction.

We deal with a special case first: $\tau$ contains a connected component $\lambda$ which is an embedded loop and all curves $\ul\gamma_1,\ldots,\ul\gamma_m$ intersect $\lambda$ in at most one point. Then necessarily one of them, $\ul\gamma_1$ say, intersects $\lambda$ in exactly one point. In that case, $\lambda$ and $\ul\gamma_1$ together fill a handle $H$ of $S$. In $H$, replace $\lambda$ with a new component $\lambda'$ and $\ul\gamma_1$ with a new dual curve $\ul\gamma'_1$ as shown in Figure \ref{fig:cornerizehandle}.

\begin{figure}[h]
\centering \def\svgwidth{.8\textwidth}
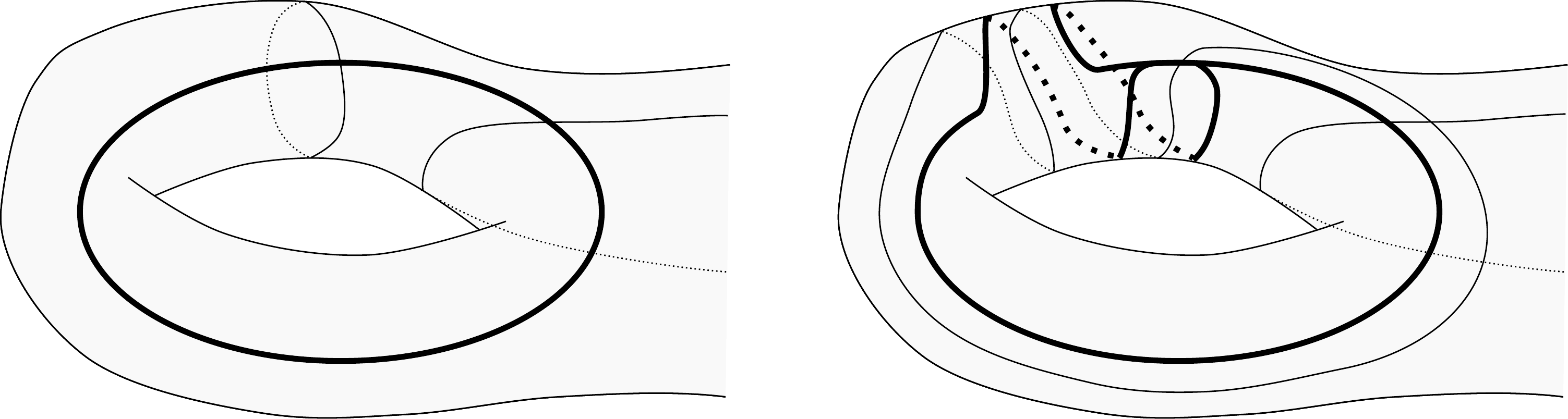
\caption{\label{fig:cornerizehandle}How to get rid of a loop component $\lambda$ of $\tau$ with a dual curve $\ul\gamma_1$ which intersects $\tau$ in a single point, lying along $\lambda$. The two lie in a handle $H$ of $S$. Note: $\lambda'$ shall differ from $\lambda$ only in a neighbourhood of $\ul\gamma_1$ whose intersection with any other $\ul\gamma_i$ is empty.}
\end{figure}

Define $\tau'\coloneqq(\tau\setminus\lambda)\cup\lambda'$. It is true that $\partial\bar\nei(\tau')$ has one smooth component less than $\partial\bar\nei(\tau)$ and that $\tau'$ is a train track. Furthermore it is tranversely recurrent: the new curve $\ul\gamma'_1$ may be isotoped to keep dual to $\tau'$ and intersect either of the two branches in $\lambda'$; while the other curves $\ul\gamma_2,\ldots,\ul\gamma_m$ are still in efficient position with respect to $\tau'$, and therefore dual: as it was done in Definition \ref{def:efficientposition}, fix a regular neighbourhood $\nei(\gamma_i')$ which contains no corners of $\partial\bar\nei(\tau')$. As one may see from the figure, $H\setminus\left(\nei(\tau')\cup \nei(\gamma_i')\right)$ will only consist of connected components with $\geq 4$ corners each, so $S\setminus\left(\nei(\tau')\cup \nei(\gamma_i')\right)$ consists only of negative index components or rectangles. For each branch $b$ of $\tau'\setminus\lambda'$ there is one curve among $\ul\gamma_2,\ldots,\ul\gamma_m$ which intersects it.

Now suppose that a component as above does not exist in $\tau$. Let $\alpha\subset\partial\bar\nei(\tau)$ be a smooth connected component: in particular $\alpha$ is isotopic to a wide carried curve of $\tau$, so it makes sense to talk about the carrying image $\tau.\alpha$. Let $C$ be the connected component of $\bar S\setminus\nei(\tau)$ such that $\alpha\subset\partial C$.

We wish to make sure that there is a curve $\ul\gamma_i$ intersecting $\alpha$, and intersecting $\tau$ in at least two points. Suppose such a curve does not exist: i.e. all curves $\underline{\gamma_i}$ intersecting $\alpha$ in fact intersect $\tau$ in only $1$ point. By our hypothesis, this excludes the possibility that $\tau.\alpha$ is a smooth loop consisting of an entire connected component of $\tau$.

Thus, in case $\tau.\alpha$ is a smooth loop (i.e. $\alpha$ has an embedded train path realization), we have that $\tau.\alpha$ includes a mixed or large branch $b$ of $\tau$. In case $\tau.\alpha$ is not a smooth loop, $\tau.\alpha$ surely includes a large branch $b$ of $\tau$.

In both cases, there is a $\ul{\gamma_i}$ such that the single point $\ul{\gamma_i}\cap \tau.\alpha$ lies along $b$. With a small isotopy on $\ul{\gamma_i}$, slide it towards one of the large ends/the large end (resp.) of $b$, and then past the relative switch. The curve thus obtained, $\ul{\gamma'_i}$, is still dual to $\tau$; moreover the single intersection of $\ul{\gamma_i}$ with $b$ has been replaced with an intersection point of $\ul{\gamma'_i}$ with each of the small branch ends which are incident at that switch.

So, it is always possible to assume, up to enlarging the collection $\ul\gamma$ of curves dual to $\tau$, that there exists a curve $\underline{\gamma_i}$ intersecting $\alpha$, and having more than $1$ intersection point with $\tau$. Fix a regular neighbourhood $\nei(\gamma_i)$ which contains no corners of $\partial\bar\nei(\tau)$. Let $p\in \ul{\gamma_i}\cap \tau.\alpha$, let $g$ be an arc of $\gamma$ which begins at $p$, proceeds towards $C$ through a half-tie, then through $C$, and finally follows a tie of $\tau$ so as to arrive at the nearest intersection point $q\in \ul{\gamma_i}\cap\tau$. Pinch a neighbourhood of $p$ in $\tau$ along $g$ and fold it with a neighbourhood of $q$, so as to create a new large branch $f$: this pretrack will be our $\tau'$ (Figure \ref{fig:cornerization}).

\begin{figure}
\begin{center}
\def\svgwidth{.9\textwidth}
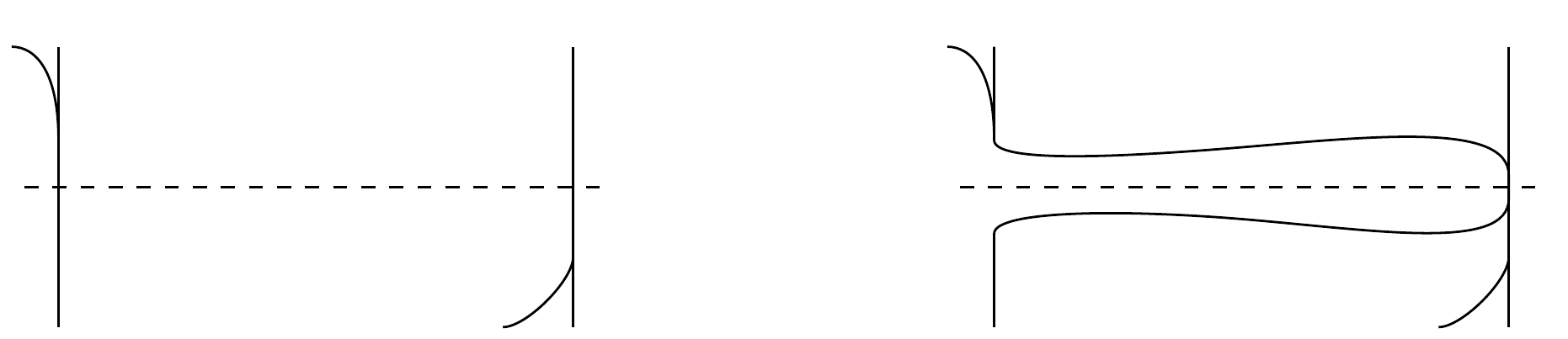
\end{center}
\caption{\label{fig:cornerization}The elementary step to reduce the number of smooth components of $\partial\bar\nei(\tau)$ is depicted above. The dual curve $\ul{\gamma_i}$ shows the way to \emph{fold} $\tau$ (this is how the inverse of a split move is usually called) so as to reduce the total number of smooth boundary components.}
\end{figure}

We claim, first of all, that $\tau'$ is a train track. Note that the complementary regions of $\nei(\tau')$ in $S$ can be identified with the ones of $\nei(\tau)$, except $C$ which is replaced by either a single complementary component $C'$ or by two $C',C''$. In either case what we obtain is diffeomorphic to $C\setminus (\nei(\ul{\gamma_i})|_g)$, where by $\nei(\ul{\gamma_i})|_g$ we mean the component of $\nei(\ul{\gamma_i})\cap C$ which contains $g$. Since, by efficient position of $\ul{\gamma_i}$, components of $C\setminus\nei(\ul{\gamma_i})$ either have negative index or are rectangles, $C',C''$ will have the same property, because they are obtained as the gluing of some of such components, plus some rectangles, along boundary edges. But $g$ cannot cut a rectangle out of $C$: that would mean that $\alpha$ is not a smooth boundary component. This proves the claim.

Also, we claim that $\tau'$ is transversely recurrent. The given curves $\ul{\gamma_1},\ldots,\ul{\gamma_k}$ may not be enough to intersect all branches of $\tau'$ because $\ul{\gamma_i}$, in general, will intersect the new large branch $f$ born from folding, but not the four branches which are incident to its ends. However, if $\ul{\gamma_i}$ is altered via an isotopy that slides it past either switch of $f$, it turns into a curve, again dual to $\tau'$, and intersecting two of the four branch ends sharing their switch with $f$. So we may add two more curves, isotopic to $\ul\gamma_i$, to the collection $\ul\gamma$, to intersect all branches of $\tau'$.

In case $\tau$ is \emph{not} transversely recurrent, we apply a similar idea. We do not select a family of curves $\ul\gamma$; after choosing a connected component $C$ of $\bar S\setminus\nei(\tau)$ with a smooth boundary component $\alpha$, we may pick a properly embedded arc $g$ in $C$ such that: the endpoints of $g$ are distinct; at least of them lies along $\alpha$; and $g$ does not cut out a region of $C$ with nonnegative index. One builds $\tau'$ similarly as above and then shows that is a train track.
\end{proof}

We now describe how to cornerize consistently an entire splitting sequence: the first step is to make it cleaner by turning it into a wide splitting sequence.

\begin{rmk}\label{rmk:centralsplitbound}
Any (wide) splitting sequence involves at most $N_1$ central (wide) splits: given a train track $\tau$, each branch contributes at most $4$ corners in $\partial\bar\nei(\tau)$; and each central split decreases the total number of corners by $4$, whereas any other move keeps it unvaried.
\end{rmk}

\begin{lemma}[Postponing central splits]\label{lem:postpone}
Given a wide splitting sequence $\bm\sigma=(\sigma_0\ldots,\sigma_i,\sigma_{i+1},\sigma_{i+2},\ldots,\sigma_N)$, call $\beta_j$ (for $j=0,1$) the splitting arc for $\sigma_{i+j}$ that shall be split along to get $\sigma_{i+j+1}$. Suppose that the split along $\beta_0$ is central whereas the one along $\beta_1$ is parity. Then there is a wide splitting sequence $\bm{\sigma'}=(\ldots,\sigma'_i,\ldots,\sigma'_{i+k},\ldots)$, with $2\leq k\leq 7$, such that
\begin{itemize}
\item $\sigma_j=\sigma'_j$ for all $j\leq i$;
\item the wide split(s) between $\sigma'_i$ and $\sigma'_{i+k-1}$ is/are parity;
\item the wide split between $\sigma'_{i+k-1}$ and $\sigma'_{i+k}$ is central;
\item $\sigma_j$ is comb equivalent to $\sigma'_{j-2+k}$ for all $j\geq i+2$. (and in particular the all wide splits in the subsequence $(\sigma'_j)_{j\geq i+k}$ have the same parity as the corresponding ones in $(\sigma_j)_{j\geq i+2}$). 
\end{itemize}
\end{lemma}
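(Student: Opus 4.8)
The plan is to analyse the local picture of the two consecutive wide splits. The central split along $\beta_0$ acts on a large branch $b_0$ of $\sigma_i$; after it is performed, the two copies of $b_0$ become small branch ends in $\sigma_{i+1}$, and the splitting arc $\beta_1$ traverses some large branch $b_1$ of $\sigma_{i+1}$. There are two cases: either $b_1$ was already (essentially) a large branch of $\sigma_i$, disjoint enough from $b_0$ that the parity split and the central split are supported on disjoint portions of $\bar\nei(\sigma_i)$; or $b_1$ is created, or altered, by the central split along $\beta_0$ — that is, $b_1$ shares a switch with one of the two copies of $b_0$, or arises from combing after the central split. In the first, disjoint case the two moves commute up to comb equivalence: I would simply perform the parity wide split along (the splitting arc of $\sigma_i$ corresponding to) $\beta_1$ first — using Remark \ref{rmk:generic_move_as_unzip} and Remark \ref{rmk:zipvssplit} to express it as an unzip along a large zipper supported away from $b_0$ — getting $\sigma'_{i+1}$ comb equivalent to the track one obtains from $\sigma_{i+1}$ by splitting along $\beta_1$, i.e. to $\sigma_{i+2}$; and then perform the central split on $b_0$, which is unaffected. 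Here $k=2$ and Proposition \ref{prp:combpersistence} propagates comb equivalence to the rest of the sequence, preserving parities.

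The substantive case is when $\beta_1$ interacts with $b_0$. Then I would take a large multibranch $\beta$ in $\sigma_i$ that, after the central split and the necessary combs, projects to $\beta_1$: concretely, $\beta$ follows one of the two strands coming out of the central split and then continues along $\beta_1$. The idea is that the combined effect ``central split along $b_0$, then parity split along $\beta_1$'' is realized as a \emph{multiple split} of $\sigma_i$ along $\beta$ with parity equal to that of the $\beta_1$-split, possibly followed by the leftover central split on the (now isolated) branch $b_0$; and by Definition \ref{def:multiplesplit} together with Remark \ref{rmk:zipvssplit} a multiple split decomposes, via its canonical decomposition into alternating large and small zippers, into a bounded number of parity wide splits (and combs), followed by at most one central wide split coming from $b_0$. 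Counting: the unzip along $\kappa_1$ contributes one or two parity wide splits (Remark \ref{rmk:zipvssplit}), the unzip along $\kappa'_2$ another one or two, $\beta$ traverses at most one large branch other than $b_0$ so the canonical decomposition has at most a couple of large zippers, and then one central split — giving $k\le 7$, with $k\ge 2$ since at least one parity move and the central move occur. The output tracks at each stage differ from the originals only by comb moves, by Proposition \ref{prp:combpersistence}, so $\sigma'_j$ is comb equivalent to $\sigma_j$ for $j\le i$ (in fact equal) and $\sigma'_{j-2+k}$ is comb equivalent to $\sigma_j$ for $j\ge i+2$.

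To make the bookkeeping uniform I would actually run the argument through Proposition \ref{prp:deleteslidings} and Proposition \ref{prp:combpersistence}: fix the large multibranch $\beta$ of $\sigma_i$, perform the canonical decomposition of the corresponding multiple split into wide splits, check that the \emph{last} wide split in this decomposition is the central one (this is automatic because the only source of a central split is $b_0$, whose two copies remain untouched by the parity unzips until the very end — the parity unzips only thin branches lying along $\beta\setminus b_0$ or one side of $b_0$), and then invoke comb persistence to splice the tail $(\sigma_j)_{j\ge i+2}$ onto $\sigma'_{i+k}$ with all parities preserved. The main obstacle I expect is verifying rigorously that the central split can be postponed to the very end of the block rather than getting entangled in the middle: one must check that each intermediate parity unzip along (a piece of) $\beta$ keeps $b_0$'s two copies as small branch ends joined by an untouched segment, so that ``do the central split on $b_0$'' still makes sense as the final move. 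This is a careful but routine case analysis of how the zippers $\kappa_1,\kappa'_2$ of the multiple split sit relative to $b_0$, using the disjointness built into the central-parity configuration and the tangential-intersection description of unzips in Definition \ref{def:zipper}.
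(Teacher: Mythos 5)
Your overall strategy matches the paper's: you also realize the two wide splits as multiple splits of $\sigma_i$ (doing the parity one first), decompose into unzips, and invoke Proposition~\ref{prp:combpersistence} to splice the tail. However, there is a gap in your argument that the paper handles carefully, and your count is not actually supported by the steps you describe.

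The key issue is what happens to the postponed central split. You write: ``realized as a multiple split of $\sigma_i$ along $\beta$ \dots possibly followed by the leftover central split on the (now isolated) branch $b_0$''. But after you perform the multiple split along $\beta_1$ in $\sigma_i$ (producing, in the paper's notation, $\rho''_i$), the arc $\beta_0$ need not be a \emph{splitting} arc for $\rho''_i$: it may now be a large multibranch traversing \emph{two} large branches of $\rho''_i$, because unzipping near $P_1,P_2$ can lower a mixed branch along $\beta_0$. The paper spends a paragraph analysing exactly this (the behaviour of the branch end $e$ whose endpoint $v$ lies along the same tie as one of the anchors), and concludes that the central multiple split along $\beta_0$ may itself contribute up to one parity wide split \emph{before} the terminal central one. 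Your sketch omits this contribution entirely, so your accounting ($\leq 2+2+1$) would give $k \leq 5$, yet you assert $k\leq 7$; the numbers don't close.

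The second problem is your claim that ``$\beta$ traverses at most one large branch other than $b_0$''. This is where the paper's set $\delta = \bigcup_{b\text{ traversed by }\beta_0}\beta_1\cap R_b$ earns its keep: $\delta$ can have up to two connected components, each of which traverses $B_0$, and the three-way case split (anchors on a single-component $\delta$; anchors on a two-component $\delta$; anchors off $\delta$) is what gives the parity-split counts $1$, at most $3$, at most $5$ for the $\bm\rho^1$ phase. Without that structure your ``a couple of large zippers'' is just a guess, and the claim as stated is false in general. You also don't mention placing the anchors $P_1,P_2$ within a single segment of $\beta_1\cap R_{b_1}$, which the paper explicitly flags as ``relevant for the following argument'': this placement is what controls the interaction between $\kappa_1$, $\kappa'_2$ and $\beta_0$.

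Finally, your initial dichotomy (``disjoint'' versus ``interacting'') is cast in terms of whether $b_1$ meets $b_0$, but the right object is the large multibranch $\beta_1$ regarded as an arc in $\bar\nei(\sigma_i)$ — the paper notes $\beta_1$ already is a large multibranch for $\sigma_i$ (no surrogate $\beta$ ``following one strand of the central split'' is needed), and the commuting case is absorbed into $\delta=\emptyset$ rather than treated separately. So: right machinery, but the case analysis that actually delivers the bound $2\leq k\leq 7$ is missing.
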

\begin{proof}
We first build a splitting sequence $\bm\rho$ turning $\sigma_i$ into $\sigma_{i+2}$, and involving $1$ to $6$ parity splits, followed by $1$ central split.

Since the split along $\beta_0$ is central, a tie neighbourhood $\bar\nei(\sigma_{i+1})$ is contained in $\bar\nei(\sigma_i)\setminus \beta_0$, with ties obtained by restriction of the old ones; and $\partial_v\bar\nei(\sigma_{i+1})$ consists of $\partial_v\bar\nei(\sigma_i)$, deprived of two connected components. So we may suppose that, in $\bar\nei(\sigma_i)$, the arcs $\beta_0,\beta_1$ are properly embedded and disjoint. The endpoints of $\beta_0$ lie on distinct components of $\partial_v\bar\nei(\sigma_i)$, and so do the ones of $\beta_1$ because this property must hold when considering it as an arc in $\bar\nei(\sigma_{i+1})$. It is also impossible that the two splitting arcs have an extreme each on the same connected component of $\partial_v\bar\nei(\sigma_i)$, because the two connected components where $\partial\beta_0$ lie are not part of $\partial_v\bar\nei(\sigma_{i+1})$.

The arc $\beta_1$, in general, is not a splitting arc for $\sigma_i$, but it is a large multibranch. Also, since the wide central split along $\beta_0$ shall turn it into a splitting arc, the two arcs are embedded in $\bar\nei(\sigma_i)$ with this property: $\beta_1$ traverses each branch at most twice, and if $b$ is any branch traversed twice then $R_b\setminus \beta_0$ consists of two connected components, and the two segments of $\beta_1\cap R_b$ each lie in one.

Let $\delta\coloneqq \bigcup_{b\text{ traversed by }\beta_0} \beta_1\cap R_b$. If $B_0$ is the large branch of $\sigma_i$ traversed by $\beta_0$, then each component of $\delta$ traverses $B_0$ (here we use the term `traverse' in an obvious generalized setting). Let indeed $e$ be the first or last branch end traversed by a connected component $\delta_0$ of $\delta$: either $e$ is the first/last branch end traversed by $\beta_0$ or $\beta_1$; or $\beta_1$ and $\beta_0$ give two different train paths originating from $e$. So the branch ends at both extremities of $\delta_0$ are large, implying that $\delta_0$ must traverse a large branch: necessarily $B_0$. In particular, $\delta$ shall consist of $0$, $1$ or $2$ connected components.

Let $B_1$ be the large branch traversed by $\beta_1$ in $\sigma_{i+1}$: $c_{\sigma_i}(B_1)\subseteq \sigma_i$ is a union of branches, including a large one $b_1$. Place the two anchors $P_1,P_2$ for the wide split of $\sigma_{i+1}$ along $\beta_1$ within the same segment of $\beta_1\cap R_{b_1}([-1,1]\times[-1,1])$. This is not important for the wide split, as it only affects its result up to isotopy; but is relevant for the following argument.

Let $\rho''_i$ be the result of the multiple split of $\sigma_i$ along $\beta_1$, with the same parity as the wide split that $\sigma_{i+1}$ undergoes in the sequence $\bm\sigma$, and with anchors $P_1,P_2$. Call $\kappa_{11}$ the first zipper that gets unzipped, and $\rho'_i$ the result of the unzip; and call $\kappa_{12}'$ the second zipper, with $P'_2\in\rho'_i$ the endpoint of $\kappa_{12}'$. The arc $\beta_0$ is a large multibranch in $\rho''_i$, and the central multiple split along it gives back $\sigma_{i+2}$. This multiple split will be described as the unzip of a zipper $\kappa_{01}$ to obtain an almost track $\rho'''_i$, followed by the unzip along another zipper $\kappa_{02}$ to get $\rho''''_i$, and finally a central split.

According to Remarks \ref{rmk:generic_move_as_unzip} and \ref{rmk:zipvssplit}, the multiple split along $\beta_1$ can be canonically decomposed into unzips along small and large zippers. While unzips along small zippers give the same result as a series of comb/uncomb moves, the ones along large zippers give a track which is comb equivalent to the one obtained with one or two wide splits so their effect is obtained with a splitting sequence which involves one or two splits, even if \emph{only up to isotopy}. This specifies how to build a splitting sequence $\bm\rho^1$, turning $\sigma_i$ progressively into $\rho'_i$ and $\rho''_i$. Similarly, the subsequent multiple split along $\beta_0$ is the result of a splitting sequence $\bm\rho^0$ turning $\rho''_i$ into $\rho'''_i,\rho''''_i$, and finally $\sigma_{i+2}$. Also, let $\bm\rho=\bm\rho^1*\bm\rho^0$. The last split in this sequence is central, the others are parity.

We will now estimate how many splits are involved, in total, in the sequence $\bm\rho^1$, by analysing how many unzips along large zippers take place when canonically decomposing the unzips along $\kappa_{11}$ and $\kappa'_{12}$. There are several cases to consider, but the following consideration is always true: let $\eta$ be the second-to-last almost track obtained when applying the unzips in the canonical decomposition, and let $\kappa_l:[-\epsilon,t]\rightarrow \bar\nei(\eta)$ be the last zipper in the decomposition, which is a large one. Then $t\in(1,2)$, $P_1$ belongs to the tie through $\kappa_l(1)$, and the branch end $(\kappa_l)_P([1,t])$ shares its switch only with another branch end: so the unzip along $\kappa_l$ contributes only one (parity) split in the sequence $\bm\rho^1$.

Suppose that $P_1,P_2$ lie along $\delta$, and therefore along one same connected component $\delta_0$; in particular they lie along $\delta_0\cap R_{B_0}$. The first case to consider is the one when this supposition holds, and $\delta$ has only 1 connected component. In this case there are no large branches contained in the image of $(\kappa_{11})_P$, and the one of $(\kappa'_{12})_P$ contains only one. So the zipper $\kappa_{11}$ is small, and either the zipper $\kappa'_{12}$ is large, or the unzip along it can be subdivided into the unzip along a small zipper followed by one along a large zipper. In either case, with this configuration $\bm\rho^1$ only involves 1 split (due to the consideration in the above paragraph).

The second case is the one of $P_1,P_2$ lying again along $\delta$, but $\delta$ having two connected components. Then the component different from $\delta_0$ is included entirely in $\beta_1\setminus b_1$ --- thus it traverses branches of $\sigma_i$ that are completely contained in the image of either $(\kappa_{11})_P$, or $c_{\sigma_i}\circ(\kappa'_{12})_P$. In the first sub-case the unzip along $\kappa_{11}$ is canonically decomposed into at most $3$ unzips, with one occurring along a large branch and the other, or others, along one or two small ones; and the unzip along $\kappa'_{12}$, instead, is decomposed similarly as above; the large zippers involved are one or two, implying that the splits required in $\bm\rho^0$ are $1$ to $3$ ($4$ is to be excluded because of the previous consideration; in any case, they are all parity splits). In the second sub-case the zipper $\kappa_{11}$ is small, while the canonical decomposition of the unzip along $\kappa'_{12}$ involves $2$ unzips along large zippers, the second of which requires only $1$ split: there are again at most $3$ (parity) splits in $\bm\rho^1$.

The third and last case is the one of $P_1,P_2$ not lying along $\delta$. In this case $\delta$ (if nonempty) is entirely contained in the images of $(\kappa_{11})_P$ and $c_{\sigma_i}\circ(\kappa'_{12})_P$. Several cases are possible as $\delta$ may consist of up to $2$ components, and each of them may be traverse branches belonging to each of the images of the specified maps. But a similar analysis as above shows that the canonical decompositions of the two unzips together involve at most three unzips along large zippers, thus there are at most $5$ (parity) splits in $\bm\rho^1$.

Turning to $\bm\rho^0$: $\beta_0$ traverses each branch of $\rho''_i$ at most once. The restriction of the tie collapse $c_{\sigma_i}:\rho''_i\rightarrow \sigma_i$ maps all branch ends of $\rho''_i$ sharing a switch with $\rho''_i.\beta_0$ to branch ends of $\sigma_i$ sharing a switch with $\sigma_i.\beta_0$, except possibly for the one, $e$, whose endpoint $v$ lies along the same tie of $\bar\nei(\sigma_i)$ as $P_1$ or $P_2$: it may be the case that $c_{\sigma_i}(e)\subseteq \sigma_i.\beta_0$.

The multiple central split along $\beta_0$ is a wide central split, if $\beta_0$ traverses only one large branch of $\rho''_i$. If this is not true, the only possibility is that $\beta_0$ traverses $2$ large branches of $\rho''_i$, with one of them delimited by $v$; call the other one $b$. Without affecting the result of the multiple central split, one can suppose that the anchors $Q_1,Q_2$ (with $Q_j$ endpoint of $\kappa_{0j}$) lie in $\beta_0\cap R_b$, and that $Q_1$ is the one closer to $v$.

We can now apply a similar argument as before. The unzip along $\kappa_{01}$ can be canonically decomposed into at most $3$ unzips, only one of which takes place along a large zipper $\kappa_l$, defined on an appropriate interval $[-\epsilon,t]$. One sees that, necessarily, $t\in(1,2)$; $(\kappa_l)_P([1,t])$ contains no switches other than $v$ and is only incident to the branch end $e$. The zipper $\kappa_{02}$ is small instead. So the splitting sequence $\bm\rho^1$ requires at most $1$ parity split further than the last, central split.

Let $k$ be the total number of splits in $\bm\rho$: it has been proved above that $2\leq k\leq 7$, and that only the last split is central anyway.

Now that $\bm\rho$ has been defined, we build $\bm\sigma'$ from $\bm\sigma$ with the following steps. Call $\bm\sigma_-=\bm\sigma(0,i)$. The wide splitting sequence $\bm\sigma(i+2,N)$ can be turned into a (regular) splitting sequence, $\bm\sigma''_+$. According to Proposition \ref{prp:deleteslidings} there is a wide splitting sequence $\bm\sigma'_{0+}$, starting with $\sigma_i$, whose elements are orderedly comb equivalent to the ones of $\bm\rho*\bm\sigma''_+$. This sequence begins with with $k-1$ wide parity splits followed by a central one, and $(\sigma'_{0+})_k$ is comb equivalent to $\sigma_{i+2}$. Define $\bm\sigma'=\bm\sigma_-*(\bm\sigma'_{0+})$. 
\end{proof}

\begin{defin}\label{def:cornerizedseq}
Let $\tau=(\tau_j)_{j=0}^N$ be a train track splitting sequence. We build a splitting sequence $P\bm\tau$ (not uniquely defined) in the following way.

Let $\hat\tau_0$ be a cornerization of $\tau_0$. Let $\bm{\hat\tau}$ be a splitting sequence from $\hat\tau_0$ to $\tau_0$, and then continuing as $\bm\tau$.

If $\bm{\hat\tau}$ involves no central split, or all central splits are followed only by central splits and slides, the process ends here. Else replace $\bm{\hat\tau}$ with a wide splitting sequence $\bm\sigma^0$ according to Proposition \ref{prp:deleteslidings}, and define a list of wide splitting sequences $\bm\sigma^i=(\sigma^i_j)_j$ recursively this way: within the sequence $\bm\sigma^i$, let $\ell(i)$ be the highest index $j$ such that $\sigma^i_j,\sigma^i_{j+1},\sigma^i_{j+2}$ are a central wide split followed by a parity one: postpone the central split according to Lemma \ref{lem:postpone} above, and call $\bm\sigma^{i+1}$ the new sequence. Repeat the process as far as it is possible to define $\ell(i)$. Finally, replace the last constructed $\bm\sigma^R$ back with a (non-wide) splitting sequence: this will be $P\bm\tau$.

Let $\cnr\bm\tau$ be the initial subsequence of $P\bm\tau$ obtained by truncating just after the last parity split. It is called a \nw{cornerization} of $\bm\tau$.
\end{defin}

We summarize the properties of $\cnr\bm\tau$ in the below:
\begin{lemma}\label{lem:ctauproperties}
Given a splitting sequence $\bm\tau=(\tau_j)_{j=0}^N$ on a surface $S$, let $\cnr\bm\tau=(\cnr\tau_j)_{j=0}^M$ be a cornerization of it defined from $P\bm\tau=(P\tau_j)_{j=0}^{M+M'}$. Then the following properties hold:
\begin{enumerate}
\item if all elements of $\bm\tau$ are recurrent, then so are the ones of $\cnr\bm\tau$;
\item if all elements of $\bm\tau$ are transversely recurrent, then so are the ones of $\cnr\bm\tau$;
\item all elements of $\cnr\bm\tau$ are cornered;
\item there is an increasing function $f:[0,N]\cap\mathbb Z\rightarrow [0,M+M']\cap\mathbb Z$, with $f(0)=0$, such that $\cnr\tau_{f(j)}$ is comb equivalent to a cornerization of $\tau_j$ for all $0\leq j\leq N$, and if $N_0$ is the lowest index such that no parity split occurs in $\bm\tau$ after $\tau_{N_0}$ then $f(N_0)=M$;
\item $|\bm\tau|-N_1\leq|\cnr\bm\tau|\leq 6^{N_1}|\bm\tau|$, for $N_1$ defined as in Remark \ref{rmk:centralsplitbound}.
\end{enumerate}
\end{lemma}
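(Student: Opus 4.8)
The plan is to prove the structural assertion (4) first and then read off (1), (2), (3) and (5) from it, together with the already available facts about recurrence and central splits.

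\emph{Assertion (4).} Unwinding Definition \ref{def:cornerizedseq}: the sequence $\bm{\hat\tau}$ runs from a cornerization $\hat\tau_0$ of $\tau_0$ through $\tau_0,\tau_1,\dots,\tau_N$, its initial segment from $\hat\tau_0$ to $\tau_0$ consisting only of central splits and comb moves. Passing to the wide sequence $\bm\sigma^0$ by Proposition \ref{prp:deleteslidings} attaches to each $\tau_j$ a comb-equivalent partner, compatibly with an increasing reindexing. Each application of Lemma \ref{lem:postpone} producing $\bm\sigma^{i+1}$ from $\bm\sigma^i$ carries its own increasing reindexing --- the identity on indices $\le \ell(i)$, a shift by $k-2$ (with $2\le k\le 7$) on indices $\ge \ell(i)+2$, and the lone index $\ell(i)+1$ the only entry whose comb-class is disturbed --- under which every entry past the affected block keeps its comb class. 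The decisive point is that postponing never interchanges two parity splits nor two central splits and only ever pushes a central split past a parity split; hence in the terminal sequence $\bm\sigma^R$ all parity splits precede all central splits, so, after re-expansion to $P\bm\tau$, the last parity split separates a purely parity-split initial block --- which is exactly $\cnr\bm\tau$ --- from a purely central-split tail. Composing the reindexings and the two passages between wide and non-wide sequences gives the increasing $f$, with $f(0)=0$ since $\hat\tau_0=\cnr\tau_0$ is never touched. Since the parity splits keep their relative order, the last parity split of $P\bm\tau$ is the one feeding into $\tau_{N_0}$, whence $f(N_0)=M$; tracing the central splits pushed past this last parity split shows that $\tau_{N_0}$ (indeed $\tau_N$) is obtained from $\cnr\tau_M$ by central splits and comb moves, and $\cnr\tau_M$ is cornered (by (3) below) and transversely recurrent whenever $\tau_N$ is (forward propagation, Remark \ref{rmk:recurrence_at_extremes}), so $\cnr\tau_M$ is a cornerization of $\tau_{N_0}$; for $j\le N_0$ the entry $\cnr\tau_{f(j)}$ lies in the parity block, hence is cornered, and is comb equivalent to $\tau_j$, which is then its own cornerization.

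\emph{Assertions (3), (2) and (1).} For (3): $\hat\tau_0=\cnr\tau_0$ is cornered, and every move of $\cnr\bm\tau$ is a parity wide split or a comb, uncomb or slide --- all central splits having been pushed into the discarded tail --- and these moves preserve corneredness, so every entry of $\cnr\bm\tau$ is cornered. For (2): by Lemma \ref{lem:cornerization} and the definition of cornerization, $\hat\tau_0$ is transversely recurrent whenever $\tau_0$ is, hence whenever every entry of $\bm\tau$ is; transverse recurrence then propagates forward along $\cnr\bm\tau$ by Remark \ref{rmk:recurrence_at_extremes}. For (1): assertion (4) gives that $\cnr\tau_M$ is comb equivalent to a cornerization of $\tau_{N_0}$; since $\tau_{N_0}$ is recurrent and a cornerization of a recurrent track is recurrent (a direct consequence of the definition, via backward propagation in Remark \ref{rmk:recurrence_at_extremes}), $\cnr\tau_M$ is recurrent, and recurrence then propagates backward along $\cnr\bm\tau$ by the first bullet of that remark.

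\emph{Assertion (5).} As $\cnr\bm\tau$ contains no central split, $|\cnr\bm\tau|$ equals the number of parity splits of $P\bm\tau$, equivalently of $\bm\sigma^R$. For the lower bound, the prefix from $\hat\tau_0$ to $\tau_0$ contributes only central splits, and each postponing step replaces one parity split by $k-1\ge 1$ of them while keeping the number of central splits fixed, so $\bm\sigma^R$ has at least as many parity splits as $\bm\tau$; by Remark \ref{rmk:centralsplitbound} the latter has at least $|\bm\tau|-N_1$ of them. For the upper bound, Remark \ref{rmk:centralsplitbound} bounds by $N_1$ the number of central splits present at every stage; processing the central splits from the rightmost one, each is postponed past every parity split still to its right --- a terminating process, since each such postponement moves the central split strictly rightward and never creates parity splits to its right --- and a single central split passing a parity split replaces it by $k-1\le 6$ copies, so each central split multiplies the parity count by at most $6$; hence $|\cnr\bm\tau|\le 6^{N_1}|\bm\tau|$.

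\emph{Main obstacle.} The fussiest part is the bookkeeping in (4): verifying that the composite reindexing is well defined and increasing, keeping track of the single entry disturbed at each postponing step, and --- above all --- confirming that the terminal parity-phase track $\cnr\tau_M$, and more generally the entries $\cnr\tau_{f(j)}$, is comb equivalent to a genuine \emph{cornerization} of the relevant $\tau_j$ and not merely to a track related to it by splits. This rests on repeatedly recognising a cornered track from which $\tau_j$ is reached by central splits and comb moves as a cornerization of $\tau_j$, once the transverse-recurrence clause of that definition has been checked via forward propagation from $\hat\tau_0$.
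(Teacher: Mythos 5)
Most of your argument tracks the paper's own proof step for step: the same composition of reindexings to build $f$, the same forward/backward propagation for (1) and (2), the same observation that $\cnr\bm\tau$ consists only of parity moves for (3), and the same central-split counting for (5). Your reframing of the (5) upper bound --- process each central split fully to the right, noting that one complete pass multiplies the parity-split count by at most $6$ --- is a slightly cleaner account than the paper's $p(i),c(i)$ potential-function bookkeeping and arrives at the same $6^{N_1}$ factor.

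The genuine error is in the last sentence of your argument for (4). You assert that for $j \le N_0$ the track $\cnr\tau_{f(j)}$ is comb equivalent to $\tau_j$, and then call $\tau_j$ its own cornerization. Neither holds in general. Already for $j = 0$ one has $\cnr\tau_{f(0)} = \hat\tau_0$, which is a cornerization of $\tau_0$, hence related to $\tau_0$ by central splits and comb moves, not by comb equivalence alone; and $\tau_0$ need not be cornered, so it is not its own cornerization. The same failure occurs for intermediate $j$: Lemma~\ref{lem:postpone} sets $f_i(s+1)=s$, so an index just after a postponed central split gets sent strictly \emph{before} that central split in the new sequence, and $\sigma^i_{s+1}$ differs from $\sigma^{i+1}_{f_i(s+1)}$ by that central split. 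As a consequence the correct relationship, which the paper states and which you yourself state both for $j=N_0$ and again in your ``main obstacle'' paragraph, is that $\tau_j$ is obtained from $\cnr\tau_{f(j)}$ by central splits and comb moves; combined with $\cnr\tau_{f(j)}$ being cornered (from (3)) and transversely recurrent when appropriate (forward propagation from $\hat\tau_0$), this exhibits $\cnr\tau_{f(j)}$ as a cornerization of $\tau_j$, which is what the statement requires. So you have the right picture in mind, but the sentence as written contradicts the definition of cornerization and needs to be replaced by the argument you already gave for $j=N_0$, applied uniformly.
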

\begin{proof}
\begin{enumerate}
\item $P\tau_{M+M'}$ is comb equivalent to $\tau_N$. If the latter is recurrent, then so is the former; but if the last element of a splitting sequence is recurrent then so are all the previous ones.

\item $\cnr\tau_0$ is defined to be a cornerization of $\tau_0$. If the latter is transversely recurrent, then so is the former; but if the first element of a splitting sequence is transversely recurrent then so are all the following ones.

\item $\cnr\tau_0$ is cornered and, as all wide splits in the sequence are parity splits, the complementary regions of each $\cnr\tau_j$ are diffeomorphic to the ones of $\cnr\tau_0$.

\item Recall the notation for the construction of $P\bm\tau$ set up in Definition \ref{def:cornerizedseq}. Suppose that $\bm{\hat\tau}=(\hat\tau_j)_{j=0}^{N'+N}$ and, for each $i$, $\bm\sigma^i=(\sigma^i_j)_{j=0}^{m_i}$. Let $h:[0,N]\rightarrow [0,N'+N]$ (here and onwards it is understood that intervals are always in $\mathbb Z$) be defined as $h(j)=N'+j$. Let $\omega:[0,N'+N]\rightarrow [0,m_1]$ be defined inductively as follows: $\omega(0)=0$; $\omega(j+1)=\omega(j)+1$ if $\hat\tau_{j+1}$ is obtained from $\hat\tau_j$ with a split, else $\omega(j+1)=\omega(j)$ (i.e. $\omega$ establishes the natural correspondence between the splits of $\bm{\hat\tau}$ and the corresponding wide splits of $\bm\sigma^0$). And let also $\rho: [0,m_R]\rightarrow [0,M+M']$ be defined inductively with these conditions: $\rho(0)=0$; $\rho(j+1)$ is the least index $s>\rho(j)$ such that $P\tau_{s+1}$ is obtained from $P\tau_s$ with a split (so $\rho$ establishes the natural correspondence between $\bm\sigma^R$ and $P\bm\tau$).

For $0\leq i<R-1$, let $f_i:[0,m_i]\rightarrow [0,m_{i+1}]$ be defined as follows. Suppose $\bm\sigma^i=(\ldots,\sigma_s,\sigma_{s+1},\sigma_{s+2},\ldots)$ and $\bm\sigma^{i+1}=(\ldots,\sigma'_s,\ldots,\sigma'_{s+k_i},\ldots)$, with a notation derived from the one used in the statement of Lemma \ref{lem:postpone}. Then we define $f_i(j)=j$ for $j\leq s$; $f_i(j)=j+k_i-2$ for $j\geq s+3$; $f_i(s+1)=s$; $f_i(s+2)=s+k_i-1$ (i.e. $f_i$ maps the indices before and after the parity wide split involved in the postponement to the indices before and after the parity wide split(s) introduced by the postponement, respectively). Note that, for all $j$, $\sigma^i_j$ is either obtained from $\sigma^{i+1}_{f_i(j)}$ with comb moves and a central wide split, or is  comb equivalent to it; whereas $\omega$ and $\rho$ establish a correspondence between tracks which are comb equivalent. So $f\coloneqq\rho\circ f_{R-1}\circ\cdots\circ f_0\circ\omega$ is such that $f(0)=0$, and also $f(m_0)=M$ because $M$ is the lowest index such that the following moves in $P\bm\tau$ are all combs or central splits, which is what one must get, due to the descriptions of the maps given. The map $f$ is increasing because all the composed maps are. Also, $\tau_j$ is obtained from $\cnr\tau_{f(j)}$ with a sequence of central splits and so, by properties 2. and 3., the latter is a cornerization of the former.

\item Lemma \ref{lem:postpone} and Remark \ref{rmk:centralsplitbound} together yield that $P\tau$ involves at least as many splits as $\tau$, and the number of central ones is unvaried: hence the first inequality. For the second one, consider for each $i$ the numbers $p(i)$ counting how many parity wide splits occur between $\sigma^i_{\ell(i)+1}$ and the end of the sequence $\bm\sigma^i$; and $c(i)$ counting the number of consecutive wide central splits at the end of the sequence $\bm\sigma^i$. For each $0\leq i <R$ one of the following is true: either $p(i+1)=p(i)-1$, and in this case $c(i)=c(i+1)$; or $p(i)=1,p(i+1)\geq 1$, and in this case $c(i+1)=c(i)+1$. Let $i_1,\ldots,i_s$ be the sequence of indices $i$ such that the second scenario occurs: necessarily $s\leq N_1$. For notational convenience, denote also $i_0=0$ and $i_{s+1}=R$. Between any $i_j,i_{j+1}$ the number $p(i)$ is strictly decreasing, and is undefined for $i=R$, yielding that $|\bm\sigma^{i_{j+1}}|\leq 6|\bm\sigma^{i_j}|$ for all $0\leq j\leq s$, because each old parity split has been replaced with at most six ones. Hence $|\bm\sigma^R|\leq 6^{N_1}|\bm\sigma^0|$.

Switching between wide and regular split sequences leaves the number of splits unaltered.
\end{enumerate}
\end{proof}

\subsection{Diagonal extensions}
\label{sub:diagext}
We will list some technical lemmas by Masur and Minsky, which require the following definitions:
\begin{defin}\label{def:diagext}
Let $\tau$ be a recurrent almost track that fills $S$. A \nw{diagonal extension} of $\tau$ is an almost track $\sigma$ of which $\tau$ is a subtrack, with the properties that
\begin{itemize}
\item if a branch $b$ of $\sigma$ is not included in $\tau$, then its interior lies entirely in a connected component of $S\setminus \bar\nei_0(\tau)$ (and its endpoints, in particular, are switches of $\tau$);
\item each peripheral annulus component in $S\setminus\nei(\sigma)$ is also a peripheral annulus component in $S\setminus\nei(\tau)$.
\end{itemize}

The set of all \emph{recurrent} diagonal extensions of $\tau$ will be denoted $\e(\tau)$. We will, furthermore, denote
$$\f(\tau)\coloneqq\bigcup_{\substack{\rho\text{ recurrent subtrack of }\tau\\ \rho\text{ fills }S}}\e(\rho)$$
(this differs from the notation employed in \cite{masurminskyi}, \cite{masurminskyq} where the same set is called $\mathrm{N}(\tau)$). Consequently it is possible to define $\ce(\tau)=\bigcup_{\rho\in \e(\tau)} \cc(\rho)$ and $\cf(\tau)=\bigcup_{\substack{\rho\text{ recurrent subtrack of }\tau\\ \rho\text{ fills }S}}\ce(\rho)$.

For $k\in\mathbb N$ and $\tau$ a recurrent almost track, let 
$$\ce_k(\tau)\coloneqq \bigcup_{\delta\in \e(\tau)} \{\gamma\in \cc(\delta)\mid \mu_\gamma(b)\geq k\text{ for each } b\in\br(\tau)\};$$
where, for each $\delta\in \e(\tau)$, $\mu_\gamma$ is understood to be the element corresponding to $\gamma$ in ${\mathcal M}_{\mathbb Q}(\delta)$: so $\ce_k(\tau)$ is the set of the curves $\gamma$ in $\ce(\tau)$ with the property that, among the diagonal extensions which carry it, there is one where $\gamma$ traverses each branch in the subtrack $\tau$ at least $k$ times.

Let also $\cf_k(\tau)\coloneqq \bigcup_{\substack{\rho\text{ recurrent subtrack of }\tau\\ \rho\text{ fills }S}} \ce_k(\rho)$.
\end{defin}

\begin{lemma}[\cite{masurminskyi}, Lemma 4.2]\label{lem:cf_decreasing}
Let $\tau,\tau'$ be recurrent train tracks on a surface $S$, and let $X\subseteq S$ be a subsurface. Suppose that $\tau'|X$ is carried by $\tau|X$, and that they both fill $S^X$ (or, equivalently, $X$). Then $\cf(\tau'|X)\subseteq \cf(\tau|X)$ and, if $\tau'|X$ is fully carried by $\tau|X$, then $\ce(\tau'|X)\subseteq \ce(\tau|X)$.

More specifically, if $\delta'\in \f(\tau'|X)$ is a diagonal extension of a subtrack $\sigma'$ of $\tau'|X$ which fills $X$, then $\delta'$ is fully carried by  $\delta\f(\tau|X)$ a diagonal extension of a subtrack $\sigma$ of $\tau|X$ which again fills $X$; and $\sigma'$ is fully carried by $\sigma$.
\end{lemma}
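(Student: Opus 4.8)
The plan is to reduce everything to the known statement of Lemma 4.2 in \cite{masurminskyi} by working inside the annular (or general-subsurface) cover $S^X$ and exploiting the fact, established already in this excerpt, that induced tracks of recurrent tracks behave like honest recurrent train tracks once we remove finitely many points from peripheral annuli. First I would recall that by Lemma \ref{lem:inducedisonsurface} the induced tracks $\tau|X$ and $\tau'|X$ are almost tracks on $S^X$ (and, after retracting, almost tracks on $S$ carried inside $\inte(X)$); and by Remark \ref{rmk:recurrence_at_extremes} together with the hypothesis that $\tau,\tau'$ are recurrent, combined with the carrying hypothesis and Proposition \ref{prp:carriediffsplit}, the relevant induced tracks are recurrent almost tracks. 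The hypothesis that $\tau'|X$ is carried by $\tau|X$ and both fill $S^X$ (equivalently $X$) is precisely the input needed.

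Next I would perform the now-standard puncture-removal trick used throughout this section (as in the proof of Corollary \ref{cor:carryingunique} and Corollary \ref{cor:measurecurvecorresp}): let $\Sigma\subset S^X$ be a finite set containing one point in each peripheral annulus among the components of $S^X\setminus\bar\nei(\tau|X)$, set $\Sigma'\supseteq\Sigma$ similarly adapted so the same works simultaneously for $\tau'|X$, and put $W\coloneqq S^X\setminus\Sigma'$. Then $\tau|X$ and $\tau'|X$ are genuine \emph{train tracks} on $W$ (being a train track is metric-independent), they are still recurrent there, $\tau'|X$ is still carried by $\tau|X$, and both still fill $W$ since the filling condition is about complementary components being discs or once-punctured discs. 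The carrying is full, respectively the second bullet's more refined carrying of diagonal extensions, exactly when it was on $S^X$, because Remarks \ref{rmk:idx_of_nei_diff} and \ref{rmk:decreasingmeasures} control how tie neighbourhoods sit inside one another. At this point Lemma 4.2 of \cite{masurminskyi} applies verbatim on $W$, giving $\cf(\tau'|X)\subseteq\cf(\tau|X)$, the stronger $\ce(\tau'|X)\subseteq\ce(\tau|X)$ under full carrying, and the explicit statement about a diagonal extension $\delta'$ of a filling subtrack $\sigma'$ being fully carried by a diagonal extension $\delta$ of a filling subtrack $\sigma$ of $\tau|X$.

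Finally I would transfer the conclusion back from $W$ to $S^X$ (hence to $X$): every curve in $\cc(\tau'|X)$ carried on $W$ is carried on $S^X$ since adding back the punctures only enlarges $\cc$, and the diagonal extensions built on $W$ satisfy the definition of diagonal extension on $S^X$ because, by construction of $\Sigma'$, each peripheral-annulus component of the relevant complements is unchanged --- this is exactly the second bullet in Definition \ref{def:diagext}, and it is the point where the puncture-removal set must be chosen compatibly with \emph{both} tracks, which is the one slightly delicate bookkeeping step. The main obstacle I anticipate is not conceptual but organizational: making sure that the chosen $\Sigma'$ simultaneously converts $\tau|X$, $\tau'|X$, and all the relevant filling subtracks $\sigma,\sigma'$ and their diagonal extensions into train tracks on one fixed surface $W$, so that a single application of \cite{masurminskyi}'s Lemma 4.2 suffices; once the surfaces are matched up, the index-additivity remarks (\ref{rmk:idx_of_nei_diff}, \ref{rmk:negativeindexincover}) and Proposition \ref{prp:carriediffsplit} do the rest mechanically.
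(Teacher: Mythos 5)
Your strategy---reduce to \cite{masurminskyi}'s Lemma 4.2 on a further-punctured surface $W$---is genuinely different from what the paper does, and it breaks down at the filling step. The paper's ``proof'' is not a reduction at all: it simply asserts that the original argument goes through verbatim once one replaces $S,\sigma,\tau$ by $S^X,\tau'|X,\tau|X$, even though the induced tracks are only almost tracks and may have smooth-boundary peripheral-annulus complementary components; and it notes that the final sentence of the statement is implicit in that proof but not in the original statement.

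The obstruction to your reduction is this. The puncture-removal trick turns $\tau|X$ into an honest train track on $W$ precisely by placing a puncture in each smooth-boundary peripheral annulus of $S^X\setminus\nei(\tau|X)$. But such a component is a once-punctured disc (the puncture being a funnel of $S^X$), and it is one of the complementary components that certify that $\tau|X$ fills $S^X$. After removing a point from it, that component becomes a twice-punctured disc, which is neither a disc nor a once-punctured disc, so $\tau|X$ does \emph{not} fill $W$. Your claim that ``both still fill $W$'' is therefore false. Since the hypothesis of \cite{masurminskyi}'s Lemma 4.2, and indeed the very definitions of $\e,\f,\ce,\cf$, all require the ambient filling of the surface, the original lemma cannot be invoked on $W$. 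The tension is structural, not a bookkeeping issue: on any surface, a pretrack with a smooth-boundary peripheral-annulus complementary component cannot simultaneously be a genuine train track (which forbids zero-index components) and fill (which forbids anything worse than a once-punctured disc). This is exactly the reason the paper builds the almost-track terminology and the adapted Definition~\ref{def:diagext} in the first place, and then verifies that the Masur--Minsky argument is insensitive to whether the filling once-punctured-disc regions have cusps or not. To repair your argument you would need to abandon the reduction and instead follow the original proof in the almost-track setting, as the paper does.
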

The original lemma has no reference to subsurfaces and induced tracks, but its proof works equally well to prove the above statement replacing occurrences of $S,\sigma,\tau$ there with $S^X,\tau'|X,\tau|X$ respectively, even if induced train tracks, being almost tracks, may have paths encircling some peripheral annuli.

Note that the notion of tie neighbourhood in \cite{masurminskyi} is slightly different from ours, as it is similar to our notion of $\bar\nei_0(\tau)$. Some terminology is different, too: note in particular that train tracks filling the surface they lie on are called \emph{large}, while the sets we denote $\ce,\cf$ correspond to the ones called $PE,PN$ there, respectively (they are, more generically, sets of \emph{transverse measures}, but it is not important). The last sentence in the statement above is not mentioned in the original statement, but is deducible from its proof.

\begin{lemma}[\cite{masurminskyi}, Lemma 4.5]\label{lem:diag_inters_control}
Let $\tau$ be a recurrent train track on a surface $S$, and let $X\subseteq S$ be a subsurface such that $\tau|X$ fills $S^X$. If $\alpha\in\ce(\tau|X)$ and $\beta\in\cc(X)$ is not carried by any diagonal extension of $\tau|X$, then $i(\alpha,\beta)\geq \min_{b\in\br(\tau)}\mu_\alpha(b)$.
\end{lemma}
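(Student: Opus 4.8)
The plan is to transcribe the proof of Lemma~4.5 in \cite{masurminskyi}, reading $S^X$, $\tau|X$ and a suitable diagonal extension $\delta$ of $\tau|X$ in place of their $S$, $\tau$ and $\hat\tau$; nothing in the argument is disturbed by $\tau|X$ being merely an almost track rather than a train track. If one wishes to sidestep that point, one may first puncture once each funnel among the components of $S^X\setminus\nei(\tau|X)$, as in Corollary~\ref{cor:carryingunique}, so that $\tau|X$ becomes a genuine recurrent train track filling the punctured surface, while the essential curve $\beta$ and every element of $\ce(\tau|X)$ survive.

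First I would fix the two realizations. Since $\alpha\in\ce(\tau|X)$, pick a recurrent diagonal extension $\delta$ of $\tau|X$ carrying $\alpha$, a carried realization of $\alpha$ in $\bar\nei(\delta)$ transverse to every tie, and a tie neighbourhood $\bar\nei(\tau|X)\subseteq\bar\nei(\delta)$ with ties obtained by restriction. Set $n\coloneqq\min_{b\in\br(\tau)}\mu_\alpha(b)$, the minimum being over the branches of the subtrack $\tau|X$ of $\delta$; the case $n=0$ is vacuous, so assume $n\geq 1$. Then, for every branch $b$ of $\tau|X$, the realization of $\alpha$ meets the mid-tie $\alpha^b$ of the branch rectangle $R_b$ in exactly $\mu_\alpha(b)\geq n$ points, and these are consecutive along $\alpha^b$. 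Then I would put $\beta$ in efficient position with respect to $\tau|X$ — using Theorem~\ref{thm:efficientpositionexists}, or Masur--Minsky's original notion of efficient position, which is available since $\tau|X$ is recurrent and fills $S^X$ — so that for each branch $b$ the set $\beta\cap R_b$ is either a union of ties or transverse to all ties, and the arcs of $\beta$ in each complementary region are efficient.

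The first substantive step is to see that $\beta$ transversally traverses some branch $b_0$ of $\tau|X$. Were that not so, $\beta\cap\bar\nei(\tau|X)$ would consist of tie segments only, so $\beta$ could be isotoped off $\tau|X$; sliding, in each complementary region, the endpoints of the resulting arcs onto the cusps of that region then realizes $\beta$ as a curve carried by a diagonal extension of $\tau|X$, contradicting the hypothesis on $\beta$. With such a $b_0$ fixed, each transverse arc of $\beta\cap R_{b_0}$ crosses $\alpha^{b_0}$, hence crosses the line of $\mu_\alpha(b_0)$ consecutive strands of $\alpha$ along $\alpha^{b_0}$; so the two realizations above satisfy $|\alpha\cap\beta|\ge\mu_\alpha(b_0)\ge n$.

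The hard part — and the genuine content inherited from \cite{masurminskyi} — is to check that these realizations already realize $i(\alpha,\beta)$, i.e.\ that a curve carried by a diagonal extension of $\tau|X$ and a curve in efficient position with respect to $\tau|X$ that is not carried by any diagonal extension of $\tau|X$ form no bigons. One argues by contradiction on an innermost bigon, bounded by a subarc $a$ of the carried curve $\alpha$ and a subarc $b'$ of $\beta$: since $a$ is isotopic to a taut train path in $\bar\nei(\delta)$, the bigon would force $b'$ to make an excursion across $\tau|X$ that is forbidden by efficient position, unless $b'$ runs parallel to $\alpha$, which would make $\beta$ carried by a diagonal extension. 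Running this bigon analysis exactly as in \cite{masurminskyi}, by bookkeeping how $a$ and $b'$ cross $\partial\bar\nei(\tau|X)$ and enter the complementary regions — an argument untouched by the passage from $S$ to $S^X$ — gives $i(\alpha,\beta)=|\alpha\cap\beta|\ge n=\min_{b\in\br(\tau)}\mu_\alpha(b)$, as claimed.
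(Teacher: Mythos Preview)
Your sketch diverges from what \cite{masurminskyi} actually does, and so your final deferral to them leaves a genuine gap.

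The original Lemma~4.5 argument --- and the paper's adaptation of it --- works in the universal cover $\Hy^2$: one lifts $\alpha$, $\beta$ and $\tau|X$, uses that biinfinite train paths in $\widetilde{\tau|X}$ are uniform quasi-geodesics (the paper invokes Proposition~\ref{prp:paths_quasi_geod} for exactly this), and analyses how the lift $\tilde\beta$ meets the complementary regions of $\widetilde{\tau|X}$ in $\Hy^2$. The paper's remarks after the statement are precisely about checking that these two ingredients survive the passage from $S$ to $S^X$: quasi-geodesicity holds, and the peripheral (funnel) components of $\Hy^2\setminus\widetilde{\tau|X}$ cannot contain the endpoints of $\tilde\beta$ because $\beta$ is a closed curve in $S^X$. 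There is no surface-level efficient-position argument or bigon chase in \cite{masurminskyi} to transcribe.

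Your route --- put $\beta$ in efficient position with respect to $\tau|X$, count crossings in one branch rectangle, then rule out bigons --- is a reasonable alternative strategy, but neither of the steps you defer is actually available as stated. Theorem~\ref{thm:efficientpositionexists} requires a \emph{cornered birecurrent} train track, which $\tau|X$ need not be even after you puncture the funnels (that produces a recurrent train track, not a cornered or transversely recurrent one). And your ``bigon analysis exactly as in \cite{masurminskyi}'' points to an argument that is simply not there: minimal intersection is established in the universal cover via quasi-geodesicity, not by a surface-level innermost-bigon argument. If you want to make your approach self-contained you would need to supply that bigon analysis yourself, and the presence of the diagonal branches of $\delta$ makes this less routine than your sketch suggests.
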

Similarly as for the previous lemma, the original statement is not meant for induced train tracks. Again the proof goes through with the same modifications pointed out above, but it is worth highlighting some further points. In particular, from the beginning of the proof, we need that all biinfinite train paths along the complete lift of $\tau|X$ in $\Hy^2$ are uniformly quasi-geodesic: this is the content of Proposition \ref{prp:paths_quasi_geod}.

Another point that is worth marking concerns what sort of shapes $\Hy^2$ is cut into by $\widetilde{\tau|X}$. Since $\tau|X$ fills $S^X$, each of its complementary components in $S^X$ either has a polygon as its closure, or is peripheral (meaning that its closure in $\ol{S^X}$ includes an arc of $\partial\ol{S^X}$). Since polygons lift diffeomorphically, a similar property is true for $\widetilde{\tau|X}$ in $\Hy^2$. Any peripheral component $P$ of $\Hy^2\setminus \widetilde{\tau|X}$ must be part of $\mathcal H_\pm$, because the endpoints of $\tilde\beta$ cannot lie in $\partial\bar\Hy^2\cap P$: that would mean that $\beta$ is not a closed curve in $S^X$ but a properly embedded, and non-compact arc. These remarks ensure that the original proof works for our generalized statement.

\begin{lemma}[\cite{masurminskyi}, Lemma 4.4]\label{lem:ccnesting}
Let $\tau$ be a recurrent train track on a surface $S$, and let $X\subseteq S$ be a subsurface such that $\tau|X$ fills $X$. Then:
\begin{itemize}
\item if $X\cong S_{0,4}$ then $\nei_1(\ce_3(\tau|X))\subset \ce(\tau|X)$;
\item if $X\cong S_{1,1}$ then $\nei_1(\ce_2(\tau|X))\subset \ce(\tau|X)$;
\item if $X$ is of any other topological type, then $\nei_1(\ce_1(\tau|X))\subset \ce(\tau|X)$.
\end{itemize}
\end{lemma}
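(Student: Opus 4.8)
The plan is to reduce Lemma \ref{lem:ccnesting} to the analogous statement for $X$ a stand-alone surface, which is exactly Lemma 4.4 in \cite{masurminskyi}, by exploiting the identification of $\tau|X$ with an almost track on $S^X$ (equivalently on $X$ itself) provided by Lemma \ref{lem:inducedisonsurface}. The key observation is that the sets $\ce_k(\tau|X)$ and $\ce(\tau|X)$, as well as the curve complex $\cc(X)=\cc(S^X)$ in which the $1$-neighbourhood $\nei_1$ is taken, depend only on $\tau|X$ as an almost track on $S^X$ and not on the ambient $S$. So first I would note that $\tau|X$ is a recurrent almost track on $S^X$ which fills $X$ (recurrence passes to the induced track since every branch of $\tau|X$ is, by definition, in the carrying image of some carried curve), and that Definition \ref{def:diagext} of $\e(\cdot)$, $\ce(\cdot)$, $\ce_k(\cdot)$ makes sense verbatim for almost tracks on $S^X$. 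The only subtlety is that $S^X$ may be non-compact with funnels, so $\tau|X$ is genuinely an almost track rather than a train track; but as remarked after Lemma \ref{lem:cf_decreasing} and Lemma \ref{lem:diag_inters_control}, Masur--Minsky's arguments go through for almost tracks, since one can remove a point from each peripheral annulus component of $S^X\setminus\nei(\tau|X)$ to obtain an auxiliary surface $Y$ on which $\tau|X$ is an honest recurrent train track that fills $Y$, with the same $\ce_k$ and $\ce$ sets under the natural identifications (inessential complementary annuli of $S^X$ cannot arise, since the funnel components of $S^X$ have geodesic boundary).

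Next I would apply Masur--Minsky's Lemma 4.4 to the train track $\tau|X$ on the surface $Y$: if $Y\cong S_{0,4}$ (equivalently $X\cong S_{0,4}$) then $\nei_1(\ce_3(\tau|X))\subset\ce(\tau|X)$; if $Y\cong S_{1,1}$ then $\nei_1(\ce_2(\tau|X))\subset\ce(\tau|X)$; otherwise $\nei_1(\ce_1(\tau|X))\subset\ce(\tau|X)$. Here the statement of their lemma is phrased in terms of taking a $1$-neighbourhood in $\cc(Y)=\cc(X)$, which is precisely what our statement asserts. The topological type of $Y$ agrees with that of $X$ because removing a puncture-point from peripheral annuli does not change the genus or the number of non-funnel punctures, and the three exceptional cases $S_{0,4}$, $S_{1,1}$ in the statement are characterised by $\xi=2$ together with the type of the curve complex; so the case distinction transfers without change. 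In each case the conclusion about $\tau|X$ on $Y$ is literally the conclusion we want about $\tau|X$ on $S^X$, hence on $X$ via Lemma \ref{lem:inducedisonsurface}.

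The point that requires a little care — and which I expect to be the main (though modest) obstacle — is checking that the auxiliary-surface trick is compatible with the definition of $\ce_k$: a diagonal extension of $\tau|X$ on $S^X$ restricts to a diagonal extension on $Y$ and conversely, because the new (diagonal) branches lie in complementary components of $S\setminus\bar\nei_0(\tau|X)$ which are unaffected by deleting interior points of peripheral annuli, and the condition $\mu_\gamma(b)\ge k$ for $b\in\br(\tau|X)$ is phrased purely in terms of the transverse measure on the fixed branch set; Corollary \ref{cor:measurecurvecorresp} guarantees that this correspondence of weighted multicurves is a bijection compatible with the measures. One must also confirm that a curve carried by some recurrent diagonal extension of $\tau|X$ in $Y$ descends to an essential curve in $S^X$ carried by the corresponding diagonal extension there — this is the same argument used in the proof of Corollary \ref{cor:carryingunique} and in Step 2 of Lemma \ref{lem:decreasingfilling} — so that the containments $\nei_1(\ce_k(\tau|X))\subset\ce(\tau|X)$ really say the same thing on $Y$, on $S^X$, and on $X$. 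Once these bookkeeping points are in place, the lemma follows directly from \cite{masurminskyi}, Lemma 4.4.
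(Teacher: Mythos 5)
Your reduction hinges on the claim that the auxiliary surface $Y$, obtained from $S^X$ by puncturing each peripheral-annulus component of $S^X\setminus\nei(\tau|X)$, has the same topological type as $\inte(X)$, so that the three-way case distinction and the adjacency rule of the curve complex transfer unchanged. That claim is false whenever $X$ is a proper subsurface with boundary. In that situation $S^X$ has one funnel per boundary component of $X$; since $\tau|X$ is compact and lies in the convex core, each funnel gives a genuine peripheral-annulus component of $S^X\setminus\nei(\tau|X)$, and $Y$ therefore has strictly more punctures than $\inte(X)$. Your parenthetical remark that such annuli ``cannot arise, since the funnel components of $S^X$ have geodesic boundary'' misreads the definition of almost track: the geodesic-boundary condition there is a requirement imposed on the peripheral annuli that do occur, not a reason for them not to occur. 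In particular, ``$Y\cong S_{0,4}$ (equivalently $X\cong S_{0,4}$)'' is not an equivalence: for $X\cong S_{0,4}$ with four boundary components one gets $Y\cong S_{0,8}$.

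The damage is concentrated exactly in the two exceptional cases. Adjacency in $\cc(X)$ for $X\cong S_{0,4}$ means intersecting in two points, but adjacency in $\cc(Y)$ means disjointness; so $\beta\in\nei_1\bigl(\ce_3(\tau|X)\bigr)$ taken in $\cc(X)$ does not give $d_{\cc(Y)}(\alpha,\beta)\leq 1$ for the relevant $\alpha$, and in any case Masur--Minsky's Lemma~4.4 applied to $Y$ (which, being non-exceptional, falls under the generic clause) only yields $\nei_1^Y(\ce_1^Y)\subset\ce^Y$ and says nothing about $\ce_3^Y$. Nothing in your argument bridges that gap, and the $S_{1,1}$ case fails symmetrically. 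The generic case of your reduction would likely survive (disjointness in $\cc(X)$ does imply disjointness in $\cc(Y)$), but that is not where the content of the lemma lies.

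The paper's own proof takes a different, shorter route that sidesteps these issues: it cites the already-adapted Lemma~\ref{lem:diag_inters_control} directly in $S^X$, which for $\alpha\in\ce_3(\tau|X)$ and $\beta\notin\ce(\tau|X)$ gives $i(\alpha,\beta)\geq 3$, hence $d_{\cc(X)}(\alpha,\beta)\geq 2$ in the Farey graph $\cc(S_{0,4})$; and the same count with $\ce_2$, resp.\ $\ce_1$, disposes of $S_{1,1}$ and the generic case. Your approach can be repaired along the same lines --- replace the appeal to Masur--Minsky's Lemma~4.4 on $Y$ with their Lemma~4.5 on $Y$, which bounds intersection numbers rather than curve-complex distances and so is insensitive to the change of adjacency rule --- but as written, with the topological-type claim and the direct citation of Lemma~4.4, the argument does not prove the two exceptional cases.
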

Here, with $\nei_k(\cdot)$ we mean the set of points at distance $\leq k$ from the given subset of $\cc(X)$.

\begin{proof}
Let us focus with the first case. According to the above lemma, for each essential closed curves $\alpha\in \ce_3(\tau|X)$ and $\beta\not\in \ce(\tau|X)$ we must have $i(\alpha,\beta)\geq 3$. So $d_\cc(\alpha,\beta)\geq 2$, and this proves the claim. The same argument proves the claim in the other cases.
\end{proof}

Recall that in \cite{masurminskyi}, for two train tracks $\sigma,\tau$ on a same surface, the authors define $d_\cc(\tau,\sigma)\coloneqq \min_{\beta\in V(\tau),\alpha\in V(\sigma)} d_\cc(\beta,\alpha)$.
\begin{lemma}[\cite{masurminskyi}, Lemma 4.7 (Nesting Lemma)]
There is a constant $D=D(S)$ such that if $\sigma$ and $\tau$ are two recurrent train tracks filling $S$, with $\sigma$ carried by $\tau$ and $d_\cc\left(V(\tau),V(\sigma)\right)>D$, then:
\begin{itemize}
\item for $S\cong S_{0,4}$, $\cf(\sigma)\subset \cf_3(\tau)$;
\item for $S\cong S_{1,1}$, $\cf(\sigma)\subset \cf_2(\tau)$;
\item for all other topological types, $\cf(\sigma)\subset \cf_1(\tau)$.
\end{itemize}
\end{lemma}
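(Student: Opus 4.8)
The plan is to split off the easy containment with Lemma \ref{lem:cf_decreasing} and then prove the refinement by contradiction, the contradiction being that a curve carried by $\sigma$ which is \emph{thin} on $\tau$ — i.e.\ traverses some branch of an associated filling recurrent subtrack too few times — is forced to lie $\cc(S)$-close to both $V(\tau)$ and $V(\sigma)$, which is impossible once $D$ is large. Write $j=3$ if $S\cong S_{0,4}$, $j=2$ if $S\cong S_{1,1}$, and $j=1$ otherwise, so that the three bullets read uniformly as $\cf(\sigma)\subseteq\cf_j(\tau)$. By Lemma \ref{lem:cf_decreasing} we already have $\cf(\sigma)\subseteq\cf(\tau)$, and, more precisely, every $\gamma\in\cf(\sigma)$ — say $\gamma\in\ce(\sigma_0)$ for a filling recurrent subtrack $\sigma_0$ of $\sigma$, carried by a diagonal extension $\delta_0$ of $\sigma_0$ — is fully carried by a diagonal extension $\delta$ of some filling recurrent subtrack $\rho$ of $\tau$, with $\sigma_0$ itself fully carried by $\rho$. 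So it suffices to show that for every such pair $\gamma,\rho$ the transverse measure $\mu_\gamma$ induced on $\delta$ satisfies $\mu_\gamma(b)\geq j$ for all $b\in\br(\rho)$; this is exactly the assertion $\gamma\in\ce_j(\rho)\subseteq\cf_j(\tau)$.

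Suppose this fails: there are $\gamma$ and $\rho$ as above and a branch $b_0\in\br(\rho)$ with $\mu_\gamma(b_0)<j$. Since $\sigma_0$ is fully carried by $\rho$ (and $\delta_0$ by $\delta$), the monotonicity of branch-multiplicities under carrying (Remark \ref{rmk:decreasingmeasures}) transports this thinness downward: there is a branch $b_1$ of $\sigma_0$ on which $\gamma$ has multiplicity $<j$ in $\delta_0$. Thus $\gamma$ is at once thin on a filling recurrent subtrack of $\tau$ and on one of $\sigma$.

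The crux of the argument is the estimate: \emph{if a curve $\gamma$ carried by a diagonal extension of a filling recurrent almost track $\nu$ traverses some branch of $\nu$ fewer than $j$ times, then $d_{\cc(S)}(\gamma,V(\nu))\leq R(S)$} for a constant $R(S)$. Granting this, apply it with $\nu=\rho$ and again with $\nu=\sigma_0$; since $V(\rho)$ and $V(\tau)$ consist of wide carried curves of $\tau$, and likewise $V(\sigma_0)$, $V(\sigma)$ of $\sigma$, Lemma \ref{lem:vertexsetbounds} gives $d_\cc(V(\rho),V(\tau))\leq C_0$ and $d_\cc(V(\sigma_0),V(\sigma))\leq C_0$, whence $d_\cc(V(\tau),V(\sigma))\leq 2R(S)+2C_0$. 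Taking $D=2R(S)+2C_0$ contradicts $d_\cc(V(\tau),V(\sigma))>D$, and the lemma follows.

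It remains to sketch the estimate, and this is where the real work — and the case split — lies. One uses the thinness on $b_0$ to manufacture a test curve $\beta$ with $i(\gamma,\beta)$ bounded by a function of $j$ and $N_1(S)$: either by re-routing the train-path realization of $\gamma$ off $b_0$, or by exhibiting a curve meeting $\gamma$ essentially only in a bounded neighbourhood of $b_0$; then $d_\cc(\gamma,\beta)\leq F(i(\gamma,\beta))$ by Lemma \ref{lem:cc_distance}, while $\beta$, crossing each branch of $\nu$ a bounded number of times, meets every wide carried curve of $\nu$ boundedly often and so lies at bounded distance from $V(\nu)$. Lemma \ref{lem:diag_inters_control} is what makes the bookkeeping close up: in contrapositive form it says that a curve at small intersection from $\gamma$ is carried by a diagonal extension of $\nu$, keeping all surgered objects inside the category of diagonal extensions. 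The main obstacle is precisely the quantitative content of this estimate — thinness constrains only the \emph{minimum} multiplicity on a \emph{single} branch, so a crude intersection count is useless and one must exploit that $\nu$ \emph{fills} $S$ to control $\gamma$ near the thin branch — and it is exactly the low-complexity surfaces $S_{0,4}$ and $S_{1,1}$, where $\cc(S)$ carries the nonstandard edge relation and the intersection-to-distance conversion (cf.\ Lemmas \ref{lem:cc_distance} and \ref{lem:annulus_distance}) is coarser, that force the larger thresholds $j=3$ and $j=2$.
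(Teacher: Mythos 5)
Your strategy reduces the lemma to what you call the crux estimate: \emph{if $\gamma$ is carried by a diagonal extension of a filling recurrent track $\nu$ and traverses some branch of $\nu$ fewer than $j$ times, then $d_{\cc(S)}(\gamma,V(\nu))\leq R(S)$.} This is never proved --- you explicitly defer it --- and, more seriously, as stated it is not true. If $\nu'\subset\nu$ is a proper subtrack that still fills $S$ (say, obtained by deleting a branch $b_0$ while preserving recurrence, which is easy to arrange on high-complexity surfaces), every curve in $\cc(\nu')$ is carried by the trivial diagonal extension $\nu$ and has multiplicity $0<j$ on $b_0$; yet by Theorem \ref{thm:mm_cc_geodicity} the set $\cc(\nu')$ has unbounded diameter in $\cc(S)$, while $V(\nu')\subseteq W(\nu)$ lies within distance $C_0$ of $V(\nu)$ by Lemma \ref{lem:vertexsetbounds}, so $d_\cc(\gamma,V(\nu))$ cannot be uniformly bounded over such $\gamma$. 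Thinness on a single branch controls only how $\gamma$ sits \emph{near that branch}; it places no constraint on how $\gamma$ winds through the rest of the track, and that winding is precisely what drives it far from $V(\nu)$. Your brief sketch (re-routing off $b_0$, or finding a test curve meeting $\gamma$ boundedly near $b_0$) does not repair this: re-routing does not yield a closed curve, and the test-curve existence is exactly as nontrivial as the estimate itself. Lemma \ref{lem:diag_inters_control} is of the wrong shape to help --- it gives a lower bound on $i(\alpha,\beta)$ when $\beta$ is \emph{not} carried by a diagonal extension, and its contrapositive produces a carried curve, not a $\cc(S)$-close one.

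The route the paper actually takes is quite different, and much more modest. For the general case ($j=1$) it does not reprove the lemma at all --- it cites \cite{masurminskyi} directly; the content there is a genuine nesting argument on projective measure cones, not a thinness-implies-closeness argument. For $S_{0,4}$ and $S_{1,1}$ the paper exploits that these curve complexes are Farey graphs and that there are only finitely many diffeomorphism types of filling train tracks on them (so $\{\tau\}=\e(\tau)=\f(\tau)$); it then identifies the two vertex cycles with $0/1$ and $1/0$, expresses a carried curve as a Farey fraction $\pm b_2/b_1$ via its branch weights, and checks by hand that $\min(b_1,b_2)\leq 2$ (i.e.\ failure of $\cf_3$) forces Farey-distance at most $2$ from a vertex cycle. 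This explicit low-complexity computation is what buys the sharpened thresholds $j=2,3$ there, and it is not recoverable from a uniform argument of the kind you propose.
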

\begin{proof}
The third statement is the actual content of the lemma in \cite{masurminskyi}. For what concerns the other two statements, a simplified proof is possible. Recall, first of all, that $\cc(S_{0,4})$ and $\cc(S_{1,1})$ are isomorphic to the Farey graph (see for instance \cite{farb}, \S 4.1.1). In \cite{ibaraki}, Figure 10, there is a classification of the diffeomorphism types of train tracks on $S_{0,4}$ which are \emph{complete} i.e. are not subtracks of any other train track; whereas the only one in $S_{1,1}$ is given in Figure 4 there. This means that any other train track on these surfaces is diffeomorphic to a subtrack of the given ones; but it is easy to note that none of those fills the surface. So, for each track $\tau$ that fills $S_{0,4}$ or $S_{1,1}$ respectively, $\{\tau\}=\e(\tau)=\f(\tau)$.

For each of the given train tracks $\tau$ in $S_{0,4}$ (resp. in the only track given in $S_{1,1}$), the two vertex cycles $\alpha_1,\alpha_2$ intersect in 2 (resp. 1) points, so they are connected by an edge in $\cc(S_{0,4})$ (resp. $\cc(S_{1,1}$). It is therefore possible to enforce an identification of the curve complex with the Farey graph such that $\alpha_1,\alpha_2$ correspond to $0/1$ and $1/0$, respectively.

Let $\beta\in\cc(S)$ (where $S=S_{0,4}$ or $S_{1,1}$ accordingly) be a curve carried by $\sigma$, and for $j=1,2$ let $b_j$ the weight that $\beta$ assigns to a branch of $\tau$ which is traversed by $\alpha_j$ but not $\alpha_{3-j}$ (it will be the same for any such branch). Then, as an element of the Farey graph, $\beta$ corresponds to $\pm b_2/b_1$. If $\beta\not\in \cf_3(\tau)$ then either $b_1$ or $b_2$ is $\leq 2$. Suppose, without loss of generality, that $b_2\leq 2$. If $b_2=1$ then $d_{\mathrm{Farey}}(\pm b_1/b_2,1/0)=1$. If $b_2=2$ then $\lfloor b_1/2\rfloor/1$ lies at distance $1$ from both $\pm b_1/b_2$, $1/0$. This proves the claim.
\end{proof}

\begin{lemma}[\cite{masurminskyq}, Lemma 3.5]\label{lem:vertexnotinterior}
Let $\tau$ be a recurrent train track on a surface $S$, and let $X\subseteq S$ be a subsurface such that $\tau|X$ fills $X$. Then, if $\alpha \in V(\tau|X)$ then $\alpha\not\in \cf_1(\tau|X)$.
\end{lemma}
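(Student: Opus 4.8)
The plan is a proof by contradiction, adapting the argument of \cite{masurminskyq}, Lemma~3.5, to the induced-track setting. First I would normalise the situation: by Lemma~\ref{lem:inducedisonsurface} we may regard $\tau|X$ as an almost track on $X$ itself, and since $\tau|X$ fills $X$ all of its complementary components are discs or once-punctured discs, so it is in fact a genuine recurrent \emph{train track} on $X$ (and likewise every subtrack and every diagonal extension occurring below fills $X$, hence is a genuine train track; note also $\xi(X)\ge 1$, since $X$ is an admissible subsurface). Writing $\tau_0\coloneqq\tau|X$, it then suffices to prove: if $\tau_0$ is a recurrent train track filling a surface $X$ with $\xi(X)\ge1$ and $\alpha\in V(\tau_0)$, then $\alpha\notin\cf_1(\tau_0)$.

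Assume the contrary. Unwinding Definition~\ref{def:diagext}, there are a recurrent subtrack $\rho\subseteq\tau_0$ that fills $X$ and a recurrent diagonal extension $\delta\in\e(\rho)$ carrying $\alpha$, with $\mu_\alpha^{\delta}(b)\ge1$ for every branch $b$ of $\rho$. Let $\delta'\coloneqq\delta.\alpha$ be the carrying image of $\alpha$ inside $\delta$. Because $\mu_\alpha^{\delta}\ge1$ on $\br(\rho)$ we have $\rho\subseteq\delta'$, so $\delta'$ is a recurrent train track that still fills $X$, $\alpha$ is \emph{fully} carried by $\delta'$, and $\mu_\alpha^{\delta'}(b)\ge1$ for \emph{every} branch $b$ of $\delta'$ (the multiplicity of a carried realization on a traversed branch is a positive integer).

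The key ingredient is an elementary fact: \textbf{if $\sigma$ is a recurrent train track filling a surface $Y$ with $\xi(Y)\ge1$, then $\dim_{\R}{\mathcal M}(\sigma)\ge2$}. Indeed, were ${\mathcal M}(\sigma)$ one-dimensional it would, by Proposition~\ref{prp:measurecurvecorresp}, be the ray through $\mu_c$ for a single essential curve $c$; recurrence of $\sigma$ would then force $\sigma.c=\sigma$, so $\nei(\sigma)$ would be a regular neighbourhood of $c$ and $Y\setminus\nei(\sigma)$ would have an annular component, contradicting that $\sigma$ fills $Y$. Applying this to $\delta'$: the cone ${\mathcal M}(\delta')$ is rational polyhedral of dimension $\ge2$, every proper face of it lies in some coordinate hyperplane $\{\mu(b)=0\}$, and $\mu_\alpha^{\delta'}$ has all coordinates positive; hence $\mu_\alpha^{\delta'}$ lies in the relative interior of ${\mathcal M}(\delta')$ and spans no extreme ray, i.e.\ $\alpha\notin V(\delta')$. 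To transfer this to $\tau_0$, note that, by the sub-tie-neighbourhood conventions, $\bar\nei(\rho)$ contains the middle portion of the branch rectangle of each $b\in\br(\rho)$ inside \emph{any} ambient track containing $\rho$, so the number of strands of a carried realization of $\alpha$ crossing the middle of $b$ is the same whether $\alpha$ is realized in $\bar\nei(\tau_0)$ or in $\bar\nei(\delta')$; thus $\mu_\alpha^{\tau_0}(b)=\mu_\alpha^{\delta'}(b)\ge1$ for all $b\in\br(\rho)$. Hence the support $\tau_1\coloneqq\tau_0.\alpha$ contains $\rho$, so $\tau_1$ is a recurrent subtrack of $\tau_0$ that fills $X$ and on which $\mu_\alpha^{\tau_0}$ is everywhere positive. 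The transverse measures of $\tau_0$ supported on $\tau_1$ form a face ${\mathcal M}(\tau_1)$ of ${\mathcal M}(\tau_0)$ (at each switch of $\tau_1$ the large branch of $\tau_0$ has positive $\alpha$-weight, so the large/small structure is inherited); by the displayed fact this face has dimension $\ge2$, so $\mu_\alpha^{\tau_0}$ lies in its relative interior and cannot span an extreme ray of ${\mathcal M}(\tau_0)$ --- contradicting $\alpha\in V(\tau_0)$.

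I expect the delicate points to be, first, the invariance claim $\mu_\alpha^{\tau_0}(b)=\mu_\alpha^{\delta'}(b)$ for $b\in\br(\rho)$: it is visibly true from the tie-neighbourhood picture, but a careful write-up means tracking how $\bar\nei(\rho)$ sits inside both $\bar\nei(\tau_0)$ and $\bar\nei(\delta')$ and recalling that carried realizations cross each tie minimally; and second, the cone-geometry bookkeeping, namely that a point of ${\mathcal M}(\tau_0)$ whose coordinates are positive on a filling subtrack (but possibly zero elsewhere) still fails to be extreme. Neither is deep, but both need to be phrased with some care, since the filtration $\cf_k$ is defined through subtracks and diagonal extensions and the reduction to genuine (rather than merely almost) train tracks must be kept honest throughout.
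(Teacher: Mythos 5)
The paper does not reprove this lemma from scratch: it refers back to Masur--Minsky's own proof (a universal-cover argument, lifting to $\Hy^2$ and obtaining a contradiction from the index of certain discs $D$, $D'$, $D''$) and only records the three adaptations needed to make that proof go through for induced almost tracks. Your proposal is therefore a genuinely different route: a cone-geometry argument on $\mathcal M(\tau_0)$ that avoids the universal cover entirely. Unfortunately, as written it has a real gap at the step you yourself flag as ``delicate.''

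First, a small but fixable slip. Your argument that a recurrent, filling track $\sigma$ on a surface of complexity $\ge1$ has $\dim\mathcal M(\sigma)\ge 2$ asserts that if $\mathcal M(\sigma)=\R_{\ge0}\mu_c$ then ``$\nei(\sigma)$ would be a regular neighbourhood of $c$.'' That is not right: $c$ could be non-embedded as a train path, and even when it is, $\sigma$ may have switches, so $\nei(\sigma)$ need not be an annulus. The correct and shorter argument is the one the paper gives: if $V(\sigma)=\{c\}$ then by Lemma \ref{lem:decreasingfilling} the subsurface filled by $\cc(\sigma)$ equals the one filled by $V(\sigma)=\{c\}$, namely an annulus, contradicting that $\sigma$ fills a surface of complexity $\ge1$.

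The serious problem is the transfer step. You write that ``by the sub-tie-neighbourhood conventions, $\bar\nei(\rho)$ contains the middle portion of the branch rectangle of each $b\in\br(\rho)$ inside any ambient track containing $\rho$, so the number of strands of a carried realization of $\alpha$ crossing the middle of $b$ is the same whether $\alpha$ is realized in $\bar\nei(\tau_0)$ or in $\bar\nei(\delta')$'' --- and hence $\mu_\alpha^{\tau_0}(b)=\mu_\alpha^{\delta'}(b)$ for every $b\in\br(\rho)$. This is asserted, not proved, and I do not believe it is elementary. The two tie neighbourhoods $\bar\nei(\tau_0)$ and $\bar\nei(\delta')$ only share $\bar\nei(\rho)$; neither ambient track is carried by the other, since the diagonals of $\delta'$ are not branches of $\tau_0$ and $\tau_0\setminus\rho$ is not part of $\delta'$. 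So the carried realization of $\alpha$ in $\bar\nei(\tau_0)$ and the carried realization in $\bar\nei(\delta')$ are two \emph{different} embedded curves, isotopic in $X$ but living in different tie neighbourhoods, and Corollary \ref{cor:carryingunique} (uniqueness of carrying) applies only to two realizations inside the \emph{same} ambient tie neighbourhood. The intersection number of two distinct isotopic simple closed curves with a fixed tie of $b$ has no reason to agree: a tie is not a properly embedded arc in $X$, so there is no isotopy-invariant ``minimal intersection'' to appeal to, and ``carried realizations cross each tie minimally'' is not a theorem. Between two consecutive passages through $\bar\nei(\rho)$, the $\tau_0$-realization detours through $\tau_0\setminus\rho$ while the $\delta'$-realization detours through the diagonals of $\delta'$, and these detours can rearrange which lifts of $b$ are traversed, and how often. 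Establishing some compatibility between the two carryings on $\rho$ is precisely the nontrivial content of Masur--Minsky's argument, and it is exactly the point at which they lift to $\Hy^2$ and run the disc/index contradiction that the paper's commentary references. In effect your proof compresses the hard step into one sentence and presents it as a bookkeeping observation. If you want to keep the cone-geometric framing, the honest fix is to insert the universal-cover comparison of the two realizations (following \cite{masurminskyq}, with the adaptations noted in the paper for the induced-track setting) to justify that $\tau_0.\alpha\supseteq\rho$, and then your concluding face argument on $\mathcal M(\tau_0)$ will close the proof.
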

Again the original given proof works for induced train tracks, with the same replacement as the one pointed out for Lemma \ref{lem:cf_decreasing}. But, again, it is worth noting some points: first of all, $V(\tau|X)$ must consist of at least $2$ elements else it cannot fill $S^X$; but then, $\cc(\tau|X)$ does neither (by Lemma \ref{lem:decreasingfilling}). This replaces the argument used at the beginning of the original proof to show that the set of projective transverse measures has dimension $\geq 2$.

Secondly, the proof appeals to the injectivity of a map $P(\omega)\rightarrow\mathcal{ML}(S)$, basically the inverse map of the one mentioned in Proposition \ref{prp:measurecurvecorresp}. In order to suit better our setting, it may be more straightforward to say, to obtain the contradiction constructed in the original proof, that $v\in\cc(X)$ not having a unique carrying image in $\omega$ is a behaviour that transgresses Corollary \ref{cor:carryingunique}.

The assertion about positive generalized Euler characteristics (i.e. index) of the disc $D''$ is still a contradiction, even if an almost track gives a larger variety of possible complementary regions than in the original setting: this is because the original disc $D$ cannot intersect any lift of a peripheral annulus component of $S^X\setminus (\tau|X)$, anyway.

\section{Good behaviour of splitting sequences}\label{sub:goodbehaviour}

In the following sections we aim to prove the following statement.
\begin{theo}\label{thm:core}
Let $\bm\tau=(\tau_j)_{j=0}^N$ be a splitting sequence of semigeneric, birecurrent train tracks with their vertex sets $V(\tau_j)\in\pa(S)$ for all $0\leq j\leq N$. Then there is a constant $A>1$, depending only on $S$, such that
$$d_{\pa(S)}(V(\tau_0),V(\tau_N))=_A |\utw(\rar(\cnr\bm\tau))|.$$
\end{theo}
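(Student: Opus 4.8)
The plan is to follow the blueprint of Theorem~6.1 in \cite{mms}, but with the annular contributions tamed by the operations $\cnr$, $\rar$ and $\utw$. The overall strategy splits into an upper bound and a lower bound, both of which reduce --- via Theorem~\ref{thm:mmprojectiondist} and Lemma~\ref{lem:pantsquasiisom} --- to controlling the subsurface projections $d_X(V(\tau_0),V(\tau_N))$ for all \emph{non-annular} $X\subseteq S$, and then comparing the resulting sum with the number of splits in $\utw(\rar(\cnr\bm\tau))$. First I would record the easy direction: passing from $\bm\tau$ to $\cnr\bm\tau$ changes the split count only by the bounded amount $N_1$ and by the factor $6^{N_1}$ (Lemma~\ref{lem:ctauproperties}(5)), while preserving recurrence, transverse recurrence, and --- by Lemma~\ref{lem:ctauproperties}(4) together with Remark~\ref{rmk:decreasingmeasures} --- the relevant filling and vertex-set data up to comb equivalence; and similarly $\rar$ and $\utw$ are, by their construction in \S\ref{sec:twistcurves} and \S\ref{sec:traintrackconclusion}, splitting sequences sharing the first track with $\cnr\bm\tau$ whose split counts and $\pa$-distances are related to those of $\cnr\bm\tau$ up to uniform multiplicative and additive constants. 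So it suffices to prove the formula with $\bm\tau$ already cornered and with $\utw(\rar\bm\tau)$ in place of $\utw(\rar(\cnr\bm\tau))$.

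Next I would establish the \textbf{upper bound} $d_{\pa(S)}(V(\tau_0),V(\tau_N))\leq_A |\utw(\rar\bm\tau)|$. Here the point is that each elementary move moves the vertex set a bounded distance in $\pa(S)$ (Remark~\ref{rmk:pickparameters} fixes the parameters $k,\ell$ precisely so that a split gives a displacement $\leq 1$, and combs/uncombs/slides give displacement $0$ up to bounded error), so $d_{\pa(S)}(V(\tau_0),V(\tau_N))\leq_{(1,c)}|\bm\tau|$; the content is to see that $|\bm\tau|$ is itself controlled by $|\utw(\rar\bm\tau)|$ \emph{as far as the pants graph is concerned}, i.e. that the splits of $\bm\tau$ which do not survive into $\utw(\rar\bm\tau)$ are exactly the ones accounting for annular distance, hence invisible to $\pc(S)$. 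This is precisely the design goal of $\rar$ (group the Dehn twists about twist curves) and of $\utw$ (cap their length). Concretely I would invoke Theorem~\ref{thm:mmprojectiondist} for $\pa(S)$: the only terms are non-annular $X$, and for those $d_X(V(\tau_0),V(\tau_N))$ is unchanged in passing from $\bm\tau$ to $\utw(\rar\bm\tau)$ up to a uniform error (the untwisting construction alters projections to non-annular subsurfaces only near the boundary, hence boundedly, by the analogues of Lemma~\ref{lem:induction_vertices_commute}).

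Then comes the \textbf{lower bound} $|\utw(\rar\bm\tau)|\leq_A d_{\pa(S)}(V(\tau_0),V(\tau_N))$, which is where the adaptation of \cite{mms} genuinely happens and which I expect to be the main obstacle. The \cite{mms} proof subdivides the splitting sequence into chunks on which $V(\tau_j)$ fills a fixed subsurface $Y\subseteq S$ (this is legitimate since the filled subsurfaces decrease along the sequence by the Remark after Lemma~\ref{lem:decreasingfilling}), uses that $\bigl(V(\tau_j|Y)\bigr)_j$ is an unparametrized quasi-geodesic in $\cc(Y)$ (the results collected in \S\ref{sub:goodbehaviour}), the Nesting Lemma and Lemmas~\ref{lem:cf_decreasing}, \ref{lem:diag_inters_control}, \ref{lem:ccnesting}, \ref{lem:vertexnotinterior} to lower-bound the number of splits occurring `inside $Y$' by $d_Y(V(\tau_0),V(\tau_N))$, and then sums over $Y$; the crucial extra input there is \emph{local finiteness} of $\mc(S)$, used to ensure that a subsequence spanning bounded distance in $\mc(S)$ contains boundedly many splits. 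Since neither $\pc(S)$ nor $\pa(S)$ is locally finite, I would replace that step by the key property engineered into $\utw(\rar\bm\tau)$ in \S\ref{sec:traintrackconclusion}: if a subsequence of $\utw(\rar\bm\tau)$ spans bounded distance in $\pa(S)$, then it spans bounded distance in $\ma(S)$, and there local finiteness is available. Feeding this into the \cite{mms} machinery --- run now with $\utw(\rar\bm\tau)$ and the $\ma$/$\pa$ pair in place of $\mc$ --- yields $|\utw(\rar\bm\tau)|\leq_{A} \sum_{X\text{ non-annular}}[d_X(V(\tau_0),V(\tau_N))]$, since the annular terms have been killed by the untwisting; one more application of Theorem~\ref{thm:mmprojectiondist} (in the $\pa(S)$ form of Lemma~\ref{lem:pantsquasiisom}) turns the right-hand side into $d_{\pa(S)}(V(\tau_0),V(\tau_N))$ up to constants. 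The delicate bookkeeping is that the chunk decomposition and the quasi-geodesic estimates must be carried out on $\rar\bm\tau$ and then transferred to $\utw(\rar\bm\tau)$ using the comparison results of \S\ref{sec:traintrackconclusion}; keeping the additive errors from the (boundedly many) chunks, from the cornerization, and from the $\rar$/$\utw$ comparisons all uniform in $S$ is what ultimately produces the single constant $A$.
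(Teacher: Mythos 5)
Your overall strategy is the same as the paper's: reduce to a cornered sequence via Lemma~\ref{lem:ctauproperties}, prove the upper bound (the paper's Proposition~\ref{prp:easyttbound}), and prove the lower bound by revisiting the inductive subsurface--projection argument of Theorem~6.1 in \cite{mms} with the local-finiteness step replaced by the key property of the untwisted sequence. You also correctly identify that the replacement for local finiteness is that bounded $\pa$-distance along the untwisted sequence implies bounded $\ma$-distance, which is the content of Proposition~\ref{prp:locallyfinite}, and that this is what the constructions of $\rar$ and $\utw$ are designed to deliver.

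There is, however, a genuine error in the technical bridge you propose. You claim that $d_X(V(\tau_0),V(\tau_N))$ for non-annular $X$ is unchanged up to uniform error when passing from $\rar\bm\tau$ to $\utw(\rar\bm\tau)$. This is false: $\utw(\rar\bm\tau)$ ends with a \emph{different} track, obtained from $\tau_N$ by composing inverse Dehn twists about each effective twist curve $\gamma_t$ with exponent $m_t-(2\mathsf K_0+4)$, which can be arbitrarily large. For a non-annular $X$ containing some $\gamma_t$ essentially, such a Dehn twist can move the $X$-projection of the ending vertex set arbitrarily far; there is nothing ``near the boundary'' about it. The actual comparison in Proposition~\ref{prp:locallyfinite} is only \emph{quadratic} in each direction, $d_{\pa(\utw X)}(\dn k,\dn l)^\utw\leq C_5\bigl(d_{\pa(X)}(k,l)\bigr)^2$ and conversely, and it is obtained by first bounding the number $q$ of effective twist curves meeting $X$ by the pants distance via Proposition~\ref{prp:tcbound} (an essential ingredient your sketch omits), then estimating each of the $q+1$ Dehn-interval and intermediate pieces separately. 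Relatedly, the paper's upper bound (Proposition~\ref{prp:easyttbound}) does not route through Theorem~\ref{thm:mmprojectiondist} or any claimed subsurface-projection invariance at all: it chops $\rar\bm\tau$ into Dehn intervals and single splits, bounds each piece's $\pa$-displacement by $C_2$ using Lemma~\ref{lem:pantsboundunderdt}, and observes that each piece contributes at least one split to $\utw(\rar\bm\tau)$. Your detour through subsurface projections for the upper bound rests on the same false premise.
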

Here, $\rar$ and $\utw$ denote, respectively, a \emph{rearrangement} and an \emph{untwisting} of the splitting sequence $\bm\tau$, which we define in \S \ref{sub:rearrang} and in \S \ref{sec:traintrackconclusion}, respectively.

Our result about splitting sequences in a graph which is quasi-isometric to the pants graph comes after several other ones on the same line. The first one is a \emph{structure theorem} for cornered train track splitting sequences. We state it in a slighty altered way. Given $X\subseteq S$ a subsurface, and $\bm\tau=(\tau_j)_{j=0}^N$ a train track splitting sequence indexed by the interval $[0,N]$, let the \nw{accessible interval} for $X$ in $[0,N]$ be defined as follows.
\begin{itemize}
\item Suppose $X$ is not an annulus. Then define 
\begin{eqnarray*}
  m_X & \coloneqq &\min\left\{i\in [0,N]\mid \mathrm{diam}_X\left(\cc^*(\tau_i|X)\right)\geq 3\right\} \\
\text{and } n_X & \coloneqq & \max\left\{i\in [0,N]\mid \mathrm{diam}_X\left(\cc(\tau_i|X)\right)\geq 3\right\}.
\end{eqnarray*}
 If $m_X,n_X$ are both finite and $m_X\leq n_X$, then let $I_X=[m_X,n_X]$. Else set $I_X=\emptyset$.
\item Suppose $X$ is an annulus, with $\gamma$ its core curve. Then define $I_X$ to be the set of indices $i$ such that $\gamma$ is carried and is a twist curve in $\tau_i$. That the set $I_X$ is an actual interval is proved in Lemma \ref{lem:twistcurvebasics}. This definition of $I_X$ for annuli is different from the one of \cite{mms} thus the meaning of our statement is slightly different from the one in the in the original paper; but this new version is easily derived through Lemma \ref{lem:twistininduced}, as we will see. We will also use the notation $I_\gamma$ to mean $I_{\nei(\gamma)}$ where $\gamma\in\cc(S)$ and $\nei(\gamma)$ is a regular neighbourhood of a curve in the isotopy class $\gamma$.
\end{itemize}

\begin{theo}[\cite{mms}, Theorem 5.3]\label{thm:mmsstructure}
Given a splitting sequence $\bm\tau=(\tau_j)_{j=0}^N$ of generic, cornered, birecurrent train tracks on a surface $S$, there is a constant $\mathsf K_0=\mathsf K_0(S)$ such that, for each subsurface $X\subseteq S$, the following properties hold.

\begin{enumerate}
\item Let $[a,b]\subseteq [0,N]$ be an interval of indices, disjoint from $I_X$ except for, possibly, a single point. Then, if $\pi_X(\tau_b)\not=\emptyset$, then $d_X(V(\tau_a),V(\tau_b))\leq \mathsf K_0$. If $X$ is an annulus, also $d_X(\tau_a|X,\tau_b|X)\leq \mathsf K_0$.
\item If $X$ is an annulus, and $i\in I_X$, then $\tau_{i+1}|X$ is obtained from $\tau_i|X$ with slides (if this is the case between $\tau_{i+1}$ and $\tau_i$); or with at most 2 splits and/or taking a subtrack (if there is a split between $\tau_{i}$ and $\tau_{i+1}$).
\item If $X$ is not an annulus and $i\in I_X$, then $\partial X$, when put in efficient position with respect to $\tau_i$, is wide. Moreover $V(\tau_i|X)$ fills $S^X$. Finally, if $\tau_{i+1}|X\not=\tau_i|X$, then $\tau_{i+1}|X$ is obtained from $\tau_i|X$ with slides (if this is the case between $\tau_{i+1}$ and $\tau_i$); or with a split or taking a subtrack (if there is a split between $\tau_{i}$ and $\tau_{i+1}$).
\end{enumerate}
\end{theo}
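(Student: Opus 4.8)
The plan is to deduce Theorem \ref{thm:mmsstructure} from Theorem 5.3 of \cite{mms}, whose hypotheses (generic, cornered, birecurrent train tracks on $S$) are literally ours; the only genuine work is to reconcile the two places where our formulation departs from the original, namely the redefinition of the accessible interval $I_X$ when $X$ is an annulus, and the fact that an induced track $\tau_i|X$ is in general only an almost track.

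For $X$ non-annular I would first observe that there is nothing new: the accessible interval is defined by the same formulas in terms of $m_X, n_X$, and statements 1 and 3 are the corresponding assertions of \cite{mms}, Theorem 5.3. The only point to check is that the Masur--Mosher--Schleimer argument survives when $\tau_i|X$ is an almost track rather than a train track. By Lemma \ref{lem:inducedisonsurface} it is indeed an almost track on $S^X$, and every technical input to their proof has been restated above in that generality: the diagonal-extension lemmas of \S \ref{sub:diagext}, uniqueness of carrying (Corollary \ref{cor:carryingunique}), and the uniform quasigeodesic property of train paths (Proposition \ref{prp:paths_quasi_geod}). Hence their argument applies essentially verbatim, at worst enlarging $\mathsf K_0$.

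For $X$ an annulus with core $\gamma$, the content is to show that our $I_X$ --- the set of indices $i$ at which $\gamma$ is carried and is a twist curve in $\tau_i$ --- agrees, up to a bounded number of indices, with the annular accessible interval of \cite{mms}. Lemma \ref{lem:twistcurvebasics} guarantees that $I_X$ is an interval, so the comparison makes sense; Lemma \ref{lem:twistininduced} then does the translation between ``$\gamma$ is a twist curve in $\tau_i$'' and the structure of $\tau_i|X$ (that it carries $\gamma$, in the language of Remark \ref{rmk:annulusinducedbasics}), and also records --- this is essentially statement 2 --- that along a split of $\tau_i$ with $i\in I_X$ the induced track changes only by taking a subtrack or performing at most two splits. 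From this one reads off statement 1 for annuli: on an interval $[a,b]$ meeting $I_X$ in at most one point, the sequence of induced tracks is controlled so that $\pi_X V(\tau_j)$ stays in a bounded-diameter subset of $\cc(X)$ --- using point \ref{itm:diambound} of Remark \ref{rmk:annulusinducedbasics} and Claim 2 of Lemma \ref{lem:induction_vertices_commute} --- whence $d_X(V(\tau_a),V(\tau_b))\leq\mathsf K_0$. The variant $d_X(\tau_a|X,\tau_b|X)\leq\mathsf K_0$ follows because, again by Lemma \ref{lem:induction_vertices_commute} and point \ref{itm:diambound} of Remark \ref{rmk:annulusinducedbasics}, $\pi_X V(\tau_j)$ and $V(\tau_j|X)$ are at bounded distance in $\cc(X)$.

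The step I expect to be the real obstacle is precisely this quantitative comparison of the accessible intervals for annuli: one must certify that the annular projection $\pi_X V(\tau_j)$ cannot accumulate distance outside our $I_X$, which is not a formality, since a single split may raise the intersection number of $V(\tau_j)$ with the annular arc. Lemma \ref{lem:twistininduced} is what makes this work --- it identifies the only mechanism of accumulation as the iterated realization of Dehn twists about a twist curve, which by definition occurs only for $i\in I_X$ --- and turning it into a uniform constant $\mathsf K_0$ independent of $\bm\tau$ requires the finiteness statements of Lemma \ref{lem:vertexsetbounds} together with the fact, recalled in Lemma \ref{lem:annulus_distance}, that $\cc(X)$ is quasi-isometric to $\mathbb Z$, so that distances there are controlled with bounded error.
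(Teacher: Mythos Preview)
Your proposal is correct and follows the same path as the paper, which does not prove the theorem but quotes \cite{mms} and explains, in the paragraph immediately following the statement and again in the observation after Lemma \ref{lem:twistininduced}, how the minor differences are absorbed into a possibly larger $\mathsf K_0$. One refinement worth recording: the annular $I_X$ defined here \emph{contains} the one used in \cite{mms} (this is what Lemma \ref{lem:twistininduced} gives), so statement 1 is automatically \emph{weaker} than the original and needs no new argument --- your final paragraph has the direction of difficulty reversed. The genuine check is that statement 2 extends to the larger interval, and the paper disposes of this by noting that the \cite{mms} proof of that statement only uses that $\tau_i|X$ is combed, which by Lemma \ref{lem:twistininduced} holds throughout the enlarged $I_X$.
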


The differences between this statement and the original one may be summarized into three points. The first one is the definition of accessible interval for an annulus, and this is cared after in Lemma \ref{lem:twistininduced} and the following observation. The other ones, instead, concern the first statement: the interval $[a,b]$ here may share an endpoint with $I_X$, which was not allowed in the original version. But if that statement is true then this one also is, possibly picking a larger value for $\mathsf K_0(S)$ --- see also Remark \ref{rmk:pickparameters} for the existence of a universal bound for the distance induced by a single split. A larger value of $\mathsf K_0$ is necessary also for the last sentence in this statement to be true; but it is possible to choose one, due to Lemmas \ref{lem:vertexsetbounds} and \ref{lem:induction_vertices_commute}.

\begin{rmk}
When a multicurve $\gamma$ is wide (not carried) with respect to a given train track $\tau$, in general, it is not necessarily true that all possible efficient positions for $\gamma$ comply with the conditions defining a wide curve in Definition \ref{def:efficientposition}: the definition of wide curve only asks for an efficient position as such to exist.

When $\gamma=\partial X$ as in the above theorem, anyway, \emph{any} efficient position with respect to $\tau_i$, $i\in I_X$, shows that $\partial X$ is wide. This is a consequence of Lemma 5.2 in \cite{mms}, where the proof works by contradiction, supposing that there exists any efficient position for $\partial X$ which does not show that it is wide.
\end{rmk}

A number of theorems prove that, in different measures, train track splitting sequences induce quasigeodesic in more than one of the graphs previously introduced.

\begin{theo}[\cite{masurminskyq}, Theorem 1.3]\label{thm:mm_cc_geodicity}
Given a train track splitting sequence $\bm\tau=(\tau_j)_{j=0}^N$ on a surface $S$, there is a constant $Q=Q(S)$ such that the set $\left(V(\tau_j)\right)_j$ is a $Q$-unparametrized quasi-geodesic in $\cc(S)$.
\end{theo}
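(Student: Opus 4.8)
The plan is to verify that the sequence $\left(V(\tau_j)\right)_{j=0}^N$ satisfies a standard criterion for being an unparametrized quasi-geodesic in the (uniformly) $\delta$-hyperbolic space $\cc(S)$: there should be a constant $Q=Q(S)$ such that (i) $\mathrm{diam}_{\cc(S)}\!\left(V(\tau_j)\right)\le Q$ for all $j$; (ii) $d_{\cc(S)}\!\left(V(\tau_j),V(\tau_{j+1})\right)\le Q$ for all $j$; and (iii) the reverse triangle inequality
\[ d_{\cc(S)}\!\left(V(\tau_i),V(\tau_j)\right)+d_{\cc(S)}\!\left(V(\tau_j),V(\tau_k)\right)\le d_{\cc(S)}\!\left(V(\tau_i),V(\tau_k)\right)+Q \]
holds for all $0\le i\le j\le k\le N$. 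From (i)--(iii) one obtains the required reparametrization by collapsing each maximal run on which the sequence stays within distance $Q$ of its starting value, and $\delta$-hyperbolicity then upgrades (iii) to a genuine quasi-geodesic bound for the collapsed sequence. Two reductions first. By Remark \ref{rmk:decreasingmeasures} the sets $\cc(\tau_j)$ are non-increasing, so by Lemma \ref{lem:decreasingfilling} the subsurfaces they fill form a non-increasing family; hence there is an index $a$ with $V(\tau_j)$ filling $S$ exactly for $j\le a$, and the tail $\left(V(\tau_j)\right)_{j\ge a}$ then consists of curves lying in a fixed proper subsurface, so it has bounded $\cc(S)$-diameter and can be discarded. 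Thus we may assume every $V(\tau_j)$, hence (Lemma \ref{lem:decreasingfilling}) every $\tau_j$, fills $S$; replacing each $\tau_j$ by its recurrent subtrack does not change $V(\tau_j)$, so we may also assume every $\tau_j$ is recurrent; and the exceptional cases $S\cong S_{1,1},S_{0,4}$, where $\cc(S)$ is the Farey graph and the complete train tracks are classified, are handled directly as in the proof of the Nesting Lemma (Lemma 4.7 of \cite{masurminskyi}), so assume $S$ is of generic type.

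Parts (i) and (ii) are routine. For (i), Lemma \ref{lem:vertexsetbounds} gives $\mathrm{diam}_{\cc(S)}\!\left(W(\tau_j)\right)\le C_0$ and $V(\tau_j)\subseteq W(\tau_j)$. For (ii), $\tau_{j+1}$ is obtained from $\tau_j$ by a single elementary move, so $i\!\left(V(\tau_j),V(\tau_{j+1})\right)$ is bounded by a constant depending only on $S$ --- this is exactly the quantity used to fix the parameter $\ell$ in Remark \ref{rmk:pickparameters} --- and Lemma \ref{lem:cc_distance} turns this into $d_{\cc(S)}\!\left(V(\tau_j),V(\tau_{j+1})\right)\le F(\ell)$.

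The substance is (iii), and here I would run the nesting machinery of \S \ref{sub:diagext}. Fix $i\le j\le k$ and let $D=D(S)$ be the Nesting Lemma constant. If $d_{\cc(S)}\!\left(V(\tau_i),V(\tau_j)\right)\le D$ or $d_{\cc(S)}\!\left(V(\tau_j),V(\tau_k)\right)\le D$ then (iii) follows from the ordinary triangle inequality with $Q\ge 2D+2C_0$. Otherwise, applying the Nesting Lemma to the pair $\tau_i\to\tau_j$ gives $\cf(\tau_j)\subseteq\cf_1(\tau_i)$, while Lemma \ref{lem:vertexnotinterior} says $V(\tau_i)\cap\cf_1(\tau_i)=\emptyset$; hence a vertex cycle $\gamma_i\in V(\tau_i)$ lies outside $\cf(\tau_j)$, i.e.\ it is carried by no diagonal extension of $\tau_j$. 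On the other side, $\tau_k$ is fully carried by $\tau_j$ (Proposition \ref{prp:carriediffsplit}) and, iterating the Nesting Lemma and Lemma \ref{lem:cf_decreasing} along the subsequence from $\tau_j$ to $\tau_k$, every curve carried by $\tau_k$ lies deep in $\tau_j$, traversing each branch of some filling recurrent subtrack of $\tau_j$ a number of times growing with $d_{\cc(S)}\!\left(V(\tau_j),V(\tau_k)\right)$. The goal is then to show that every $\cc(S)$-geodesic from $\gamma_i$ to a vertex cycle $\gamma_k\in V(\tau_k)$ passes within a uniformly bounded neighbourhood of $V(\tau_j)$; granting this, $\delta$-thinness of the triangle on $\gamma_i,\gamma_j,\gamma_k$ (with $\gamma_j\in V(\tau_j)$) yields (iii), the additive constant absorbing $\delta$, $C_0$ and the neighbourhood bound. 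This ``gate'' property says that $V(\tau_j)$ coarsely separates the curves deeply carried by $\tau_j$ from the rest of $\cc(S)$: its qualitative shadow is visible from Lemma \ref{lem:diag_inters_control} (a curve carried by no diagonal extension of $\tau_j$ has large intersection with any deeply carried curve) together with Lemma \ref{lem:ccnesting}, but those only show that $d_{\cc(S)}\!\left(\gamma_i,\gamma_k\right)$ is bounded below by a logarithm of the depth of $\gamma_k$ in $\tau_j$, which is too weak for (iii).

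So the main obstacle is exactly this gate property: it is in effect Masur--Minsky's quasiconvexity theorem for $\cc(\tau_j)$, and although the lemmas of \S \ref{sub:diagext} are the right ingredients, obtaining the sharp, logarithm-free separation requires the inductive argument of \cite{masurminskyq}, in which the nesting of the sets $\cf_1(\tau_\bullet)$ along the splitting sequence is exploited step by step rather than through a single intersection estimate. Two secondary points also need care: the reductions to filling, recurrent tracks must be carried out uniformly, so that $Q$ does not degrade when the filled subsurface drops --- harmless, since this can occur only a bounded number of times and each drop contributes only bounded $\cc(S)$-diameter --- and the exceptional surfaces $S_{1,1},S_{0,4}$, for which the relevant statements reduce to elementary facts about the Farey graph.
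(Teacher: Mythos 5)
The paper does not supply an original proof of this statement: it cites Theorem 1.3 of \cite{masurminskyq} verbatim and adds only a remark about replacing the sets used there with $\cf(\cdot)$ and $\cf_k(\cdot)$ so the argument covers $S_{0,4}$, $S_{1,1}$. So your proposal is a reconstruction of Masur and Minsky's argument, and its skeleton is correct: you identify the right lemmas from \S\ref{sub:diagext}, and the criterion ``bounded steps plus reverse triangle inequality implies unparametrized quasi-geodesic'' is legitimate (and in fact needs no hyperbolicity: choosing $\rho(\iota+1)$ to be the first index achieving distance $>Q$ from $g(\rho(\iota))$, the reverse triangle inequality telescopes to give the lower quasi-geodesic bound, and bounded steps give the upper one and the intermediate-diameter bound).

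You also flag, honestly, a genuine gap, and it is the heart of the theorem: your ``gate property''. You try to extract it from $\delta$-thinness of the triangle on $\gamma_i,\gamma_j,\gamma_k$, but, as you observe, Lemma \ref{lem:diag_inters_control} gives only intersection-number separation, which by Lemma \ref{lem:cc_distance} yields a logarithmic lower bound on $\cc(S)$-distance, far short of what (iii) needs. Masur and Minsky do not argue via thin triangles at all. Instead one extracts from $[j,k]$ a subsequence $j=j_0<j_1<\cdots<j_m=k$ where each $j_{a+1}$ is the \emph{least} index beyond $j_a$ with $d_{\cc(S)}\bigl(V(\tau_{j_a}),V(\tau_{j_{a+1}})\bigr)>D$. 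Minimality plus bounded steps force $d_{\cc(S)}\bigl(V(\tau_{j_a}),V(\tau_{j_{a+1}})\bigr)\le D+\ell$, so $d_{\cc(S)}\bigl(V(\tau_j),V(\tau_k)\bigr)\le m(D+\ell)$. For the lower bound, the Nesting Lemma gives $\cf(\tau_{j_{a+1}})\subset\cf_1(\tau_{j_a})$ for each $a$; Lemma \ref{lem:ccnesting} upgrades this to $\nei_1\bigl(\cf(\tau_{j_{a+1}})\bigr)\subset\cf(\tau_{j_a})$; and iterating yields $\nei_m\bigl(\cf(\tau_k)\bigr)\subset\cf(\tau_j)$. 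With Lemma \ref{lem:vertexnotinterior} this pins $V(\tau_j)$ outside the nested set, so $d_{\cc(S)}\bigl(V(\tau_j),V(\tau_k)\bigr)\ge m-1$. It is this explicit, inductive nesting count --- not a Gromov-product or thin-triangle estimate --- that produces the sharp, logarithm-free progress bound; your reverse triangle inequality is a corollary, not an input. Until you run that induction (or simply cite \cite{masurminskyq} wholesale, which is what the paper itself does), the proposal remains a correct outline with its central step unproven.
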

The proof of this theorem employs all the lemmas listed in \S \ref{sub:diagext} above. We spend just a few words about a secondary issue: that is, such proof requires some slight adaptations in order to hold for $S_{0,4}$ and $S_{1,1}$, whose curve graph have specific definitions. In order to cope with those surfaces, we may employ the lemmas in \S \ref{sub:diagext}: i.e. read the theorem's proof replacing the lemmas employed there with the corresponding ones in \S\ref{sub:diagext}. Also, we need to replace each occurrence of  $\mathcal PN(\cdot)$ with $\cf(\cdot)$; and each occurrence of $int(\mathcal PN(\cdot)$ with $\cf_3(\cdot)$ or $\cf_2(\cdot)$ depending on whether we aim to prove the statement for $S_{0,4}$ or $S_{1,1}$, respectively.

The same theorem is true for subsurface projections (adding some technical hypotheses):
\begin{theo}[\cite{mms}, Theorem 5.5]\label{thm:mms_cc_geodicity}
Given a splitting sequence $\bm\tau=(\tau_j)_{j=0}^N$ of cornered, birecurrent train tracks on a surface $S$, there is a constant $Q=Q(S)$ such that, for each subsurface $X\subseteq S$ such that $\pi_X(V(\tau_N))\not=\emptyset$, the sequence $\left(\pi_XV(\tau_j)\right)_{j=0}^N$ is a $Q$-unparametrized quasi-geodesic in $\cc(X)$.
\end{theo}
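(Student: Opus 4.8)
The plan is to reduce the statement to the accessible interval $I_X$ of \S\ref{sub:goodbehaviour} and then to run, on the sequence of induced tracks, the very machinery that proves Theorem \ref{thm:mm_cc_geodicity}; this is essentially the route of \cite{mms}, Theorem 5.5, and the work is mostly a matter of checking that the pieces assembled earlier in this chapter fit together. First I would note that the hypothesis $\pi_X(V(\tau_N))\neq\emptyset$ propagates backwards along $\bm\tau$: by Remark \ref{rmk:decreasingmeasures} the sets $\cc(\tau_j)$ form a decreasing family, so a curve of $V(\tau_N)$ crossing $X$ lies in every $\cc(\tau_j)$, whence the subsurface filled by $V(\tau_j)$ crosses $X$ (Lemma \ref{lem:decreasingfilling}) and $\pi_X(V(\tau_j))\neq\emptyset$ for all $j$; thus $g\coloneqq\big(\pi_X V(\tau_j)\big)_{j=0}^{N}$ is everywhere defined. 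If $I_X=\emptyset$, then Theorem \ref{thm:mmsstructure}(1) applied to $[0,N]$ bounds the diameter of $g$ by $\mathsf K_0$ and there is nothing to prove; otherwise, writing $I_X=[m_X,n_X]$ and applying Theorem \ref{thm:mmsstructure}(1) to $[0,m_X]$ and to $[n_X,N]$ yields $d_X(V(\tau_a),V(\tau_b))\le\mathsf K_0$ whenever $a\le b$ lie both below $m_X$ or both above $n_X$. So it suffices to prove that $\big(\pi_X V(\tau_j)\big)_{j\in I_X}$ is a $Q$-unparametrized quasi-geodesic in $\cc(X)$: a reparametrization witnessing this can be prolonged to $[0,N]$ by sending its first point to $0$ and its last to $N$, the bounded-jump clause being met on the tails thanks to the $\mathsf K_0$-bound.

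The heart is the non-annular case on $I_X$. For $j\in I_X$, Theorem \ref{thm:mmsstructure}(3) gives that $\tau_{j+1}|X$ is obtained from $\tau_j|X$ by slides, a single split, or passing to a subtrack --- in every case $\tau_{j+1}|X$ is \emph{carried} by $\tau_j|X$ --- and that $V(\tau_j|X)$, hence $\cc(\tau_j|X)$ (Lemma \ref{lem:decreasingfilling}), fills $S^X$. These are exactly the conditions under which the proof of Theorem \ref{thm:mm_cc_geodicity} operates, once each ingredient is taken in its induced-track form from \S\ref{sub:diagext}: the decisive monotonicity input Lemma \ref{lem:cf_decreasing} only needs carrying, not full carrying, so slides, splits \emph{and} subtrack passages are all legitimate, while Lemmas \ref{lem:diag_inters_control}, \ref{lem:ccnesting}, the Nesting Lemma and \ref{lem:vertexnotinterior} are already phrased for $\tau_j|X$ (for $X\cong S_{0,4}$ or $S_{1,1}$ one substitutes their Farey-graph versions, as recorded after Theorem \ref{thm:mm_cc_geodicity}). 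This produces a constant $Q=Q(S)$ making $\big(V(\tau_j|X)\big)_{j\in I_X}$ a $Q$-unparametrized quasi-geodesic in $\cc(X)$. I would then transfer this to $g$ via Lemma \ref{lem:induction_vertices_commute}(1): with $\gamma\in V(\tau_j|X)\subseteq W(\tau_j|X)$ and $\delta\in\pi_X V(\tau_j)\subseteq\pi_X W(\tau_j)$ it gives $d_{\cc(X)}\big(\pi_X V(\tau_j),V(\tau_j|X)\big)\le F(8N_1(S^X))$ uniformly in $j$, so $g|_{I_X}$ fellow-travels $\big(V(\tau_j|X)\big)_{j\in I_X}$ and is itself an unparametrized quasi-geodesic with a marginally larger constant.

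When $X$ is an annulus the argument is separate and easier: $\cc(X)$ is quasi-isometric to $\mathbb Z$ (Lemma \ref{lem:annulus_distance}), $I_X$ is the twisting interval of Lemma \ref{lem:twistcurvebasics}, and combining Theorem \ref{thm:mmsstructure}(2) with the twist-curve analysis of \S\ref{sec:twistcurves} --- Lemma \ref{lem:twistininduced} being what licenses our annular definition of $I_X$ in place of that of \cite{mms} --- shows $\pi_X V(\tau_j)$ moving essentially monotonically along $I_X$ while remaining bounded off it; a monotone sequence in $\mathbb Z$ is automatically an unparametrized quasi-geodesic.

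The hard part, I expect, is the verification behind the second paragraph: confirming that no step of the Masur--Minsky curve-complex-geodicity argument degrades when $\tau_j$ is replaced by the merely-almost track $\tau_j|X$, that the point-removal device of Corollary \ref{cor:carryingunique} used to turn almost tracks into honest train tracks respects vertex-cycle sets, and that the three reparametrizations (on $[0,m_X]$, on $I_X$, and on $[n_X,N]$) splice into a single $\rho$ meeting every clause in the definition of unparametrized quasi-geodesic --- the endpoint interaction with $I_X$ being exactly the reason Theorem \ref{thm:mmsstructure} was restated here so as to permit $[a,b]$ to share an endpoint with $I_X$.
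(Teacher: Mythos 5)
The paper does not prove this theorem: it is cited verbatim from \cite{mms}, Theorem~5.5, and the only word the thesis offers on its proof is the closing sentence of \S\ref{sub:goodbehaviour}, recording that Theorems \ref{thm:mmsstructure} and \ref{thm:mm_cc_geodicity} are employed in it. So there is no internal proof against which to compare your sketch.

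As a reconstruction of the argument in \cite{mms}, your outline is on the right track and consistent with the dependency the thesis records: reduce to the accessible interval using Theorem~\ref{thm:mmsstructure}(1), splice the two tails with that theorem's endpoint-inclusive form (which is precisely why it was restated that way here), run the Masur--Minsky curve-complex machinery of Theorem~\ref{thm:mm_cc_geodicity} on the induced tracks $(\tau_j|X)_{j\in I_X}$ under the structural control of Theorem~\ref{thm:mmsstructure}(3) with the induced-track versions of the \S\ref{sub:diagext} lemmas, and transfer back via Lemma~\ref{lem:induction_vertices_commute}(1). Your observation that Lemma~\ref{lem:cf_decreasing} requires only carrying (not full carrying), so subtrack passages do not break the monotonicity, is exactly the point that lets this run on induced almost tracks. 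Two caveats: the annular case is stated more breezily than it deserves --- you should make precise the sense in which $\pi_X V(\tau_j)$ moves monotonically, and the clean tool for that is the rotation-number control of Remark~\ref{rmk:rotbasics} (point~\ref{itm:rot_vs_dist}) rather than a vague appeal to \S\ref{sec:twistcurves}; and the splicing of the three reparametrizations, plus the interaction of the point-removal trick (Corollary~\ref{cor:carryingunique}) with vertex sets, is indeed where the remaining labour sits, as you say. No gap in the proposed route, but the claim is not proved in this paper, only cited, so the comparison you were asked to make is moot.
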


And a stronger statement is true for the marking graph: it is the starting point for our Theorem \ref{thm:core}.
\begin{theo}[\cite{mms}, Theorem 6.1]\label{thm:mms_main}
Given a splitting sequence $\bm\tau=(\tau_j)_{j=0}^N$ of cornered, birecurrent train tracks on a surface $S$, whose vertex sets each fill $S$, there is a constant $Q=Q(S)$ such that the set $\left(V(\tau_j)\right)_{j\in J}$ is a $Q$-quasi-geodesic in $\ma(S)$.

Here $J\subseteq [0,N-1]$ is the set of indices $j$ such that $\tau_j$ splits to $\tau_{j+1}$
\end{theo}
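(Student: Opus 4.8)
The plan is to combine the structure theorem (Theorem \ref{thm:mmsstructure}), the subsurface-projection quasigeodicity (Theorem \ref{thm:mms_cc_geodicity}), and the distance formula of Masur--Minsky (Theorem \ref{thm:mmprojectiondist}) for $\ma(S)$, much as is done for $\cc(S)$ in the proof of Theorem \ref{thm:mm_cc_geodicity}. Fix the splitting sequence $\bm\tau=(\tau_j)_{j=0}^N$ and let $J\subseteq[0,N-1]$ be the set of ``split indices''. Since each $V(\tau_j)$ fills $S$ and has uniformly bounded self-intersection (Lemma \ref{lem:vertexsetbounds}), every $V(\tau_j)$ is a vertex of $\ma(S)$, and consecutive ones are connected by a bounded-length path (Remark \ref{rmk:pickparameters}); so the map $j\mapsto V(\tau_j)$, reparametrized by $J$, is automatically a $Q$-\emph{Lipschitz} path in $\ma(S)$ for a suitable $Q=Q(S)$. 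The whole content of the theorem is therefore the reverse inequality: for split indices $j<k$,
\begin{equation*}
|\{\,\ell\in J : j\le \ell<k\,\}| \le_Q d_{\ma(S)}\bigl(V(\tau_j),V(\tau_k)\bigr).
\end{equation*}

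To get this I would restrict attention to the subsequence $\bm\tau(j,k)$ and estimate its number of splits via Theorem \ref{thm:mmprojectiondist}: it suffices to exhibit, for $M>M_6(S)$,
\begin{equation*}
|\bm\tau(j,k)| \le_Q \sum_{X\subseteq S} \bigl[d_X(V(\tau_j),V(\tau_k))\bigr]_M .
\end{equation*}
Here the argument splits into a ``large-distance'' and a ``small-distance'' regime exactly as for the curve complex. First, a split of $\tau_\ell$ either changes the ambient track $\tau_\ell$ in a way that is detected in the base curve complex $\cc(S)$ — and by Theorem \ref{thm:mm_cc_geodicity} the number of such splits between $\tau_j$ and $\tau_k$ is coarsely $d_{\cc(S)}(V(\tau_j),V(\tau_k))$ — or it is a split which is ``invisible'' in $\cc(S)$ because the vertex set is not moving there. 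In the latter case the structure theorem (Theorem \ref{thm:mmsstructure}) forces $\ell$ to lie in the accessible interval $I_X$ of some proper subsurface $X\subsetneq S$ (annular or not): outside all accessible intervals, $V(\tau_\ell)$ moves boundedly in every $\cc(X)$, and a split that does not move the vertex set anywhere cannot change the carried-curve set, contradiction. So I would partition $[j,k]$ into the accessible intervals $I_X$ meeting it, inductively count splits inside each $I_X$ against $d_X(V(\tau_j),V(\tau_k))$ using Theorem \ref{thm:mms_cc_geodicity} and the reverse triangle inequality (Lemma \ref{lem:reversetriangle}) in $\cc(X)$, and count the remaining splits against $d_{\cc(S)}$. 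Summing over $X$ and invoking the $\ma(S)$-distance formula gives the bound, provided the total count is not inflated by overlaps of accessible intervals — which, by the structure theorem, overlap only in bounded ways, costing at most an additive error per subsurface and hence an error absorbed into the threshold $M$ and the constants $e_0,e_1$.

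The main obstacle — and the reason this is a theorem of \cite{mms} rather than a one-line corollary — is the failure of local finiteness in $\ma(S)$: one cannot deduce the quasigeodesic property merely from a length bound plus a distance estimate at the endpoints, since a priori a Lipschitz path could backtrack cheaply through the non-locally-finite graph. The way around this is the observation, already present in \cite{mms}, that for a splitting sequence a \emph{subpath} that covers bounded distance in $\ma(S)$ must in fact involve a bounded number of splits: this is exactly what the inductive subsurface count above delivers, because each subsurface contributing to the distance formula contributes its distance and nothing more. So the real work is the bookkeeping that converts the local structure statement (splits outside accessible intervals are invisible; splits inside $I_X$ move $\cc(X)$ monotonically) into the global summation, keeping track of the constants from Lemmas \ref{lem:vertexsetbounds}, \ref{lem:induction_vertices_commute} and from Remark \ref{rmk:pickparameters}. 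Once all proper subsurfaces are treated uniformly together with $\cc(S)$, Theorem \ref{thm:mmprojectiondist} for $\ma(S)$ closes the argument and fixes $Q=Q(S)$.
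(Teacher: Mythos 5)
There is a genuine error in your sketch, and it sits exactly at the point you identify as the ``main obstacle.'' You assert that the marking graph $\ma(S)$ is not locally finite and that one must work around a ``non-locally-finite graph.'' This is backwards: $\ma(S)$ (like $\mc(S)$) \emph{is} locally finite, and the Masur--Mosher--Schleimer proof of Theorem~\ref{thm:mms_main} crucially exploits this. Concretely, the argument bounds the number of splits in a subinterval where all subsurface projection distances are small (a ``straight interval'') by observing that the induced vertex sets all lie in a ball of bounded radius in $\ma(X)$; since such a ball contains only finitely many markings, and since Claim~6.14 of \cite{mms} (Lemma~\ref{lem:mms614} here) bounds the number of splits a sequence can perform without changing its induced vertex set, the split count is controlled. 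This is the engine of the whole proof and cannot be replaced by your ``inductive subsurface count above'' — the induction \emph{produces} the straight intervals but does not by itself bound the number of splits within one. In fact the asymmetry you have inverted is precisely what motivates this thesis: the pants graph $\pc(S)$, unlike $\ma(S)$, is \emph{not} locally finite (Dehn twisting about a base curve gives infinitely many vertices at distance~one), and the whole point of the rearrangement $\rar$ and untwisting $\utw$ machinery is to manufacture a surrogate for local finiteness (Proposition~\ref{prp:locallyfinite}) so that the MMS strategy can be transported to $\pa(S)$.

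Beyond this, your partition of $[j,k]$ into ``the accessible intervals $I_X$ meeting it'' is looser than what the argument actually requires. Accessible intervals for different subsurfaces overlap in essentially arbitrary ways, and the MMS proof instead proceeds by: fixing a threshold $\mathsf T_0$ and declaring a proper subsurface $Y$ \emph{inductive} if $d_Y(J_X)\ge\mathsf T_0$; showing that subintervals disjoint from all inductive subintervals are straight; and then \emph{assigning} each split index to the smallest inductive subsurface in which it is visible, which is what prevents double-counting rather than the loose ``bounded overlaps'' you invoke. Your instinct to count splits against $d_X$ via Theorem~\ref{thm:mms_cc_geodicity} and close with Theorem~\ref{thm:mmprojectiondist} is in the right spirit, but the bookkeeping you describe would not run as stated, and the role you assign to local finiteness must be reversed before the proof can work.
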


Theorems \ref{thm:mmsstructure} and \ref{thm:mm_cc_geodicity} are employed in the proof of \ref{thm:mms_cc_geodicity}, and \ref{thm:mms_main} depends on all the previous three.
\section{All about twist curves}\label{sec:twistcurves}

\subsection{Terminology and basics}

When a train track splitting sequence spans long distances in annulus subsurface projections we find that, morally, they are caused by application of high powers of Dehn twist; they may be produced by elementary moves which are hidden and sparse in the sequence, and the key to track them down are \emph{twist curves}. We need some work to make this sentence precise.

First of all, suppose that $\tau$ is an almost track and that $\gamma$ is a \emph{wide carried} curve for $\tau$. Let $A_\gamma$ be a wide collar and let $X$ be a regular neighbourhood of $\gamma$ in $S$. Let $\hat p:S^X\rightarrow S$ be the covering map. We have seen in point \ref{itm:embeddedcore} of Remark \ref{rmk:annulusinducedbasics} that the core curve of $S^X$ (which we identify with $\gamma$ itself) is carried by $\tau^X$ and is embedded as a train path; $A_\gamma$ has a homeomorphic lift to $S^X$, which is an annulus having $\tau^X.\gamma$ as a boundary component.

\begin{defin}
Let $e$ be a small branch end of $\tau^X$ such that $e\cap\tau^X.\gamma$ is a switch of $\tau^X$. If $e\cap A_\gamma\not=\emptyset$, we say that $e$ \nw{hits} $A_\gamma$; else, $e$ \nw{avoids} $A_\gamma$. This terminology does not apply to branch ends of $\tau^X$ such that $e\cap(\tau^X.\gamma)$ is empty, or consists of more than one point.

Given a small branch end $e'$ of $\tau$, we say that it \emph{hits} $A_\gamma$ if $e'=\hat p(e)$ for $e$ a branch end of $\tau^X$ hitting $A_\gamma$; we say that $e'$ \emph{avoids} $A_\gamma$ if $e'$ does not hit $A_\gamma$ and $e'=\hat p(e)$ for $e$ a branch end of $\tau^X$ avoiding $A_\gamma$. With this definition we are including, among the small branch ends of $\tau$ avoiding $A_\gamma$, the ones which are part of $\tau.\gamma$, too. However, this terminology does not apply to branch ends of $\tau$ which are disjoint from $\tau.\gamma$.

The set of all branch ends hitting $A_\gamma$ and the set of all branches avoiding it will be called the two \nw{sides} of $\gamma$. 
\end{defin}

\begin{defin}\label{def:twistcurve}
Let $\tau$ be an almost track on a surface $S$, let $\gamma\in\cc(S)$ be wide carried by $\tau$ and let $X$ be a regular neighbourhood of $\gamma$. The curve $\gamma$ is a \nw{twist curve} for $\tau^X$ and for $\tau$ if the two following conditions hold.
\begin{itemize}
\item There is a wide collar $A_\gamma$ (to be viewed in $S^X$) such that, for each branch end $e$ of $\tau^X$ hitting $A_\gamma$, the $e$-orientation (see Definition \ref{def:eorientation}) on $\gamma$ is always the same. The collar $A_\gamma$ (regarded as either $\subseteq S$ or $\subseteq S^X$) will be called a \nw{twist collar}, and the orientation given to $\gamma$ the \nw{$A_\gamma$-orientation}.
\item Among the branches of $\tau^X$ included in $\tau^X.\gamma$, there is at least a large one.
\end{itemize}
A curve $\gamma$ whose realization as a train path in $\tau$ is embedded, and which may be regarded as a twist curve after picking a collar on \emph{either} of its sides, is called \nw{combed}.
\end{defin}

An example of twist curve for an almost track $\tau$ is given in Figure \ref{fig:twistcurve}. The second bullet above is equivalent to saying that there is a branch end $f$ in $\tau^X$ which avoids $A_\gamma$, and such that the $f$-orientation is opposite to the $A_\gamma$-orientation. In particular, if $\gamma$ is combed, the orientations given to $\gamma$ by twist collars on opposite sides are opposite. So, this condition implies that $\cc(\tau^X)\not=\emptyset$ and that $\gamma$ is carried by $\tau|X$, by point \ref{itm:gammacarriedininduced} in Remark \ref{rmk:annulusinducedbasics}.

\begin{figure}
\centering{\includegraphics[width=.9\textwidth]{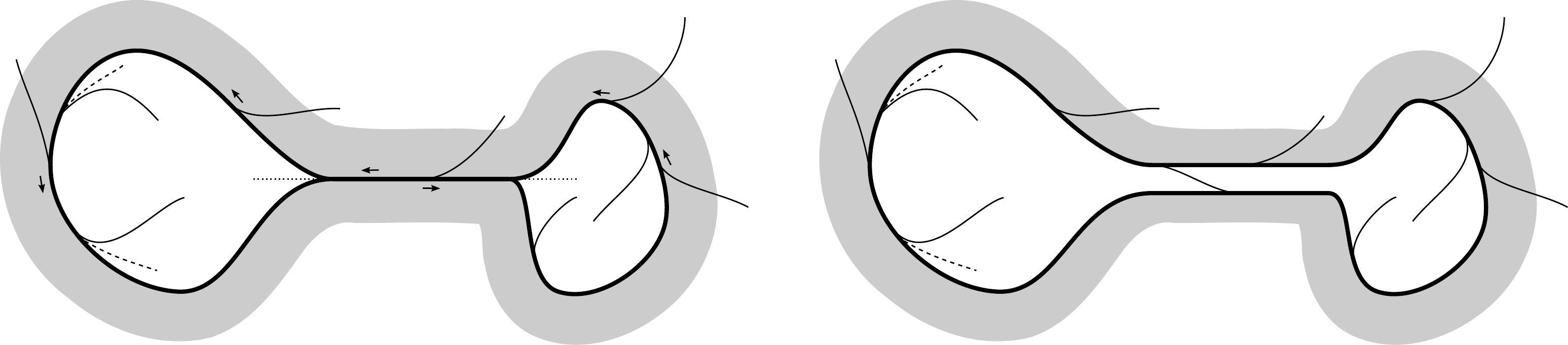}}
\caption{\label{fig:twistcurve}\emph{Left:} An example of twist curve which is not embedded as a train path. A possible choice for a twist collar $A_\gamma$ is marked in grey, and the arrows describe the $A_\gamma$-orientation. The extremities of splitting arcs for a wide spurious and a wide bispurious split (see Definition \ref{def:twistmoves}) are drawn with a dashed and a dotted line, respectively. \emph{Right:} The effect of a wide bispurious split. The twist curve has a twist collar that necessarily differs from the previous one. In Remark \ref{rmk:permanenceconventions} we show how to make the two collars correspond under a suitable homotopy equivalence $\E$ based on the tie collapse.}
\end{figure}

\begin{defin}
Let $\tau$ be an almost track on a surface $S$, let $\gamma\in\cc(S)$ be a twist curve for $\tau$ with a fixed regular neighbourhood $X$ and a fixed twist collar $A_\gamma$. Let $e$ be a branch end of $\tau^X$ sharing a switch with $\tau^X.\gamma$ and avoiding $A_\gamma$. If the $e$-orientation on $\gamma$ is opposite to the $A_\gamma$-orientation we call $e$ an \nw{$A_\gamma$-favourable} branch end; else we say that $e$ is \nw{$A_\gamma$-adverse}.
\end{defin}

For $\tau$, $\gamma$, $X$ as specified above, we say that $\gamma$ is a \emph{twist curve}, or it is \emph{combed}, in $\tau|X$ if it is a carried curve there and satisfies similar hypotheses as the ones that have been laid out for $\gamma$ in $\tau^X$. Lemma \ref{lem:twistininduced} specifies how being a twist curve in $\tau$ relates with being one in $\tau|X$.

\vspace{1ex}
\ul{Note}: all pretracks and splitting sequences in the present \S \ref{sec:twistcurves} will be \emph{generic}, as we will specify in our definitions and statements. Only in \S \ref{sub:twistcurvebound}, for technical reasons, we need to employ some semigeneric ones (see the note at the beginning of that subsection).

\vspace{1ex}
Note first of all that, if $\tau$ is a generic almost track with a wide curve $\gamma$, which has $A_\gamma$ as a wide collar and $X$ as a regular neighbourhood of the latter, then a small branch end $e'$ that hits (resp. avoids) $A_\gamma$ in $\tau$ has only one lift to $S^X$ which also hits (resp. avoids) $A_\gamma$\footnote{This is not true in the semigeneric setting: a branch end of $\tau$ which avoids $A_\gamma$ and shares its switch with two small branch ends in $\tau.\gamma$ has two distinct lifts that give opposite orientations to $\gamma$.}. So some of the definitions given above for $\tau^X$ descend to $\tau$.
\begin{itemize}
\item If $e'$ is a small branch end of $\tau$ which hits (resp. avoids) $A_\gamma$, let $e$ be the lift of $e'$ to $\tau^X$ which hits (resp. avoids) $A_\gamma$, and define the \emph{$e'$-orientation} on $\gamma$ to be the $e$-orientation.
\item If $\gamma$ is a twist curve with $A_\gamma$ a twist collar, and $e'$ is a small branch end of $\tau$ which avoids $A_\gamma$, let $e$ be the lift of $e'$ which avoids $A_\gamma$. We say that $e'$ is \emph{$A_\gamma$-favourable} (resp. \emph{adverse}) if $e$ is $A_\gamma$-favourable (resp. adverse).
\end{itemize}

\begin{rmk}\label{rmk:twistparam}
We set up here several pieces of notation referring to regular neighbourhoods of twist curves, to be used in the present section. Given $\gamma$ a twist curve of a generic almost track $\tau$, let $A_\gamma$ be a twist collar and $X$ be a regular neighbourhood of $\gamma$. For simplicity, we will suppose that $A_\gamma$ is a connected component of $\nei(\tau.\gamma)\setminus \tau.\gamma$ which is suitable to serve as twist collar. Also, we will identify it with its diffeomorphic lift to $S^X$, which serves as a twist collar for $\gamma\subset S^X$, regarded as a twist curve of $\tau^X$. Call $\Hy^2\stackrel{p}{\rightarrow} S^X\stackrel{\hat p}{\rightarrow} S$ the (metric) covering maps between these surfaces. 

Extend $p$ to the metric universal cover $\bar\Hy^2_X\stackrel{p}{\rightarrow}\ol{S^X}$ of $\ol{S^X}$ ($\bar\Hy^2_X$ is homeomorphic to a closed strip). We set up a parametrization as specified by the following commutative diagram:

\begin{equation*}
\begindc{\commdiag}[260]
\obj(1,3)[strip]{$\R\times[-2,2]$}
\obj(4,3)[hyclosed]{$\bar\Hy^2_X$}
\obj(1,1)[cylinder]{$\sph^1\times[-2,2]$}
\obj(4,1)[annulusclosed]{$\ol{S^X}$}
\mor{strip}{hyclosed}{$\tilde\upsilon$}
\mor{cylinder}{annulusclosed}{$\upsilon$}
\mor{hyclosed}{annulusclosed}{$p$}[\atleft,\surjectivearrow]
\mor{strip}{cylinder}{$e$}[\atleft,\surjectivearrow]
\mor{strip}{annulusclosed}{$q$}[\atleft,\surjectivearrow]
\enddc
\end{equation*}

Here, $e(x,t)=(e^{2\pi i x},t)$ and $\upsilon,\tilde\upsilon$ are built to be two diffeomorphisms, which are not required to be orientation-preserving (see below for a determination of whether they preserve orientations), and are not unique; however, we specify some requests below. To start with, we require that $\tau^X.\gamma=\upsilon(\sph^1\times\{0\})$ and $A_\gamma=\upsilon\left(\sph^1\times(0,1)\right)$.

Define $q\coloneqq\upsilon\circ e$; $\tilde\gamma\coloneqq \tilde\upsilon(\R\times\{0\})$; and $\tilde A_\gamma\coloneqq\tilde\upsilon\left(\R\times(0,1)\right)$. Also, orient the two components of $\partial\ol{S^X}$ consistently with the $A_\gamma$-orientation on $\gamma$; their lifts are to be oriented accordingly.

We require that the map $q$ complies with the following request. Let $b$ be a branch of $\tau^X$ hitting $A_\gamma$, and consider a connected component $\tilde b$ of $q^{-1}(b)\cap (\R\times [0,1])$: we require that, for all $t\in [0,1]$, $\tilde b\cap (\R\times\{t\})$ is a unique point that we denote $\tilde b(t)$, and that, if $\tilde b:[0,1]\rightarrow \R\times [0,1]$ is parametrized so that $\tilde b(1)$ lies along $\R\times\{0\}$, then the second coordinate of $\tilde b$ is decreasing.

This request means that, in $\R\times[0,1]$, the preimages of branches hitting $A_\gamma$ are directed from the upper left to the lower right. The $A_\gamma$-orientation on $\gamma$ together with the above request on $q$, then, determine whether $\upsilon$ is orientation-preserving or -reversing. In the first case we say that the twist curve $\gamma$ has \nw{positive sign}, in the second one that it has \nw{negative} one. This property does not depend on the twist collar $A_\gamma$ chosen; even in case $\gamma$ is combed, and we take two different twist collar which intersect opposite sides of $\gamma$, the sign of $\gamma$ with respect to either collar is the same.

A natural map $\R\times[0,1]\rightarrow \bar A_\gamma$, where the closure of $A_\gamma$ is to be meant in $S$, is obtained as $\hat p\circ q$; but, with an abuse of notation, we will denote it again as $q$, taking care of avoiding misunderstandings.

A number $x\in\R$ will be called an \nw{upper obstacle} for $\tau^X$ if $(x,0)\in q^{-1}(v)$ where $v$ is a switch of $\tau^X$ along $\tau^X.\gamma$, and incident to a branch end hitting $A_\gamma$; a \nw{lower obstacle} if $(x,0)\in q^{-1}(v)$ where $v$ is a switch of $\tau^X$ along $\tau^X.\gamma$, and incident to a branch end avoiding $A_\gamma$ and favourable; a \nw{fake obstacle} if $(x,0)\in q^{-1}(v)$ where $v$ is a switch of $\tau^X$ along $\tau^X.\gamma$, and incident to a branch end avoiding $A_\gamma$ and adverse. 

A ramp $\rho$ in $\tau^X$ will be said to be \nw{hitting} or \nw{avoiding} $A_\gamma$, to be \nw{$A_\gamma$-favourable} or \nw{adverse} in accordance with the properties of the only branch end adjacent to $\tau^X.\gamma$ and traversed by $\rho$.
\end{rmk}

\begin{rmk}\label{rmk:sameorientationforarcs}
Let $\gamma$ be a twist curve for a generic almost track $\tau$, with a twist collar $A_\gamma$; and let $\alpha\in\cc\left(\nei(\gamma)\right)$. 

The decomposition of a train path realization of $\alpha$ into $\rho_1,\beta,\rho_2$ given in point \ref{itm:horizontalstretch} of Remark \ref{rmk:annulusinducedbasics} gets a further property in this case: since one of between $\rho_1,\rho_2$ (say $\rho_1$) terminates with a branch end hitting $A_\gamma$, the last branch end of $\rho_2$, which avoids $A_\gamma$ must be favourable: it has been already noted, indeed, that the two branch ends must give opposite orientations to $A_\gamma$. If $\alpha$ is oriented so that $\rho_1$ is its first segment, then the segment $\beta$ is swept according to the $A_\gamma$-orientation.
\end{rmk}

\begin{lemma}\label{lem:twistininduced}
Let $\tau$ be a generic, recurrent almost track on a surface $S$, $\gamma\in \cc(S)$ be a curve carried by $\tau$, and $X$ be a regular neighbourhood of $\gamma$. Then $\gamma$ is a twist curve for $\tau$ if and only if it is a curve carried by, and combed in, $\tau|X$.

More specifically, if $\gamma$ is a twist curve, then $\tau|X$ contains all branches of $\tau^X$ hitting $A_\gamma$ in $S^X$ (which biject naturally with the ones of $\tau$ hitting $A_\gamma$ in $S$) and branch ends which avoid $A_\gamma$ and are favourable. Moreover, for each pair $e,e'$ where $e$ is a branch end hitting $A_\gamma$ and $e'$ is one which avoids $A_\gamma$ and is favourable, there is an element of $V(\tau^X)$ which traverses both.

The branches avoiding $A_\gamma$ and adverse to it do not belong to $\tau|X$.
\end{lemma}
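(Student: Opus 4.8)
The plan is to prove Lemma \ref{lem:twistininduced} by a double implication, working throughout with the parametrized picture of the annular cover set up in Remark \ref{rmk:twistparam}, and leaning on the analysis of induced arcs in Remark \ref{rmk:annulusinducedbasics}. First I would settle the easy direction: if $\gamma$ is carried by $\tau|X$ and combed there, then it is carried by $\tau^X$ (since $\tau|X\subseteq\tau^X$) and, being wide carried with an embedded train-path realization, it is a twist curve for $\tau$ once we check the two bullets of Definition \ref{def:twistcurve}. The fact that $\gamma$ is combed in $\tau|X$ means there is a collar on one side meeting only branch ends giving the same $e$-orientation; transporting this collar to $S$ via $\hat p$ gives a twist collar, and the presence of a large branch in $(\tau|X).\gamma$ (hence in $\tau^X.\gamma$) is exactly the second bullet. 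So this direction is a matter of unwinding definitions.

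For the substantive direction, suppose $\gamma$ is a twist curve for $\tau$, with twist collar $A_\gamma$ and the parametrization $q:\R\times[0,1]\to\bar A_\gamma$ of Remark \ref{rmk:twistparam}. The key observation is that, by point \ref{itm:gammacarriedininduced} of Remark \ref{rmk:annulusinducedbasics}, the second bullet of the twist-curve definition already forces $\cc(\tau^X)\neq\emptyset$ and $\gamma$ to be carried by $\tau|X$. Then I would take an arbitrary $\alpha\in\cc(\tau^X)$ and apply the decomposition $\alpha=\rho_1\beta\rho_2$ of point \ref{itm:horizontalstretch}: by Remark \ref{rmk:sameorientationforarcs}, one of the ramps $\rho_1,\rho_2$ hits $A_\gamma$ and the other avoids it and is favourable, and $\beta$ winds around $\tau^X.\gamma$ according to the $A_\gamma$-orientation. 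Using the Dehn-twist trick from point 7 of Remark \ref{rmk:annulusinducedbasics} (replace $\alpha$ by a suitable $D_X^{\pm1}(\alpha)$ so that its horizontal stretch traverses every branch of $\tau^X.\gamma$), I obtain a vertex cycle $\beta_0\in V(\tau^X)$ whose train path traverses all branches of $\tau^X.\gamma$ plus one ramp hitting $A_\gamma$ and one ramp avoiding it and favourable. This shows simultaneously that $\tau|X$ contains $\tau^X.\gamma$, the initial branch ends of those two ramps, and that there is a vertex cycle through a given hitting end. To get the full statement about \emph{every} pair $(e,e')$ I would vary the choice of the first ramp: given any hitting branch end $e$ and any favourable avoiding end $e'$, build an ingoing ramp starting with $e$ and an outgoing ramp starting with $e'$ — these exist because $\tau$ is recurrent, so each branch end is traversed by some carried curve, and one can splice the ramp-pieces with the winding segment $\beta$ — then run the same argument.

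It remains to prove that branches which avoid $A_\gamma$ and are adverse do \emph{not} belong to $\tau|X$, and this is the step I expect to be the main obstacle, because it is the only genuinely negative assertion. The strategy is by contradiction: suppose an adverse avoiding branch end $e''$ is traversed by some $\alpha\in\cc(\tau^X)$. Again decompose $\alpha=\rho_1\beta\rho_2$; the ramp of $\alpha$ containing $e''$ avoids $A_\gamma$, so by Remark \ref{rmk:sameorientationforarcs} the \emph{other} ramp must hit $A_\gamma$, and the avoiding ramp must be favourable — but $e''$ is the first branch end of a ramp that is adverse, contradicting point \ref{itm:dataforramp} of Remark \ref{rmk:annulusinducedbasics}, which says the initial branch end of the avoiding ramp (being opposite in orientation to the hitting one) is precisely the favourable one. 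More carefully: by point \ref{itm:dataforramp}, two outgoing ramps towards the same boundary component of $\ol{S^X}$ have initial branch ends giving opposite orientations to $\gamma$; combined with $\gamma$ being a twist curve (all hitting ends give one fixed orientation), this pins down that the avoiding ramp of any carried arc must be favourable, so an adverse end can never be the start of a ramp realized inside a carried arc, hence it is not in $\tau|X$. I would also note the parenthetical natural bijection between branches of $\tau^X$ hitting $A_\gamma$ and branches of $\tau$ hitting $A_\gamma$ — this is the genericity remark stated just before the lemma, that in the generic setting each hitting (resp.\ avoiding) small branch end of $\tau$ has exactly one lift of the same type — so no extra work is needed there.
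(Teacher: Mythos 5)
Your forward direction (if $\gamma$ is a twist curve for $\tau$ then it is carried by, and combed in, $\tau|X$) follows essentially the same route as the paper: build a carried arc by splicing an incoming and an outgoing ramp with a segment of $\tau^X.\gamma$, show this works for each pair $(e,e')$ of hitting/favourable ends, use point~\ref{itm:gammacarriedininduced} of Remark~\ref{rmk:annulusinducedbasics} to see $\gamma$ is carried, and deduce the adverse ends are excluded via Remark~\ref{rmk:sameorientationforarcs}. That part is fine.

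The gap is in the other implication, which you dismiss as ``a matter of unwinding definitions.'' It is not, for two reasons. First, being a twist curve presupposes that $\gamma$ is \emph{wide carried} by $\tau$ (Definition~\ref{def:twistcurve}), and this is not a consequence of $\gamma$ being embedded as a train path in $\tau^X$ (point~\ref{itm:embeddedcore}): wideness of $\gamma$ in $\tau$ depends on the sides-pattern of the two preimages of a doubly-traversed branch, which is invisible to a single lift. The paper proves wideness with a genuine contradiction argument: a non-wide realization $\ul\gamma$ would yield two other lifts $\ul{\hat\gamma}_1,\ul{\hat\gamma}_2$ each traversing a pair of branch ends on the same side of $\tau^X.\gamma$ with opposite orientations, and the ramp-splicing lemma would put both members of one pair into $\tau|X$, contradicting combedness. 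Second, your remark that ``transporting the collar to $S$ via $\hat p$ gives a twist collar'' is too quick: a twist collar in $\tau|X$ only certifies the orientations of branch ends of $\tau|X$, but the corresponding annular region of $S^X$ (and its projection) may meet branch ends of $\tau^X\setminus\tau|X$, which could impress the opposite orientation. The paper closes this with another contradiction, showing that at least one of the two wide collars of $\gamma$ in $\tau^X$ is in fact a twist collar, again via the ramp-splicing lemma; only then is the projection to $S$ legitimate. Both of these arguments use the same auxiliary construction as the forward direction, so the fix is not long, but without them the ``easy'' direction is not proved.
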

\begin{proof}
Recall points \ref{itm:embeddedcore} and \ref{itm:uniquecarrying} in Remark \ref{rmk:annulusinducedbasics}, in particular that $\gamma$ is embedded as a train path in $\tau^X$.

No matter whether $\gamma$ is a twist curve for $\tau$, if it is one for $\tau|X$ then it is actually combed: for in Remark \ref{rmk:sameorientationforarcs} it is shown that, if $e$ is a branch end traversed by some $\alpha\in\cc(X)$ and avoiding $A_\gamma$, then $e$ is favourable.

Note moreover the following behaviour. Let $e,e'$ be two branch ends sharing a switch, $v,v'$, respectively, with $\tau^X.\gamma$, located on opposite sides of $\tau^X.\gamma$, and such that the $e$- and the $e'$-orientations on $\gamma$ are opposite. One can always find an incoming ramp $\rho$ traversing $e$ and an outgoing one $\rho'$ traversing $e'$. The switches $v,v'$ will cut $\tau^X.\gamma$ into two segments, only one of which, $\beta$, has at its extremes two large branch ends. The train route obtained from the concatenation of $\rho,\beta,\rho'$ is then a properly embedded arc, hence it is a realization of some element of $V(\tau^X)$ which traverses both $e, e'$.

Suppose now that $\gamma$ is a twist curve with a twist collar $A_\gamma$: then it is a twist curve in $\tau^X$ too and, since it is embedded there, the branches it traverses in $\tau^X$ include a large one only if a favourable branch end $e'$ exists. Pick a branch end $e$ of $\tau^X$ hitting $A_\gamma$: the pair $e,e'$, however the two are chosen, complies with the hypotheses above; thus they are traversed by one same element of $V(\tau^X)$, and in particular they are part of $\tau|X$; so all branches hitting $A_\gamma$ and all favourable ones belong to $\tau|X$.

Let $\alpha\in V(\tau|X)$ be any of the wide arcs constructed in the way specified above: it certifies that $\cc(\tau^X)\not=\emptyset$ and therefore, by point \ref{itm:gammacarriedininduced} in Remark \ref{rmk:annulusinducedbasics}, that $\tau^X.\gamma\subset \tau|X$. Moreover $(\tau|X).\gamma$ must necessarily include a large branch of $\tau|X$, else it is impossible for the arc $\alpha$ to enter $(\tau|X).\gamma$ and leave it after travelling through some of its branches.

Clearly, there is a side of $\gamma$ such that all branch ends of $\tau|X$ sharing a switch with $(\tau|X).\gamma$ and approaching it from that side will give $\gamma$ the same orientation, so $\gamma$ is a twist curve in $\tau|X$. Due to the argument at the beginning of this proof, it is combed; equivalently, no branch ends avoiding $A_\gamma$ and adverse belong to $\tau|X$. One implication of the first part the lemma's statement is thus proved, together with the part of the statement which specifies which branch ends around $\tau^X.\gamma$ belong to $\tau|X$.

We only have left to prove the remaining implication. Suppose that $\gamma$ is combed in $\tau|X$ --- in particular it is carried, thus so it is by $\tau^X$ and by $\tau$. The presence of a large branch of $\tau^X$ within $\tau^X.\gamma$ is proved with the same argument as in the paragraph above; hence there must be a large branch of $\tau$ within $\tau.\gamma$. 

We claim now that $\gamma$ is wide in $\tau$. Let $\ul\gamma$ be a carried realization: if it is not wide then --- regardless of the orientation that we put on it --- there are two branches $b_1,b_2$ of $\tau$, traversed at least twice, such that in $R_{b_1}$ one segment $s_1$ of $\ul\gamma$ sees another segment $t_1$ to its left, and in $R_{b_2}$ one segment $s_2$ of $\ul\gamma$ sees another segment $t_2$ to its right. Let $\ul{\hat\gamma}$ be the homeomorphic lift of $\ul\gamma$ to $S^X$. Call $\hat s_1,\hat s_2$ the lifts of $s_1,s_2$ which are contained in $\ul{\hat\gamma}$, and $\hat t_1,\hat t_2$ the lifts of $t_1,t_2$ which are located in the same branch rectangles as $\hat s_1,\hat s_2$. The segments $\hat t_1,\hat t_2$ belong to other lifts $\ul{\hat\gamma}_1,\ul{\hat\gamma}_2$ of $\ul\gamma$; they do not cross $\ul{\hat\gamma}$. So $\ul{\hat\gamma}_1$ must traverse two branch ends $e_{11},e_{12}$, each sharing a switch with $\tau^X.\gamma$, located on the same side of $\gamma$, and impressing on it opposite orientations. Similarly $\ul{\hat\gamma}_2$ must traverse branch ends $e_{21},e_{22}$ with the same properties. Suppose that the $e_{11},e_{21}$-orientations on $\gamma$ are opposite: then, by the observation at the beginning of the present proof, there is an element $\alpha_j\in V(\tau^X)$ traversing both $e_{1j},e_{2j}$ for $j=1,2$. Thus $e_{11},e_{12}$ both belong to $\tau|X$ contradicting the assumption that $\gamma$ is combed there.

Now we prove that, among the wide collars of $\gamma$ in $\tau^X$, there must be at least one twist collar (meaning, one such that all branch ends intersecting it give the same orientation to $\gamma$). Suppose not: then, on each side of $\gamma$ in $\tau^X$, we can find a pair of branch ends sharing a switch with $\tau^X.\gamma$ and impressing opposite orientations on $\gamma$. Call $e_{11},e_{12}$ the ones on one given side and $e_{21},e_{22}$ the others on the other one. The same contradiction as above occurs.

Note that a twist collar $A_\gamma$ for $\gamma$ in $\tau^X$ projects to a wide collar for $\gamma$ in $\tau$; and it will be a twist collar, too. This completes the proof.
\end{proof}

The above lemma is the reason why we have been able to state Theorem \ref{thm:mmsstructure} without problems due to having employed a definition of accessible interval for annular subsurfaces which is different from the one given in \cite{mms}. As for the first statement: by looking at the original statement, we see that our definition comprises an interval larger than the original one, and this does not affect the statement's validity.

The second statement of Theorem \ref{thm:mmsstructure} may instead seem stronger than what is proved in the original paper. But it is not in reality, because the proof only uses the property that the induced track under exam is combed.

\begin{lemma}\label{lem:twistcurvetrees}
Let $\gamma$ be a twist curve for a generic, recurrent almost track $\tau$, and let $X$ be a regular neighbourhood of $\gamma$. Then $\ol{\tau|X\setminus (\tau|X).\gamma}$ (the closure is meant in $\ol{S^X}$ here) is a disjoint union of trees: each of them intersects $(\tau|X).\gamma$ in a single point which serves as a root, and consists only of mixed branches of $\tau|X$. 

In particular, every train path between two endpoints of such a component is a ramp. If such a ramp $\rho$ is outgoing, then $\rho$ enters each traversed branch from its small end.
\end{lemma}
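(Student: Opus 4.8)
The statement has two parts: a structural description of $\ol{\tau|X \setminus (\tau|X).\gamma}$, and a consequence about train paths between its endpoints. I would organize the argument around the classification of branch ends around $(\tau|X).\gamma$ supplied by Lemma~\ref{lem:twistininduced}. First recall that, by that lemma, the branches of $\tau|X$ adjacent to $(\tau|X).\gamma$ fall into two families: branch ends that hit $A_\gamma$, and branch ends that avoid $A_\gamma$ and are favourable; in particular, since $\gamma$ is combed, no adverse branch end belongs to $\tau|X$. Also recall (Remark~\ref{rmk:annulusinducedbasics}, point~\ref{itm:horizontalstretch}) that every carried arc of $\tau^X$ decomposes as $\rho_1 \beta \rho_2$ with $\beta$ running along $(\tau|X).\gamma$ and $\rho_1,\rho_2$ ramps that do not traverse any branch of $(\tau|X).\gamma$. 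The set $\ol{\tau|X \setminus (\tau|X).\gamma}$ is precisely the union of the images of all such ramps.

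\emph{Step 1: no cycles, hence a forest.} I would argue that $\ol{\tau|X \setminus (\tau|X).\gamma}$ contains no embedded cycle. Suppose a connected component $T$ contained a cycle $c$; then either $c$ is disjoint from $(\tau|X).\gamma$, or it meets it. If $c$ were disjoint from $\tau^X.\gamma$, it would bound a subsurface of $S^X$; using the planarity of $S^X$ together with the index computations of Remark~\ref{rmk:negativeindexincover} (and Remark~\ref{rmk:idx_of_nei_diff}), one contradicts $\idx(\ol{S^X})=0$, exactly as in the proof of point~\ref{itm:embeddedcore} in Remark~\ref{rmk:annulusinducedbasics}. Alternatively, $c$ together with a segment of $(\tau|X).\gamma$ would bound a compact complementary region; but $\tau|X$ is an almost track on $S^X$ (Lemma~\ref{lem:inducedisonsurface}) and $S^X$ is an annular cover, so one again derives an index contradiction. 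Hence each component is a tree.

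\emph{Step 2: single root, only mixed branches.} Fix a component $T$. Since $T$ is attached to $(\tau|X).\gamma$ through its branch ends, and $T$ is a tree, I claim $T \cap (\tau|X).\gamma$ is a single switch: if $T$ touched $(\tau|X).\gamma$ at two distinct switches, a path in $T$ joining them, concatenated with the arc of $(\tau|X).\gamma$ between them, would yield the forbidden cycle of Step~1. Call this switch the root. Next I would show every branch of $T$ is mixed. A branch of $T$ cannot be large: if it were, a vertex cycle of $\tau^X$ could traverse it only after leaving $(\tau|X).\gamma$, but point~\ref{itm:horizontalstretch} of Remark~\ref{rmk:annulusinducedbasics} forces the ramps $\rho_1,\rho_2$ to be ramps in the technical sense, and an \emph{outgoing} ramp that traversed a large branch from its large end would, by an argument like point~\ref{itm:dataforramp}, create a complementary disc of nonnegative index — contradiction; a large \emph{small}-end entry would require a branching of the tree inconsistent with being on the path to a leaf. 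Likewise a branch of $T$ cannot be small on both ends, since recurrence of $\tau|X$ (Lemma~\ref{lem:twistininduced} gives the relevant vertex cycles) forces each branch to be traversed, and a branch with two small ends cannot lie on an embedded train path. So every branch is mixed, which already yields the last two sentences: a train path between two endpoints of $T$ never meets $(\tau|X).\gamma$ (it stays in $T$), hence is a ramp; and, tracing it outward, at the root it leaves $(\tau|X).\gamma$, so subsequently it must enter each mixed branch through its small end — otherwise two consecutive branches would meet large-end-to-large-end, which is impossible in a train path.

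\emph{Main obstacle.} The delicate point is Step~2, namely ruling out large branches inside $T$: the index bookkeeping there is the same flavour as in points~\ref{itm:embeddedcore}, \ref{itm:dataforramp}, and~\ref{itm:diambound} of Remark~\ref{rmk:annulusinducedbasics}, but one must be careful that $\tau|X$ is only an \emph{almost} track, so complementary peripheral annuli of $S^X \setminus \nei(\tau|X)$ must be handled (they contribute zero index and are pushed against $\partial\ol{S^X}$, so they do not affect the contradiction). I expect the cleanest route is to lift everything to $\Hy^2$ and invoke Proposition~\ref{prp:paths_in_univ_cover} (train paths in $\tilde\tau$ are embedded, and two of them meet in a connected set), which immediately forbids both cycles and the bad large-branch configuration; this reduces Steps~1 and~2 to the planar/cover index lemmas already established.
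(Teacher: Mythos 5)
Your overall intuition (planarity of the annular cover, index arguments, lifting to $\Hy^2$) is in the right territory, but the argument as written has a genuine gap in Step~2, precisely at the point you flag as delicate, and the structural idea that actually powers the paper's proof is missing.

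The paper proceeds in the opposite order from you. Its first step is a lemma about outgoing ramps: any two distinct outgoing ramps in $\tau|X$ either have disjoint images, or their intersection is a bounded \emph{initial} sub-train path of both. This is the index computation (two ramps whose images meet non-initially would bound a compact complementary region of nonnegative index in $S^X$, contradicting Remarks~\ref{rmk:idx_of_nei_diff} and~\ref{rmk:negativeindexincover}). From this one single fact the whole of Step~2 follows cleanly: a large branch $b\not\subset(\tau|X).\gamma$ would force all outgoing ramps traversing $b$ to enter from the same small branch end at one switch of $b$, leaving the other small branch end at that switch untraversed by any ramp, contradicting the definition of $\tau|X$; and a small branch $b\not\subset(\tau|X).\gamma$ would force two ramps, one passing through $b$ and one not, to share the large branch end just past $b$ without sharing their initial segments, again contradicting initiality of the intersection. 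Only \emph{after} all non-$(\tau|X).\gamma$ branches are known to be mixed does the paper conclude there are no cycles: a cycle would then be smooth, hence a carried curve homotopic to $\gamma$, contradicting uniqueness of carrying.

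Your proposal omits the ramp-intersection lemma entirely, and this is not a cosmetic reordering. Your argument that the tree has no small branches reads: ``a branch with two small ends cannot lie on an embedded train path.'' That claim is simply false. A small branch $b$ with small ends at $v_1$ and $v_2$ is perfectly happily traversed by an embedded train path: enter through the small end of $b$ at $v_1$ (continuing from the large branch end at $v_1$), travel along $b$, exit through the small end at $v_2$, and continue through the large branch end at $v_2$. Nothing in the definition of a train path or recurrence forbids this. Ruling out small branches in $T$ genuinely requires a finer structural input — the paper's is the connectedness/initiality of ramp intersections. Similarly, your index-based sketch for no large branches (``an argument like point~\ref{itm:dataforramp}'') and your appeal to ``branching inconsistent with being on the path to a leaf'' is not yet an argument; there is no a priori tree structure at that point of your proof, since you have only excluded cycles, not produced the single-root, mixed-branch description.

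There is also a subtler issue in your Step~1: a cycle $c$ in the graph $\ol{\tau|X\setminus(\tau|X).\gamma}$ need not be a smooth curve, so the index machinery of Remarks~\ref{rmk:idx_of_nei_diff} and~\ref{rmk:negativeindexincover} (which is built around tie neighbourhoods of pretracks) does not directly apply to the region it bounds without further work. The paper sidesteps this by establishing mixedness first, after which any cycle is automatically smooth and one can use uniqueness of carrying instead of index. Your closing suggestion to lift to $\Hy^2$ and use Proposition~\ref{prp:paths_in_univ_cover} is in fact the closest you come to the missing lemma (it is the ambient source of the ramp-intersection property), and if you push that through carefully it can replace the paper's index argument for Step~1 — but you would still need to derive from it the statement about outgoing ramps meeting only initially, and then run the large/small branch dichotomy argument as the paper does, rather than the heuristic you sketch.
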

\begin{proof}
\step{1} given any two distinct outgoing ramps $\rho_1,\rho_2$ in $\tau|X$, their images either are disjoint or their intersection is a bounded, initial sub-train path of both.

Suppose that the images of the two ramps are not disjoint. Then, by Lemma \ref{lem:twistininduced}, either they both hit $A_\gamma$ or both avoid it and are favourable. Suppose the first alternative holds, the other being entirely similar. As a consequence of point \ref{itm:windaboutgamma} in Remark \ref{rmk:annulusinducedbasics}, both $\rho_1,\rho_2$ are embedded train paths. Even if $\rho_1,\rho_2$ do not begin at the same switch along $\tau^X.\gamma$, then (up to reversing indices) it is possible to have an embedded train path $\rho'_1$ which begins at the same switch as $\rho_2$, follows part of $(\tau|X).\gamma$ and then continues as $\rho_1$. The claim is proved if we show that the intersection between the images of $\rho'_1$ and $\rho_2$ is connected.

Suppose it is not. Then necessarily $\rho_1,\rho_2$ together bound a topological disc $B$, with 1 or 2 cusps along its boundary. Since $\tau|X\subseteq \tau^X$, $B$ is a union, along their respective boundaries, of regions as in Remarks \ref{rmk:idx_of_nei_diff} and \ref{rmk:negativeindexincover}, so it must have negative index: and this is a contradiction.

\step{2} the bounded branches of $\tau|X$ not belonging to $(\tau|X).\gamma$ are all mixed.

An immediate consequence of the previous step is that no branch in $\tau|X$ is traversed by distinct ramps in opposite directions.

If $b\in\br(\tau|X)$ is a large branch, not contained in $(\tau|X).\gamma$, then fix $\rho$ an outgoing ramp which traverses $\rho$, and let $\rho'$ be any other ramp which also does so. Then, since $b$ belongs to the intersection between the images of $\rho$ and $\rho'$, $\rho$ and $\rho'$ are not only traversing $b$ in the same direction but they are also coming from the same branch end adjacent to $b$. This means that one of the branch ends adjacent to $b$, is actually not traversed by any outgoing ramp in $\tau|X$: and this contradicts the definition itself of $\tau|X$.

If $b\in\br(\tau|X)$ is a small branch, not contained in $(\tau|X).\gamma$, then fix $\rho$ an outgoing ramp which traverses $\rho$, and let $e$ be the branch end $\rho$ traverses just after leaving $b$. Let $\rho'$ be an outgoing ramp traversing $e$ but not $b$. Then $\rho,\rho'$ traverse $e$ in the same direction, and their routes must coincide up to the point when they enter $e$. But this would require $\rho'$ to traverse $b$, too: and this is our contradiction.

\step{3} $\ol{\tau|X\setminus (\tau|X).\gamma}$ is a graph with no cycles.

If $C\subseteq \ol{\tau|X\setminus (\tau|X).\gamma}$ is a cycle, it consists entirely of mixed branches in $\tau|X$. But then it is smooth i.e. it represents a curve carried by $\tau|X$. This curve cannot be nullhomotopic, so it is necessarily homotopic to $\gamma$. But then, $C$ and $(\tau|X).\gamma$ project to $S$ and give two distinct carried realizations of $\gamma$, a contradiction.

Note that any outgoing ramp $\rho$ in $\tau|X$ will enter its first branch from a small end. Since the branch is mixed, $\rho$ leaves it through its large end; necessarily, $\rho$ shall traverse all its branches in a similar fashion. This ends the proof.
\end{proof}

\subsection{Splitting sequences seen at a twist curve}
\begin{defin}\label{def:twistmoves}
Let $\tau$ be a generic almost track on $S$ and let $\gamma$ be a twist curve for $\tau$. A \nw{twist split} about $\gamma$ is a parity split of a large $b\in\br(\tau)$ included in $\tau.\gamma$ and sharing one switch with a branch end hitting $A_\gamma$, where the parity is chosen so that $\gamma$ is still carried and stays a twist curve after the split.

A \nw{twist slide} about $\gamma$ is a slide as the one explicated by the second drawing in Figure \ref{fig:ttcombing}, provided that the horizontal line consists of branches in $\tau.\gamma$; and either the collar $A_\gamma$ is under such line and the slide is represented by the rightward arrow; or the collar $A_\gamma$ is above such line and the slide is represented by the leftward arrow. Note that the inverse move of a twist slide is not a twist slide.

A slide, split, or wide split on $\tau$ is \nw{far from $\gamma$} if there is a regular neighbourhood $\nei(\tau.\gamma)$ such that $\tau\cap\nei(\tau.\gamma)$ is not altered by the move. 

A move that is neither a twist one nor a far one from $\gamma$, but keeps $\gamma$ a twist curve, is called a \nw{spurious} move. A spurious (wide) split affecting a large branch traversed twice by $\gamma$ will be called \nw{bispurious}.

A \nw{wide twist split} about $\gamma$ is a wide split corresponding to a sequence of elementary moves where the only split is a twist split about $\gamma$. Similar definitions hold for a \nw{wide far}, \nw{wide spurious} or \nw{wide bispurious} split. Note that a wide split is far from $\gamma$ if and only if $\nei(\tau.\gamma)$ is not altered by the wide split as a whole.

We will say that a splitting sequence $\bm\tau=(\tau_j)_{j=0}^k$ has \nw{twist nature} about $\gamma$ if it consists only of twist splits and twist slides (\emph{not} wide twist splits) about $\gamma$.
\end{defin}

Note that the property of being a twist curve is never altered by a slide. A twist move always involves a branch $b$ along $\tau.\gamma$ with exactly one switch shared with a branch end hitting $A_\gamma$. Informally, one may view a twist move as moving this latter branch end forward along $\tau.\gamma$, according to the $A_\gamma$-orientation, so that its endpoint moves past the other switch of $b$. This idea is exploited with the introduction of \emph{modelling functions} for sequences of twist nature, which will be explained in Remark \ref{rmk:twistnaturemodelling}. However, note that the definition of twist nature is not easily adapted to a wide split setting: when realizing a wide twist split as a sequence of elementary moves, in particular, it may be necessary to employ slides which are not twist.

\begin{rmk}\label{rmk:spurious}
Let $\gamma$ be a twist curve for a generic almost track $\tau$. Suppose that a splitting arc $\alpha$ or a zipper $\kappa$ describe a move that leaves $\gamma$ carried. Then how is $\alpha$, or $\kappa$, placed with respect to $\tau.\gamma$? And with respect to a carried realization $\ul\gamma$ of $\gamma$? The following considerations are true also for `standard' (non-wide) splits, as a special case.

If the wide split along $\alpha$ is far from $\gamma$, then $\alpha$ is disjoint from both $\tau.\gamma$ and $\ul\gamma$. If the wide split is twist, then, however one picks a lift $\hat\alpha$ of $\alpha$ to $S^X$, its extremes lie in distinct components of $S^X\setminus\tau^X.\gamma$.

If the given wide split is spurious, then $\alpha$ and $\gamma$ admit disjoint carried realizations in $\nei(\tau)$. The examples of spurious and bispurious splitting arcs given in Figure \ref{fig:twistcurve}, left, should be able to communicate intuitively why this is true. Note that, when $\gamma$ is an embedded in $\tau$, $\ul\gamma$ can be taken to be a train path realization of $\gamma$. When it is not, there is no guarantee that $\alpha$ can be realized disjointly from $\tau.\gamma$: it cannot if and only if the wide spurious split is actually bispurious.

As for the case of a zipper $\kappa$, similar consideration apply. Namely, if all elementary moves specified by unzipping $\kappa$ are far, then $\kappa$ is disjoint from both $\tau.\gamma$ and $\ul\gamma$. More generally, $\kappa$ can always be made disjoint from $\ul\gamma$ (not from $\tau.\gamma$). Also, if $A_\gamma$ is a twist collar for $\gamma$ in $\tau$, the following alternative can be assumed: either the last point of $\kappa$ lies along $\tau.\gamma$, or the image of $\kappa$ does not intersect $A_\gamma$. In other words, the unzip $\kappa$ will never create a switch inside $A_\gamma$. This assumption does not pose any actual limitation as to the possible result of an unzip: the only restriction following from it concerns the realization of the result of the unzip within its isotopy class.

Note that a spurious split or wide split is possible only if $\gamma$ is not combed; and that the splitting arc corresponding to a spurious split must traverse one of the large branches in $\tau.\gamma$.
\end{rmk}

\begin{rmk}\label{rmk:consistentcollars}
Suppose that $\gamma$ is a twist curve in a generic almost track $\tau$ with a twist collar $A_\gamma$.

Let $\tau$ undergo a (wide) split which turns it into $\tau'$, but keeps $\gamma$ a twist curve: then, once a carried realization $\tau'\hookrightarrow \nei(\tau)$ is fixed, let $c:\tau'\rightarrow \tau$ be the restriction to $\tau'$ of the tie collapse $c_\tau:\bar\nei(\tau)\rightarrow \tau$. Then $c(\tau'.\gamma)=\tau.\gamma$; moreover one may find an extension $c:S\rightarrow S$, isotopic to $\mathrm{id}_S$, and a twist collar $A'_\gamma$ for $\gamma$ in $\tau'$, such that $A_\gamma= c(A'_\gamma)$. A more concrete way of choosing twist collars, consistently with a splitting sequence, will be described in Remark \ref{rmk:permanenceconventions}.

Meanwhile we give the following definition: let $T$ be a subset of the family of almost tracks fully carried by $\tau$, with the property that $\gamma$ is a twist curve for each $\sigma\in T$ (e.g. $T$ may be the family of the entries of a splitting sequence preserving $\gamma$). For each $\sigma\in T$, let $A_\gamma(\sigma)$ be a twist collar for $\gamma$. We say that the twist collars $\{A_\gamma(\sigma)\mid\sigma\in T\}$ (more appropriately, the pairs $\{(\sigma,A_\gamma(\sigma))\mid \sigma\in T\}$) form an \nw{$A_\gamma$-family} if, for each pair $\sigma,\sigma'\in T$ such that $\sigma$ carries $\sigma'$, the two twist collars $A_\gamma,A_\gamma'$ are related in the way described above.

The $A_\gamma(\sigma)$-orientations for $\gamma$ in the respective almost tracks are all the same, and we may as well just call then the $A_\gamma$-orientation. Similarly we may speak of a branch end of $\sigma$ which hits or avoids $A_\gamma$, when we actually mean that it hits or avoids $A_\gamma(\sigma)$.
\end{rmk}

\begin{lemma}\label{lem:twistcurvebasics}
Given a generic splitting sequence of almost tracks $\bm\tau$ and a curve $\gamma$, let $\Sigma$ be the set of indices $j$ such that $\gamma$ is a twist curve for $\tau_j$: then $\Sigma$ is an interval. If all entries in the splitting sequence are recurrent, then $\gamma$ can only cease to be a twist curve by ceasing to be carried.

Given a fixed $J\in\Sigma$ and $A_\gamma$ a twist collar for $\tau_J$, there is an $A_\gamma$-family of twist collars for $\bm\tau(J,\max \Sigma)$.
\end{lemma}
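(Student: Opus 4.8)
The plan is to prove the three assertions of Lemma \ref{lem:twistcurvebasics} in order: (i) $\Sigma$ is an interval, (ii) under recurrence, $\gamma$ can only leave $\Sigma$ by ceasing to be carried, and (iii) twist collars propagate into an $A_\gamma$-family along $\bm\tau(J,\max\Sigma)$. For (i), the argument should be local to a single elementary move. Suppose $\gamma$ is a twist curve for $\tau_j$ with twist collar $A_\gamma$; I would show that if $\gamma$ is still carried by $\tau_{j+1}$ then it remains a twist curve \emph{unless} the move was a spurious one that destroyed the ``one-sided orientation'' property. The key point (via Remark \ref{rmk:spurious}) is that the defining data of a twist curve --- an embedded large branch in $\tau^X.\gamma$ and a side whose branch ends all induce the same $e$-orientation --- is stable under twist splits, twist slides, and far moves, and is altered only by spurious/bispurious moves. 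So I need to rule out the configuration where a spurious move \emph{creates} a twist curve that was not one before, after which a later move destroys it, giving a gap in $\Sigma$. Here I would use that $\cc(\tau_j)$ is a decreasing family (Remark \ref{rmk:decreasingmeasures}) together with Lemma \ref{lem:twistininduced}: $\gamma$ is a twist curve for $\tau_j$ iff it is carried by and combed in $\tau_j|X$, and the induced tracks $\tau_j|X$ form a decreasing sequence in the appropriate sense (the branches hitting $A_\gamma$ plus favourable branch ends can only be lost, not gained, once $\gamma$ is combed). This monotonicity of the induced-track data around $\gamma$ should force $\Sigma$ to be an interval.

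For (ii), assuming all entries are recurrent, I would argue that the second bullet of Definition \ref{def:twistcurve} --- the existence of a large branch in $\tau^X.\gamma$ --- cannot fail while $\gamma$ stays carried. If $\gamma$ is carried by a recurrent $\tau_j$, then by recurrence every branch of $\tau_j^X.\gamma$ is traversed by some carried curve, and the ``combed'' condition plus Lemma \ref{lem:twistininduced} give that $\gamma$ is carried by $\tau_j|X$ and that $(\tau_j|X).\gamma$ contains a large branch of $\tau_j|X$ whenever a favourable branch end survives; the only way to lose this while $\gamma$ stays carried would contradict Lemma \ref{lem:twistcurvetrees} (the complementary branches of $(\tau|X).\gamma$ form trees of mixed branches, so no ``hidden'' large branch can disappear without $\gamma$ dropping out of $\cc(\tau_j|X)$, hence out of $\cc(\tau_j)$). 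So once $\gamma \in \Sigma$, it remains in $\Sigma$ for as long as it is carried; combined with (i) this pins down $\Sigma$ exactly.

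For (iii), this is the constructive part. Given $J\in\Sigma$ and a twist collar $A_\gamma$ for $\tau_J$, I would build the family by induction along $\bm\tau(J,\max\Sigma)$, at each step invoking Remark \ref{rmk:consistentcollars}: if $\tau_{j+1}$ is obtained from $\tau_j$ by a (wide) split keeping $\gamma$ a twist curve, fix a carried realization $\tau_{j+1}\hookrightarrow\nei(\tau_j)$, take $c$ the restriction of the tie collapse $c_{\tau_j}$, note $c(\tau_{j+1}.\gamma)=\tau_j.\gamma$, and choose $A_\gamma(\tau_{j+1})$ so that $A_\gamma(\tau_j)=c(A_\gamma(\tau_{j+1}))$ for a suitable ambient extension of $c$ isotopic to $\mathrm{id}_S$. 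For slides and comb/uncomb moves the collar carries over essentially verbatim since $\bar\nei(\tau_{j+1})$ can be taken inside $\bar\nei(\tau_j)$ with sub-ties. The transitivity required for an $A_\gamma$-family (the relation between $A_\gamma(\sigma)$ and $A_\gamma(\sigma')$ whenever $\sigma$ carries $\sigma'$, not just for adjacent entries) follows because compositions of tie collapses along a splitting subsequence are again (homotopic to) tie collapses, by Proposition \ref{prp:carryingunique}; this is the routine bookkeeping I would not spell out in full.

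The main obstacle I expect is part (i), specifically ruling out that a spurious move momentarily makes $\gamma$ a twist curve and a subsequent move unmakes it. The subtlety is that ``twist curve'' is not obviously monotone under elementary moves \emph{before} $\gamma$ becomes combed --- the issue is precisely the branch ends on the ``wrong'' side. The resolution should come from the fact (Lemma \ref{lem:twistininduced}) that being a twist curve is equivalent to being combed in $\tau|X$, and combedness \emph{is} monotone under the decreasing family $\tau_j|X$: adverse branch ends can only be shed, never acquired, as branches are split or deleted. Getting this monotonicity statement cleanly --- and checking it survives the passage between the generic splitting sequence on $S$ and the induced (almost-track) splitting sequence on $S^X$, where a single split on $S$ may become a subtrack-passage or a multi-split on $S^X$ (cf.\ Theorem \ref{thm:mmsstructure}(2)) --- is where the real care is needed.
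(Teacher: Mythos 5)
Your proposal takes a genuinely different route from the paper's: you try to push everything through Lemma \ref{lem:twistininduced} (``twist curve for $\tau$'' $\Leftrightarrow$ ``combed in $\tau|X$'') and argue monotonicity of combedness along the induced sequence $\tau_j|X$, whereas the paper argues directly in $S$ by analyzing, move by move, where the endpoints of the splitting arc $\alpha$ sit relative to $A_\gamma$. Your route has a genuine gap, and it is precisely where you flag caution.

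The key problem is your monotonicity claim ``adverse branch ends can only be shed, never acquired.'' Combedness of $\gamma$ in $\tau_j|X$ is, by Lemma \ref{lem:twistininduced}, \emph{exactly} equivalent to $\gamma$ being a twist curve for $\tau_j$; so claiming combedness is monotone in $j$ (while $\gamma$ stays carried) would prove that $\gamma$ can only cease to be a twist curve by ceasing to be carried --- \emph{without} using recurrence. But the second assertion of the lemma is only stated under the recurrence hypothesis, and the paper's proof shows why: in the case where both ends of the splitting arc lie opposite $A_\gamma$, the split can leave $\tau_{j+1}.\gamma$ with no large branch, which breaks the second bullet of Definition \ref{def:twistcurve} while $\gamma$ remains carried. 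Recurrence is used to forbid exactly this scenario (a hitting branch end of such a $\tau_{j+1}$ could not be closed up into a periodic train path). Your proposed monotonicity elides this failure mode and hence cannot be correct as stated; to make the induced-track approach go through you would have to identify and handle the analogous failure inside $\tau_{j+1}|X$, which is no easier than the paper's direct argument and is further complicated by the fact (Theorem \ref{thm:mmsstructure}) that a single split on $\tau_j$ can reflect as several moves or a subtrack-passage on $\tau_j|X$.

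Two smaller points. First, your framing of ``spurious'' is off: by Definition \ref{def:twistmoves}, a spurious move is one that is neither twist nor far but \emph{keeps} $\gamma$ a twist curve, so spurious moves by definition cannot ``destroy'' the one-sided orientation property; the move that destroys twist-ness has no name in the paper, and the paper's contribution is to pin down exactly when it can occur (loss of the large branch) and to show that once it occurs, the structure near $\tau_{j'}.\gamma$ for $j'\geq j+1$ is frozen against further splits, so $\gamma$ never becomes a twist curve again --- that terminality is what makes $\Sigma$ an interval in general, not monotonicity. Second, the concern you raise (a spurious move ``creating'' a twist curve that is then destroyed, opening a gap in $\Sigma$) is not actually the failure mode to worry about: the paper's argument starts from $j_0 = \min\Sigma$ and propagates forward, so the only thing that must be shown is that exiting $\Sigma$ while still carried is terminal. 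Your part (iii), built on Remark \ref{rmk:consistentcollars} and tie collapses, matches the paper's ``restrict the arguments'' and is fine.
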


This means that, along a \emph{recurrent} splitting sequence, an element of $\cc(S)$ evolves through a subsequence of these stages:
\begin{enumerate}[label=\alph*.]
\item carried, not wide;
\item wide, not twist;
\item twist, not combed;
\item combed;
\item not carried.
\end{enumerate}

\begin{proof}
Let $j_0$ be the lowest of the indices $j$ such that $\gamma$ is a twist curve for $\tau_j$, with a twist collar $A_\gamma(j_0)$, and let $j_1$ be the highest index such that $\gamma$ is carried by $\tau_{j_1}$. Then $\gamma$ is carried by all $\bm\tau(j_0,j_1)$ because the set of carried curves can only shrink along a splitting sequence (Remark \ref{rmk:decreasingmeasures}). Also, $\gamma$ is wide in this sequence, as a family of wide collars $A_\gamma(j)$ for $\tau_j.\gamma$ ($j\in[j_0,j_1]$) can be defined recursively, with the condition that a suitable homotopy equivalence $S\rightarrow S$ mapping $\tau_j$ to $\tau_{j-1}$, as in Remark \ref{rmk:consistentcollars}, will also map $A_\gamma(j)$ to $A_\gamma(j-1)$.

Suppose that $\gamma$ is a twist curve in $\tau_j$ for a fixed index $j_0\leq j< j_1$, with $A_\gamma(j)$ a twist collar; so $\gamma$ is carried by $\tau_{j+1}$. If the elementary move on $\tau_j$ is a slide or there is neighbourhood of $\tau_j.\gamma$ not affected by the move, then clearly $\gamma$ is a twist curve in $\tau_{j+1}$ with $A_\gamma(j+1)$ a twist collar. So suppose that the elementary move operated on $\tau_j$ is a split which affects the train track close to $\tau_j.\gamma$. Then the branch being split is traversed by $\gamma$; the split can be considered to be a wide split along a splitting arc $\alpha$ that traverses one branch only.

Since $\gamma$ is a twist curve, at least one of the ends of $\alpha$ must lie on the side of $\tau_j.\gamma$ opposite to $A_\gamma(j)$. If exactly one does, then the only parity that keeps $\gamma$ carried after the split is the one specifying a twist split. If both endpoints lie opposite $A_\gamma(j)$, then any splitting parity keeps $\gamma$ carried, and the branch ends of $\tau_{j+1}$ hitting $A_\gamma(j+1)$ all give $\gamma$ the same orientation, similarly as before. So the only way $\gamma$ may fail being a twist curve in this last case is the absence of any large branch of $\tau_{j+1}$ among the ones traversed by $\gamma$. But if this is the case, then no further split may affect all neighbourhoods of $\tau_{j'}.\gamma$ for $j'\geq j$: therefore $\gamma$ will not return a twist curve at any later stage in the splitting sequence $\bm\tau$ --- but it may still be carried.

This proves that $\Sigma$, as defined in the statement, is an interval; also, $\{A_\gamma(j)\}_{j\in\Sigma}$ is an $A_\gamma(0)$-family. Moreover, if $\bm\tau$ is recurrent, the last kind of split described above, which keeps $\gamma$ carried but does not keep it a twist curve, cannot occur. In that event, indeed, $\tau_{j+1}$ is not recurrent, because any train path which traverses a branch end $e$ hitting $A_\gamma(j+1)$ is forced to remain within $\tau_{j+1}.\gamma$ and cannot get back to traversing $e$. So, in case $\bm\tau$ is recurrent, $\Sigma=[j_0,j_1]$ i.e. $\gamma$ ceases being a twist curve by ceasing being carried.

As for the last claim the lemma's statement, just restrict the arguments in this proof to the subsequence $\bm\tau(J,\max \Sigma)$: let $A_\gamma(J)\coloneqq A_\gamma$ given in the statement, and recursively construct the $A_\gamma$-family of twist collars.
\end{proof}

\begin{rmk}\label{rmk:permanenceconventions}
In the present Remark we describe how to choose consistent representatives of train tracks and twist collars in a splitting sequence such that a fixed curve is a twist curve for all tracks in the sequence.

Train tracks and almost tracks are are generally understood to be regarded up to isotopy, but it is convenient to fix some conventions concerning the way we regard almost tracks to change along a generic (possibly wide) splitting sequence $\bm\tau=(\tau_j)_{j=0}^N$, when our focus is on a curve $\gamma$ which stays a twist curve all along the sequence. This step is necessary because most of this section analyses how to discern the effect of twist moves from the other ones. Since all the following conventions are only a limitation when choosing an almost track within its isotopy class, they will be used in the proofs, but do not affect the validity of the statements. For a matter of convenience, we allow the sequence $\bm\tau$ to include moves which consist of isotopies only.

For all $0\leq j\leq N$, we will use the notation $A_\gamma(j)$ to mean a twist collar for $\gamma$ in $\tau_j$ such that $\{A_\gamma(j)\}_{j=0}^N$ is an $A_\gamma(0)$-family; and we denote $q_j$ the map $q:\R\times[-2,2]\rightarrow S^X$ (or $\rightarrow S$) built as specified in Remark \ref{rmk:twistparam}, but with specific reference to the almost track $\tau_j$.

It is convenient to communicate the idea behind the adopted conventions before they are described in detail. For each entry in the sequence $\left(q_j^{-1}(\tau_j^X)\right)_j$, i.e. the sequence $\bm\tau$ lifted to $\R\times[-2,2]$, the following entry $q_{j+1}^{-1}(\tau_{j+1}^X)$ shall not only include the equator $\R\times\{0\}$, but also appear naturally as a carried realization in $q_j^{-1}\left(\nei(\tau_j^X)\right)$.

Unfortunately it is not possible to keep the same map $q$ for all entries in the sequence, because one has to take into account the change of $\tau_j.\gamma$ under a bispurious split, for instance (see Figure \ref{fig:twistcurve}, right): in that case, adjusting $\tau_j$ within its own isotopy class will not suffice for us to keep the same map $q$ for all entries, and expect it to behave in compliance with Remark \ref{rmk:twistparam}. And the same problem arises when, albeit $\tau_j.\gamma,\tau_{j+1}.\gamma$ are isotopic, the isotopy between the two does not keep each point anchored along its tie in $\nei(\tau_j)$.

\begin{figure}
\centering
\def\svgwidth{\textwidth}
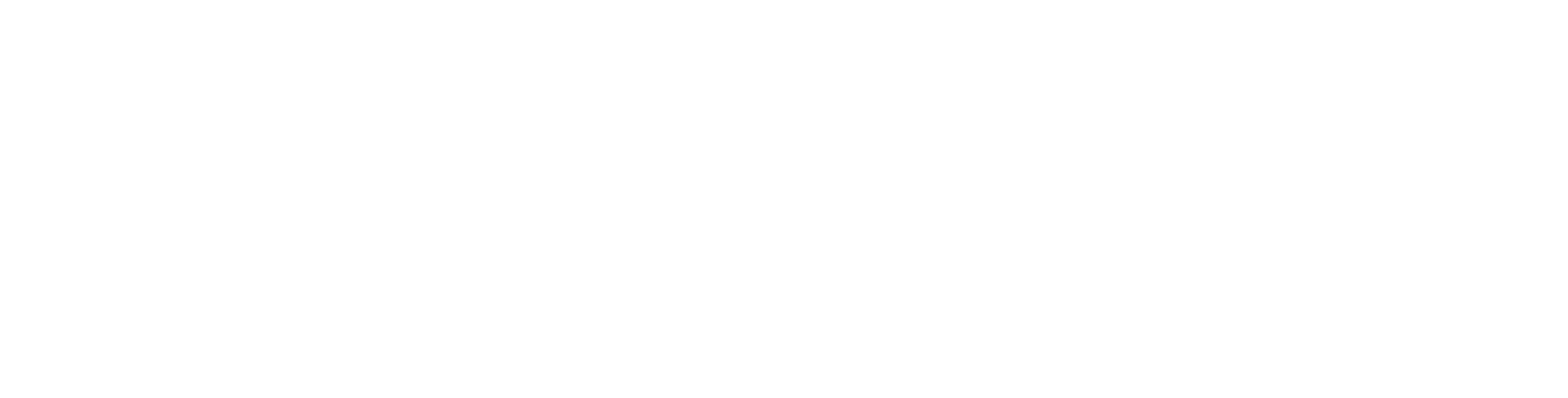
\caption{\label{fig:permanenceconventions} An illustration of the conventions set up in Remark \ref{rmk:permanenceconventions}, for a bispurious parity split, and in particular of its effects seen in $\R\times[-2,2]$. The pictures show only the changes for a region of $q_j^{-1}(\tau_j)$ and are not meant to be accurate. Three pieces of $q_j^{-1}(A_\gamma(j))$ and of $q_{j+1}^{-1}(A_\gamma(j+1))$ are shown in light grey. The split in $S$ is equivalent to unzipping a zipper, that lifts to an infinite family including two ones along $\R\times\{0\}$: call $\kappa$ one of these two. The main purpose of the set of established conventions is to make sure that, along a splitting sequence, for all $j$, $\tau_j.\gamma=q_j(\R\times\{0\})$ and $A_\gamma(j)=q_j\left(\R\times(0,1)\right)$ while each elementary move leaves the new pretrack naturally carried by the old one. Note indeed that, in these pictures, as $\kappa$ is unzipped, the result of the operation is not embedded as Definition \ref{def:zipper} would describe: it is slightly altered with an isotopy so that $\R\times\{0\}$ stays a lift for $\tau_{j+1}.\gamma$. No particular care is needed for any other unzips, far from $\R\times\{0\}$. The map $\E=\E_j$ defined in Remark \ref{rmk:permanenceconventions} lifts, via $q_j,q_{j+1}$, to a map of $\R\times[-2,2]$ which crushes the dark grey regions to a single line, and is a diffeomorphism on the complement of the grey regions (including the ones which are not drawn). If the split were only spurious, no region would be crushed: $\E_j$ would give a diffeomorphism of $\R\times[-2,2]$ --- and we may assume that it would give the identity. Finally, in these pictures two lifts of ramps, $\rho_j,\rho_{j+1}$, have been highlighted with a thicker line: they begin at the same point along $\R\times\{-2\}$ and are both $A_\gamma$-adverse. Note that the extremity $(x_j,0)$ of $\rho_j$ along $\R\times\{0\}$ --- here: a fake obstacle --- moves along $\R\times\{0\}$ in the direction suggested by the last branch end $\rho_j$ traverses; and reaches $(x_{j+1},0)$, which is the last point of the zipper $\kappa$. This is because any lift of $c_{\tau_j^X}\circ\rho_{j+1}$ via $q_j$ includes a segment along $\R\times\{0\}$, which is not part of $\rho_j$. Should $\rho'_{j+1}$ be such that $c_{\tau_j^X}\circ\rho'_{j+1}$ does not include a segment as above, the corresponding $\rho'_j$ would end at the same point of $\R\times\{0\}$ as $\rho'_{j+1}$. Note that, if a lift of an incoming ramp for $\tau_{j+1}^X$ begins at a given point $a$ of $\R\times\{\pm 2\}$, then necessarily $\tau_j^X$ also has an incoming ramp with one of its lifts beginning at $a$; but the inverse implication does not hold.
}
\end{figure}

We start describing the conventions from the following basic case (see also Figure \ref{fig:permanenceconventions}): let $\tau$ be an almost track where $\gamma$ is a twist curve with twist collar $A_\gamma$; and let $\tau''$, which carries $\gamma$, be obtained via the unzip of $\tau$ along a zipper $\kappa: [-\epsilon, t)\rightarrow \bar\nei(\tau)$ such that $\kappa_P$ is embedded.

Recall the notation used in Definition \ref{def:zipper}. Isotope $\tau''$ so that, if a switch lies in the set called $C$ there, then there is a switch of $\tau$ lying along the same tie of $\nei(\tau)$.

Let $R_\kappa$ be the union of the ties of $\nei(\tau)$ intersecting $\kappa$. Consider the restriction $c_\tau|_{\tau''.\gamma}$ of the tie collapse $c_\tau$ (see \S \ref{sub:traintrackdefin}), and extend it to a continuous map $\E:(S,\tau''.\gamma)\rightarrow (S,\tau.\gamma)$ with the following properties:
\begin{itemize}
\item $\E$ is surjective;
\item $\E|_{S\setminus R_\kappa}=\mathrm{id}_{S\setminus R_\kappa}$;
\item $\E$ maps each tie segment in $\bar\nei(\tau)$ to a segment or point of the same tie;
\item if $C\cap\tau_{j+1}.\gamma$ consists of exactly two \emph{smooth} components\footnote{Either two connected components which are smooth, or a single connected component consisting of two smooth paths joined at a cusp. It is exactly when one of these two scenarios occurs that we cannot keep the parametrization $q$ constant along the sequence $\bm\tau$: and in this case $\E$ shall describe the `crush' of the two pieces of $C$ to a single one.} note that, with the conditions set up so far, $\E(\bar\nei(\kappa))=\mathrm{im}(\kappa_P)$: then take $\E$ to be a diffeomorphism between $(S\setminus \bar\nei(\kappa),(\tau''.\gamma)\setminus C)$ and $(S\setminus\mathrm{im}(\kappa_P),(\tau.\gamma) \setminus\mathrm{im}(\kappa_P))$;
\item if not, then take $\E$ to be a diffeomorphism $(S,\tau''.\gamma)\rightarrow (S,\tau.\gamma)$.
\end{itemize}

Another equally basic case is the one of two almost tracks $\tau,\tau''$, both carrying $\gamma$, with $\tau''$ obtained from $\tau$ via a central split, along a branch $b$. Identify $R_b$ with its image. Also, identify $\tau''$ with a carried realization of it in $\nei_0(\tau)$, with the property that $\tau\setminus R_b=\tau''\setminus R_b$. Similarly as above, extend $c_\tau|_{\tau''.\gamma}$ to a map $\E$ with the same properties as above, except that we replace $R_\kappa$ with $R_b$, define $C$ as the union of the two `copies' of $b$ produced with the central split, and replace $\nei(\kappa)$ with $K$ the union of the open tie segments in $R_b$ each delimited by two points in $C$.

In all the above described scenarios, call $\F$ the inverse \emph{diffeomorphism} of $\E$ --- its domain and image will depend on the particular case as described above. The map $\E$ gives rise to a unique lift $\hat\E:S^X\rightarrow S^X$, via the covering map $\hat p$, with the property that it extends to the identity map on $\partial\ol{S^X}$.

The twist collar for $\gamma$ in $\tau''$ will be supposed to be $A_\gamma''= \F(A_\gamma)$: this is possible if the map $\E$ is realized appropriately, and/or $\nei(\tau'')$ is chosen wisely (recall indeed that we want $A_\gamma''$ to be one of the components of $\nei(\tau''.\gamma)\setminus(\tau''.\gamma)$. Given the parametrization $q:\R\times [-2,2]\rightarrow S^X$ for $\tau^X$, the corresponding parametrization $q''$ for ${\tau''}^X$ shall be taken to comply with the equality $q=\hat\E\circ q''$.

If all the elementary moves produced by unzipping $\kappa$ are far from $\gamma$, it is possible to take $\E=\F=\mathrm{id}_S$, $A_\gamma''=A_\gamma$, $q=q''$, and suppose that $\tau\cap\nei(\tau.\gamma)=\tau''\cap\nei(\tau.\gamma)$.

If $\kappa$ decomposes into two shorter zippers $\kappa_1,\kappa_1$, then the respective maps relate via $\E(\kappa)=\E(\kappa_2)\circ \E(\kappa_1)$ and $\F(\kappa)=\F(\kappa_1)\circ \F(\kappa_2)$. This allows us to give sense to the maps $\E,\F$ and to $A_\gamma''$ even if $\kappa_P$ is not an embedded path.

Back to the case of a (possibly wide) splitting sequence $\bm\tau$, each move can be regarded as the combination of one or more unzips. Remark \ref{rmk:generic_move_as_unzip}, and Remark \ref{def:multiplesplit} for wide splitting sequences, give some guidelines to convert a splitting sequence into a sequence of unzips and central splits, albeit not in a unique way. Exceptionally we will also allow sequences with trivial moves, i.e. unzips that only result in isotopies.

For each index $j$, then, a continuous surjection ${\mathcal E}_j:(S,\tau_{j+1}.\gamma)\rightarrow(S,\tau_j.\gamma)$ is defined by composing the maps $\E$ related with each of the unzips and central splits used to perform the move turning $\tau_j$ into $\tau_{j+1}$. Let $\F_j$ be its inverse, defined as the composition of the maps $\F$ seen above where they are defined.

If $b$ is the large branch, mixed branch or carried realization of the splitting arc involved in the move, then $\E_j$, restricted to $S\setminus \E_j^{-1}(b)$, is a diffeomorphism with its image; so $S\setminus b$ is always included in the domain of $\F_j$. Every time the elementary move or wide split is far from $\gamma$, $\E_j=\F_j=\mathrm{id}_S$. 

Denote $\hat{\E}_j:S^X\rightarrow S^X$ the lift of $\E_j$ as above. As a direct consequence of what established, the twist collars correspond under $A_\gamma(j+1)=\F_j\left(A_\gamma(j)\right)$, and in particular $\{A_\gamma(j)\}_{J=0}^N$ is an $A_\gamma(0)$-family. Meanwhile, the parametrization maps are be subject to the condition $q_j=\hat\E_j\circ q_{j+1}$. If the elementary move/wide split between $\tau_j$ and $\tau_{j+1}$ is far from $\gamma$, then $\tau_j\cap\nei(\tau_j.\gamma)=\tau_{j+1}\cap\nei(\tau_j.\gamma)$.

Remark \ref{rmk:limitsetconsistency} will be used multiple times. In particular, combined with the above conventions, it gives a constraint on the following construction. Fix $i<j$, and let $\rho_j:(-\infty,0]\rightarrow \tau_j^X$ be any incoming ramp; then there is a unique incoming ramp $\rho_i:(-\infty,0]\rightarrow \tau_i^X$ coming from the same point of $\partial\ol{S^X}$ and of the same kind as $\rho_j$ (by `kind' we mean hitting, favourable, or adverse; for the uniqueness, see point \ref{itm:dataforramp} in Remark \ref{rmk:annulusinducedbasics}). Actually, as a consequence of the conventions established here, $\rho_i$ is an initial segment of (a reparametrization of) $c_{\tau_i^X} \circ \rho_j$. Note that this last expression would not even be well defined, if a carried realization of $\tau_j$ in $\nei(\tau_i)$ were not fixed as we have done in the present Remark.

So, let $\hat\rho_j: (-\infty,0]\rightarrow \R\times[-2,2]$ be any lift of $\rho_j$ via $q_j$: it begins at a point $a\in\R\times\{-2,2\}$ and ends at an obstacle $x_j\in \R\times\{0\}$. There will be a connected component of $q_i^{-1}(\mathrm{im}(\rho_i))$ starting at the same point $a$; it ends at an obstacle $x_i$ for $\tau_i^X$, of the same kind (upper, lower, or fake) as $x_j$ for $\tau_j^X$.

The constraint, then, is that $x_j\geq x_i$ if they are upper or fake obstacles i.e. if $\rho_i,\rho_j$ are ramps hitting $A_\gamma$ or $A_\gamma$-adverse; and $x_j\leq x_i$ if they are lower obstacles i.e. if $\rho_i,\rho_j$ are ramps avoiding $A_\gamma$ and favourable. The inequalities are strict exactly when the image of $c_{\tau_i} \circ \rho_j$ has more than one point along $\tau_i^X.\gamma$, implying that one needs to trim it in order to get $\rho_j$.

Figure \ref{fig:permanenceconventions} summarizes part of the ideas behind these conventions. Another, simpler and more restrictive convention, which concerns sequences of twist nature only, will be described in Remark \ref{rmk:twistnaturemodelling}, and it clashes with parts of the assumptions made above. Therefore in the rest of this section, whenever necessary, we will clarify which of the two conventions we are about to use.
\end{rmk}

\begin{defin}
A \nw{twist modelling function} is a smooth map $h:\R\times[0,1]\rightarrow \R$ with the following properties. For all $x,t$, it holds that
$$
h(x+2\pi,t)=h(x,t)+2\pi\text{; }\frac{\partial}{\partial x}h(x,t)>0 \text{ and }\frac{\partial}{\partial t} h(x,t)\leq 0.$$
Moreover there exists and $\epsilon>0$ such that, for all $x$ and for $1-\epsilon \leq t<1$, the map satisfies $h(x,t)=x$.
\end{defin}

\begin{rmk}\label{rmk:twistnaturemodelling}
In the present Remark we describe how to choose consistent representatives of train tracks in splitting sequences of twist nature.

Consider a generic splitting sequence of almost tracks $\bm\tau=(\tau_j)_{j=0}^N$, of twist nature about a twist curve $\gamma$, and let $X$ be a regular neighbourhood of the latter in $S$. We use here the notations given in Remark \ref{rmk:twistparam}. 

As announced in Remark \ref{rmk:permanenceconventions} above, one may apply a different convention from the one seen there, when it comes to find a concrete realization of $\tau''$ obtained by applying a twist split or a twist slide on an almost track $\tau$, about a twist curve $\gamma$. Under this restriction, indeed, it is always possible to make $\tau.\gamma$ and $\tau''.\gamma$ coincide under an isotopy that keeps each point along its tie.

If $b\in\br(\tau)$, with $b\subseteq \tau.\gamma$, is the large/mixed branch that is being split/slid, then exactly one of its endpoints is adjacent to a branch end $e$ hitting $A_\gamma$: as it has been already noted, the result of the elementary move may be visualized as a shift of this branch end beyond the other endpoint of $b$.

In order to attain this visualization, the move may be realized with an unzip along a zipper $\kappa:[-\epsilon, t]\rightarrow \bar\nei(\tau)$ such that $\kappa\left([0,t)\right)$ is contained in $A_\gamma$. In this scenario, the map $\E$ defined in Remark \ref{rmk:permanenceconventions} is a diffeomorphism (isotopic to $\mathrm{id}_S$). But in this set of conventions, rather than keeping track of the map $\E$, the new track $\tau''$ is immediately replaced with $\F(\tau'')$. This is convenient as $A_\gamma$ is a twist collar for $\gamma$ in $\tau''$, too; and $\tau\setminus A_\gamma=\tau''\setminus A_\gamma$.

In other words, it becomes useless to appeal to $A_\gamma$-families, because there is no need to adapt the twist collar $A_\gamma$. Similarly, in this context the maps $\tilde\upsilon$, $\upsilon$, $q$ of Remark \ref{rmk:twistparam} are taken to be the same when applying the construction to $\tau$ or to $\tau''$. 

One may apply this convention for an entire sequence of twist nature $\bm\tau$. However, rather than consider each elementary move as the result of a \emph{single} unzip, it is better to keep the model less rigid: an elementary move may as well be the result of unzipping \emph{more} than a zipper as described above, posing the condition that all unzips but one result into isotopies of the almost track. This makes sure that constructions like the one in Lemma \ref{lem:functiongivestwist} work properly.

The argument concerning endpoints of ramps, developed in Remark \ref{rmk:permanenceconventions}, applies in this set of conventions, too; but the twist nature of the sequence limits the variety of possible behaviours. Given an almost track $\tau$ where we unzip a zipper $\kappa$ as specified above, there is exactly one coset $x+2\pi\mathbb Z\subset \R$, where $x$ is an \emph{upper} obstacle, such that all obstacles (whether upper, lower or fake) of $\tau^X$ which do not belong to this coset are found also as obstacles of $(\tau'')^X$.

Define the obstacles $x_i,x_j$ similarly as in that paragraph. Then $x_j\geq x_i$ if they are upper obstacles, and $x_j=x_i$ if they are lower or fake obstacles. The first inequality is strict only when $x_i$ belongs to one of the families of upper obstacles $x+\mathbb Z$ which are affected by the unzips along $\bm\tau(i,j)$.

Moreover, if one defines similarly two pairs $(x_i,x_j)$ and $(x'_i,x'_j)$, then $x_i=x'_i$ if and only if $x_j=x'_j$. In words, two ramps in $\tau_i^X$ have their images partly overlapping if and only if their counterparts in $\tau_j^X$ partly overlap (possibly they only have one common endpoint).

This establishes a correspondence between sequences of twist nature and twist modelling functions, as we explain below.

Given a twist modelling function $h$, one can use it to define a self-diffeomorphism $\tilde h$ of $\R\times[0,1]$, by sending $(x,t)\mapsto(h(x,t),t)$; and it can also be extended to a map $\tilde h:\R\times [-2,2]\rightarrow \R\times [-2,2]$, by setting it to the identity outside $\R\times[0,1]$. This map $\tilde h$ is not continuous, as it fails along $\R\times \{0\}$. We call $\tilde H\coloneqq \tilde\upsilon\circ \tilde h \circ\tilde\upsilon^{-1}$ the corresponding self-bijection of $\bar\Hy^2_X\rightarrow \bar\Hy^2_X$.

Since $\tilde h$ is equivariant under horizontal translation by $2\pi$, it descends to a self-bijection of $\sph^1\times[-2,2]$ on $\ol{S^X}$ and, via conjugation by $\upsilon$, the latter is turned into a self-bijection $\hat H:\ol{S^X}\rightarrow \hat H$.

Finally, a map $H:S\rightarrow S$ is obtained by setting $H|_{A_\gamma}=\hat H|_{A_\gamma}$ and $H|_{S\setminus A_\gamma}\coloneqq \mathrm{id}_{S\setminus A_\gamma}$. This is again not continuous, but it is a self-diffeomorphism of $S\setminus \tau_0.\gamma$ which does not permute its connected components. Note a detail: $H$ fixes $\tau_0.\gamma$ pointwise, but $\hat H$ and $\tilde H$ fix $\tau_0^X.\gamma$ and $\tilde\gamma$, respectively, only setwise.

One can see (cfr. \cite{mosher}, p. 215) that, after modelling $\bm\tau$ under the conventions given above, for each pair of indices $0\leq k\leq l\leq N$, there exists a twist modelling function $h_k^l:\R\times[0,1]\rightarrow \R$ such that $\tau_l= H_k^l(\tau_k)$, where $H_k^l$ is defined with the process above; and we can also suppose that, given three indices $0\leq k\leq l \leq r \leq N$, it holds that $h_k^r(x,t)=h_l^r\left(k_k^l(x,t),t\right)$ i.e. $\tilde h_k^r=\tilde h_l^r\circ\tilde h_k^l$ and similar composition rules hold for the maps defined above. We will say that $h_k^l$ is a twist modelling function \nw{associated with} the splitting sequence $\bm\tau(k,l)$.
\end{rmk}

\begin{lemma}\label{lem:functiongivestwist}
Let $\tau_0$ be a generic almost track on a surface $S$ with $\gamma$ a twist curve, and $A_\gamma$ a fixed twist collar. Let $h:\R\times[0,1]\rightarrow \R$ be a twist modelling function. Define a bijection $H:S\rightarrow S$ as in Remark \ref{rmk:twistnaturemodelling}.

Suppose that $\tau_1\coloneqq H(\tau_0)$ is a (generic) almost track; equivalently that, for any $x$ upper obstacle for $\tau_0^X$, $h(x,0)$ is not a lower or fake obstacle of $\tau_0^X$. Then there exists a splitting sequence $\bm\tau$ turning $\tau_0$ into $\tau_1$, and having twist nature about $\gamma$ with twist collar $A_\gamma$. Moreover, $\bm\tau$ has $h$ as an associated twist modelling function.
\end{lemma}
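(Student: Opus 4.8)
The statement asks us to realize the given combinatorial data --- a twist modelling function $h$ acting on the parametrized annular lift --- as an honest splitting sequence of twist nature. The plan is to decompose the self-diffeomorphism $H$ into ``elementary pieces'', each of which corresponds to a single twist split or twist slide, and then concatenate the resulting moves. First I would set up the parametrization $q_0:\R\times[-2,2]\rightarrow S^X$ attached to $\tau_0$ as in Remark \ref{rmk:twistparam}, so that $\tau_0^X.\gamma=q_0(\R\times\{0\})$, the twist collar is $A_\gamma=q_0(\R\times(0,1))$, and preimages of branch ends hitting $A_\gamma$ are directed from upper-left to lower-right. The hypothesis that $\tau_1=H(\tau_0)$ is again an almost track is exactly the statement that $h$ does not drag any upper obstacle past a lower or fake obstacle; this is what guarantees that at no intermediate stage the core curve $\gamma$ fails to stay carried and a twist curve (by the analysis in Lemma \ref{lem:twistcurvebasics}).

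The key step is an interpolation argument. Since $\frac{\partial}{\partial x}h>0$, $\frac{\partial}{\partial t}h\le 0$, and $h(x,t)=x$ for $t$ near $1$ and $h(x+2\pi,t)=h(x,t)+2\pi$, the map $h$ is isotopic through twist modelling functions to the identity; more usefully, the total ``shift'' $h(x,0)-x$ that $h$ imposes on the line $\R\times\{0\}$ is a bounded, $2\pi$-periodic function, hence moves each upper obstacle past only finitely many obstacles of $\tau_0^X$ per period. I would order the upper obstacles of $\tau_0^X$ (in one fundamental domain, then extend equivariantly) and process them one at a time in the direction of the $A_\gamma$-orientation: each time an upper obstacle is dragged past the nearest obstacle lying ahead of it, that is precisely one elementary move on $\tau_j^X$ --- a twist split if the obstacle it passes is an upper or a favourable lower obstacle sharing a large branch of $\tau_j^X.\gamma$, and a twist slide if it passes across a mixed configuration (see Definition \ref{def:twistmoves} and the visualization in Remark \ref{rmk:twistnaturemodelling} of a twist move as ``shifting a small branch end hitting $A_\gamma$ beyond the other endpoint of a branch in $\tau.\gamma$''). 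Using the convention of Remark \ref{rmk:twistnaturemodelling}, each such move is realized by unzipping a zipper $\kappa:[-\epsilon,t]\rightarrow\bar\nei(\tau_j)$ with $\kappa([0,t))\subseteq A_\gamma$; by construction $\E$ for this unzip is a diffeomorphism isotopic to $\mathrm{id}_S$, so we may replace $\tau_{j+1}$ by $\F(\tau_{j+1})$ and keep $\tau_j.\gamma$, $A_\gamma$, and the map $q$ fixed throughout. Descending $q_0$ to $S$ and to $S^X$, each move corresponds to post-composing $q_j$ with $\tilde h_j$ for an ``elementary'' twist modelling function $h_j$ (one supported near the single obstacle being moved, with all the required monotonicity and periodicity), and the composition rule $\tilde h_j^r=\tilde h_j^{r}$ of Remark \ref{rmk:twistnaturemodelling} shows that, after finitely many moves within each period and extending equivariantly, $h_0^N=h$ up to the harmless freedom in choosing intermediate representatives. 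This gives that $\bm\tau=(\tau_j)_{j=0}^N$ has $h$ as an associated twist modelling function.

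I would then check the remaining bookkeeping. Genericity of each $\tau_j$: this is the content of the equivalent reformulation in the statement --- as long as $h(\cdot,0)$ never identifies an upper obstacle with a lower or fake one, no two switches of $\tau_j^X$ collide, so $\tau_j$ stays a generic almost track (use Remark \ref{rmk:negativeindexincover} and the index count of Lemma \ref{lem:vertexsetbounds} to see complementary regions keep negative index; nothing outside $A_\gamma$ changes). Twist nature: each move is, by its construction, a twist split or a twist slide about $\gamma$ with twist collar $A_\gamma$ (the parity of a twist split is forced, as in the proof of Lemma \ref{lem:twistcurvebasics}, to be the one keeping $\gamma$ carried), so $\bm\tau$ has twist nature by definition. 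That $\bm\tau$ is genuinely a splitting sequence (and terminates): the shift function $h(x,0)-x$ is bounded, so only finitely many obstacle-crossings occur per fundamental domain, hence finitely many moves in total after using $2\pi$-equivariance --- each move strictly decreases the number of obstacle-pairs still ``out of order'' relative to $h$.

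\textbf{Main obstacle.} The delicate point is the interpolation: I must show that an arbitrary twist modelling function $h$ factors as a finite composition of ``elementary'' ones each realizing a single twist move, \emph{and} that this can be done while keeping every intermediate $\tau_j$ an almost track. The monotonicity conditions $\partial_x h>0$, $\partial_t h\le 0$ are exactly what is needed: they force the ordering of obstacles along $\R\times\{0\}$ to be preserved except for the controlled passing of upper obstacles, so one can always choose the ``next crossing'' unambiguously and never create a coincidence forbidden by the hypothesis. Making the finiteness and the equivariant extension precise --- and matching the composed modelling function to $h$ on the nose rather than merely up to isotopy --- is where most of the care goes; the reference \cite{mosher}, p.~215, where the correspondence between sequences of twist nature and twist modelling functions is established in the train-track (as opposed to almost-track) setting, provides the template and only minor adaptations for peripheral annuli are needed, exactly as in the surrounding lemmas.
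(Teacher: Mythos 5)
Your overall plan matches the paper's in spirit --- decompose $H$ into elementary twist moves, one per obstacle crossing, and read off the resulting splitting sequence --- but the one step you flag as ``where most of the care goes'' is exactly the step you do not resolve, and the way you phrase it would not work if taken literally. You say you would ``order the upper obstacles \ldots and process them one at a time in the direction of the $A_\gamma$-orientation,'' dragging each obstacle past what lies ahead of it. This per-obstacle ordering is incompatible with the constraints: upper obstacles never collide (they are endpoints of distinct branch ends), and the intermediate twist modelling functions must stay strictly increasing in $x$, so you cannot move one upper obstacle to its final position while leaving a neighbouring upper obstacle behind. If $x_1<x_2$ are consecutive upper obstacles and the shift sends $x_1$ past $x_2$'s starting position, processing $x_1$ ``to completion'' first produces a coincidence at some intermediate stage. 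The order of the elementary moves must instead be determined by \emph{when} a crossing happens under a simultaneous deformation, not by \emph{which} obstacle is involved.

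The paper handles precisely this by interpolating $h$ linearly to the identity: it sets $\Phi(x,t,u)=(1-u)x+uh(x,t)$, checks that each $h_u=\Phi(\cdot,\cdot,u)$ is a twist modelling function with $\partial_u\Phi\ge 0$, and observes that after a small perturbation the set of parameters $u\in(0,1)$ at which some upper obstacle meets a lower or fake obstacle is finite, with exactly one coset $x_0(u)+2\pi\mathbb Z$ of upper obstacles involved at each such $u$. Between two consecutive critical values $u_j<u_{j+1}$ the tracks $\tau_u\coloneqq H_u(\tau_0)$ are isotopic, and crossing a single $u_j$ is a twist slide or twist split. Choosing sample points $v_j$ between the critical values gives the splitting sequence directly. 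All upper obstacles move simultaneously, the monotonicity of $h_u$ in $x$ (for every $u$) is automatic from the interpolation, and the ordering of the elementary moves is canonically the ordering of the critical times. This is the ingredient missing from your write-up; once you replace ``process the upper obstacles one at a time'' with ``move all upper obstacles simultaneously under $\Phi(\cdot,\cdot,u)$ and record each crossing as it occurs,'' the rest of your bookkeeping (genericity between crossings via the hypothesis that no $h(x,0)$ hits a lower/fake obstacle; twist nature of each crossing; finiteness per fundamental domain from $2\pi$-equivariance and the boundedness of $h(x,0)-x$) is essentially what the paper does.
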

\begin{proof}
Consider the smooth map $\Phi:\R\times[0,1]\times[0,1]\rightarrow \R\times [0,1]$ defined by the linear combination $\Phi(x,t,u)=(1-u)x+ u h(x,t)$. For all $u\in[0,1]$ the map $h_u(x,t)\coloneqq\Phi(x,t,u)$ is a twist modelling function; let $\tilde h_u$ be the corresponding self-map of $\R\times[-2,2]$, defined as prescribed in Remark \ref{rmk:twistnaturemodelling}. We have $\frac{\partial}{\partial u}\Phi(x,t,u)\geq 0$ for all $x,t,u$ and $\frac{\partial}{\partial u}\Phi(x,t,u)= 0$ if and only if $h_1(x,t)=x$.

For all $u$ let $H_u$ be the self-map induced by $h_u$ on $S$. Up to small perturbations of $\Phi$ not affecting the previously listed properties, one can also suppose that for every fixed $u\in[0,1]$, one of the following is true:
\begin{itemize}
\item there is one coset $x_0(u)+2\pi\mathbb Z$ such that, if $x$ is an upper obstacle for $\tau_0^X$ while $h_u(x,0)$ is a lower or fake obstacle for $\tau_0^X$, then $x\in x_0(u)+2\pi\mathbb Z$;
\item if $x$ is an upper obstacle for $\tau_0^X$, then $h_u(x,0)$ is neither a lower nor fake obstacle for $\tau_0^X$.
\end{itemize}

Let $0<u_1<\ldots<u_k<1$ be the values of $u$ as in the first bullet. Define, for $u\in[0,1]$, $\tau_u=H_u(\tau_0)$: it is a generic almost track exactly when $u$ is none of the aforementioned values.

Given two values $0\leq u<u'\leq 1$: if there is a $j$ such that $u_j<u<u'<u_{j+1}$ then $\tau_u,\tau_{u'}$ are turned into each other with an isotopy of $S$. If there is exactly one value of $j$ such that $u<u_j<u'$ then $\tau_{u'}$ is obtained from $\tau_u$ by shifting exactly one branch end hitting $A_\gamma$ beyond a branch end that avoids it (plus an isotopy); which is the same as saying: via a twist slide or a twist splitting.

So if we define another sequence $v_0,\ldots,v_k$ such that $0\leq v_0<u_1<v_1<u_2<\ldots<v_{k-1}<u_k<v_k\leq 1$, then $\bm\tau=(\tau_{v_j})_{j=0}^k$ is the desired splitting sequence, with twist nature.
\end{proof}

\begin{rmk}\label{rmk:twistfunctionsflexibility}
\begin{enumerate}
\item Given any increasing function $\eta:\R\rightarrow \R$ such that $\eta(x)\geq x$ for all $x$, there is a twist modelling function $h: \R\times[0,1]\rightarrow \R$ such that $h|_{\R\times\{0\}}=\eta$.

\item Fix a generic almost track $\tau$ carrying a twist curve $\gamma$; fix $A_\gamma$ and the other complementary constructions given in Remark \ref{rmk:twistparam}. Given any two twist modelling functions $h,h'$ and the relative maps $H,H':S\rightarrow S$ built from them, suppose that $h|_{\R\times\{0\}}=h'|_{\R\times\{0\}}$ and that $H(\tau)$ is a train track. Then $H'(\tau)$ is also a train track, isotopic to the former one: the isotopy is made explicit by the $1$-parameter family $H_u(\tau)$, where $u\in[0,1]$ and $H_u$ is the self-map of $S$ obtained from the twist modelling function $h_u=(1-u)h+uh'$.

\item A particular case of twist modelling function is given by the ones giving $h(x,1)=x+2\pi m$ for some fixed $m\in\mathbb N$. We consider these maps in the setting specified in Remark \ref{rmk:twistnaturemodelling}. Due to the $2\pi$-periodicity in the $x$ variable, the corresponding maps $\hat H$ and $H$ are continuous; and, due to the point above, one may as well suppose that they are smooth, up to isotopy. If $\epsilon$ is the sign of $\gamma$ as a twist curve, then $\hat H$ is isotopic to $D_X^{\epsilon m}$, where $D_X$ is the Dehn twist of $\ol{S^X}$ about its core curve; and $H$ is isotopic to $D_\gamma^{\epsilon m}$, where $D_\gamma$ is the Dehn twist of $S$ about $\gamma$.
\end{enumerate}
\end{rmk}

\subsection{The wide arc set at a twist curve}
We introduce a few more notations and immediate remarks about the arcs carried by an induced track in an annulus. We use here the parametrization of Remark \ref{rmk:twistparam}.

This definition integrates the definition of horizontal stretch given in point \ref{itm:horizontalstretch} in Remark \ref{rmk:annulusinducedbasics}:
\begin{defin}\label{def:horizontallength}
Let $\tau$ be a generic almost track where a specified curve $\gamma$ is a twist curve with a specified twist collar $A_\gamma$, and let $X$ be a regular neighbourhood of $\gamma$. For $\alpha\in\cc(\tau^X)$, let $\ul\alpha_\tau$ be a realization of $\alpha$ as a train path along $\tau^X$, and let $\ul{\tilde\alpha}_\tau$ be a lift of it to the universal cover $\bar\Hy^2_X$ of $\ol{S^X}$. The train path $\ul{\tilde\alpha}_\tau$ is necessarily embedded in $\tilde\tau$, and traverses one or more branches belonging to $\tilde\tau.\tilde\gamma$, which we already know to be consecutive.

An \nw{interval stretch} is an interval $[x,y]$ such that $[x,y]\times\{0\}= \tilde \upsilon^{-1}(\tilde\gamma\cap\ul{\tilde\alpha}_\tau)= (\R\times\{0\})\cap \tilde\upsilon^{-1}(\ul{\tilde\alpha}_\tau)$; there is one for each choice of $\ul{\tilde\alpha}_\tau$, so they are a family of intervals obtained from one another via addition of a $\mathbb Z$-multiple of $2\pi$. Necessarily, $x$ is an upper obstacle for $\tau^X$ and $y$ is a lower one (see Remark \ref{rmk:sameorientationforarcs}).

Finally, we call the \nw{horizontal length} of $\alpha$ the length of any of these intervals: $\hl(\tau,\alpha)\coloneqq y-x$ (this is independent of the choice of $\ul{\tilde\alpha}_\tau$).

If $\bm\tau=(\tau_j)_{j=0}^N$ is a splitting sequence of almost tracks such that $\gamma$ stays a twist curve throughout, and $\alpha\in\cc(\tau_i^X)$ we may denote, leaving that sequence implicit, $\hl_\alpha(i)= \hl(\tau_i,\alpha)$; $\ul\alpha_i=\ul\alpha_{\tau_i}$. In that case we choose the family $\ul{\tilde\alpha}_j$, for $0\leq j\leq i$, to be consisting of arcs all having the same pair of endpoints along $\partial\bar\Hy^2$. We denote $[x_j,y_j]$ the corresponding interval stretches, and $\hl_\alpha(j)=\hl(\tau_j,\alpha)$.
\end{defin}

We list some basic properties of the horizontal length:
\begin{enumerate}
\item With $x,y$ as in the above definition, consider the two segments of $\tilde\upsilon^{-1}(\ul{\tilde\alpha}_\tau)\setminus \left((x,y)\times\{0\}\right)$, i.e. the two lifts of ramps of $\ul\alpha_\tau$. The one having an endpoint at $(x,0)$ is contained in $\R\times[0,2]$ i.e. it projects to a ramp hitting $A_\gamma$ in $S^X$; while the other one is contained in $\R\times[-2,0]$ so it projects to a ramp which avoids $A_\gamma$ and is favourable.

\item \textit{For any $\alpha\in\cc(\tau)$, $\hl(\tau,\alpha)$ is never a multiple of $2\pi$.} If it were, that would mean that $\hs(\tau,\alpha)$ begins and ends at the same switch along $\tau^X.\gamma$. But this is impossible, because one of the two ramps composing $\tau^X.\alpha\setminus\tau^X.\gamma$ hits $A_\gamma$ and the other one avoids it: by genericness of $\tau$, they do not meet $\tau^X.\gamma$ at the same switch.

\item\label{itm:hl_vs_multiplicity} \textit{$\lfloor \hl(\tau,\alpha)/2\pi\rfloor$ is the minimum number of times $\alpha$ travels along a fixed branch within $\tau^X.\gamma$.} In particular this quantity is independent of metric properties and realization of $\tau$ within a given isotopy class. Moreover there is at least one branch traversed $\lfloor \hl(\tau,\alpha)/2\pi\rfloor+1$ times.

\item\label{itm:hlvertex} \textit{An arc $\alpha\in\cc(\tau^X)$ is wide carried if and only if $\hl(\tau,\alpha)<2\pi$}: this bound on horizontal length is equivalent to saying that $\hs(\tau,\alpha)$ does not traverse all branches in $\tau^X.\gamma$. Remark \ref{rmk:annulusinducedbasics}, point \ref{itm:windaboutgamma}, concludes our argument.

\item\label{itm:farisininfluent} \textit{Let $\bm\tau$ be a splitting sequence as in Definition \ref{def:horizontallength}, and fix $0\leq j<N$. Suppose the move between $\tau_j$ and $\tau_{j+1}$ is far from $\gamma$. Then, if the conventions of Remark \ref{rmk:permanenceconventions} are used then, for any $\alpha\in\cc(\tau_{j+1}^X)$, $\hs_\alpha(j+1)\subseteq \hs_\alpha(j)$. If the move between $\tau_j$ and $\tau_{j+1}$ is far from $\gamma$, then $\hs_\alpha(j)=\hs_\alpha(j+1)$.}

As a consequence of point \ref{itm:horizontalstretch}, a train path realization $\ul\alpha_{j+1}$ of $\alpha$ in $\tau_{j+1}^X$ is the concatenation of $\rho^h_{j+1},\hs_\alpha(j+1),\rho^f_{j+1}$ for $\rho^h_{j+1}$ an incoming ramp for $\tau_{j+1}^X$ hitting $A_\gamma$, and $\rho^f_{j+1}$ an outgoing ramp avoiding $A_\gamma$ and favourable.

A train path realization $\ul\alpha_j=c_{\tau_j^X}(\ul\alpha_{j+1})$ --- here we are neglecting any necessary reparametrization --- is the concatenation of $c_{\tau_j^X}(\rho^h_{j+1})$, $c_{\tau_j^X}(\hs_\alpha(j+1))=\linebreak\hs_\alpha(j+1)$, $c_{\tau_j^X}(\rho^f_{j+1})$: hence $\hs_\alpha(j+1)\subseteq \hs_\alpha(j)$.

When the move that takes place between $\tau_j,\tau_{j+1}$ is far from $\gamma$, the conventions in Remark \ref{rmk:permanenceconventions} imply that neither $c_{\tau_j^X}(\rho^h_{j+1})$ nor $c_{\tau_j^X}(\rho^f_{j+1})$ include any segment along $\tau_j^X.\gamma$. So $\hs_\alpha(j)=\hs_\alpha(j+1)$.

\item\label{itm:twistnaturehl} \textit{Let $\bm\tau$ be a splitting sequence as in Definition \ref{def:horizontallength}, with twist nature about $\gamma$, and fix two indices $0\leq k\leq l\leq N$. Model $\bm\tau$ as said in Remark \ref{rmk:twistnaturemodelling}, and let $h$ be the twist modelling function $h$ associated with $\bm\tau(k,l)$. Let $\alpha\in V(\tau_l)$, and let $[x_k,y_k],[x_l,y_l]$ be two consistent choices of interval stretches for $\alpha$ in $\tau_k$ and $\tau_l$, respectively; then $y_k=y_l$ and $x_l=h(x_k,0)\geq x_k$; and, in particular, $\hl_\alpha(l)\leq \hl_\alpha(k)$.}

Under the conventions of Remark \ref{rmk:twistnaturemodelling}, $\ul{\tilde\alpha}_l\setminus p^{-1}(\bar A_\gamma)= \ul{\tilde\alpha}_k\setminus p^{-1}(\bar A_\gamma)$, and this implies that $\ul{\tilde\alpha}_l\setminus(\R\times\{0\})=\tilde H\left(\ul{\tilde\alpha}_l\setminus(\R\times\{0\})\right)$. This gives immediately $y_k=y_l$, and $x_l=h(x_k,0)$.
\end{enumerate}

\begin{lemma}\label{lem:onerollingdirection}
Let $\tau$ be a generic almost track, and let $\gamma$ be a twist curve with sign $\epsilon$ and $X$ a regular neighbourhood of it. Then the following are true. 
\begin{itemize}
\item For each $m\in\mathbb N$,
$$
D_X^{\epsilon m}\cdot V(\tau^X)=\{\alpha\in\cc(\tau^X)\mid 2\pi m<\hl(\tau,\alpha)<2\pi(m+1)\}
$$
where $D_X$ is the Dehn twist about $\gamma$ in $S^X$. In particular, for all $\alpha\in V(\tau^X)$, $\hl(\tau,D_X^{\epsilon m}(\alpha))=\hl(\tau,\alpha)+2\pi m$; and
$$
\cc(\tau^X)=\bigcup_{j\geq 0} D_X^{\epsilon j}\cdot V(\tau^X).
$$
\item If $m\in\mathbb N,\alpha\in V(\tau^X), \beta\in D_X^{\epsilon m}\cdot V(\tau^X)$ then $m-1\leq i(\alpha,\beta)\leq m+1$. In particular, $\mathrm{diam} \left(V(\tau^X)\right)\leq 2$.
\end{itemize}
\end{lemma}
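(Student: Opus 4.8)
The plan is to run everything through the \emph{horizontal length} $\hl(\tau,\cdot)$, whose relevant properties were just recorded. Those properties already give $V(\tau^X)=\{\alpha\in\cc(\tau^X)\mid 0<\hl(\tau,\alpha)<2\pi\}$ (positivity, never an integer multiple of $2\pi$, and point~\ref{itm:hlvertex} of horizontal length), which is the $m=0$ case of the first displayed equality. The crux is then the claim: for every $\alpha\in\cc(\tau^X)$ one has $D_X^{\epsilon}(\alpha)\in\cc(\tau^X)$ with $\hl(\tau,D_X^{\epsilon}\alpha)=\hl(\tau,\alpha)+2\pi$, and if moreover $\hl(\tau,\alpha)>2\pi$ then $D_X^{-\epsilon}(\alpha)\in\cc(\tau^X)$ with $\hl(\tau,D_X^{-\epsilon}\alpha)=\hl(\tau,\alpha)-2\pi$. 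To prove this I would realise $D_X^{\pm\epsilon}$ by a twist modelling function: by the third point of Remark~\ref{rmk:twistfunctionsflexibility} the homeomorphism $\hat H$ attached to a twist modelling function whose restriction to $\R\times\{0\}$ is $x\mapsto x\pm2\pi$ is isotopic to $D_X^{\pm\epsilon}$, it fixes $\tau^X.\gamma$ and $\partial\ol{S^X}$ pointwise, and it is supported inside $A_\gamma$, where it shears $\tau^X.\gamma$ past everything else by one full turn in the $A_\gamma$-orientation. A train path realisation $\ul\alpha=\rho_1\cdot\hs(\tau,\alpha)\cdot\rho_2$ meets $A_\gamma$ only in the single subarc of the ramp $\rho_1$ lying in that collar (no switches of $\tau^X$ lie in $A_\gamma$); applying $\hat H$ turns this subarc into a spiral, and tightening the image back onto $\tau^X$ (uniqueness of carrying, Remark~\ref{rmk:annulusinducedbasics}, points~\ref{itm:embeddedcore}--\ref{itm:uniquecarrying}) produces $\rho_1\cdot(\text{once around }\tau^X.\gamma\text{ in the }A_\gamma\text{-orientation})\cdot\hs(\tau,\alpha)\cdot\rho_2$, i.e.\ the horizontal stretch of $\alpha$ with one extra full turn — so $\hl$ increases by $2\pi$, and the result is carried by $\hat H(\tau^X)$, hence by $\tau^X$ (Lemma~\ref{lem:functiongivestwist}). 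For $D_X^{-\epsilon}$ the same argument removes a turn, which is possible exactly when $\hl(\tau,\alpha)>2\pi$. Granting the claim, the first bullet is formal: iterating the $+$ part from the $m=0$ description gives $D_X^{\epsilon m}V(\tau^X)\subseteq\{\alpha\in\cc(\tau^X)\mid 2\pi m<\hl(\tau,\alpha)<2\pi(m+1)\}$, and conversely applying $D_X^{-\epsilon}$ exactly $m$ times to such an $\alpha$ (legitimate since $\hl>2\pi$ at each intermediate stage) lands in $\cc(\tau^X)$ with horizontal length in $(0,2\pi)$, hence in $V(\tau^X)$; the two ``in particular'' assertions are then immediate, the decomposition $\cc(\tau^X)=\bigcup_{j\ge0}D_X^{\epsilon j}V(\tau^X)$ because positivity and non-integrality of $\hl/2\pi$ place every carried arc in exactly one band $(2\pi j,2\pi(j+1))$, $j=\lfloor\hl(\tau,\alpha)/2\pi\rfloor\ge0$.

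For the second bullet, fix $\alpha\in V(\tau^X)$ and $\beta\in D_X^{\epsilon m}V(\tau^X)$ and write $\beta=D_X^{\epsilon m}(\alpha')$ with $\alpha'=D_X^{-\epsilon m}\beta\in V(\tau^X)$. I would compute $i(\alpha,\beta)$ in the universal cover $\bar\Hy^2_X\cong\R\times[-2,2]$ of $\ol{S^X}$, parametrised as in Remark~\ref{rmk:twistparam}: since distinct train path lifts are embedded and meet in a connected set (the arguments behind Remark~\ref{rmk:annulusinducedbasics}, point~\ref{itm:embeddedcore}, and Corollary~\ref{cor:distinctends}), $i(\alpha,\beta)$ equals the number of $2\pi\mathbb Z$-translates of a fixed lift of $\alpha$ that are linked, as arcs in the strip, with a fixed lift of $\beta$. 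Each such lift has one endpoint on $\R\times\{-2\}$ and one on $\R\times\{2\}$; writing $t_\xi-b_\xi$ for the difference of the $x$-coordinates of these endpoints of a lift of an arc $\xi$, and using the realisation $\beta=D_X^{\epsilon m}\alpha'$ (so a lift of $\beta$ has the same top endpoint as the corresponding lift of $\alpha'$ and a bottom endpoint shifted from it by $2\pi m$), this count equals the number of integers $k$ with $2\pi k$ strictly between two reals whose difference is $2\pi m+\big((t_{\alpha'}-b_{\alpha'})-(t_{\alpha}-b_{\alpha})\big)$, and this number lies in $\{m-1,m,m+1\}$ as soon as $\big|(t_{\alpha'}-b_{\alpha'})-(t_{\alpha}-b_{\alpha})\big|<2\pi$. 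So it remains to show that $\{\,t_\alpha-b_\alpha:\alpha\in V(\tau^X)\,\}$ is contained in an interval of length $<2\pi$; here I would use Lemma~\ref{lem:twistcurvetrees}: the ramps of $\tau|X$ lie in pairwise disjoint trees rooted along $\tau^X.\gamma$, so on each boundary line the ``leaf intervals'' of the trees are totally ordered exactly as their roots are and, being $2\pi$-periodic and disjoint, the leaf intervals of one period together occupy an interval of length $<2\pi$; combined with $0<\hl(\tau,\alpha)<2\pi$ (which forces the two feet of the horizontal stretch of $\alpha$ to lie within one period) and a short case analysis over which tree carries the incoming and which the outgoing ramp of each vertex cycle, this gives the bound. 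Taking $m=0$ specialises to $i(\alpha,\beta)\le1$ for $\alpha,\beta\in V(\tau^X)$, i.e.\ $\mathrm{diam}\big(V(\tau^X)\big)\le2$.

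The main obstacle is the last step of the second part: pinning the endpoint offsets $t_\alpha-b_\alpha$ of the vertex cycles into a single period. This is exactly what makes the count come out as $m\pm1$ — equivalently, what improves the diameter bound for $V(\tau^X)$ from the cruder constant of Remark~\ref{rmk:annulusinducedbasics}, point~\ref{itm:diambound}, down to $2$ — and it requires a careful, if elementary, analysis of the disjoint ramp-trees of Lemma~\ref{lem:twistcurvetrees} inside the parametrisation of Remark~\ref{rmk:twistparam}. A secondary, minor point is the sign bookkeeping in the first part: checking that the twist modelling function sending $x\mapsto x+2\pi$ on $\R\times\{0\}$ yields $D_X^{+\epsilon}$ rather than $D_X^{-\epsilon}$, using the definition of the sign of a twist curve in Remark~\ref{rmk:twistparam}; everything else is routine once the shear computation is in place.
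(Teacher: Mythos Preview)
Your first-bullet argument is correct and equivalent to the paper's, though the paper is more direct: instead of invoking twist modelling functions and then tightening, it builds the train-path lift of $D_X^{\epsilon m}(\alpha)$ explicitly in $\R\times[-2,2]$ by translating the lower ramp $\tilde\rho_-$ by $(2\pi m,0)$ and extending the horizontal stretch to $[x,y+2\pi m]\times\{0\}$, so the $+2\pi m$ in horizontal length is visible at once and carriedness is manifest. Your route via $\hat H$ and Lemma~\ref{lem:functiongivestwist} amounts to the same computation unwound.

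For the second bullet your linking-count strategy is exactly the paper's (it replaces the train-path lifts by straight segments $\seg(\alpha),\seg(\beta)$ with the same endpoints, notes these form no bigons, and counts how many $2\pi$-translates of one cross the other), but you are working too hard at the endpoint-offset step. The paper does not analyse the tree structure at all; instead it exploits the freedom left in Remark~\ref{rmk:twistparam} to impose the extra normalisation
\[
\tilde\upsilon^{-1}(\sigma)\ \subseteq\ \bigcup_{a\in P}(a-\nu,a+\nu)\times[-2,2],
\]
i.e.\ each tree of $\ol{\tau|X\setminus(\tau|X).\gamma}$ is squeezed into an arbitrarily thin vertical tube over its root. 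Then the endpoints $(x',2),(y',-2)$ of a lift of $\alpha\in V(\tau^X)$ satisfy $|x'-x|,|y'-y|<\nu$ where $[x,y]$ is the interval stretch, so the offset $x'-y'$ lies within $2\nu$ of $-\hl(\tau,\alpha)\in(-2\pi,0)$; the bound you need on differences of offsets drops out immediately for $\nu$ small, with no case analysis. After a symmetry reduction the count of crossing translates becomes the number of integers in an explicit open interval whose length lies strictly between $2\pi(m-1)$ and $2\pi(m+1)$, giving $m-1\le i(\alpha,\beta)\le m+1$ directly. Your proposed tree-ordering argument might be made rigorous, but the reparametrisation shortcut is what the paper actually does, and it dissolves precisely the step you flagged as the main obstacle.
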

\begin{proof}
The first equality in the first bullet has been shown, in the particular case $m=0$, in point \ref{itm:hlvertex} of the above list. For $m>0$: given $\alpha\in V(\tau^X)$, construct $\ul{\tilde\alpha}_\tau$ and the corresponding interval stretch $[x,y]$ as prescribed in Definition \ref{def:horizontallength}.

Let $\tilde\rho_+=\ul{\tilde\alpha}_\tau\cap(\R\times(0,2])$ and $\tilde\rho_-=\ul{\tilde\alpha}_\tau\cap(\R\times[-2,0))$. Define then $\rho'_-\coloneqq\{(x'+2\pi m,y')\in\R\times[-2,0)|(x',y')\in \rho_-\}$, and $\ul{\tilde\beta}_\tau\coloneqq\rho_+\cup [x,y+2\pi m]\times\{0\}\cup \rho'_-$. The path specified by $\ul\beta=q(\ul{\tilde\beta}_\tau)\subset\ol{S^X}$ is then a train path along $\tau$, realizing some $\beta\in \cc(\tau^X)$. By construction, $\hl(\tau,\beta)=\hl(\tau,\alpha)+2\pi m\in (2\pi m, 2\pi(m+1))$; and $\beta=D^{\epsilon m}(\alpha)$. Since all $\beta\in D_X^{\epsilon m}\cdot V(\tau^X)$ may be realized this way, this construction proves the $\subseteq$ inclusion in the given equality.

To show the opposite inclusion, we can pick any $\alpha\in\cc(\tau^X)$ with $2\pi m<\hl(\tau,\alpha)<2\pi(m+1)$ and define $\tilde\rho_+,\tilde\rho_-$ as above. Then we define $\rho_-'=\{(x'-2\pi m,y')\in\R\times[-2,0)|(x',y')\in \rho_-\}$, and  $\ul{\tilde\beta}_\tau=\rho_+\cup [x,y-2\pi m]\times\{0\}\cup \rho'_-$ (note that, due to our hypothesis on $\alpha$, $y>x+2\pi m$). Then, similarly as above, this gives $\beta\in \cc(\tau^X)$ with $\alpha=D_X^{\epsilon m}(\beta)$. Moreover $0<\hl(\beta)<2\pi$, hence $\beta\in V(\tau^X)$.

The second equality of the first bullet is immediate.

A preliminary discussion for the second bullet: we have seen in Lemma \ref{lem:twistcurvetrees} that $\tau|X$ consists of $\tau^X.\gamma$ plus a forest $\sigma$ of trees each having its root along $\tau^X.\gamma$. Let $P\subset \R$ be the set such that $P\times \{0\}=q^{-1}(\text{switches of }\tau^X\text{ along }\tau^X.\gamma)$; and let $\nu>0$ be small, to be constrained more precisely in a bit. In the conventions set up in Remark \ref{rmk:twistparam}, we may add the further request that
$$\tilde\upsilon^{-1}(\sigma)\subseteq \bigcup_{a\in P} (a-\nu,a+\nu)\times[-2,2].$$
Then, let $\alpha\in\cc(\tau^X)$ and $\ul{\tilde\alpha}_\tau$ be defined as above. Also, let $\alpha_-=\ul{\tilde\alpha}_\tau\cap \R\times\{-2\}, \alpha_+=\ul{\tilde\alpha}_\tau\cap \R\times\{2\}$ be the two endpoints of $\ul{\tilde\alpha}_\tau$. Let $\seg(\alpha)$ be defined as the straight line segment joining $\alpha_-$ with $\alpha_+$: since $\seg(\alpha)$ and $\ul{\tilde\alpha}_\tau$ are isotopic relatively to their endpoints, $q\left(\seg(\alpha)\right)$ is a representative of the isotopy class $\alpha\in\cc(X)$.

Given any $X\subset \R\times [-2,2], j\in \mathbb Z$, denote $X+j=\{(a+2\pi j,b)|(a,b)\in X\}$; and $\mathcal O(X)=\bigcup_{j\in\mathbb Z}(X+j)$.

However we pick another $\alpha\not=\beta\in \cc(\tau^X)$, the collection of segments $\mathcal O\left(\seg(\alpha)\right)\cup \mathcal O\left(\seg(\beta)\right)$ does not bound any bigon, so $q\left(\seg(\alpha)\cup\seg(\beta)\right)$ does neither.  Hence, $i(\alpha,\beta)$ is the number of points of $\mathcal O\left(\seg(\alpha)\right)\cap \seg(\beta)$; which is the same as saying, the number of $j\in\mathbb Z$ such that $\seg(\alpha)+j$ intersects $\seg(\beta)$.

Now take $m\in\mathbb N,\alpha\in V(\tau^X), \beta\in D_X^{\epsilon m}\cdot V(\tau^X)$ as in the second bullet in the statement. Let $[x,y]$ be the interval stretch for $\alpha$ corresponding to a definite choice of $\ul{\tilde\alpha}_\tau$, and $[z,w]$ the one for a choice of a lift $\ul{\tilde\beta}_\tau$. Also, let $\alpha_+=(x',2),\alpha_-=(y',-2), \beta_+=(z',2),\beta_-=(w',-2)$: we have $|x'-x|<\nu$ and similarly for the other letters. If $\nu$ is small, then $y'>x'$ and $z'>w'$.

We claim that, without loss of generality, one may suppose $y'-x'<w'-z'$. Recall indeed that $0<y-x<2\pi$ and $2\pi m<z-w<2\pi(m+1)$. So, if $m>0$ and $\nu$ is small enough, then also $y'-x'<w'-z'$; whereas, if $m=0$, the roles of $\alpha$ and $\beta$ can be swapped, so the same supposition can be done by symmetry after discarding the only case left out: $y'-x'=w'-z'$. In this special case, indeed $\seg(\alpha)$ and $\seg(\beta)+j$ are parallel for any $j$, so $i(\alpha,\beta)=0$ consistently with our statement.

Now, $\seg(\alpha)+j$ intersects $\seg(\beta)$ if and only if their respective endpoints along $\R\times\{2\}$ come in the reverse order with respect to the ones on $\R\times\{-2\}$. And, as $y'-x'<w'-z'$, this condition is verified if and only if $[x'+2\pi j,y'+2\pi j]\subseteq (z',w')$ i.e. $j\in \frac{1}{2\pi}(z'-x',w'-y')$: we estimate how many integer $j$ lie within this interval.

From an algebraic manipulation of the above inequalities involving $x,y,z,w$ we get, on the one hand, $w-y>z-x +2\pi(m-1)$. If $\nu$ is small enough, we have also $w-y-2\nu>z-x+2\pi(m-1)+2\nu$, hence $(z'-x',w'-y')\supseteq (z-x+2\nu,w-y-2\nu)\supseteq (z-x+2\nu,z-x+2\nu+2\pi(m-1))$ --- we agree that an interval is empty if its supremum is lower than its infimum. In this last interval there are exactly $m-1$ elements of $2\pi\mathbb Z$, so the possible values of $j$ are at least as many: $i(\alpha,\beta)\geq m-1$.

On the other hand, a similar manipulation gives $w-y<z-x +2\pi(m+1)$ and, again for $\nu$ small enough, $w-y+2\nu<z-x+2\pi(m-1)-2\nu$. Hence $(z'-x',w'-y')\subseteq (z-x-2\nu,w-y+2\nu)\subseteq (z-x-2\nu,z-x-2\nu+2\pi(m+1))$. Since in this last interval there are exactly $m+1$ elements of $2\pi\mathbb Z$, we get $i(\alpha,\beta)\leq m+1$.

The last claim in the statement is a direct application of Lemma \ref{lem:annulus_distance}.
\end{proof}

We introduce the following definition to measure how many \emph{complete} Dehn twists are induced by a twist splitting sequence:

\begin{defin}\label{def:rot}
Let $\bm\tau=(\tau_j)_{j=0}^N$ be a generic splitting sequence of almost tracks such that a twist curve $\gamma$ stays a twist curve throughout, with a given $A_\gamma$-family of twist collars.

We define the \nw{rotation number} about $\gamma$ of a subsequence $\bm\tau(k,l)$ as
$$
\rot_{\bm\tau}(\gamma,k,l)\coloneqq \min_{\alpha\in\cc(\tau_l|X)}\left(\left\lfloor\frac{\hl_\alpha(k)}{2\pi}\right\rfloor-\left\lfloor\frac{\hl_\alpha(l)}{2\pi}\right\rfloor\right).
$$
When some of the data is understood from the context we will use a lighter notation such as $\rot(\gamma,k,l)$ or $\rot(k,l)$; or $\rot_{\bm\tau}(\gamma),\rot_{\bm\tau}$ when the rotation number is computed on the entire splitting sequence rather than a subsequence.
\end{defin}

\begin{rmk}\label{rmk:rotbasics}
We give some basic facts related with the rotation number, using the same notations already set up in the definition above. 

\begin{enumerate}
\item \label{itm:concatrot_below} \textit{If $0\leq k\leq r\leq l\leq N$, then $\rot(k,l)\geq\rot(k,r)+\rot(r,l)$.}
This is just because $\rot(k,l)\geq \min_{\alpha\in\cc(\tau_l|X)}\left(\left\lfloor\frac{\hl_\alpha(k)}{2\pi}\right\rfloor-\left\lfloor\frac{\hl_\alpha(r)}{2\pi}\right\rfloor\right) + \min_{\alpha\in\cc(\tau_l|X)}\left(\left\lfloor\frac{\hl_\alpha(r)}{2\pi}\right\rfloor-\left\lfloor\frac{\hl_\alpha(l)}{2\pi}\right\rfloor\right)\linebreak \geq \rot(k,r)+\rot(r,l)$. The second inequality is explained by replacing the minimum in the first summand with the one taken on the larger set $\cc(\tau_r|X)$.

\item\label{itm:rotwithvertexonly} \textit{An alternative definition is $\rot_{\bm\tau}(\gamma,k,l)= \min_{\alpha\in V(\tau_l|X)}\left\lfloor\frac{\hl_\alpha(k)}{2\pi}\right\rfloor$. In particular the rotation number is always nonnegative.}

We can replace the minimum over $\cc(\tau_l|X)$ with the one over $V(\tau_l|X)$ because of Lemma \ref{lem:onerollingdirection}.

\item \label{itm:rotofdehn} \textit{If $\bm\tau(k,l)$ is a splitting sequence of twist nature, associated with a twist modelling function $h$ having $h(x,0)=x+2\pi m$ for some $m\in\mathbb N$ (i.e. a neighbourhood of $\gamma$ in $S$ differs, from $\tau_k$ to $\tau_l$, according to the self-map $H=D_\gamma^{\epsilon m}:S\rightarrow S$) then $\rot(k,l)=m$.} This is a simple consequence of point \ref{itm:twistnaturehl} after Definition \ref{def:horizontallength}.

\item \textit{If $0\leq k\leq r\leq l\leq N$ and $\bm\tau(r,l)$ is a sequence consisting of slides only then $\rot(k,r)=\rot(k,l)$.} Since the inverse of a slide move is also a slide move, from point \ref{itm:concatrot_below} in this list we get $\rot(k,r)\leq\rot(k,l)\leq \rot(k,r)$.

\item \textit{The rotation number is not affected by changing the train tracks within their isotopy class and changing the choice for the parametrizations defined in Remark \ref{rmk:twistparam}.}

\item \label{itm:rot_vs_dt_vertices} \textit{If $m=\rot(k,l)$, then
$$V(\tau_l^X)\subseteq \left(D_X^{\epsilon m}\cdot V(\tau_k^X)\right)\cup \left(D_X^{\epsilon(m+1)}\cdot V(\tau_k^X)\right)\cup \left(D_X^{\epsilon(m+2)}\cdot V(\tau_k^X)\right).$$}
This is a consequence of Lemma \ref{lem:onerollingdirection}. By definition of rotation number, $\hl_\alpha(k)>2\pi m$ for all $\alpha \in V(\tau_l^X)$, and so $V(\tau_l^X)\subseteq \bigcup_{j\geq m} D_X^{\epsilon j}\cdot V(\tau_k^X)$. Fix $\alpha$ realizing the minimum in the definition of $\rot(k,l)$: then $\alpha= D_X^{\epsilon m}(\alpha')$ for some $\alpha'\in V(\tau_k^X)$.

Given any $\beta\in V(\tau_l^X)\cap \left(D_X^{\epsilon(m+j)}\cdot V(\tau_k^X)\right)$ for a fixed $j\geq 0$, write correspondingly $\beta=D_X^{\epsilon m}(\beta')$ for $\beta'\in D_X^{\epsilon j}\cdot V(\tau_k^X)$. Then said Lemma implies that $i(\alpha,\beta)=i(\alpha',\beta')\geq j-1$ while, on the other hand, $i(\alpha,\beta)\leq 1$ because these two arcs both belong to $V(\tau_l^X)$. So $j\leq 2$, as required.

\item \label{itm:rot_vs_dist}\textit{If $m=\rot(k,l)$, then $m\leq d_{\cc(X)}\left(V(\tau_k^X),V(\tau_l^X)\right)\leq m+4$.}

This is a consequence of the point above and Lemma \ref{lem:onerollingdirection}. The core of the argument is that, if $\alpha\in V(\tau_k^X)$ and $\beta\in V(\tau_l^X)$ are distinct, then $m-1\leq i(\alpha,\beta)\leq m+3$ and Lemma \ref{lem:annulus_distance} gives $m\leq d_{\cc(X)}(\alpha,\beta)\leq m+4$. 

\item \textit{If $\alpha\in\cc(\tau_l^X)$ then $\left\lfloor \frac{\hl_\alpha(k)}{2\pi}\right\rfloor-\left\lfloor \frac{\hl_\alpha(l)}{2\pi}\right\rfloor\leq  \rot(k,l)+2$.}

Point \ref{itm:rot_vs_dt_vertices} together with Lemma \ref{lem:onerollingdirection} give this for $\alpha\in V(\tau_l^X)$ first, and then for all $\alpha\in\cc(\tau_l^X)$.

\item \label{itm:concatrot_above} \textit{If $0\leq k\leq r\leq l\leq N$, then $\rot(k,l)\leq \rot(k,r)+\rot(r,l)+2$.}
If $\alpha\in \cc(\tau_l^X)$ realizes the minimum defining $\rot(r,l)$ then
$\rot(k,l)\leq \left(\left\lfloor \frac{\hl_\alpha(k)}{2\pi}\right\rfloor-\left\lfloor \frac{\hl_\alpha(r)}{2\pi}\right\rfloor\right) + \rot(r,l)$
and we use the point above.

\item \label{itm:tmfbeyondrot} \textit{If $\bm\tau(k,l)$ is a splitting sequence of twist nature, associated with a twist modelling function $h$, then $h(x,0)<x+2\pi(\rot(k,l)+3)$ for all $x\in\R$ which are upper obstacles for $\tau_k^X$.}

For each $x$ upper obstacle for $\tau_k^X$ there are some $\alpha\in V(\tau_l^X)$ and $y>x$ such that $[x,y]$ is an interval stretch for $\alpha$ with respect to $\tau_k$; from point \ref{itm:rot_vs_dt_vertices} above, $y-x=\hl_\alpha(k)<2\pi(\rot(k,l)+3)$; on the other hand, because of point \ref{itm:twistnaturehl} after Definition \ref{def:horizontallength}, $y-h(x)=\hl_\alpha(l)\in(0,2\pi)$; therefore $h(x)-x\leq 2\pi(\rot(k,l)+3)$.
\end{enumerate}
\end{rmk}

\begin{lemma}[Dehn twists with remainder]\label{lem:dehn+remainder}
Let $\bm\tau=(\tau_j)_{j=0}^N$ be a generic splitting sequence of almost tracks on a surface $S$ which has twist nature about a fixed curve $\gamma$ with sign $\epsilon$. Suppose the sequence is modelled according to Remark \ref{rmk:twistnaturemodelling}, with a fixed twist collar $A_\gamma$. Let $m\coloneqq \rot_{\bm\tau}(\gamma,0,N)$.

Then there is another splitting sequence $\bm\tau'=(\tau_j)_{j=0}^{N''}$ with twist nature about $\gamma$; there are two indices $0<N'\leq N''$ and a factorization $N''-N'=mk$, such that:
\begin{itemize}
\item $\tau'_0=\tau_0$, and $\tau'_{N''}$ is obtained from $\tau_N$ with slides only;
\item $\rot_{\bm\tau'}(\gamma,0,N')=0$;
\item for any choice of indices $N'\leq j < j+k\leq N''$, $\tau'_{j+k}=D_\gamma^{\epsilon}(\tau'_j)$, where $D_\gamma$ is the Dehn twist about $\gamma$ in $S$. In particular, $\tau'_{N''}=D_\gamma^{\epsilon m}(\tau_{N'})$.
\end{itemize}
\end{lemma}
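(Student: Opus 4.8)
The plan is to work entirely through the dictionary of Remark \ref{rmk:twistnaturemodelling} between sequences of twist nature and twist modelling functions, and to assemble $\bm\tau'$ from two consecutive phases: a \emph{remainder} phase $\bm\tau'(0,N')$ with $\rot=0$, followed by $m$ identical \emph{Dehn-twist blocks}, each of a common length $k$, so that $N''-N'=mk$. Throughout, model $\bm\tau$ as in Remark \ref{rmk:twistnaturemodelling}, write $\eta:=h_0^N|_{\R\times\{0\}}$ (a strictly increasing $2\pi$-equivariant map with $\eta(x)\ge x$ for all $x$, the latter because a twist modelling function is non-increasing in $t$ and equals the identity near $t=1$), and let $U_0,L_0,F_0\subseteq\R$ be the upper, lower and fake obstacles of $\tau_0^X$; these are pairwise disjoint discrete $2\pi\mathbb Z$-sets, since each switch along $\tau_0^X.\gamma$ carries exactly one out-branch-end and hence exactly one obstacle of one type.

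I would first dispose of the Dehn-twist blocks, which is the soft part. By Remark \ref{rmk:twistfunctionsflexibility}(3) together with Lemma \ref{lem:functiongivestwist}, once $\tau'_{N'}$ is fixed the homeomorphism $D_\gamma^{\epsilon}$ is realized by a twist-nature sequence $w_0=\tau'_{N'},w_1,\dots,w_k=D_\gamma^{\epsilon}(\tau'_{N'})$ of some length $k$. Applying $D_\gamma^{j\epsilon}$ (which conjugates twist collars and modelling functions and so carries twist-nature sequences to twist-nature sequences of the same length) to this block for $j=0,1,\dots,m-1$ and concatenating — legitimate since $D_\gamma^{j\epsilon}(w_k)=D_\gamma^{(j+1)\epsilon}(w_0)$ — produces $\bm\tau'(N',N'')$ with $N''-N'=mk$, endpoint $\tau'_{N''}=D_\gamma^{m\epsilon}(\tau'_{N'})$, and with $\tau'_{j+k}=D_\gamma^{\epsilon}(\tau'_j)$ for every $N'\le j\le N''-k$ because block $j+1$ is block $j$ pushed forward by $D_\gamma^{\epsilon}$.

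For the remainder phase, the idea is to choose a strictly increasing $2\pi$-equivariant $\eta_0$ with $\mathrm{id}\le\eta_0\le\eta$, obtained from $\eta$ by ``subtracting $m$ full rotations, truncated at the identity'': start from $x\mapsto\max(x,\eta(x)-2\pi m)$ and perturb generically so that $\eta_0(U_0)$ is disjoint from $L_0\cup F_0$, from $U_0$, and from its own translates. Then $\tau'_{N'}:=H_0(\tau_0)$, with $H_0$ built from any twist modelling function restricting to $\eta_0$ on $\R\times\{0\}$, is a generic almost track by Lemma \ref{lem:functiongivestwist}, which also delivers a twist-nature sequence $\bm\tau'(0,N')$ starting at $\tau_0$ and associated with a modelling function restricting to $\eta_0$. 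The crucial input from $\rot(0,N)=m$ is that every upper obstacle of $\tau_0^X$ advances along $\bm\tau$ by strictly more than $2\pi(m-1)$: indeed $\gamma$ is still a twist curve for $\tau_N$, so every vertex cycle of $\tau_N|X$ has horizontal length $<2\pi$ over $\tau_N$ (property \ref{itm:hlvertex} after Definition \ref{def:horizontallength}) yet $\ge 2\pi m$ over $\tau_0$ (Remark \ref{rmk:rotbasics}(\ref{itm:rotwithvertexonly}), using Lemma \ref{lem:twistininduced} that every hitting branch end lies in $\tau_N|X$ and is covered by such a vertex cycle). Hence the truncation only occurs within a band of width $<2\pi$, and one keeps $\eta_0$ small enough near one fixed pair $(x_*,y_*)$ with $y_*-x_*<2\pi$ (such a pair exists since $\rot(0,0)=0$) that this pair survives as a vertex cycle of $\tau'_{N'}|X$, whence $\rot_{\bm\tau'}(\gamma,0,N')=0$ by Remark \ref{rmk:rotbasics}(\ref{itm:rotwithvertexonly}).

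The main obstacle is the last requirement, that $\tau'_{N''}$ be obtained from $\tau_N$ by slides only. By the composition rule for modelling functions, $\tau'_{N''}=D_\gamma^{m\epsilon}(\tau'_{N'})$ has a modelling function restricting to $\eta_0+2\pi m$ over $\tau_0$, while $\tau_N$ has $\eta$; by Remark \ref{rmk:twistfunctionsflexibility}(2) the two tracks depend, up to isotopy, only on those boundary functions. The linear interpolation between $\eta$ and $\eta_0+2\pi m$ is a monotone family of strictly increasing $2\pi$-equivariant maps, failing the legitimacy condition of Lemma \ref{lem:functiongivestwist} at finitely many parameter values, each of which corresponds to one elementary move (slide or split, and in either direction, which is allowed since a slide move is invertible while a twist split is not) of a twist-nature sequence connecting $\tau_N$ to $\tau'_{N''}$. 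To see these are all slides I would first pin down, from Lemma \ref{lem:twistcurvetrees} and Definition \ref{def:twistmoves}, the dichotomy that carrying a hitting branch end past a switch is a twist \emph{split} exactly when that switch is an obstacle of one distinguished kind and a twist \emph{slide} otherwise; the claim then reduces to choosing $\eta_0$ (within the constraints above) so that for every upper obstacle $x$ the positions $\eta(x)$ and $\eta_0(x)+2\pi m$ lie between the same two consecutive split-type obstacles. This is possible precisely because $\eta$ ``does $m$ full rotations plus a remainder of size less than $2\pi$'' — peeling the $m$ rotations off into the separate blocks, the residual discrepancy $\eta_0(x)+2\pi m-\eta(x)$ stays inside a single gap of the split-obstacle set once $\eta(x)-x>2\pi(m-1)$ is used together with a generic choice of $\eta_0$. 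This bookkeeping, the analogue of Mosher's ``rolling up'' argument (cf.\ \cite{mosher}, p.~215), is the delicate step; the rest is routine once the modelling-function dictionary and the slide/split dichotomy are in place. Finally one checks that $\bm\tau'=\bm\tau'(0,N')\ast\bm\tau'(N',N'')$ is a single twist-nature splitting sequence, which is immediate since both phases have twist nature about $\gamma$ and share the track $\tau'_{N'}$.
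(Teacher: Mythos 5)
Your proof takes a genuinely different route from the paper's. The paper first \emph{extends} $\bm\tau$ beyond $\tau_N$ with slides only, via a careful recursion (its Step~1), so that the resulting modelling function satisfies $h(x,0)\ge x+2\pi m$ for all $x$; it then simply subtracts $2\pi m$ to define the remainder phase, and the final track $\tau'_{N''}$ coincides with the extended $\tau_{N'''}$, which is connected to $\tau_N$ by exactly those extension slides. You instead \emph{truncate} the modelling function at the identity to get $\eta_0$, and then argue that the interpolation from $\eta$ to $\eta_0+2\pi m$ crosses no lower obstacles. The high-level structure is sound and, if the bookkeeping went through, would be cleaner than the paper's by avoiding the recursion.

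However, your justification of the central bookkeeping claim is wrong as stated. You assert that the discrepancy $\eta_0(x)+2\pi m-\eta(x)$ ``stays inside a single gap of the split-obstacle set once $\eta(x)-x>2\pi(m-1)$ is used together with a generic choice.'' Having the discrepancy bounded by $2\pi$ is not sufficient: consecutive lower obstacles along $\R\times\{0\}$ can be (and generically are) much closer than $2\pi$ apart, since there is one lower obstacle per $2\pi\mathbb Z$-coset for each favourable branch end and there may be up to $N_3$ of those. The claim is nevertheless \emph{true}, but for a different reason, and this is exactly the point the paper's recursion labours to establish. Here is what is actually needed: if $y'\in L_0$ lies in $(\eta(x),\,x+2\pi m)$ for some upper obstacle $x$, then since $\eta(x)-x>2\pi(m-1)$ one has $y'-\eta(x)<2\pi$, so the arc $\alpha'$ with interval stretch $[\eta(x),y']$ in $\tau_N$ is a vertex cycle of $\tau_N|X$; but its stretch in $\tau_0$ is $[x,y']$ with $y'-x<2\pi m$, whence $\lfloor\hl_{\alpha'}(0)/2\pi\rfloor\le m-1$, contradicting $\rot(0,N)=m$ via Remark \ref{rmk:rotbasics}(\ref{itm:rotwithvertexonly}). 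Without this you have not proved that the interpolation is slides only. (Two lesser remarks: the justification for $\rot_{\bm\tau'}(\gamma,0,N')=0$ is also too loose --- ``such a pair exists since $\rot(0,0)=0$'' does not by itself give a pair surviving the truncation; you should take $\alpha$ realizing the minimum in $\rot(0,N)$ and use $D_X^{-\epsilon m}(\alpha)$, i.e.\ the pair $(x,y-2\pi m)$, whose survival $\eta_0(x)<y-2\pi m$ follows from $\eta(x)<y$. And the genericity of the perturbation of $\eta_0$ must be kept small enough that it does not push $\eta_0(x)+2\pi m$ across the first obstacle beyond $x+2\pi m$; this is routine but should be said.)
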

\begin{proof}
Let $X$ be a regular neighbourhood of $\gamma$, and $h$ be the twist modelling function associated with $\bm\tau(0,N)$.

\step{1} make sure that $h(x,0),0\geq x+2\pi m$, possibly operating further slides.

This step is devoted to defining recursively the entries of an extension $\bm\tau(N,N''')$ of the splitting sequence $\bm\tau$. This extension will also be of twist nature, \emph{with no splits}, but with the possibility that some $\tau_{j+1}$ is isotopic to $\tau_j$.

The extension will be associated with a twist modelling function $h'''$, such that $h'''(h(x,0),0)\geq x+2\pi m$ for all $x\in\R$. Actually, it will suffice to show that this inequality holds for every $x$ which is an upper obstacle for $\tau_0^X$. If this is true, one will be able to adapt $h'''$ so that the inequality holds for all $x\in\R$, without changing $\tau_{N'''}$.

We fix some notation for the entries $\bm\tau(N,N''')$, which we are about to build. For $0\leq j \leq N'''-N$, label $\beta_j^0,\ldots,\beta_j^r$ the segments of $(\tau_{N+j}|X).\gamma$ delimited by two switches, and such that their extremities are located at large branch ends. Due to Lemma \ref{lem:twistininduced}, these segments are exactly the ones which can be obtained as $\hs_\alpha(N+j)$ for some $\alpha\in V(\tau_{N+j}^X)$; of course, all the $\alpha$ defining the same segment have the same $\hl_\alpha(N+j)$, and we call it $\hl(\beta_j^i)$.

Note that, as $\bm\tau$ is a splitting sequence of twist nature, two elements $\alpha,\alpha'\in V(\tau_{N+j}^X)$ have $\hs_\alpha(N+j)=\hs_{\alpha'}(N+j)$ if and only if $\hs_\alpha(N+j')=\hs_{\alpha'}(N+j')$ for each $j'\leq j$: it is just another way of phrasing the behaviour of obstacles along a sequence which was noted in Remark \ref{rmk:twistnaturemodelling}. This observation establishes a natural bijection between the collections $(\beta_j^i)_i$, $(\beta_{j'}^i)_i$ for $j< j'$: i.e. two segments $\beta_j^i$ and $\beta_{j'}^{i'}$ may be supposed to comply with the condition $i=i' \Longleftrightarrow$ there is an element $\alpha\in V(\tau_{N+j'}^X)$ such that $\beta_j^i=\hs_\alpha(N+j)$ and $\beta_{j'}^{i'}=\hs_\alpha(N+j')$.

For each $i$, then, fix an $\alpha^i\in V(\tau_N^X)=V(\tau_{N+1}^X)=\ldots=V(\tau_{N'''}^X)$ such that $\hs_{\alpha^i}(\tau_{N+j})=\beta_j^i$ for all $j$.

Furthermore, we assign superscripts so that $\beta_j^i\subset \beta_j^{i'}\Rightarrow i<i'$. In order to get this condition one may impose, for instance, that the sequence $\left(\hl(\beta_0^i)\right)_i$ is increasing. The slides along the sequence will then force $\beta_j^i\subset \beta_j^{i'} \Rightarrow \beta_{j+1}^i\subset \beta_{j+1}^{i'}$ for all $0\leq j < N'''-N$.

During the course of our recursion, each almost track $\tau_{N+j}$ will have the following property: 
\begin{center}
$\hl(\beta_j^i)\leq \hl_{\alpha^i}(0)-2\pi m$ for all $i\leq j$.
\end{center}

The sequence will then stop at the index $N'''=N+r$. At that point, recall point \ref{itm:twistnaturehl} after Definition \ref{def:horizontallength}: given any upper obstacle $x$ for $\tau_0^X$, there is an $\alpha^i$ such that two consistent choices for interval stretches of $\alpha^i$ with respect to $\tau_0$ and $\tau_{N'''}$ turn out to be $[x,y]$ and $[h'''(h(x,0),0),y]$ respectively, for some $y\in\R$ lower obstacle for both $\tau_0^X,\tau_{N'''}^X$. This implies $\hl_{\alpha^i}(j)\leq \hl_{\alpha^i}(0)-2\pi m\Rightarrow h'''(h(x,0),0)>x+2\pi m$, as desired.

We now enter the recursive process: suppose that $\tau_{N+j}$ has been defined for some $0\leq j <N'''-N$: we construct $\tau_{N+j+1}$. If $\hl_{\alpha^{j+1}}(N+j)<\hl_{\alpha^{j+1}}(0)-2\pi m$, then it is fine to set $\tau_{N+j+1}=\tau_{N+j}$. Else, denote $[x_0,y]$, $[\xi,y]$ two consistent choices for interval stretches of $\alpha^{j+1}$ with respect to $\tau_0$ and $\tau_{N+j}$, respectively. By assumption, $\xi<x_0+2\pi m$.

We claim the following:
\begin{itemize}
\item there exists a twist modelling function $h_j$ such that $h_j(\xi+2\pi \zeta,1)=x_0+2\pi(m+\zeta)$ for all $\zeta\in\mathbb Z$, and $h(x,1)=x$ for any $x\not\in\xi+2\pi\mathbb Z$ which is an upper obstacle for $\tau_{N+j}^X$; 
\item given $H_j$ the self-map of $S$ derived from $h_j$ with the construction given in Remark \ref{rmk:twistnaturemodelling}, $\tau_{N+j+1}\coloneqq H_j(\tau_{N+j})$ is isotopic, or obtained with twist slides only, from $\tau_{N+j}$.
\end{itemize}

If it is possible to define $\tau_{N+j+1}$ this way, then $\hl(\beta_{j+1}^{j+1})=y-h_j(\xi,0)= y-x_0-2\pi m= \hl_{\alpha^j}(0)-2\pi m$; and for, $i\leq j$, $\hl(\beta_{j+1}^i)\leq \hl(\beta_j^i) \leq \hl_{\alpha^i}(0)-2\pi m$ as required by the property claimed above.

The claim in the first bullet is true if and only if the interval $(\xi,x_0+2\pi m]$ contains no upper obstacles for $\tau_{N+j}^X$ (see also Remark \ref{rmk:twistfunctionsflexibility}), and the one in the second bullet is true if and only if $(\xi,x_0+2\pi m]$ contains no lower obstacles for $\tau_{N+j}^X$.

Suppose that an upper or lower obstacle $\bar\xi$ exists in the specified segment, for a contradiction. Then one between $[\xi,\bar\xi]$ and $[\bar\xi, y]$ is a connected component of $q^{-1}(\beta_j^i)$, for some $i$; clearly $i\leq j$, because by construction $\beta_j^i\subseteq \beta_j^{j+1}$. Therefore $\hl(\beta_j^i) \leq \hl_{\alpha^i}(0)-2\pi m$.

If $\bar\xi$ is a lower obstacle, then $[\xi,\bar\xi]$ is the one of the two segments which makes a connected component of $q^{-1}(\beta_j^i)$. But then $\hl(\beta_j^i) \leq \hl_{\alpha^i}(0)-2\pi m$ implies $\xi\geq x_0+2\pi m$, contrary to the assumption. If $\bar\xi$ is an upper obstacle, then $[\bar\xi, y]$ is a connected component of $q^{-1}(\beta_j^i)$. Let $[\bar x_0,y]$ be an interval stretch for $\alpha^i$ in $\tau_0^X$, consistent with the fixed interval stretch $[\bar\xi, y]$ related to $\tau_{N+j}^X$. The hypothesis $\hl(\beta_j^i)=\hl_{\alpha^i}(N+j)\leq \hl_{\alpha^i}(0)-2\pi m$ translates into $\bar x_0 \leq \bar\xi -2\pi m$. On the other hand, since $\bar\xi\leq x_0+2\pi m$ by definition, we get $\bar x_0\leq x_0$; and this is impossible as it would imply $\beta_0^i\supseteq \beta_0^{j+1}$ while $\beta_{N+j}^i\subsetneq \beta_{N+j}^{j+1}$.

This concludes the recursion argument. The claimed twist modelling function for $\bm\tau(N,N''')$ is, of course, $h'''\coloneqq h_{N'''-N-1}\circ \cdots \circ h_0$.

\step{2} proof of the lemma.

We can now change the notation partially: remove from $\bm\tau(N,N''')$ any $\tau_j$ such that $\tau_{j+1}$ is isotopic to it (the value of $N'''$ decreases accordingly). Moreover, since the original $h$ will not be needed, for a simpler notation let $h$ be the twist modelling map associated with the entire sequence $\bm\tau(0,N''')$. Call $H$ the self-bijection of $S$ that one obtains from $h$. Furthermore, consider a twist modelling function $h_D$ (and, consequently, a map $H_D:S\rightarrow S$) defined with the condition that $h_D(x,0)=x+2\pi\eqqcolon \eta_D(x)$. According to what noted in Remark \ref{rmk:twistfunctionsflexibility}, such a map $h_D$ exists, and $H_D$ is a diffeomorphism of $S$, in the isotopy class of $D_\gamma^\epsilon$. 

Consider now the function $\eta':\R\rightarrow\R$ defined by $\eta'(x)=h(x,0)-2\pi m$. Clearly $\eta'$ is a strictly increasing function; also, $\eta(x)\geq x$ since, by construction, $h(x,0)\geq x+2\pi m$. So, again, it is possible to find a twist modelling function $h':\R\times[0,1]\rightarrow \R$ with $h'|_{\R\times\{0\}}=\eta'$ (with the relative map $H':S\rightarrow S$).

The composition $(\eta_D)^{m}\circ \eta'$ agrees with $h|_{\R\times\{0\}}$. So, according to what noted in Remark \ref{rmk:twistfunctionsflexibility}, $(H_D)^{m}\circ H'(\tau_0)$ is isotopic to $H(\tau)=\tau_{N'''}$.

The new sequence $\bm\tau'$ is then the concatenation of $m+1$ sequences, each obtained from an application of Lemma \ref{lem:functiongivestwist}. The first one, $\bm\tau'(0,N')$, is a splitting sequence of twist nature built from $\tau_0$ and $h'$: the Lemma's hypotheses are met, as by definition $H'(\tau_0)=D_\gamma^{-\epsilon m}\circ H(\tau_0)$ is a generic almost track.

Let $\alpha\in V(\tau_{N'''}^X)$ be an arc realizing the minimum in the definition of $\rot_{\bm\tau}(0,N''')$; then $\hl_\alpha(0)\in (2\pi m,2\pi(m+1))$, so if $[x_0,y]$ and $[x_{N'''},y]$ are interval stretches for $\alpha$ in $\tau_0$ and $\tau_{N'''}$ respectively, then $x_0\in (y-2\pi(m+1),y-2\pi m)$ and $x_{N'''}=h(x_0,0) \in (y-2\pi,y)$. An interval stretch for $\alpha$ in $\tau'_{N'}$ is instead $[\xi,y]$ where $\xi=\eta'(x_0)= h(x_0,0)-2\pi m \in (y-2\pi(m+1),y-2\pi m)$. So, in the sequence $\bm\tau'$, both $\hl_\alpha(0), \hl_\alpha(N')\in (2\pi m,2\pi (m+1))$ and this implies that $\rot_{\bm\tau'}(0,N')=0$.

Another subsequence $\bm\tau'(N',N'+k)$ will turn $\tau_{N'}$ into $H_D(\tau_{N'})$; and, for $1\leq j<m$, one can consistently define $\bm\tau'(N'+kj)= (H_D)^j\cdot\bm\tau'(N',N'+k)$ --- i.e. transforming each entry of $\bm\tau'(N',N'+k)$ under $(H_D)^j$. Set $N''\coloneqq N'+km$: by definition, $\tau'_{N''}=\tau_{N'''}$.
\end{proof}

\begin{lemma}\label{lem:twistsplitnumber}
Let $N_3=N_3(S)$ be a constant such that, given any generic almost track $\tau$ on $S$, and any twist curve $\gamma$ for $\tau$, for each side of $\tau^{\nei(\gamma)}.\gamma$ and any orientation on $\gamma$ there are at most $N_3$ branch ends sharing a switch with $\tau^{\nei(\gamma)}.\gamma$, located on the specified side and giving $\gamma$ the required orientation.

If $\bm\tau=(\tau_j)_{j=0}^N$ is a splitting sequence of almost tracks and $\bm\tau(k,l)$ has twist nature about $\gamma\in W(\tau_k)$, then the number of twist splits in $\bm\tau$ is bounded by $N_3^2\left(\rot(k,l)+3\right)$.
\end{lemma}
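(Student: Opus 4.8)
The idea is to work in the twist modelling picture of $\bm\tau(k,l)$ and estimate how far the branch ends hitting $A_\gamma$ are dragged along $\gamma$, bounding the associated twist modelling function in terms of the rotation number. Fix a regular neighbourhood $X$ of $\gamma$ and a twist collar $A_\gamma$, and model $\bm\tau(k,l)$ as in Remark~\ref{rmk:twistnaturemodelling}: there is a twist modelling function $h=h_k^l$ associated with $\bm\tau(k,l)$, and each elementary move $\tau_j\to\tau_{j+1}$ ($k\le j<l$) — necessarily a twist split or a twist slide — shifts exactly one coset $x+2\pi\mathbb Z$ of upper obstacles of $\tau_j^X$ forward past one switch of $\tau_j^X.\gamma$, fixing all the remaining obstacles, while the single-move functions compose to $h$, i.e. $h_k^{j+1}=h_j^{j+1}\circ h_k^j$. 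Since the inverse of a twist slide is not a twist slide (Definition~\ref{def:twistmoves}), and a twist split also pushes the relevant branch end hitting $A_\gamma$ forward with respect to the $A_\gamma$-orientation, every such shift is in the increasing direction; hence the image of each upper obstacle $x$ of $\tau_k^X$ moves monotonically and ends at $h(x,0)\ge x$. By the defining property of $N_3$, applied on the $A_\gamma$-side of $\tau_k^X.\gamma$ and for the $A_\gamma$-orientation, a fundamental domain $[0,2\pi)$ contains at most $N_3$ upper obstacles $x_1<\dots<x_p$ ($p\le N_3$), one in each coset.

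Via the factorization $h_k^{j+1}=h_j^{j+1}\circ h_k^j$, every twist split in $\bm\tau(k,l)$ is charged to exactly one of these cosets, namely the $x_s$-coset whose current image the corresponding move advances; and by Definition~\ref{def:twistmoves} such a move advances the image of $x_s$ across a \emph{large} branch of the current track $\tau_j^X.\gamma$ (a twist slide would instead advance it across a mixed branch). Now Remark~\ref{rmk:rotbasics}, point~\ref{itm:tmfbeyondrot}, gives $h(x_s,0)<x_s+2\pi\bigl(\rot_{\bm\tau}(\gamma,k,l)+3\bigr)$, so the image of $x_s$ sweeps forward through an interval of length strictly less than $2\pi\bigl(\rot_{\bm\tau}(\gamma,k,l)+3\bigr)$, that is, across at most $\rot_{\bm\tau}(\gamma,k,l)+3$ fundamental domains. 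It then remains to bound, per fundamental domain, how many of these forward crossings can be twist splits: using that the obstacles coming from branch ends avoiding $A_\gamma$ never move in a twist-nature sequence (Remark~\ref{rmk:twistnaturemodelling}), and that the large branch crossed by a twist split has one end at a switch carrying a branch end hitting $A_\gamma$ and the other end at a switch adjacent to a branch end on the opposite side (Lemma~\ref{lem:twistcurvetrees}), one shows that consecutive twist-split crossings by a fixed upper obstacle are separated by the $2\pi$-periodic fixed positions of the branch ends avoiding $A_\gamma$, of which there are at most $N_3$ per period (enlarging $N_3$ if necessary, so that its defining inequality still holds). Hence at most $N_3$ twist splits are charged to $x_s$ per swept fundamental domain, so at most $N_3\bigl(\rot_{\bm\tau}(\gamma,k,l)+3\bigr)$ in all; summing over the $p\le N_3$ cosets yields the bound $N_3^2\bigl(\rot_{\bm\tau}(\gamma,k,l)+3\bigr)$.

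The main obstacle is precisely this last counting step, since splits alter the track as the sequence proceeds, so the ``number of large branches crossed per period'' is not literally constant; the clean way to handle it is to carry out the whole count at the level of the twist modelling function $h$ together with the immovable positions of the branch ends avoiding $A_\gamma$, reducing ``a twist split occurs'' to ``the moving upper obstacle crosses one of those immovable positions'' and thereby converting the displacement bound $h(x_s,0)-x_s<2\pi(\rot_{\bm\tau}(\gamma,k,l)+3)$ directly into the desired count. Everything else — the existence of $h$ and the compatibility of the factorizations, the monotonicity of the obstacle motion, and the estimate on $h(x_s,0)$ — is already supplied by Remarks~\ref{rmk:twistnaturemodelling} and~\ref{rmk:rotbasics}.
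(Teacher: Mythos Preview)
Your approach is correct and essentially identical to the paper's: model $\bm\tau(k,l)$ via Remark~\ref{rmk:twistnaturemodelling}, track the at most $N_3$ upper obstacles per fundamental domain, bound their total displacement by $2\pi(\rot(k,l)+3)$ via point~\ref{itm:tmfbeyondrot} of Remark~\ref{rmk:rotbasics}, and count crossings of the fixed lower obstacles (at most $N_3$ per period).

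The only difference is in exposition. Your middle paragraph takes a detour through ``large branches crossed per period'' before arriving at the clean formulation in your final paragraph; the paper goes there directly, simply observing that the move $\tau_j\to\tau_{j+1}$ is a twist \emph{split} precisely when some moving upper obstacle $x_j^a$ passes a \emph{lower} obstacle $y^b$ (crossing a fake obstacle yields a twist slide instead). This makes the count immediate and avoids your ``enlarging $N_3$ if necessary'': the defining property of $N_3$ already bounds the favourable branch ends avoiding $A_\gamma$ --- i.e.\ the lower obstacles --- by $N_3$ per period, so no enlargement is needed. Your phrase ``branch ends avoiding $A_\gamma$, of which there are at most $N_3$ per period'' is slightly imprecise, since that set includes both favourable and adverse ends (up to $2N_3$ in total); but only the favourable ones matter for twist splits, and those are indeed at most $N_3$.
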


\begin{proof}
The existence of the constant $N_3$ is just a consequence of more general bounds on combinatorics of almost tracks (Lemma \ref{lem:vertexsetbounds}).

Model the sequence $\bm\tau(k,l)$ in accordance with Remark \ref{rmk:twistnaturemodelling}. Every time $\tau_{j+1}$ is obtained from $\tau_j$ with a twist split, then $\tau_{j+1}^X$ is obtained from $\tau_j^X$ with infinitely many splits, only one of which is a twist split along $\gamma$. So we count the number of those twist splits instead. For $k\leq j<j'\leq l$, let $h_j^{j'}$ be the twist modelling function associated with $\bm\tau(j,j')$.

Denote $\ldots y^{-1}, y^0, y^1,\ldots$ the ordered biinfinite sequence in $\R$ of lower obstacles for $\tau_k^X$ --- suppose for simplicity that $y^0=0$; and let $x_k^1,\ldots,x_k^r$ be the upper obstacles for $\tau_k^X$ lying within the interval $(0,2\pi)$; these have a natural bijection with the branch ends in $\tau_k^X$ hitting $A_\gamma$. 

Note that, according to the conventions as in Remark \ref{rmk:twistnaturemodelling}, $\ldots y^{-1}, y^0, y^1,\ldots$ are lower obstacles for $\tau_j^X$ for all $j\geq k$, too. For each $k<j\leq l$ we define instead $x_j^i\coloneqq h_k^j(x_k^i)$ for $i=1,\ldots,r$. Each $x_j^i$ is an upper obstacle for the respective $\tau_j^X$. So each sequence $(x_j^i)_j$ describes the alterations of a branch end in $\tau_k$, hitting $A_\gamma$, along the sequence $\bm\tau(k,l)$: the position of its endpoint along the carrying image of $\gamma$ changes along the sequence, moving long a direction specified by the $A_\gamma$-orientation on $\gamma$. As already specified in Remark \ref{rmk:twistnaturemodelling}, $j'>j\Rightarrow x_{j'}^i\geq x_j^i$.

The elementary move between $\tau_j$ and $\tau_{j+1}$, then, is a (twist) splitting if and only if there are two indices $1\leq a\leq r$, $b\geq 0$ such that $x_j^a<y^b<x_{j+1}^a$. In this case the choice for the indices $a, b$ is unique.

As a consequence of point \ref{itm:tmfbeyondrot} in Remark \ref{rmk:rotbasics}, for each $i=1,\ldots,r$ we have $x_l^i<x_k^i +2\pi(\rot(k,l)+3)$. In each interval $\left[2\pi \zeta,2\pi(\zeta+1)\right)$, $\zeta\in\mathbb N$, there can be at most $N_3$ lower obstacles of any fixed $\tau_j^X$, because they must be lifts of endpoints of distinct branch ends avoiding $A_\gamma$ and favourable. Hence, in the splitting sequence $\bm\tau$, between each $x_k^i$ and the corresponding $x_l^i$ there at most $N_3\left(\rot(k,l)+3\right)$ lower obstacles (of any of the tracks in the sequence). The total number of splits in $\bm\tau$ is bounded by total number of indices $j$ such that a choice of $a,b$ as above exists: since also $r\leq N_3$, a bound is $N_3^2\left(\rot(k,l)+3\right)$.
\end{proof}

\begin{lemma}\label{lem:vertexsetnontwist}
Suppose that, in a generic splitting sequence $\bm\tau$ of almost tracks on a surface $S$, a curve $\gamma$ is a twist curve for some $\tau_j,\tau_{j+1}$ and the elementary move between these two train tracks is \emph{not} a twist split about $\gamma$. Let $X$ be a regular neighbourhood of $\gamma$. Then $V(\tau_{j+1}|X)\subseteq V(\tau_{j}|X)$.
\end{lemma}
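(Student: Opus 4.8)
The statement asserts that when a move between $\tau_j$ and $\tau_{j+1}$ keeps $\gamma$ a twist curve but is not a twist split about $\gamma$, the wide arc set of the induced annular track can only shrink. I would first dispose of the easy cases: if the move between $\tau_j$ and $\tau_{j+1}$ is a slide, or a split/wide split which is far from $\gamma$, then by the conventions set up in Remark \ref{rmk:permanenceconventions} (or Remark \ref{rmk:twistnaturemodelling} in the twist-nature case) we have $\tau_{j+1}^X$ carried by $\tau_j^X$ with $\tau_{j+1}.\gamma$ and $\tau_j.\gamma$ identified along the relevant ties, and then $\cc(\tau_{j+1}^X)\subseteq\cc(\tau_j^X)$ by Remark \ref{rmk:decreasingmeasures}; checking that wideness is preserved amounts to the horizontal-length characterisation of point \ref{itm:hlvertex} in the list after Definition \ref{def:horizontallength} together with point \ref{itm:farisininfluent}, which gives $\hs_\alpha(j+1)\subseteq\hs_\alpha(j)$, hence $\hl_\alpha(j+1)\leq\hl_\alpha(j)<2\pi$ for $\alpha\in V(\tau_{j+1}|X)$, so $\alpha\in V(\tau_j|X)$.

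The substantive case is when the move is a split which is close to $\tau_j.\gamma$, but is \emph{not} a twist split about $\gamma$. By Lemma \ref{lem:twistcurvebasics} (and the analysis inside its proof), since $\gamma$ remains a twist curve after the move, the only remaining possibilities for such a non-far, non-twist split are: a spurious (including bispurious) split about $\gamma$; or a split which is close to $\tau_j.\gamma$ but affects a branch traversed by $\gamma$ in a way that still leaves $\gamma$ a twist curve with a \emph{different}-side collar — but the proof of Lemma \ref{lem:twistcurvebasics} shows that when both endpoints of the splitting arc lie opposite $A_\gamma(j)$, any parity keeps $\gamma$ carried, and if a large branch of $\tau_{j+1}.\gamma$ survives then $\gamma$ stays a twist curve; this last situation is precisely the spurious/bispurious one. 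So I would reduce to: the split is spurious (possibly bispurious) about $\gamma$. In that case Remark \ref{rmk:spurious} tells us that the splitting arc $\alpha$ and $\gamma$ admit disjoint carried realizations in $\nei(\tau_j)$ (when $\gamma$ is embedded in $\tau_j$), so in particular every branch of $\tau_j^X.\gamma$ survives into $\tau_{j+1}^X.\gamma$ unchanged; and for bispurious splits one uses the maps $\E_j,\F_j$ and the lift $\hat\E_j$ of Remark \ref{rmk:permanenceconventions} to identify $A_\gamma(j+1)=\F_j(A_\gamma(j))$ and to transport the parametrisation via $q_j=\hat\E_j\circ q_{j+1}$, so that the upper/lower obstacle structure along $\tau_j^X.\gamma$ is preserved (obstacles may only move "outward", never be created inside $A_\gamma$).

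With this identification in place, the argument is the same horizontal-length computation as in the easy case. Take $\alpha\in V(\tau_{j+1}|X)$. By Lemma \ref{lem:onerollingdirection} and point \ref{itm:hlvertex}, $\alpha$ is wide in $\tau_{j+1}^X$ iff $\hl(\tau_{j+1},\alpha)<2\pi$, i.e. $\hs_\alpha(j+1)$ does not traverse all of $\tau_{j+1}^X.\gamma$. A train path realization of $\alpha$ in $\tau_{j+1}^X$ is a concatenation $\rho^h*\hs_\alpha(j+1)*\rho^f$ with $\rho^h$ hitting $A_\gamma$ and $\rho^f$ avoiding it and favourable (Remark \ref{rmk:sameorientationforarcs}); applying $c_{\tau_j^X}$ and trimming gives a train path realization of $\alpha$ in $\tau_j^X$ whose horizontal stretch is contained in $c_{\tau_j^X}(\hs_\alpha(j+1))$, since $c_{\tau_j^X}(\rho^h)$ and $c_{\tau_j^X}(\rho^f)$ contribute ramps and at most an extra segment along $\tau_j^X.\gamma$ to be trimmed. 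Because the split is spurious — so the branches of $\tau_j^X.\gamma$ are in natural bijection with those of $\tau_{j+1}^X.\gamma$ — we get $\hl_\alpha(j)\leq\hl_\alpha(j+1)<2\pi$ (the length can only decrease because a spurious split never adds a branch to the carrying image of $\gamma$, and the obstacle-monotonicity constraint at the end of Remark \ref{rmk:permanenceconventions} forces $x_j\leq x_{j+1}$, $y_j\geq y_{j+1}$ appropriately). Hence $\alpha\in\cc(\tau_j^X)$ and $\hl(\tau_j,\alpha)<2\pi$, so $\alpha\in V(\tau_j|X)$, which is the claim. The main obstacle, and the point I would spell out most carefully, is the bookkeeping in the bispurious case: making sure that the identification of collars via $\F_j$ and the compatibility $q_j=\hat\E_j\circ q_{j+1}$ really do guarantee that no obstacle of $\tau_{j+1}^X$ along $\tau_{j+1}^X.\gamma$ is "new" relative to $\tau_j^X$, so that horizontal lengths of carried arcs cannot increase; everything else is a direct application of Remarks \ref{rmk:spurious}, \ref{rmk:permanenceconventions} and the horizontal-length list, together with Lemmas \ref{lem:twistininduced}, \ref{lem:twistcurvebasics} and \ref{lem:onerollingdirection}.
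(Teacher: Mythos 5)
Your reduction to the spurious case is right, and the easy cases (slide, far split) are handled correctly. But the crux of your argument contains a sign error that hides the actual difficulty of the lemma. You write ``we get $\hl_\alpha(j)\leq\hl_\alpha(j+1)<2\pi$'' and justify it with ``the obstacle-monotonicity constraint \ldots forces $x_j\leq x_{j+1}$, $y_j\geq y_{j+1}$''. Those two inequalities on the obstacles give $y_j - x_j \geq y_{j+1}-x_{j+1}$, i.e.\ $\hl_\alpha(j)\geq\hl_\alpha(j+1)$ --- the \emph{opposite} of what you wrote. And that opposite direction is indeed the correct one (it is what point~\ref{itm:farisininfluent} after Definition~\ref{def:horizontallength} says: $\hs_\alpha(j+1)\subseteq\hs_\alpha(j)$ always). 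So a priori nothing stops $\alpha$ from being wide at time $j+1$ but \emph{not} at time $j$: $[x_j,y_j]$ could strictly contain $[x_{j+1},y_{j+1}]$ with $\hl_\alpha(j)>2\pi>\hl_\alpha(j+1)$, which is exactly the scenario you must rule out, and your argument as written does not rule it out.

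The paper's actual proof works by contradiction from exactly that scenario and extracts the real content of ``spurious, not twist'': since the relevant zipper $\kappa_1$ does not intersect $A_\gamma$, the tie collapse cannot add a segment of $\tau_j^X.\gamma$ on the $A_\gamma$ side, so $x_j=x_{j+1}$ exactly; hence any drop in $\hl_\alpha$ must come from $y_{j+1}<y_j$, which forces the outgoing favourable ramp $\rho_j^f$ to start at the favourable branch end $e_1$ adjacent to the split branch $b_1$. That forces a fake obstacle $w_j\in(x_j,y_j)$ with no obstacles in $(w_j,y_j)$, while $y_{j+1}$ is pinned between $w_j$ and the next obstacle of $\tau_j^X$ below it; combined with the fact that $x_j+2\pi$ is an upper obstacle in $(x_j,y_j)$, this gives $y_{j+1}>x_j+2\pi$, so $\hl_\alpha(j+1)=y_{j+1}-x_j>2\pi$, contradicting wideness of $\alpha$ in $\tau_{j+1}^X$. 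This delicate obstacle-chasing is what your proposal replaces with a reversed inequality, so the gap is genuine and not just a bookkeeping slip in the bispurious sub-case as you suspected.
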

\begin{proof}
It suffices to prove the statement in the case the move occurring is a split which is spurious or far from $\gamma$. Suppose that $\alpha\in V(\tau_{j+1}|X)$ fails to be wide in $\tau_j|X$, i.e. $\hl_{\alpha}(j)>2\pi$ while $\hl_{\alpha}(j+1)<2\pi$: by point \ref{itm:farisininfluent} after Definition \ref{def:horizontallength}, we have that the split may only be spurious, because $\lfloor \hl_{\alpha}(j)/2\pi\rfloor$ and $\lfloor \hl_{\alpha}(j+1)/2\pi\rfloor$ are distinct quantities, and both well-defined up to isotopies. 

Let $b\subseteq \tau_j.\gamma$ be the branch that is about to be split. We use the conventions set up in Remark \ref{rmk:permanenceconventions} and repeat the notation used in the first paragraph there. Also, let $\tilde{\ul\alpha}_j, \tilde{\ul\alpha}_{j+1}$ be two lifts of $\ul\alpha_j, \ul\alpha_{j+1}$ via the maps $q_j,q_{j+1}$ respectively, with the same endpoints on $\R\times[-2,2]$. Let $[x_j,y_j]$ be the interval stretch for $\alpha$ in $\tau_j^X$ consistent with the choice of the lift $\tilde{\ul\alpha}_j$, and define $[x_{j+1},y_{j+1}]$ similarly, replacing all occurrences of the index $j$ with $j+1$. As a consequence of Remark \ref{rmk:sameorientationforarcs}, the points $x_j,x_{j+1}$ are upper obstacles while $y_j,y_{j+1}$ are lower ones.

Since the split is spurious, $\hat p^{-1}(R_b)$ has 1 or 2 components intersecting $\tau_j^X.\gamma$ (as many as the number of times $b$ is traversed by $\gamma$). 

In the rest of the proof, square brackets will enclose adaptations that apply for the case of $\gamma$ traversing $b$ twice. Let $b_1$[, $b_2$] be the [two] lift[s] of $b$ that lie along $\tau_j^X.\gamma$; and let $e_1$[, $e_2$] be the only favourable branch end[s] sharing a switch with $b_1$[, $b_2$, respectively]. 

As seen in point \ref{itm:horizontalstretch} of Remark \ref{rmk:annulusinducedbasics}, $\ul\alpha_{j+1}$ is the concatenation of train paths $\rho_{j+1}^h,\hs_\alpha(j+1),\rho_{j+1}^f$ where the first is an incoming ramp for $\tau_{j+1}^X$ hitting $A_\gamma$, while the last is an outgoing ramp avoiding $A_\gamma$ and favourable. A similar decomposition $\rho_j^h,\hs_\alpha(j),\rho_j^f$ for $\ul\alpha_j$ holds.

Using the conventions of Remark \ref{rmk:permanenceconventions}, $\tau_{j+1}$ is obtained from $\tau_j$ unzipping a single zipper, defined on an interval $[-\epsilon,t]$ for $1<t<2$. This lifts to an infinite family of zippers for $\tau_j^X$, and only one [two, resp.] of these intersects $\bar\nei(\tau^X.\gamma)$: we call it [them, resp.] $\kappa_1$[, $\kappa_2$ according to which of $b_1,b_2$ they traverse].

Note that, since unzipping $\kappa_1$ [and $\kappa_2$] does not realize a twist split, this [these two, resp.] zipper[s] cannot intersect $A_\gamma$. Therefore $\rho_j^h=c_{\tau_j^X}\left(\rho_{j+1}^h\right)$ i.e. the tie collapse cannot create a segment lying along $\tau_j^X.\gamma$, and in particular $x_j=x_{j+1}$. The only way to have $\hl_\alpha(j+1)<2\pi<\hl_\alpha(j)$, then, is that $y_{j+1}<y_j$.

If $\rho_j^f$ does not begin with $e_1$ [nor with $e_2$], then also $\rho_j^f=c_{\tau_j^X}\left(\rho_{j+1}^f\right)$. But this would imply that ${\ul\alpha}_j,{\ul\alpha}_{j+1}$ have the same horizontal stretch, so $\hl_\alpha(j)=\hl_\alpha(j+1)$ leading to a contradiction.

Thus, [without loss of generality] $\rho_j^f$ begins at $e_1$; and this means that there is a fake obstacle $x_j<w_j<y_j$ such that $[w_j,y_j]\times\{0\}$ is one of the connected components of $q_j^{-1}(b_1)$; in particular no obstacles lie in $(w_j,y_j)$. In order to have the hypothesized shortening in horizontal length, $\kappa_1$ shall begin along the component of $\partial_v\bar\nei(\tau_j^X)$ which lies close to $q_j(y_j,0)$. Then the conventions of Remark \ref{rmk:permanenceconventions} imply that $y_{j+1}<w_j$ and, for all obstacles $z<w_j$ for $\tau_j^X$, $y_{j+1}>z$. 

Note that the upper obstacle $x_j+2\pi\in (x_j,y_j)$ because $\hl_\alpha(j)>2\pi$. Necessarily, then $w_j>x_j+2\pi$ and, from the above paragraph, also $y_{j+1}>x_j+2\pi$. This implies $\hl_{\alpha}(j+1)=y_{j+1}-x_j>2\pi$, contrarily to our assumption.
\end{proof}

\begin{lemma}[Three ramps criterion]\label{lem:threeramps}
Let $\bm\tau$ be a generic almost track splitting sequence such that a curve $\gamma\in\cc(S)$ stays a twist curve in the subsequence $\bm\tau(k,l)$. Suppose the following:
\begin{itemize}
\item $\gamma$ is not combed in $\tau_l$;
\item $\bm\tau(k,l)$ consists of subsequences which have been alternatively modelled according to the conventions of Remark \ref{rmk:permanenceconventions} --- possibly including moves which consist of trivial unzips --- and of Remark \ref{rmk:twistnaturemodelling} (the latter subsequences, of course, must be of twist nature);
\item under this fixed model, there is an incoming ramp $\rho_l^h: (-\infty,0]\rightarrow \tau_l^X$, hitting $A_\gamma$, such that $\rho_k^h\coloneqq c_{\tau_k^X}\circ \rho_l^h$ also intersects $\tau_l.\gamma$ only in the point $c_{\tau_k^X}\circ \rho_l^h(0)$.
\end{itemize}

Then, if $\alpha\in\cc(\tau_l^X)$ has a train path realization which includes $\rho_l^h$ then $\hl_\alpha(k)> \hl_\alpha(l)-2\pi$; in particular $\rot_{\bm\tau}(\gamma;k,l)\leq 1$.
\end{lemma}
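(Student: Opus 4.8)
The idea is to use the \emph{three ramps} of the title: the incoming ramp $\rho_l^h$ hitting $A_\gamma$, the horizontal stretch of the arc $\alpha$, and a third ramp which exhibits $\gamma$ as non-combed in $\tau_l$. Fix a regular neighbourhood $X$ of $\gamma$ and work in the parametrization of Remark \ref{rmk:twistparam}, with the lifted picture in $\R\times[-2,2]$ provided by the maps $q_k,q_l$. Since $\gamma$ is not combed in $\tau_l$, Lemma \ref{lem:twistininduced} tells us that $\tau_l|X$ carries no adverse branch end, so any arc $\alpha\in\cc(\tau_l^X)$ decomposes (point \ref{itm:horizontalstretch} of Remark \ref{rmk:annulusinducedbasics}, refined by Remark \ref{rmk:sameorientationforarcs}) as a concatenation $\rho^h,\hs_\alpha(l),\rho^f$ with $\rho^h$ hitting $A_\gamma$ and $\rho^f$ avoiding it and favourable; we take $\rho^h=\rho_l^h$ by hypothesis. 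Lift $\ul\alpha_l$ to $\ul{\tilde\alpha}_l$ in $\bar\Hy^2_X$, obtaining an interval stretch $[x_l,y_l]$ with $x_l$ an upper obstacle (the endpoint of the lift of $\rho_l^h$ on $\R\times\{0\}$) and $y_l$ a lower obstacle.

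\textbf{Tracking the endpoints back from $l$ to $k$.} The key is to estimate $x_k$ and $y_k$, the endpoints of the interval stretch for $\alpha$ with respect to $\tau_k$ obtained from the lift with the same endpoints at infinity as $\ul{\tilde\alpha}_l$. For the left endpoint: the hypothesis says $\rho_k^h:=c_{\tau_k^X}\circ\rho_l^h$ still meets $\tau_k^X.\gamma$ only in its terminal point, i.e. the tie collapse from $\tau_l^X$ to $\tau_k^X$ does not create any segment of $\rho_l^h$ running along $\tau_k^X.\gamma$. Following the bookkeeping of Remark \ref{rmk:permanenceconventions} (the inequalities on obstacle positions under the mixed model, applied piecewise along the alternately-modelled subsequences), this forces $x_k=x_l$: the left obstacle does not move. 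For the right endpoint $y_k$: in general $c_{\tau_k^X}\circ\rho_l^f$ \emph{may} acquire an extra segment along $\tau_k^X.\gamma$, so that $y_k$ can differ from $y_l$; but the point is that it can differ by at most one fundamental domain. Concretely, between two consecutive lower obstacles of $\tau_k^X$ there is exactly one coset representative of the favourable branch ends, and the obstacle-monotonicity conventions of Remarks \ref{rmk:permanenceconventions} and \ref{rmk:twistnaturemodelling} (upper obstacles can only move right, lower obstacles — in the twist-nature blocks — stay put) give $y_l-2\pi< y_k\le y_l$ once we know that $\rho_l^f$ avoids $A_\gamma$ and is favourable so its lift sits in $\R\times[-2,0]$ and winds at most one full turn differently. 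Hence $\hl_\alpha(k)=y_k-x_k> (y_l-2\pi)-x_l=\hl_\alpha(l)-2\pi$, which is the claimed inequality.

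\textbf{From the single arc to the rotation number.} Once the horizontal length inequality is known for \emph{this} particular $\alpha$ (any arc realized with $\rho_l^h$ as its hitting ramp), we deduce the bound on $\rot_{\bm\tau}(\gamma;k,l)$. By the alternative formula in point \ref{itm:rotwithvertexonly} of Remark \ref{rmk:rotbasics}, $\rot(k,l)=\min_{\beta\in V(\tau_l|X)}\lfloor \hl_\beta(k)/2\pi\rfloor$, and by Lemma \ref{lem:onerollingdirection} it suffices to exhibit one arc $\alpha$ carried by $\tau_l^X$ — not necessarily a vertex cycle, using point \ref{itm:hl_vs_multiplicity} after Definition \ref{def:horizontallength} to compare with vertex cycles, or more directly using the last displayed item of Remark \ref{rmk:rotbasics} — with $\hl_\alpha(l)$ close to $2\pi$ (so that $\alpha$, or the appropriate $D_X^{\pm1}$ of it, is a vertex cycle with $\hl\in(0,2\pi)$) and $\rho_l^h$ as its hitting ramp. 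For such $\alpha$ one has $\hl_\alpha(l)<2\pi\cdot 2$ and the inequality just proved gives $\hl_\alpha(k)<\hl_\alpha(l)+2\pi<6\pi$, so $\lfloor\hl_\alpha(k)/2\pi\rfloor\le 1$, whence $\rot(k,l)\le 1$ by the $\cc$-distance estimate in point \ref{itm:rot_vs_dist} combined with the minimum characterization. (One must check that a vertex cycle realized with the given ramp $\rho_l^h$ exists: this follows from the construction in the proof of Lemma \ref{lem:twistininduced}, concatenating $\rho_l^h$ with a suitable segment of $\tau_l^X.\gamma$ and a favourable outgoing ramp, which yields an element of $V(\tau_l^X)$ traversing $\rho_l^h$'s first branch end.)

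\textbf{Expected main obstacle.} The routine parts are the two decomposition lemmas and the final passage to $\rot$; the delicate point is the right-endpoint estimate $y_l-2\pi<y_k\le y_l$, because here the sequence $\bm\tau(k,l)$ is modelled as a \emph{mixture} of Remark \ref{rmk:permanenceconventions}-blocks and Remark \ref{rmk:twistnaturemodelling}-blocks, and the obstacle-monotonicity statements one needs have slightly different forms in the two regimes (in the twist-nature blocks lower and fake obstacles are stationary and upper ones move monotonically; in the general blocks the inequalities are only one-sided and one must handle the possible `crush' of a component of $\tau^X.\gamma$ under a bispurious split). Carefully concatenating these per-block inequalities, while keeping the lift endpoints at infinity fixed so that the interval stretches across different $\tau_j^X$ are genuinely `consistent' in the sense of Definition \ref{def:horizontallength}, is where the real work of the proof lies; the non-combedness hypothesis on $\tau_l$ is what guarantees that the outgoing ramp of $\alpha$ is favourable and hence that its lift stays in the lower half strip, which is exactly what pins down the winding discrepancy to at most $2\pi$.
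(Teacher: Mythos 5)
Your plan has the right general shape (lift, track interval–stretch endpoints from $l$ back to $k$, convert to a rotation bound), but the core step is wrong, and the reason is visible in the lemma's name: you never use a \emph{third} ramp. The paper's argument hinges on the fact that, since $\gamma$ is not combed in $\tau_l$, the lift $\tau_l^X$ has \emph{both} a favourable outgoing ramp $\rho_l^f$ and an \emph{adverse} outgoing ramp $\rho_l^a$; it is the adverse ramp, together with the fact that the lifts $\tilde\rho_\cdot^f,\tilde\rho_\cdot^a$ have fixed endpoints at infinity and hence their preserved cyclic order $x_\cdot^a<x_\cdot^f<x_\cdot^a+2\pi$, that produces the needed \emph{upper} bound on $x_k^f$: combining $x_k^a\le x_l^a$ and $x_k^f\ge x_l^f$ with this order gives $x_k^f<x_k^a+2\pi\le x_l^a+2\pi<x_l^f+2\pi$. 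You have no mechanism at all for bounding $x_k^f$ from above, and indeed your stated right-endpoint estimate $y_l-2\pi<y_k\le y_l$ is the wrong direction (lower obstacles can only move \emph{up} as one passes from $l$ to $k$, so $y_k\ge y_l$; an upper bound is what must be proved).

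This then propagates to a self-contradiction in the final step: the inequality you prove, $\hl_\alpha(k)>\hl_\alpha(l)-2\pi$, is the inequality that appears in the statement, but it is vacuous — $\hl_\alpha(k)\ge\hl_\alpha(l)$ is automatic along a splitting sequence — and in your last paragraph you silently invoke the opposite bound $\hl_\alpha(k)<\hl_\alpha(l)+2\pi$, which you have not established. (The stated inequality in the lemma is a misprint for $\hl_\alpha(k)<\hl_\alpha(l)+2\pi$; this is the one the paper actually derives and the one that gives $\rot(k,l)\le 1$.) Relatedly, your reading of the ``not combed'' hypothesis is backwards: it does not tell you that $\tau_l|X$ has no adverse branch ends (that is true for \emph{any} twist curve by Lemma \ref{lem:twistininduced}); its actual role is to guarantee that $\tau_l^X$ \emph{does} carry an adverse branch end, hence an adverse outgoing ramp, and that ramp is the missing ingredient you need. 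Finally, the paper deals with the mixed modelling issue cleanly by first converting the twist-nature blocks to the Remark \ref{rmk:permanenceconventions} conventions (with trivial unzips inserted), rather than running the per-block monotonicity estimates that you gesture at; once that reduction is made, the three-ramp comparison is a single, uniform argument.
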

\begin{proof}
Given a subsequence $\bm\tau(r,r')$ of twist nature, and a model of it after the prescriptions given in Remark \ref{rmk:twistnaturemodelling}, the latter may easily be converted into a model $\bm\sigma$ after Remark \ref{rmk:permanenceconventions}, except that some unzips may be trivial. In order to get this alternative model, do \emph{not} apply the isotopies that, after each unzip in $\bm\tau(r,r')$, make sure that $\tau_j.\gamma$ stays unchanged; and keep trace of changes of $\tau_j.\gamma$ via the maps $\E_j$ instead, as explained in Remark \ref{rmk:permanenceconventions}. This means, in particular, that rather than having a single map $q:\R\times[-2,2]\rightarrow \ol{S^X}$, in the new model we have individual maps $q_j$ for each entry in $\bm\sigma$. Anyway, for each $r\leq j\leq r'$ one may find an index $j'$ such that $q^{-1}(\tau_j)=q_{j'}^{-1}(\sigma_{j'})$ (the indices do not correspond exactly because the trivial unzips appearing along $\bm\tau(r,r')$ have to be inserted as individual moves). Moreover, if $\bm\tau(r,r')$ is replaced with $\bm\sigma$, then the last bullet in the statement still holds.

So, ultimately, we can simplify the proof by supposing that the entire sequence $\bm\tau(k,l)$ is modelled after Remark \ref{rmk:permanenceconventions}, possibly with trivial moves.

Since $\gamma$ is not combed in $\tau_l$, $\tau_l^X$ features two branch ends $e^f,e^a$ which avoid $A_\gamma$ and are favourable and adverse, respectively. There are two outgoing ramps $\rho_l^f,\rho_l^a:[0,+\infty)\rightarrow \tau_l^X$ which begin at $e^f,e^a$ respectively.

Let $\tilde\rho_l^h$ be a lift of $\rho_l^h$ to $\R\times[-2,2]$ via $q_l$: $x_l^h\coloneqq \tilde\rho_l^h(0)$ is an upper obstacle for $\tau_l^X$. The hypothesis on $\rho_k^h$ implies that it is an incoming ramp for $\tau_k^X$ and, if one lifts it to a $\tilde\rho_k^h$ in $\R\times[-2,2]$ via $q_k$, so that $\tilde\rho_k^h,\tilde\rho_l^h$ depart from the same point of $\R\times\{-2,2\}$, then the upper obstacle $x_k^h\coloneqq\tilde\rho_k^h(0)=x_l^h$.

Let $\rho_k^f,\rho_k^a$ be outgoing ramps for $\tau_k^X$ obtained from $c_{\tau_k^X}\circ\rho_l^f,c_{\tau_k^X}\circ\rho_l^a$ respectively by trimming their initial subpaths lying along $\tau_k^X.\gamma$. Choose lifts $\tilde\rho_l^a,\tilde\rho_l^f$ of $\rho_l^a,\rho_l^f$ via $q_l$ so that, if $x_l^a\coloneqq\tilde\rho_l^a(0),x_l^f\coloneqq\tilde\rho_l^f(0)$, then $x_l^a<x_l^f<x_l^a+2\pi$ and $x_l^h<x_l^f$.

Let $\tilde\rho_k^a,\tilde\rho_k^f$ be lifts of $\rho_k^a,\rho_k^f$ whose endpoints on $\R\times\{-2\}$ are the same as $\tilde\rho_l^a,\tilde\rho_l^f$ respectively. Let $x_k^a\coloneqq\tilde\rho_k^a(0),x_k^f\coloneqq\tilde\rho_k^f(0)$.

Then, by Remark \ref{rmk:permanenceconventions}, $x_k^a\leq x_l^a$ and $x_k^f\geq x_l^f$. Also, note that the order of $x_l^a,x_l^f,x_l^a+2\pi$ along $\R\times\{0\}$ is the same as the order of the extremities of $\tilde\rho_l^a$, $\tilde\rho_l^f$, $\tilde\rho_l^a+(2\pi,0)$ along $\R\times\{-2\}$, which are also extremities of $\tilde\rho_k^a$, $\tilde\rho_k^f$, $\tilde\rho_k^a+(2\pi,0)$ (these paths are all disjoint); therefore also $x_k^a,x_k^f,x_k^a+2\pi$ have the same order.

So, joining the inequalities, $x_k^f< x_k^a+2\pi\leq x_l^a+2\pi < x_l^f+2\pi$. Let $\alpha\in\cc(\tau_l^X)$ be the arc defined by the train path $\ul\alpha$ having, among its lifts to $\R\times[-2,2]$, the concatenation of $\tilde\rho_l^h$, $[x_l^h,x_l^f]\times\{0\}$, $\tilde\rho_l^f$. In particular $\hl_{\alpha}(l)=x_l^f-x_l^h$ and, via a similar realization in $\tau_k^X$, $\hl_{\alpha}(k)=x_k^f-x_k^h < x_l^f+2\pi-x_l^h=\hl_{\alpha}(l)+2\pi$.

This construction covers all $\alpha\in\cc(\tau_l^X)$ whose realization includes $\rho_l^h$: it suffices to choose an appropriate $\rho_l^f$ at the beginning of this proof. In particular $\rot_{\bm\tau}(\gamma;k,l)\leq 1$, as claimed.
\end{proof}

\subsection{Twist split grouping}\label{sub:rearrang}
In our incoming estimates of distance in $\pa(S)$ induced by a train track splitting sequence, a big annoyance will be the potentially high number of twist splits that do not result in an any accordingly large contribution to the pants distance, as it is shown in Lemma \ref{lem:pantsboundunderdt}. This is the reason why we shall work to identify exactly how they alter the computation.

\begin{prop}[Split rearrangement for a twist curve]\label{prp:rearrang1}
Suppose that $\bm\tau=(\tau_j)_{j=0}^N$ is a generic train track splitting sequence on a surface $S$, such that a fixed curve $\gamma$ stays a twist curve, with one same twist collar $A_\gamma$ and sign $\epsilon$, throughout. Let $X$ be a regular neighbourhood of $\gamma$, and let $m\coloneqq\rot_{\bm\tau}(0,N)$.

Then there are another splitting sequence $\bm{\tau'}=(\tau'_j)_{j=0}^{N(5)}$ whose first and last entries are the same as $\bm\tau$, and three indices $0\leq N(1)\leq N(2)\leq N(3)\leq N(4) \leq N(5)$, with the following properties.
\begin{enumerate}
\item $\rot_{\bm\tau'}\left(0,N(2)\right)\leq 3$; there is an increasing map $f: [0,N]\rightarrow [0,N(2)]$ with $f(0)=0$, $f(N)=N(2)$ and such that, for all $0\leq j \leq N$, $\tau_j$ is obtained from $\tau'_{f(j)}$ with a splitting sequence of twist nature about $\gamma$ followed by a comb equivalence.
\item If $0<N(1)<N(2)$ then $\gamma$ is not combed in $\bm\tau'\left(0,N(1)-1\right)$ and it is in $\bm\tau'\left(N(1),N(2)\right)$; if these inequalities are not strict, instead, then $\gamma$ is either combed or not combed in the entire sequence $\bm\tau'\left(0,N(2)\right)$.
\item $\bm\tau'\left(N(2),N(3)\right)$ and $\bm\tau'\left(N(4),N(5)\right)$ have twist nature about $\gamma$, while\linebreak $\bm\tau'(N(3),N(4))$ consists of slides only.
\item Let $m'\coloneqq \rot_{\bm\tau'}(N(2),N(4))$ Then $m'\geq m-5$, while $\rot_{\bm\tau'}(N(2),N(3))=0$ and there is a factorization $N(5)-N(4)=m'k$ such that $\tau'_{k+j}=D_\gamma^\epsilon(\tau'_j)$ for all $N(4)\leq j\leq N(5)-k$.
\end{enumerate}
\end{prop}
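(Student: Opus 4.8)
The strategy is to separate the splitting sequence $\bm\tau$ into a ``combing phase'' followed by a ``pure twist phase'' and then to apply Lemma \ref{lem:dehn+remainder} to the twist phase. First I would handle item 2, which is the cheapest: by Lemma \ref{lem:twistcurvebasics} the property of being combed is monotone along $\bm\tau$ (once $\gamma$ becomes combed it stays combed, since being combed means the adverse branch ends have disappeared from $\tau|X$ and splitting cannot make them reappear). So there is a well-defined threshold index, which I will call $N(1)$, before which $\gamma$ is not combed and from which on it is; if $\gamma$ is combed (or not combed) throughout, we set $N(1)=0$ (resp. $N(1)=N(2)$). The indices $N(1),\ldots,N(5)$ will be produced as we go; there is no constraint relating $N(1)$ with the twist-nature decomposition other than $N(1)\le N(2)$.

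\textbf{The combing-phase reduction (item 1).} The heart of item 1 is to show that all the twist splits can be ``pushed to the end'' of the combing phase, i.e.\ that there is a splitting sequence $\bm\tau'(0,N(2))$ which mimics $\bm\tau$ but in which the subsequence realizing the first $N$ moves involves (up to comb equivalence) only non-twist moves followed by a short burst of twisting, so that the rotation number accumulated in $\bm\tau'(0,N(2))$ is bounded by an absolute constant ($\le 3$). The mechanism is the same ``postponement'' bookkeeping used for central splits in Lemma \ref{lem:postpone} and Definition \ref{def:cornerizedseq}: whenever a twist split about $\gamma$ is immediately followed by a non-twist move, one commutes them, replacing the pair by a non-twist move (or a bounded number of them) followed by a twist split, using the unzip/zipper formalism of \S\ref{sub:diagext} (Remarks \ref{rmk:generic_move_as_unzip}, \ref{rmk:zipvssplit}) together with the permanence conventions of Remark \ref{rmk:permanenceconventions}. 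The key input that makes the accumulated rotation bounded is the Three Ramps Criterion, Lemma \ref{lem:threeramps}: as long as $\gamma$ has an adverse branch end (i.e.\ is not combed), any incoming ramp hitting $A_\gamma$ that survives the tie-collapse witnesses $\rot\le 1$ on the corresponding subsequence; combined with the bound relating $\rot$ over a subsequence to the horizontal lengths of vertex arcs (Remark \ref{rmk:rotbasics}, items \ref{itm:rotwithvertexonly}, \ref{itm:rot_vs_dist}, \ref{itm:tmfbeyondrot}), and with Lemma \ref{lem:vertexsetnontwist} (non-twist splits do not enlarge $V(\tau|X)$), one concludes that the ``residual'' rotation that cannot be moved past is bounded by a small constant. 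This produces the map $f\colon[0,N]\to[0,N(2)]$ with $f(0)=0$, $f(N)=N(2)$, $\tau_j$ recovered from $\tau'_{f(j)}$ by a twist-nature splitting sequence followed by a comb equivalence; increasingness of $f$ follows because the postponement operations are order-preserving, exactly as in the proof of Lemma \ref{lem:ctauproperties}(4).

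\textbf{The pure twist phase (items 3 and 4).} After $N(2)$ we have a single curve $\gamma$ which is a twist curve with collar $A_\gamma$, and we append to $\bm\tau'$ a splitting sequence of twist nature about $\gamma$ whose overall effect carries $\tau'_{N(2)}$ to $\tau_N$ up to slides (using that $\tau_N$ is obtained from $\tau'_{f(N)}=\tau'_{N(2)}$ by a twist-nature sequence plus a comb equivalence, and that comb equivalences change nothing essential about $\gamma$). Now apply Lemma \ref{lem:dehn+remainder} to this twist-nature tail: it outputs indices $N(3)\le N(4)\le N(5)$ and a factorization $N(5)-N(4)=m'k$ with $\rot_{\bm\tau'}(N(2),N(3))=0$, the subsequence $\bm\tau'(N(3),N(4))$ consisting of slides only, and $\tau'_{k+j}=D_\gamma^\epsilon(\tau'_j)$ on $\bm\tau'(N(4),N(5))$ --- these are precisely items 3 and the last two clauses of item 4. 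For the inequality $m'\ge m-5$ in item 4: by the subadditivity of rotation number (Remark \ref{rmk:rotbasics}, items \ref{itm:concatrot_below}, \ref{itm:concatrot_above}), $m=\rot_{\bm\tau}(0,N)$ is split between the at most $\le 3$ accumulated in $\bm\tau'(0,N(2))$ (item 1), an error term of $\le 2$ from concatenation, and $m'=\rot_{\bm\tau'}(N(2),N(4))$; rearranging gives $m'\ge m-3-2=m-5$. I would double-check the exact constants here against Remark \ref{rmk:rotbasics} item \ref{itm:concatrot_above} (each concatenation costs $+2$, and we concatenate twice), but $m-5$ is the expected clean bound.

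\textbf{Main obstacle.} The delicate point is not the twist phase --- that is essentially Lemma \ref{lem:dehn+remainder} verbatim --- but the bookkeeping in item 1: one must commute a twist split past an arbitrary non-twist elementary move while (a) keeping $\gamma$ a twist curve with a consistent $A_\gamma$-family of collars, (b) not losing control of how many moves are introduced, and (c) tracking exactly which moves become twist splits versus slides versus far/spurious moves after the commutation. The unzip/zipper language of \S\ref{sub:diagext} together with the canonical decomposition of Remark \ref{rmk:zipvssplit} is the right tool, but one has to be careful that a non-twist move can fail to commute cleanly with a twist split when its splitting arc traverses a large branch of $\tau.\gamma$ (a spurious or bispurious situation), and in that case the residual rotation is what the Three Ramps Criterion bounds. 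Getting the Three Ramps Criterion hypothesis (an incoming ramp $\rho^h_l$ hitting $A_\gamma$ whose tie-collapse back to $\tau_k$ still meets $\tau.\gamma$ only at its endpoint) to apply at the right stage --- namely, as long as $\gamma$ is not combed, so that the adverse and favourable branch ends $e^a,e^f$ demanded by that lemma exist --- is exactly why the bound in item 1 is stated for $\bm\tau'(0,N(2))$ and why $N(1)$ (the combing threshold) is recorded: once $\gamma$ is combed the Three Ramps Criterion no longer applies, but by then no further postponement is needed because all remaining twisting has already been gathered past $N(2)$.
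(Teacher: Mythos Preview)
Your overall architecture is right --- separate non-twist from twist, bound the rotation of the non-twist part via the Three Ramps Criterion, then apply Lemma \ref{lem:dehn+remainder} to the twist tail --- and the $m'\ge m-5$ accounting is correct. But there is a genuine gap in how you plan to verify the hypothesis of the Three Ramps Criterion.

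That lemma requires an incoming hitting ramp $\rho^h_l$ in $\tau_l^X$ whose tie-collapse $c_{\tau_k^X}\circ\rho^h_l$ is \emph{still a ramp} in $\tau_k^X$, i.e.\ meets $\tau_k^X.\gamma$ only at its endpoint. You assert this holds ``as long as $\gamma$ is not combed'', but non-combedness only supplies the adverse and favourable branch ends $e^a,e^f$ that the lemma also needs; it does nothing to prevent the tie-collapse of $\rho^h_l$ from acquiring an initial segment along $\tau_k^X.\gamma$. What causes such a segment to appear is precisely a \emph{pull}: a slide that detaches a branch end from the side hitting $A_\gamma$. The paper's proof therefore inserts a preprocessing step (its Step 1) that rewrites $\bm\tau$, via Propositions \ref{prp:deleteslidings} and \ref{prp:combpersistence}, into a comb-equivalent sequence $\bm\sigma$ in which no pulls occur (and, on the combed portion, no f-pulls either). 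Only after this can one trace a hitting ramp backward through the whole non-twist sequence with its endpoint pinned to a fixed upper obstacle, which is exactly the Three Ramps hypothesis. On the combed portion the symmetric no-f-pull condition lets one trace a \emph{favourable} ramp instead, giving $\rot=0$ there; combining with $\rot\le 1$ on the non-combed part and one concatenation error (Remark \ref{rmk:rotbasics}, item \ref{itm:concatrot_above}) yields the bound $\le 3$.

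Secondarily, the paper's mechanism for ``pushing twists to the end'' is not a local commutation \`a la Lemma \ref{lem:postpone} but a global one: it partitions $\bm\sigma$ into alternating twist-nature and non-twist blocks, and to each non-twist block applies the composite inverse $H_i^{-1}\circ\cdots\circ H_0^{-1}$ of all preceding twist modelling functions (this is the map $\phi_j$ in the proof). The resulting sequence $\bm\rho'$ is the candidate $\bm\tau'(0,N(2))$, and the removed twisting is then reinstated wholesale as the tail $\bm\tau'(N(2),N(5))$. Your pairwise-commutation idea might be made to work, but you would still need the no-pull preprocessing to control what happens to ramps, and the global-undo approach sidesteps the combinatorial explosion you flag as your main obstacle.
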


A note of warning: the sequence $\bm\tau'$ will be built with the possibility that some entry is obtained from the previous one via isotopies only. It is clear, anyway, that one may delete the repeated entries in the sequence (forgetting about any model according to Remarks \ref{rmk:permanenceconventions} and \ref{rmk:twistnaturemodelling} that the sequence is given).

\begin{proof}
\step{1} replacement of $\bm\tau$ with a splitting sequence with control on the number of branches hitting $A_\gamma$.

\begin{figure}[h]
\centering
\includegraphics[width=.4\textwidth]{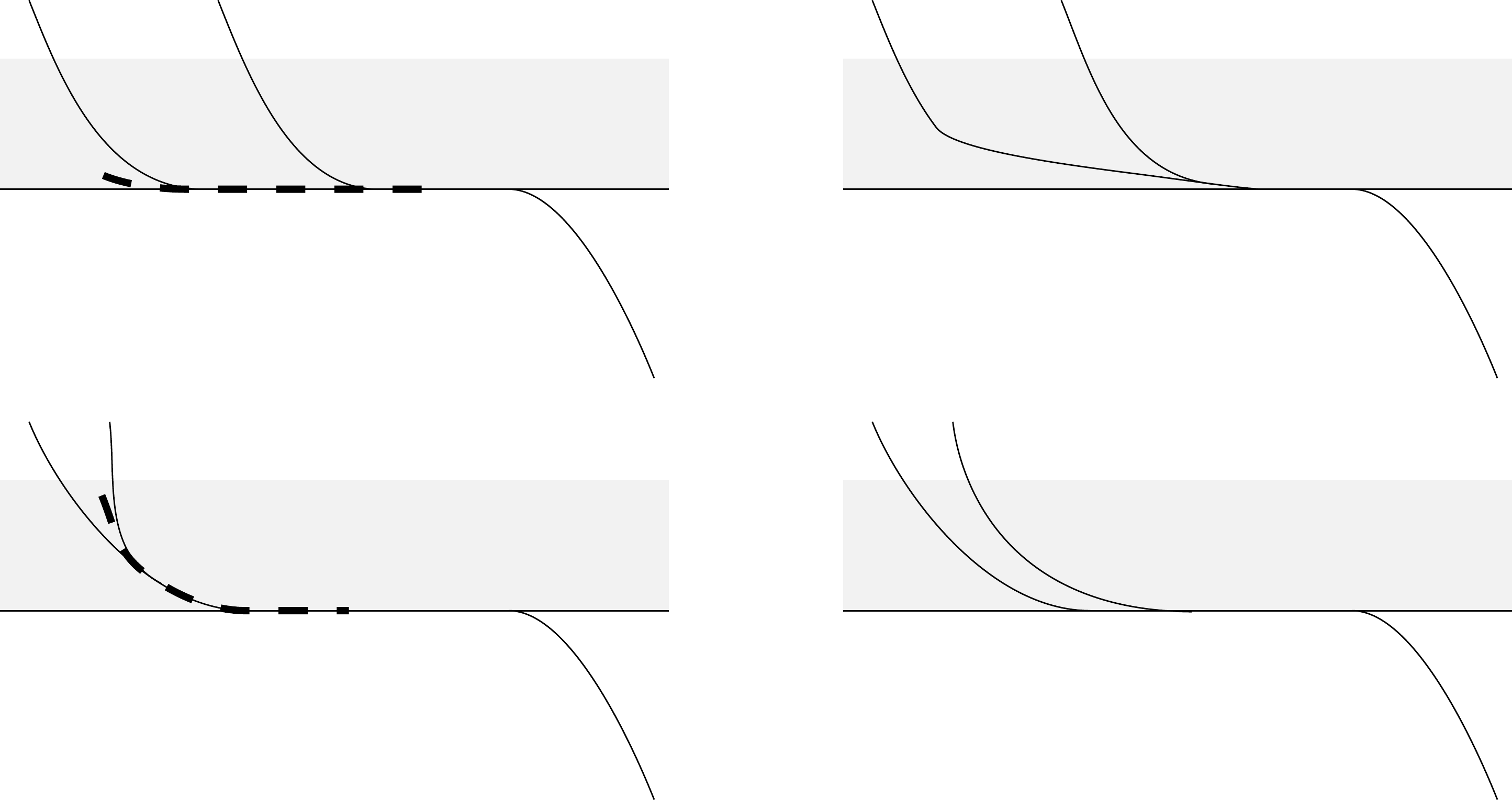}
\caption{\label{fig:pushpull}A pull (above) and a push (below) as the result of an unzip.}
\end{figure}

We define two particular kinds of slide around $\gamma$: a \emph{push} is a slide which increases the number of branch ends hitting $A_\gamma$; a \emph{pull} is one which decreases said number (see Figure \ref{fig:pushpull}). We also define a \emph{f-push} and a \emph{f-pull} accordingly, but replacing the words `hitting $A_\gamma$' with `avoiding $A_\gamma$ and favourable'.

In this step a new splitting sequence $\bm\sigma=(\sigma_j)_{j=0}^M$ will be constructed, with the same beginning track as $\bm\tau$ and with the elements of the sequence orderedly comb equivalent to the ones of $\bm\tau$ (in particular the last entries of the two sequences will be comb equivalent), but with the property that no pulls occur; and that, if $\bm\sigma(M',M)$ is the maximal subsequence of $\bm\sigma$ where $\gamma$ is combed, then no f-pull occurs there, either. Recall from the observation before Lemma \ref{lem:twistcurvebasics} that, if $\gamma$ is combed in a track along a splitting sequence, than $\gamma$ will remain combed in the sequence, as long as it is a twist curve.

Let $\bm\sigma^0$ be a wide splitting sequence obtained from $\bm\tau$ via Proposition \ref{prp:deleteslidings}; and let $\ul{\bm\sigma}^0$ be the conversion of it into another regular splitting sequence, by decomposing each wide split into elementary moves. 

We wish to define recursively a family of splitting sequences $\ul{\bm\sigma}^i$ whose entries keep orderedly comb equivalent to the ones of $\bm\tau$: in particular there is an $A_\gamma$-family of twist collars for all entries of all these sequences. We require that $\ul{\bm\sigma}^i$ is obtained from adjoining $\ul{\bm\sigma}^i_-*\ul{\bm\sigma}^i_+$, where $\ul{\bm\sigma}^i_-$ includes no pull, and no f-pull where $\gamma$ is combed; and $\ul{\bm\sigma}^i_+$ is the translation of a wide splitting sequence $\bm\sigma^i_+$ into a regular one --- for $i=0$, $\ul{\bm\sigma}^0_-$ is empty while $\ul{\bm\sigma}^0_+=\ul{\bm\sigma}^0$. The subsequence of $\ul{\bm\sigma}^i_+$ accounting for the $j$-th wide split in $\bm\sigma^i_+$ will be called $\ul{\bm\sigma}^i_+(j)$.

Also, we require $|\ul{\bm\sigma}^i|$ to be the same for all $i$ while $|\ul{\bm\sigma}^i_+|$ is strictly decreasing as $i$ increases: we stop the recursion when $|\ul{\bm\sigma}^i_+|=0$ (or earlier), so that $\ul{\bm\sigma}^i_-$ is the sequence $\bm\sigma$ that we desired to build. 

Suppose $\ul{\bm\sigma}^0,\ldots, \ul{\bm\sigma}^i$ have been defined; we proceed further to $\ul{\bm\sigma}^{i+1}$. Let $(\alpha^i_j)_{j=0}^{s(i)}$ be the sequence of the splitting arcs employed along the sequence $\bm\sigma^i_+$. If in $\ul{\bm\sigma}^i_+$ there is no pull nor f-pull where $\gamma$ is combed, then the recursion stops here with $\bm\sigma=\ul{\bm\sigma}^i$.

Else, the first pull in the sequence, or f-pull while $\gamma$ is combed, occurs as part of a $\ul{\bm\sigma}^i_+(j)$ such that the corresponding wide split in $\ul{\bm\sigma}^i_+$ is twist: far wide splits, indeed, fail to produce a pull or a f-pull, and a spurious wide split can only produce a f-pull, and only before $\gamma$ gets combed. Denote $t_1,t_2$ the indices such that $\ul{\bm\sigma}^i_+(j)=\ul{\bm\sigma}^i(t_1,t_2)$; but call $\xi_1\coloneqq \ul\sigma^i_{t_1}$, $\xi_2\coloneqq \ul\sigma^i_{t_2}$, for simplicity, the train tracks before and after this wide twist split.

Orient the splitting arc $\alpha^i_j$, embedded in $\bar\nei(\xi_1)$, so that it traverses branches of $\gamma$ according to the $A_\gamma$-orientation. 

Define $B_1,B_2\subseteq \br(\xi_1)$ as the two collections of branches such that $\bigcup B_1,\bigcup B_2$ give the two connected components of $\ol{\xi_1.\alpha^i_j\setminus\xi_1.\gamma}$ --- either, or both, may empty if the specified set is connected or empty. Let $R_1,R_2$ be the unions of the branch rectangles $R_b$ for $b\in B_1$, $b\in B_2$ respectively; and let $\alpha^i_{j1}=\alpha^i_j\cap R_1$, $\alpha^i_{j2}=\alpha^i_j\cap R_2$. They are the images of two disjoint small zippers (if either is empty, just ignore it): unzipping $\xi_1$ along $\alpha^i_{j1}$ and $\alpha^i_{j2}$, one gets a sequence of slides, none of which is a pull or a f-pull, turning $\xi_1$ into $\xi_1'$. In this latter almost track there is a splitting arc $\alpha^i_{j3}$ which corresponds to $\alpha^i_j$ (via Proposition \ref{prp:combpersistence}), and it traverses only branches contained in $\xi_1'.\gamma$.

With a series of twist splits, rearrange the branch ends with their endpoints lying along $\alpha^i_{j3}$: move all the ones hitting $A_\gamma$ past the ones that avoid it. The result, $\xi_2'$, is comb equivalent to the wide split along $\alpha^i_{j3}$, that is $\xi_2$.

We define $\ul{\bm\sigma}^{i+1}_-$ by adjoining $\ul{\bm\sigma}^i(0,t_1)$ to the elementary moves seen above which turn $\xi_1$ into $\xi_1'$ and then $\xi_2'$.

If $\xi_2$ is the last entry of the sequence $\bm\sigma^i_+$ then $\bm\sigma^{i+1}_+$ can be considered to be a trivial sequence with the entry $\xi_2'$ only, and the recursion ends here. If it is not then $\xi_2'$, being comb equivalent to $\xi_2$, carries a splitting arc which corresponds to $\alpha^i_{j+1}$; by Proposition \ref{prp:combpersistence} the wide split of it, with the same parity as $\alpha^i_{j+1}$ in $\xi_2$, gives an almost track which is comb equivalent to the entry of $\bm\sigma^i_+$ which succeeds $\xi_2$.

Continuing this way one gets a wide splitting sequence $\bm\sigma^{i+1}$ whose entries are comb equivalent to the entries of $\bm\sigma^i$ from $\xi_2$ on. As $\xi_2$ is not the first entry of $\bm\sigma^i_+$, $|\bm\sigma^i_+|>|\bm\sigma^{i+1}_+|$. This concludes the description of the recursive construction.

\step{2} grouping together the moves which are not twist; definition of the subsequence $\bm\tau'(0,N(2))$.

The argument used in this step is going to have some lines in common with the one of Lemma \ref{lem:postpone} about central split postponement. The underlying idea is: scan the sequence $\bm\sigma$ neglecting all twist moves, and performing all the other ones instead. At a later stage, the twist moves forgotten here will be performed altogether to give rise to the subsequence $\bm\tau'(N(2),N(5))$.

Consider the sequence of indices $0=j_0\leq j_1<\ldots <j_{2r-1}\leq j_{2r}=M$ such that, for all $i$ such that the following expressions make sense, the sequence $\bm\sigma(j_{2i},j_{2i+1})$ has twist nature whereas no split or slide in $\bm\sigma(j_{2i+1},j_{2i+2})$ is a twist one. We will have $j_1=0$ if and only if $\bm\sigma$ does \emph{not} begin with a twist split or slide, and similarly $j_{2r-1}=M$ if and only if $\bm\sigma$ ends with a twist split or slide.

While the sequences $\bm\sigma(j_{2i+1},j_{2i+2})$ will be modelled with the conventions explained in Remark \ref{rmk:permanenceconventions}, the sequences of twist nature $\bm\sigma(j_{2i},j_{2i+1})$ will be realized with the conventions of \ref{rmk:twistnaturemodelling}. In particular, along each sequence $\bm\sigma(j_{2i},j_{2i+1})$, the carrying images $\sigma_j.\gamma$ and the twist collar $A_\gamma(j)$, for $j_{2i}\leq j\leq j_{2i+1}$, are all the same subset of $S$; and each of these sequences shall be regarded as one only affecting the picture within $A_\gamma(j)$, associated to a twist modelling function $h_i$ and, correspondingly, to a bijection $H_i:S\rightarrow S$.

Let $q_j:\R\times[-2,2]\rightarrow S^X$ be the parametrization of $S^X$ which has been set up for $\sigma_j$, in compliance of Remark \ref{rmk:twistparam} and of either Remark \ref{rmk:permanenceconventions} or \ref{rmk:twistnaturemodelling} as specified above: it is handy to add to $q_j$ the request that all upper obstacles for $\sigma_j^X$ are points of $2\pi\mathbb Q\times\{0\}$; while all lower and fake obstacles are points of $2\pi(\R\setminus\mathbb Q)\times\{0\}$. With this request one may as well suppose that all $h_i$ have $h_i(2\pi\mathbb Q,0)=2\pi \mathbb Q$, and this will be helpful when removing the twist moves, as we will see.

In accordance with what seen in Remark \ref{rmk:generic_move_as_unzip}, for each index $j\in[j_{2i+1},j_{2i+2}-1]$ for some $i$, if the elementary move performed on $\sigma_j$ is not a central split, then it is the result of unzipping a zipper $\kappa_j$. For ease of notation, if the move is a central split, let $\kappa_j$ be a splitting arc traversing the branch that is being split. In order to comply with the further request on parametrizations specified above, necessarily the following statement shall hold, for any $j$ such that the $j$-th move is not a central split and for any $x\in\R$ such that $q_j(x,0)$ lies on the same tie as the last point of $\kappa_j$: $x\in 2\pi(\R\setminus\mathbb Q)$ if $\kappa_j$ intersects $A_\gamma$ (since twist moves have to be excluded, this means that its unzip describes necessarily a push); while $x\in 2\pi\mathbb Q$ otherwise.

Define, for all $i=0,\ldots r-1$, for all $j_{2i+1}< j \leq j_{2i+2}$, $\E_{(j)}\coloneqq  \E_{j_{2i+1}}\circ\ldots\circ\E_{j-1}: (S,\sigma_j)\rightarrow (S,\sigma_{j_{2i+1}})$. Here the maps $\E_j$ are relative to the sequence $\bm\sigma$, and defined as in Remark \ref{rmk:permanenceconventions}. Set then, for notational convenience, $\E_{(j_{2i+1})}=\mathrm{id}_S$, and $\E_{[i]}=\E_{(j_{2i+2})}$. Also, define $\F_{(j)}\coloneqq  \F_{j-1}\circ\ldots\circ\F_0$, which is defined on some subset of $S$ which we do not make explicit: for our purposes, it is important only that it includes the twist collar $A_\gamma(0)$. Note that there is a discrepancy between the definitions of $\F_{(j)}$ and $\E_{(j)}$.

Now, for all $j_{2i+1}\leq j < j_{2i+2}$, define $\phi_j:S\rightarrow S$ by
$$\phi_j|_{A_\gamma(j)}\coloneqq \F_{(j)}\circ H_0^{-1}\circ\E_{[0]}\circ\ldots\circ H_{i-1}^{-1}\circ \E_{[i-1]}\circ H_i^{-1}\circ \E_{(j)}\quad\text{and}\quad \phi_j|_{S\setminus A_\gamma(j)}=\mathrm{id}_{S\setminus A_\gamma(j)}.$$

This map is, in words: a bijection of $S$ which fixes $A_\gamma(j)$, is discontinuous only along $\sigma_j.\gamma$, and `undoes' the twist splits and slides performed in $\bm\sigma(0,j)$: while each such move pushes the branch ends hitting $A_\gamma$ forwards, this map moves them backwards by the same length. So define $\rho^i_j\coloneqq \phi_j(\sigma_j)$. We see that $\rho^i_j$ is a generic almost track, because we have kept upper obstacles in $2\pi\mathbb Q$ and lower and fake ones in $2\pi(\R\setminus\mathbb Q)$. Clearly, the existing parametrization $q_j$ can be used with respect to $(\rho_j^i)^X$, too, respecting the requests of Remark \ref{rmk:twistparam}.

Let then $\bm\rho^i=(\rho^i_j)_{j_{2i+1}}^{j_{2i+2}}$: this is a sequence resembling $\bm\sigma({j_{2i+1}},{j_{2i+2}})$ except that the effects of all the twist splits occurred earlier along $\bm\sigma$ have been cancelled. More precisely, the twist modelling function
\begin{equation}\label{eqn:h_tot}
h^*_i\coloneqq h_i(h_{i-1}(\cdots h_0(x,t)\cdots))
\end{equation}
induces a map $H^*_i$ such that $H^*_i(\rho^i_j)=\sigma_j$, for all $j_{2i+1}\leq j \leq j_{2i+2}$.

For each fixed ${j_{2i+1}}\leq j<{j_{2i+2}}$, the zipper/splitting arc $\kappa_j$ is still a zipper/multiple branch for $\rho^i_j$ and the unzip/central multiple split of it gives $\rho^i_{j+1}$. But, in general, if $\kappa_j$ is a zipper, its unzip may possibly describe \emph{not} a single elementary move, but more than one; or even just an isotopy. Similarly, if $\kappa_j$ is a splitting arc for $\sigma_j$, it may be the case that $\kappa_j$ is not a splitting arc in $\rho^i_j$. So, $\bm\rho^i$ in general is \emph{not} a splitting sequence, but each $\rho^i_{j+1}$ ($j_{2i+1}\leq j < j_{2i+2}$) is carried by the previous $\rho^i_j$. Also, note that $\rho^i_{j_{2i+2}}=\rho^{i+1}_{j_{2i+3}}$ for all $0\leq i\leq r-2$. Let $\bm\rho\coloneqq (\bm\rho^0)*\cdots*(\bm\rho^{r-1})$ (which is not a splitting sequence either, but the concatenation makes sense).

Each unzip may be subdivided into several unzips along shorter zippers, each of which give a single elementary move (or an isotopy). With this subdivision a splitting sequence $(\bm\rho^i)'$ is built, which touches orderedly all almost tracks in the sequence $\bm\rho^i$ and respects the conventions of Remark \ref{rmk:permanenceconventions}. Patch together all these pieces to get a new splitting sequence (possibly with trivial moves) $\bm\rho'\coloneqq (\bm\rho^0)'*\cdots*(\bm\rho^{r-1})'$. Note that, if $\xi$ is one of the almost tracks inserted between $\rho^i_j$ and $\rho^i_{j+1}$ (for suitable $i,j$) then $H^*_i(\xi)$ is isotopic to either $\sigma_j$ or $\sigma_{j+1}$; which, in turn, is comb equivalent to $\tau_{j'}$ for a suitable $j'$.

Let $(\bm\rho')^u$ be the maximal subsequence of $\bm\rho'$ where $\gamma$ is not combed, and let $(\bm\rho')^c$ the maximal subsequence where $\gamma$ is combed. Either of these may be empty or trivial. Index $\bm\rho'=(\rho'_j)_{j=0}^{N(2)}$ and let $N(1)\in [1,N(2)]$ be an index such that:
\begin{itemize}
\item either $(\bm\rho')^u$ is empty, or $(\bm\rho')^u=\bm\rho'(0,N(1)-1)$;
\item either $(\bm\rho')^c$ is empty, or $(\bm\rho')^u=\bm\rho'(N(1), N(2))$.
\end{itemize}

By construction, one has an increasing map $f_\sigma:[0,N]\rightarrow[0,M]$ such that $f_\sigma(0)=0$, $f_\sigma(N)=M$ and $\sigma_{f_{\sigma}(j)}$ comb equivalent to $\tau_j$ for all $j$. Also, for $0\leq j\leq M$, $\sigma_j$ is obtained from $\rho^i_j$ (for the correct $i$) via a splitting sequence of twist nature, as seen. This is enough to build the claimed map $f:[0,N]\rightarrow [0,N(2)]$.

\step{3} estimation of $\rot_{\bm\rho'}\left(0,N(2)\right)$.

Let $X$ be an annular neighbourhood of $\gamma$ in $S$. We only cover the case $0<N(1)<N(2)$, as the other ones are simple adaptations. 

We claim that there is an incoming, hitting ramp $\delta^h_{N(2)}$ in $(\rho'_{N(2)})^X$ such that $c_{(\rho'_0)^X}\circ \delta^h_{N(2)}$ is an hitting ramp for $(\rho'_0)^X$.

Recall, first of all, that each zipper involved in each of the subsequences\linebreak $\bm\sigma(j_{2i+1},j_{2i+2})$ induces a single elementary move; let $\sigma_j,\sigma_{j+1}$ be the almost tracks before and after the move.

If the move is a push: all the upper obstacles for $\sigma_j^X$ are also upper obstacles for $\sigma_{j+1}^X$. If $\delta:(-\infty,0]\rightarrow \sigma_{j+1}^X$ is a ramp hitting $A_\gamma(j+1)$, and $q_{j+1}^{-1}\left(\delta(0)\right)$ is a collection of upper obstacles for $\sigma_j^X$, too, then $c_{\sigma_j^X}\circ \delta$ is also a ramp hitting $A_\gamma(j)$ --- i.e. no segment of $c_{\sigma_j^X}\circ \delta$ lies along $\sigma_j^X.\gamma$. This can be understood from Figure \ref{fig:pushpull}, even though it is simplified.

The zipper $\kappa_j$ unzipped in this move is the same employed to turn $\rho^i_j$ into $\rho^i_{j+1}$; and, despite the result of this unzip not being necessarily a single elementary move, it is seen that the upper obstacles for $(\rho^i_j)^X$ are a subset of the ones for $(\rho^i_{j+1})^X$ and that a similar property as above holds: i.e. if $\delta:(-\infty,0]\rightarrow (\rho^i_{j+1})^X$ is an incoming ramp hitting $A_\gamma(j+1)$, with $q_{j+1}^{-1}\left(\delta(0)\right)$ a collection of upper obstacles for $(\rho^i_j)^X$, then $c_{(\rho^i_j)^X}\circ\delta$ is a ramp for $(\rho^i_j)^X$ hitting $A_\gamma(j)$.

If the elementary move from $\sigma_j$ to $\sigma_{j+1}$ is not a push, recall that it is not a pull or a twist move, either. The unzip that realizes it keeps $q_j^{-1}(\sigma_j^X)\cap\left(\R\times[0,1]\right)= q_{j+1}^{-1}(\sigma_{j+1}^X)\cap\left(\R\times[0,1]\right)$, and so also $q_j^{-1}(\rho_j^X)\cap\left(\R\times[0,1]\right)= q_{j+1}^{-1}(\rho_{j+1}^X)\cap\left(\R\times[0,1]\right)$. This implies that, for \emph{any} incoming ramp $\delta:(-\infty,0]\rightarrow (\rho^i_{j+1})^X$ hitting $A_\gamma(j+1)$, $c_{(\rho^i_j)^X}\circ\delta$ is a ramp for $(\rho^i_j)^X$ hitting $A_\gamma(j)$.

Considering that $\bm\rho'$ only inserts intermediate stages between the entries of $\bm\rho$, then, it is possible to pick $\delta^h_{N(2)}$ an incoming ramp for $(\rho'_{N(2)})^X$ with $(q'_{N(2)})^{-1}\left(\delta^h_{N(2)}(0)\right)$ a subset of the upper obstacles of $(\rho'_0)^X$. The argument seen above yields that both $c_{(\rho'_{N(1)})^X}\circ\delta^h_{N(2)}$ and $c_{(\rho'_0)^X}\circ\delta^h_{N(2)}$ are incoming ramps hitting $A_\gamma$ in $(\rho'_{N(1)})^X$, $(\rho'_0)^X$ respectively.

So the sequence $\bm\rho'\left(0,N(1)-1\right)$, if nonempty, complies with the hypotheses of Lemma \ref{lem:threeramps}; hence $\rot_{\bm\rho'}\left(\gamma;0,N(1)-1\right)\leq 1$.

As for the sequence $(\bm\rho')^c$, recall that no f-pulls occur here. So an argument entirely similar to the above shows that there is an outgoing, favourable ramp $\delta^f_{N(2)}$ for $\rho'_{N(2)}$ with $c_{(\rho'_{N(1)})^X}\circ\delta^h_{N(2)}$ a favourable ramp for $(\rho'_{N(1)})^X$. This means that, if $\alpha\in V(\rho'_{N(2)})$ has a train path realization which includes $\delta^h_{N(2)}$ and $\delta^f_{N(2)}$ then $\hl\left(\rho'_N(1),\alpha\right)=\hl\left(\rho'_N(2),\alpha\right)$ i.e. $\alpha\in V(\rho'_{N(1)})$. The move turning $\rho'_{N(1)-1}$ into $\rho'_{N(1)}$ makes $\gamma$ into a combed curve, so it must be a spurious split: hence, by Lemma \ref{lem:vertexsetnontwist}, $V(\rho'_{N(1)})\subseteq V(\rho'_{N(1)-1})$. The presence of the element $\alpha$ both in the latter set and in $V(\rho'_{N(2)})$ implies that $\rot_{\bm\rho'}\left(\gamma;N(1)-1, N(2)\right)=0$.

By point \ref{itm:concatrot_above} in Remark \ref{rmk:rotbasics}, $\rot_{\bm\rho'}\left(0,N(2)\right)\leq \rot_{\bm\rho'}\left(0,N(1)-1\right)+\linebreak \rot_{\bm\rho'}\left(N(1)-1,N(2)\right)+2 =3$.

\step{4} definition of the subsequence $\bm\tau'(N(2),N(5))$ and conclusion.

Let $h_{tot}\coloneqq h^*_{r-1}$ (see (\ref{eqn:h_tot}) above) and let $H_{tot}$ be the corresponding self-map of $S$. Note that, by construction, $\sigma_M=H_{tot}(\rho'_{N'})$. Applying Lemma \ref{lem:functiongivestwist} to the twist modelling function $h_{tot}$ is therefore possible to build a splitting sequence $\bm\omega$, having twist nature about $\gamma$, beginning with $\rho'_{N(2)}$ and ending with $\sigma_M$. Let $m'$ be the rotation number of the sequence $\bm\omega$.

As a consequence of Lemma \ref{lem:dehn+remainder}, $\bm\omega$ can be replaced by a new sequence $\bm\omega'=(\omega'_j)_{j=0}^{Q}$ of twist nature, with $\omega'_0=\rho'_{N(2)}$, $\omega'_Q$ comb equivalent to $\sigma_M$ (and to $\tau_N$) such that, for suitable integers $Q', k'$ we have $Q=Q'+k'm'$ and $\omega'_{j+k'}= D_\gamma^\epsilon(\omega'_j)$ for all $Q'\leq j\leq Q-k'$; while $\rot_{\bm\omega'}(\gamma;0,Q')=0$.

Let $\bm\xi$ be a series of slides that turns $\omega'_Q$ into $\tau_N$; so the sequence $D_\gamma^{-\epsilon m'}\cdot\bm\xi$ (i.e. the application of $D_\gamma^{-\epsilon m'}$ to all elements in the sequence $\bm\xi$) turns $\omega'_{Q'}$ into $D_\gamma^{-\epsilon m'}(\tau_N)$. Lemma \ref{lem:functiongivestwist} gives a new splitting sequence $\bm\omega''=(\omega''_j)_{j=0}^{m'k}$, with twist nature about $\gamma$, with $\omega''_{j+k}= D_\gamma^\epsilon(\omega''_j)$ for all $0\leq j\leq m'(k-1)$, and such that $\omega''_0=D_\gamma^{-\epsilon m'}(\tau_N)$ and $\omega''_{m'k}=\tau_N$.

Define finally $\bm\tau'=\bm\rho'*\bm\omega'(0,Q')*(D_\gamma^{-\epsilon m'}\cdot\bm\xi)*\bm\omega''$. This splitting sequence satisfies all requirements in the statement with $N(3)=N(2)+Q'$ and $N(4)=N(3)+ (\text{length of }\bm\xi)-1$. In particular, due to point \ref{itm:concatrot_above} in Remark \ref{rmk:rotbasics}, $m'\geq \rot_{\bm\tau'}(0,N(5))-\rot_{\bm\tau'}(0,N(2))-2 \geq m-5$.
\end{proof}

\begin{rmk}
The rearrangement procedure has a good behaviour with respect to the following properties --- meaning that if all tracks in $\bm\tau$ have the specified property then all tracks in $\bm\tau'$ have the same property, too.
\begin{itemize}
\item \textit{Recurrence and transverse recurrence.} In Remark \ref{rmk:recurrence_at_extremes} we have noted that all train tracks in a splitting sequence are recurrent if the last track in the sequence is. Moreover, they are all transversely recurrent if the first track in the sequence is. And $\bm\tau,\bm\tau'$ have the same endpoints.
\item \textit{Being cornered.} For each train track $\tau'_j$ there is a train track $\tau_i$ with a natural correspondence between the components of $S\setminus\nei(\tau'_j)$ and the ones of $S\setminus\nei(\tau_i)$, under which they are pairwise diffeomorphic.
\end{itemize}
\end{rmk}

\begin{rmk}\label{rmk:twist_disjointness}
We report a property of twist curves noted in \cite{mosher}, p. 215:
\begin{claim}
Let $\Gamma\subseteq \cc^0(S)$ be a family of essential curves which are pairwise disjoint up to isotopies, and are all twist curves for a given almost track $\tau$. Then, even if the carried images of these curves are not necessarily pairwise disjoint, it is possible to take a family of pairwise disjoint twist collars $A_\gamma$, for all $\gamma\in\Gamma$. For each of the $\gamma\in\Gamma$ which are combed in $\tau$, one may also choose what side of $\gamma$ the collar $A_\gamma$ must lie.
\end{claim}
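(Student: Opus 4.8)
The plan is to prove the claim by induction on the number of curves in $\Gamma$, combined with a careful use of the tree structure provided by Lemma \ref{lem:twistcurvetrees}. First I would set up the base case and inductive framework: fix $\gamma_0\in\Gamma$, let $\Gamma'=\Gamma\setminus\{\gamma_0\}$, and apply the inductive hypothesis to obtain pairwise disjoint twist collars $\{A_\gamma\}_{\gamma\in\Gamma'}$. The task is then to produce a twist collar $A_{\gamma_0}$ disjoint from all of these. For this, observe that since the curves in $\Gamma$ are pairwise disjoint up to isotopy, one may fix embedded representatives $\{\ul\gamma\}_{\gamma\in\Gamma}$ which are genuinely pairwise disjoint in $S$; each $\ul\gamma$ is isotopic to $\tau.\gamma$ (the carrying image). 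The key geometric input is that a twist collar for $\gamma_0$ can be taken to be a thin annulus running alongside $\tau.\gamma_0$ on one of its two sides, and the obstruction to disjointness is precisely the intersection of $\tau.\gamma_0$ with the carrying images of the other curves.

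Next I would analyse the local picture near $\tau.\gamma_0$. The branches of $\tau$ traversed by other curves $\gamma\in\Gamma'$ that lie along $\tau.\gamma_0$ are a potential source of trouble, but here I would invoke Lemma \ref{lem:twistcurvetrees} applied in $S^{\nei(\gamma_0)}$: the part of $\tau|\nei(\gamma_0)$ outside $(\tau|\nei(\gamma_0)).\gamma_0$ is a disjoint union of trees rooted on $\tau^X.\gamma_0$, consisting of mixed branches, and every ramp enters its branches from the small end. The point is that the ``hitting'' side of $\gamma_0$ — where the branch ends hitting $A_{\gamma_0}$ attach — can be chosen to avoid the directions in which the other curves' carrying images travel, or, when $\gamma_0$ is not combed, one side is forced but one then checks that a thin enough collar on that forced side still misses the (isotopically pushed-off) representatives $\ul\gamma$ for $\gamma\in\Gamma'$. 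Because the $\ul\gamma$ are disjoint from $\ul\gamma_0$ and from each other, and because the $A_\gamma$ ($\gamma\in\Gamma'$) can themselves be shrunk to lie in arbitrarily small neighbourhoods of the respective $\ul\gamma$, shrinking $A_{\gamma_0}$ toward $\ul\gamma_0$ eventually achieves disjointness from all of them. For the last sentence of the claim, note that when $\gamma_0$ is combed there is genuine freedom in choosing the side, so this freedom persists through the construction and is recorded in the statement.

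I expect the main obstacle to be handling the case where $\tau.\gamma_0$ and $\tau.\gamma$ for some $\gamma\in\Gamma'$ share branches — i.e.\ their carrying images genuinely overlap, which the claim explicitly allows. In that situation one cannot simply take $A_{\gamma_0}$ to be a neighbourhood of $\tau.\gamma_0$ in $\nei(\tau)$, since that would intersect $A_\gamma$. The resolution is to work not with the carrying images but with the disjoint embedded representatives: a twist collar may equivalently be described (via the discussion after Definition \ref{def:twistcurve}, and the construction of wide collars in \S\ref{sec:traintracks}) as an annulus with one boundary component an embedded curve in the class $\gamma_0$ and the other running along $\tau.\gamma_0$, but we are free to thin it so that it lies within any prescribed neighbourhood of $\ul\gamma_0$ except near the switches where branch ends attach. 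Since only finitely many attaching points are involved and the curves $\ul\gamma$ avoid $\ul\gamma_0$, a sufficiently thin choice works. I would also need to double-check that shrinking $A_{\gamma_0}$ does not destroy the defining property of a twist collar (all branch ends hitting it induce the same $A_{\gamma_0}$-orientation on $\gamma_0$): this is automatic, since the orientation data is determined by which side of $\gamma_0$ a branch end approaches from, and that side relation is preserved under isotopies of the collar. Assembling these observations gives the claim; the argument is essentially the one referenced in \cite{mosher}, p.~215, recast in the present formalism.
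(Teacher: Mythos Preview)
The paper does not prove this claim; it merely reports it from \cite{mosher}, p.~215, so there is no paper-side argument to compare against. Your attempt to supply a proof is therefore welcome, but the proposal has a genuine gap precisely at the point you flag as the main obstacle.

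The shrinking manoeuvre cannot do what you want. By the definition of a wide (hence twist) collar, $A_{\gamma_0}$ has $\tau.\gamma_0$ itself as one of its two boundary components; thinning the collar means pushing the \emph{other} boundary component toward $\tau.\gamma_0$, not away from it. Now take $\gamma\in\Gamma'$ with $\tau.\gamma_0\cap\tau.\gamma$ containing a branch $b$. In the branch rectangle $R_b$, both $A_{\gamma_0}\cap R_b$ and $A_\gamma\cap R_b$ are open strips with one edge along the centre line $b$; if they lie on the same side of $b$ they overlap however thin you make them. Your claim that a thin $A_{\gamma_0}$ ``lies within any prescribed neighbourhood of $\ul\gamma_0$ except near the switches'' does not help: any such neighbourhood still contains that piece of $b$ (since $\ul\gamma_0$ runs parallel to $b$ inside $R_b$), and $A_\gamma$ is anchored at $b$ on the same side. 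The inductive hypothesis, the disjointness of the embedded representatives $\ul\gamma$, and Lemma~\ref{lem:twistcurvetrees} do not, on their own, force the collar \emph{sides} to be opposite at shared branches --- and it is the side, not the thickness, that governs disjointness here.

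What is actually required is an argument that the twist condition itself rules out the bad configuration: one must show that if $\gamma_0,\gamma$ are disjoint twist curves with a common stretch of carrying image, then the orientations forced by the hitting branch ends (Definition~\ref{def:eorientation}) make it impossible for both twist collars to be mandated on the same side of that stretch, unless at least one of the curves is combed, in which case its side is free and you may simply flip it. This local compatibility of sides is the substance of Mosher's observation; once it is established, the collars can indeed be taken as thin as one likes and will then be disjoint --- but thinness is a consequence, not the mechanism.
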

\end{rmk}

\begin{lemma}[Small interference of twist curves]\label{lem:smallinterference}
Let $\bm\tau=(\tau_j)_{j=0}^N$ be a generic splitting sequence of almost tracks such that a curve $\gamma\in \cc^0(S)$ remains a twist curve throughout a subsequence $\bm\tau(k,l)$. If either of the following is true, then $\rot_{\bm\tau}(\gamma;k,l)=0$.
\begin{itemize}
\item There is a curve $\gamma_1$, intersecting $\gamma$ essentially, that also remains a twist curve throughout $\bm\tau(k,l)$.
\item There is a family of curves $\gamma_1,\ldots,\gamma_m$, all disjoint up to isotopy from $\gamma$ (not necessarily from each other), such that $\bm\tau(k,l)$ consists of subsequences of twist nature, each with respect to one of the curves $\gamma_j$.
\end{itemize}
\end{lemma}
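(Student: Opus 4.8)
The plan is to prove each of the two bullets by showing that the rotation number cannot exceed $0$, exploiting the fact that rotation about $\gamma$ forces the horizontal stretch of the vertex arcs in $\tau_l^X$ to be long, and this is incompatible with the presence of the competing structure (a transverse twist curve, or a disjoint collection supporting only twist-nature moves). For the first bullet, I would use Remark~\ref{rmk:twist_disjointness}'s companion situation in reverse: since $\gamma_1$ intersects $\gamma$ essentially, any carried realization of $\gamma_1$ in $\tau_j$ (it is carried because it is a twist curve) must cross $\tau_j.\gamma$. Fix $A_{\gamma_1}$ a twist collar for $\gamma_1$ along $\bm\tau(k,l)$ using Lemma~\ref{lem:twistcurvebasics}, and note that a carried realization of $\gamma_1$ passing through $\tau_l^X.\gamma$ decomposes, when lifted to $S^X$, into an arc of $\cc(\tau_l^X)$; its horizontal length $\hl(\tau_l,\cdot)$ is bounded by a constant, since $\gamma_1$ has bounded self-intersection (Lemma~\ref{lem:vertexsetbounds}) and hence bounded intersection with $\gamma$, while by point~\ref{itm:hl_vs_multiplicity} after Definition~\ref{def:horizontallength} the horizontal length controls how many times that arc wraps around $\tau_l^X.\gamma$. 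The key point is that $\gamma_1$ being a twist curve for \emph{all} of $\bm\tau(k,l)$ means a fixed realization of $\gamma_1$ is carried by every $\tau_j^X$, so the same bounded-length arc lies in $\cc(\tau_k^X)$ too with the same pair of endpoints on $\partial\ol{S^X}$; thus $\hl$ of this arc stays bounded from $\tau_k$ to $\tau_l$. But $\rot_{\bm\tau}(\gamma;k,l)=m>0$ would force, by point~\ref{itm:rotwithvertexonly} of Remark~\ref{rmk:rotbasics} together with Lemma~\ref{lem:onerollingdirection}, every vertex arc (and more generally every arc of $\cc(\tau_l|X)$) to have $\hl_\cdot(k)\geq 2\pi m$, a contradiction once $m$ is large; to kill even $m=1$ one must argue more carefully that the arc coming from $\gamma_1$ actually realizes $\hl_\cdot(l)<2\pi$, i.e.\ lands in $V(\tau_l|X)$, which follows because $\gamma_1$ is \emph{embedded} as a curve and so its restricted arc does not wrap (point~\ref{itm:embeddedcore} of Remark~\ref{rmk:annulusinducedbasics} applied in $S^{\nei(\gamma_1)}$, transported to $S^X$).

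For the second bullet I would apply Remark~\ref{rmk:twist_disjointness} to the family $\{\gamma,\gamma_1,\dots,\gamma_m\}$ (all disjoint from $\gamma$ up to isotopy, though possibly not from each other — so I would first replace the $\gamma_j$ by a maximal disjoint sub-collection, which does not lose any twist-nature subsequence since each such subsequence is about a single $\gamma_j$ whose isotopy class survives or is represented by a disjoint one intersecting the removed ones only in the complement of $\nei(\gamma)$). Choose pairwise disjoint twist collars $A_\gamma, A_{\gamma_1},\dots,A_{\gamma_m}$. Now a twist move about some $\gamma_j$ is, by definition, far from $\gamma$ in the sense of Definition~\ref{def:twistmoves}: it alters $\tau_j$ only inside $A_{\gamma_j}$, which is disjoint from $\nei(\tau_j.\gamma)$, so there is a regular neighbourhood $\nei(\tau_j.\gamma)$ untouched. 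A twist slide about $\gamma_j$ is likewise supported near $\tau_j.\gamma_j$ and disjoint from a neighbourhood of $\tau_j.\gamma$. Hence every move in $\bm\tau(k,l)$ is far from $\gamma$. By point~\ref{itm:farisininfluent} after Definition~\ref{def:horizontallength}, under the conventions of Remark~\ref{rmk:permanenceconventions} (which apply since no move is a twist move about $\gamma$, so all are far), $\hs_\alpha(j+1)=\hs_\alpha(j)$ for all $\alpha\in\cc(\tau_{j+1}^X)$ and all $j\in[k,l-1]$; in particular $\hl_\alpha(k)=\hl_\alpha(l)$ for every $\alpha\in\cc(\tau_l^X)$, so by Definition~\ref{def:rot} (and point~\ref{itm:rotwithvertexonly} of Remark~\ref{rmk:rotbasics}) $\rot_{\bm\tau}(\gamma;k,l)=0$.

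I expect the first bullet to be the main obstacle. The difficulty is twofold: first, pinning down a single, fixed carried realization of $\gamma_1$ that works simultaneously for $\tau_k^X$ and $\tau_l^X$ with a controlled interval-stretch decomposition — this needs the uniqueness of carrying (Corollary~\ref{cor:carryingunique}) and the consistency conventions of Remark~\ref{rmk:permanenceconventions}, applied to a possibly non-twist-nature sequence; second, ruling out $m=1$ rather than just large $m$, which requires showing the $\gamma_1$-arc lies in $V(\tau_l|X)$ and not merely $\cc(\tau_l|X)$. An alternative, possibly cleaner route for the first bullet is to invoke the Three Ramps Criterion (Lemma~\ref{lem:threeramps}): construct from $\gamma_1$ an incoming hitting ramp $\rho_l^h$ in $\tau_l^X$ whose tie-collapse back to $\tau_k^X$ is still a hitting ramp, using that $\gamma_1$'s realization stays combed/carried and that it cannot wrap around $\tau_j^X.\gamma$ for any $j\in[k,l]$ because $\gamma_1$ is a simple curve with bounded geometric intersection with $\gamma$; then Lemma~\ref{lem:threeramps} gives $\rot\leq 1$, and a separate embeddedness argument (as above) improves this to $\rot=0$. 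I would pursue whichever of these two bookkeeping schemes turns out to need fewer new conventions — likely the Three Ramps route, since that lemma was evidently designed for exactly this kind of argument.
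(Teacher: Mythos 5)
Your proposal goes wrong in both bullets, though the intuitions are partly right.

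For the first bullet, your diagnosis that the issue is to show the $\gamma_1$-arcs land in $V(\tau_j|X)$ (not merely $\cc(\tau_j|X)$) is correct, but your justification is wrong. Embeddedness of $\gamma_1$ as a simple closed curve is not the relevant property, and point~\ref{itm:embeddedcore} of Remark~\ref{rmk:annulusinducedbasics} concerns the core curve of the annular cover, not a transverse curve lifted into a \emph{different} annular cover. The property you actually need is that $\gamma_1$, as a twist curve in each $\tau_j$, is \emph{wide} carried: $\gamma_1\in W(\tau_j)$ for all $j\in[k,l]$ (wideness is built into Definition~\ref{def:twistcurve}). A wide curve never traverses a branch twice in the same direction; by point~\ref{itm:windaboutgamma} of Remark~\ref{rmk:annulusinducedbasics}, an arc of $\cc(\tau_j^X)$ that traverses a branch twice must do so in the same direction (it winds around $\tau_j^X.\gamma$), and such repeated traversal projects to $\gamma_1$ traversing a branch of $\tau_j.\gamma$ twice in the same direction. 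So $\pi_X(\gamma_1)\subseteq V(\tau_j^X)$ for all $j$, giving $V(\tau_k^X)\cap V(\tau_l^X)\ne\emptyset$, and then point~\ref{itm:rot_vs_dt_vertices} of Remark~\ref{rmk:rotbasics} forces $\rot=0$ directly. With this observation in hand, the two-stage argument you propose (first bound large $m$ via horizontal lengths, then rule out $m=1$) is unnecessary; the paper's route is a single step. The Three Ramps alternative is viable but also requires this same wideness input to verify the hypothesis that $c_{\tau_k^X}\circ\rho_l^h$ is a ramp, so it does not avoid the difficulty.

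For the second bullet, the central claim of your argument — that every move in $\bm\tau(k,l)$ is far from $\gamma$ — is false. Remark~\ref{rmk:twist_disjointness} gives you disjoint twist \emph{collars} $A_\gamma, A_{\gamma_i}$, but it does not give you that $A_{\gamma_i}$ avoids a two-sided regular neighbourhood of $\tau_j.\gamma$. Even though $\gamma$ and $\gamma_i$ are disjoint as curves, their carried images $\tau_j.\gamma$ and $\tau_j.\gamma_i$ can share branches, and a twist move about $\gamma_i$ acts on a branch of $\tau_j.\gamma_i$ that may lie in $\tau_j.\gamma$. Such a move is not far from $\gamma$ in the sense of Definition~\ref{def:twistmoves}, so point~\ref{itm:farisininfluent} does not apply. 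The paper handles this with a second sub-case: when some $\gamma_i$ traverses a branch of $\tau_{k_i}.\gamma$, it identifies a segment $B\subseteq\tau_k^X.\gamma$ bounded by a favourable and an adverse end, shows that no lift of any $\gamma_i$ (or of a twist collar $A_{\gamma_i}$) can touch $B$, and then exhibits an arc $\alpha\in V(\tau_l^X)$ through the favourable end $f$ whose horizontal stretch is unchanged from $\tau_k^X$ to $\tau_l^X$, again forcing $\rot=0$. Your plan has no analogue of this, and as written it would silently assume the sub-case away. Separately, your proposed reduction to a pairwise-disjoint sub-collection of the $\gamma_j$ is unjustified: the twist-nature subsequences are about specific curves $\gamma_j$, and you cannot in general replace a $\gamma_j$ by a disjoint representative without losing the twist-nature structure of the corresponding chunk of the sequence. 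In fact the paper never needs pairwise disjointness of the $\gamma_j$; it only uses disjointness of each $\gamma_i$ from $\gamma$.
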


\begin{proof}
Let $X$ be a regular neighbourhood of $\gamma$ in $S$.

In the first scenario: we claim that, for all $k\leq j\leq l$, $\pi_X(\gamma_1)\subseteq \cc(\tau_j^X)$ is actually a subset of $V(\tau_j^X)$. Note, first of all, that it is surely it is not empty.

If the claim is false, one of the branches in $\tau_j^{X}.\gamma$ is traversed twice, in the same direction, by an arc in the family $\pi_{X}(\gamma_1)$; and, if this is true, then also $\gamma_1$ traverses twice and in the same direction one of the branches in $\tau_j.\gamma$. But this would contradict the fact that, as a twist curve, $\gamma_1\in W(\tau_j)$.

This means that $V(\tau_k^{X})\cap V(\tau_l^{X})\supseteq \pi_{X}(\gamma_1)\not=\emptyset$. By point \ref{itm:rot_vs_dt_vertices} in Remark \ref{rmk:rotbasics}, this implies that $\rot_{\bm\tau}(\gamma,k,l)=0$.

In the second scenario, call $\bm\tau(k_i,l_i)$ the subsequence of $\bm\tau(k,l)$ which has twist nature with respect to $\gamma_i$; and model it according to Remark \ref{rmk:twistnaturemodelling}, applied to the twist curve $\gamma_i$. As the curves $\gamma_i$ are all disjoint from $\gamma$, the $j$-th elementary move in $\bm\tau(k,l)$ changes $\tau_{k+j-1}$ only within the relevant $A_{\gamma_i}(j-1)=A_{\gamma_i}(j)$, thus it does not affect $A_\gamma(j)$ nor $\tau_{k+j}.\gamma$, because of the disjointness property (Remark \ref{rmk:twist_disjointness} above). In other words, for $k_i\leq j < l_i$, $c_{\tau_j^X}|_{\tau_{j+1}^X}$ is the identity map out of $A_{\gamma_i}$. This means that we can employ the same parametrization $q:\R\times[-2,2]\rightarrow S^X$, in compliance with Remark \ref{rmk:twistparam}, and focused on the twist curve $\gamma$, for all almost tracks in the sequence $\bm\tau(k,l)$.

There are now two sub-cases to be considered. If none of the $\gamma_i$ traverses one same branch of the corresponding $\tau_{k_i}$ as $\gamma$, then all splits in the sequence $\bm\tau(k,l)$ are far from $\gamma$ and so $V(\tau_l^X)\subseteq V(\tau_k^X)$ by Lemma \ref{lem:vertexsetnontwist}. So $\rot_{\bm\tau}(\gamma,k,l)=0$ because the two vertex sets are not disjoint (point \ref{itm:rot_vs_dt_vertices} in Remark \ref{rmk:rotbasics}).

Now suppose that at least one of the curves $\gamma_i$ traverses a branch of $\tau_{k_i}.\gamma$. Then in $\tau_{k_i}^X$ there is necessarily an $A_\gamma$-adverse branch, also traversed by $\gamma_i$; and thus, there is one also in $\tau_k^{X}$. Let then $B\subseteq \tau_k^X.\gamma$ be a union of consecutive branches whose extremities are both small branch ends, whose endpoints are switches incident to a $A_\gamma$-favourable and an $A_\gamma$-adverse branch end, respectively, and such that all switches in $\inte(B)$ are incident to branch ends hitting $A_\gamma$. Then, if a twist curve $\delta$ for $\tau_k$ has no essential intersection with $\gamma$, then no lift of it to $S^X$ traverses any branch in $B$; nor any lift of a twist collar $A_\delta\subseteq S$ to $S^X$ may intersect $B$.

This implies, by recursion, that $B$ remains delimited by a pair of branch ends which are favourable and adverse, respectively, after each elementary move in the sequence $\bm\tau(k,l)$. Let $f$ be the favourable one of these two branch ends: by what has been said so far, $c_{\tau_j^X}(f)=f$ for all $k\leq j\leq l$.

Pick any $\alpha\in V(\tau_l^X)$ which traverses $f$. Due to the decomposition specified in point \ref{itm:horizontalstretch} of Remark \ref{rmk:annulusinducedbasics}, a train path realization $\ul\alpha_l$ of $\alpha$ will include an incoming, favourable ramp $\rho_l^f$ ending with $f$, followed by an embedded stretch $\hs(\tau_l,\alpha)$ along $\tau_l^X.\gamma$, and finally an outgoing ramp $\rho_l^h$ hitting $A_{\gamma}$, and starting at a branch end $e$.

The arc $\alpha$ also belongs to $\cc(\tau_k^X)$. A train path realization of $\alpha$ in $\tau_k^{X}$ is $c_{\tau_k^X}\circ\ul\alpha_l$ (possibly with a reparametrization). Note that both $c_{\tau_k^X}\circ\rho_l^f$, $c_{\tau_k^X}\circ\rho_l^h$ are ramps in $\tau_k^X$ because $c_{\tau_k^X}$ is the identity on both $f,e$. 

Therefore also $\hs(\tau_k,\alpha)=\hs(\tau_l,\alpha)$, hence $\alpha\in V(\tau_k^{X})$ and $\rot(\gamma;k,l)=0$.
\end{proof}

Now we are going to use some machinery that was already set up in \cite{mms}, so our hypotheses on the considered train track splitting sequences become more restrictive. Also, we will use shorthand notations like $\bm\tau(I)\coloneqq \bm\tau(\min I,\max I)$, where $I$ is an interval in $\mathbb Z$ and $\bm\tau$ is a splitting sequence indexed by a superinterval of $I$.

\begin{defin}
A train track splitting sequence $\bm\tau=(\tau_j)_{j=0}^N$ on a surface $S$ \nw{evolves firmly} in a possibly disconnected subsurface $S'$ of $S$ if, for each $0\leq j\leq N$, $V(\tau_j)$ fills exactly the subsurface $S'$.
\end{defin}

\begin{defin}\label{def:etc}
Let $\bm\tau=(\tau_j)_{j=0}^N$ be a generic train track splitting sequence on $S$, evolving firmly in a subsurface $S'$, not necessarily connected.

A curve $\gamma\in\cc(S)$, and essential in one of the non-annular connected components of $S'$, is an \nw{effective twist curve} for $\bm\tau$ if 
$$d_{\nei(\gamma)}(\tau_0,\tau_N)\geq 4\mathsf{K}_0+ 19,$$
where $\nei(\gamma)$ is a regular neighbourhood of $\gamma$ in $S$, and $\mathsf{K}_0$ is the constant defined in Theorem \ref{thm:mmsstructure}.
\end{defin}

Note that the given definition does not require that $\pi_{\nei(\gamma)}\left(V(\tau_0)\right), \pi_{\nei(\gamma)}\left(V(\tau_N)\right)\not=\emptyset$, because this is automatic by the request that all vertex cycles fill the same $S'$. Also, note that for an effective twist curve $\gamma$ in a splitting sequence $\bm\tau$ necessarily $I_\gamma\not=\emptyset$, by the first point of Theorem \ref{thm:mmsstructure}; in other words, an effective twist curve is, in particular, a twist curve for some tracks in the sequence $\bm\tau$. Moreover, $d_{\nei(\gamma)}(\tau_{\min I_\gamma},\tau_{\max I_\gamma})\geq 2\mathsf{K}_0+ 19$.

\begin{defin}\label{def:arranged}
Let $\bm\tau=(\tau_j)_{j=0}^N$ be a generic splitting sequence of cornered birecurrent train tracks on a surface $S$, evolving firmly in some subsurface $S'$, not necessarily connected. Let $\gamma_1,\ldots,\gamma_r\in\cc(\tau_0)$, and for each $1\leq t \leq r$ let $I_t$ be the accessible interval of $\nei_t\coloneqq \nei(\gamma_t)$ a regular neighbourhood of $\gamma_t$.

We say that the splitting sequence $\bm\tau$ is \nw{$(\gamma_1,\ldots,\gamma_r)$-arranged} if the following conditions hold. For each $t=1,\ldots,r$, there is a \nw{Dehn interval} for $\gamma_t$: a subinterval $DI_t\subset I_t$ such that $\bm\tau(DI_t)$ has twist nature with respect to $\gamma_t$ with $\rot_{\bm\tau}(\gamma_t;DI_t)\geq 2\mathsf{K}_0+4$, and is arranged into Dehn twists as prescribed in Lemma \ref{lem:dehn+remainder}, \emph{with no remainder}. Given any two intervals $DI_t$, for distinct values of $t$, they may intersect in at most one point; the curves are listed with the condition that the sequence $(\max DI_s)_{s=1}^r$ is increasing. Also, let $G_{t-}\coloneqq[0,\min I_t]$; $G_{t+}\coloneqq[\max I_t,N]$; $I_{t-}\coloneqq[\min I_t,\min DI_t]$; $I_{t+}\coloneqq [\max DI_t,\max I_t]$.

As an additional piece of notation, we subdivide each interval $DI_t$ into subintervals $DI_t(0), \ldots, DI_t(m_t-1)$, where $m_t=\rot_{\bm\tau}(\gamma_t;DI_t)$, dividing Dehn twists from one another. More precisely the maximum of each subinterval is also the minimum of the following one; if we call $a_t(i)=\min DI_t(i)$ for $i=0,\ldots,m_t-1$ and $a_t(m_t)=\max DI_t(m_t-1)$, then the sequence $a_t(0),\ldots,a_t(m_t)$ is an arithmetic progression and, for each $i,j$ such that $a_t(0) \leq a_t(i) +j \leq a_t(m_t)$, we have $\tau_{a_t(i)+j}=D_{\gamma_t}^{\epsilon i}\left(\tau_{a_t(0)+j}\right)$.

Let now $\gamma_1,\ldots,\gamma_r\in\cc(\tau_0)$ be the effective twist curves of $\bm\tau$. We say that $\bm\tau$ is \nw{effectively arranged} if it is $(\gamma_1,\ldots,\gamma_r)$-arranged and the following holds. For each $1\leq t \leq r$, if $i,j\in G_{t-}\cup I_{t-}$, then $d_{\nei_t}(\tau_i,\tau_j)\leq \mathsf{K}_0+2\mathsf{R}_0+9$; while, if $i,j\in I_{t+}\cup G_{t+}$, then $d_{\nei_t}(\tau_i,\tau_j)\leq \mathsf{K}_0+6$ --- and if $i,j\in I_{t+}$ then $d_{\nei_t}(\tau_i,\tau_j)\leq 6$. 

Here $\mathsf{K}_0$ is as in Definition \ref{def:etc}, and $\mathsf{R}_0=\mathsf{R}_0(S,Q)$ is as defined in Lemma \ref{lem:reversetriangle}, where $Q$ is the quasi-isometry constant introduced in Theorem \ref{thm:mms_cc_geodicity}.
\end{defin}

\begin{prop}[Effective rearrangement]\label{prp:rearrang2}
Let $\bm\tau=(\tau_j)_{j=0}^N$ be a generic splitting sequence of cornered birecurrent train tracks, which evolves firmly in some (possibly disconnected) subsurface $S'$ of a surface $S$. Let $\gamma_1,\ldots,\gamma_r\in\cc(\tau_0)$ be the effective twist curves of $\bm\tau$, listed so that the sequence $(\max I_{\gamma_s})_{s=1}^r$ is increasing.

Then there is a $(\gamma_1,\ldots,\gamma_r)$-effectively arranged splitting sequence $\rar\bm\tau=(\rar\tau_j)_{j=0}^{N'}$ which begins and ends with the same train tracks as $\bm\tau$. The two splitting sequences, in particular, have the same family of effective twist curves. Moreover, if $I^\rar_s$ is the accessible interval of $\gamma_s$ in $\rar\bm\tau$, then the sequence $(\max I^\rar_s)_{s=1}^m$ is also increasing.
\end{prop}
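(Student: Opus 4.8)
The plan is to apply Proposition~\ref{prp:rearrang1} (the single-curve split rearrangement) successively, one effective twist curve at a time, proceeding in the order $\gamma_1, \ldots, \gamma_r$ determined by the increasing sequence $(\max I_{\gamma_s})_{s=1}^r$, and to show that after dealing with $\gamma_t$ we may leave the already-arranged portions associated with $\gamma_1, \ldots, \gamma_{t-1}$ untouched. More precisely, I would argue by induction on $t$, producing splitting sequences $\bm\tau = \bm\tau^{(0)}, \bm\tau^{(1)}, \ldots, \bm\tau^{(r)} = \rar\bm\tau$, where $\bm\tau^{(t)}$ is $(\gamma_1, \ldots, \gamma_t)$-arranged and where the restriction of $\bm\tau^{(t)}$ to an appropriate initial segment (up to roughly $\max I_{\gamma_t}$) already has its Dehn intervals for $\gamma_1, \ldots, \gamma_t$ in place. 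The passage from $\bm\tau^{(t-1)}$ to $\bm\tau^{(t)}$ consists of isolating the maximal subsequence along which $\gamma_{t+1}$ is a twist curve --- this is an interval by Lemma~\ref{lem:twistcurvebasics} and, since $\gamma_{t+1}$ is an effective twist curve, it is $I_{\gamma_{t+1}}$ (up to endpoint conventions, via the first point of Theorem~\ref{thm:mmsstructure}) --- and feeding $\bm\tau^{(t)}(I_{\gamma_{t+1}})$ into Proposition~\ref{prp:rearrang1}. That proposition outputs indices $N(1) \le \cdots \le N(5)$ and a sequence in which a block $\bm\tau'(N(4),N(5))$ realizes $m' \ge m - 5$ clean Dehn twists about $\gamma_{t+1}$ with no remainder, while $\rot(0, N(2)) \le 3$; one takes $DI_{t+1}$ to be (an initial full-period piece of) the clean block, so that $\rot_{\bm\tau}(\gamma_{t+1}; DI_{t+1}) \ge 2\mathsf K_0 + 4$ follows from $d_{\nei(\gamma_{t+1})}(\tau_0, \tau_N) \ge 4\mathsf K_0 + 19$ together with point~\ref{itm:rot_vs_dist} in Remark~\ref{rmk:rotbasics} (which converts the annulus distance into rotation number up to an additive $4$) and $m' \ge m - 5$.

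The key structural point to verify is that these rearrangements do not interfere with one another. For this I would invoke the monotonicity of the filled subsurfaces and, crucially, Lemma~\ref{lem:smallinterference}: if $\gamma_s$ and $\gamma_{t+1}$ intersect essentially, they cannot both be twist curves over an interval with nonzero rotation, so the accessible intervals $I_{\gamma_s}, I_{\gamma_{t+1}}$ essentially nest or are nearly disjoint in the relevant sense; and if they are disjoint, Remark~\ref{rmk:twist_disjointness} provides disjoint twist collars, so the twist-nature moves about $\gamma_{t+1}$ (which alter the track only inside $A_{\gamma_{t+1}}$, by the conventions of Remark~\ref{rmk:twistnaturemodelling}) leave $\gamma_s$, its carrying image, and its twist collar untouched. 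Combined with the ordering $(\max I_{\gamma_s})_s$ increasing, this lets me claim that applying Proposition~\ref{prp:rearrang1} to $\gamma_{t+1}$ only modifies the sequence inside (a neighbourhood of) $I_{\gamma_{t+1}}$, beyond where the Dehn intervals $DI_1, \ldots, DI_t$ were placed, so those remain valid; and that the Dehn intervals for distinct curves overlap in at most a point, as required. The inequality that $(\max I^\rar_s)_s$ stays increasing then follows because each $I^\rar_s$ is contained in a small enlargement of the original $I_{\gamma_s}$ and the enlargements respect the original ordering.

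The last ingredient is the additional ``effectively arranged'' estimate: $d_{\nei_t}(\tau_i, \tau_j) \le \mathsf K_0 + 2\mathsf R_0 + 9$ for $i, j \in G_{t-} \cup I_{t-}$, the symmetric bound $\le \mathsf K_0 + 6$ for $i,j \in I_{t+} \cup G_{t+}$, and $\le 6$ for $i, j \in I_{t+}$. Here I would use Theorem~\ref{thm:mmsstructure}(1): outside $I_{\gamma_t}$ an interval disjoint from $I_{\gamma_t}$ (up to one point) contributes at most $\mathsf K_0$ to $d_{\nei_t}$, and the factor $2\mathsf R_0$ comes from the reverse triangle inequality (Lemma~\ref{lem:reversetriangle}) applied to the unparametrized quasi-geodesic $(\pi_{\nei_t} V(\tau_j))_j$ in $\cc(\nei_t)$ guaranteed by Theorem~\ref{thm:mms_cc_geodicity}, needed because after the rearrangement the initial segment up to $\min DI_t$ has been shuffled and one must bound diameters via the quasi-geodesic structure. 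The $\le 6$ bound on $I_{t+}$ is automatic because on a twist-nature subsequence realizing only Dehn twists about $\gamma_t$, the vertex sets $V(\tau_j|\nei_t)$ move through $D_{\nei_t}$-translates and $\mathrm{diam}(V(\tau^X)) \le 2$ (Lemma~\ref{lem:onerollingdirection}), so after Proposition~\ref{prp:rearrang1} sent the remainder to zero there is no extra drift. I expect the main obstacle to be exactly this bookkeeping of non-interference across the $r$ applications --- making rigorous that ``Proposition~\ref{prp:rearrang1} applied to $\gamma_{t+1}$ does not disturb $DI_1, \ldots, DI_t$'' --- since Proposition~\ref{prp:rearrang1} as stated only controls what happens to a single twist curve, and one must carefully track, via the twist-collar disjointness of Remark~\ref{rmk:twist_disjointness} and the localization of twist moves, that all the intervening far/spurious moves genuinely commute past the earlier arrangements up to comb equivalence.
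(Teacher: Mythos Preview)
Your overall strategy---iterate Proposition~\ref{prp:rearrang1} once per effective twist curve, and use Lemma~\ref{lem:smallinterference}, Remark~\ref{rmk:twist_disjointness}, Theorem~\ref{thm:mmsstructure} and the reverse triangle inequality for the distance estimates---matches the paper's. But there is a genuine gap in the order of processing and in the non-interference mechanism.

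The paper processes the curves in \emph{decreasing} index order, building $\bm\tau=\bm\tau^{r+1},\bm\tau^r,\ldots,\bm\tau^1=\rar\bm\tau$. The reason is that Proposition~\ref{prp:rearrang1} places its Dehn block $[N(4),N(5)]$ at the \emph{end} of its output. Since $(\max I_{\gamma_s})_s$ is increasing, processing $\gamma_r$ first puts $DI_r$ near the right-hand end; then, for $\gamma_s$ with $s<r$, the paper applies Proposition~\ref{prp:rearrang1} not to the full accessible interval $I^{s+1}_s$ but only to $\bm\tau^{s+1}(\min I^{s+1}_s,\min I^{s+1}_{s+})$, where $I^{s+1}_{s+}$ is the maximal terminal segment of $I^{s+1}_s$ already lying in a chain of previously placed Dehn intervals. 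An invariant $P(s)$ is maintained ensuring that every $DI^{s+1}_t$, $t\ge s+1$, lies inside $I^{s+1}_{s+}\cup G^{s+1}_{s+}$ and is therefore protected from the next rearrangement. The bound $\rot(\gamma_s;I^s_{s+})\le 2$ on the excised tail then comes from Lemma~\ref{lem:smallinterference} applied to that chain.

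In your increasing-order scheme, $DI_1$ lands at the end of $I_{\gamma_1}$, near $\max I_{\gamma_1}$. But $I_{\gamma_2}$ need not begin after $\max I_{\gamma_1}$; it may well contain $DI_1$. Feeding all of $I_{\gamma_2}$ into Proposition~\ref{prp:rearrang1} would then subject the moves inside $DI_1$ to the push/pull manipulations of its Step~1 and the regrouping of Steps~2--4, destroying the block. Your appeal to disjoint twist collars shows only that twist moves about $\gamma_2$ leave a neighbourhood of $\gamma_1$ untouched; it does not show that the full rearrangement procedure of Proposition~\ref{prp:rearrang1} preserves $DI_1$ as a block. You correctly flag this bookkeeping as the main obstacle, but as stated your scheme has no mechanism to address it---which is precisely what the paper's reversed order and the excision of $I^{s+1}_{s+}$ supply.
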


\begin{proof}
We will define recursively (decreasing the indices) a sequence of splitting sequences $\bm\tau=\bm\tau^{r+1},\bm\tau^r,\ldots,\bm\tau^1=\rar\rm\tau$ on the surface $S$ --- each of those will be indexed as $\bm\tau^s=(\tau^s_j)_{j=0}^{N^s}$ --- with the following properties. Their entries will all be cornered birecurrent train tracks; the first and last entries in each of these sequences will always be the same as in $\bm\tau$; each $\bm\tau^s$, informally speaking, is `partially' effectively arranged: it satisfies the requests in the definition of effectively arranged only for the curves $\gamma_t$ with $t\geq s$.

More precisely: for all $1\leq t \leq r$, denote $I^s_t$ the accessible interval relative to the curve $\gamma_t$ in the splitting sequence $\bm\tau^s$; and $G_{t-}^s\coloneqq[0,\min I_t^s]; G_{t+}^s\coloneqq[\max I_t^s,N^s]$. For the indices $s\leq t\leq r$ an interval $DI^s_t\subseteq I^s_t$ will be provided, together with $I^s_{t-}\coloneqq[\min I^s_t,\min DI^s_t]$; $I^s_{t+}\coloneqq [\max DI^s_t,\max I^s_t]$. The following claim will be true for each $s=r+1,r,\ldots,1$:
\begin{claim}
In the splitting sequence $\bm\tau^s$, for all $s\leq t\leq r$:
\begin{enumerate}
\item $i,j\in I^s_{t+}\cup G^s_{t+}\Longrightarrow d_{\nei_t}(\tau^s_i,\tau^s_j)\leq \mathsf{K}_0+6$; and $\rot_{\bm\tau^s}(\gamma_t;I^s_{t+})\leq 2$;
\item $d_{\nei_t}(\tau^s_0,\tau^s_{\min DI^s_t})\leq \mathsf{K}_0+9$, and $i,j\in G^s_{t-}\cup I^s_{t-}\Longrightarrow d_{\nei_t}(\tau^s_i,\tau^s_j)\leq \mathsf{K}_0+2\mathsf{R}_0+9$;
\item $DI^s_t$ is a Dehn interval for $\gamma_t$: $\bm\tau^s(DI^s_t)$ is a sequence of twist nature with respect to $\gamma_t$, with $\rot_{\bm\tau^s}(\gamma_t;DI^s_t)\geq 2\mathsf{K}_0+4$, arranged in Dehn twists with no remainder.
\end{enumerate}
\end{claim}
Eventually, it will suffice to define $\rar\bm\tau\coloneqq \bm\tau^1$. 

\step{1} recursive construction of the sequences $\bm\tau^s$.

Fix $1\leq s\leq r$. Suppose that all $\bm\tau^i$ for $i\geq s+1$ have been defined, together with intervals $DI^i_t, I^i_{t-}, I^i_{t+}\subseteq I^i_t$ for $t$ in the range $i,\ldots,r$. For each fixed $i$, the intervals $DI^i_t$ are pairwise disjoint except possibly for a common endpoint.

We now build $\bm\tau^s$ from $\bm\tau^{s+1}$ with an application of Proposition \ref{prp:rearrang1}, with respect to $\gamma_s$, on a suitable subsequence of $\bm\tau^{s+1}$. We define $I^{s+1}_{s+}$ as follows: the idea is that $I^{s+1}_{s+}$ is an interval we do not want to apply Proposition \ref{prp:rearrang1} on, because it is already structured in twists with respect to other curves.
\begin{itemize}
\item If $\max I^{s+1}_s$ is contained in $DI^{s+1}_t$ for a $t\geq s+1$, let $J$ be the maximal concatenation of intervals $DI^{s+1}_u$, $r\geq u \geq s+1$, such that the maximum of one is the minimum of another, and $DI^{s+1}_t$ is part of the union. Let then $I^{s+1}_{s+}= I^{s+1}_s\cap J$.
\item If $\max I^{s+1}_s$ is not contained in any $DI^{s+1}_t$, define $I^{s+1}_{s+}\coloneqq \{\max I^{s+1}_s\}$.
\end{itemize}

Set now $\bm\sigma\coloneqq\bm\tau^{s+1}(\min I^{s+1}_s,\min I^{s+1}_{s+})$. Let $\bm\sigma'$ be the splitting sequence obtained from $\bm\sigma$ by application of Proposition \ref{prp:rearrang1}; and define $\bm\tau^s\coloneqq \bm\tau^{s+1}(G^{s+1}_{s-})*\bm\sigma'*\bm\tau^{s+1}(I^{s+1}_{s+}\cup G^{s+1}_{s+})$. As entries in $\bm\tau^s$ are given indices in the interval $[0,N^s]$, the three sequences which compose it are indexed by subintervals which we call $[0,a^s]$, $[a^s,b^s]$, $[b^s,N^s]$, respectively. With this indexing, the subsequence $\bm\tau^{s+1}(I^{s+1}_{s+})$ is copied to a subsequence of $\bm\tau^s$ indexed by an interval which we call $I^s_{s+}$; and it has $\min I^s_{s+}=b^s$.

Note that, as a consequence of the construction, the sequence $(\max I^s_t)_{t=1}^r$ is increasing. Given an index $1\leq i\leq r+1$, let $P(i)$ be the following property: ``for all $i \leq t \leq r$, $\max DI^i_t$ coincides with either $\min DI^i_{u}$ for an index $u\geq t$, or with $\max I^i_t$; and the sequences $(\min DI^i_t)_{t=i}^r, (\max DI^i_t)_{t=i}^r$ are increasing''.

We prove the following: 
\begin{claim}
If $P(i)$ is true for a given $i> 1$, then $DI^i_t\subseteq I^i_{(i-1)+}\cup G^i_{(i-1)+}$ for all $i\leq t\leq r$.
\end{claim}
\begin{proof}
There are two cases to consider: if $\max DI^i_i=\max I^i_i$, which is $\geq \max I^i_{i-1}$, then clearly, by construction of $I^i_{(i-1)+}$, $DI^{i}_i\subseteq I^{i}_{(i-1)+}\cup G^{i}_{(i-1)+}$; and for $i \leq t \leq r$, $DI^{i}_t\subseteq I^{i}_{(i-1)+}\cup G^{i}_{(i-1)+}$ is true because of the last sentence in $P(i)$.

If $\max DI^i_i=\min DI^i_u$ for some $u\geq i$ then, by the monotonicity of $(\min DI^{i}_t)_{t=i}^r$, necessarily $u=i+1$. Let then $J'$ be the maximal interval which contains $DI^i_i$ and is obtained as a union of intervals $DI^i_t$, $i\leq t\leq r$. Let $i\leq t'\leq r$ be the index such that $\max J'=\max DI^i_{t'}$. Suppose for a contradiction that there exists an index $i\leq t''\leq r$ with $DI^i_{t''}\not\subseteq I^i_{(i-1)+}\cup G^i_{(i-1)+}$; then, again by monotonicity, also $DI^i_i\not\subseteq I^i_{(i-1)+}\cup G^i_{(i-1)+}$.

Considering the way $I^i_{(i-1)+}$ has been defined in the sequence $\bm\tau^i$, in order for this to happen it must be $\max I^i_{i-1}>\max J'$. But in this case, also $\max I^i_{t'}>\max J'$ for all $i\leq t \leq r$, and in particular $\max I^i_{t'}>\max DI^i_{t'}$. However, by definition of $J'$ and $t'$, there is no index $u$ such that $\max DI^i_{t'}=\min DI^i_u$. So the index $t'$ is a contradiction to $P(i)$. 
\end{proof}

Now, $P(r+1)$ is voidly true; we assume $P(s+1)$, and further ahead we prove that $P(s)$ follows: this is necessary to legitimate the recursive construction of $\bm\tau^s$.

As said above, $DI^{s+1}_t\subseteq I^{s+1}_{s+}\cup G^{s+1}_{s+}$ for all $s+1\leq t\leq r$. So the construction of $\bm\tau^s$ causes the subsequences $\bm\tau^{s+1}(I^{s+1}_{t+})$ and $\bm\tau^{s+1}(DI^{s+1}_t$) to be copied to subsequences inside $\bm\tau^s\left(b^s,N^s\right)$. They will be indexed by intervals which we call $I^s_{t+}, DI^s_t$ respectively.

In $\bm\sigma'$, following the notation given in Proposition \ref{prp:rearrang1}, there is a subsequence indexed by the subinterval $[N(4), N(5)]$, and we call it $\bm\sigma''$: $\bm\sigma''$ has twist nature with respect to $\gamma_s$ and is arranged into Dehn twists with no remainder. When inserting $\bm\sigma'$ as the subsequence $\bm\tau^s(a^s,b^s)$ of $\bm\tau^s$, $\bm\sigma''$ will be given indices in a subinterval of $[a^s,b^s]$: we call it $DI^s_s$. Define, for $t\geq s$, $I^s_{t-}\coloneqq [\min I^s_t,\min DI^s_t]$.

With these definitions, $P(s)$ is `almost true': it is true that ``for all $s+1 \leq t \leq r$, $\max DI^s_t$ coincides with either $\min DI^s_{u}$ for an index $u\geq t$, or with $\max I^s_t$; and the sequences $(\min DI^s_t)_{t=s+1}^r, (\max DI^s_t)_{t=s+1}^r$ are increasing''. This is because, since all the relevant intervals $DI^{s+1}_t$ ($s+1 \leq t \leq r$) are contained in $I^{s+1}_{s+}\cup G^{s+1}_{s+}$, the corresponding $DI^s_t$ are contained in $[b^s,N^s]$, and this family of subintervals is just a translation of the corresponding family $DI^{s+1}_t$ in $[0,N^{s+1}]$.

But the construction forces $\max DI^s_s$ to be a lower bound for all intervals $DI^s_t$, $s+1 \leq t \leq r$; and we have $\max DI^s_s=\max I^s_s$ if $I^s_s$ is disjoint from all $DI^s_t,t>s$, while $\max DI^s_s=\min_{t>s}\left(\min DI^s_t\right)$ otherwise. So $P(s)$ is true.

\step{2} the properties claimed above for $\bm\tau^s$ hold.

Those properties are empty for $s=r+1$. Supposing that they hold for $\bm\tau^{r+1},\ldots,\linebreak \bm\tau^{s+1}$, we prove that they hold for $\bm\tau^s$, establishing an inductive argument.

Note, first of all, that if $t\geq s$ then $i,j\in G^s_{t-}\Rightarrow d_{\nei_t}\left(\tau^s_i,\tau^s_j\right)\leq \mathsf{K}_0$ by point 1 of Theorem \ref{thm:mmsstructure}. Same for $i,j\in G^s_{t+}$. Hence $d_{\nei_t}\left(\tau^s_{\min I^s_t},\tau^s_{\max I^s_t}\right)\geq d_{\nei_t}(\tau^s_0,\tau^s_{N^s})-2\mathsf{K}_0=2\mathsf{K}_0+19$; and $\rot_{\bm\tau^s}(\gamma_t;I^s_t)\geq 2\mathsf{K}_0+15$, by point \ref{itm:concatrot_above} in Remark \ref{rmk:rotbasics}.

Properties 1 and 3 for $t>s$: since $DI^{s+1}_t, I^{s+1}_{t+}\subseteq I^{s+1}_{s+}\cup G^{s+1}_{s+}$, we have that $\bm\tau^s(DI^s_t),\bm\tau^s(I^s_{t+})$ are copies of $\bm\tau^{s+1}(DI^{s+1}_t),\bm\tau^{s+1}(I^{s+1}_{t+})$, respectively. Hence, by inductive hypothesis, they have $\rot_{\bm\tau^s}(\gamma_t; DI^s_t)\geq 2\mathsf{K}_0+ 4$; and $\rot_{\bm\tau^s}(\gamma_t,I^s_{t+})\leq 2$ which yields $d_{\nei_t}\left(\tau^s_{\min I^s_{t+}}, \tau^s_{\max I^s_{t+}}\right)\leq 6$ via point \ref{itm:rot_vs_dist} of Remark \ref{rmk:rotbasics}.

Property 1 for $t=s$: we claim that $\rot_{\bm\tau^{s+1}}(\gamma_s;I^{s+1}_{s+})\leq 2$ --- and $\rot_{\bm\tau^s}(\gamma_s;I^s_{s+})\leq 2$, because the two rotation numbers are computed on two copies of the same sequence --- therefore $d_{\nei_s}\left(\tau^s_{\min I^s_{s+}}, \tau^s_{\max I^s_{s+}}\right)\leq 6$.

The claim is obvious if $I^{s+1}_{s+}=\{\max I^{s+1}_s\}$. If $\max I^{s+1}_s\in DI^s_t$ for a fixed $t>s$, call $A=I^{s+1}_{s+}\cap DI^s_t; B=[\min I^{s+1}_{s+},\min A]$. Then $\rot_{\bm\tau^{s+1}}(\gamma_s;A)=0$ because of Lemma \ref{lem:smallinterference} (both in case $\gamma_s,\gamma_t$ intersect and in case they do not). According to the same lemma, when $B$ is not a single point, necessarily all curves $\gamma_u$ with $DI^{s+1}_u\subseteq B$ must be essentially disjoint from $\gamma_s$, because $\rot_{\bm\tau^{s+1}}(\gamma_u;DI^{s+1}_u)\geq 2\mathsf{K}_0+ 4$. But then the sequence $\bm\tau^s(B)$ falls into the case covered in the second point of Lemma \ref{lem:smallinterference}, which yields $\rot_{\bm\tau^{s+1}}(\gamma_s;B)=0$. So $\rot_{\bm\tau^{s+1}}(\gamma_s,I^{s+1}_{s+})\leq 2$ by point \ref{itm:concatrot_above} in Remark \ref{rmk:rotbasics}. 

Property 2 for $t>s$: again because $\max I^{s+1}_{t-}= \min DI^{s+1}_t \in I^{s+1}_{s+}\cup G^{s+1}_{s+}$, the sequence $\bm\tau^s(G^s_{t-}\cup I^s_{t-})$ begins and ends with the same train tracks as $\bm\tau^s(G^{s+1}_{t-}\cup I^{s+1}_{t-})$. So $d_{\nei_t}(\tau^s_0,\tau^s_{\min DI^s_t})\leq \mathsf{K}_0+9$ by inductive hypothesis. According to Theorem \ref{thm:mms_cc_geodicity} the sequence $\left(\pi_{\nei_t}(V(\tau^s_j))\right)_{j\in G^s_{t-}\cup I^s_{t-}}$ is a $Q$-unparametrized quasi-geodesic in $\cc(\nei_t)$, so the reverse triangle inequality in Lemma \ref{lem:reversetriangle} gives, for $i,j\in G^s_{t-}\cup I^s_{t-}$, $d_{\nei_t}(\tau^s_i,\tau^s_j)\leq \mathsf{K}_0+2\mathsf{R}_0+9$ as required.

Property 2 for $t=s$: Proposition \ref{prp:rearrang1} above guarantees that $\rot_{\bm\tau^s}(\gamma_s,I^s_{s-})\leq 5$, as $\bm\tau^s(I^s_{s-})$ indeed corresponds, using the notation given in that Proposition, to the subsequence $\bm\tau'\left(0,N(4)\right)$ of the output sequence $\bm\tau'$. So, for all pairs $i,j\in I^s_{s-}$, we have $d_{\nei_s}(\tau^s_i,\tau^s_j)\leq 9$ (see point \ref{itm:rot_vs_dist} in Remark \ref{rmk:rotbasics}). Combine it with the previously noted bounds for $i,j\in G^s_{s-}$ to complete the proof that property 2 holds.

Property 3 for $t=s$: by point \ref{itm:concatrot_above} in Remark \ref{rmk:rotbasics}, $\rot_{\bm\tau^s}(\gamma_s;DI^s_s)\geq \rot_{\bm\tau^s}(\gamma_s;I^s_s) - \rot_{\bm\tau^s}(\gamma_s;I^s_{s-})- \rot_{\bm\tau^s}(\gamma_s;I^s_{s+})-4\geq 2\mathsf{K}_0+4$.
\end{proof}

\subsection{A bound on the number of highly twisting curves}
\label{sub:twistcurvebound}

\ul{Note:} In this subsection we still deal mostly with generic almost tracks. However, we make use of the diagonal extension machinery from \S \ref{sub:diagext}; so we have to consider some semigeneric almost tracks as well. The adjective `generic' will be made explicit when appropriate, anyway.

Given a surface $S$, recall that in Remark \ref{rmk:pickparameters} we have fixed the parameters $k,\ell$ involved in the definition of $\pa(S)$ and of $\pa(X')$ for $X'$ a subsurface of $S$. Let $M\coloneqq \max_{X'\subseteq S} M_6(X',k(X',S),\ell(X'))$, where the maximum is taken over all $X'\subseteq S$ non-annular subsurfaces, and $M_6(X',k,\ell)$ is defined as in Lemma \ref{lem:pantsquasiisom}.

Given a non-annular subsurface $X'$ and two train tracks $\sigma,\tau$ on $S$, suppose that $\pi_Y\left(V(\sigma)\right),\pi_Y\left(V(\tau)\right)\not=\emptyset$ for all $Y\subseteq X'$ non-annular subsurfaces. In this case define
$$d'_{\pa(X')}(\sigma,\tau)\coloneqq \sum_{\substack{Y\subset X'\text{ essential} \\ \text{and non-annular}}} [d_Y(\sigma,\tau)]_M.$$

And, if non-emptyness holds also for all projections onto annuli $Y\subseteq X'$, we may also define
$$d'_{\ma(X')}(\sigma,\tau)\coloneqq \sum_{Y\subset X'\text{ essential}} [d_Y(\sigma,\tau)]_M.$$

Similarly as in Theorem \ref{thm:mmprojectiondist}, the summations shall be meant over $Y\subset X'$ subsurfaces, counting only one representative for each isotopy class in $S$. Restating that theorem, also in the light of Lemma \ref{lem:pantsquasiisom} one has
\begin{eqnarray*}
d'_{\pa(X')}(\sigma,\tau)  & =_{(e_0,e_1)} &
d_{\pa(X')}(\pi_{X'} V(\sigma), \pi_{X'} V(\tau)); \\
d'_{\ma(X')}(\sigma,\tau)  & =_{(e_0,e_1)} & 
d_{\ma(X')}(\pi_{X'} V(\sigma), \pi_{X'} V(\tau)).
\end{eqnarray*}
in the two respective cases, for suitable constants $e_0(X',M, k,\ell), e_1(X',M, k,\ell)$. 

Suppose now that $V(\sigma|X')$ and $V(\tau|X')$ are vertices of $\pa(X')$ (resp. $\ma(X)$). Then $\pi_{X'} V(\sigma)$ and $\pi_{X'} V(\tau)$ are vertices there, too, i.e. the above formulas make sense for them. This implies, via Lemma \ref{lem:induction_vertices_commute}, that
\begin{eqnarray*}
d'_{\pa(X')}(\sigma,\tau) & =_{(e_0,e_1+ C_1)} &
d_{\pa(X')}\left(\sigma, \tau\right); \\
d'_{\ma(X')}(\sigma,\tau) & =_{(e_0,e_1+ C_1)} &
d_{\ma(X')}\left(\sigma, \tau\right).
\end{eqnarray*}

The aim of this subsection is to prove the following
\begin{prop}\label{prp:tcbound}
Let $\bm\tau=(\tau_j)_{j=0}^N$ be a generic, recurrent train track splitting sequence on a surface $S$ which evolves firmly in some subsurface $S'$ --- not necessarily a connected one. Let $X$ be (another) non-annular subsurface of $S$; let $\gamma_1,\ldots,\gamma_q\subseteq \cc(\tau_0)$ be curves all contained, and essential, in $X$; and suppose that $\bm\tau$ is $(\gamma_1,\ldots,\gamma_q)$-arranged (see Definition \ref{def:arranged}; in particular the sequence $(\max DI_t)_{t=1}^q$ is increasing).

Fix $0\leq k\leq l\leq N$ with $V(\tau_l|X)\in\pa^0(X)$ and such that $DI_t \subseteq [k,l]$ for all $1\leq t \leq q$. 

Then there are constants $C_3, C_4$, only depending on $S$, such that
$$q\leq C_3 d'_{\pa(X)}(\tau_k,\tau_l) + C_4.$$
\end{prop}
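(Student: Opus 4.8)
The plan is to bound $q$, the number of effective (highly-twisting) curves $\gamma_1,\dots,\gamma_q$, by showing that each one forces a definite, \emph{quantifiable} contribution to the summation $d'_{\pa(X)}(\tau_k,\tau_l) = \sum_{Y\subset X\text{ non-annular}}[d_Y(\tau_k,\tau_l)]_M$. Since $\pa(X)$ (unlike $\ma(X)$) does not see annular projections, the twisting about $\gamma_t$ itself contributes nothing directly; the content of the proposition is that a large rotation about $\gamma_t$ nonetheless \emph{propagates} to non-annular subsurfaces. The natural candidate is a non-annular subsurface $Y_t$ with $\gamma_t$ as (part of) its boundary, or more precisely one of the components adjacent to $\gamma_t$ in the subsurface $X(\tau_l|X)$ filled by the vertex cycles. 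I would first set up these witness subsurfaces $Y_t$, one associated to each $\gamma_t$, and a combinatorial bound showing that each isotopy class of non-annular $Y\subseteq X$ can serve as a witness for only a bounded number (depending on $\xi(S)$, hence on $S$) of the $\gamma_t$'s — this uses that $\partial Y$ consists of at most $\xi(S)$ curves and each $\gamma_t\in\cc(\tau_0)$ is wide-carried, so the collection $\{\gamma_t\}$ has bounded self-intersection and lives in $\cc(\tau_0|X)$, which has bounded size by Lemma~\ref{lem:vertexsetbounds}.

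Next, the heart of the argument: show that for each $t$ the witness $Y_t$ has $[d_{Y_t}(\tau_k,\tau_l)]_M$ bounded below by a uniform positive constant, or at least that the \emph{number} of distinct witnesses with nonzero contribution is comparable to $q$ up to a multiplicative constant. The mechanism is that the Dehn interval $DI_t\subseteq[k,l]$ carries out $\rot_{\bm\tau}(\gamma_t;DI_t)\geq 2\mathsf K_0+4$ genuine Dehn twists about $\gamma_t$, realized by the self-map $D_{\gamma_t}^{\epsilon}$ applied repeatedly (by the no-remainder arrangement, Lemma~\ref{lem:dehn+remainder}). Restricting attention to a subsurface $Y$ whose boundary contains $\gamma_t$ in efficient position, I would use Theorem~\ref{thm:mmsstructure}(3): when $i\in I_{\gamma_t}$, $\partial Y$ is wide with respect to $\tau_i$ and $V(\tau_i|Y)$ fills $S^Y$. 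The idea — this is essentially the idea from \cite{masurminskyq} the introduction alludes to — is that $\partial Y$ being a twist curve for the induced track forces the \emph{induced} splitting sequence $(\tau_j|Y)$ to undergo a corresponding amount of motion, because twisting about a boundary curve of $Y$ changes which curves of $\cc(Y)$ are carried: the vertex cycles $V(\tau_j|Y)$ must themselves wind around $\partial Y$, and this is detected by a non-annular projection \emph{inside} $Y$ when one further restricts, or directly by $d_Y$ if $\gamma_t$ is non-peripherally involved. I expect to extract from this, together with Lemmas~\ref{lem:diag_inters_control} and \ref{lem:vertexnotinterior} and the Nesting Lemma, that $d_{Y_t}(\tau_k,\tau_l)$ grows with $\rot_{\bm\tau}(\gamma_t;DI_t)$, in particular exceeds $M$ (after replacing, if necessary, the generic tracks by semigeneric ones and passing to diagonal extensions as flagged in the subsection's opening note).

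Then I would assemble the estimate: let $\mathcal W$ be the set of isotopy classes of non-annular $Y\subseteq X$ that arise as some $Y_t$. By the combinatorial bound, $|\{t : Y_t = Y\}|\leq C$ for a constant $C=C(S)$, so $q\leq C|\mathcal W|$. By the previous step, each $Y\in\mathcal W$ has $[d_Y(\tau_k,\tau_l)]_M \geq 1$ (the contribution is nonzero), whence $|\mathcal W|\leq \sum_{Y\subset X\text{ non-annular}}[d_Y(\tau_k,\tau_l)]_M = d'_{\pa(X)}(\tau_k,\tau_l)$. Combining, $q\leq C\, d'_{\pa(X)}(\tau_k,\tau_l)$, which gives the claim with $C_3 = C$ and $C_4$ absorbing the corrections (e.g. from curves whose Dehn interval produces a contribution just below $M$, or boundary effects where $Y_t$ degenerates — such cases are bounded in number by $\xi(S)$). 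The subtlety to watch is that distinct $\gamma_t$ need not have disjoint witnesses in the naive sense, and that a single subsurface $Y$ might ``absorb'' the twisting of several nested or linked curves; Remark~\ref{rmk:twist_disjointness} (disjoint twist collars for a disjoint family) and the increasing ordering of $(\max DI_t)_t$ should let me organize the $\gamma_t$ so that at most boundedly many share a witness.

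\textbf{Main obstacle.} The hard part will be the second step — making precise and quantitative the claim that high rotation about $\gamma_t$ \emph{must} register as a nonzero contribution in some fixed non-annular $Y_t\subseteq X$. Pants distance is genuinely blind to twisting, so the propagation has to be indirect: it relies on the fact that after many twists about $\partial Y$, the induced track $\tau_j|Y$ (via Lemma~\ref{lem:cf_decreasing}, the Nesting Lemma, and the structure theorem) is forced to move in $\cc(Y)$ or in a further subsurface of $Y$. Controlling this carefully, and verifying that the relevant $Y$ can be chosen so that $V(\tau_l|Y)$ is an actual vertex of $\pa(Y)$ (so that the term $[d_Y]_M$ is legitimately part of the sum defining $d'_{\pa(X)}$, via Lemma~\ref{lem:induction_vertices_commute}), is where the real work lies, and is presumably why the statement allows the ``technical step'' looseness acknowledged in the introduction (a linear bound being the truth, but a coarser one sufficing here).
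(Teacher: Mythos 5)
Your proposed architecture — assign to each $\gamma_t$ a non-annular ``witness'' subsurface $Y_t$ having $\gamma_t$ in its boundary, show each such $Y$ is reused only boundedly often, and show each $Y_t$ contributes $[d_{Y_t}(\tau_k,\tau_l)]_M \geq 1$ — has a genuine gap at its heart, and it is the exact opposite of a technicality. Twisting about a curve $\gamma$ that can be isotoped to be disjoint from a subsurface $Y$ (in particular, if $\gamma$ is a boundary component of $Y$) does \emph{not} move the induced track $\tau_j|Y$ at all: $D_\gamma$ lifts to a self-map of $S^Y$ isotopic to the identity, so $\tau_{j}|Y$ and $\tau_{j+g_t}|Y$ are isotopic. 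This is precisely claim 6 of Lemma~\ref{lem:untwistedsubsurfaces}, and it is the reason the pants graph is blind to annular twisting in the first place. So the sentence ``twisting about a boundary curve of $Y$ changes which curves of $\cc(Y)$ are carried'' is false, and with it the whole mechanism by which you hoped to make a single $\gamma_t$ register a nonzero term $[d_{Y_t}]_M$. Your honest ``main obstacle'' paragraph does flag this as the hard part, but the obstacle is not a quantitative refinement that will fall to care with the Nesting Lemma; it is a mechanism that simply does not exist for a single twist curve.

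The paper's actual route is structurally different and worth absorbing. It never attaches a witness subsurface to an individual $\gamma_t$. Instead it observes that twisting about $\gamma_t$ is detected only through \emph{other} curves $\gamma_s$ that \emph{intersect} $\gamma_t$: after the twists in $DI_t$, any later $\gamma_s$ meeting $\gamma_t$ is forced to traverse the branches along $\gamma_t$ many times (Lemma~\ref{lem:weightsaftertwist}), which is what moves the projection to the subsurface the two curves jointly fill. Accordingly, the combinatorial reduction is by \emph{chains} — Lemma~\ref{lem:chainbound} gives $q \leq \xi(S)\cdot(\text{maximal chain length})$ — and the analytic heart is an induction on the complexity of the subsurface $X'$ filled by a chain subsequence $\bm\delta$. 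The chain is broken at the indices $x_1,\dots,x_\eta$ where consecutive sub-chains $\bm\delta(x_j)$ fill $X'$; the diagonal-extension nesting machinery (Lemmas~\ref{lem:cf_decreasing}, \ref{lem:ccnesting}, \ref{lem:vertexnotinterior}, using the key cascading-weights statement Lemma~\ref{lem:weightsaftertwist}) then gives $d_{X'}(\tau_{a_-},\tau_{a_+}) \gtrsim \eta$, while the lower-complexity good subsurfaces $Y(i)\subsetneq X'$ are handled by the inductive hypothesis, with Corollary~\ref{cor:subsurfacesdontrepeat} ensuring good subsurfaces are not revisited. None of this matches the witness-subsurface decomposition you sketched, and the quantity controlled is not a per-curve contribution but a per-sub-chain contribution to the filled subsurface.

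One secondary mismatch: you cite Remark~\ref{rmk:twist_disjointness} as the tool to bound how many $\gamma_t$ share a witness, but that remark is only about the existence of mutually disjoint twist collars for a family of disjoint twist curves; it says nothing about a bound on repeated witnesses, and is not used for that purpose in the paper's argument.
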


Before we start, anyway, we prove a lemma which will be of use in the following sections, too.

\begin{lemma}\label{lem:pantsboundunderdt}
Let $S$ be a surface, $X$ be a non-annular subsurface of $S$ (possibly $X=S$), $\gamma\in \cc(X)$.

Let $\tau$ be a generic almost track and $\bm\tau=(\tau_j)_{j=0}^N$ be a generic splitting sequence of almost tracks on $S$, with $\bm\tau(k,l)$ a sequence of twist nature about $\gamma$.

\begin{enumerate}
\item $\left(D_\gamma(\tau)\right)|X$ and $D_\gamma(\tau|X)$ are isotopic (here $D_\gamma:X\rightarrow X$, so it can be extended trivially to both $S$ and $S^X$, where the almost tracks lie).
\item If $V(\tau_k|X)$ is a vertex of $\pa(X)$ (resp. of $\ma(X)$), then $V(\tau_j|X)$ is one, too, for all $k\leq j\leq l$.
\item In the sequence $(\tau_j|X)_{j=k}^l$, each entry is fully carried by the previous one.
\item There is a bound $C_2(S)$ such that, if $V(\tau_k|X)$ is a vertex of $\pa(X)$, then $d_{\pa(X)}(\tau_k|X,\tau_l|X)\leq C_2$, and $d_Y\left(\tau_k|X,\tau_l|X\right)\leq C_2$ for all $Y\subseteq X$ non-annular subsurfaces.
\end{enumerate}
\end{lemma}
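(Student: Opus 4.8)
The plan is to handle the four assertions essentially in the order stated, since each later one rests on the earlier. For item (1), I would work on $S^X$, where $\tau|X$ lives as an almost track (Lemma~\ref{lem:inducedisonsurface}). The key observation is that carrying commutes with applying a diffeomorphism: if $f:\tau'\hookrightarrow\bar\nei(\tau)$ is a carrying injection then $D_\gamma\circ f$ is a carrying injection of $D_\gamma(\tau')$ into $\bar\nei(D_\gamma(\tau))=D_\gamma(\bar\nei(\tau))$. Applying this to each element of $\cc(\tau^X)$, one sees that $\cc\bigl((D_\gamma(\tau))^X\bigr)=D_\gamma\cdot\cc(\tau^X)$, and the branches appearing in carrying images of these curves are exactly the $D_\gamma$-images of the branches appearing in carrying images of curves in $\cc(\tau^X)$. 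Since $\tau|X$ is defined precisely as the union of those branches, $\bigl(D_\gamma(\tau)\bigr)|X=D_\gamma(\tau|X)$ up to isotopy. One just needs to be slightly careful that $D_\gamma$, extended trivially outside a neighbourhood of $\gamma$, still acts as the Dehn twist on $S^X$ about its core; this is immediate from the definition of $S^X$ as the annular cover associated to $\gamma$.

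For item (2), I would use that $\bm\tau(k,l)$ has twist nature about $\gamma$, so each move is a twist split or twist slide; by Theorem~\ref{thm:mmsstructure}(2) (or directly by Lemma~\ref{lem:twistininduced} together with the analysis in \S\ref{sub:induced}) each $\tau_{j+1}|X$ is obtained from $\tau_j|X$ by slides, or by at most two splits and/or taking a subtrack. In every such case the curves in $V(\tau_{j+1}|X)$ are carried by $\tau_j|X$ (Remark~\ref{rmk:decreasingmeasures}), and the filling subsurface of $V(\tau_{j+1}|X)$ inside $S^X$ is contained in that of $V(\tau_j|X)$ (Lemma~\ref{lem:decreasingfilling} and the Remark following it). Hence, using criterion c) of Definition~\ref{def:quasipants}, $V(\tau_j|X)$ a vertex of $\pa(X)$ forces $V(\tau_{j+1}|X)$ to cut $S^X$ into pieces no more complicated than pairs of pants, so it is also a vertex; the self-intersection bound is taken care of by the choices in Remark~\ref{rmk:pickparameters}. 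The $\ma(X)$ case is identical, using version c) and the fact that filling is preserved. Induction on $j$ finishes it. Item (3) is then essentially the content of the twist-nature hypothesis combined with Proposition~\ref{prp:carriediffsplit}: a twist split about $\gamma$ corresponds on $\tau^X$ to a splitting sequence with no central splits restricted to the relevant branches, and twist slides correspond to slides; in both cases $\tau_{j+1}|X$ is \emph{fully carried} by $\tau_j|X$ (no branch of $\tau_j|X$ is discarded except possibly when taking a subtrack, but under twist nature the branches around $(\tau_j|X).\gamma$ that disappear are exactly re-traversed by the shifted arcs, so nothing is lost). I would spell this out using Lemma~\ref{lem:twistcurvetrees} to control the forest part and the obstacle bookkeeping of Remark~\ref{rmk:twistnaturemodelling} to see that every branch remains traversed.

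Item (4) is the one I expect to be the main obstacle, though it is more a matter of assembling prior results than of new ideas. The strategy: by item (1) and an iteration, the net effect of $\bm\tau(k,l)$ on the induced track, up to slides which don't change $\cc$, is $\tau_l|X$ being a $D_\gamma^{\epsilon m}$-image-like modification of $\tau_k|X$ — more precisely, by Lemma~\ref{lem:dehn+remainder} applied on $S^X$, $\tau_l|X$ lies in a bounded neighbourhood of some $D_\gamma^{\epsilon m}(\tau_{k'}|X)$ with $\rot=m$. Now for a non-annular $Y\subseteq X$: if $\gamma$ is disjoint from $Y$ or contains $Y$, the twisting doesn't move $\pi_Y$; otherwise $\gamma$ intersects $\partial Y$, and then $\pi_Y\bigl(V(\tau_j|X)\bigr)$ changes under $D_\gamma^{\pm1}$ by a bounded amount at each step, but more importantly by Theorem~\ref{thm:mms_cc_geodicity} the sequence $\bigl(\pi_Y V(\tau_j|X)\bigr)_{j=k}^l$ is a $Q$-unparametrized quasi-geodesic in $\cc(Y)$, and by Theorem~\ref{thm:mmsstructure}(1) (with $X$ there an annulus, noting $Y$ is \emph{not} the annulus $\nei(\gamma)$) the distance it spans is $\leq\mathsf K_0$ since $[k,l]\setminus I_{\nei(\gamma)}$-type considerations apply — more directly, a splitting sequence of twist nature about $\gamma$ has bounded projection to any subsurface other than $\nei(\gamma)$ itself, which is exactly the first statement of Theorem~\ref{thm:mmsstructure} applied to $Y$. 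So $d_Y(\tau_k|X,\tau_l|X)\leq\mathsf K_0$, giving the uniform bound $C_2'$ on non-annular projections. Feeding this into the distance formula of Theorem~\ref{thm:mmprojectiondist} via Lemma~\ref{lem:pantsquasiisom} (choosing $M>\max\{\mathsf K_0,M_6\}$ so that every term $[d_Y]_M$ vanishes) yields $d_{\pa(X)}(\tau_k|X,\tau_l|X)\leq e_1\eqqcolon C_2$. The only delicate point is ensuring all the intermediate projections $\pi_Y V(\tau_j|X)$ are nonempty so the quasi-geodesic statements apply — this follows from item (2) and the Remark after Lemma~\ref{lem:decreasingfilling}, since $V(\tau_j|X)$ being a vertex of $\pa(X)$ throughout forces $\pi_Y V(\tau_j|X)$ to be a vertex of $\pa(Y)$, hence nonempty. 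I would set $C_2(S)\coloneqq\max\{\mathsf K_0,\, e_1(S,M)+C_1(S)\}$ and conclude.
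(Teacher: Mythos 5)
Your treatment of item (1) matches the paper's (the lift $\hat D$ is isotopic to $D_\gamma:S^X\to S^X$, and carrying commutes with diffeomorphism), so no issue there. But items (2)–(4) each have a real problem.

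For item (2), your logic runs backwards. The Remark following Lemma~\ref{lem:decreasingfilling} states that if $V(\tau_j|X)$ is a vertex of $\pa(X)$ then all \emph{earlier} $V(\tau_{j'}|X)$, $j'<j$, are vertices too --- this is the content of ``filling decreases.'' You need the \emph{later} tracks to stay vertices, which is precisely the non-trivial direction: as the filled subsurface shrinks, its complement grows and could stop being a union of pants. The paper closes this by an auxiliary sequence $\bm\sigma$ (built via a twist modelling function $h'$ with $h'(h(x,0),0)=x+2\pi(\rot(k,l)+3)$, cf.\ Remark~\ref{rmk:rotbasics} item~\ref{itm:tmfbeyondrot} and Lemma~\ref{lem:functiongivestwist}) extending $\bm\tau(k,l)$ to terminate at $D_\gamma^{\epsilon(\rot(k,l)+3)}(\tau_k)$. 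Since that endpoint is a $\mcg$-image of $\tau_k$, its vertex set fills a homeomorphic subsurface, hence is a vertex of $\pa(X)$; applying the Remark to $\bm\tau(k,l)*\bm\sigma$ then shows every intermediate $V(\tau_j|X)$ is a vertex. The same trick simultaneously gives item~(3): $\tau_k|X$ fully carries the endpoint $D_\gamma^{\epsilon(\rot(k,l)+3)}(\tau_k|X)$, each entry carries the next, and full carrying of the composite forces full carrying at every intermediate step. Your suggestion to verify item (3) ``by hand'' through Lemma~\ref{lem:twistcurvetrees} and obstacle bookkeeping is not obviously wrong, but it is a substantially messier argument that you do not actually give.

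For item (4), the invocation of Theorem~\ref{thm:mmsstructure}(1) is unjustified on two counts. First, that theorem requires the splitting sequence to consist of \emph{cornered, birecurrent train tracks}, whereas the lemma concerns generic \emph{almost} tracks (which may fail to be train tracks, let alone cornered). Second, even granting the hypotheses, Theorem~\ref{thm:mmsstructure}(1) bounds $d_Y$ only on an interval disjoint from $I_Y$; you assert without argument that twist nature about $\gamma$ implies bounded $Y$-projection for every non-annular $Y\neq\nei(\gamma)$, but when $\gamma$ cuts $\partial Y$ one can perfectly well have $[k,l]\subseteq I_Y$, so there is nothing to cite. The paper instead applies Lemma~\ref{lem:dehn+remainder} to split $\bm\tau(k,l)$ into a short ``remainder'' prefix plus an exact power of $D_\gamma$, bounds $i\bigl(V(\tau'_0),V(\tau'_{N'})\bigr)$ by a combinatorial finiteness argument (Lemma~\ref{lem:twistsplitnumber} caps the number of twist splits in the remainder; there are finitely many configurations up to $\mcg(S)$), transports the bound across the Dehn twist part using a pants decomposition containing $\gamma$ (which is $D_\gamma$-invariant), and only then feeds the resulting intersection-number bounds into Remark~\ref{rmk:subsurf_inters_bound}, Lemma~\ref{lem:cc_distance}, Theorem~\ref{thm:mmprojectiondist} and Lemma~\ref{lem:induction_vertices_commute}. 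You need an argument of that type; the structure theorem cannot do the work here.
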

\begin{proof}
To prove claim 1, note that the map $D_\gamma:S\rightarrow S$ has a lift $\hat D: S^X\rightarrow S^X$ whose restriction to $\core(X)=X\cap \core(S^X)$ concides with the restriction of $D_\gamma:X\rightarrow X$. Therefore $\hat D: S^X\rightarrow S^X$ is isotopic to $D_\gamma:S^X\rightarrow S^X$. The claim follows from $\hat D(\tau|X)=\left(D_\gamma(\tau)\right)|D_\gamma(X)=\left(D_\gamma(\tau)\right)|X$.

For claims 2 and 3, model $\bm\tau(k,l)$ according to Remark \ref{rmk:twistnaturemodelling}. If $h$ is a twist modelling function associated with $\bm\tau(k,l)$, then by point \ref{itm:tmfbeyondrot} in Remark \ref{rmk:rotbasics}, one may assume $h(x,0)<x+2\pi\left(\rot(k,l)+3\right)$ for all $x\in\R$ which means that there is a twist modelling function $h'$ such that $h'(h(x,0),0)=x+2\pi(\rot(k,l)+3)$. This is associated with a splitting sequence of twist nature, which we call $\bm\sigma$, turning $\tau_l$ into $D_\gamma^{\epsilon(\rot(k,l)+3)}(\tau_k)$, where $\epsilon$ is the sign of $\gamma$ as a twist curve.

It is clear that $V\left(D_\gamma^{\epsilon(\rot(k,l)+3)}(\tau_k|X)\right)= D_\gamma^{\epsilon(\rot(k,l)+3)}\cdot V(\tau_k|X)$: here, $D_\gamma$ shall be meant as the Dehn twist about $\gamma$ as a diffeomorphism $S^X\rightarrow S^X$ or $X\rightarrow X$. So $V\left(D_\gamma^{\epsilon(\rot(k,l)+3)}(\tau_k|X)\right)$ is a vertex of $\pa(S)$ (resp. of $\ma(X)$) if and only if $V(\tau_k|X)$ is one, too; but the remark following Lemma \ref{lem:decreasingfilling}, applied to $\bm\sigma$, yields that in this case $V(\tau_l|X)$ is also a vertex of $\pa(X)$ (resp. $\ma(X)$). Moreover in the sequence $\bm\tau*\bm\sigma$, induced on $X$, each entry carries the following one, and the first one $\tau_k|X$ fully carries the last one $D_\gamma^{\epsilon(\rot(k,l)+3)}(\tau_k|X)$. So the carrying must be a full one at any intermediate stage of the sequence.

For claim 4: define, from $\bm\tau(k,l)$, a splitting sequence $\bm\tau'=(\tau'_j)_{j=0}^{N''}$ arranged in Dehn twists plus remainder, as in Lemma \ref{lem:dehn+remainder}. We adopt the notation used in the statement of that lemma. In particular, $V(\tau'_0)=V(\tau_k)$ and $D_\gamma^{\epsilon m}\cdot V(\tau'_{N'})=V(\tau'_{N''})=V(\tau_l)$. Since $\rot_{\bm\tau'}(\gamma;0,N')=0$, at most $3N_3^2$ splits occur in $\bm\tau'(0,N')$ by Lemma \ref{lem:twistsplitnumber}.

But, once the surface $S$ is fixed, the possible pairs $(\tau_0,\gamma)$ as in the statement are finitely many up to the action of $\mcg(S)$ (cfr. Lemma \ref{lem:vertexsetbounds}). The finiteness of possible choices implies that there is a bound $k_1$, depending on $S$ only, on $i(\alpha,\beta)$, for $\alpha\in V(\tau'_0),\beta\in V(\tau'_{N'})$. 

Let $p(\tau_{N'})$ be a pants decomposition of $S$ including the curve $\gamma$, chosen so that $\max_{\alpha \in p(\tau_{N'}), \beta \in V(\tau_{N'})} i(\alpha,\beta)$ is minimal among all pants decompositions with this property. So, again by finiteness of possible configurations up to $\mcg(S)$, a constant $k_2=k_2(S)$ exists with $\max_{\alpha \in p(\tau_{N'}), \beta \in V(\tau_{N'})} i(\alpha,\beta)\leq k_2$. Using the fact that $D_\gamma^{\epsilon m}\cdot p(\tau_{N'})=p(\tau_{N'})$ and $D_\gamma^{\epsilon m}\cdot V(\tau'_{N'})=V(\tau'_{N''})$, also $\max_{\alpha \in p(\tau_{N'}), \beta \in V(\tau_{N''})} i(\alpha,\beta)\leq k_2$.

We now use arguments similar to the ones in Lemma \ref{lem:induction_vertices_commute}. For each $Y\subseteq X$ non-annular subsurface, $\pi_Y V(\tau'_0),\pi_Y V(\tau'_{N'}), \pi_Y\left(p(\tau'_{N'})\right), \pi_Y V(\tau'_{N'})$ are all non-empty. Using Remark \ref{rmk:subsurf_inters_bound}, given $\alpha\in \pi_Y V(\tau'_0)$, $\beta\in\pi_Y V(\tau'_{N'})$ one has $i(\alpha,\beta)\leq 4k_1+4$, and a similar bound $4k_2+4$ holds for $i(\alpha,\beta)$ if $\alpha\in \pi_Y\left(p(\tau'_{N'})\right)$ and $\beta\in \pi_Y V(\tau'_{N'})$ or $\in \pi_Y V(\tau'_{N''})$.

Appealing to Lemma \ref{lem:cc_distance},
$$
d_Y\left(V(\tau'_0), V(\tau'_{N''})\right)\leq F(4k_1+4)+ 2 F(4k_2+4).
$$
This implies also that $d_Y\left(\tau'_0|X, \tau'_{N''}|X\right)$ is bounded, by the first statement in Lemma \ref{lem:induction_vertices_commute}.

Let $M> \max\{M_6(S),F(4k_1+4)+ 2 F(4k_2+4)\}$. Then, by Theorem \ref{thm:mmprojectiondist} and Lemma \ref{lem:pantsquasiisom} applied with the specified value $M$,
$$d_{\pa(X)}\left(\pi_X V(\tau'_0),\pi_X V(\tau'_{N''})\right)\leq e_1(X,M,k(X,S),\ell(X)).$$
Lemma \ref{lem:induction_vertices_commute}, finally, gives our claim.
\end{proof}

Rather than proving Proposition \ref{prp:tcbound} directly in the form given, we will lean on the following definition and lemma.\footnote{Many thanks to Saul Schleimer for having suggested the proof of this lemma.}

\begin{defin}Given a surface $S$ and a sequence $\alpha_1,\ldots,\alpha_r$ of distinct isotopy classes of essential simple closed curves on $S$, we call an increasing subsequence $\alpha_{j_1},\ldots,\alpha_{j_s}$ a \nw{chain} if, for all $1\leq i < s$, $\alpha_{j_{i+1}}$ intersects $\alpha_{j_i}$ essentially.
\end{defin}

\begin{lemma}\label{lem:chainbound}
Let $S$ be a surface and let $\alpha_1,\ldots,\alpha_r$ be a sequence of distinct isotopy classes of essential simple closed curves on $S$. Suppose that any chain in this sequence has at most $c$ elements: then $r\leq \xi(S)c$.
\end{lemma}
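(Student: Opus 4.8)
The plan is to prove this by induction on the complexity $\xi(S)$, peeling off one curve from a suitably chosen maximal multicurve at each step. First I would recall the elementary fact that a multicurve on $S$ has at most $\xi(S)$ components; the intended role of $\xi(S)$ in the bound is exactly this. The key idea is a Dilworth-type / greedy argument: to each index $i$ associate the length $\ell(i)$ of the longest chain in $\alpha_1,\dots,\alpha_i$ \emph{ending} at $\alpha_i$. By hypothesis $1\le \ell(i)\le c$ for all $i$, so the map $i\mapsto\ell(i)$ partitions $\{1,\dots,r\}$ into at most $c$ classes $L_1,\dots,L_c$, where $L_t=\{i:\ell(i)=t\}$. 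The crucial observation is that within a single class $L_t$ the curves are pairwise disjoint (up to isotopy): if $i<i'$ both lie in $L_t$ and $\alpha_{i'}$ intersected $\alpha_i$ essentially, then appending $\alpha_{i'}$ to a longest chain ending at $\alpha_i$ would produce a chain of length $t+1$ ending at $\alpha_{i'}$, contradicting $\ell(i')=t$. Hence each $L_t$ indexes a collection of pairwise disjoint, pairwise non-isotopic essential curves, i.e.\ (a sub-multicurve of) a multicurve, so $|L_t|\le\xi(S)$.

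Combining, $r=\sum_{t=1}^{c}|L_t|\le \xi(S)c$, which is the desired inequality. I would write this up directly rather than by an explicit induction, since the class decomposition already gives the clean bound in one shot; the induction on $\xi(S)$ is not actually needed once the Dilworth argument is in place. One point to be careful about: the definition of \emph{chain} in the excerpt requires the subsequence to be \emph{increasing} in index and each consecutive pair to intersect essentially (not all pairs), so $\ell$ is well-defined and the ``append'' step above respects the ordering — this is exactly why the argument works with index-ordered chains rather than arbitrary antichains, and I would state it carefully to avoid confusion with the classical statement of Dilworth's theorem.

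I do not expect a serious obstacle here; the only mild subtlety is making sure that ``pairwise disjoint up to isotopy and pairwise distinct'' genuinely forces the cardinality bound $\xi(S)$, which is the standard fact (quoted in \S\ref{sub:surfaces} of this excerpt) that a multicurve has at most $\xi(S)$ curves — here we should note that distinctness of the isotopy classes $\alpha_i$ is part of the hypothesis, so no two indices in $L_t$ give the same curve, and the disjointness just established upgrades this to an honest multicurve. If one preferred an inductive presentation, the alternative would be: let $\gamma$ be any curve appearing in the sequence and split $\R\setminus\!\!\setminus$... — but this is strictly more cumbersome, so the class-decomposition proof is the one I would carry out.
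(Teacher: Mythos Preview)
Your proof is correct and follows essentially the same strategy as the paper: partition $\{1,\dots,r\}$ into level sets, show each level indexes a multicurve (hence has size $\le\xi(S)$), and bound the number of levels by $c$. The only difference is the level function: you use $\ell(i)=$ length of the longest chain ending at $\alpha_i$ (the classical Mirsky argument), while the paper assigns $i+1$ to $A_{u+1}$ where $u$ is the largest $v$ such that $\alpha_{i+1}$ hits something in each of $A_1,\dots,A_v$; these two assignments need not coincide but both yield pairwise-disjoint levels and a chain of length equal to the number of levels, so the bound is identical. Your version is arguably cleaner and more standard.
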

\begin{proof}
We define a partition $A_1,\ldots,A_t$ of $\{1,\ldots, r\}$ inductively, as follows --- the number $t\geq 1$ will be determined by the construction. First, assign $1\in A_1$.

Suppose now that all indices $1,\ldots,i$ ($i<r$) have been assigned to some set in the partition. Let then $i+1\in A_{u+1}$, where $u$ is the highest of all $v\geq 0$ such that, for each $1\leq v'\leq v$, there exists $j\in A_{v'}$, $j\leq i$, such that $\alpha_{i+1}$ intersects $\alpha_j$ essentially. In particular, if $\alpha_{i+1}$ is disjoint from all curves $\alpha_j$ with $j\leq i$ and $j\in A_1$, then we assign $i+1\in A_1$, too. Eventually, we define $t$ to be the highest index $u$ such that some $1\leq i\leq r$ has been assigned to $A_u$.

As a consequence of the construction, if $j\in A_u$ for $u>1$ then there exists an index $1\leq l(j)< j$ such that $l(j)\in A_{u-1}$. Moreover, all indices in a specified set $A_u$ of the partition correspond to pairwise disjoint curves: as such, they are part of a pants decomposition for $S$, so there are at most $\xi(S)$ indices in $A_u$. Therefore $t\geq r/\xi(S)$.

Let now $j\in A_t$. The indices $l^{(t-1)}(j),l^{(t-2)}(j),\ldots,l(j),j$ belong to $A_1,A_2,$ \ldots, $A_{t-1},A_t$ respectively (here $l^{(i)}$ denotes the iteration of $l$ for $i$ times), and the subsequence of $\alpha_1,\ldots,\alpha_r$ specified by those indices is a chain. Therefore $t\leq c$; and $r\leq \xi(S)c$ as claimed.
\end{proof}

As our argument to prove Proposition \ref{prp:tcbound} is based on the diagonal extension machinery, we wish to make sure that diagonal extensions work smoothly with respect to Dehn twists.

\begin{lemma}\label{lem:weightsaftertwist}
Let $\bm\tau=(\tau_j)_{j=0}^N$ be a generic, recurrent train track splitting sequence. Consider the induced train tracks $\rho_j\coloneqq \tau_j|X$ on a non-annular essential subsurface $X$ of $S$. Suppose that a subsequence $\bm\tau(k,l)$ has twist nature with respect to a curve $\gamma\in\cc(X)$, and that $\rot_{\bm\tau}(\gamma;k,l)=m\geq 1$.

If $\alpha\in \cc(\rho_N)$ intersects $\gamma$ essentially, then $\alpha$ traverses each branch of $\rho_0$ contained in $\rho_0.\gamma$ at least $m$ times.

Suppose now that each $\rho_j$ fills $S^X$; let $\alpha$ be a curve essentially intersecting $\gamma$, with $\alpha\in \cc(\delta_N)$ for some $\delta_N\in \f(\rho_N)$. Then there is a $\delta_0\in \f(\rho_0)$ carrying $\alpha$, built from a recurrent subtrack of $\rho_0$ that fills $S^X$ and includes $\rho_0.\gamma$; and $\alpha$ traverses all branches in $\delta_0.\gamma$ at least $m-1$ times.
\end{lemma}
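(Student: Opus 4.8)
The key point is to track the weights of a curve that crosses $\gamma$ under the splitting sequence. The plan is as follows. First I would handle the statement about $\alpha\in\cc(\rho_N)$. By Remark~\ref{rmk:decreasingmeasures}, $\cc(\rho_N)\subseteq\cc(\rho_j)$ for all $j$, so in particular $\alpha\in\cc(\rho_0)$. Model $\bm\tau(k,l)$ according to Remark~\ref{rmk:twistnaturemodelling}, pick a regular neighbourhood $Y=\nei(\gamma)$ of $\gamma$ in $X$, and consider $\pi_Y(\alpha)\in\cc(\rho_0^Y)=\cc(\tau_0^Y)$, which is nonempty since $\alpha$ crosses $\gamma$. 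By point~\ref{itm:hl_vs_multiplicity} after Definition~\ref{def:horizontallength}, the number of times $\pi_Y(\alpha)$ (and hence $\alpha$) traverses any fixed branch of $\rho_0^Y.\gamma$ is at least $\lfloor\hl(\tau_0,\pi_Y(\alpha))/2\pi\rfloor$. Since $\alpha\in\cc(\rho_l^Y)$, the definition of $\rot_{\bm\tau}(\gamma;k,l)=m$ (using the minimum-over-$\cc(\tau_l^Y)$ form) together with point~\ref{itm:rotwithvertexonly} of Remark~\ref{rmk:rotbasics} forces $\lfloor\hl(\tau_k,\pi_Y(\alpha))/2\pi\rfloor\geq m$, and by point~\ref{itm:farisininfluent} / the monotonicity of horizontal length along the sequence outside $[k,l]$, $\hl(\tau_0,\pi_Y(\alpha))\geq\hl(\tau_k,\pi_Y(\alpha))$ (more precisely, horizontal stretches only shrink going forward, so going backward they do not shrink). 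Hence $\alpha$ traverses each branch in $\rho_0.\gamma$ at least $m$ times, noting that branches of $\rho_0.\gamma$ biject with branches of $\rho_0^Y.\gamma$.

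Next I would treat the diagonal-extension statement, which is the harder half. Assume each $\rho_j$ fills $S^X$ and $\alpha\in\cc(\delta_N)$ with $\delta_N\in\f(\rho_N)$; say $\delta_N\in\e(\sigma_N)$ for a recurrent subtrack $\sigma_N$ of $\rho_N$ filling $S^X$. Apply Lemma~\ref{lem:cf_decreasing} (in its induced-track form) along the sequence $(\rho_j)$: since $\rho_N|\,=\rho_N$ is carried by $\rho_0$ and both fill $S^X$, we get $\cf(\rho_N)\subseteq\cf(\rho_0)$, and more precisely $\delta_N$ is fully carried by some $\delta_0\in\f(\rho_0)$, a diagonal extension of a recurrent subtrack $\sigma_0$ of $\rho_0$ which again fills $S^X$, with $\sigma_N$ fully carried by $\sigma_0$. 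I still need $\delta_0$ (equivalently $\sigma_0$) to contain $\rho_0.\gamma$: for this I would argue that, since $\gamma$ is a twist curve for $\rho_0$ (Lemma~\ref{lem:twistininduced}, via the hypothesis that $\bm\tau(k,l)$ has twist nature about $\gamma$ and $\rot\geq1$), the branches of $\rho_0.\gamma$ are forced to lie in any subtrack of $\rho_0$ that fills $S^X$ — they are traversed by every carried curve that crosses $\gamma$, in particular by the vertex cycles of $\sigma_0$, because a curve carried by $\sigma_0$ that fills $S^X$ cannot avoid $\gamma$. If necessary I would enlarge $\sigma_0$ within $\rho_0$ to include $\rho_0.\gamma$ while keeping it recurrent and filling, which is harmless.

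Finally, for the weight estimate on $\delta_0$: $\alpha$ is carried by $\delta_0$ and crosses $\gamma$, so $i(\alpha,\gamma)\geq1$ and $\alpha\notin\cc(\delta)$ for no diagonal extension… — the cleanest route is Lemma~\ref{lem:weightsaftertwist}'s first part applied with a diagonal extension in place of $\rho$: run the twist-nature subsequence $\bm\tau(k,l)$ and pass to the annular cover $Y=\nei(\gamma)$. The curve $\pi_Y(\alpha)$ is carried by $\delta_0^Y$, which contains $\rho_0^Y.\gamma=(\delta_0^Y).\gamma$; since $\alpha$ crosses $\gamma$ we have $\hl(\delta_0,\pi_Y(\alpha))$ well defined, and the same argument as in the first paragraph — horizontal length does not decrease going backward along $\bm\tau$, and the minimum defining $\rot$ gives a lower bound of $m$ at index $k$ — shows $\lfloor\hl(\rho_0,\pi_Y(\alpha))/2\pi\rfloor\geq m-1$; here the loss of one comes from the fact that $\pi_Y(\alpha)$ need not be carried by $\rho_0$ itself but only by a diagonal extension, so the minimum in the definition of $\rot$ is taken over $\cc(\rho_l^Y)$ rather than over the possibly larger $\cc(\delta_l^Y)$, and comparing these two via Lemma~\ref{lem:onerollingdirection} costs one Dehn twist. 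Then point~\ref{itm:hl_vs_multiplicity} after Definition~\ref{def:horizontallength} converts this into: $\alpha$ traverses each branch of $\delta_0^Y.\gamma$, hence of $\delta_0.\gamma$, at least $m-1$ times.

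\textbf{Main obstacle.} The delicate point I expect is bookkeeping the ``$-1$'': precisely controlling why, when $\alpha$ is only carried by a diagonal extension of $\rho_0$ (not $\rho_0$), one loses exactly one Dehn twist and not more. This needs the induced-track uniqueness of carrying (Corollary~\ref{cor:carryingunique}, point~\ref{itm:uniquecarrying} of Remark~\ref{rmk:annulusinducedbasics}) to make $\hl$ well defined on $\delta_0^Y$, plus Lemma~\ref{lem:onerollingdirection} to compare horizontal lengths in $\rho_0^Y$ versus $\delta_0^Y$ through a single Dehn twist, together with the observation that $\delta_0^Y$ and $\rho_0^Y$ share the same carrying image of $\gamma$ (the diagonal branches of $\delta_0$ lie in complementary regions of $\bar\nei_0(\rho_0)$, hence outside $\nei(\gamma)$ for a sufficiently thin $\nei(\gamma)$, so they do not interfere with $\rho_0^Y.\gamma$). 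Getting this comparison clean, rather than sloppily losing a bounded additive error, is where the real work lies.
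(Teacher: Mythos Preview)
Your treatment of the first statement is essentially the paper's: lift to the annular cover $Y=\nei(\gamma)$, use that $\pi_Y(\alpha)\in\cc(\rho_l^Y)$, apply the definition of rotation number to get $\hl_{\pi_Y(\alpha)}(k)>2\pi m$, and translate via point~\ref{itm:hl_vs_multiplicity} after Definition~\ref{def:horizontallength}. Fine.

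The second statement, however, has a genuine gap, and it is exactly the point you flag as ``harmless''. The subtrack $\sigma_0$ of $\rho_0$ furnished by Lemma~\ref{lem:cf_decreasing} need not contain $\rho_0.\gamma$; your assertion that ``the branches of $\rho_0.\gamma$ are forced to lie in any subtrack of $\rho_0$ that fills $S^X$'' is false in general. And enlarging $\sigma_0$ to $\sigma_0'\coloneqq\sigma_0\cup\rho_0.\gamma$ is \emph{not} harmless: the diagonal branches of $\delta_0$ live in complementary regions of $\sigma_0$, not of $\sigma_0'$, so they may meet $\rho_0.\gamma$ transversally. After the enlargement, $\delta_0$ is simply no longer a diagonal extension of $\sigma_0'$ --- it is not even a pretrack containing $\sigma_0'$. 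Consequently your annular-cover argument collapses: there is no reason for $(\delta_0)^Y.\gamma$ to coincide with $\rho_0^Y.\gamma$, nor for the horizontal-length machinery to apply to $\delta_0$.

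The paper's proof confronts this head-on with a lengthy explicit construction. After arranging $\bm\tau(k,l)$ into Dehn twists with remainder (Lemma~\ref{lem:dehn+remainder}), it works at an index $r$ with $\tau_l=D_\gamma^{\epsilon m}(\tau_r)$, sets $\alpha_-\coloneqq D_\gamma^{-\epsilon m}(\alpha)$, $\delta_r\coloneqq D_\gamma^{-\epsilon m}(\delta_l)$, $\sigma_r\coloneqq D_\gamma^{-\epsilon m}(\sigma_l)$, and then \emph{builds by hand} a new diagonal extension $\delta_r'$ of $\sigma_r'\coloneqq\sigma_r\cup\rho_r.\gamma$. This takes three steps: (1) put the extra branches of $\delta_r$ into an ``efficient position'' relative to $\sigma_r'$; (2) break each diagonal branch at its transverse intersections with $\sigma_r'.\gamma$ and bend the pieces along $\gamma$ (in the direction dictated by the twist collar) to reattach them at switches of $\sigma_r'$; (3) verify that $D_\gamma^{\epsilon}(\alpha_-)$ --- not $\alpha_-$ itself --- is carried by the resulting $\delta_r'$. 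The loss of one Dehn twist, giving $m-1$ instead of $m$, comes precisely from step~(3): the bending absorbs a single twist. One then transports $\delta_r'$ back to $\rho_0$ via Lemma~\ref{lem:cf_decreasing} and applies the first part of the present lemma to a twist sequence of length $m-1$. Your sketch does not contain any analogue of this construction, and without it the argument does not go through.
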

\begin{proof}
We prove the first statement. The set $\pi_{\nei(\gamma)}(\alpha)$ is nonempty and contained in $\cc(\rho_l^{\nei(\gamma)})$. By definition of rotation number, each arc $\alpha'\in \cc(\rho_l^{\nei(\gamma)})$ has $\hl_{\alpha'}(k)>2\pi m$ so it traverses each branch in $\rho_k^{\nei(\gamma)}.\gamma$ at least $m$ times (see point \ref{itm:hl_vs_multiplicity} after Definition \ref{def:horizontallength}). So also $\alpha$ shall traverse each branch of $\rho_k.\gamma$ at least $m$ times; and the same must be true in $\rho_0.\gamma$ (see Remark \ref{rmk:decreasingmeasures}). In particular this is true if one picks $\alpha\in \cc(\rho_N)\subseteq \cc(\rho_l)$.

We work now on the second of the two statements. Without altering the truth of the statement we can suppose, from this point on, that $\bm\tau(k,l)$ has been already replaced with the subdivision into Dehn twists with remainder guaranteed by Lemma \ref{lem:dehn+remainder}. In particular we are replacing the original $\tau_l$ with a train track which is comb equivalent to it; and therefore we are also operating some comb/uncomb moves after the new $\tau_l$, in order to continue smoothly with the original splitting sequence $\bm\tau(l,N)$.

This means that there is an index $k\leq r< l$ with $\tau_l=D_\gamma^{\epsilon m}(\tau_r)$ where $\epsilon$ is the sign of $\gamma$ as a twist curve. By Lemma \ref{lem:pantsboundunderdt}, also $\rho_l=D_{\gamma}^{\epsilon m}(\rho_r)$; and clearly $\f(\rho_l)=D_{\gamma}^{\epsilon m}\cdot \f(\rho_r)$, so similar relations hold for the sets of carried curves: $\cc(\rho_l)=D_{\gamma}^{\epsilon m}\cdot\cc(\rho_r)$; $\cf(\rho_l)=D_{\gamma}^{\epsilon m}\cdot \cf(\rho_r)$.

Rather than the original statement, we will prove this other one:
\begin{claim}
Let $\alpha$ be a curve essentially intersecting $\gamma$, carried by $\delta_l\in \f(\rho_l)$ which is a diagonal extension of a recurrent subtrack $\sigma_l$ of $\rho_l$ that fills $S^X$. Then there exists a $\delta'_r\in \f(\rho_r)$ which is a diagonal extension of the recurrent subtrack $\sigma_r=D_{\gamma}^{-\epsilon m}(\sigma_l)$ of $\rho_r$ and contains $\rho_r.\gamma$; and $\alpha$ traverses each branch in $\delta'_r.\gamma$ at least $m-1$ times.
\end{claim}

This implies the desired statement as follows. According to Lemma \ref{lem:cf_decreasing},\linebreak $\cf(\rho_N)\subseteq \cf(\rho_l)$ so the above statement is true for $\alpha\in\cf(\rho_N)$ in particular. Also, the last statement in that lemma ensures that not only $\cf(\rho_r)\subseteq \cf(\rho_0)$, but also there is a $\delta'_0\in \f(\rho_0)$ fully carrying $\delta'_r$. In particular $\delta'_0$ will carry both $\alpha$ and $\gamma$, and $\alpha$ traverses at least $m-1$ times any branch in $\delta'_0.\gamma$.

Let $\delta_r\coloneqq D_\gamma^{-\epsilon m}(\delta_l)$, and let $\alpha_-\coloneqq D_{\gamma}^{-\epsilon m}(\alpha)\in \cc(\delta_r)$. Without loss of generality, we may suppose that $\sigma_r$ complies with the following maximality property: any almost track $\sigma_r\subsetneq \xi\subseteq \rho_r$ has the property that a branch of $\xi$ intersects one of $\delta_r$ at a point that is interior for both branches; and this property remains true when changing $\xi$ up to isotopies fixing $\sigma_r$.

Our $\delta'_r$ is going to be a diagonal extension of $\sigma'_r\coloneqq \sigma_r\cup\rho_r.\gamma$ --- which is a generic almost track, is recurrent as $\sigma_r$ is, and fills $S^X$. When $\sigma_r=\sigma'_r$, one just takes $\delta'_r\coloneqq \delta_r$ and proves the desired claim using the first claim of the present lemma. When $\sigma_r\not=\sigma'_r$ instead, $\sigma'_r$ and $\delta_r$ are not subtracks of a common almost track, hence $\alpha_-$ may not be carried by any diagonal extension of $\sigma'_r$; nevertheless we will show that $D_\gamma^\epsilon(\alpha_-)$ is carried by a suitable one, and this will allow us to conclude.

\step{1} finding an `efficient position' for the extra branches in $\br(\delta_r)\setminus\br(\sigma_r)$ (similar to what Definition \ref{def:efficientposition} requires for curves): this will be necessary in order to avoid monogons in the construction of $\delta'_r$. The construction has some points in common with \S 4.2 in \cite{mms}.

Let $a_1,\ldots,a_s$ be an enumeration of the branches in $\br(\delta_r)\setminus\br(\sigma_r)$ that are traversed by $\alpha_-$. For each $i=1,\ldots,s$, let $Q_i$ be the closure in $S$ of the connected component of $S\setminus \sigma_r$ which contains $a_i$. Also, if $\ul\gamma$ is a train path realization of $\gamma$ in $\rho_r$ (and in $\sigma'_r$), let $\beta_i^1,\ldots,\beta_i^{u(i)}$ be the maximal segments of this path which lie in $Q_i$ and are not contained entirely in its boundary. Any two of them are not necessarily disjoint but, since $\gamma$ is wide in $\rho_i$, each branch of $\sigma'_r$ is not traversed more than twice in total, with multiplicities, by this collection.

For each of the $\beta_i^j$, let $\ul\beta_i^j$ be a smooth embedded path in $\bar\nei(\sigma'_r)$ which is transverse to all ties and has its extremes at two cusps in $\partial Q_i$: $\ul\beta_i^j$ shall be constructed to be parallel to $\beta_i^j$ but, at each extremity of $\beta_i^j$, if it is not a corner of $\partial Q_i$, $\ul\beta_i^j$ shall continue traversing branches in $\partial Q_i$, until a corner is reached. Also, we choose $\ul\beta_i^j$ to sit inside $\inte(Q_i)$, except for its endpoints; and all $\ul\beta_i^j$ to be disjoint from each other except possibly for their endpoints.

Fix a branch $b\in\br(\sigma'_r)$ with $b\subseteq Q_i$, such at least one of the following is true:
\begin{itemize}
\item $b\subseteq\partial Q_i$ and $b$ is traversed by some $\ul\beta_i^j$ at least once;
\item $b$ is traversed at least twice in total by the family $\{\ul\beta_i^j\}_{j=1}^{u(i)}$.
\end{itemize}
If $b\subseteq \partial Q_i$, define the \emph{fold} at $b$ as the union of all sub-ties which are contained in $R_b([-1,1]\times[-1,1])$ and have one endpoint along $b$ and the other along some $\ul\beta_i^j$. Otherwise, define the fold at $b$ as the union of all sub-ties which are contained in $R_b([-1,1]\times[-1,1])$ and have both endpoints along some $\ul\beta_i^j$. Informally, the fold at $b$ is a triangle or rectangle containing all chunks of the arcs $\ul\beta_i^j$ which traverse $b$: the ones that one could sensibly fold together or fold to $b$.

Define a \emph{folding area} as a maximal union of folds whose interior is connected. For each folding area $A$, the boundary $\partial A$ includes open subsets of a finite number of ties of $\nei(\sigma'_r)$. Cut $A$ along each tie that has an open subset in $\partial A$, to get a collection of \emph{folding rectangles} and \emph{triangles}. For $A'$ a folding rectangle or triangle, call $\partial_h A'=\mathrm{int}\left(\partial A'\cap(\sigma_r\cup \ul\beta_i^1\ldots \cup\ul\beta_i^{u(i)})\right)$ and $\partial_v A'=\inte(\partial A'\setminus \partial_h A')\cup \left(A'\cap (\text{corners of }\partial Q_i)\right)$. The two components of $\partial_h A'$ are two tie-transverse arcs in $\nei(\sigma'_r)$. They have the same image under the tie collapse $c:\bar\nei_0(\sigma'_r)\rightarrow \sigma'_r$: we call it the \emph{crush} of $\partial_h A'$; its closure is a bounded train path in $\sigma'_r$ (up to reparametrization). Figure \ref{fig:diagext_efficientpos}, left, explains this construction.

We need to adjust each branch $a_i$ with respect to $\sigma'_r$ via isotopies which leave their endpoints fixed and keep the inclusion $a_i\subseteq Q_i$, in order to attain an analogous condition to efficient position for curves (Definition \ref{def:efficientposition}); in particular, we wish that the newly obtained smooth paths $a''_i$ are still embedded in $S^X$ individually; but they need not be pairwise disjoint.

More specifically, we require that, for each $i$:
\begin{enumerate}
\item $a_i''\setminus \sigma_r$ is connected and $a''_i\cap\sigma'_r$ is a union of branches of $\sigma'_r$ plus isolated points;
\item if $i'\not=i$ but $Q_{i'}=Q_i$, then $a''_i\cap a''_{i'}$ shall be exactly one of the following: empty, one common endpoint, or a union of branches of $\sigma'_r$; moreover, $a''_{i'}$ cannot intersect more than 1 component of $Q_i\setminus a''_i$;
\item the association of each connected component of $a''_i\cap\sigma'_r$ with the connected component of $a''_i\cap\bar\nei(\sigma'_r)$ which contains it is a bijection; 
\item given a connected component of $a''_i\cap\bar\nei(\sigma'_r)$, it is either a single tie, or it is transverse to all ties of $\nei(\sigma'_r)$ it encounters; in this second case, each of its endpoints either is a corner of $\partial Q_i$ or lies along $\partial_v\bar\nei(\sigma'_r)$;
\item if a connected component of $a''_i\cap\sigma'_r$ is an isolated point, then either
\begin{itemize}
\item it is contained in $\partial Q_i$ and there is a component of $a''_i\cap\bar\nei_0(\sigma'_r)$ which consists of exactly that point,
\item or it is contained in a component of $a''_i\cap\bar\nei(\sigma'_r)$ which is a tie of $\bar\nei(\sigma'_r)$;
\end{itemize}
\item if a given connected component of $a''_i\cap\sigma'_r$ is a union $b$ of branches of $\sigma'_r$, let $B$ be the corresponding connected component of $a''_i\cap\bar\nei(\sigma'_r)$: then $B\setminus b$ is entirely contained in $\bar\nei(\sigma'_r)\setminus \bar\nei_0(\sigma'_r)$;
\item however one chooses a finite-length smooth, embedded, path $\rho$ along $\sigma'_r\cap Q_i$ (not necessarily a train path), and a subarc $a$ of $a''_i$, which intersect exactly at their endpoints, the region they bound together has negative index.
\end{enumerate}

Note that, if one can choose two paths $\rho$ and $a$ as in condition 7 and they satisfy it, then they can only delimit a 1-punctured bigon. If they exist but do not satify the condition then, even admitting that $a$ may degenerate to a single point, the region they bound cannot be a zero-gon, monogon, or 1-punctured zero-gon/monogon.

\begin{figure}
\centering
\begin{minipage}[c]{.5\textwidth}
\def\svgwidth{\textwidth}
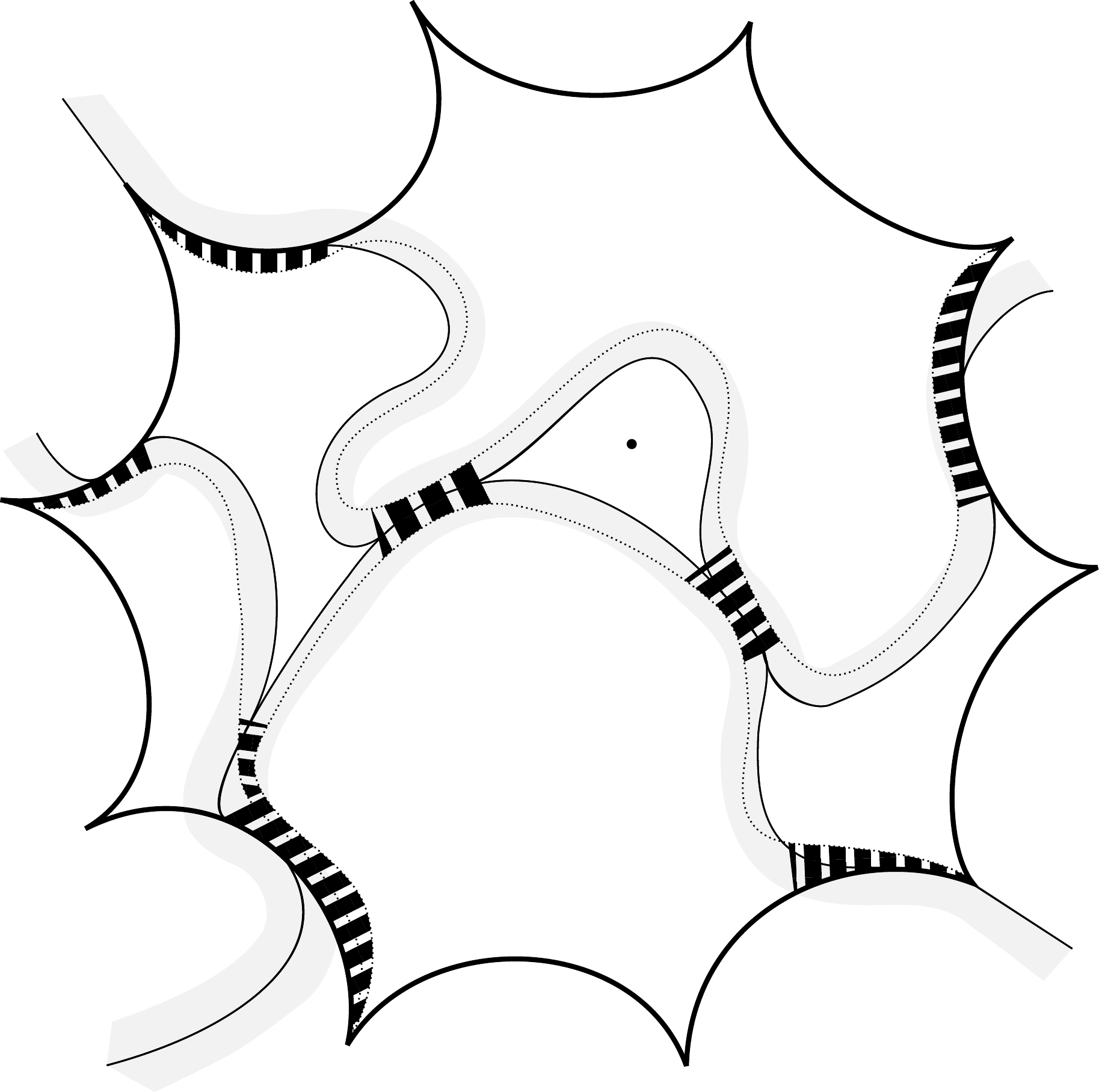
\end{minipage}\hspace{5em}
\begin{minipage}[c]{.35\textwidth}
\def\svgwidth{\textwidth}
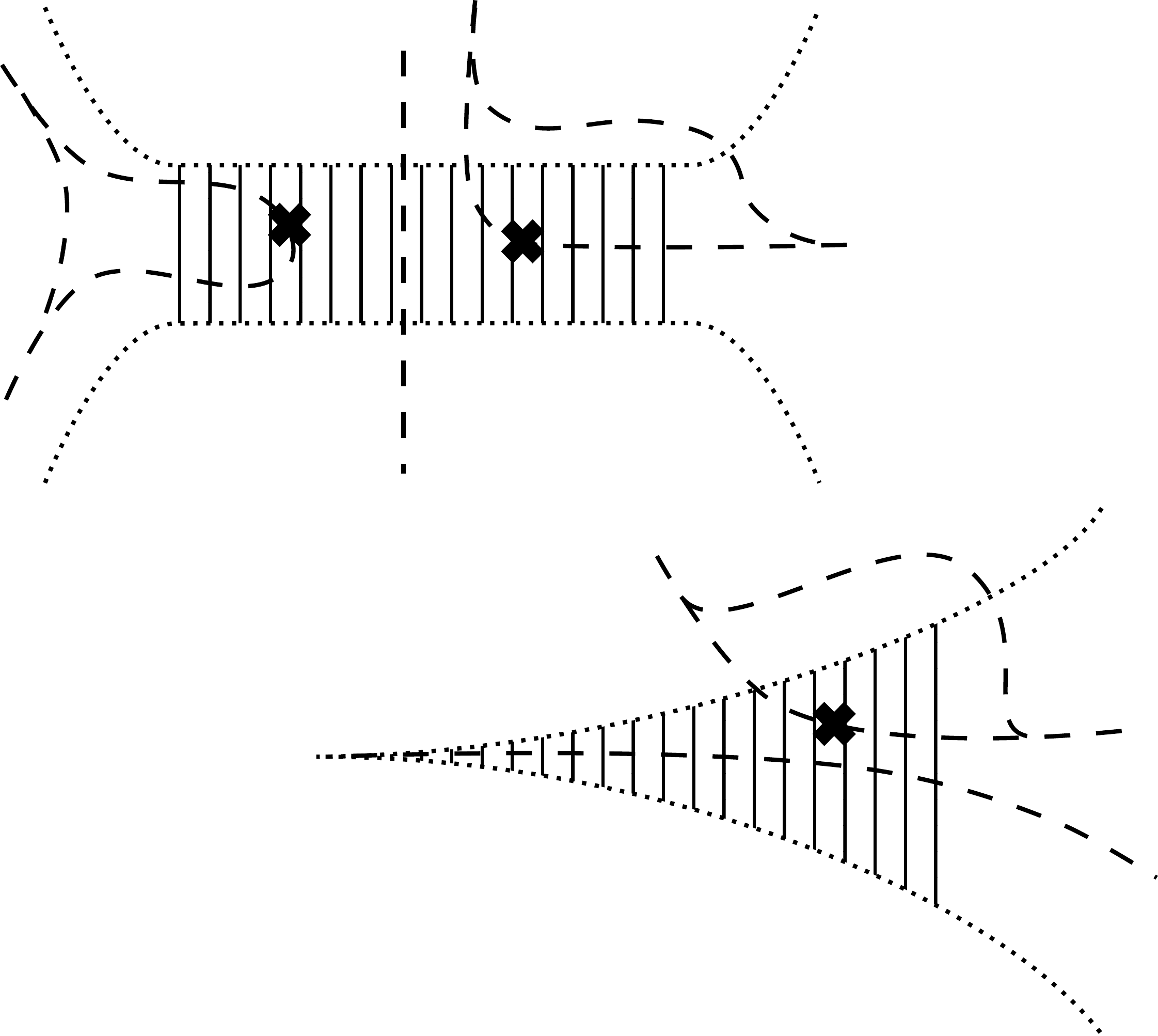
\end{minipage} 
\caption{\label{fig:diagext_efficientpos}\textit{Left:} The construction of the arcs $\ul\beta_i^j$ given $\beta_i^j$ in a complementary region $Q_i$ of $\sigma_r$ (dotted), and the related folding rectangles and triangles (marked with black stripes). The twist collar $A_\gamma$ is marked in grey. \textit{Right:} Adaptation of the arcs $a_i$ (dashed) with respect to a folding rectangle or triangle (here shown with its foliation into ties). In short: every time $a_i$ does \emph{not} enter and exit the folding triangle/rectangle from opposite edges, we move it off.}
\end{figure}
\begin{figure}
\centering
\def\svgwidth{.8\textwidth}
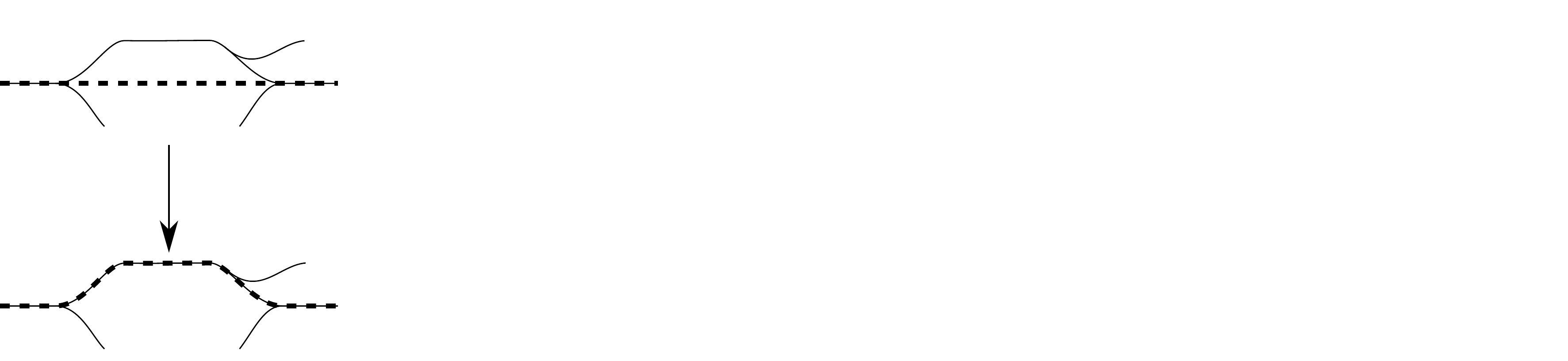
\caption{\label{fig:diagext_furtheradjust}Ways to remove one of the innermost bigons between $\bigcup_{i=1}^{s} \phi(a_i)$ and $\sigma'_r$ --- say one bounded by $(\rho,a)$. The picture a) depicts the case of a bigon whose corners are both tangential intersections of some $a_i$ (dashed) with a switch of $\sigma'_r$: the bigon can be crushed with $a$ into $\sigma'_r$. The pictures b1), b2), b3) depict the case of a bigon with one corner being a transverse intersection between $\phi(a_i)$ and $\sigma'_r$, and the other a tangential one. The idea is to move $a$ across $\rho$ in order to remove completely the transverse intersection point. In b1) a new tangential intersection point is introduced at a switch along the $\sigma'_r$-edge of the cancelled bigon; in b2) a tangential intersection point is introduced at a switch not along that edge; in b3) none is introduced at all. None of these moves increases the number of innermost bigons, but it may leave it unvaried: in b3) another arc $\phi(a_{i'})$ is marked in grey to show how it may happen. In any case the number $x$ decreases. The arc $\phi(a_{i'})$ depicted also shows how condition 2 is not violated when moving $\phi(a_i)$.}
\end{figure}

\begin{figure}
\centering
\def\svgwidth{.85\textwidth}
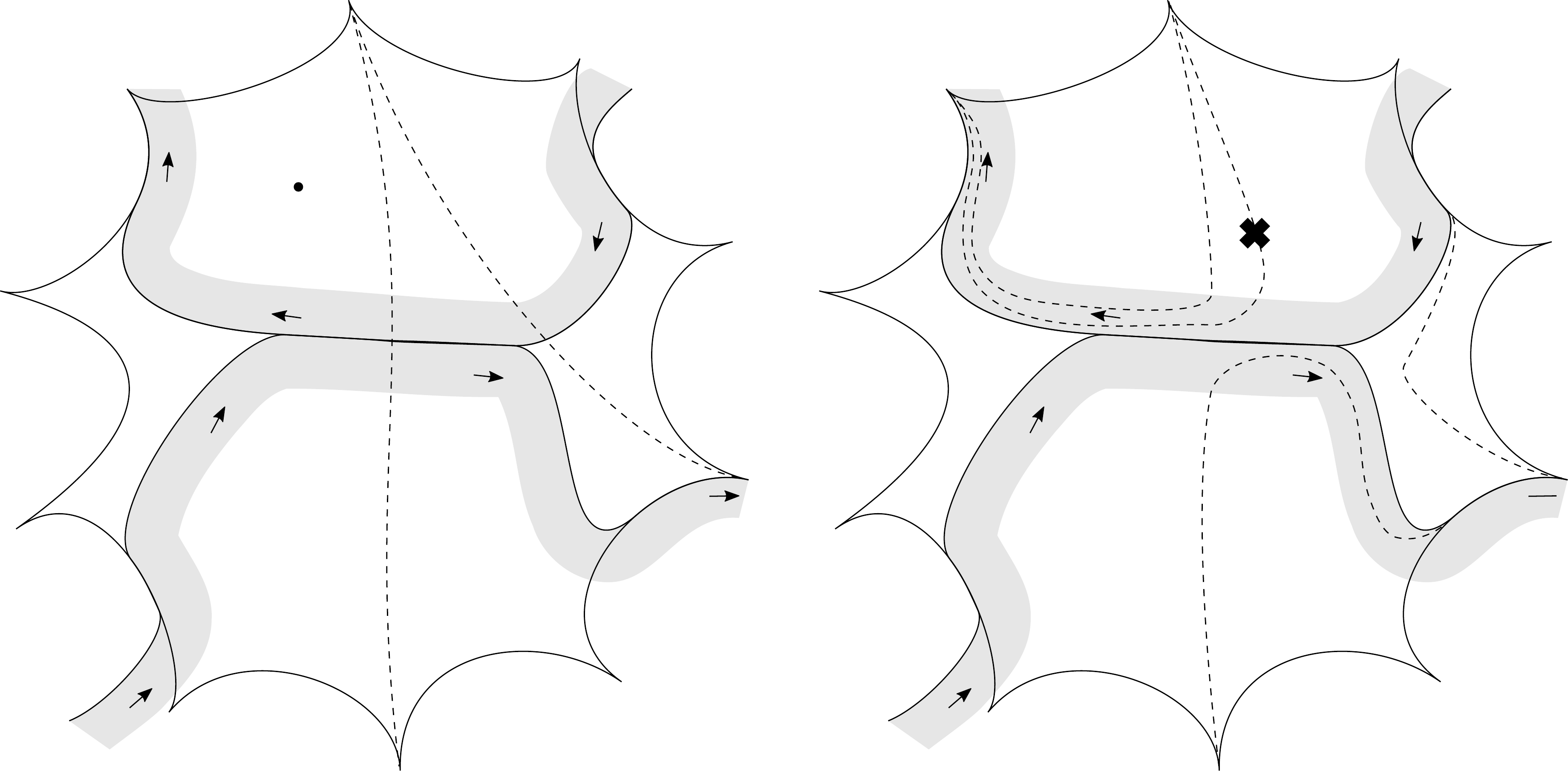
\caption{\label{fig:diagextconstruction}An example of the way the construction of $\delta'_r$ works in one of the complementary regions of $\sigma_r$, in this case $Q_1=Q_2=Q_3$. The arcs $\beta_\cdot^\cdot$ are part of $\rho_r.\gamma$ (in this picture they share a branch traversed twice by $\gamma$), and a twist collar for the latter is marked in grey. Branches $a_1,a_2,a_3\in\br(\delta_r)\setminus\br(\sigma'_r)$ (picture to the left) are drawn with a dashed line. The arrows around the $\beta_\cdot^\cdot$ show the instructions to `bend' the segments $e_j$ to get the corresponding $e'_j$ in $\nei(\sigma'_r.\gamma)$. The picture to the right shows the new branches that are added to $\sigma'_r$ to get $\sigma''_r$. A cross marks a new branch that forms a bigon with another one, thus is to be excluded from the definition of $\delta'_r$.}
\end{figure}

In practice, the position described above can be attained with the process we are about to describe. Start with isotoping the arcs $a_i$ so that their interiors remain contained in the respective $\mathrm{int}(Q_i)$, pairwise disjoint, and each intersects each arc $\ul\beta_i^j$ transversally, without forming bigons. Then, via slight perturbations, for each folding triangle/rectangle $A$, make sure that no $a_i$ includes points of $\partial A\setminus (\partial_h A\cup\partial_v A)$ (i.e. the corners of the folding triangle/rectangle which are not cusps in $\partial Q_i$).

Let $\mathcal S$ be the set of all connected components of the intersections of the form $a_i\cap A$, for some $1\leq i\leq s$ and $A$ a folding rectangle or triangle. First, we make sure that, for every $a_i$ and every folding rectangle or triangle $A$, every connected component of $a_i\cap A$ has its endpoints either on the two opposite components of $\partial_h A$, or the two opposite components of $\partial_v A$ (remember: one of the latter may be a single point). This is done as specified in Figure \ref{fig:diagext_efficientpos}: the process comes to an end, because moving one or more portions of the arcs $a_i$ off a folding rectangle/triangle results into making the cardinality of $\mathcal S$ strictly lower. 

When performing each of these moves, we keep the interiors of the $a_i$ pairwise disjoint, thus they comply with a stricter version of condition 2 in the list above. When nothing is left to move off, they can also be supposed to intersect $\sigma'_r$ only in isolated points, thus complying with a stricter version of condition 1. We can also suppose that conditions 3, 4 are satisfied by all $a_i$ provided that the tie neighbourhood is chosen wisely. 

Let now $\phi:S^X\rightarrow S^X$ be a smooth map complying with the following requests. On the points belonging $\sigma'_r$, to any of the $\ul\beta_i^j$, or to any folding area --- denote $Y$ the set of all these points --- $\phi$ coincides with the tie collapse $c:\bar\nei_0(\sigma'_r)\rightarrow \sigma'_r$. $\phi$ is then defined to be homotopic to $\mathrm{id}_{S^X}$, with $\phi|_{S^X\setminus Y}$ injective, $\phi|_{\bar\nei(\sigma'_r)}$ keeping each point along the same tie, and $\phi(S^X\setminus Y)\cap \sigma'_r=\emptyset$.

The family $\{\phi(a_i)\}$ is seen to comply with all conditions 1--6 that the family $\{a''_i\}$ is required to satisfy (possibly adapting the tie neighbourhood $\bar\nei(\sigma'_r)$), but not necessarily condition 7. However, it is impossible that a segment of one of the $\phi(a_i)$ bounds, together with a smooth embedded path along $\sigma'_r$, a bigon whose corners are each a transverse intersection point between some $\phi(a_i)$ and some branch of $\sigma'_r$. If there were one such bigon, then necessarily there would be a bigon bounded by the corresponding $a_i$ and one of the $\ul\beta_i^j$.

Now, let $x$ be the number of pairs $(\rho,a)$ which transgress condition $7$: $\rho$ is a smooth path along $\sigma'_r$, $a$ is a subarc of one of the $\phi(a_i)$, $\rho$ and $a$ intersect exactly at their endpoints, and they bound a bigon. In Figure \ref{fig:diagext_furtheradjust} it is shown how to set up a procedure that recursively considers an innermost bigon bounded by some $(\rho,a)$, and moves $\phi(a_i)$ off the bigon to decrease $x$ strictly. At each stage of the recursion, one shall probably perform isotopies to make sure that the position with respect to $\bar\nei(\sigma'_r)$ keeps respecting conditions 3--6. When it is impossible to proceed with the recursion any further, condition 7 will also be satisfied; while conditions 1 and 2 will be still satisfied, too. Define the family $\{a''_i\}$ to be the set of arcs eventually attained.

Finally, define $\delta''_r\coloneqq \sigma_r\cup\left(\bigcup_{i=1}^s a''_i\right)$. It is clear that $\delta''_r$ carries $\ul\alpha_-$.

\step{2} Construction of $\delta'_r$.

Before we start with this step, we announce that the construction of $\delta'_r$ will consist of removing any transverse intersection between the family $\{a''_i\}$ and $\sigma'_r$: the arcs $a''_i$ will be `broken' at each transverse intersection point, and then bent as $D_\gamma^\epsilon$ would do, in order to get attached to a switch of $\sigma'_r$. In Step 3 we will see why $\delta'_r$ thus constructed carries $\alpha$.

The curve $\gamma$ is a twist curve in $\sigma'_r$: the first condition in Definition \ref{def:twistcurve} is clearly verified; the second one is, too, because $\sigma'_r$ is recurrent. In particular $\gamma$ inherits the twist collar $A_\gamma$ from $\rho_r$. As it was already done previously, we suppose that the component of $\Lambda$ of $\partial\bar A_\gamma$ which is not part of $\sigma'_r.\gamma$ coincides with a component of $\partial\bar\nei(\sigma'_r.\gamma)$.

Enumerate (in any way) $e_1,\ldots,e_t$ the `half-ties' contained in $\left(\bigcup_{i=1}^s a''_i\right)\cap\bar\nei(\sigma'_r.\gamma)$, i.e., for each connected component $e$ of this intersection which is a tie, the two opposite segments of it, from $e\cap\sigma'_r.\gamma$ to $\partial\bar\nei(\sigma'_r)$, are two of the elements of the list. Let then $P_j$ be the endpoint of the corresponding $e_j$ that lies along $\partial\bar\nei(\sigma'_r)$; and let $i(j)$ be the only value of $1\leq i\leq s$ such that $e_j\subset a''_i$. We wish to define, for each $j$, a new arc $e'_j$ contained in $\bar\nei_0(\sigma'_r)$ whose endpoints are $P_j$ and a suitable switch of $\sigma'_r$. 

To do so, we perform the following construction for each $e_j$, one after another according to their order --- see also Figure \ref{fig:diagextconstruction}. Supposing that $e'_1,\ldots,e'_{j-1}$ have been constructed, consider the closures of the connected components of $(\bar\nei_0(\sigma'_r)\cap Q_{i(j)})\setminus\sigma'_r$: each of them is bounded by a smooth component of $\partial\bar\nei_0(\sigma'_r)$ and a bounded train path along $\sigma'_r.\gamma$, and therefore is a bigon, with its edges transverse to the ties of $\sigma'_r$, and corners (cusps) at switches of $\sigma'_r$. Let then $E_j\subseteq Q_{i(j)}$ be the region among these which contains $e_j$.

Define $e'_j$ to be an arc in $E_j$ with the following properties:
\begin{itemize}
\item $e'_j\cap\partial E_j$ consists exactly of the endpoints of $e'_j$, which shall be $P_j$ and one of the cusps of $\partial E_j$;
\item $e'_j$ is transverse to all ties it meets;
\item $e'_j$ runs in parallel with (part) of one of the arcs $\ul\beta_{i(j)}^{\ldots}$; moreover it proceeds in the verse consistent with the $A_\gamma$-orientation if $e_j\subseteq \ol{A_\gamma}$, and in the opposite verse otherwise;
\item $e'_j$ is part of a smooth path that begins at a point of $\left(\bigcup_{i=1}^s a''_i\right)\setminus \bar\nei(\sigma'_r.\gamma)$, enters $e'_j$ from $P_j$ and, after leaving it, continues entering a branch in $\sigma'_r$ and following it;
\item $e'_j$ is disjoint from all $e'_{j'}$ for $j'<j$.
\end{itemize}
Note that these conditions determine uniquely what is the endpoint of $e'_j$ other than $P_j$.

We claim that $e'_j\subseteq\bar\nei(\sigma'_r.\gamma)$. This is clear if $e_j\subseteq\ol{A_\gamma}$: as $e'_j$ proceeds in the $A_\gamma$-orientation, eventually it must travel between $\sigma'_r.\gamma$ and a branch end hitting $A_\gamma$ (there is one necessarily, because $\sigma'_r.\gamma$ intersects more than one of the regions $Q_i$), and is forced to reach the switch between the two. Since this switch belongs to a branch end hitting $A_\gamma$, it must necessarily be located along $\partial Q_{i(j)}$.

In case $e_j\cap A_\gamma=\emptyset$, the analysis is slightly more involved. Suppose that $e'_j$, starting at $P_j$, and proceeding along an arc $\ul\beta$ among the $\ul\beta_{i(j)}^{\ldots}$, oppositely to the $A_\gamma$-orientation, leaves $\nei(\sigma'_r.\gamma)$. Then this event must occur just after $e'_j$ has traversed a large branch end in $\sigma'_r.\gamma$. The first branch end $\eta_1$ traversed which is not in $\sigma'_r.\gamma$ is then, necessarily, one that avoids $A_\gamma$; and it is adverse, due to the orientation of $e'_j$. Also, $\eta_1$ is included in some smooth edge $\lambda$ of $\partial Q_i$. Necessarily, $\ul\beta$ does not travel along the entire length of $\lambda$ but only a portion.

This means that $\gamma$ traverses a large branch $b$ of $\sigma'_r$, contained in the \emph{interior} of $\lambda$, and then gets out of $\lambda$ on the side opposite to $Q_{i(j)}$. Let $\eta_2$ be the branch end of $\sigma'_r$, contained in $\lambda$, which shares with $b$ the switch other than the one $b$ shares with $\eta_1$. Then $\eta_1,\eta_2$ are branch ends giving $\gamma$ opposite orientations; so, as $\eta_2$ is a branch end hitting $A_\gamma$, $\eta_1$ is necessarily a favourable one, a contradiction.

When this process is done, define $\sigma''_r\coloneqq \sigma'_r\cup \left(\bigcup_{i=1}^s \left(a''_i\setminus \nei_0(\sigma'_r)\right)\right)\cup\left(\bigcup_{j=1}^t e'_j\right)$. In general $\sigma''_r$ is not a train track as there may be bigons among the complementary regions of $S\setminus \sigma''_r$. Monogons, nullgons and 1-punctured nullgons are to be excluded instead, as they would require either $a''_i$ to bound a bigon with $\sigma'_r$, or $\sigma''_r$ to have more switches than $\sigma'_r$.

Define $\delta'_r$ to be a maximal subtrack of $\sigma''_r$ which includes $\sigma'_r$ and is a train track. Practically speaking, define $\delta'_r$ by deleting any branch in $\br(\sigma''_r)\setminus\br(\sigma'_r)$ which bounds a bigon together with a train path along $\sigma'_r$; and furthermore, every time there are two branches in $\br(\sigma''_r)\setminus\br(\sigma'_r)$ surviving after this operation, and bounding a bigon together, delete one of them. As each of these operations affects only one of the regions $\{Q_i\}$, and any innermost bigon has to be entirely contained in one of these regions, the suggested procedure is able to remove the extra branches without leaving any bigon in $\delta'_r$. 

The curve $\gamma$ is carried by $\delta'_r$ and is again a twist curve there.

We consider a tie neighbourhood for $\delta''_r$ built with the following constraints: $\bar\nei(\delta''_r)$ and $\bar\nei(\sigma'_r)$ induce one same tie neighbourhood $\nei(\sigma_r)$ for their common subtrack $\sigma_r$; each transverse intersection point between $\delta''_r$ and $\sigma'_r$ is contained in a connected component of $\bar\nei(\delta''_r)\cap\bar\nei(\sigma'_r)$ which is diffeomorphic to a square, and the tie foliations given by the two tie neighbourhoods are each parallel to one of the two pairs of opposite edges of the square.

From this, we build a tie neighbourhood for $\sigma''_r$ too, with the property that $\bar\nei(\sigma''_r)\subseteq \bar\nei(\sigma'_r.\gamma)\cup \bar\nei(\delta''_r)$; $\bar\nei(\sigma''_r)\setminus \bar\nei(\sigma'_r.\gamma)= \bar\nei(\delta''_r)\setminus \bar\nei(\sigma'_r.\gamma)$; and all ties in each rectangle $R_b$, for $b$ a branch of $\sigma''_r$ which is part of $\sigma''_r.\gamma$, are sub-ties of $R_{b'}$, for $b'\in\br(\sigma'_r)$ a branch contained in $\sigma'_r.\gamma$ ($b,b'$ are actually the same branch, regarded as part of the two different almost tracks). This means that the branch end rectangles $R_{e'_j}$ have their image contained in $\bar\nei(\sigma'_r.\gamma)$, albeit with a new specification for their ties. This definition of tie neighbourhood induces a tie neighbourhood for $\delta'_r$, and again one for $\sigma_r$, which \emph{does not} coincide with the $\nei(\sigma_r)$ previously defined. An example of these constructions is given in Figure \ref{fig:diagonalnbhs}.

\begin{figure}
\centering{\includegraphics[width=.9\textwidth]{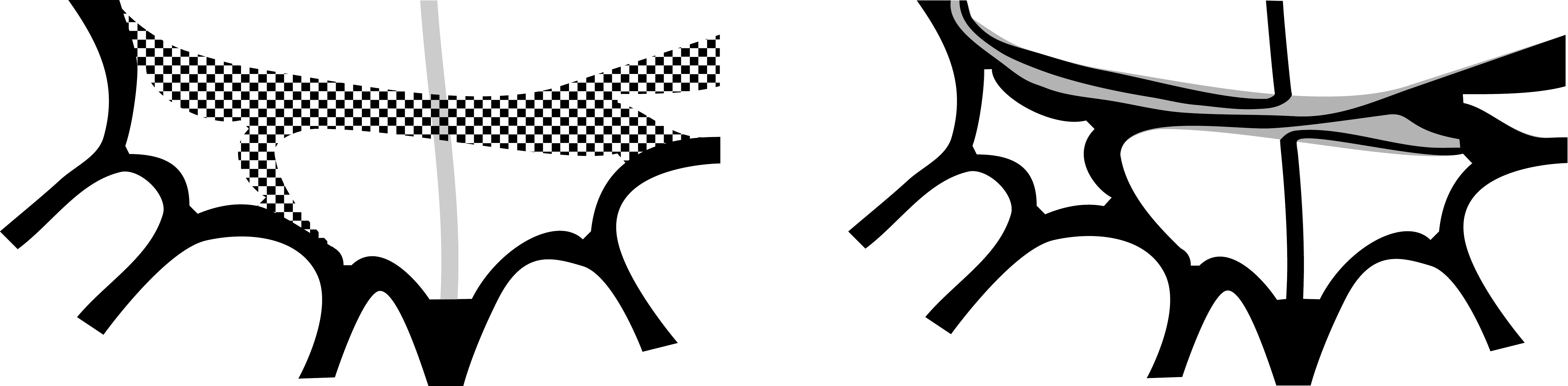}}
\caption{\label{fig:diagonalnbhs}A choice of tie neighbourhoods related to a portion of the previous Figure \ref{fig:diagextconstruction}. The picture on the left shows how $\nei(\sigma'_r)$ (chequerboard-coloured) and $\nei(\delta''_r)$ (grey) intersect and define on their common subtrack $\sigma_r$ the same tie neighbourhood $\nei(\sigma_r)$ (black). The picture on the right shows a choice of $\nei(\sigma''_r)$, contained in $\sigma'_r$ (painted in grey here) and coinciding with it outside $\bar\nei(\sigma'_r.\gamma)$.}
\end{figure}

We will now use, several times, the notation $\nei(\sigma'_r.\gamma)$ to mean specifically the tie neighbourhood induced by $\nei(\sigma'_r)$: it is necessary to clarify this, because $\sigma''_r.\gamma$ (or $\delta'_r.\gamma$), albeit coinciding with $\sigma'_r.\gamma$ setwise, inherits a different tie neighbourhood from $\sigma''_r$ (or $\delta'_r$). Similarly we will use $\nei(\sigma''_r.\sigma'_r)$, $\nei(\sigma''_r.\sigma_r)$ to mean the tie neighbourhoods inherited from $\sigma''_r$, as opposed to $\nei(\sigma'_r)$ the `native' tie neighbourhood and $\nei(\sigma_r)$ the tie neighbourhood that $\sigma_r$ inherits from $\sigma'_r$ or, equivalently, from $\delta''_r$. Note that not only $\bar\nei(\sigma''_r.\sigma'_r)\subseteq \bar\nei(\sigma'_r)$ but there is a diffeomorphism $f:\bar\nei(\sigma'_r)\rightarrow \bar\nei(\sigma''_r.\sigma'_r)$ sending each tie of $\bar\nei(\sigma'_r)$ to a subset of it.

For each $\beta_i^j$, let $\mathcal X_i^j$ be the connected component of $\bar\nei(\sigma'_r)\setminus \nei(\sigma_r)$ which intersects it. 

\step{3} $D_\gamma^\epsilon(\alpha_-)$ is carried by $\delta'_r$, and conclusion.

Let $\ul\alpha_-$ be a carried realization of $\alpha_-$ in $\bar\nei(\delta''_r)$ and $\ul\gamma$ be a carried realization of $\gamma$ in $\bar\nei(\sigma_r)$ (therefore it is one in $\bar\nei(\sigma'_r)$ and in $\bar\nei(\delta''_r)$, too), with the properties that: they intersect transversely, minimally among the pairs of curves in the respective homotopy classes; each of the connected components of $\ul\alpha_-\cap\bar\nei(\sigma'_r)$ is either of the following:
\begin{itemize}
\item transverse to all ties, with its endpoints on $\partial_v\bar\nei(\sigma'_r)$;
\item a single tie of $\bar\nei(\sigma'_r)$.
\end{itemize}
This property may be required because of the previously specified intersection pattern between $\bar\nei(\sigma'_r)$ and $\bar\nei(\delta''_r)$. Similarly, we require that each of the connected components of $\ul\gamma\cap\bar\nei(\delta''_r)$ is either:
\begin{itemize}
\item transverse to all ties of $\bar\nei(\delta''_r)$ (but this time its endpoints may lie on $\partial_h\bar\nei(\delta''_r)$, too);
\item a single tie of $\bar\nei(\delta''_r)$.
\end{itemize}

We are about to construct a carried realization of $\alpha_+\coloneqq D_\gamma^\epsilon(\alpha_-)$. What we will do, informally, is realize the Dehn twist on $\ul\alpha_-$ by bending each transverse intersection of $\alpha_-$ in $\bar\nei(\delta''_r)$ to follow the branches of $\sigma''_r$, which have been introduced as a bending of the branches of $\delta''_r$, specifically for this idea to work; and by making the carried portions of $\ul\alpha_-$ in $\sigma'_r$ wind once more about $\sigma'_r.\gamma$. Once a carried realization of $D_\gamma^\epsilon(\alpha_-)$ in $\sigma''_r$ is proved to exist, it will be clear that there exists one in $\delta'_r$, too.

Let $\mathcal T$ be a (narrow) regular neighbourhood of $\ul\gamma$ in $\bar\nei(\sigma'_r.\gamma)$, and let $D_{\mathcal T}$ be a concrete realization of the Dehn twist about $\ul\gamma$, chosen to be the identity map outside $\mathcal T$.

Let $\Xi\coloneqq \ul\alpha_-\cap\bar\nei(\sigma'_r.\gamma)$. It is legitimate to suppose that there is a bijection between the connected components of $\Xi\cap \mathcal T$ and the points of $\Xi\cap\ul\gamma$.

Recall that $\Lambda$ is the connected component of $\partial\bar A_\gamma$ which is also a connected component of $\partial\bar\nei(\sigma'_r.\gamma)$. Given a connected component $\xi$ of $\Xi$, we have that $\xi\cap \ul\gamma\not=\emptyset$ if and only if $\xi$ has one endpoint along $\Lambda$, and the other endpoint along some other component of $\partial\bar\nei(\sigma'_r.\gamma)$. If this is not the case, any intersection point between $\ul\gamma$ and $\ul\alpha_-$, located along $\xi$, could be deleted by moving $\ul\gamma$ via isotopies, and this would reduce their total amount, a contradiction.

So, if a connected component $\xi$ intersects $\ul\gamma$ and is not a tie, orient it from its endpoint along $\Lambda$ towards the other one: then we have that $\xi$ traverses the ties according to the $A_\gamma$-orientation. This is the case because, at the point where $\xi$ begins, either $\ul\alpha_-$ is entering $\bar\nei(\sigma'_r.\gamma)$ by traversing a branch end of $\sigma'_r$ which hits $A_\gamma$; or $\ul\alpha_-$ has just entered $\bar\nei(\sigma'_r)$ via a component of $\partial_v\bar\nei(\sigma'_r)$ which has an endpoint along $\Lambda$.

Define $\Xi'=D_{\mathcal T}^\epsilon(\Xi)$: then the curve $\ul\alpha_+\coloneqq (\ul\alpha_-\setminus\Xi)\cup\Xi'$ is a loop in $S$ in the isotopy class of $\alpha_+$. Moreover $\ul\alpha_+\setminus \nei(\sigma'_r.\gamma)$ is transverse to all ties of $\bar\nei(\delta''_r)$, and therefore of $\bar\nei(\sigma''_r)$, it encounters. We wish to isotope $\Xi'$ so that it is entirely contained in $\bar\nei(\sigma''_r)\cap\bar\nei(\sigma'_r.\gamma)$, and is transverse to the ties of $\bar\nei(\sigma''_r)$. After the end of this modification, the resulting $\ul\alpha_+$ is a carried realization of $\alpha_+$ in $\bar\nei(\sigma''_r)$.

Regardless of what are the properties of the single connected components $\Xi'$, after a small perturbation leaving fixed the endpoints of each connected component and not altering the intersection pattern, is transverse to all ties of $\bar\nei(\sigma'_r.\gamma)$. Moreover, given any component of $\Xi'$ with an endpoint along $\Lambda$, if one orients it from this endpoint towards the other one, then one actually gets the $A_\gamma$-orientation on it.

For each $\mathcal X_i^j$, each connected component $\zeta$ of $\Xi'\cap \ol{\mathcal X}_i^j$ falls into one of the two following cases:
\begin{itemize}
\item $\zeta$ has one endpoint along $\partial_h\bar\nei(\sigma'_r)$. In this case $\zeta$ is part of $D_{\mathcal T}^\epsilon(\xi)$, for $\xi$ a connected component of $\Xi$ which is a tie of $\bar\nei(\sigma'_r.\gamma)$. The other endpoint of $\zeta$ lies necessarily along $\partial\bar\nei(\sigma_r)$. Furthermore, there is one of the branch ends $e'_u$ defined in Step 2 such that the first endpoint of $\zeta$ lies actually along the tie $R_{e'_u}(\{0\}\times[-1,1])$ of $\bar\nei(\sigma''_r)$, and such that $R_{e'_u}$ intersects the same connected component of $\ol{\mathcal X}_i^j\cap \partial\bar\nei(\sigma_r)$ on which the second endpoint of $\zeta$ lies. Informally, $\zeta$ traverses, in the same order, the ties of $\bar\nei(\sigma'.\gamma)$ that $e'_u$ traverses.
\item both endpoints of $\zeta$ are away from $\partial_h\bar\nei(\sigma'_r)$. This includes the case of $\zeta$ being part of $D_{\mathcal T}^\epsilon(\xi)$, for $\xi$ a connected component of $\Xi$ which does not intersect $\ul\gamma$ --- so that, actually, $D_{\mathcal T}^\epsilon(\xi)=\xi$.
\end{itemize}

Recall the diffeomorphism $f:\bar\nei(\sigma'_r)\rightarrow \bar\nei(\sigma''_r.\sigma'_r)$ defined above, and define $\Xi'_f$ as follows. Given any connected component $\xi'$ of $\Xi'$, attach to each extreme point of $f(\xi')$ not lying along $\partial_h\bar\nei(\sigma'_r.\gamma)$ a segment of tie in $\bar\nei(\sigma'_r.\gamma)$, so as to obtain a path in $\bar\nei(\sigma'_r.\gamma)$ with both endpoints lying along $\partial\bar\nei(\sigma'_r.\gamma)$. Let then $\Xi'_f$ be the union of all paths thus obtained: $(\ul\alpha_-\setminus\Xi)\cup\Xi'_f$ is again a representative of $\alpha_+$. Now the components $\zeta$ of $\Xi'_f\cap \ol{\mathcal X}_i^j$ behave again as in one of the bullets above, but the ones in the second bullet are entirely contained in $\bar\nei(\sigma''_r)$, and the ones in the first bullet may be isotoped, leaving their endpoints fixed, so that they end up entirely contained in the relevant $R_{e'_u}$, transversely to its ties.

This gives the announced, desired realization of $\Xi'$; and we have proved that $\alpha_+$ is carried by $\sigma''_r$ and, as it was anticipated above, this is enough to say that it is carried by $\delta'_r$, too: if $\ul\alpha_+$ traverses any branch of $\sigma''_r$ which is not found in $\delta'_r$, there is another branch or union of branches with the same endpoints, which is is subset of $\delta'_r$ instead: so the carried realization of $\alpha_+$ may be adjusted to traverse this other branch instead.

Note that $\delta'_r$ is recurrent, as each branch is traversed by either $\alpha_+$ or by an element of $\cc(\sigma'_r)$. Let $\eta$ be a generic almost track which is comb equivalent to $\delta'_r$: then $\alpha_+\in\cc(\eta)$ and there is a generic splitting sequence of twist nature which turns $\eta$ into $D_\gamma^{\epsilon(m-1)}(\eta)$; its rotation number is $m-1$ (see Lemma \ref{lem:functiongivestwist}, and point \ref{itm:rotofdehn} of Remark \ref{rmk:rotbasics}). Note that, by definition, $\alpha=D_\gamma^{\epsilon(m-1)}(\alpha_+)\in\cc\left(D_\gamma^{\epsilon(m-1)}(\eta)\right)$: so, by the first statement of the present lemma, $\alpha$ traverses each branch in $\eta.\gamma$ at least $m-1$ times. This property is not affected by comb equivalences: so the same is true for the carrying image of $\alpha$ in $\delta'_r$.
\end{proof}

Given an almost track $\rho$ (in $S^X$, say), we give a generalized version of the definitions specified in \S \ref{sub:diagext}. If $\alpha_1,\ldots,\alpha_s\in\cc(\rho)$ and $k\geq 1$, denote:
$$
\cc_k(\rho;\alpha_1,\ldots,\alpha_s)\coloneqq\left\{\alpha\in \cc(\rho)\,\left|\,\parbox{.5\textwidth}{$\alpha$ traverses at least $k$ times each branch contained in one of the $\rho.\alpha_j$}\right.\right\}.
$$
Denote subsequently $\ce_k(\rho;\alpha_1,\ldots,\alpha_s) \coloneqq \bigcup_{\delta\in \e(\rho)}  \cc_k(\delta;\alpha_1,\ldots,\alpha_s)$; and\linebreak $\cf_k(\rho;\alpha_1,\ldots,\alpha_s)\coloneqq \bigcup_\omega \ce_k(\omega;\alpha_1,\ldots,\alpha_s)$, where the union is performed over all $\omega$ subtracks of $\rho$ which fill $S^X$, and include $\rho.\alpha_j$ for all $j$.

\begin{lemma}\label{lem:tcboundsimplest}
In the setting of Proposition \ref{prp:tcbound}, suppose a subsequence $(\gamma_{t_j})_{j=1}^r$ of the sequence of curves $(\gamma_j)$ fills a subsurface $X\subset S$ which is homeomorphic to $S_{0,4}$ or $S_{1,1}$.

Fix $a_-\leq\min DI_{t_1}$, and $a_+\geq\max DI_{t_r}$ such that $V(\tau_{a_+}|X)\not=\emptyset$. Then, for any $\alpha_-\in V(\tau_{a_-}|X), \alpha_+\in V(\tau_{a_+}|X)$,
$$
d_X\left(\alpha_-,\alpha_+\right)\geq \lfloor (r-1)/3\rfloor.
$$
\end{lemma}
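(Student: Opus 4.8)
The idea is to exhibit a long chain, in the sense of Lemma~\ref{lem:chainbound}, inside the interval $[a_-,a_+]$, built from the effective twist curves $\gamma_{t_1},\ldots,\gamma_{t_r}$, and then convert the length of that chain into a lower bound on the annular (in fact $S_{0,4}$ or $S_{1,1}$) curve complex distance via the Dehn-twist count packaged in Lemma~\ref{lem:weightsaftertwist}. Since $X$ is $S_{0,4}$ or $S_{1,1}$, $\cc(X)$ is the Farey graph and, by the observations preceding Lemma~\ref{lem:vertexnotinterior}, every recurrent train track that fills $X$ has $\e(\tau)=\f(\tau)=\{\tau\}$; this should let me dispense with the subtleties of diagonal extensions and work directly with the induced tracks $\rho_j\coloneqq\tau_j|X$ and their carried curves.

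First I would split the ordered list $\gamma_{t_1},\ldots,\gamma_{t_r}$ into consecutive blocks of three, say $s\coloneqq\lfloor (r-1)/3\rfloor$ full blocks (discarding the at most three leftover curves at the end), and show that in each block there is one curve $\delta_i$ whose Dehn interval $DI$ is ``used up'' between two consecutive vertices of the quasi-geodesic, giving a genuine $+1$ to $d_X(\alpha_-,\alpha_+)$. More precisely: by $(\gamma_1,\ldots,\gamma_q)$-arrangedness the intervals $DI_{t_j}$ are pairwise disjoint up to endpoints and ordered, with $\rot_{\bm\tau}(\gamma_{t_j};DI_{t_j})\geq 2\mathsf K_0+4\geq 2$, and each is arranged into Dehn twists with no remainder. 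I would pick, within each block $\{\gamma_{t_{3i-2}},\gamma_{t_{3i-1}},\gamma_{t_{3i}}\}$, the middle one $\delta_i\coloneqq\gamma_{t_{3i-1}}$ and set $k_i\coloneqq\min DI_{\delta_i}$, $l_i\coloneqq\max DI_{\delta_i}$, so that $a_-\le k_1< l_1\le k_2<\cdots< l_s\le a_+$. The point of the block-of-three device is exactly the $\lfloor(r-1)/3\rfloor$ in the statement: using middle curves guarantees a buffer curve on either side, so the subsurfaces filled by the various $V(\tau_j|X)$ genuinely change between blocks and I avoid double-counting.

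The core estimate is then: for each $i$, a curve in $V(\tau_{l_i}|X)$ that intersects $\delta_i$ essentially must, by the first statement of Lemma~\ref{lem:weightsaftertwist}, traverse every branch of $\rho_{k_i}.\delta_i$ at least $m_i\coloneqq\rot_{\bm\tau}(\delta_i;k_i,l_i)\ge 2\mathsf K_0+4$ times; translating weights into Farey-distance via Lemma~\ref{lem:annulus_distance} (as in the proof of the simplified Nesting Lemma, comparing the ``weights'' $b_1,b_2$ that a carried curve assigns to the two distinguished branches), one gets that any $\beta\in V(\tau_{l_i}|X)$ and any $\beta'\in V(\tau_{k_i}|X)$ satisfy $d_X(\beta,\beta')\ge 1$, and moreover that the projections $\pi_X$ of $V(\tau_{k_i})$ and $V(\tau_{l_i})$ are at distance $\ge 1$. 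Since $l_i\le k_{i+1}$ and $\pi_X V(\tau_{k_{i+1}})$ lies at distance $\le\text{(bounded)}$ from $\pi_X V(\tau_{l_i})$ --- here I would use Theorem~\ref{thm:mmsstructure}(1) on the complementary interval, noting that between $l_i$ and $k_{i+1}$ the accessible interval of $\delta_i$ has been left --- I can assemble these $\ge 1$ increments along the geodesic. The clean way to run this assembly is to invoke Theorem~\ref{thm:mms_cc_geodicity}: $(\pi_X V(\tau_j))_j$ is a $Q$-unparametrized quasi-geodesic in $\cc(X)$, hence by the reverse triangle inequality (Lemma~\ref{lem:reversetriangle}) the per-block increments add, up to a controlled error; so $d_X(\alpha_-,\alpha_+)\gtrsim s$, and with the block-of-three slack absorbing the additive constants and the buffer curves, one gets exactly $d_X(\alpha_-,\alpha_+)\ge s=\lfloor(r-1)/3\rfloor$.

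The main obstacle I anticipate is bookkeeping the passage from ``$m_i$ is large'' to ``$d_X$ increases by at least $1$'' cleanly enough that the additive errors --- from $\mathsf K_0$ in Theorem~\ref{thm:mmsstructure}(1), from $Q$ and $\mathsf R_0$ in the quasi-geodesic argument, and from the $+3,+4$ slippage in Remark~\ref{rmk:rotbasics} points \ref{itm:rot_vs_dist} and \ref{itm:concatrot_above} --- do not eat into the count. Spending three curves per unit rather than one is precisely what buys this room: each $m_i\ge 2\mathsf K_0+4$ dominates the error $\mathsf K_0$ incurred over the neighbouring Dehn intervals and complementary stretches, so the net contribution of each block stays $\ge 1$. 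A secondary point to be careful about is that I need $V(\tau_{l_i}|X)\ne\emptyset$ and a curve there that actually intersects $\delta_i$; this follows because $V(\tau_{l_i}|X)$ fills $X$ (the $\gamma_{t_j}$ fill $X$, so by the remark after Lemma~\ref{lem:decreasingfilling} and $DI_{t_j}\subseteq[k,l]$ the induced vertex sets keep filling $X$ throughout $[k,l]$), hence some vertex cycle of $\tau_{l_i}|X$ must cross $\delta_i$. Once these two points are nailed down the rest is the routine Farey-graph / weight comparison already used in the Nesting Lemma proof above.
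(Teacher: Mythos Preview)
Your approach has a genuine gap in the ``assembly'' step. You obtain a per-block increment of $\geq 1$ in $d_X$, then invoke the reverse triangle inequality (Lemma~\ref{lem:reversetriangle}) along the $Q$-unparametrized quasi-geodesic to sum the $s=\lfloor(r-1)/3\rfloor$ increments. But applying that inequality $s-1$ times gives $d_X(\alpha_-,\alpha_+)\geq\sum_{i}d_X(k_i,l_i)-(s-1)\mathsf R_0\geq s-(s-1)\mathsf R_0$, which is useless once $\mathsf R_0>1$. Your absorption argument (``each $m_i\ge 2\mathsf K_0+4$ dominates the error'') conflates two different quantities: $m_i$ controls the \emph{annular} distance $d_{\nei(\delta_i)}$, while the per-block jump you actually extract in $\cc(X)$ is only $\geq 1$. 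Large $m_i$ does not translate into a large Farey-graph jump (a curve with huge weights on one branch and weight $1$ on the other is still adjacent to a vertex cycle), so you cannot boost the per-block contribution to beat $\mathsf R_0$. There is also no stated relation between $\mathsf K_0$ and $\mathsf R_0$ that would let one dominate the other.

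The paper's proof avoids metric summation entirely. It works set-theoretically, using the nesting machinery of \S\ref{sub:diagext}: for $4\le j\le r$, one shows $\cf(\rho_{a_j})\subseteq\cf_3(\rho_{a_{j-3}})$ by tracking weights through Lemma~\ref{lem:weightsaftertwist} applied to three successive twist curves (since any two distinct $\gamma_t$'s fill $X$), and then Lemma~\ref{lem:ccnesting} gives the \emph{exact} inclusion $\nei_1\bigl(\cf(\rho_{a_j})\bigr)\subseteq\cf(\rho_{a_{j-3}})$. Iterating $\hat r=\lfloor(r-1)/3\rfloor$ times yields $\nei_{\hat r-1}\bigl(\cf(\rho_{a_{3\hat r+1}})\bigr)\subseteq\cf_1(\rho_{a_-})$; then $d_X(\alpha_-,\alpha_+)<\hat r$ would force $\alpha_-\in\cf_1(\rho_{a_-})$, contradicting Lemma~\ref{lem:vertexnotinterior}. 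This is why the bound $\lfloor(r-1)/3\rfloor$ comes out with no additive error: each nesting step contributes exactly one unit of neighbourhood radius, with no quasi-isometry constants involved.
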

\begin{proof}
This proof is based on the ideas used to prove Theorem 1.3 in \cite{masurminskyq} (stated as Theorem \ref{thm:mm_cc_geodicity} in the present work). We will use the notation $\rho_j=\tau_j|X$ and, in order to avoid introducing further notation, we denote our $\gamma_{t_j}$'s simply as $\gamma_j$'s, forgetting about the other effective twist curves. We do the same with the notation for the intervals $DI_j$ and similar ones. For each $j$, let $m_j\coloneqq \rot(\gamma_j;DI_j)$. Also, for $1\leq j\leq r-1$ set $a_j\coloneqq\min DI_j$ and, asymmetrically, set also $a_r\coloneqq\max DI_{r-1}$.

Note that each pair of distinct curves $\gamma_j,\gamma_{j'}$ intersect and fill $X$ --- and $S^X$ --- because $X$ is a 4-holed sphere or a 1-holed torus. So, for all $1\leq j\leq r$, $\rho_{a_j}$ fills $S^X$ as it carries two of these curves.

Suppose $3\leq j \leq r$. Then $a_{j-2}<\max DI_{j-2}\leq a_{j-1}<\max DI_{j-1}\leq a_j$. Lemma \ref{lem:weightsaftertwist}, applied to $\bm\tau(a_{j-2},a_j)$ with respect to the twist curve $\gamma_{j-1}$, yields that $\gamma_j\in \cc(\rho_{\alpha_j})$ traverses at least $2\mathsf{K}_0+4$ times each branch of $\rho_{a_{j-2}}$ contained in $\rho_{a_{j-2}}.\gamma_{j-1}$. The same lemma, applied with respect to the twist curve $\gamma_{j-2}$, yields also that $\gamma_j$ traverses at least $2\mathsf{K}_0+4$ times each branch of $\rho_{a_{j-2}}$ contained in $\rho_{a_{j-2}}.\gamma_{j-2}$.

Let now $4 \leq j \leq r$, and fix $\alpha\in \cf(\rho_{a_j})$: we want to show that $\alpha\in \cf_3(\rho_{a_{j-3}})$. If $\alpha=\gamma_{j-1}$ a completely similar argument to the one just applied, considered for the sequence $\bm\tau(a_{j-3}, a_j)$ with respect to the two twist curves $\gamma_{j-3},\gamma_{j-2}$, yields $\gamma_{j-1}\in \cc_3(\rho_{a_{j-3}}; \gamma_{j-2},\gamma_{j-3})\subset\cf_3(\rho_{a_{j-3}})$. (Actually we might as well conclude that $\gamma_{j-1}\in \cc_{2\mathsf{K}_0+4}(\rho_{a_{j-3}}; \gamma_{j-2},\gamma_{j-3})$, but we do not need it; also in the following inclusions, we will only care about branches being traversed thrice.) The last inclusion is due to the fact that, as $\gamma_{j-2},\gamma_{j-3}$ fill $S^X$, also $\rho_{a_{j-3}}.\gamma_{j-2}\cup \rho_{a_{j-3}}.\gamma_{j-3}$ is an almost track filling $S^X$, and $\cc_3(\rho_{a_{j-3}}; \gamma_{j-2},\gamma_{j-3})\subseteq \ce_3(\rho_{a_{j-3}}.\gamma_{j-2}\cup \rho_{a_{j-3}}.\gamma_{j-3})$. 

If $\alpha$ is any other curve, it will intersect $\gamma_{j-1}$. The following chain of inclusions holds:
\begin{itemize}
\item $\alpha\in \cf_3(\rho_{a_{j-1}};\gamma_{j-1})$, by Lemma \ref{lem:weightsaftertwist} applied to the sequence $\bm\tau(a_{j-1},a_j)$ with respect to the twist curve $\gamma_{j-1}$.
\item $\cf_3(\rho_{a_{j-1}};\gamma_{j-1})\subset \cf_3(\rho_{a_{j-3}};\gamma_{j-1})$, because of Lemma \ref{lem:cf_decreasing}: first of all, clearly $\rho_{a_{j-1}}$ is carried by $\rho_{a_{j-3}}$ and, as noted above, both almost tracks fill $S^X$. If a curve $\beta$ is carried by $\delta'$, diagonal extension of $\sigma'$ which is a subtrack of $\rho_{a_{j-1}}$ which fills $S^X$ and contains $\rho_{a_{j-1}}.\gamma_{j-1}$, and $\beta$ traverses all branches in $\sigma'.\gamma_{j-1}$ at least thrice, then consider the almost tracks $\delta$ and $\sigma$ given by the last statement of said Lemma, with $\sigma$ a subtrack of $\rho_{a_{j-3}}$: necessarily $\sigma\supset \rho_{a_{j-3}}.\gamma_{j-1}$ and, by composition of carrying maps, $\beta$ in $\delta$ traverses each branch in $\rho_{a_{j-3}}.\gamma_{j-1}$ at least thrice.
\item $\cf_3(\rho_{a_{j-3}};\gamma_{j-1}) \subset\cf_3(\rho_{a_{j-3}};\gamma_{j-1},\gamma_{j-2},\gamma_{j-3})\subset \cf_3(\rho_{a_{j-3}})$: the first of these two inclusions is just due to the argument above (with translated indices) that $\rho_{a_{j-3}}.\gamma_{j-1}$ will traverse all branches carrying $\gamma_{j-2},\gamma_{j-3}$. The second one is due to the fact that $\cf_3(\rho_{a_{j-3}};\gamma_{j-1},\gamma_{j-2},\gamma_{j-3})=\ce_3(\rho_{a_{j-3}}.\gamma_{j-1}\cup \rho_{a_{j-3}}.\gamma_{j-2}\cup \rho_{a_{j-3}}.\gamma_{j-3})$.
\end{itemize}

Lemma \ref{lem:ccnesting}, together with this chain of inclusions, yields that $\nei_1(\cf(\rho_{a_j}))\subseteq \nei_1(\cf_3(\rho_{a_{j-3}}))\subseteq \cf(\rho_{a_{j-3}})$. And, nesting these last found inclusions for different values of $j$, for any pair of indices $1\leq i<i'\leq r$ such that $3|(i'-i)$ we get
$$
\mathcal N_{(i'-i)/3}\left(\cf(\rho_{a_{i'}})\right)\subseteq \cf(\rho_{a_i}).
$$

Denote $\hat r\coloneqq \lfloor(r-1)/3\rfloor$. If, for any $\alpha_+\in V(\rho_{a+})\subseteq \cf(\rho_{a_{3\hat r+1}}),\alpha_-\in V(\rho_{a-})$, we have $d_X(\alpha_-,\alpha_+)< \hat r$, then
$$\alpha_- \in \nei_{\hat r - 1}(\cf(\rho_{a_{3\hat r+1}}))\subseteq \cf(\rho_{a_4}) \subseteq \cf_3(\rho_{a_1}),$$
as a consequence of the inclusions proved above. Also, $\cf_3(\rho_{a_1})\subseteq \cf_3(\rho_{a_-})$, by an argument entirely similar to the inclusion shown in the second bullet above.

But $\alpha_- \in \cf_3(\rho_{a_-})$ cannot be true because of Lemma \ref{lem:vertexnotinterior}. So $d_X(\alpha_-,\alpha_+)\geq \hat r$, and this proves the claim.
\end{proof}

\begin{lemma}\label{lem:subsurfacesdontrepeat}
Let $\bm\tau=(\tau_j)_{j=0}^N$ be a splitting sequence of generic, recurrent almost tracks on a surface $S$, such that $\bm\tau(k,l)$ has twist nature about a curve $\gamma$, with $\rot_{\bm\tau}(\gamma;k,l)\geq 2\mathsf{K}_0+4$. Let $Y\subset S$ be a subsurface with $\gamma$ essentially intersecting $\partial Y$.

Let $0\leq a_-\leq k<l\leq a_+\leq N$, and let $\alpha_-\in W(\tau_{a-}),\alpha_+\in W(\tau_{a+})$ with the properties that
\begin{itemize}
\item $\alpha_-\subset Y$ (when choosing appropriate representatives in their isotopy classes);
\item $\alpha_+\cap Y\not=\emptyset$ (however the representative of $\alpha_+$ is chosen);
\item both $\alpha_-,\alpha_+$ intersect $\gamma$ essentially.
\end{itemize}

Then $\alpha_+$ intersects $\partial Y$ essentially.
\end{lemma}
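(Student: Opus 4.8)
The plan is to argue by contradiction inside the annular curve complex of $\gamma$. Set $X\coloneqq\nei(\gamma)$. Suppose $\alpha_+$ does \emph{not} intersect $\partial Y$ essentially. Since $\alpha_+\cap Y\neq\emptyset$ for every representative and since $\alpha_+$ cannot be isotopic to a component of $\partial Y$ (as $\partial Y$ crosses $\gamma$ while a boundary-parallel curve could be pushed off $Y$), this forces $\alpha_+$ to admit a representative contained in $\inte(Y)$; in particular $i(\alpha_+,\partial Y)=0$. Now both $\alpha_-$ and $\alpha_+$ have representatives disjoint from $\partial Y$, so their full preimages in $S^X$ are disjoint from that of $\partial Y$. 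As $\alpha_-$, $\alpha_+$ and $\partial Y$ all cross $\gamma$ essentially, the projections $\pi_X(\alpha_-),\pi_X(\alpha_+),\pi_X(\partial Y)$ are nonempty and (being built from mutually disjoint arcs, each projection of diameter $\le 1$, cf.\ the remark after Definition \ref{def:subsurfaceproj}) satisfy $d_X(\alpha_-,\partial Y)\le 1$ and $d_X(\alpha_+,\partial Y)\le 1$; hence $d_X(\alpha_-,\alpha_+)\le 2$.

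The heart of the matter is a matching lower bound $d_X(\alpha_-,\alpha_+)\ge m-\mathsf{K}_0$, which contradicts the above since $m\coloneqq\rot_{\bm\tau}(\gamma;k,l)\ge 2\mathsf{K}_0+4$. First, $\alpha_-\in W(\tau_{a_-})$ and $\alpha_+\in W(\tau_{a_+})$ give, by Lemma \ref{lem:induction_vertices_commute}(2), $\pi_X(\alpha_-)\subseteq V(\tau_{a_-}^X)$ and $\pi_X(\alpha_+)\subseteq V(\tau_{a_+}^X)$; fix $\beta_-\in\pi_X(\alpha_-)$ and $\beta_+\in\pi_X(\alpha_+)$. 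Since carried sets shrink along a splitting sequence (Remark \ref{rmk:decreasingmeasures}), $\beta_+\in\cc(\tau_l^X)\subseteq\cc(\tau_{a_-}^X)$. Let $I_X$ be the accessible interval of $X$; by Lemma \ref{lem:twistcurvebasics} it is an interval, and it contains $[k,l]$, while $\gamma$ is carried by $\tau_j$ for every $j\le k$. Put $b_-\coloneqq\max(a_-,\min I_X)\le k$, so that $\gamma$ is a twist curve for $\tau_{b_-}$ and an $A_\gamma$-family of twist collars extends over $[b_-,l]$; superadditivity of the rotation number (Remark \ref{rmk:rotbasics}\ref{itm:concatrot_below}, \ref{itm:rotwithvertexonly}) gives $\rot_{\bm\tau}(\gamma;b_-,l)\ge\rot_{\bm\tau}(\gamma;k,l)=m$. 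Using the description of the rotation number through vertex cycles (Remark \ref{rmk:rotbasics}\ref{itm:rotwithvertexonly}) together with the behaviour of horizontal length under Dehn twisting (Lemma \ref{lem:onerollingdirection}), every arc of $\cc(\tau_l^X)$ — in particular $\beta_+$ — has $\hl(\tau_{b_-},\beta_+)>2\pi m$, so $\beta_+\in D_X^{\epsilon j}\cdot V(\tau_{b_-}^X)$ for some $j\ge m$, where $\epsilon$ is the sign of $\gamma$. By the second bullet of Lemma \ref{lem:onerollingdirection} and Lemma \ref{lem:annulus_distance}, $d_{\cc(X)}(\xi,\beta_+)\ge m$ for every $\xi\in V(\tau_{b_-}^X)$. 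If $a_-=b_-$ this already yields $d_{\cc(X)}(\beta_-,\beta_+)\ge m$; if $a_-<b_-=\min I_X$, then $[a_-,b_-]$ meets $I_X$ only at its right endpoint, so Theorem \ref{thm:mmsstructure}(1) gives $d_{\cc(X)}(V(\tau_{a_-}^X),V(\tau_{b_-}^X))\le\mathsf{K}_0$ and hence $d_{\cc(X)}(\beta_-,\beta_+)\ge m-\mathsf{K}_0$. Since $\beta_\pm\in\pi_X(\alpha_\pm)$, this bounds $d_X(\alpha_-,\alpha_+)\ge d_{\cc(X)}(\beta_-,\beta_+)\ge m-\mathsf{K}_0\ge\mathsf{K}_0+4>2$, the desired contradiction; therefore $\alpha_+$ intersects $\partial Y$ essentially.

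\textbf{Expected main obstacle.} The delicate point is the bookkeeping around the accessible interval $I_X$: one must make sure that horizontal lengths and the one-directionality of Lemma \ref{lem:onerollingdirection} are applied at a track for which $\gamma$ is genuinely a twist curve — whence the replacement of $a_-$ by $b_-$ — and, when $a_-<\min I_X$, the step across $[a_-,b_-]$ invokes Theorem \ref{thm:mmsstructure}(1), which is stated for cornered birecurrent tracks (a hypothesis present wherever this lemma is used, e.g.\ in Propositions \ref{prp:tcbound} and \ref{prp:rearrang2}); alternatively one can argue this step directly from Lemma \ref{lem:weightsaftertwist} (first statement, taking the ambient surface for its ``$X$''), at the cost of a careful translation of ``$\alpha_+$ traverses each branch of $\tau_{a_-}.\gamma$ at least $m$ times'' into the bound $\hl(\tau_{b_-},\beta_+)>2\pi m$ via point \ref{itm:hl_vs_multiplicity} after Definition \ref{def:horizontallength} and Remark \ref{rmk:annulusinducedbasics}\ref{itm:embeddedcore}. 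Apart from this, every estimate is routine, and the numerical gap between the hypothesis $m\ge 2\mathsf{K}_0+4$ and the required strict inequality $d_X(\alpha_-,\alpha_+)>2$ is ample.
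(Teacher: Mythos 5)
Your proof is correct and takes essentially the same route as the paper's: both reduce to a lower bound on distance in $\cc(X)$ ($X=\nei(\gamma)$) coming from $\rot_{\bm\tau}(\gamma;k,l)\ge 2\mathsf K_0+4$, combined with the fact that disjoint curves project at distance $\le 1$ in an annulus, the only cosmetic differences being that you argue by contradiction and bound $d_X(\alpha_-,\alpha_+)$ rather than directly bounding $d_X(\partial Y,\alpha_+)$ as the paper does. One small bonus of your variant: by reading $\hl(\tau_{b_-},\beta_+)$ at a single stage you skip the paper's $c_+=\min\{\max I_\gamma,a_+\}$ step and the extra $\mathsf K_0$ it costs, and you correctly flag that the appeal to Theorem \ref{thm:mmsstructure}(1) across $[a_-,b_-]$ needs the cornered/birecurrent hypothesis that is not literally in the lemma's statement but is present in every application — the paper's own proof has the same implicit reliance.
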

\begin{proof}
Let $X$ be a regular neighbourhood of $\gamma$, let $m\coloneqq \rot_{\bm\tau}(\gamma;k,l)$ and let $c_-=\max\{\min I_\gamma,a_-\}$, $c_+=\min\{\max I_\gamma,a_+\}$. Then $c_-\leq k < l \leq c_+$.

As $\alpha_-\in W(\tau_{a-})$, then also $\pi_X(\alpha_-)\subseteq V(\tau_{a_-}^X)$: if any arc $\pi_X(\alpha_-)$ traverses a branch of $\tau_{a_-}^X$ twice in the same direction (this is the only way it may not be wide: see point \ref{itm:windaboutgamma} in Remark \ref{rmk:annulusinducedbasics}), then also $\alpha_-$ traverses a branch of $\tau_{a_-}$ twice in the same direction. Similarly, $\pi_X(\alpha_+)\subseteq V(\tau_{a_+}^X)$.

From the statement 1 of Theorem \ref{thm:mmsstructure} we have $d_{\cc(X)}\left(\tau_{a_-}|X, \tau_{c_-}|X\right)\leq \mathsf{K}_0$ and $d_{\cc(X)}\left(V(\tau_{c_+}|X), \pi_X(\alpha_+)\right)\leq \mathsf{K}_0.$

Note that $\pi_X(\partial Y)\not=\emptyset$, and as $\partial Y,\alpha_-$ do not intersect, $d_X(\partial Y,\alpha_-)=1$. The triangle inequality holds for $d_{\cc(X)}$ even when its arguments are \emph{sets}, so\linebreak $d_{\cc(X)}\left(\pi_X(\partial Y), V(\tau_{c_-}|X)\right)\leq \mathsf{K}_0+1$. Subsequently,
\begin{eqnarray*}
 & d_X\left(\partial Y,\alpha_+\right) \geq d_{\cc(X)}\left(V(\tau_{c_-}|X),V(\tau_{c_+}|X)\right)- d_{\cc(X)}\left(\pi_X(\partial Y), V(\tau_{c_-}|X)\right) + & \\
 & - d_{\cc(X)}\left(V(\tau_{c_+}|X), \pi_X(\alpha_+)\right) \geq d_{\cc(X)}\left(V(\tau_{c_-}|X),V(\tau_{c_+}|X)\right)-2\mathsf{K}_0-1 & 
\end{eqnarray*}
again by the triangle inequality; and $d_{\cc(X)}\left(V(\tau_{c_-}|X),V(\tau_{c_+}|X)\right)\geq \rot_{\bm\tau}(\gamma;c_-,c_+)\geq m$ as seen in point \ref{itm:rot_vs_dist} of Remark \ref{rmk:rotbasics}.

Since $m\geq 2\mathsf{K}_0+4$, we have $d_X\left(\partial Y,\alpha_+\right)\geq 3$, meaning in particular that $\pi_X(\alpha_+)$ intersects $\pi_X(\partial Y)$. Thus also $\alpha_+$ intersects $\partial Y$ as required.
\end{proof}

\begin{coroll}\label{cor:subsurfacesdontrepeat}
In the setting of Proposition \ref{prp:tcbound}, let $1\leq t_1<t_2<t_3\leq q$ be three indices, and let $Y\subseteq S$ be a subsurface, with the following properties: $\gamma_{t_2}$ essentially intersects both $\gamma_{t_1},\gamma_{t_3}$; $\gamma_{t_1}\subset Y$, $\gamma_{t_2}$ essentially intersects $\partial Y$; and $\gamma_{t_3}$ cannot be realized disjointly from $Y$ as a curve on $S$.

Then $\gamma_{t_3}$ intersects $\partial Y$ essentially, too.
\end{coroll}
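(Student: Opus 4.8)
The plan is to reduce Corollary~\ref{cor:subsurfacesdontrepeat} to a direct application of Lemma~\ref{lem:subsurfacesdontrepeat} with the effective twist curve $\gamma_{t_2}$ playing the role of $\gamma$ there, and the Dehn interval $DI_{t_2}$ playing the role of $[k,l]$. By the definition of a $(\gamma_1,\ldots,\gamma_q)$-arranged splitting sequence (Definition~\ref{def:arranged}), $DI_{t_2}\subseteq I_{t_2}$ is a Dehn interval, so $\bm\tau(DI_{t_2})$ has twist nature about $\gamma_{t_2}$ and $\rot_{\bm\tau}(\gamma_{t_2};DI_{t_2})\geq 2\mathsf{K}_0+4$, which is exactly the standing hypothesis of Lemma~\ref{lem:subsurfacesdontrepeat}. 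Writing $k\coloneqq \min DI_{t_2}$, $l\coloneqq \max DI_{t_2}$, I would set $a_-\coloneqq 0$, $\alpha_-\coloneqq \gamma_{t_1}$, $a_+\coloneqq N$, $\alpha_+\coloneqq \gamma_{t_3}$ (or, to be safe, $a_+\coloneqq \max DI_{t_3}$ if one wants $\alpha_+\in W(\tau_{a_+})$ — see below), $Y$ the given subsurface, and then check the three bullet hypotheses of that lemma.

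First I would verify the membership conditions $\alpha_\pm\in W(\tau_{a_\pm})$. Since $\gamma_{t_1},\gamma_{t_3}\in\cc(\tau_0)$ and the sequence $(\max DI_s)_s$ is increasing with $t_1<t_2<t_3$, the relevant tracks carry these curves; more precisely $\gamma_{t_3}$ is a twist curve, hence in $W(\cdot)$, throughout its accessible interval $I_{t_3}$, and $a_+=\max DI_{t_3}\in I_{t_3}$, so $\gamma_{t_3}\in W(\tau_{a_+})$; and $\gamma_{t_1}\in\cc(\tau_0)=W$ at index $0$ is immediate once one notes a carried curve of a track with an embedded train path realization is wide, or more simply one takes $a_-=\min DI_{t_1}\le k$, where $\gamma_{t_1}$ is a twist curve hence wide. (I need $a_-\le k< l\le a_+$; since $\max DI_{t_1}\le \min DI_{t_2}=k$ and $\max DI_{t_2}=l\le \max DI_{t_3}=a_+$, this holds.) Next, the first bullet — $\alpha_-=\gamma_{t_1}\subset Y$ for appropriate representatives — is the hypothesis $\gamma_{t_1}\subset Y$ of the corollary verbatim. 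The second bullet — $\alpha_+=\gamma_{t_3}$ meets $Y$ for every representative — is exactly the corollary's hypothesis that $\gamma_{t_3}$ cannot be realized disjointly from $Y$. The third bullet — both $\gamma_{t_1},\gamma_{t_3}$ intersect $\gamma_{t_2}$ essentially — is again given in the corollary's hypotheses. Finally, $\partial Y$ is essentially intersected by $\gamma_{t_2}$, as required for $Y$ in Lemma~\ref{lem:subsurfacesdontrepeat}.

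With all hypotheses checked, Lemma~\ref{lem:subsurfacesdontrepeat} yields that $\alpha_+=\gamma_{t_3}$ intersects $\partial Y$ essentially, which is precisely the conclusion of the corollary.

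The only genuine point requiring a little care — and the step I expect to be the main (minor) obstacle — is the bookkeeping of which index to take for $a_+$ so that $\gamma_{t_3}\in W(\tau_{a_+})$ while simultaneously $a_+\ge l=\max DI_{t_2}$; this is where the hypothesis that the curves are listed with $(\max DI_s)_s$ increasing is used, guaranteeing $\max DI_{t_2}\le \max DI_{t_3}$ so that $a_+\coloneqq \max DI_{t_3}$ lies in $I_{t_3}$ and is $\ge l$. One should also double check that "$\gamma_{t_3}$ cannot be realized disjointly from $Y$ as a curve on $S$" is the same as "$\alpha_+\cap Y\ne\emptyset$ however the representative of $\alpha_+$ is chosen" in the lemma — these are literally the same statement. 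No deeper input is needed; the corollary is a straightforward specialization of the lemma to the Dehn interval of the middle curve.
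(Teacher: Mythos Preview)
Your proposal is correct and follows exactly the same approach as the paper: apply Lemma~\ref{lem:subsurfacesdontrepeat} with $\gamma=\gamma_{t_2}$, $[k,l]=DI_{t_2}$, $\alpha_-=\gamma_{t_1}$, $\alpha_+=\gamma_{t_3}$. The paper chooses $a_-=\max DI_{t_1}$ and $a_+=\min DI_{t_3}$, while you settle on $a_-=\min DI_{t_1}$ and $a_+=\max DI_{t_3}$; both choices lie in the respective accessible intervals $I_{t_1}$, $I_{t_3}$ (so the curves are twist, hence wide, there) and both satisfy $a_-\le k<l\le a_+$ thanks to the disjointness of Dehn intervals and the ordering $(\max DI_s)_s$, so the difference is immaterial.
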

This statement may be read as follows: once a chain subsequence breaks out of a given subsurface $Y$, no subsequent entry will enter it again.
\begin{proof}
Apply the previous lemma with $\gamma=\gamma_{t_2}$,  $[k,l]=DI_{t_2}$, $a_-=\max DI_{t_1}$, $a_+=\min DI_{t_3}$, $\alpha_-=\gamma_{t_1}$, $\alpha_+=\gamma_{t_3}$. We have $\rot_{\bm\tau}(\gamma_{t_2};DI_{t_2})\geq 2\mathsf{K}_0+4$ by definition of Dehn interval.
\end{proof}

Proposition \ref{prp:tcbound} will be proved employing an auxiliary statement, by induction on the complexity $\xi(X')$ of subsurfaces $X'\subseteq X\subseteq S$. Its proof will be based on the ideas used to prove Theorem 1.3 in \cite{masurminskyq}.

Fix any chain subsequence $\bm\delta=(\delta_1,\ldots,\delta_r)$ of $(\gamma_1,\ldots,\gamma_q)$; let $X'\subseteq S$ be the subsurface which is filled by this chain subsequence. We say that $X'$ is \nw{$0$-good} with respect to the given chain. Note that, as $\bm\delta$ is a chain, $X'$ is connected.

We give a recursive definition of two functions $\tl,\tr:[1,r]\rightarrow [1,r]$, which are auxiliary to $\bm\delta$. The idea is that, for each index $1\leq i\leq r$, the following holds: $i\in [\tl i,\tr i]$; the curves in $\bm\delta$ indexed by $[\tl i,\tr i]$ fill a proper subsurface of $X'$; and it is impossible to pick a larger interval with the same property. However, we would also like that, every time two indices $i\not=i''$ have $[\tl i,\tr i]\not= [\tl i'',\tr i'']$ and these two intervals both give families of curves not filling the entire $X'$, there is an index $i< i'< i''$ such that the curves indexed by $[\tl i',\tr i']$ do fill $X'$ instead.

Start by setting $\tl 1\coloneqq 1$, and let $\tr 1$ be the highest index $L$ such that the curves $\delta_1,\ldots,\delta_L$ do not fill $X'$. Now, given an index $1<i_0\leq r$, suppose we have defined $\tl i,\tr i$ for all $i< i_0$: if $\tr(i_0-1)\geq i_0$, set $\tl i_0\coloneqq\tl(i_0-1), \tr i_0\coloneqq\tr(i_0-1)$. Else let $\tl i_0\coloneqq\tl(i_0-1)$ and $\tr i_0\coloneqq i_0$, so that the curves $\delta_{\tl i_0},\ldots,\delta_{\tr i_0}$ fill $X'$; furthermore, let $\tl(i_0+1)\coloneqq i_0+1$, and let $\tr(i_0+1)$ be the highest index $L$ such that $\delta_{i_0+1},\ldots,\delta_L$ do not fill $X'$.

For each $i$, let $\bm\delta(i)=(\delta_{\tl i},\ldots,\delta_{\tr i})$, and let $Y(i)$ be the subsurface of $X'$ filled by the curves in $\bm\delta(i)$. We call elements of the family $\{Y(i)|1\leq i\leq r\}\setminus\{X'\}$ the \nw{$1$-good} subsurfaces.

Again with a recursive definition, for $k>1$ we say that $Z\subseteq S$ is a \nw{$k$-good} subsurface with respect to $\bm\delta$ if there is an index $1\leq i\leq r$ such that $Y(i)\not=X'$ and $Z$ is a $(k-1)$-good subsurface of $Y(i)$, computed with respect to the chain $\bm\delta(i)$.

Also, a subsurface of $S$ is \nw{good} with respect to $\bm\delta$ if it is $k$-good for some $k\in\mathbb N$. We prove a fact about good subsurfaces:

\begin{claim}
Given $Z\subsetneq X'$ a good subsurface (with respect to $\bm\delta$), there is at most one $1$-good subsurface $Y$ such that $Z\subseteq Y$. Moreover two subsurfaces $Y(i),Y(i')$ which are $\subsetneq X'$, but such that $Y(i'')=X'$ for some $i<i''<i'$, are certainly distinct.
\end{claim}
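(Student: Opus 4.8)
The plan is to prove the two assertions of the Claim directly from the recursive structure of the functions $\tl,\tr$ and the ordering properties we have set up for the effective twist curves, using Corollary \ref{cor:subsurfacesdontrepeat} as the key non-formal ingredient. First I would unravel what it means for $Z\subsetneq X'$ to be good: by definition there is a descending chain of good subsurfaces $Z=Z_k\subseteq Z_{k-1}\subseteq\cdots\subseteq Z_1\subsetneq X'$ where each $Z_j$ is $1$-good with respect to the chain filling $Z_{j-1}$, and $Z_1=Y(i_1)$ for some index $i_1$ with $Y(i_1)\ne X'$. In particular $Z\subseteq Y(i_1)$. So for the first assertion it suffices to show: if $Z\subseteq Y(i)$ and $Z\subseteq Y(i')$ with both $Y(i),Y(i')\subsetneq X'$, then $Y(i)=Y(i')$.

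For this I would argue by contradiction using the second assertion. Suppose $Y(i)\ne Y(i')$, say $i<i'$; by the definition of $\tl,\tr$ the intervals $[\tl i,\tr i]$ and $[\tl i',\tr i']$ are distinct and, by the very construction (each time an interval stops growing one inserts a "filling" index and then restarts), there is an index $i<i''<i'$ with $Y(i'')=X'$, i.e.\ the curves $\delta_{\tl i''},\ldots,\delta_{\tr i''}$ fill $X'$. Now pick three of these curves to run Corollary \ref{cor:subsurfacesdontrepeat}: since $Z$ is a nonempty essential subsurface contained in $Y(i)$, and the curves in $\bm\delta(i)$ fill $Y(i)$, some curve $\delta_{t_1}\in\bm\delta(i)$ is contained in (a neighbourhood of) $Z$, hence in $Y(i)\subsetneq X'$; among the filling family $\bm\delta(i'')$ there is a curve $\delta_{t_2}$ intersecting $\partial Y(i)$ essentially (otherwise $\bm\delta(i'')$ would be disjoint from $\partial Y(i)$ and could not fill $X'$, which strictly contains $Y(i)$); and there is a curve $\delta_{t_3}\in\bm\delta(i')$ that cannot be realised disjointly from $Y(i)$, because $\bm\delta(i')$ fills $Y(i')\supseteq Z$ and $Z\subseteq Y(i)$. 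Using that $\bm\delta$ is a chain, after possibly replacing these by nearby indices one arranges $\gamma_{t_2}$ to intersect both $\gamma_{t_1}$ and $\gamma_{t_3}$ essentially and $t_1<t_2<t_3$ with $t_2$ sitting in a Dehn interval. Corollary \ref{cor:subsurfacesdontrepeat} then forces $\gamma_{t_3}$ to intersect $\partial Y(i)$ essentially — but $\delta_{t_3}\in\bm\delta(i')$ and the curves of $\bm\delta(i')$ fill $Y(i')$, so if $Y(i')\subseteq Y(i)$ none of them meets $\partial Y(i)$, a contradiction; and if $Y(i')\not\subseteq Y(i)$ then $Z\subseteq Y(i)\cap Y(i')$ already forces an essential intersection pattern we can exploit the same way. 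Thus $Y(i)=Y(i')$, proving the first assertion, and the intermediate step (the existence of $i''$ with $Y(i'')=X'$ whenever $i<i'$ give distinct proper $Y$'s) is exactly the second assertion, which is immediate from the bookkeeping in the definition of $\tl,\tr$.

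I expect the main obstacle to be the careful choice of the triple $(t_1,t_2,t_3)$ and the verification that the hypotheses of Corollary \ref{cor:subsurfacesdontrepeat} genuinely hold — in particular that $t_2$ can be taken inside some $DI_{t_2}$ (so that $\rot_{\bm\tau}(\gamma_{t_2};DI_{t_2})\geq 2\mathsf K_0+4$, which is what the Corollary needs) while still keeping $t_1<t_2<t_3$ and the required essential intersections. This is where the hypothesis that the sequence $(\max DI_t)_{t=1}^q$ is increasing, together with the chain condition on $\bm\delta$, has to be used to guarantee that between an index landing in $Y(i)$ and one landing in $Y(i')$ there is a curve whose Dehn interval separates them appropriately. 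Once that combinatorial placement is pinned down, the rest is a routine application of the Corollary and of the definitions of $1$-good and good subsurfaces.
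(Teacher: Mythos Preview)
You have the right tool (Corollary \ref{cor:subsurfacesdontrepeat}), but two genuine gaps.

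First, you have read the second assertion backwards. It does not say ``distinct proper $Y(i),Y(i')$ implies existence of a filling index $i''$ between them'' --- that direction \emph{is} bookkeeping, and you correctly invoke it en route to the first assertion. The actual second assertion is the converse: given $Y(i),Y(i')\subsetneq X'$ and some $i<i''<i'$ with $Y(i'')=X'$, the subsurfaces $Y(i)$ and $Y(i')$ must be distinct. That is not bookkeeping --- the defining index-ranges $[\tl i,\tr i]$ and $[\tl i',\tr i']$ are disjoint, but two disjoint families of curves could a priori fill the same subsurface. The paper handles both assertions with a single contradiction argument (for the second, it sets $Z\coloneqq Y(i)=Y(i')$).

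Second --- and this is precisely the obstacle you flagged --- applying the Corollary with subsurface $Y(i)$ does not close. You can arrange $\delta_{t_1}\subset Y(i)$ and a $\delta_{t_2}$ crossing $\partial Y(i)$ with $\delta_{t_2}\cap\delta_{t_1}\ne\emptyset$, since $\bm\delta(i)$ fills $Y(i)$. But nothing forces $\delta_{t_2}$ to hit a curve $\delta_{t_3}\in\bm\delta(i')$ lying inside $Y(i)$ (which is what you would need for the Corollary's conclusion to contradict anything): $\bm\delta(i')$ fills $Y(i')$, not $Y(i)$, and the chain property only links \emph{consecutive} curves. The paper's remedy is to apply the Corollary with subsurface $Z$ instead. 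The content of ``$Z$ is good'' is exactly that the recursion furnishes a consecutive sub-range of $\bm\delta(i)$ and one of $\bm\delta(i')$ that each \emph{fill} $Z$; then any $\delta_{\iota(2)}$ crossing $\partial Z$ automatically hits a curve from each of these two families, supplying $\iota(1)$ and $\iota(3)$ for free, and such a $\delta_{\iota(2)}$ exists between the two ranges because the chain cannot stay inside $Z$ across a filling index. Your side-worry about $t_2$ sitting in a Dehn interval is unfounded: every $\delta_j$ is one of the $\gamma_t$'s with respect to which $\bm\tau$ is arranged, so each already comes with its $DI$.
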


\begin{proof}
We prove the two claims together, with an an argument by contradiction that works for both. Supposing that either of the two is false may be restated as the following hypothesis: there is a good subsurface $Z$ (for the second statement, $Z\coloneqq Y(i)=Y(i')$), contained in two $1$-good subsurfaces $Y(i),Y(i')$, not necessarily distinct but with an index $i<i''<i'$ with $Y(i'')=X'$. 

This means that $Z$, when appearing as a good subsurface of $Y(i)$ with respect to the chain $\bm\delta(i)$, gets filled by some curves $\delta_{\iota_-},\ldots,\delta_{\iota_+}$, for $\tl i\leq \iota_-< \iota_+ \leq \tr i$. Similarly with respect to the index $i'$: $Z$ is filled by $\delta_{\iota'_-},\ldots,\delta_{\iota'_+}$, for $\tl i'\leq \iota'_-< \iota'_+ \leq \tr i'$.

But, by definition, $\{\delta_{\iota_-},\ldots,\delta_{\iota'_+}\}\supseteq \{\delta_{\tl i''},\ldots,\delta_{\tr i''}\}$: so these families both fill $X'$. Therefore there is some $\delta_{\iota(2)}$ (for $\iota_+< \iota(2)< \iota'_-$) intersecting $\partial Z$; moreover there are $\iota_-\leq\iota(1)\leq\iota_+$, $\iota'_-\leq\iota(3)\leq\iota'_+$ such that both $\delta_{\iota(1)}$ and $\delta_{\iota(3)}$ intersect $\delta_{\iota(2)}$: this is because we are picking from two families of curves that fill $Z$. Since the curves $\delta_{\iota(1)}, \delta_{\iota(2)}, \delta_{\iota(3)}$, in the given order, are taken from the sequence $(\gamma_1,\ldots,\gamma_q)$ with respect to which $\bm\tau$ is arranged, this contradicts Corollary \ref{cor:subsurfacesdontrepeat} above.
\end{proof}

The definition of $\tl$, $\tr$ and the second statement of this claim together imply what follows. Let $1<x_1<\ldots<x_\eta\leq r$ are the indices $i$ such that $Y(i)=X'$ (in particular, for these indices, $\tr i=i$); note that this collection includes no two consecutive values of $i$. If an interval $I$ of consecutive indices $i$ does not include any of these ones, then the corresponding $Y(i)$ are always the same surface. If $1\leq i< x_j < i' \leq r$ for some $1\leq j\leq\eta$, and $Y(i),Y(i')\not=X'$, then $Y(i)\not= Y(i')$. So the number $\eta'$ of $1$-good subsurfaces of $X'$ equals either $\eta$ or $\eta+1$. The quantities $\eta,\eta'$ will be recalled below.

Let $Y_1,\ldots,Y_{\eta'}$ be an enumeration of these subsurfaces, in the same order as the sequence $\left(Y(i)\right)_i$ but avoiding repetitions and occurrences of $X'$. And, for each $1\leq u\leq \eta'$, if $Y_u=Y(i)$, let $\bm\delta_u=\bm\delta(i)$ be the chain sequence, subsequence of $\bm\delta$, certifying that $Y_u$ is a good subsurface. This is independent of the choice of $i$.

Now define, for $0\leq j,j'\leq N$,
$$d''_{\bm\delta}(\tau_j,\tau_{j'})\coloneqq \sum_{Y \subseteq X' \text{ good w.r.t. } \bm\delta} [d_Y(\tau_j,\tau_{j'})]_M.$$

Clearly, $d''_{\bm\delta}(\tau_j,\tau_{j'})\leq d'_{\pa(X')}(\tau_j,\tau_{j'})$. We prove the following claim: 
\begin{claim}
Given the arranged sequence $\bm\tau=(\tau_j)_{j=0}^N$ as in the statement of Proposition \ref{prp:tcbound}, let $\bm\delta=(\delta_1\coloneqq\gamma_{t_1},\ldots,\delta_r\coloneqq\gamma_{t_r})$ be a chain subsequence of the $(\gamma_t)_{t=1}^q$ ($t_1<\ldots<t_r$), and call $X' \subseteq S$ ($X'=S$ is allowed here) the essential, non-annular subsurface filled by the curves in $\bm\delta$. Let $a_-\leq \min DI_{t_1}, a_+\geq \max DI_{t_r}$ be two indices along the sequence $\bm\tau$, with $\pi_{X'}\left(V(\tau_{a_+})\right) \in \pa^0(X')$.

Then there are two constants $c_3, c_4$, only depending on the topological types of $S$ and $X'$, such that $r\leq c_3 d''_{\bm\delta}(\tau_{a_-},\tau_{a_+})+c_4$ (which is in turn $\leq c_3 d'_{\pa(X')}(\tau_{a_-},\tau_{a_+})+c_4$).
\end{claim}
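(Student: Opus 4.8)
\emph{Strategy: strong induction on the complexity, plus the base case.} The plan is to prove the Claim by induction on $\xi(X')$. Since a proper essential non-annular (non-pants) subsurface of $X'$ has strictly smaller complexity, and there are only finitely many topological types of subsurfaces of $S$, I may, in the inductive step, replace the subsurface-dependent constants coming from the inductive hypothesis by their maxima $c_3',c_4'$ over all such types. If $r=1$ the Claim is trivial (take $c_4\geq 1$); otherwise $\delta_1,\delta_2$ intersect and fill at least an $S_{0,4}$ or an $S_{1,1}$, so $\xi(X')\geq 1$. For the base case $\xi(X')=1$, i.e.\ $X'\cong S_{0,4}$ or $S_{1,1}$, the chain $\bm\delta$ is exactly a subsequence of the kind treated in Lemma \ref{lem:tcboundsimplest}. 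One first notes $V(\tau_{a_+}|X')\neq\emptyset$ — this follows from $\pi_{X'}(V(\tau_{a_+}))\in\pa^0(X')$ via the description of induced tracks on subsurfaces in Lemmas \ref{lem:inducedisonsurface}, \ref{lem:decreasingfilling} and the remark following the latter — and then Lemma \ref{lem:tcboundsimplest} together with Lemma \ref{lem:induction_vertices_commute} gives $\lfloor(r-1)/3\rfloor\leq d_{X'}(\tau_{a_-},\tau_{a_+})+c(S)$. As $X'$ is $0$-good with respect to $\bm\delta$, it contributes $[d_{X'}(\tau_{a_-},\tau_{a_+})]_M$ to $d''_{\bm\delta}(\tau_{a_-},\tau_{a_+})$, and since $x\leq[x]_M+M$ always, $r\leq 3\,d''_{\bm\delta}(\tau_{a_-},\tau_{a_+})+3(M+c(S))+4$.

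\emph{The combinatorial decomposition.} In the inductive step, recall that the indices $x_1<\dots<x_\eta$ with $Y(i)=X'$ interleave with the sub-chains filling the $1$-good subsurfaces $Y_1,\dots,Y_{\eta'}$, where $\eta\leq\eta'\leq\eta+1$. Writing $r_u$ for the length of the sub-chain $\bm\delta_u$ filling $Y_u$, a direct count gives $r=\sum_{u=1}^{\eta'}r_u+\eta$. Each $\bm\delta_u$ consists of chain-consecutive curves, so if $Y_u$ is non-annular then $r_u\geq 2$ and $\xi(Y_u)<\xi(X')$; any annular $Y_u$ (necessarily $r_u=1$) is absorbed, together with the transition curves, into the bound on $\eta$ below. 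For each non-annular $Y_u$, the remark after Lemma \ref{lem:decreasingfilling} gives $\pi_{Y_u}(V(\tau_{a_+}))\in\pa^0(Y_u)$, and the Dehn intervals of the curves of $\bm\delta_u$ lie in $[a_-,a_+]$, so the inductive hypothesis applies: $r_u\leq c_3'\,d''_{\bm\delta_u}(\tau_{a_-},\tau_{a_+})+c_4'$. By the Claim proved just above — any good subsurface $Z\subsetneq X'$ lies in at most one $1$-good subsurface, and distinct blocks fill distinct subsurfaces — the index sets of the $d''_{\bm\delta_u}$ are pairwise disjoint, and every subsurface good with respect to some $\bm\delta_u$ is good with respect to $\bm\delta$ and strictly contained in $X'$; hence $\sum_u d''_{\bm\delta_u}(\tau_{a_-},\tau_{a_+})\leq d''_{\bm\delta}(\tau_{a_-},\tau_{a_+})-[d_{X'}(\tau_{a_-},\tau_{a_+})]_M$.

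\emph{Bounding $\eta$ — the main obstacle.} The crux is to show $\eta\leq C_\eta\,d_{X'}(\tau_{a_-},\tau_{a_+})+C_\eta'$, with $C_\eta,C_\eta'$ depending only on the topological types of $X'$ and $S$. The plan is to reproduce the nesting argument of Lemma \ref{lem:tcboundsimplest} with ``the curves of block $j$'' playing the role of a single twist curve. Set $g_j:=\min DI_{t_{x_{j-1}+1}}$ (with $x_0:=0$); since the Dehn intervals of the $\gamma_t$ are ordered and pairwise meet in at most a point, $a_-\leq g_1<\dots<g_\eta\leq a_+$, and $\rho_{g_j}:=\tau_{g_j}|X'$ fills $X'$ because it carries all curves of block $j$ (whose Dehn intervals begin no earlier than $g_j$), which fill $X'$. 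One then aims to prove $\nei_1(\cf(\rho_{g_j}))\subseteq\cf(\rho_{g_{j-3}})$ for $j\geq 4$: given $\alpha\in\cf(\rho_{g_j})$, either $\alpha$ is one of the finitely many block curves (handled by the first statement of Lemma \ref{lem:weightsaftertwist} and the filling property), or $\alpha$ crosses some curve of block $j-1$, and one pushes $\alpha$ back via the second statement of Lemma \ref{lem:weightsaftertwist} into $\cf_3(\rho_{g_{j-1}};\cdot)$, then into $\cf_3(\rho_{g_{j-3}};\cdot)$ by Lemma \ref{lem:cf_decreasing}, then into $\ce_3$ of a subtrack of $\rho_{g_{j-3}}$ that fills $X'$ and is built from the carrying images of block-$(j-3)$ curves — using the first statement of Lemma \ref{lem:weightsaftertwist} and the chain property $\delta_{i+1}\cap\delta_i\neq\emptyset$ to propagate the multiplicity bounds across block boundaries — and finally Lemma \ref{lem:ccnesting}. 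Iterating yields $\nei_{\lfloor(j-j')/3\rfloor}(\cf(\rho_{g_j}))\subseteq\cf(\rho_{g_{j'}})$, so if $d_{X'}(\tau_{a_-},\tau_{a_+})<\lfloor(\eta-1)/3\rfloor$ then $\pi_{X'}V(\tau_{a_-})$ would lie in $\cf_3(\rho_{a_-})$, contradicting Lemma \ref{lem:vertexnotinterior}; hence $\eta\leq 3\,d_{X'}(\tau_{a_-},\tau_{a_+})+O(1)$. The genuinely delicate points are (i) that $\rho_j$ need not fill $X'$ once $j$ passes the twisting of the last block, so the weight and diagonal-extension lemmas must be invoked only at the indices $g_j$ where the filling property is guaranteed by carrying a whole block; and (ii) that $\alpha$ is only guaranteed to meet \emph{one} curve of block $j-1$, so propagating the diagonal-extension multiplicity estimates through successive blocks must be routed through the chain structure and the ordering of the $DI_t$ rather than through a single pair of intersecting curves as in the base case.

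\emph{Conclusion.} Since $d_{X'}(\tau_{a_-},\tau_{a_+})\leq d''_{\bm\delta}(\tau_{a_-},\tau_{a_+})+M$, both $\eta$ and $\eta'$ are $\leq C_\eta\,d''_{\bm\delta}(\tau_{a_-},\tau_{a_+})+O(1)$, whence
\[
r=\sum_{u}r_u+\eta\leq c_3'\sum_u d''_{\bm\delta_u}(\tau_{a_-},\tau_{a_+})+\eta'c_4'+\eta\leq\bigl(c_3'+(c_4'+1)C_\eta\bigr)d''_{\bm\delta}(\tau_{a_-},\tau_{a_+})+c_4,
\]
with $c_4$ depending only on $C_\eta,C_\eta',c_4',M$, hence only on the topological types of $X'$ and $S$. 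This closes the induction and proves the Claim. Proposition \ref{prp:tcbound} then follows: apply the Claim to an arbitrary chain $\bm\delta$ in $(\gamma_1,\dots,\gamma_q)$, with $X'$ the subsurface it fills (so $X'\subseteq X$ and, by Lemma \ref{lem:induction_vertices_commute}(3) and the remark after Lemma \ref{lem:decreasingfilling}, $\pi_{X'}(V(\tau_l))\in\pa^0(X')$), $a_-=k$, $a_+=l$; since the good subsurfaces of $\bm\delta$ lie in $X'\subseteq X$ this bounds the length of every chain by $c_3\,d'_{\pa(X)}(\tau_k,\tau_l)+c_4$, and Lemma \ref{lem:chainbound} gives $q\leq\xi(S)\bigl(c_3\,d'_{\pa(X)}(\tau_k,\tau_l)+c_4\bigr)$, so one may take $C_3=\xi(S)c_3$, $C_4=\xi(S)c_4$.
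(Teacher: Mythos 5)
Your proposal is correct and follows essentially the same route as the paper: induction on $\xi(X')$ with Lemma \ref{lem:tcboundsimplest} as the base case, the same $\tl/\tr$ block decomposition, the same good-subsurface bookkeeping to split $d''_{\bm\delta}$ into $[d_{X'}]_M$ plus the $d''_{\bm\delta_u}$, and the same nesting argument (via Lemmas \ref{lem:weightsaftertwist}, \ref{lem:cf_decreasing}, \ref{lem:ccnesting}, \ref{lem:vertexnotinterior} with the chain-propagation observation $\sigma_j.\delta_k \supseteq \sigma_j.\delta$) to bound the number $\eta$ of filling checkpoints by $d_{X'}$. The only cosmetic deviations are that you target $\cf_3$ with a nesting step of $3$ where the paper uses $\cf_1$ with a step of $2$ (the third bullet of Lemma \ref{lem:ccnesting} applies since the inductive step has $\xi(X')\geq 2$), and you absorb annular $Y_u$ directly by noting $r_u=1$ rather than via the paper's $[\,\cdot\,]_0$ trick --- both affect only constants.
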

\begin{proof}\label{prf:tcbound}

\step{1} Set-up of the induction on $\xi(X')$.

As previously noted (Lemma \ref{lem:decreasingfilling} and subsequent observations), the condition $\pi_{X'}\left(V(\tau_{a_+})\right) \in \pa^0(X')$ implies that, for all $a_-\leq j\leq a_+$ and all subsurfaces $Y\subseteq X'$, $\pi_Y\left(V(\tau_j)\right) \in \pa^0(Y)$.

For ease of notation, for $i=1,\ldots,r$ let $a_i \coloneqq\min DI_{t_i}$, 
and $m_i\coloneqq\rot_{\bm\tau}(\delta_i,DI_{t_i})$. Let moreover $\rho_j\coloneqq \tau_j|X'$ for all $j$.

The induction basis is for $X'\cong S_{0,4}$ or $\cong S_{1,1}$. In that case 
$$d''_{\pa(X')}\left(\tau_{a_-},\tau_{a_+}\right)=d_{X'}\left(V(\tau_{a_-}),V(\tau_{a_+})\right)\geq d_{\cc(X')}\left(V(\rho_{a_-}),V(\rho_{a_+})\right)-2\tilde n_1$$
where $\tilde n_1\coloneqq 2F(8 N_1(S))$ (see Lemma \ref{lem:induction_vertices_commute}). Lemma \ref{lem:tcboundsimplest}, applied with reference to the family $\delta_1,\ldots,\delta_r$, yields that $d_{\cc(X')}\left(V(\rho_{a_-}),V(\rho_{a_+})\right) \geq r/3-1$. So the claim is proved with $c_3(X')= 3, c_4(X') = 3(1+2\tilde n_1)$.

We get now to the induction step, i.e. prove that the statement holds for $X'$, provided that $X'$ is not homeomorphic to $S_{0,4}$ or $S_{1,1}$ and that the statement holds for any essential, non-annular $Y\subsetneq X'$.

\step{2} For any $\alpha_-\in V(\tau_{a_-})$ and $\alpha_+\in V(\tau_{a_+})$, $d_{\cc({X'})}(\alpha_-,\alpha_+)\geq \lfloor (\eta-1)/2\rfloor$.

For $1\leq j \leq \eta$ consider the previously defined $x_j$ and let $\sigma_j\coloneqq\rho_{a_{\tl x_j}}$. As already noted, for any fixed $1\leq j \leq \eta$, the chain sequence $\bm\delta(x_j)= \{\delta_{\tl x_j},\ldots,\delta_{x_j}\}$ fills $X'$. As all curves in each sequence $\bm\delta(x_j)$ are carried by the respective $\sigma_j$, each $\sigma_j$ fills $X'$; and, by Lemma \ref{lem:cf_decreasing}, $\cf(\sigma_j)\supseteq \cf(\sigma_{j+1})$.

If a curve $\delta_k$ belongs to the sequence $\bm\delta(x_{j+1})$ (for $1\leq j<r$), then it belongs to the set $\cc_1\left(\sigma_j;\bm\delta(x_j)\right)$: this is because, for each $i \in [(\tl x_j)+1, k]$, $\delta_i\in \cc(\rho_{a_i})$ intersects the previous $\delta_{i-1}$; so, according to the first statement of Lemma \ref{lem:weightsaftertwist} applied to the sequence $\bm\tau(a_{\tl x_j}, a_i)$, we have $\sigma_j.\delta_i\supseteq \sigma_j.\delta_{i-1}$. Therefore $\sigma_j.\delta_k$ includes all $\sigma_j.\delta$ for $\delta\in \bm\delta(x_j)$, and this is exactly what has just been claimed.

Now, for $1\leq j\leq \eta-2$ consider any curve $\alpha\in \cf(\sigma_{j+2})$: $\alpha$ will intersect some $\delta\in \bm\delta(x_{j+1})$ because that collection fills $X'$. So Lemma \ref{lem:weightsaftertwist}, applied along the sequence $\bm\tau(a_{\tl x_j},a_{\tl x_{j+2}})$ with respect to the twist curve $\delta$, yields that there is a diagonal extension $\omega_j$ of a subtrack of $\sigma_j$ which fills $X'$, where $\alpha$ is carried with $\omega_j.\alpha\supseteq \omega_j.\delta=\sigma_j.\delta$; and, as $\sigma_j.\delta$ contains all the carrying images in $\sigma_j$ of the curves in $\bm\delta(x_j)$, it is a subtrack filling $X'$: $\alpha\in \cf_1(\sigma_j)$.

Lemma \ref{lem:ccnesting} gives then $\cf(\sigma_{j+2})\subseteq \cf_1(\sigma_j)\subseteq \nei_1\left(\cf(\sigma_j)\right)$. Now the argument goes as in Lemma \ref{lem:tcboundsimplest}. Nesting these inclusions, for all pairs of indices $1\leq j<j'\leq \eta$ such that $2|(j'-j)$, we get
$$
\mathcal N_{(j'-j)/2}(\cf(\sigma_{j'}))\subseteq \cf(\sigma_j).
$$

Denote $\hat \eta\coloneqq \lfloor(\eta-1)/2\rfloor$. If, for any $\alpha_+\in V(\rho_{a_+})\subseteq \cf(\sigma_{a_{2\hat \eta}+1})$, $\alpha_-\in V(\rho_{a_-})$, we have $d_{X'}(\alpha_-,\alpha_+)< \hat\eta$, then
$$\alpha_- \in \mathcal N_{\hat\eta - 1}(\cf(\sigma_{a_{2\hat \eta}+1}))\subseteq \cf(\sigma_{3}) \subseteq \cf_1(\sigma_1),$$
as a consequence of the inclusions proved above. Also, $\cf_1(\sigma_1)\subseteq \cf_1(\rho_{a_-})$, due to an argument already employed, based on the last statement of Lemma \ref{lem:cf_decreasing}: fix any $\xi\in \cf_1(\sigma_1)$. Then there is a $\omega_1\in \e(\sigma'_1)\subseteq \f(\sigma_1)$, where $\sigma'_1$ is a subtrack of $\sigma_1$ that fills $X'$; and $\xi$ is carried by $\omega_1$ and traverses all branches of $\sigma'_1$. By said Lemma, there is a $\omega_-\in \e(\rho_{a_-}')\subseteq \f(\rho_{a_-})$, with $\rho_{a_-}'$ a subtrack of $\rho_{a_-}$ filling $X'$, such that $\omega_-$ fully carries $\omega_1$, which carries $\xi$; moreover $\rho_{a_-}'$ fully carries $\sigma'_1$ so, by composition of carrying maps, $\xi$ will traverse in $\omega_-$ all branches belonging to $\rho_{a_-}'$.

But $\alpha_- \in \cf_1(\rho_{a_-})$ cannot be true because of Lemma \ref{lem:vertexnotinterior}. So $d_{X'}(\alpha_-,\alpha_+)\geq \hat \eta$, proving the claim of this step.

\step{3} Proof of the key claim for Proposition \ref{prp:tcbound}.

Write
$$d''_{\bm\delta}(\tau_{a_-},\tau_{a_+})= [d_{X'}(\tau_{a_-},\tau_{a_+})]_M + \sum_{\substack{Y\subset {X'}\text{ }k \text{-good w.r.t. }\bm\delta \\ (k\geq 1)}} [d_Y(\tau_{a_-},\tau_{a_+})]_M.$$

The fact proved above the beginning of the present proof implies that the set of the $k$-good subsurfaces for ${X'}$, $k\geq 1$, can be partitioned into families: one for each $1\leq u\leq \eta'$, consisting of the good subsurfaces of $Y_u$ which are good with respect to $\bm\delta_u$. So the above summation can be split accordingly. As for the first term instead, from Step 2 we get that $[d_{X'}(\tau_{a_-},\tau_{a_+})]_M\geq d_{X'}(\tau_{a_-},\tau_{a_+}) - M \geq \eta/2 - 1 -M$. So
\begin{eqnarray*}
 & d''_{\bm\delta}(\tau_{a_-},\tau_{a_+}) \geq \left(\sum_{u=1}^{\eta'} d''_{\bm\delta_u}(\tau_{a_-},\tau_{a_+})\right) + \eta/2 - 1 -M & \\
 & \geq \sum_{u=1}^{\eta'} \left(1/2 + d''_{\bm\delta_u}(\tau_{a_-},\tau_{a_+})\right) - 3/2 -M & 
\end{eqnarray*}
(where we are also using the fact that $\eta\leq \eta'\leq \eta+1$). Now we apply the induction hypothesis and get that, if $r(u)$ is the length of the chain $\bm\delta_u$, the last expression is
$$
\geq \sum_{u=1}^{\eta'} \left(1/2 + [r(u)/\hat c_3 - \hat c_4]_0\right) - 3/2 -M,
$$
where $\hat c_3, \hat c_4$ are upper bounds for the constants $c_3(Y), c_4(Y)$ over all topological types of subsurfaces $Y$ of ${X'}$; and the notation $[\cdot]_0$ indicates that we consider this summand only if it is positive. There will be a constant $c'$ (depending on ${X'}$) such that $1/2 + [r(u)/\hat c_3 - \hat c_4]_0\geq 1/3 +c' r(u)$, so the expression is 
$$
\geq \eta'/3 + c'\sum_{u=1}^{\eta'} r(u) - 3/2 -M \geq \min\{1/3,c'\}\left(\eta'+\sum_{u=1}^{\eta'} r(u)\right) -3/2 -M \geq r -3/2 -M.
$$

The last inequality is due to the following argument: the sequence $\bm\delta$ consists of the junction of the sequences, and elements, $\bm\delta_1,\delta_{x_1},\bm\delta_2,\ldots,\bm\delta_\eta,\delta_{x_\eta},[\bm\delta_{\eta+1}]$ (the last sequence may not exist). So its length is $r=\eta+\sum_{u=1}^{\eta'} r(u)$.

This concludes the proof of the key claim.
\end{proof}

We now prove Proposition \ref{prp:tcbound}. Let $\bm\delta=(\delta_1\coloneqq\gamma_{t_1},\ldots,\delta_r\coloneqq\gamma_{t_r})$ be a chain subsequence \emph{with maximal length} of $\gamma_1,\ldots,\gamma_q$ as defined in the statement. Let $r$ be the length of this chain, and let $X'$ be the subsurface of $X$ (and $S$) filled by the curves in $\bm\delta$. Then
$$r\leq c_3(X') d'_{\pa(X')}(\tau_k,\tau_l)+c_4(X')\leq c_3(X') d'_{\pa(X)}(\tau_k,\tau_l)+c_4(X')$$
($d'_{\pa(X)}$ is indeed a summation involving all summands already present in $d'_{\pa(X')}$). Lemma \ref{lem:chainbound} gives then
$$
q\leq \xi(X)r \leq \xi(X)\left( c_3(X') d'_{\pa(X)}(\tau_k,\tau_l)+c_4(X') \right).
$$

Let now $\tilde c_3(S)\coloneqq \max_{Y\subseteq S} c_3(Y)$, $\tilde c_4(S)\coloneqq \max_{Y\subseteq S} c_4(Y)$. Since\linebreak $\xi(S) = \max_{Y\subseteq S} \xi(Y)$ we have
$$
q \leq \xi(S)\left( \tilde c_3(S) d'_{\pa(X)}(\tau_k,\tau_l)+\tilde c_4(S) \right).
$$
which defines the required constants $C_3(S),C_4(S)$. This completes the proof.
\section{Untwisted sequences. Proof of the main statement}\label{sec:traintrackconclusion}

\ul{Note:} Consistently with \S \ref{sec:twistcurves}, we work in the setting of \emph{generic} train tracks only. This is not restrictive, however, since a semigeneric splitting sequence can always be converted to a generic one, to which the main result (Theorem \ref{thm:main_full}) applies.

In this section we will explain how to deprive a splitting sequence of a high number of Dehn twist, and then we show that this sequence is suitable for application of the same techniques of proof of quasi-geodicity as in \cite{mms}, Theorem 6.12.

\subsection{The untwisted sequence}\label{sub:untwistedsequence}
\begin{defin}\label{def:untwistedsequence}
Let $S$ be a surface, and let $\bm\tau=(\tau_j)_{j=0}^N$ be a generic splitting sequence of train tracks evolving firmly in a subsurface $S'$, not necessarily connected. Let $\gamma_1,\ldots,\gamma_r\subseteq \cc(\tau_0)$ be curves all contained in $S'$, and suppose that $\bm\tau$ is $(\gamma_1,\ldots,\gamma_r)$-arranged. In particular the sequence $(\max DI_t)_{t=1}^r$ is increasing.

Let $m_t\coloneqq \rot_{\bm\tau}(\gamma_t;DI_t)\geq 2\mathsf{K}_0+4$, let $\epsilon_t$ be the sign of $\gamma_t$ as a twist curve in $\bm\tau$, and let $g_t\coloneqq \max DI_t(0)-\min DI_t(0)$ be the `period length' in the sequence $DI_t$.

Define recursively $\phi_0\coloneqq \mathrm{id}_S:S\rightarrow S$; and, for $1\leq t \leq r$, $\phi_t\coloneqq D_{\gamma_t}^{\epsilon_t(2\mathsf{K}_0+4-m_t)}\circ \phi_{t-1}$.

Let $NI_0\coloneqq[0,\min DI_1]$; $NI_t\coloneqq [\max DI_t,\min DI_{t+1}]$ for $1\leq t\leq r-1$; $NI_r\coloneqq [\max DI_r,N]$; $DK_t\coloneqq DI_t(0)\cup \ldots \cup DI_t(2\mathsf{K}_0+3)$ and $DL_t\coloneqq DI_t(m_t-2\mathsf{K}_0-4)\cup \ldots \cup DI_t(m_t-1)$ for $1\leq t\leq r$. Define the \nw{untwisting} of $\bm\tau$ as
\begin{align*}
\utw\bm\tau \coloneqq\  & \bm\tau(NI_0)*\\
 &  * \left(\phi_1\cdot \bm\tau(DL_1)\right) * \left(\phi_1\cdot\bm\tau(NI_1)\right) *\\
 & \ldots  \\
 & * \left(\phi_{r-1}\cdot\bm\tau(DL_{r-1})\right) * \left(\phi_{r-1}\cdot\bm\tau(NI_{r-1})\right) * \\
 & * \left(\phi_r\cdot\bm\tau(DL_r)\right) * \left(\phi_r\cdot\bm\tau(NI_r)\right).
\end{align*}

In the above notation, $\phi_t\cdot \bm\tau(\ldots)$ means the splitting sequence obtained from $\bm\tau(\ldots)$ via application of $\phi_t$ to all its entries.
\end{defin}

If $\utw\bm\tau=(\utw\tau_j)_{j=0}^{N'}$, the above definition provides a natural subdivision of $[0,N']$ into subintervals
$$
NI_0^\utw, DI_1^\utw,NI_2^\utw,DI_2^\utw,\ldots,NI_{r-1}^\utw,DI_r^\utw, NI_r^\utw
$$
where the maximum of each subinterval is the minimum of the following one. Each interval $DI_t^\utw$ has twist nature with respect to $\phi_t(\gamma_t)$, and may be subdivided into $DI_t^\utw(0),\ldots, DI_t^\utw(2\mathsf{K}_0+3)$. For all $1\leq t\leq r+1$, there is a natural bijection between $NI_t$ and $NI_t^\utw$; and, if $t\not=0$, between $DI_t(m_t-2\mathsf{K}_0-4+s)$ and $DI_t^\utw(s)$ for all $0\leq s \leq 2\mathsf{K}_0+3$. In order to keep the employed notation simple, let $DI_0=DI_0^\utw=DK_0=DL_0=\{0\}$. If $j\in NI_t$ (resp. $DL_t$) let $\dn j$ be the corresponding index in $NI_t^\utw$ (resp. $DI_t^\utw$). Extend $\dn:[0,N]\rightarrow [0,N']$ in the only way that gives a monotonic map. For $j'\in [0,N']$ let $\up j'$ be the least of the indices $j\in[0,N]$ such that $\dn j=j'$. Note that there is always a $t$ such that $\up j'\in DL_t\cup NI_t$.

For $j\in [0,N]$, let $t(j)$ be the least index $t$ such that $j\in DI_t$ or $j\in NI_t$. For $j'\in [0,N']$, let $\utw t(j')$ be the least index $t$ such that $j\in DI_t^\utw$ or $j\in NI_t^\utw$. Then, for all $j\in \bigcup_{t=0}^{r} (DL_t\cup NI_t)$, $\utw\tau_{\dn j}=\phi_{t(j)}(\tau_j)$; and for all $j'\in[0,N']$, $\utw\tau_{j'}=\phi_{\utw t(j')}(\tau_{\up j'})$.

Anyway note that each interval $DI^\utw_t$ can be made to correspond not only to $DL_t$, but to $DK_t$ as well: for all $1\leq t \leq r$, if $j\in DK_t$, and $j'\coloneqq \min DI_t^\utw + (j - \min DI_t)$, then $\utw\tau_{j'}=\phi_{t-1}(\tau_j)$. This means that, for all $0\leq t \leq r-1$,
$$
\utw\bm\tau(DI_t^\utw\cup NI_t^\utw\cup DI_{t+1}^\utw) = \phi_t\cdot  \bm\tau\left(DL_t\cup NI_t\cup DK_{t+1}\right).
$$

Also, for $0\leq t\leq r-1$, let $[t]\dn:[0,N]\rightarrow [0,N']$ be defined as a correspondence that exploits this identity: $[t]\dn j\coloneqq j-\min DK_{t+1}+\min DK_{t+1}^\utw$ if $j\in DK_{t+1}$; $[t]\dn j\coloneqq \max DI_{t+1}^\utw$ if $j\in DI_{t+1}\setminus DK_{t+1}$; and $[t]\dn j\coloneqq \dn j$ otherwise. With this correspondence, $\utw\tau_{[t]\dn j}=\phi_t(\tau_j)$ for all $j\in DL_t\cup NI_t\cup DK_{t+1}$.

Define also $[t]\up:[0,N']\rightarrow [0,N]$ by setting $[t]\up j'$ to be the least $j\in[0,N]$ such that $[t]\dn j=j'$. Define $[r]\dn\coloneqq \dn$ and $[r]\up\coloneqq\up$.

For $X\subseteq S$ a subsurface, denote $I_X^\utw$ the accessible interval of $X$ in $\utw\bm\tau$.

\ul{Note:} most of the time in this section we will deal with an arranged splitting sequence $\bm\tau$ and its respective untwisted sequence $\utw\bm\tau$, as above. In order to simplify notations for distances along these two splitting sequences we will adopt similar ones as in \cite{mms}.
\begin{itemize}
\item If $i,j\in [0,N]$ and $Y\subseteq S$ is a subsurface, $d_Y(i,j)\coloneqq d_Y(\tau_i,\tau_j)$, and similarly for $d_{\pa(Y)}$, $d'_{\pa(Y)}$ and other distances in graphs. If $I\subseteq [0,N]$ is a subinterval, $d_Y(I)\coloneqq d_Y(\min I, \max I)$ etc.
\item If $i,j\in [0,N']$ and $X\subseteq S$ is a subsurface, $d_X(i,j)^\utw\coloneqq d_X(\utw\tau_i,\utw\tau_j)$, and similarly for $d_{\pa(X)}(\ldots,\ldots)^\utw$, $d'_{\pa(X)}(\ldots,\ldots)^\utw$ and other distances in graphs. If $I\subseteq [0,N]$ is a subinterval, $d_Y(I)^\utw\coloneqq d_Y(\min I, \max I)^\utw$ etc.
\end{itemize}

In a bit we will need a version of untwisting for train track splitting sequences which \emph{do not} evolve firmly in any subsurface of $X$. Let $\bm\tau=(\tau_j)_{j=0}^N$ be a generic splitting sequence of cornered birecurrent train tracks: $\bm\tau$ can be seen as a concatenation $\bm\tau^1*\bm\epsilon^2*\bm\tau^2*\ldots*\bm\epsilon^w*\bm\tau^w$ where each $\bm\tau^i$ evolves firmly in a fixed subsurface of $S$ and each $\bm\epsilon^u$ is a single split, say from a track $\tau_j$ to $\tau_{j+1}$, such that $V(\tau_j)$ fills a surface strictly and essentially containing the one filled by $\tau_{j+1}$ --- thus the complexity of the former is higher than the one of the latter. This decomposition is possible because, as it has been pointed out after Lemma \ref{lem:decreasingfilling}, the subsurfaces filled by $V(\tau_j)$, for $0\leq j\leq N$, are a decreasing family with respect to the inclusion. Moreover, the number $w\leq \xi(S)$; and the single split in each $\bm\epsilon_u$ may induce at most $d_{\pa(X)}(\rar\tau_j,\rar\tau_{j+1})\leq 1$.

\begin{defin}\label{def:not_firmly}
For $\bm\tau$ as above, we define
$$
\rar\bm\tau\coloneqq \rar\bm\tau^1*\bm\epsilon^1*\rar\bm\tau^2*\ldots*\bm\epsilon^{w-1}*\rar\bm\tau^w.
$$

Suppose now that each $\bm\tau^u$ in the subdivision above is $(\gamma_1^u,\ldots,\gamma_{r(u)}^u)$-arranged for a suitable family of curves. The construction of each $\utw\bm\tau^u$ as in Definition \ref{def:untwistedsequence} above would give, in particular, a diffeomorphism $\phi_{r(u)}^u:S\rightarrow S$ (called simply $\phi_r$ there); and we denote here $\psi_{u+1}\coloneqq \phi_{r(u)}^u\circ\ldots\circ \phi_{r(1)}^1$ for all $1\leq u \leq w-1$; while $\psi_1\coloneqq \mathrm{id}_S$. Let then
\begin{align*}
\utw\bm\tau \coloneqq\  & \utw\bm\tau^1*\\
 &  * \left(\psi_2\cdot \bm\epsilon^2\right) * \left(\psi_2\cdot\utw\bm\tau^2\right) *\\
 & \ldots  \\
 & * \left(\psi_{w-1}\cdot \bm\epsilon^{w-1}\right) * \left(\psi_{w-1}\cdot\utw\bm\tau^{w-1}\right) * \\
 & * \left(\psi_w \cdot \bm\epsilon^w\right) * \left(\psi_w\cdot\utw\bm\tau^w\right).
\end{align*}
\end{defin}

\begin{lemma}[Unbroken accessible intervals]\label{lem:untwistedsubsurfaces}
Let $\bm\tau=(\tau_j)_{j=0}^N$ be a $(\gamma_1,\ldots,\gamma_r)$-arranged train track splitting sequence, evolving firmly in a subsurface $S'$, not necessarily connected. Let $X\subseteq S'$ be a subsurface of $S$. Then the following properties hold.
\begin{enumerate}
\item If $\gamma_t$ cuts $\partial X$, then $DI_t\not\subseteq I_X$; more precisely, $DI_t\cap I_X$ contains at most $2g_t$ indices;
\item If $\gamma_t$ does not cut $\partial X$, then either $DI_t\subseteq I_X$ or $DI_t\cap I_X=\emptyset$.
\item Let $t_-\coloneqq t(\min I_X)$; let $t_+$ be the highest $t$ such that $\max I_X\geq \max DI_t$, and suppose that there is no $t$ with $I_X\subsetneq DI_t$. Then, for all $t_-\leq t\leq t_+$, $\phi_t^{-1}\circ \phi_{t_-}$ fixes $X$  and each component of $\partial X$, up to isotopies of $S$.
\item In the setting specified above, $[t_+]\dn I_X=I_{\phi_{t_-}(X)}^\utw$.
\item Let $\utw t_- \coloneqq \utw t(\min I_X^\utw)$; let $\utw t_+$ be the highest $t$ such that $\max I_X^\utw \geq \max DI_t^\utw$, and suppose that there is no $t$ with $I_X^\utw\subsetneq DI_t^\utw$. Then
$$I_{\phi_{\utw t_-}^{-1}(X)}= \left[[\utw t_+]\up \min I_X^\utw, \max DI_{\utw t_+}\right]
\text{ or }
I_{\phi_{\utw t_-}^{-1}(X)}= \left[[\utw t_+]\up \min I_X^\utw,[\utw t_+]\up \max I_X^\utw\right]
$$ depending on whether $\max I_X^\utw=\max DI_{\utw t_+}^\utw$ or not.
\item If $X$ is not an annulus, $\gamma_t$ does not intersect $X$ essentially, and $j, j+g_t\in DI_t\subseteq I_X$, then $\tau_j|X=\tau_{j+g_t}|X$ (up to isotopy).

\item If $\bigcup_{t=t_0}^{t_1} (DI_t\cup NI_t)\subseteq (DI_{t_0}\cup NI_{t_0})\cup I_X$, and $\gamma_t$ does not intersect $X$ for $t_0+1\leq t \leq t_1$, then all the respective maps $\phi_t\circ \phi_{t_0}^{-1}$ have their restriction to $\phi_{t_0}(X)$ isotopic to the inclusion $\phi_{t_0}(X)\hookrightarrow S$; and all the $\hat\phi_t: S^X\rightarrow S^{\phi_t(X)}=S^{\phi_{t_0}(X)}$, lift of the respective $\phi_t$, are isotopic to $\hat\phi_{t_0}$.

As a consequence, $\utw\tau_{\dn j}|\phi_{t_0}(X)=\hat\phi_{t_0}(\tau_j|X)$ for all $j\in \bigcup_{t=t_0}^{t_1} (DI_t\cup NI_t)$. If, in addition, there is an interval $I\subseteq I_X$ such that $I\subseteq \left(\bigcup_{t=t_0}^{t_1} (DI_t\cup NI_t)\right)\cup DK_{t_1+1}$ (setting $DK_{r+1}=\{N\}$ for simplicity), then $\utw\tau_{[t_1]\dn j}|\phi_{t_0}(X)=\hat\phi_{t_0}(\tau_j|X)$ for all $j\in I$.
\end{enumerate}
\end{lemma}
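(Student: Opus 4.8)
The plan is to treat parts 1--2 as the analytic heart and parts 3--7 as formal consequences built on two elementary principles. The first is the telescoping identity $\phi_t\circ\phi_{t_0}^{-1}=D_{\gamma_t}^{c_t}\circ\cdots\circ D_{\gamma_{t_0+1}}^{c_{t_0+1}}$ with $c_s=\epsilon_s(2\mathsf{K}_0+4-m_s)$, immediate from the recursion $\phi_s=D_{\gamma_s}^{c_s}\circ\phi_{s-1}$; this expresses the difference between two of the untwisting diffeomorphisms as a product of individual Dehn twists about a controlled set of the $\gamma_s$. The second is that a self-homeomorphism of $S$ supported away from a subsurface $X$ fixes the induced track on $X$: it lifts to a homeomorphism of $S^X$ supported off $X\subseteq S^X$, hence fixes $\cc(X)$ pointwise, hence carries $\tau^X$ to an isotopic pretrack with the same induced subtrack $\tau|X$. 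For part 2, if $\gamma_t$ does not cut $\partial X$ it is isotopic into $X$ or into $S\setminus X$. In the second case $D_{\gamma_t}$ is supported off $X$ and, since along $DI_t$ the sequence has twist nature about $\gamma_t$ (each move altering $\tau_j$ only near $\tau_j.\gamma_t$), $\tau_j|X$ is constant on $DI_t$; so the $\mathrm{diam}_X$--conditions on $\cc^*$ and $\cc$ hold for all or none of $DI_t$, giving $DI_t\subseteq I_X$ or $DI_t\cap I_X=\emptyset$. In the first case ($\gamma_t\subseteq X$), Lemma \ref{lem:pantsboundunderdt}(1) identifies $(D_{\gamma_t}(\tau))|X$ with $D_{\gamma_t}(\tau|X)$, so along $DI_t$ the induced track is twisted about $\gamma_t\subseteq X$ with rotation number $\geq 2\mathsf{K}_0+4$; by Lemma \ref{lem:onerollingdirection} it keeps filling $S^X$ with $\cc$-- and $\cc^*$--diameter $\geq 3$, so again $DI_t\subseteq I_X$ since $I_X$ is an interval.

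For part 1, suppose $\gamma_t$ cuts $\partial X$. Then $D_{\gamma_t}$ is not a mapping class of $X$, and for any curve $\alpha$ the projection $\pi_X(D_{\gamma_t}^{\,n}\alpha)$ lies within a bounded distance of $\pi_X(\gamma_t)$ once $|n|$ exceeds an absolute constant. Along $DI_t$ one has $V(\tau_{j+g_t})=D_{\gamma_t}^{\epsilon_t}\cdot V(\tau_j)$ and $\cc^*(\tau_{j+g_t})=D_{\gamma_t}^{\epsilon_t}\cdot\cc^*(\tau_j)$, and similarly for the diagonal extensions computing $\cc(\tau_j|X)$ (here Lemma \ref{lem:weightsaftertwist} controls how carried curves wind about $\tau.\gamma_t$); hence once $j$ is at least $g_t$ indices past either end of $DI_t$, the projections $\pi_X(\cc^*(\tau_j|X))$ and $\pi_X(\cc(\tau_j|X))$ are pinned near $\pi_X(\gamma_t)$, so $\mathrm{diam}_X(\cc^*(\tau_j|X))<3$ and $\mathrm{diam}_X(\cc(\tau_j|X))<3$, putting such $j$ outside $I_X$. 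As $I_X$ and $DI_t$ are integer intervals, $DI_t\cap I_X$ has at most $2g_t$ indices. Making this quantitative on both the $m_X$ side (via $\cc^*$) and the $n_X$ side (via $\cc$ and Lemma \ref{lem:weightsaftertwist}) is the main obstacle; everything else is bookkeeping on top of it.

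For parts 3 and 7: for $t_-<s\leq t_+$ elementary inequalities (the $\max DI_s$ increasing, $\min I_X\in DI_{t_-}\cup NI_{t_-}$, and the hypothesis that no $t$ has $I_X\subsetneq DI_t$) give $DI_s\subseteq I_X$; since $|DI_s|=m_sg_s\geq(2\mathsf{K}_0+4)g_s>2g_s$, part 1 forbids $\gamma_s$ from cutting $\partial X$. By the telescoping identity (and the conjugation by $\phi_{t_-}$ relating $\phi_t^{-1}\circ\phi_{t_-}$ to a product $\prod_s D_{\phi_{t_-}^{-1}(\gamma_s)}^{-c_s}$, which is where most of the algebraic bookkeeping sits), $\phi_t^{-1}\circ\phi_{t_-}$ is a product of Dehn twists about curves not cutting $\partial X$, hence fixes $X$ and each component of $\partial X$ up to isotopy of $S$. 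Part 7 is the same argument under the stronger hypothesis that the $\gamma_s$ ($t_0<s\leq t_1$) are disjoint from $X$: then each $D_{\gamma_s}$ in the telescoped product is supported off $X$, so $\phi_t\circ\phi_{t_0}^{-1}$ is too and restricts to $\phi_{t_0}(X)$ as the inclusion, with lift $\hat\phi_t$ isotopic to $\hat\phi_{t_0}$; the identities $\utw\tau_{\dn j}=\phi_{t(j)}(\tau_j)$ and $\utw\tau_{[t_1]\dn j}=\phi_{t_1}(\tau_j)$ of Definition \ref{def:untwistedsequence}, together with the second principle of the first paragraph, give $\utw\tau_{\dn j}|\phi_{t_0}(X)=\hat\phi_{t_0}(\tau_j|X)$ on the stated ranges.

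For parts 4--6: part 6 is the base computation — $\gamma_t$ disjoint from $X$ makes the lift $\hat D_{\gamma_t}$ of $D_{\gamma_t}$ supported off $X\subseteq S^X$, so from $\tau_{j+g_t}=D_{\gamma_t}^{\epsilon_t}(\tau_j)$ (valid as $j,j+g_t$ are aligned in the period structure of $DI_t$) we get $\cc(\tau_{j+g_t}|X)=\hat D_{\gamma_t}^{\epsilon_t}(\cc(\tau_j|X))=\cc(\tau_j|X)$, whence $\tau_{j+g_t}|X=\tau_j|X$ since the induced track is determined by the branches its carried curves traverse. For parts 4--5, observe that the accessible interval $I_Y$ of a subsurface $Y$ depends only on the induced tracks $\tau_i|Y$ (the $\cc^*$-- and $\cc$--diameters for $Y$ non-annular; whether the core is a twist curve for $Y$ annular, an induced-track property by Lemma \ref{lem:twistininduced}), and these data are transported isomorphically by the homeomorphism $\phi_{t_-}$; since parts 3 and 7 show that the index correspondence $[t_+]\dn$ matches $\tau_j|X$ with $\utw\tau_{[t_+]\dn j}|\phi_{t_-}(X)$ up to $\hat\phi_{t_-}$ on the relevant range — this being exactly where the hypothesis ``no $t$ with $I_X\subsetneq DI_t$'' and the use of $[t_+]\dn$ rather than $\dn$ are needed, so that the $DK_t$ blocks surviving in $\utw\bm\tau$ line up with the start of $I_X$ — one obtains $[t_+]\dn I_X=I^\utw_{\phi_{t_-}(X)}$, and part 5 is the same statement read through $[\utw t_+]\up$, the two cases there arising according to whether $\max I_X^\utw$ coincides with $\max DI_{\utw t_+}^\utw$.
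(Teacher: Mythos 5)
The formal superstructure of your proposal (telescoping the $\phi_t$, homeomorphism-invariance of $\cc$- and $\cc^*$-diameters, and the resulting bookkeeping across parts 3--7) agrees with the paper's. Parts 2, 3, 6 and 7 are essentially the paper's arguments. Your appeal to part 7 inside parts 4--5 is slightly off: for $t_-<s\leq t_+$ one only knows $\gamma_s$ does not cut $\partial X$, so it may still be essential in $X$, in which case the untwisted induced tracks are not $\hat\phi_{t_-}$-images of the originals; the conclusion nevertheless survives because membership in the accessible interval is determined by the $\cc$- and $\cc^*$-diameters, which are preserved by the homeomorphisms $\phi_t$ whatever they do to $X$ internally.

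The genuine gap is in part 1, precisely the step you flag as ``the main obstacle.'' Pinning $\cc(\tau_j|X)$ and $\cc^*(\tau_j|X)$ within bounded distance of $\pi_X(\gamma_t)$ only yields $\mathrm{diam}_X\leq O(1)$, while the definition of $I_X$ demands $\mathrm{diam}_X<3$; there is no reason the implicit constant is $<1$, so this approach cannot close without a fundamentally sharper estimate in $\cc(X)$, which I do not think exists. The paper's proof has a different shape and sidesteps this entirely: assume for contradiction that $k,l\in DI_t\cap I_X$ satisfy $\tau_l=D_{\gamma_t}^{2\epsilon_t}(\tau_k)$, and work in the \emph{annular} complex $\cc(\nei(\gamma_t))$ rather than in $\cc(X)$. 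Since $k,l\in I_X$, both $\cc(\tau_k|X)$ and $\cc(\tau_l|X)$ fill $X$ and hence have nonempty projection to $\nei(\gamma_t)$; Theorem \ref{thm:mmsstructure} makes $\partial X$ wide in $\tau_k$ and $\tau_l$, which, combined with Lemma \ref{lem:onerollingdirection}, confines $\pi_{\nei(\gamma_t)}\cc(\tau_k|X)$ to $V(\tau_k^{\nei(\gamma_t)})\cup D^{\epsilon_t}V(\tau_k^{\nei(\gamma_t)})$, while $\pi_{\nei(\gamma_t)}\cc(\tau_l|X)$ lands in $\bigcup_{i\geq2}D^{\epsilon_t i}V(\tau_k^{\nei(\gamma_t)})$. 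These sets are disjoint, yet $\cc(\tau_l|X)\subseteq\cc(\tau_k|X)$ (since $\tau_k|X$ carries $\tau_l|X$) forces the projections to be nested. The contradiction gives $|DI_t\cap I_X|\leq 2g_t$. The decisive ingredient you are missing is the projection to $\cc(\nei(\gamma_t))$ together with wideness of $\partial X$, in place of a direct diameter estimate in $\cc(X)$.
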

\begin{proof}
When $X$ is an annulus, claim 1 is a direct consequence of Lemma \ref{lem:smallinterference}, according to which one must have $\rot_{\bm\tau}(\gamma_t; DI_t\cap I_X)=0$. For $X$ not an annulus, suppose for a contradiction that $\#\left(DI_t\cap I_X\right)\geq 2g_t+1$, which means that there are two indices $k,l\in DI_t\cap I_X$ such that $\tau_l=D_{\gamma_t}^{2\epsilon_t}(\tau_k)$. Since $k,l\in I_X$, $\cc(\tau_k|X),\cc(\tau_l|X)$ both have diameter $\geq 3$ in $\cc(X)$, so they fill $X$, and both sets must include a curve which essentially intersects $\gamma_t$ in $S$, as $\gamma_t$ essentially intersects $X$: i.e. $\pi_{\nei_t}\cc(\tau_k|X),\pi_{\nei_t}\cc(\tau_l|X)\not=\emptyset$ where $\nei_t$ is a regular neighbourhood of $\gamma_t$.

From Theorem \ref{thm:mmsstructure} and the subsequent remark, we know that any efficient position for $\partial X$ in $\tau_k$ or $\tau_l$ is wide. Therefore, when lifting an efficiently positioned $\partial X$ to $S^{\nei_t}$, it will not traverse the same branch of $\tau_k^{\nei_t}$, or of $\tau_l^{\nei_t}$, twice in the same verse.

We claim that $\pi_{\nei_t}\cc(\tau_k|X)\subseteq V(\tau_k^{\nei_t})\cup D_{\nei_t}^{\epsilon_t}\cdot V(\tau_k^{\nei_t})$, for $D_{\nei_t}$ denoting the Dehn twist in $S^{\nei_t}$ about its core. It is certainly true, to start with, that $\pi_{\nei_t}\cc(\tau_k|X)\subseteq \cc(\tau_k^{\nei_t})$; if there is an $\alpha\in \pi_{\nei_t}\cc(\tau_k|X) \cap \left(D_{\nei_t}^{\epsilon_t i}\cdot V(\tau_k^{\nei_t})\right)$ for $i\not=0,1$, then $i>1$ by Lemma \ref{lem:onerollingdirection}, implying that $\alpha$, when embedded in $\tau_k^{\nei_t}$, traverses thrice a branch $b$ contained in $\tau_k^{\nei_t}.\gamma_t$ (see point \ref{itm:hl_vs_multiplicity} after Definition \ref{def:horizontallength}). Now, $\alpha$ shall be essentially disjoint from all components of $\pi_{\nei_t}(\partial X)$: but then, any component of $\pi_{\nei_t}(\partial X)$, assuming $\partial X$ in efficient position, shall traverse $b$ at least twice; and this contradicts the fact that it is wide.

However, $\cc(\tau_l^{\nei_t}) \subseteq D_{\nei_t}^{2\epsilon_t} \cdot \cc(\tau_k^{\nei_t})= \bigcup_{i=2}^\infty D_{\nei_t}^{\epsilon_t i}\cdot V(\tau_k^{\nei_t})$ and this, together with the fact just proved, implies in particular $\pi_{\nei_t}\cc(\tau_k|X)\cap \pi_{\nei_t}\cc(\tau_l|X)=\emptyset$. On the other hand $\pi_{\nei_t}\cc(\tau_l|X)\subseteq \pi_{\nei_t}\cc(\tau_k|X)$ because $\tau_k|X$ carries $\tau_l|X$, and they are both nonempty. This is a contradiction.

As for claim 2: let now $[k,l]\coloneqq DI_t$, so that $\tau_l=D_{\gamma_t}^{\epsilon_t m_t}(\tau_k)$. There is a lift $\hat D: S^X\rightarrow S^X$ of $D_{\gamma_t}^{\epsilon_t m_t}$ which fixes $X$ and each component $\partial X$ up to isotopy in $S^X$. So $\tau_l^X= \hat D(\tau_k^X)$ and $\cc(\tau_l^X)= \hat D\cdot \cc(\tau_k^X)$, yielding that also $\tau_l|X=\hat D(\tau_k|X)$. 

Now, when $X$ is an annulus, $j\in I_X$ means that $\tau_j|X$ is combed, i.e. its core $\gamma$ is a twist curve of $\tau_j$ (Lemma \ref{lem:twistininduced}); and it is clear from the above that $\gamma$ is a twist curve for $\tau_k$ if and only if it is one for $\tau_l=D_{\gamma_t}^{\epsilon_t m_t}(\tau_k)$ and for all tracks in between, proving the claim in this case. When $X$ is not an annulus, $\cc(\tau_l|X)=\hat D^{\epsilon_t m_t}\cdot\cc(\tau_k|X)$ and $\cc^*(\tau_l|X)=\hat D^{\epsilon_t m_t}\cdot\cc^*(\tau_k|X)$. So the sets $\cc(\tau_k|X), \cc(\tau_l|X)$ have the same diameter in $\cc(X)$, and the same is true of $\cc^*(\tau_k|X), \cc^*(\tau_l|X)$. Again, this means that $k\in I_X$ if and only if $l\in I_X$.

For claim 3 note, first of all, that it were $t_->t_+$ then $\max DI_{t_-}> \max I_X$ leading to $I_X\subsetneq DI_{t_-}$, contradicting the hypothesis.  Then $t_-\leq t_+$. The claim is proved if one proves that, for each $t_-\leq t<t+1\leq t_+$, the diffeomorphism $\phi_{t+1}^{-1}\circ\phi_t=D_{\gamma_{t+1}}^{-\epsilon_{t+1}(2\mathsf{K}_0+4-m_{t+1})}$ fixes $X$ and and each component of $\partial X$, up to isotopies of $S$. For this value of $t$, $\min DI_{t+1}\geq \min I_X$ and $\max DI_{t+1}\leq \max I_X$, so claims 1 and 2 apply, and imply that $D_{\gamma_{t+1}}$ fixes $X$ and each of the connected components of $\partial X$ (again, up to isotopies of $S$): the same is true of any power of it.

To prove claim 4: let $I_X=[k,l]$. Suppose first that $X$ is not an annulus. 

Note that if $t_+<r$ then, from claims 1 and 2 above, it can be derived that not only $l<\max DI_{t_+ +1}$, but also $l<\max DK_{t_+ +1}$. Therefore (even for $t_+=r$), if $j\in I_X$ then $j'\coloneqq [t_+]\dn j$ has the property that $\utw\tau_{j'}=\phi_t(\tau_j)$ for a suitable $t-\leq t\leq t_+$: this is a consequence of the basic remarks and definitions given above. So $\mathrm{diam}_{\cc\left(\phi_t(X)\right)}\left(\cc(\utw\tau_{j'}|\phi_t(X)\right)$, $\mathrm{diam}_{\cc\left(\phi_t(X)\right)}\left(\cc^*(\utw\tau_{j'}|\phi_t(X)\right)\geq 3$ just by application of $\phi_t$ to $\tau_j$. But it has been just proved above that, for the considered values of $t$, $\phi_t(X)=\phi_{t_-}(X)$. This yields $j'\in I_{\phi_{t_-}(X)}^\utw$ i.e. one inclusion is proved for $X$ not an annulus.

For the opposite inclusion, suppose $l<N$: this implies that, if $t_+<r$, then $l+1\leq \max DK_{t_+ +1}$. Thus (even for $t_+=r$) not only $\utw\tau_{[t_+]\dn l}=\phi_{t_+}(\tau_l)$, but also $\utw\tau_{[t_+]\dn (l+1)}=\phi_{t_+}(\tau_{l+1})$. Since $\mathrm{diam}_{\cc(X)}\left(\cc(\tau_{l+1}|X)\right)<3$, also $\mathrm{diam}_{\cc(\phi_{t_+}(X))}\left(\cc(\utw\tau_{[t_+]\dn(l+1)}|\phi_{t_+}(X))\right)<3$. As $\phi_{t_+}(X)=\phi_{t_-}(X)$, this yields $[t_+]\dn(l+1)\not\in I_{\phi_{t_-}(X)}^\utw$.

Similarly, suppose that $k>0$: then $k-1\geq \min DI_{t_-}$, thus $\utw\tau_{[t_+]\dn k}=\phi_{t_-}(\tau_k)$ and $\utw\tau_{[t_+]\dn (k-1)}=\phi_{t_-}(\tau_{k-1})$. Since $\mathrm{diam}_{\cc(X)}\left(\cc^*(\tau_{k-1}|X)\right)<3$, also\linebreak $\mathrm{diam}_{\cc(\phi_{t_-}(X))}\left(\cc^*(\utw\tau_{[t_+]\dn(k-1)}|\phi_{t_-}(X))\right)<3$, and this means that $[t_+]\dn(k-1)\not\in I_{\phi_{t_-}(X)}^\utw$. To sum up, considering that the cases $l=N$ or $k=0$ are trivial, $I_{\phi_{t_-}(X)}^\utw\subseteq [t_+]\dn I_X$.

If $X$ is an annulus a similar argument applies: but, rather than looking at the preservation of the diameter of the considered sets under the respective diffeomorphisms, the preserved property is whether the core curve of $X$, which turns into the core of $\phi_{t_-}(X)$, is twist.

In order to prove claim 5, note that similarly as above one has $\utw t_-\leq \utw t_+$. Let $k\coloneqq [\utw t_-]\up(\min I_X^\utw)$, and let $Y\coloneqq \phi_{\utw t_-}^{-1}(X)$. Necessarily $k\in I_Y$, since $[\utw t_-]\dn k=\min I_X^\utw$ and $\utw\tau_{\min I_X^\utw}=\phi_{\utw t_-}(\tau_k)$. Suppose there is a $t$ such that $I_Y\subsetneq DI_t$. There are two cases to discern.

If $k=\max NI_{\utw t_-}=\min DI_{\utw t_-+1}$, then $t=\utw t_-+1$ and $\max I_Y< \max DI_t$; also $\max I_Y< \max DK_t$ by claim 1. But then $\utw\tau_{\max DI_t}=\phi_{\utw t_-}(\tau_{\max DK_t})$ implying, with arguments similar to the ones already seen above, that $\max DI_t^\utw$ and so that $I_X\subseteq \left[[\utw t_-]\dn k, \max DI_t^\utw-1\right]$. But necessarily $\dn k=\min DI_t^\utw$ resulting in $I_X\subsetneq DI_t^\utw$, a contradiction.

If $k$ is any other index, then $t=\utw t_-$, and $\min DI_t\not \in I_Y$. This implies, by definition of $k$ as $[\utw t_+]\up(\min I_X^\utw)$, which by $t_-\leq t_+$ is the same as $\up(\min I_X^\utw)$, that $k\in DL_t$, $k\not=\min DL_t$. Either $\max DI_t=N$, or $\max DI_t+1\not\in I_Y$ and $\utw\tau_{\max DI_t^\utw+1}=\phi_{\utw t_-}(\tau_{\max DI_t+1})$. Both scenarios imply $\max I_X\leq \max DI_t^\utw$, but on the other hand also $\min I_X=[\utw t_+]\dn k > \min DI_t^\utw$ by assumption. Hence the same contradiction as above: $I_X\subsetneq DI_t^\utw$.

Therefore there is no $t$ with $I_Y\subsetneq DI_t$ and claim 4 applies: $I_X=[\utw t_+]\dn I_Y$. Now if, for any fixed value of $t$, $I_Y\supseteq DK_t$ or $DL_t$, then $I_Y\supseteq DI_t$, by claims 1 and 2 above. This completes the proof of the claim.

In claim 6, note that $\tau_{j+g_t}=D_{\gamma_t}^{\epsilon_t}(\tau_j)$. Since $\gamma_t$ does not intersect $X$ essentially, $D_{\gamma_t}|_X:X\rightarrow S$ is isotopic to the inclusion map $X\hookrightarrow S$. The map has a lift $\hat D: S^X\rightarrow S^X$ whose restriction to $X=\core(S^X)$ is again isotopic to the inclusion map. This means that $\hat D$ is itself isotopic to $\mathrm{id}_{S^X}$. Hence $\tau_j^X,\tau_{j+g_t}^X=\hat D(\tau_j^X)$ are pretracks isotopic in $S$, and $\tau_j|X$,$\tau_{j+g_t}|X$ are, too.

For the first statement in claim 7, note that $\phi_t\circ \phi_{t_0}^{-1}$ is a number of Dehn twist about curves essentially disjoint from $X$, so it follows as an application of what has been said previously. As for the second statement, we know that for all $j\in \bigcup_{t=t_0}^{t_1} (DI_t\cup NI_t)$ we have $\utw\tau_{\dn j}=\phi_{t(j)}(\tau_j)$ and $\dn j=[t_1]\dn j$ while, for all $j\in I\setminus \bigcup_{t=t_0}^{t_1} (DI_t\cup NI_t)\subseteq DK_{t_1+1}$, we have $\utw\tau_{[t_1]\dn j}=\phi_{t_1}(\tau_j)$. Lifting, we find that $\utw\tau_{\dn j}|\phi_{t(j)}(X)=\hat\phi_{t(j)}(\tau_j|X)$ and $\utw\tau_{[t_1]\dn j}|\phi_{t_1}(X)=\hat\phi_{t_1}(\tau_j|X)$, respectively, in the two specified cases; but $\phi_{t(j)}(X)$ and $\phi_{t_1}(X)$ are isotopic to $\phi_{t_0}(X)$ in both scenarios.
\end{proof}

\begin{rmk}\label{rmk:pantsboundunderdt_cutting}
There is a constant $C'_2=C'_2(S)$ such that the following are true. In the setting of the above lemma, for $X$ not an annulus, suppose that $\gamma_t$ cuts $\partial X$ essentially, and let $k,l\in DI_t\cap I_X$. Then $d_{\pa(X)}(\tau_k|X,\tau_l|X)\leq C'_2$, and\linebreak $d_Y\left(V(\tau_k|X),V(\tau_l|X)\right)\leq C'_2$ for all $Y\subseteq X$ non-annular subsurfaces. Incidentally, it has been already noted in the remark after Lemma \ref{lem:decreasingfilling} that $V(\tau_k|X),V(\tau_l|X)$ are both vertices of $\pa(X)$.

\ul{Note:} the constants $C_2,C'_2$ will be merged into a single one $C_2$ after the present remark.

A straightforward consequence of claim 1 in the above lemma is that $\rot_{\bm\tau}(\gamma_t; k,l)\leq 2$ so, by Lemma \ref{lem:twistsplitnumber}, at most $5N_3^2$ splits --- which are all twist ones about $\gamma_t$ --- occur in $\bm\tau(k,l)$. A combinatorial finiteness argument, entirely similar to the one in the proof of Lemma \ref{lem:pantsboundunderdt}, gives an upper bound $K$, depending on $S$ only, for $i(\alpha,\beta)$ where $\alpha\in V(\tau_k)$ and $\beta\in V(\tau_l)$.

For each $Y\subseteq X$ non-annular subsurface, $\pi_Y V(\tau_k),\pi_Y V(\tau_l)$ are both nonempty; and the intersection number between any pair of elements, one from each set, is not greater than $4K+4$ (Remark \ref{rmk:subsurf_inters_bound}). Hence there is a bound on $d_Y\left(V(\tau_k), V(\tau_l)\right)$ (Lemma \ref{lem:cc_distance}), and one on $d_{\pa(X)}\left(V(\tau_k), V(\tau_l)\right)$ (Theorem \ref{thm:mmprojectiondist}, Lemma \ref{lem:pantsquasiisom}).

Lemma \ref{lem:induction_vertices_commute} commutes these bounds into ones for the required distances.
\end{rmk}

\begin{coroll}\label{cor:subsurface_bijection}
Let $\bm\tau=(\tau_j)_{j=0}^N$ be a $(\gamma_1,\ldots,\gamma_r)$-arranged train track splitting sequence, evolving firmly in a subsurface $S'$, not necessarily connected, of $S$. Let $\Sigma_1(\bm\tau)$ be the family of all connected components of $S'$, and let $\Sigma_2(\bm\tau)$ be the family of all (isotopy classes of) subsurfaces $X\subseteq S'$ of $S$ such that $I_X$ is not empty and not strictly contained in a single $DI_t$, $1\leq t \leq r$. Define $\Sigma_1(\utw\bm\tau)=\Sigma_1(\bm\tau)$, and $\Sigma_2(\utw\bm\tau)$ similarly as above (adding superscripts $^\utw$).

Also let $\Sigma(\bm\tau)\coloneqq \Sigma_1(\bm\tau)\cup \Sigma_2(\bm\tau)$ and $\Sigma(\utw\bm\tau)\coloneqq \Sigma_1(\utw\bm\tau)\cup \Sigma_2(\utw\bm\tau)$.

For each $X\in \Sigma_2(\bm\tau)$, let $\phi_X\coloneqq \phi_{t_-}$ for $t_-=t_-(X)$ defined as in claim 3 of Lemma \ref{lem:untwistedsubsurfaces} above.

Then the map $\Sigma(\bm\tau)\rightarrow \Sigma(\utw\bm\tau)$, defined by $X\mapsto\utw X\coloneqq X$ for $X\in \Sigma_1(S)$, and by $X\mapsto \utw X\coloneqq \phi_X(X)$ (up to isotopies in $S$) for $X\in\Sigma_2(S)$, is a bijection.
\end{coroll}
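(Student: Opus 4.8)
The plan is to organize everything around the decomposition $\Sigma=\Sigma_1\cup\Sigma_2$, treating $\Sigma_1$ trivially and packaging Lemma~\ref{lem:untwistedsubsurfaces} to handle $\Sigma_2$. On $\Sigma_1$ there is nothing to prove: by definition $\Sigma_1(\utw\bm\tau)=\Sigma_1(\bm\tau)$ and the map restricts to the identity there. First I would dispose of well-definedness, i.e.\ check that the two defining rules agree on the overlap $\Sigma_1(\bm\tau)\cap\Sigma_2(\bm\tau)$. If $C$ is a component of $S'$, then every $\gamma_t$ is contained in some connected component of $S'$ and hence can be realized disjointly from $\partial C$; therefore each $\phi_t$, being a composition of powers of Dehn twists about the $\gamma_s$, fixes $C$ together with every component of $\partial C$ up to isotopy of $S$, so $\phi_C(C)=C$ and the two rules coincide on $C$. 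The same observation shows that the inverse rule introduced below is well defined on the corresponding overlap, and that $\phi_X$ carries a component of $S'$ to itself and a non-component to a non-component, so the $\Sigma_1$- and $\Sigma_2$-images meet only in components of $S'$.

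Next I would show that $\utw$ sends $\Sigma_2(\bm\tau)$ into $\Sigma_2(\utw\bm\tau)$. Given $X\in\Sigma_2(\bm\tau)$, set $t_-=t_-(X)$ and take $t_+$ as in claim~3 of Lemma~\ref{lem:untwistedsubsurfaces}; since $I_X$ is not strictly contained in a single $DI_t$, claim~4 applies and gives $I^\utw_{\phi_{t_-}(X)}=[t_+]\dn I_X$, which is nonempty. From the description of $[t_+]\dn$ — a bijection $DL_t\to DI_t^\utw$ and $NI_t\to NI_t^\utw$ for $t_-\le t\le t_+$, plus the shift on $DK_{t_++1}$ — one reads off that $[t_+]\dn I_X$ is again not strictly contained in a single $DI_t^\utw$; hence $\utw X=\phi_X(X)\in\Sigma_2(\utw\bm\tau)$. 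Symmetrically I would introduce the candidate inverse $\Psi\colon\Sigma(\utw\bm\tau)\to\Sigma(\bm\tau)$, equal to the identity on $\Sigma_1(\utw\bm\tau)$ and to $Y\mapsto\phi_{\utw t_-(Y)}^{-1}(Y)$ on $\Sigma_2(\utw\bm\tau)$; claim~5 of the same lemma shows that for $Y\in\Sigma_2(\utw\bm\tau)$ the surface $\phi_{\utw t_-(Y)}^{-1}(Y)$ has accessible interval in $\bm\tau$ equal to one of the two intervals displayed there, in particular nonempty and not strictly inside a single $DI_t$, so $\Psi(Y)\in\Sigma_2(\bm\tau)$.

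The last step is to check that $\utw$ and $\Psi$ are mutually inverse, and here everything reduces to the bookkeeping identity $\utw t_-\bigl(\phi_{t_-(X)}(X)\bigr)=t_-(X)$ and its mirror image. By claim~4, $\min I^\utw_{\phi_{t_-}(X)}=[t_+]\dn(\min I_X)$; since $t_-(X)=t(\min I_X)$ places $\min I_X$ in $DI_{t_-}\cup NI_{t_-}$ but outside every earlier block of $\bm\tau$, the description of $[t_+]\dn$ places $[t_+]\dn(\min I_X)$ in $DI^\utw_{t_-}\cup NI^\utw_{t_-}$ but outside every earlier block of $\utw\bm\tau$, i.e.\ $\utw t\bigl([t_+]\dn\min I_X\bigr)=t_-$. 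Hence $\phi_{\utw t_-(\utw X)}=\phi_{t_-(X)}=\phi_X$ and $\Psi(\utw X)=\phi_X^{-1}\phi_X(X)=X$; the identity $\utw(\Psi Y)=Y$ is obtained in the same way, using the endpoint $\min I_{\Psi Y}=[\utw t_+]\up(\min I^\utw_Y)$ furnished by claim~5 to get $t_-(\Psi Y)=\utw t_-(Y)$. Combined with the trivial bijection on $\Sigma_1$ and the well-definedness remarks of the first paragraph, this proves that $X\mapsto\utw X$ is a bijection $\Sigma(\bm\tau)\to\Sigma(\utw\bm\tau)$.

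The hard part will be purely combinatorial rather than geometric: reconciling the re-indexing conventions $\dn,\up,[t]\dn,[t]\up$ at the endpoints of accessible intervals — in particular the degenerate situations where an accessible interval equals a single $DI_t$ (or a single $DI_t^\utw$), or where it abuts a block boundary — so that the two verifications ``$\utw t_-(\utw X)=t_-(X)$'' and ``$t_-(\Psi Y)=\utw t_-(Y)$'' become airtight. All of the geometric substance is already carried by claims~3--5 of Lemma~\ref{lem:untwistedsubsurfaces}.
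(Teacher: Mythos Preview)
Your proposal is correct and uses the same ingredients as the paper, namely claims 3--5 of Lemma~\ref{lem:untwistedsubsurfaces}, but organized differently. The paper proves injectivity and surjectivity separately: for injectivity it does a short case split on whether $t_-(X_1)=t_-(X_2)$ (if equal, $\phi_{X_1}=\phi_{X_2}$ is a diffeomorphism; if not, the accessible intervals $I^\utw_{\utw X_1}$ and $I^\utw_{\utw X_2}$ differ), and surjectivity is read off directly from claim~5. You instead construct the explicit inverse $\Psi$ and verify the two compositions via the bookkeeping identity $\utw t_-(\utw X)=t_-(X)$. Your route is a bit more laborious at the endpoint-tracking stage you flag at the end, whereas the paper's injectivity argument sidesteps that by comparing the accessible intervals themselves rather than their starting blocks; but both are sound and rest on exactly the same lemma.
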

\begin{proof}
The two definitions are easily seen to agree for $X\in \Sigma_1(\bm\tau)\cap \Sigma_2(\bm\tau)$. Also, the fact that the map is bijective easily follows from its restriction $\Sigma_2(\bm\tau)\rightarrow\Sigma_2(\utw\bm\tau)$ being so.

We first prove injectivity. Given $X_1,X_2\in \Sigma_2(\bm\tau)$, they may have:
\begin{itemize}
\item $t_-(X_1)=t_-(X_2)$. In this case $\phi_{X_1}=\phi_{X_2}$; so $\utw X_1,\utw X_2$ are isotopic surfaces if and only if $X_1,X_2$ are.
\item $t_-(X_1)<t_-(X_2)$ (without loss of generality). In this case $\dn(\min I_{X_1})<\dn(\min I_{X_2})$ so $I_{X_1}^\utw=[t_+(X_1)]\dn I_{X_1}\not=[t_+(X_2)]\dn I_{X_2}=I_{X_2}^\utw$. This implies, in particular, $X_1,X_2$ not isotopic in $S$.
\end{itemize}

Surjectivity is a straightforward consequence of claim 5 in the lemma above.
\end{proof}

Note that, in particular, all regular neighbourhoods of curves $\nei(\gamma_t)\in \Sigma(\bm\tau)$, so there is a bijection between the family $\nei(\gamma_1),\ldots, \nei(\gamma_r)$ and the corresponding $\utw\left(\nei(\gamma_1)\right),\ldots, \utw\left(\nei(\gamma_r)\right)$. This means, in particular, that their core curves $\utw\gamma_1\coloneqq \phi_{\nei(\gamma_1)}(\gamma_1),\ldots, \utw\gamma_r\coloneqq \phi_{\nei(\gamma_r)}(\gamma_r)$ are all distinct, meaning that $\utw\bm\tau$ is $(\utw\gamma_1,\ldots,\utw\gamma_r)$-arranged and that $\utw(\utw\bm\tau)=\utw\bm\tau$.

\begin{prop}\label{prp:locallyfinite}
Let $S$ be a surface. There is a constant $C_5(S)$ such that the following is true.

Let $\bm\tau=(\tau_j)_{j=0}^N$ be a $(\gamma_1,\ldots,\gamma_r)$-arranged splitting sequence, evolving firmly in a subsurface $S'$, not necessarily connected, of $S$. Let $X\in \Sigma(\bm\tau)$, $X$ not an annulus, and let $[k,l]$ be an interval of indices for $\bm\tau$, with the condition that $[k,l]\subseteq I_X$ if $X$ is not a connected component of $S'$. Then
$$
d_{\pa(\utw X)}(\dn k,\dn l)^\utw \leq C_5 \left(d_{\pa(X)}(k,l)\right)^2
\text{ and }
d_{\pa(X)}(k,l) \leq C_5 \left(d_{\pa(\utw X)}(\dn k,\dn l)^\utw\right)^2.
$$

There are two increasing functions $\Psi_S, \Psi'_S:[0,+\infty)\rightarrow [0,+\infty)$ such that, if $\gamma_1,\ldots,\gamma_r$ are the effective twist curves of $\bm\tau$ and $\bm\tau$ is effectively arranged, then
$$
d_{\ma(\utw X)}(\dn k,\dn l)^\utw \leq \Psi_S\left(d_{\pa(\utw X)}(\dn k,\dn l)^\utw\right) \leq \Psi'_S\left( d_{\pa(X)}(k,l) \right).
$$
\end{prop}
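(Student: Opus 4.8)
The statement compares pants and marking distances along $\bm\tau$ with the corresponding distances along $\utw\bm\tau$, for subsurfaces $X\in\Sigma(\bm\tau)$. The approach will be a subsurface-by-subsurface analysis based on the projection distance formula (Theorem \ref{thm:mmprojectiondist}) together with its version for $\pa$ and $\ma$ from Lemma \ref{lem:pantsquasiisom}, and on the bijection $\Sigma(\bm\tau)\to\Sigma(\utw\bm\tau)$ of Corollary \ref{cor:subsurface_bijection}. The key is: a subsurface $Y\subseteq X$ of $S$ contributes to the summation $d'_{\pa(X)}(k,l)$ (that is, $d_Y(k,l)\geq M$) only in rather controlled ways, and I need to match up its contribution with that of $\utw Y\subseteq\utw X$ to $d'_{\pa(\utw X)}(\dn k,\dn l)^\utw$. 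So the first step is to set up, via Lemma \ref{lem:untwistedsubsurfaces} (especially claims 3, 4, 5, 7) and Corollary \ref{cor:subsurface_bijection}, a correspondence between non-annular $Y\subseteq X$ with $I_Y\cap[k,l]$ large enough and non-annular $\utw Y\subseteq\utw X$, under which $\utw\tau_\cdot|\utw Y$ is the image of $\tau_\cdot|Y$ under a lift of some $\phi_t$, hence $d_{\utw Y}(\dn k,\dn l)^\utw$ is close to $d_Y(k,l)$.

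\textbf{Main steps, in order.} First I would treat the case where $X$ is a connected component of $S'$ (so $\utw X=X$), and then reduce the general $X\in\Sigma_2$ case to it by noting that, by Lemma \ref{lem:untwistedsubsurfaces}(4), $[k,l]\subseteq I_X$ corresponds to $[\dn k,\dn l]\subseteq I_{\utw X}^\utw$ and the induced sequences $(\tau_j|X)$, $(\utw\tau_{j'}|\utw X)$ are related by diffeomorphisms. Second, for each non-annular $Y\subseteq X$, I distinguish: (a) $Y$ such that $I_Y$ is not contained in a single $DI_t$ — then $Y\in\Sigma_2(\bm\tau)$, so $\utw Y=\phi_Y(Y)$ is defined and by Lemma \ref{lem:untwistedsubsurfaces}(7) the projections onto $Y$ and $\utw Y$ differ by a diffeomorphism, so $d_Y(k,l)$ and $d_{\utw Y}(\dn k,\dn l)^\utw$ are equal up to a uniform additive constant (absorbing the bounded discrepancies on the subintervals $DK_t\setminus DL_t$ via Remark \ref{rmk:pantsboundunderdt_cutting} and Lemma \ref{lem:pantsboundunderdt}); (b) $Y$ with $I_Y\subseteq DI_t$ for some $t$, in which case by Lemma \ref{lem:pantsboundunderdt}(4) and Remark \ref{rmk:pantsboundunderdt_cutting} we have $d_Y(k,l)\leq C_2$, so $Y$ contributes nothing to the sum once $M>C_2$; and symmetrically on the $\utw$ side. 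This matches the non-annular part of the two sums up to a bounded number of bounded-contribution discrepancies, one per $t$, hence at most $r\leq$ (number of effective twist curves) many — and here is where the square appears: by Proposition \ref{prp:tcbound} (applicable because $\bm\tau$ is $(\gamma_1,\ldots,\gamma_r)$-arranged and $\{\gamma_t\}$ fills, so the hypotheses are met with $X$ in place of $S$) we have $r\leq C_3 d'_{\pa(X)}(k,l)+C_4$, so each unit of discrepancy costs only one extra unit times $r$, giving a bound of the form $C\cdot d'_{\pa(X)}\cdot(d'_{\pa(X)}+1)$, i.e. quadratic. Third, translating $d'_{\pa(\cdot)}$ back to $d_{\pa(\cdot)}$ (of vertex sets) via Lemma \ref{lem:pantsquasiisom} and Lemma \ref{lem:induction_vertices_commute} changes the constants only. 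Fourth, for the $\ma$ statement I redo the same bookkeeping but now the sum includes annular $Y$; the annular contributions on the $\bm\tau$ side that are large are exactly those coming from effective twist curves (by Theorem \ref{thm:mmsstructure}(1)), and for those the untwisting replaces $\rot\approx m_t$ Dehn twists by exactly $2\mathsf K_0+3$ of them, so $d_{\nei(\gamma_t)}^\utw(\dn k,\dn l)$ is bounded by a constant while $d_{\nei(\gamma_t)}(k,l)$ can be large; hence $d_{\ma(\utw X)}$ differs from $d_{\pa(\utw X)}$ by at most a bounded contribution per effective twist curve, again at most $r$ of them, hence (by Proposition \ref{prp:tcbound} again) at most a polynomial — I would package this into the monotone functions $\Psi_S,\Psi'_S$, the second one also swallowing the $d_{\pa(X)}$-to-$d_{\pa(\utw X)}$ comparison just proved.

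\textbf{The hard part.} The main obstacle is the bookkeeping of the correspondence $Y\leftrightarrow\utw Y$ for subsurfaces $Y$ whose accessible interval $I_Y$ straddles the boundary between a $DI_t$-block and an adjacent $NI_t$-block, or straddles several such blocks while $\gamma_t$ does or does not cut $\partial Y$ — exactly the situation dissected in Lemma \ref{lem:untwistedsubsurfaces}. One must verify that for every such $Y$ either $Y$ lies in $\Sigma_2(\bm\tau)$ (so the diffeomorphism matching applies cleanly) or its contribution is uniformly bounded, and that no non-annular $\utw Y\subseteq\utw X$ with large contribution is left unmatched; this requires a careful case analysis keeping track of when $\gamma_t$ intersects $\partial Y$, using claims 1, 2, 6, 7 of the lemma. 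The second delicate point is making sure the number of ``exceptional'' subsurfaces whose contribution I simply discard is genuinely $O(r)$ and not larger, since otherwise I would not get a polynomial bound; this is where the constraint that distinct exceptional subsurfaces are associated to distinct $t$'s (Corollary \ref{cor:subsurfacesdontrepeat} and the structure of Definition \ref{def:arranged}) is essential. Everything else — the passage between $d'$ and $d$, the translation to vertex sets, and the assembly of $\Psi_S,\Psi'_S$ — is routine given the earlier lemmas.
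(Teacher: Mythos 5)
Your plan is genuinely different from the paper's. The paper proves the pants-graph inequalities by an \emph{interval} decomposition: $[k,l]$ is cut into the untwist-compatible pieces $XDL_s\cup XNI_s$ (which map, via a lift of $\phi_{t_s}$, to $XDI_s^\utw\cup XNI_s^\utw$ with exactly matching induced tracks) and the discarded pieces $XDI_s\setminus XDL_s$; the reverse triangle inequality (Lemma \ref{lem:reversetriangle}) bounds the pants distance over each piece by $c_0\,d_{\pa(X)}(k,l)+c_1$, and summing over the $q+1$ pieces together with $q\leq c\,d_{\pa(X)}(k,l)$ from Proposition \ref{prp:tcbound} gives the quadratic bound directly. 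You propose instead a \emph{subsurface-by-subsurface} comparison via Corollary \ref{cor:subsurface_bijection}, matching $[d_Y(k,l)]_M$ against $[d_{\utw Y}(\dn k,\dn l)^\utw]_M$ term by term; this is a different organizing principle, and the paper's proof does not invoke that bijection at all.

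There is, however, a gap in the core per-subsurface estimate. You assert that for $Y\in\Sigma_2(\bm\tau)$ the quantities $d_Y(k,l)$ and $d_{\utw Y}(\dn k,\dn l)^\utw$ agree up to a uniform additive constant, but this does not follow from the diffeomorphism matching alone. By claims 3--4 of Lemma \ref{lem:untwistedsubsurfaces} the endpoint projections into $\cc(\utw Y)$ differ from the $\phi_{t_-(Y)}$-images of the projections into $\cc(Y)$ by the action of $\phi_{t_-(Y)}^{-1}\circ\phi_{t(l)}$, which fixes $Y$ and $\partial Y$ but acts on $\cc(Y)$ as a product of up to $q$ Dehn twists of (large, variable) power $|m_t-(2\mathsf{K}_0+4)|$ about curves interior to $Y$. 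Showing this moves $\pi_Y V(\tau_l)$ by only a bounded amount is a nontrivial statement about twist subsequences, and proving it leads you straight back to the interval analysis (Lemma \ref{lem:pantsboundunderdt}, Remark \ref{rmk:pantsboundunderdt_cutting}) you are trying to avoid. Even granting the claim, the bookkeeping does not close: ``at most $r$ discrepancies of bounded contribution'' sums to $O(r)=O(d'_{\pa(X)}(k,l))$ --- a \emph{linear} bound --- and the jump to ``$C\cdot d'_{\pa(X)}\cdot(d'_{\pa(X)}+1)$'' via ``each unit of discrepancy costs one extra unit times $r$'' is asserted, not derived. In the paper the square falls out for a structural reason: there are $q+1=O(d_{\pa(X)}(k,l))$ subintervals, and Lemma \ref{lem:reversetriangle} bounds the pants distance of each individual piece by $O(d_{\pa(X)}(k,l))$. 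Your sketch of the marking-graph half is close in spirit to the paper's Steps 3--4 (uniform annular bounds from effective arrangement, then a threshold $M''$ large enough to kill the annular contributions), but it rests on first having the pants-graph bound established.
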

\begin{proof}

\step{1} preparation.

The curves $\gamma_1,\ldots,\gamma_r$ come with the usual condition that $(\min DI_t)_t$ is an increasing sequence. Let $1\leq t_1<\ldots<t_q\leq r$ be the indices such that $DI_{t_s}\cap [k,l] \not=\emptyset$ and $\gamma_t$ is essentially \emph{not} disjoint from $X$. So each of these $\gamma_{t_s}$ may be essentially contained in $X$ or intersect $\partial X$ essentially; but, according to claims 1 and 2 in Lemma \ref{lem:untwistedsubsurfaces}, the latter case may occur only for $s=1,q$ because all the other $DI_{t_s}\subseteq [k,l]$.

We establish a parallel notation for intervals of indices of $\bm\tau$, one that resembles the one used so far but focuses only on $X$ and on the family $\gamma_{t_1},\ldots,\gamma_{t_q}$. First of all rename these curves as $\delta_1,\ldots,\delta_q$. Then, for $s=1,\ldots,q$, denote $XDI_s\coloneqq DI_{t_s}\cap [k,l]$, and $XDL_s \coloneqq DL_{t_s}\cap [k,l]$. For $s=1,\ldots, q-1$, moreover, define $XNI_s\coloneqq [\max XDI_s,\min XDI_{s+1}]$. Define also $XNI_0\coloneqq [k, \min XDI_1]$, $XNI_q\coloneqq [\max XDI_q,l]$. Each $XNI_s$ is a union of intervals $NI_t$, and of intervals $DI_t$ for some $\gamma_t$ not intersecting $X$ essentially. In order to improve compatibility with the formulas, let also $XDI_0\coloneqq XDL_0\coloneqq \{k\}$.

We define also intervals related to the sequence $\utw\bm\tau$: $XDI_s^\utw\coloneqq \dn XDI_s$ for all $s=1,\ldots, q$, $XNI_s^\utw\coloneqq \dn XNI_s$ for all $s=0,\ldots, q$.

The collection $V(\tau_l|X)$ is a vertex of $\pa(X)$: if $X$ is strictly contained in a connected component of $S'$, this derives from the hypothesis $l\in I_X$. Otherwise we know from Lemma \ref{lem:decreasingfilling} that $V(\tau_l|X)$ consists of all elements of $V(\tau_l)$ which are essentially contained in $X$ and not isotopic to connected components of $\partial X$: these elements fill $X$ and abide by the mutual intersection bound established in accordance with Remark \ref{rmk:pickparameters}.

Now, $\bm\tau$ is, in particular, $(\delta_1,\ldots,\delta_q)$-arranged: so, by Proposition \ref{prp:tcbound}, simplifying its statement, there is a bound $q\leq c\cdot d_{\pa(X)}(k,l)$ for a $c=c(S)$. On the other hand: $\utw\bm\tau$ is $(\utw\gamma_{t_1},\ldots,\utw\gamma_{t_q})$-arranged; these curves are all contained in $\utw X$ except for possibly $\utw\gamma_{t_1}, \utw\gamma_{t_q}$; and either $\dn l \in I_{\utw X}^\utw$ or $\utw X=X$ is a connected component of $S'$. This implies that $V(\utw\tau_{\dn l}|\utw X)$ is a vertex of $\pa(\utw X)$ so, again according to Proposition \ref{prp:tcbound}, it is also true that $q\leq c \cdot d_{\pa(X)}(\dn k,\dn l)^\utw$.

Denote, for simplicity, $t_0\coloneqq 0$ and $t_{q+1}\coloneqq r+1$. The following argument is straightforward if we are working on $X\in \Sigma_1(\bm\tau)$, and is motivated by claim 7 in Lemma \ref{lem:untwistedsubsurfaces} if $X\in \Sigma_2(\bm\tau)$. For $0\leq s \leq q$, if $t_s\leq t < t_{s+1}$ then $\phi_t|_X=\phi_{t_s}|_X$; also, if $\hat \phi_t: S^X\rightarrow S^{\phi_t(X)}$ is the lift of $\phi_t$, then the sequence $\left(\utw\tau_j|\utw X\right)_{j\in XDI_s^\utw \cup XNI_s^\utw}$ is obtained from $(\tau_j|X)_{j\in XDL_s\cup XNI_s}$ applying $\hat\phi_{t_s}$ to each entry, and removing some repetitions of tracks in the Dehn intervals $DI_t\subset XNI_s$, where $\gamma_t$ does not intersect $X$ essentially. So $d_{\pa(\utw X)}(XDI_s^\utw\cup XNI_s^\utw)^\utw=d_{\pa(X)}(XDL_s\cup XNI_s)$. 

\step{2} reciprocal bounds for distances in the pants graph.

Fix an index $s$, and denote simply $a\coloneqq \min XDL_s$; $b\coloneqq \max XDI_s$; $\phi\coloneqq \phi_{t(\min XDL_s)}$. Take $\mathsf{R}_0=\max_{Y\subseteq S \text{ subsurface}} \mathsf{R}_0(Y,Q)$, for $Q$ the quasi-isometry constant introduced in Theorem \ref{thm:mms_cc_geodicity}, as defined in Lemma \ref{lem:reversetriangle}. By those theorem and lemma, we have that for any non-annular subsurface $Y\subseteq X$, $d_Y\left(\tau_a|X,\tau_b|X\right) \leq  d_Y(\tau_k|X,\tau_l|X)+ 2\mathsf{R}_0$.

Let $M'=M+2\mathsf{R}_0$ --- for the definition of $M$, see the beginning of \S \ref{sub:twistcurvebound} --- we have
$$
\sum_{\substack{Y\subset X\text{ essential} \\ \text{and non-annular}}} [d_Y(\tau_a|X,\tau_b|X)]_{M'} =
\sum_{\substack{Y\subset X\text{ essential} \\ \text{and non-annular}}} \left([d_Y(\tau_k|X,\tau_l|X)]_M+ \mathsf{r}(Y)\right)
$$

where $\mathsf{r}(Y)=2\mathsf{R}_0$ if it comes alongside a nonzero summand, and $0$ otherwise. So $[d_Y(\tau_k,\tau_l)]_M+ \mathsf{r}(Y)\leq \frac{M'}{M} [d_Y(\tau_k|X,\tau_l|X)]_M$. Therefore, by Theorem \ref{thm:mmprojectiondist} and Lemma \ref{lem:pantsquasiisom} there are constants $c_0, c_1$ depending on $S$ (and $M$) such that $d_{\pa(X)}(\tau_a,\tau_b)\leq c_0 d_{\pa(X)}(\tau_k,\tau_l) + c_1$.

Similarly, we get $d_{\pa(\utw X)}(\utw\tau_{\dn a},\utw\tau_{\dn b})\leq c_0 d_{\pa(\utw X)}(\utw\tau_{\dn k},\utw\tau_{\dn l}) + c_1$.

So, on the one hand,
\begin{eqnarray*}
 & d_{\pa(\utw X)}(\dn k,\dn l)^\utw \leq \sum_{s=0}^q d_{\pa(\utw X)}(XDI_s^\utw\cup XNI_s^\utw)^\utw = & \\
 &  \sum_{s=0}^q d_{\pa(X)}(XDL_s\cup XNI_s) \leq (q+1) \left(c_0 d_{\pa(X)}(k,l)+c_1\right) & 
\end{eqnarray*}
and, as already pointed out, $q\leq c\cdot d_{\pa(X)}(k,l)$. This, together the considerations at the beginning of \S \ref{sub:twistcurvebound}, proves the existence of a $C_5(S)$ such that $d_{\pa(\utw X)}(\dn k,\dn l)^\utw\leq C_5 \left(d_{\pa(X)}(k,l)\right)^2$.

On the other hand,
\begin{eqnarray*}
 & d_{\pa(X)}(k, l) \leq \sum_{s=0}^q \left(d_{\pa(X)}(\min XDI_s,\min XDL_s) + d_{\pa(X)}(XDL_s\cup XNI_s) \right). & 
\end{eqnarray*}

For $s=1,\ldots, q$ (neglect the dummy index $s=0$), $\bm\tau(\min XDI_s,\min XDL_s)$ has twist nature about $\delta_s$ which is either contained in $X$ or cutting $\partial X$ essentially. So the sequence falls in the kind treated in either the last bullet of Lemma \ref{lem:pantsboundunderdt}, or Remark \ref{rmk:pantsboundunderdt_cutting}, implying that $d_{\pa(X)}(\min XDI_s,\min XDL_s)\leq C_2(S)$. Continuing the chain of inequalities:
\begin{eqnarray*}
\ldots & \leq \sum_{s=0}^q \left(C_2 + c_0 d_{\pa(\utw X)}(XDI_s^\utw\cup XNI_s^\utw)^\utw +c_1\right) \leq & \\
 & \leq (q+1)\left(C_2+c_0 d_{\pa(\utw X)}(\dn k,\dn l)^\utw + c_1\right) & 
\end{eqnarray*}

Again as already pointed out, $q\leq c \cdot d_{\pa(\utw X)}(\dn k,\dn l)^\utw$ therefore $C_5(S)$ can be taken so that $d_{\pa(X)}(k,l)\leq C_5 \left(d_{\pa(\utw X)}(\dn k,\dn l)^\utw\right)^2$.

\step{3} bounds for distances in annular subsurfaces, when $\bm\tau$ is effectively arranged.

Let $\alpha\in \cc(X)$ be a curve and, for ease of notation, let simply $\nei=\nei(\alpha)$ be a regular neighbourhood of $\alpha$ in $X$. We wish to prove the existence of constants depending on $S$ such that, for all $0\leq s\leq q$, and all $j,j'\in XDL_s\cup XNI_s$, $d_\nei(\tau_j|X,\tau_{j'}|X)$ is bounded by this constant. Note that each $V\left((\tau_j|X)^\nei\right)$ is a subset of the respective $V(\tau_j^\nei)$.

\begin{itemize}
\item If $\alpha$ is none of the $\delta_s$ then, as it follows immediately from the definition of effective twist curve, $d_\nei(\tau_0|X,\tau_N|X)< 4\mathsf{K}_0+19$; and consequently, $d_\nei(\tau_j|X,\tau_{j'}|X)< 4\mathsf{K}_0+2\mathsf{R}_0+19$.
\item If $\alpha=\delta_u$ for $u\not=s$, then $[j,j']\subseteq G_{t_u-}\cup I_{t_u-}$ or $[j,j']\subseteq I_{t_u+}\cup G_{t_u+}$ --- see Definition \ref{def:arranged}. The first case implies that $d_\nei(\tau_j|X,\tau_{j'}|X)\leq \mathsf{\mathsf{K}_0}+2\mathsf{R}_0+9$, and the second one that $d_\nei(\tau_j|X,\tau_{j'}|X)\leq \mathsf{K}_0+6$.
\item If $\alpha=\delta_s$ then $[j,j']\subseteq DL_s\cup I_{s+}\cup G_{s+}$. If $[j,j']\subseteq DL_s$ then $\rot_{\bm\tau}(\delta_s;j,j')\leq 2\mathsf{K}_0+4$ hence $d_\nei(\tau_j|X,\tau_j'|X)\leq 2\mathsf{K}_0+8$ by point \ref{itm:rot_vs_dist} in Remark \ref{rmk:rotbasics}. If $[j,j']\subseteq I_{s+}\cup G_{s+}$ then same as above applies, and if $j,j'$ range in the union of the three intervals then $d_\nei(\tau_j|X,\tau_{j'}|X)$ is bounded by the sum of the two previous bounds.
\end{itemize}

This proves the existence of the desired bound, which we call $c'$. As for the splitting sequence $\utw\bm\tau$, let $\alpha\in\cc(\utw X)$.  

Fix a value $0\leq s \leq q$, and recall the conclusions of Step 1. If $t_s\leq t < t_{s+1}$ then $\phi_t|X=\phi_{t_s}|X$ and in particular there is a curve $\beta_s\in \cc(X)$ such that $\phi_t(\beta_s)=\phi_{t_s}(\beta_s)=\alpha$. If $\hat{\hat\phi}_t: S^{\nei(\beta_s)}\rightarrow S^{\nei(\alpha)}$ is the lift of $\phi_t$, then the sequence $\left((\utw\tau_j|X)^{\nei(\alpha)}\right)_{j\in XDI_s^\utw \cup XNI_s^\utw}$ is obtained from $\left((\utw\tau_j|X)^{\nei(\beta_s)}\right)_{j\in XDL_s\cup XNI_s}$ applying $\hat{\hat\phi}_{t_s}(\min XDL_s)$ to each entry, and removing some repetitions of entries in the intervals $DI_t\subset XNI_s$, where $\gamma_t$ does not intersect $X$ essentially. So if $[a,b]=XDL_s\cup XNI_s$ then $[\dn a,\dn b]=XDI_s^\utw\cup XNI_s^\utw$ and  $d_{\nei(\alpha)}(\utw\tau_{\dn a}|\utw X,\utw\tau_{\dn b}|\utw X)=d_{\nei(\beta_s)}(\tau_a|X,\tau_b|X)\leq c'$.

\step{4} bound for $d_{\ma(\utw X)}(k,l)^\utw$, when $\bm\tau$ is effectively arranged.

The distance bounds seen above prove immediately that\linebreak $d_{\nei(\alpha)}(\utw\tau_{\dn k}|\utw X,\tau_{\dn l}|\utw X)\leq c'(q+1)$ for all $\alpha\in \cc(\utw X)$.

Let then $M''\coloneqq \max \{M,c'(q+1)+1\}$. We have
$$
\sum_{Y\subset \utw X\text{ essential}} [d_Y(\utw\tau_{\dn k}|X,\tau_{\dn l}|X)]_{M''} =
\sum_{\substack{Y\subset \utw X\text{ essential} \\ \text{and non-annular}}} [d_Y(\utw\tau_{\dn k}|X,\tau_{\dn l}|X)]_{M''}
$$
and this implies, via the usual Theorem \ref{thm:mmprojectiondist} and Lemma \ref{lem:pantsquasiisom}, that\linebreak $d_{\ma(\utw X)}(\dn k,\dn l)^\utw\leq f_0 d_{\pa(\utw X)}(\dn k,\dn l)^\utw + f_1$. Here, $f_0$ and $f_1$ derive from $e_0,e_1$ cited in Theorem \ref{thm:mmprojectiondist} and may be supposed, by taking looser bounds, to depend on $S$ and $M''$ only, and be increasing functions in the variable $M''$.

Using a simplified notation $d_\utw\coloneqq d_{\pa(\utw X)}(\dn k,\dn l)^\utw$ and $d\coloneqq d_{\pa(X)}(k,l)$:
\begin{eqnarray*}
 & d_{\ma(\utw X)}(\dn k,\dn l)^\utw\leq f_0\left(S,c'' d_\utw\right) d_\utw + f_1\left(S,c'' d_\utw\right) & \\
 &\leq C_5 f_0\left(S,c'' C_5 d^2\right) d^2 + f_1\left(S, c'' C_5 d^2\right) & 
\end{eqnarray*}
where $c''$ is a further constant, deriving from the previous ones $c,c'$; and we are using the fact that $q\leq c d_\utw$, $d_\utw\leq d^2$. This proves our claim.
\end{proof}

\subsection{Proof of the main statement}\label{sub:conclusion}

In this subsection we prove Theorem \ref{thm:core}.

\begin{defin}
Let $\bm\tau=(\tau_j)_{j=0}^N$ be a splitting sequence of birecurrent train tracks on $S$, and let $X\subseteq S$ be a non-annular subsurface. Let $0\leq j <N$ be an index such that $\tau_j$ splits to $\tau_{j+1}$. We say that this split move is \nw{visible} in $X$ if, in order to turn $\tau_j|X$ into $\tau_{j+1}|X$, a split is required (i.e. it does not suffice to apply comb equivalences and/or take subtracks).

We denote with $|\bm\tau|_X$ the number of splits in $\bm\tau$ which are visible in $X$.
\end{defin}


Paraphrasing the statement of Theorem \ref{thm:core}, we want to show that the distance induced in the pants graph by a splitting sequence is, up to constants, the number of splits in the untwisted sequence: this is easy in one direction.

\begin{prop}\label{prp:easyttbound}
There is a constant $C_6=C_6(S)$ such that the following is true. Let $\bm\tau=(\tau_j)_{j=0}^N$ be a generic splitting sequence of recurrent, cornered train tracks on $S$. Let $X$ be a subsurface of $S$ with $X\supseteq S'$ the subsurface, not necessarily connected, filled by $V(\tau_0)$, and suppose that $V(\tau_N)$ is a vertex of $\pa(S)$. Then
$$d_{\pa(X)}\left(\tau_0,\tau_N\right)\leq C_6 |\utw(\rar\bm\tau)|.$$
\end{prop}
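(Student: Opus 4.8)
The plan is to reduce the claim to the distance formula (Theorem~\ref{thm:mmprojectiondist} together with Lemma~\ref{lem:pantsquasiisom}), applied to $\utw(\rar\bm\tau)$, and then to bound each nonzero subsurface contribution $[d_Y(\utw(\rar\tau)_0,\utw(\rar\tau)_{N'})]_M$ against a count of splits in $\utw(\rar\bm\tau)$ that are visible in the relevant subsurface. More precisely, write $\bm\sigma\coloneqq\utw(\rar\bm\tau)=(\sigma_j)_{j=0}^{N'}$; it begins with $\tau_0$ (since $\rar$ and $\utw$ fix the first entry --- see Proposition~\ref{prp:rearrang2} and Definition~\ref{def:untwistedsequence}, and the extension to sequences not evolving firmly in Definition~\ref{def:not_firmly}). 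The endpoint $\sigma_{N'}$ is not $\tau_N$ in general, but its vertex set differs from $V(\tau_N)$ in a controlled way, because untwisting replaces a block of $m_t$ Dehn twists about $\gamma_t$ with $2\mathsf K_0+4$ of them: this changes $\tau_N$ by a uniformly bounded number of Dehn twists about curves that are pairwise disjoint up to isotopy (Remark~\ref{rmk:twist_disjointness}), so the two vertex sets have bounded intersection number and hence bounded distance in $\pa(S)$ by Lemma~\ref{lem:cc_distance}, Theorem~\ref{thm:mmprojectiondist} and Lemma~\ref{lem:pantsquasiisom}. So it suffices to bound $d_{\pa(X)}(\sigma_0,\sigma_{N'})$.

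First I would invoke the distance formula for $\pa(X)$: $d_{\pa(X)}(\sigma_0,\sigma_{N'})=_{(e_0,e_1)}\sum_{Y\subseteq X,\,Y\text{ not an annulus}}[d_Y(\sigma_0,\sigma_{N'})]_M$, valid once $V(\sigma_{N'})$ --- equivalently $V(\tau_N)$, up to bounded correction --- is a vertex of $\pa(X)$, which holds by the hypothesis on $V(\tau_N)$ together with Lemma~\ref{lem:decreasingfilling} and the remark following it. For each non-annular $Y\subseteq X$ with $d_Y(\sigma_0,\sigma_{N'})\geq M$, the sequence $(\pi_Y V(\sigma_j))_j$ is a $Q$-unparametrized quasi-geodesic in $\cc(Y)$ by Theorem~\ref{thm:mms_cc_geodicity} (the entries of $\bm\sigma$ are cornered birecurrent, these properties being preserved by $\rar$ as in the Remark after Proposition~\ref{prp:rearrang2} and by $\utw$ by construction). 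The key geometric input is: if $d_Y(\sigma_0,\sigma_{N'})$ is large then the accessible interval $I_Y$ for $Y$ in $\bm\sigma$ must contain many visible splits. Indeed, by the structure theorem (Theorem~\ref{thm:mmsstructure}, part~1) the motion in $\cc(Y)$ outside $I_Y$ is $\leq 2\mathsf K_0$, so $d_Y(\sigma_0,\sigma_{N'})\leq d_Y(I_Y)+2\mathsf K_0$; and inside $I_Y$ each step changes $\tau_j|Y$ by at most a split or a subtrack or slides (Theorem~\ref{thm:mmsstructure}, part~3), each such move displacing $\pi_Y V(\cdot)$ by a uniformly bounded amount (Lemmas~\ref{lem:vertexsetbounds}, \ref{lem:induction_vertices_commute}, and Remark~\ref{rmk:pickparameters}). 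Therefore $d_Y(\sigma_0,\sigma_{N'})\leq_{(c,c)}|\bm\sigma|_Y$ for a constant $c=c(S)$, where $|\bm\sigma|_Y$ counts splits visible in $Y$.

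The combinatorial heart of the argument is then to sum these local split-counts: $\sum_{Y}[d_Y(\sigma_0,\sigma_{N'})]_M \leq_{(c',c')}\sum_{Y}|\bm\sigma|_Y$, and the right-hand side is $\leq \xi(S)\cdot|\bm\sigma|$, because a single split of $\sigma_j$ into $\sigma_{j+1}$ can be visible in only boundedly many --- at most $\xi(S)$ --- of the (isotopy classes of, non-annular) subsurfaces $Y$: a split visible in $Y$ must alter $\tau_j|Y$, and the subsurfaces in which a fixed split is visible form a nested chain capped by $\xi(S)$, essentially because the curve complexes of disjoint subsurfaces cannot simultaneously register the same local combinatorial change without the change touching all of them. (This "at most $\xi(S)$ subsurfaces see one move" fact is the analogue, in this setting, of the bounded-multiplicity estimates in \cite{masurminskyii}, and is the step I would spell out most carefully.) Combining, $d_{\pa(X)}(\sigma_0,\sigma_{N'})\leq_{(e_0,e_1)}\sum_Y[d_Y]_M\leq c''\xi(S)|\bm\sigma|+c''=c''\xi(S)|\utw(\rar\bm\tau)|+c''$. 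Absorbing the bounded correction between $\sigma_{N'}$ and $\tau_N$, and noting $|\utw(\rar\bm\tau)|\geq 1$ whenever the distance is nonzero (else both sides vanish), yields the desired inequality with a suitable $C_6=C_6(S)$.

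\textbf{Main obstacle.} The step I expect to be the real obstacle is establishing that each split is visible in at most $\xi(S)$ subsurfaces with the right uniformity --- that is, making precise and correct the claim that a single elementary move on the ambient track induces a \emph{visible} (split-requiring, not merely comb/subtrack) move on $\tau_j|Y$ for only a controlled family of $Y$, and that this family behaves additively across the summation. This requires unwinding the induction construction $\tau|Y$ of \S\ref{sub:induced} and the way a split of $\tau_j$ reflects on induced tracks (the remark closing \S\ref{sub:induced}: a split induces either a subtrack move or splits), and checking that the "visible" ones are organized along the decreasing chain of filled subsurfaces so that at most $\xi(S)$ of them record the split non-trivially. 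Everything else --- the distance formula, the structure theorem, the reverse triangle inequality, and the bounded correction at the endpoint --- is a direct application of the already-quoted results.
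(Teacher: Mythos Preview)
Your approach has two genuine gaps, and the paper's argument is in fact much simpler than what you propose.

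\textbf{The endpoint correction is wrong.} You claim that $\sigma_{N'}=\utw(\rar\tau)_{N'}$ differs from $\tau_N$ by Dehn twists about curves that are ``pairwise disjoint up to isotopy,'' invoking Remark~\ref{rmk:twist_disjointness}. But that remark concerns the twist \emph{collars} of disjoint twist curves; the effective twist curves $\gamma_1,\ldots,\gamma_r$ themselves need not be pairwise disjoint (indeed, much of \S\ref{sub:twistcurvebound}, including the chain arguments in Lemma~\ref{lem:chainbound} and Corollary~\ref{cor:subsurfacesdontrepeat}, is devoted to controlling chains of mutually \emph{intersecting} effective twist curves). Moreover the exponents $m_t-(2\mathsf K_0+4)$ are unbounded, so $\phi_r(\tau_N)$ can be arbitrarily far from $\tau_N$ in $\pa(S)$; there is no uniform correction. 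This is why the paper does \emph{not} work with $\utw(\rar\bm\tau)$ at all for this direction: it works with $\rar\bm\tau$, which has exactly the endpoints $\tau_0,\tau_N$.

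\textbf{The bounded-multiplicity step is not established.} Your claim that a single split is visible in at most $\xi(S)$ non-annular subsurfaces is not justified (the subsurfaces in which a fixed split is visible do not in general form a nested chain), and the summation $\sum_Y|\bm\sigma|_Y$ over all $Y$ with $[d_Y]_M\neq 0$ can in principle over-count badly. A correct version of this would require substantial hierarchy-style input and is not a ``direct application of the already-quoted results.''

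\textbf{What the paper does instead.} The paper's proof is a one-page triangle-inequality argument carried out on $\rar\bm\tau$, not on $\utw(\rar\bm\tau)$. One subdivides the index interval of $\rar\bm\tau$ into $K$ consecutive blocks $[j_i,j_{i+1}]$ of two types: (i) an entire Dehn interval $DI^\rar_s$ (possibly with some slides), on which $d_{\pa(X)}\leq C_2$ by Lemma~\ref{lem:pantsboundunderdt}; (ii) an interval containing exactly one split and disjoint (except possibly at endpoints) from all $DI^\rar_s$, on which $d_{\pa(X)}$ is uniformly bounded by intersection-number arguments (Remark~\ref{rmk:pickparameters}, Remark~\ref{rmk:subsurf_inters_bound}, Lemma~\ref{lem:cc_distance}, Lemma~\ref{lem:induction_vertices_commute}, Theorem~\ref{thm:mmprojectiondist}). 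The triangle inequality gives $d_{\pa(X)}(\tau_0,\tau_N)\leq C_6 K$. Finally $K\leq |\utw(\rar\bm\tau)|$ because each block $[\dn j_i,\dn j_{i+1}]$ in $\utw(\rar\bm\tau)$ contains at least one split (type~(i) blocks retain $2\mathsf K_0+4$ Dehn-twist splits after untwisting; type~(ii) blocks retain their single split). No distance formula, no visibility count, and no endpoint correction are needed.
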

\begin{proof}
Suppose first that $\bm\tau$ evolves firmly in a (possibly disconnected) subsurface $S'$. Define a strictly increasing sequence of indices $(j_i)_{i=0}^{K}$ for the sequence $\rar\bm\tau=(\rar\tau_j)_{j=0}^{N'}$ such that one of the following holds --- using the standard notation of Definition \ref{def:arranged} and of Proposition \ref{prp:rearrang2}.
\begin{itemize}
\item $[j_i,j_{i+1}]=DI^\rar_s$, or consists of $DI^\rar_s$ plus some slide moves, for some $s$; in this case $d_{\pa(S)}(\rar\tau_{i_j},\rar\tau_{i_{j+1}})\leq C_2$ by Lemma \ref{lem:pantsboundunderdt}.
\item $[j_i,j_{i+1}]\cap DI^\rar_s$ is not more than a single index for each $s$, and there is exactly one split move in $(\rar\bm\tau)(j_i,j_{i+1})$. Then $d_{\pa(S)}(\rar\tau_{i_j},\rar\tau_{i_{j+1}})\leq 1$ by the choice of parameters in Remark \ref{rmk:pickparameters} and, in particular, $i\left(V(\rar\tau_{i_j}),V(\rar\tau_{i_{j+1}})\right)\leq \ell$. This bounds uniformly $i\left(\pi_Y V(\rar\tau_{i_j}),\pi_Y V(\rar\tau_{i_{j+1}})\right)$ and then $d_Y\left(\rar\tau_{i_j}|X,\rar\tau_{i_{j+1}}|X\right)$, for each non-annular subsurface $Y\subseteq X$ (with an application of Remark \ref{rmk:subsurf_inters_bound}, Lemma \ref{lem:cc_distance}, Lemma \ref{lem:induction_vertices_commute} successively). Theorem \ref{thm:mmprojectiondist} with Lemma \ref{lem:pantsquasiisom}, then, gives a uniform bound $C_6$ for $d_{\pa(X)}(\rar\tau_{i_j},\rar\tau_{i_{j+1}})$. Suppose $C_6\geq C_2$, in order to make notation simpler in the following chain of inequalities.
\end{itemize}

Now, $|\utw(\rar\bm\tau)|\geq K\geq d_{\pa(S)}(\rar\tau_0,\rar\tau_{N'})/C_6$. The second inequality follows directly from concatenating the ones above, while the first one is due to the fact that each $\left(\utw(\rar\bm\tau)\right)(\dn j_i, \dn j_{i+1})$ contains at least one split. This proves the claim in the specified special case.

More generally, view $\bm\tau$ as a concatenation $\bm\tau^1*\bm\epsilon^1*\bm\tau^2*\ldots*\bm\epsilon^{w-1}*\bm\tau^w$ as pointed out just before Definition \ref{def:not_firmly}. For each $1\leq u\leq w$, let $J^u$ be the interval of indices for $\bm\tau$ coming from $\bm\tau^u$. So
\begin{eqnarray*}
 & |\utw(\rar\bm\tau)|= \sum_{u=1}^w \left(|\psi_u\cdot\utw(\rar\bm\tau^u)|+1\right)-1 \geq & \\
 & \sum_{u=1}^w \left(d_{\pa(X)}\left(\psi_u(\rar\tau_{\min J^u})|X, \psi_u(\rar\tau_{\max J^u}|X)\right)/C_6+1\right) -1 = & \\
 & \sum_{u=1}^w \left(d_{\pa(X)}\left(\rar\tau_{\min J^u}|X, \rar\tau_{\max J^u}|X\right)/C_6+1\right) -1\geq d_{\pa(X)}(\rar\tau_0|X,\rar\tau_{N'}|X)/C_6 &
\end{eqnarray*} 
using the special case proved above combined with the triangle inequality.
\end{proof}

We state Claim 6.14 from \cite{mms} in a slightly more general setting:
\begin{lemma}\label{lem:mms614}
Let $\bm\tau=(\tau_j)_{j=0}^N$ be a splitting sequence of birecurrent train tracks on $S$, and let $X\subseteq S$ be a subsurface. Then there is a constant $N_4=N_4(X)$ such that, if $V(\tau_k|X)=V(\tau_l|X)$, then $|\bm\tau(k,l)|_X\leq N_4$.
\end{lemma}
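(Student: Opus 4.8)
Write $\rho_j:=\tau_j|X$ for $k\le j\le l$. The first step is to observe that $\cc(\rho_j)$ is \emph{constant} on $[k,l]$. Indeed each $\tau_{j+1}$ is fully carried by $\tau_j$ (Proposition \ref{prp:carriediffsplit}), so $\cc(\tau_{j+1}^X)\subseteq\cc(\tau_j^X)$ and hence, by Remark \ref{rmk:decreasingmeasures}, $\cc(\rho_{j+1})\subseteq\cc(\rho_j)$; on the other hand $V(\rho_k)=V(\rho_l)$ forces $\cc(\rho_k)=\cc(\rho_l)$, because a curve lies in $\cc(\rho)$ exactly when its transverse measure lies in the cone $\mathcal M(\rho)$, which is the convex hull of the rays through the vertex cycles (Proposition \ref{prp:measurecurvecorresp} together with Corollary \ref{cor:measurecurvecorresp} for almost tracks, which need not be train tracks here). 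Squeezing the two inclusions gives $\cc(\rho_j)=\cc(\rho_k)$ for every $j\in[k,l]$.

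Next I would upgrade this to \emph{full} carrying: since every vertex cycle of $\rho_j$ lies in $\cc(\rho_{j+1})$, it is carried by $\rho_{j+1}$ and hence by $\rho_j$ with carrying image inside $(\rho_j).(\rho_{j+1})$; as the induced track $\rho_j$ is recurrent (for $X$ non-annular this is immediate from the definition of $\tau|X$, each branch lying in the carrying image of a carried curve, which gives a periodic train path), the union of these images is all of $\rho_j$, so $\rho_{j+1}$ is fully carried by $\rho_j$. Now if $\rho_j$ and $\rho_{j+1}$ were to carry \emph{each other}, then by Proposition \ref{prp:carriediffsplit} (in the almost-track form legitimised by Corollary \ref{cor:carryingunique}) there would be a splitting sequence with no splits between them, i.e. one could pass from $\rho_j$ to $\rho_{j+1}$ by comb-equivalences and slides alone, so the move $\tau_j\to\tau_{j+1}$ would \emph{not} be visible in $X$. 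Contrapositively, each split visible in $X$ forces $\rho_{j+1}$ to sit strictly below $\rho_j$ in the carrying pre-order. Since $\rho_l\preceq\rho_j\preceq\rho_k$ for all $j\in[k,l]$ and $\cc(\rho_j)=\cc(\rho_k)$ throughout, it remains to bound the length of a strictly decreasing chain among carrying-equivalence classes of almost tracks $\rho'$ on $S^X$ with $\rho_l\preceq\rho'\preceq\rho_k$ and $\cc(\rho')=\cc(\rho_k)$.

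The key point is that there are only finitely many such classes, with a bound depending only on $S^X$. Any such $\rho'$ is fully carried by $\rho_k$ with $\cc(\rho')=\cc(\rho_k)$; realising the carrying inside $\bar\nei(\rho_k)$, every compact component of $\bar\nei(\rho_k)\setminus\nei(\rho')$ has zero index (Remark \ref{rmk:idx_of_nei_diff}) and is therefore a rectangle or a bigon-type piece, while carrying is unique up to homotopy through carrying injections (Corollary \ref{cor:carryingunique}). Hence $\rho'$ is determined, up to isotopy, by a combinatorial choice (which branch ends of $\rho_k$ get "folded together") whose number is controlled by the combinatorics of $\rho_k$; since $\rho_k$ has at most $N_1(S^X)$ branches by Lemma \ref{lem:vertexsetbounds}, there is a constant $N(S^X)$ bounding the number of isotopy classes, hence of carrying-equivalence classes, of such $\rho'$. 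A strictly decreasing chain then has length $\le N(S^X)$, so $|\bm\tau(k,l)|_X\le N(S^X)=:N_4(X)$.

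The main obstacle is precisely this last finiteness estimate: turning "fully carried by the fixed almost track $\rho_k$ with the same carried-curve set" into an explicit combinatorial bound in terms of $N_1(S^X)$, which rests on the zero-index description of the carrying complement and on uniqueness of carrying. Two loose ends should also be addressed. First, the annular case of the lemma: here the induced tracks are almost tracks by Lemma \ref{lem:inducedisonsurface}, $\cc(\rho)=\bigcup_{i\ge0}D_X^{\epsilon i}V(\rho)$ with the sign $\epsilon$ constant along the splitting sequence, so $V$-equality still yields $\cc$-equality, and the same argument applies; alternatively one may bypass it using Theorem \ref{thm:mmsstructure}(2) together with Lemma \ref{lem:twistsplitnumber}, since $V(\rho_k)=V(\rho_l)$ forces $\rot_{\bm\tau}(\gamma;\cdot)=0$ on $I_X\cap[k,l]$ and every split visible in an annulus is a twist split. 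Second, one should double-check recurrence of $\rho_j$ in all cases, which follows from Remark \ref{rmk:recurrence_at_extremes} applied to $\tau_j$ (birecurrent, hence recurrent) combined with the fact that taking $\tau|X$ only retains branches carrying some carried curve.
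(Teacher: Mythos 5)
The paper's own proof of this lemma is simply a citation to Claim~6.14 of \cite{mms}, so I am judging your argument on its own merits. Your overall plan -- reduce to constant $\cc(\rho_j)$, upgrade to full carrying, and then bound the length of a strictly decreasing carrying chain with a fixed carried-curve set -- is sound, and the first two steps are essentially correct (your cone argument for $\cc(\rho_k)=\cc(\rho_l)$ is a bit loose in wording, since the measures live in different ambient spaces and you should really argue via the carrying map $\mathcal M(\rho_l)\hookrightarrow\mathcal M(\rho_k)$ being a bijection of cones with the same extremal rays, but the idea is right).

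The genuine gap is in the finiteness step, and it is exactly where you flag the ``main obstacle.'' The observation that every compact piece of $\bar\nei(\rho_k)\setminus\nei(\rho')$ has zero index, together with uniqueness of carrying, holds for \emph{any} track $\rho'$ fully carried by $\rho_k$ -- in particular for the infinitely many pairwise non-isotopic tracks obtained by splitting $\rho_k$ over and over. So that observation, as invoked, cannot by itself yield a finite bound on isotopy classes; you never actually use the hypothesis $\cc(\rho')=\cc(\rho_k)$ in the finiteness argument, which is a warning sign. What is missing is a step that converts the cone equality into a bound on the \emph{multiplicity} of the carrying $\rho'\hookrightarrow\bar\nei(\rho_k)$. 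For instance: every $v\in V(\rho_k)=V(\rho')$ is wide on both tracks, so its transverse measure has all entries in $\{0,1,2\}$ on each side; recurrence of $\rho'$ guarantees each branch $b'$ of $\rho'$ is traversed by some such $v$; and since the carrying map sends $\mu_v^{\rho'}$ to $\mu_v^{\rho_k}$, the incidence matrix entry $L_{b,b'}$ is pinned to $\le 2$. Only once the carrying multiplicity is bounded do you get finitely many combinatorial patterns of $\rho'$ sitting inside $\bar\nei(\rho_k)$ (and then Lemma~\ref{lem:vertexsetbounds} bounds the number of branches, closing the loop). Without this, the claim ``$\rho'$ is determined by which branch ends of $\rho_k$ get folded together'' does not follow, and the chain-length bound is unjustified.
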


The proof is the same as the original claim: in that setting, the splitting sequence had some more properties than in the above statement, but they are not actually employed.

\begin{prop}\label{prp:hardttbound}
There is a constant $C_7(S)$ such that the following is true. Let $\bm\tau=(\tau_j)_{j=0}^N$ be a generic splitting sequence of birecurrent, cornered train tracks, which evolves firmly in a subsurface $S'$ of $S$, not necessarily connected; then, for each non-annular connected component $T$ of $S'$,
$$d_{\pa(T)}(V(\tau_0),V(\tau_N))\geq_{C_7}|\utw(\rar\bm\tau)|_T.$$
\end{prop}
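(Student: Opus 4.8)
The plan is to prove the lower bound $d_{\pa(T)}(V(\tau_0),V(\tau_N))\geq_{C_7}|\utw(\rar\bm\tau)|_T$ by reducing it to an application of Proposition \ref{prp:locallyfinite} together with the quasi-geodicity results of Masur--Mosher--Schleimer (Theorem \ref{thm:mms_main}), adapted so as to work for the \emph{untwisted} sequence rather than the original one. First I would replace $\bm\tau$ with $\rar\bm\tau$: by Proposition \ref{prp:rearrang2} the rearranged sequence is $(\gamma_1,\ldots,\gamma_r)$-effectively arranged, has the same effective twist curves as $\bm\tau$, begins and ends with the same train tracks, and its entries are still birecurrent and cornered; so it suffices to bound $|\utw(\rar\bm\tau)|_T$ in terms of $d_{\pa(T)}(\rar\tau_0,\rar\tau_{N'})$. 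Moreover $\rar\bm\tau$ still evolves firmly in $S'$. So from now on I would assume $\bm\tau$ itself is effectively arranged and work with $\utw\bm\tau$.

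The core of the argument concerns the subsurface $\utw T$, which is $T$ itself since $T\in\Sigma_1(\bm\tau)$. The point is to estimate $|\utw\bm\tau|_T$ from above in terms of $d_{\ma(T)}(\utw\bm\tau)$: by Theorem \ref{thm:mms_main} applied to $\utw\bm\tau$ (the untwisted sequence consists of cornered birecurrent tracks whose vertex sets fill $S'$ — here I would use the remark after Corollary \ref{cor:subsurface_bijection} that $\utw(\utw\bm\tau)=\utw\bm\tau$, and that the untwisting procedure preserves birecurrence and corneredness), the subsequence of split-indices is a $Q$-quasi-geodesic in $\ma(T)$. Crucially, the untwisted sequence no longer has long chunks of Dehn twists about a fixed curve that produce no progress in $\pa$: each Dehn interval $DI_t^\utw$ has length bounded by a function of $2\mathsf{K}_0+4$ and the combinatorics of almost tracks, and in any sub-interval where $V(\utw\tau|T)$ is revisited the number of splits visible in $T$ is bounded by $N_4(T)$ via Lemma \ref{lem:mms614}. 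Combining the quasi-geodesic property with Lemma \ref{lem:mms614} gives $|\utw\bm\tau|_T\leq_{C} d_{\ma(T)}(\utw\tau_0,\utw\tau_{N'})$ for a constant $C=C(S)$; this is the step that demands the most care, as it essentially repeats the proof of Claim 6.12 of \cite{mms} in the present setting, using that the untwisted sequence has the property that bounded $\pa$-distance on a subsequence forces bounded $\ma$-distance (the content of Proposition \ref{prp:locallyfinite}).

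Then I would close the loop with Proposition \ref{prp:locallyfinite}: since $T\in\Sigma(\bm\tau)$ is not an annulus and is a connected component of $S'$ (so no hypothesis $[k,l]\subseteq I_T$ is needed), taking $[k,l]=[0,N]$ gives
$$d_{\ma(\utw T)}(\dn 0,\dn N)^\utw\leq \Psi'_S\bigl(d_{\pa(T)}(0,N)\bigr).$$
Here $\utw T=T$ and $\dn 0=0$, $\dn N=N'$. Chaining this with the bound from the previous paragraph yields
$$|\utw(\rar\bm\tau)|_T\leq_{C} d_{\ma(T)}(\utw\tau_0,\utw\tau_{N'})\leq_{C} \Psi'_S\bigl(d_{\pa(T)}(\rar\tau_0,\rar\tau_{N'})\bigr),$$
and since $d_{\pa(T)}(\rar\tau_0,\rar\tau_{N'})=d_{\pa(T)}(V(\tau_0),V(\tau_N))$ (the rearrangement fixes the endpoints), one obtains a bound of the form $|\utw(\rar\bm\tau)|_T\leq \Psi'_S\bigl(d_{\pa(T)}(V(\tau_0),V(\tau_N))\bigr)$. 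To get the \emph{linear} coarse inequality claimed in the statement one observes — exactly as in the introduction's remark that Theorem \ref{thm:core} itself upgrades the polynomial bound — that $\Psi'_S$ can be replaced by a linear function once one has any bound at all, because $|\utw(\rar\bm\tau)|_T$ and $d_{\pa(T)}$ are both coarsely additive over a decomposition of the sequence into bounded-length chunks, so a super-linear bound on each chunk forces a linear bound overall; alternatively one argues directly along the lines of Proposition \ref{prp:easyttbound} but in reverse. This last polishing step is routine given the machinery; the genuine obstacle is verifying that the MMS quasi-geodicity argument of \cite{mms} §6 transfers verbatim to $\utw\bm\tau$, which is precisely what Propositions \ref{prp:locallyfinite} and \ref{lem:mms614} were set up to guarantee.
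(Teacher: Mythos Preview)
Your proposal has a genuine gap at step 6, and it is not routine. You obtain
$|\utw(\rar\bm\tau)|_T \leq \Psi'_S\bigl(d_{\pa(T)}(V(\tau_0),V(\tau_N))\bigr)$
with $\Psi'_S$ superlinear, and then claim this upgrades to a linear coarse inequality by a chunk-and-add argument. But that argument would require the reverse triangle inequality for $d_{\pa(T)}$ along the sequence $(V(\tau_j))_j$: if you decompose into chunks with bounded $d_{\pa(T)}$ on each, you have no control on the \emph{number} of chunks in terms of $d_{\pa(T)}(0,N)$ unless you already know the sequence is a quasi-geodesic in $\pa(T)$ --- which is exactly what you are trying to prove. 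The pants graph is not hyperbolic, so no general reverse triangle inequality is available, and the remark you cite from the introduction concerns a different bound (on the number of effective twist curves) that is upgraded \emph{after} Theorem~\ref{thm:core} is established, hence is circular here.

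The paper's proof avoids this circularity by applying Proposition~\ref{prp:locallyfinite} \emph{locally} rather than globally, inside an induction on $\xi(X)$ for $X\in\Sigma(\bm\tau)$. One defines \emph{straight} subintervals of $J_X$ (those on which every proper-subsurface projection stays below a threshold $\mathsf{T}_1$); on a straight interval one subdivides into chunks of $d_{\pa(X)}$-length exactly $\mathsf{R}_1$, applies Proposition~\ref{prp:locallyfinite} to each chunk to bound $d_{\ma(\utw X)}$, uses local finiteness of $\ma$ together with Lemma~\ref{lem:mms614} to bound splits per chunk, and --- crucially --- bounds the number $M$ of chunks linearly by $d_X(I)$ via the curve-complex reverse triangle inequality (Lemma~\ref{lem:reversetriangle}), which \emph{is} available because straightness forces $d_{\pa(X)}\approx d_{\cc(X)}$ on that interval. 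The non-straight portions are handled by the induction hypothesis applied to inductive subsurfaces $Y\subsetneq X$. Your global application of Proposition~\ref{prp:locallyfinite} discards precisely the subsurface structure that makes the linear bound possible.
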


The proof of this statement will be very similar to the one of Theorem 6.1 in \cite{mms}, quoted in this work as Theorem \ref{thm:mms_main}. Large portions of our proof, actually, would be verbatim repetitions of pieces of the proof of the mentioned result, up to a replacement of a few names of objects.

The original splitting sequence $\bm\tau$ is never needed in this proof: so we may well identify $\rar\bm\tau=\bm\tau$ and, in particular, suppose that $\bm\tau$ is effectively arranged. Index $\utw\bm\tau=(\utw\tau_j)_{j=0}^{N'}$. In this proof, in order to avoid introducing \emph{ad hoc} notations and arguments, when $T$ is a connected component of $S'$ redefine $I_T\coloneqq[0,N]$ and $I_T^\utw\coloneqq [0,N']$. 

If $X\in \Sigma(\bm\tau)$ is not an annulus and $[p,q]=I\subset I_X$, let $\mathcal T_X(p,q)=\mathcal T_X(I)$ be the set of indices $\dn p\leq j\leq (\dn q)-1$ (in particular $j\in \dn I\subseteq I_{\utw X}^\utw$) such that $\utw\tau_{j+1}$ is obtained from $\utw\tau_j$ via a split which is visible in $\utw X$. Note that for $X\not= T$ Theorem \ref{thm:mmsstructure} holds, and yields that in this case only one split is required. 

The main step in our proof is a tailored version of Proposition 6.9 from \cite{mms}, to be proved by induction on the complexity of subsurfaces of $S'$. We will refer to it as the key claim:

\begin{claim}
Let $X\in \Sigma(\bm\tau)$. There is a constant $C_7(X,S)$ such that for any $J_X\subset I_X$ we have
$$|\mathcal T_X(J_X)|\leq_{C_7} d_{\pa(X)}(J_X).$$
\end{claim}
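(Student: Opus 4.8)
The plan is to prove the key claim by induction on $\xi(X)$, following the architecture of Proposition 6.9 in \cite{mms} but translating every estimate through the machinery developed in this section --- in particular Proposition \ref{prp:locallyfinite}, which relates pants distances along $\bm\tau$ to pants (and marking) distances along $\utw\bm\tau$, and Theorem \ref{thm:mmsstructure}. The base case is $X\cong S_{0,4}$ or $S_{1,1}$: there $\pa(X)$ is quasi-isometric to $\cc(X)$, so $d_{\pa(X)}(J_X)=_Q d_{\cc(X)}(J_X)$; meanwhile $\mathcal T_X(J_X)$ counts the visible splits in $\utw\bm\tau$ over $\dn J_X$, and by Theorem \ref{thm:mms_cc_geodicity} the sequence $\bigl(\pi_{\utw X}V(\utw\tau_j)\bigr)_j$ is a $Q$-unparametrized quasi-geodesic in $\cc(\utw X)$, so that each visible split moves the projection by at most $Q$ while Lemma \ref{lem:mms614} (applied to $\utw\bm\tau$) bounds the number of consecutive visible splits producing no progress by $N_4(\utw X)$. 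Combining these, $|\mathcal T_X(J_X)|\leq_{C_7} d_{\cc(\utw X)}(\dn J_X)^\utw$, and Proposition \ref{prp:locallyfinite} (more precisely its consequence for $d_{\pa(\utw X)}$, hence via the $\cc\leftrightarrow\pa$ quasi-isometry for small-complexity surfaces) converts this back to $d_{\pa(X)}(J_X)$.

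For the inductive step I would split $J_X$ at the indices of the effective twist curves and, following \cite{mms}, bound $|\mathcal T_X(J_X)|$ by a sum of three kinds of contributions: (i) the number of visible splits that occur inside some Dehn interval $DI_t\subseteq J_X$ with $\gamma_t$ essentially intersecting $X$ --- these are controlled because after untwisting each such interval has bounded rotation ($2\mathsf{K}_0+4$), hence bounded many twist splits by Lemma \ref{lem:twistsplitnumber}, and there are at most $C_3 d'_{\pa(X)}(J_X)+C_4$ such curves by Proposition \ref{prp:tcbound}; (ii) the visible splits occurring inside Dehn intervals $DI_t$ with $\gamma_t$ essentially disjoint from $X$ --- by Lemma \ref{lem:untwistedsubsurfaces}(6) the induced track $\tau_j|X$ is unchanged across one period of such an interval, so Lemma \ref{lem:mms614} caps the visible splits per period and the number of periods is controlled by the overall length estimate; and (iii) the visible splits outside all Dehn intervals, which are handled by the inductive hypothesis applied to the proper subsurfaces of $X$ reached by the splitting sequence, together with a ``no return'' principle (Corollary \ref{cor:subsurface_bijection} and Lemma \ref{lem:untwistedsubsurfaces}(3),(7)) ensuring that the maps $\phi_t$ identify the relevant induced tracks so that $d_{\pa(\utw X)}$-distances over the untwisted subintervals equal the $d_{\pa(X)}$-distances over the original ones, exactly as established in Step 1 of Proposition \ref{prp:locallyfinite}.

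The decisive quantitative input is Theorem \ref{thm:mmsstructure} together with Theorem \ref{thm:mms_cc_geodicity}: for $X\neq T$ a split visible in $\utw X$ occurs only while $\dn j$ lies in the accessible interval $I_{\utw X}^\utw$, and by Lemma \ref{lem:untwistedsubsurfaces}(4),(5) this accessible interval is exactly $[\utw t_+]\dn I_X$ (or a slight truncation), so the bookkeeping between $\bm\tau$ and $\utw\bm\tau$ closes; outside $I_X$ there is nothing to count. For $X=T$ one uses instead $d_{\pa(T)}$ directly and the fact that $\utw\bm\tau$ evolves firmly in the same $S'$. Assembling the three contributions and absorbing the finitely many topological types into uniform constants $\tilde C_7(S)=\max_{Y\subseteq S}C_7(Y,S)$, as in the proof of Proposition \ref{prp:tcbound}, yields the key claim; taking $X=T$ in it gives Proposition \ref{prp:hardttbound}, and combining Propositions \ref{prp:easyttbound}, \ref{prp:hardttbound} (summed over the non-annular components of $S'$ and over the pieces $\bm\tau^u$) with the decomposition in Definition \ref{def:not_firmly} gives Theorem \ref{thm:core}.

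The main obstacle I anticipate is the middle contribution (ii) combined with the need to keep the identifications $\utw\tau_{\dn j}|\utw X=\hat\phi_{t_0}(\tau_j|X)$ valid across the untwisting: one must verify that shortening each long Dehn interval to length $2\mathsf{K}_0+4$ does not merge or lose any split that is visible in $\utw X$, and that the ``periods'' removed genuinely leave $\tau_j|X$ invariant when $\gamma_t$ misses $X$ --- this is where Lemma \ref{lem:untwistedsubsurfaces}(1),(2),(6),(7) must be invoked carefully, and where the quadratic (rather than linear) loss in Proposition \ref{prp:locallyfinite} is harmless only because it is composed at a bounded depth $\leq\xi(S)$ of induction. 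A secondary delicate point is the boundary case where $\gamma_t$ cuts $\partial X$ and $DI_t\cap I_X$ is a nonempty but short interval (Lemma \ref{lem:untwistedsubsurfaces}(1), Remark \ref{rmk:pantsboundunderdt_cutting}): there the contribution is bounded by the constant $C_2$, but one must make sure such $t$ occur only at the two ends of $J_X$, which is exactly the content of claims 1 and 2 of Lemma \ref{lem:untwistedsubsurfaces}.
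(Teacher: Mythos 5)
Your base case is roughly sound: for $X\cong S_{0,4}$ or $S_{1,1}$ every interval is straight and Lemma \ref{lem:mms613} closes the claim, with Proposition \ref{prp:locallyfinite} there precisely to replace the local finiteness of $\ma$ that \cite{mms} relies on. (The exact route inside Lemma \ref{lem:mms613} is the $\mathsf{R}_1$-splitting of the interval, then Proposition \ref{prp:locallyfinite} to pass to bounded $\ma(\utw X)$-diameter, then Lemma \ref{lem:mms614} inside each block, then the progress lower bound of Claim 6.15 from \cite{mms} --- not a direct appeal to the $Q$-unparametrized quasi-geodesic of Theorem \ref{thm:mms_cc_geodicity}, since such a path is allowed to stall for arbitrarily many indices.)

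The inductive step has a genuine gap, concentrated in your piece (iii). Decomposing $\mathcal T_X(J_X)$ by whether the index lies in some $DI_t$ is not the partition that closes the induction: a split visible in $\utw X$ and lying outside every $DI_t$ has, in general, no reason to be visible in \emph{any} proper $Y\subsetneq X$, so the inductive hypothesis on $|\mathcal T_Y(J_Y)|$ simply does not see it. In the extreme case of a subsequence evolving firmly in $X$ with no effective twist curves at all, your pieces (i) and (ii) are empty, everything sits in (iii), and your plan produces nothing. What the paper does instead, following \cite{mms} Proposition 6.9 faithfully, is import the \emph{straight interval} notion and the \emph{assignment} of each $r\in\mathcal T_X(J_X)$ to the smallest inductive subsurface in which the split at $r$ is also visible (or to $X$ if there is none). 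Indices assigned to a proper inductive $Y$ are handed to the inductive hypothesis; indices assigned to $X$ but lying inside some $J_Y$ are bounded by the constant of Lemma \ref{lem:mms616}; indices assigned to $X$ in a maximal interval disjoint from all inductive subintervals are bounded by Lemma \ref{lem:mms613} after Lemma \ref{lem:mms612} shows that interval is straight --- so Proposition \ref{prp:locallyfinite} is used again here, in the inductive step, not only in the base case --- and the counts are assembled by the analogue of Lemma 6.22 from \cite{mms} together with formula (\ref{eqn:mmsubsurfaceproj_mixed}). Your inputs for (i) and (ii) (Proposition \ref{prp:tcbound}, Lemma \ref{lem:twistsplitnumber}, Lemma \ref{lem:untwistedsubsurfaces}(6) with Lemma \ref{lem:mms614}) are the right ones, and your remark that the quadratic loss in Proposition \ref{prp:locallyfinite} is tolerable because it is composed at depth $\leq\xi(S)$ is a valid sanity check; but without the straight-interval and assignment machinery, (iii) is unbounded and the argument does not close.
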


Proposition \ref{prp:hardttbound} will then follow taking $X=T$ and $J_T=[0,N]$.

Before we start the proof, we make some observations. First of all, there is a difference between our hypotheses and the ones of \cite{mms}: the latter include the request that the vertex cycles of each entry in $\bm\tau$ fill the entire surface, whereas here we restrict to a connected component of the subsurface $S'$ they fill. 

The proof of our result, just as the one given in \cite{mms}, works by induction on $\xi(X)$. In general, however, our argument will exclude annular subsurfaces while they are explicitly considered in \cite{mms}. In particular, annuli cannot serve as an induction basis: this role will be covered by those subsurfaces which are homeomorphic to $S_{0,4}$ or $S_{1,1}$; but it is not necessary to distinguish the induction basis from the rest. It is worth noting that the re-definition of $I_T$ given above invalidates Theorem \ref{thm:mmsstructure}. This causes no harm, since that theorem, during the present proof, will be used only on \emph{proper} subsurfaces of the $X$ fixed in the above claim. The statements of Lemma \ref{lem:untwistedsubsurfaces} are still true with the new definition instead, and their proofs are either easier or trivial.

To start the proof, fix two constants $\mathsf{T}_0(X),\mathsf{T}_1(X)$ with the following conditions:
\begin{eqnarray*}
\max\{6K_1+2K_2+2\mathsf{K}_0(X)+2,2\mathsf{R}_0, M_6(X), & & \\
 C_2+2F(32N_1(S^X)+4)+2\mathsf{K}_0(X)+1\} & \leq & \mathsf{T}_0(X) \\
\max\{\mathsf{T}_0(X)+2\mathsf{R}_0, & & \\
N_1K_2 + C_2 + 2 F\left(8 N_1(S^X)\right)+2\mathsf{K}_0\} & \leq & \mathsf{T}_1(X)
\end{eqnarray*}

Here, $K_1$ is the constant denoted $\mathsf{N}_1$ in \cite{mms}: an upper bound for $d_Y(\alpha,\beta)$, for $Y$ a subsurface of $S$ and $\alpha,\beta$ wide curves for one same almost track on $S$. Its existence is proved with a bound on $i(\pi_Y(\alpha),\pi_Y(\beta))$, similarly as in Lemma \ref{lem:induction_vertices_commute} and Remark \ref{rmk:subsurf_inters_bound}. $K_2$ is the constant denoted $\mathsf{N}_2$ in \cite{mms}: an upper bound for $d_Y(V(\tau|Z),V(\tau'|Z))$, where $Y,Z$ are subsurfaces of $S$; $\tau,\tau'$ are almost tracks with the latter obtained from the former as a subtrack, or with a split. $K_2$ exists again by a similar argument as in Lemma \ref{lem:induction_vertices_commute}. Again we take $\mathsf{R}_0=\max_{Y\subseteq S \text{ subsurface}} \mathsf{R}_0(Y,Q)$, for $Q$ the quasi-isometry constant introduced in Theorem \ref{thm:mms_cc_geodicity}, as defined in Lemma \ref{lem:reversetriangle}.

The constant $M_6(X)$ is introduced in Theorem 6.12 of \cite{masurminskyii}, also stated as Theorem \ref{thm:mmprojectiondist} here, in the light of \S 8 in that work, which allows to ignore annular subsurfaces. Finally, $C_2$ has been defined in Lemma \ref{lem:pantsboundunderdt} and re-defined in Remark \ref{rmk:pantsboundunderdt_cutting}; $N_1$ is defined in Lemma \ref{lem:vertexsetbounds}; $F$ is the function introduced in Lemma \ref{lem:induction_vertices_commute}.

\begin{defin}
A proper, non-annular subsurface $Y\subsetneq X$ is an \nw{inductive subsurface} if $d_Y(J_X)\geq \mathsf{T}_0(X)$. The related \nw{inductive subinterval} is then $J_Y\coloneqq I_Y\cap J_X$. For any non-inductive $Y$, just set $J_Y\coloneqq \emptyset$.

A subinterval $I\subseteq J_X$ is \nw{straight} if, for all non-annular, proper subsurfaces $Y\subsetneq X$, we have $\mathrm{diam}_Y(I)\leq\mathsf{T}_1(X)$. Here $\mathrm{diam}_Y(I)\coloneqq \mathrm{diam}_{\cc(Y)}\left(\bigcup_{j\in I}\pi_Y\left(V(\tau_j)\right)\right)$.
\end{defin}
Note that if $Y$ is an inductive subsurface then $Y\in \Sigma_2(\bm\tau)$. This is because, for any $Y\not\in \Sigma_2(\bm\tau)$ with $I_Y\not=\emptyset$, there is an index $s$ such that $I_Y\subsetneq DI_s$, necessarily for a $\gamma_s$ intersecting $\partial Y$ (claims 1, 2 in Lemma \ref{lem:untwistedsubsurfaces}) and then, for any $i,j\in I_Y$, one has $d_Y(i,j)\leq C_2+2F(32N_1(S^X)+4)$ by Remark \ref{rmk:pantsboundunderdt_cutting} combined with Lemma \ref{lem:induction_vertices_commute}. Hence one may subdivide $J_X=[\min J_X, \min I_Y]\cup (J_X\cap I_Y) \cup [\max I_Y, \max J_X]$ (one or more intervals may be empty, as we may have defined an upper bound lower than the lower bound), and get $d_Y(J_X)\leq 2\mathsf{K_0} + C_2+F(32 N_1(S^X)+4)< \mathsf{T}_0(X)$ with an application of Theorem \ref{thm:mmsstructure}.

\begin{lemma}\label{lem:mms612} \emph{(Plays the role of \cite{mms}, Lemma 6.12)}
If $I\subseteq J_X$ is disjoint from all inductive subintervals of $J_X$, then $I$ is straight for $X$.
\end{lemma}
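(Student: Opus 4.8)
The statement says: if a subinterval $I\subseteq J_X$ meets no inductive subinterval, then $\mathrm{diam}_Y(I)\leq\mathsf{T}_1(X)$ for every proper non-annular $Y\subsetneq X$. The plan is to argue by cases on the relationship between $I$ and the accessible interval $I_Y$, using that $Y$ is not inductive (so $d_Y(J_X)<\mathsf{T}_0(X)$) whenever $I\cap I_Y\neq\emptyset$, together with the structure theorem (Theorem~\ref{thm:mmsstructure}) to control the contribution of the parts of $I$ lying outside $I_Y$.

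\textbf{Step 1: reduce to the case $I\cap I_Y\neq\emptyset$.} If $I$ is disjoint from $I_Y$, then $I$ is contained in a single interval $[a,b]$ disjoint from $I_Y$ except possibly at an endpoint, so part~1 of Theorem~\ref{thm:mmsstructure} gives $\mathrm{diam}_Y(I)\leq d_Y(V(\tau_{\min I}),V(\tau_{\max I}))+2\leq \mathsf{K}_0(X)+2$ (the $+2$ absorbing the fact that the projection of each vertex set has diameter at most $1$). Since $\mathsf{T}_1(X)\geq\mathsf{K}_0(X)+2$ by the choice of constants, we are done in this case.

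\textbf{Step 2: the case $I\cap I_Y\neq\emptyset$.} Here $Y$ cannot be inductive, since otherwise $J_Y=I_Y\cap J_X$ would be a nonempty inductive subinterval meeting $I$, against the hypothesis; so $d_Y(J_X)<\mathsf{T}_0(X)$, i.e.\ $d_Y(V(\tau_{\min J_X}),V(\tau_{\max J_X}))<\mathsf{T}_0(X)$. Now write $I=[p,q]$ and split it as $[p,\min I_Y']\cup([p,q]\cap I_Y)\cup[\max I_Y',q]$, where $I_Y'$ abbreviates $I_Y\cap[p,q]$; the two outer intervals are disjoint from $I_Y$ except at an endpoint, so by Theorem~\ref{thm:mmsstructure}(1) each contributes at most $\mathsf{K}_0(X)+1$ to $\mathrm{diam}_Y(I)$. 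The middle part is contained in $I_Y\cap J_X=J_Y\subseteq[\min J_X,\max J_X]$, and since $\left(\pi_Y(V(\tau_j))\right)_j$ is a $Q$-unparametrized quasi-geodesic in $\cc(Y)$ (Theorem~\ref{thm:mms_cc_geodicity}), its diameter over $J_Y$ is controlled: the reverse triangle inequality (Lemma~\ref{lem:reversetriangle}, with constant $\mathsf{R}_0$) gives $\mathrm{diam}_Y(J_Y)\leq d_Y(J_X)+2\mathsf{R}_0<\mathsf{T}_0(X)+2\mathsf{R}_0$. Adding the three contributions, $\mathrm{diam}_Y(I)\leq \mathsf{T}_0(X)+2\mathsf{R}_0+2\mathsf{K}_0(X)+2\leq\mathsf{T}_1(X)$ by the choice $\mathsf{T}_1(X)\geq\mathsf{T}_0(X)+2\mathsf{R}_0$ (after slightly enlarging, as allowed, to swallow the additive $2\mathsf{K}_0(X)+2$; this is exactly the sort of constant bookkeeping the $\max$ defining $\mathsf{T}_1(X)$ anticipates).

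\textbf{Main obstacle.} The delicate point is handling a subsurface $Y$ with $I_Y\neq\emptyset$ but $Y\notin\Sigma_2(\bm\tau)$ (so $I_Y\subsetneq DI_s$ for some $\gamma_s$ cutting $\partial Y$): for such $Y$ the re-indexed structure theorem interpretation and Lemma~\ref{lem:reversetriangle} must be applied with care, but as observed just before Lemma~\ref{lem:mms612} in the text, Remark~\ref{rmk:pantsboundunderdt_cutting} combined with Lemma~\ref{lem:induction_vertices_commute} already forces $d_Y(i,j)\leq C_2+2F(32N_1(S^X)+4)$ for all $i,j\in I_Y$, which is $<\mathsf{T}_0(X)$; then the three-interval splitting of Step~2 goes through with this uniform bound replacing the quasi-geodesic estimate. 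So the worry dissolves into the same constant comparison, and the proof reduces to verifying that every bound produced is $\leq\mathsf{T}_1(X)$, which holds by the defining inequalities for $\mathsf{T}_0(X),\mathsf{T}_1(X)$.
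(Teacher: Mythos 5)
Your proof is correct and is essentially the argument the paper defers to (it references \cite{mms}, Lemma 6.12 ``verbatim''): the three-way decomposition of $I$ around $I_Y$, the structure theorem (Theorem~\ref{thm:mmsstructure}) for the outer pieces, and the reverse triangle inequality for a quasi-geodesic on the middle piece, with the non-inductive hypothesis supplying the $< \mathsf{T}_0(X)$ bound. One small remark on bookkeeping: as written, the paper's stated lower bound on $\mathsf{T}_1(X)$ is $\mathsf{T}_0(X)+2\mathsf{R}_0$, which does not literally cover the extra $2\mathsf{K}_0(X)$ coming from the two outer pieces; but since $\mathsf{T}_0,\mathsf{T}_1$ are only constrained from below, one is free to enlarge $\mathsf{T}_1(X)$ by $2\mathsf{K}_0(X)$, exactly as you note, and then the argument closes.
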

The proof is the one of Lemma 6.12 in \cite{mms}, verbatim.

In our setting Claim 6.14 from \cite{mms} (i.e. Lemma \ref{lem:mms614} above) reads:
\begin{claim}
If for a pair of indices $k,l\in J_X$ we have $V(\utw\tau_{\dn k}|\utw X)=V(\utw\tau_{\dn l}|\utw X)$, then $\mathcal T_X(k,l)\leq N_4$.
\end{claim}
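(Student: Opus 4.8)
<br>

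The claim states: if $V(\utw\tau_{\dn k}|\utw X)=V(\utw\tau_{\dn l}|\utw X)$ for $k,l\in J_X$, then $\mathcal T_X(k,l)\leq N_4$. Since $\mathcal T_X(k,l)$ counts indices $j\in[\dn k,\dn l-1]$ at which $\utw\tau_{j+1}$ is obtained from $\utw\tau_j$ by a split visible in $\utw X$, this is nothing more than a restatement of Lemma~\ref{lem:mms614}, applied to the splitting sequence $\utw\bm\tau$ (which consists of birecurrent train tracks, by the remarks accompanying Proposition~\ref{prp:rearrang2} and Definition~\ref{def:untwistedsequence}) and the subsurface $\utw X$.

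\begin{proof}
By definition $|\mathcal T_X(k,l)|=|\utw\bm\tau(\dn k,\dn l)|_{\utw X}$, the number of splits in the subsequence $\utw\bm\tau(\dn k,\dn l)$ which are visible in $\utw X$. The untwisted sequence $\utw\bm\tau$ is a splitting sequence of birecurrent train tracks on $S$ (recurrence and transverse recurrence are preserved by the construction, since $\utw\bm\tau$ has the same endpoints as $\rar\bm\tau$, and the latter has the same endpoints as $\bm\tau$ — see the remark after Proposition~\ref{prp:rearrang1}). By hypothesis $V(\utw\tau_{\dn k}|\utw X)=V(\utw\tau_{\dn l}|\utw X)$. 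Lemma~\ref{lem:mms614}, applied with the splitting sequence $\utw\bm\tau$, the subsurface $\utw X$, and the indices $\dn k,\dn l$, gives $|\utw\bm\tau(\dn k,\dn l)|_{\utw X}\leq N_4(\utw X)$. Since $\utw X$ ranges over a finite list of topological types, we may take $N_4=N_4(X)$ to be the maximum of $N_4(\utw X)$ over all possible $\utw X$, obtaining $|\mathcal T_X(k,l)|\leq N_4$.
\end{proof}

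The plan above is essentially complete: the only content is recognising that $\mathcal T_X(k,l)$ is, by definition, the count of $\utw X$-visible splits in a subsegment of $\utw\bm\tau$, and that the hypothesis on vertex sets is exactly the hypothesis of Lemma~\ref{lem:mms614} transported to $\utw\bm\tau$. The one point that deserves care — and which I would expect to be the only potential obstacle — is confirming that $\utw\bm\tau$ genuinely satisfies the (mild) hypotheses of Lemma~\ref{lem:mms614}, namely that its entries are birecurrent train tracks; this follows from the preservation properties of $\rar$ and $\utw$ recorded earlier, but it is worth stating explicitly so the reader is not left to reconstruct it.
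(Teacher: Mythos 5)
Your approach is exactly the paper's: the claim is, as you say, nothing but Lemma~\ref{lem:mms614} applied to the sequence $\utw\bm\tau$, the subsurface $\utw X$, and the indices $\dn k,\dn l$, and the paper presents it in precisely this way. One small inaccuracy in your justification of the hypotheses deserves correction: it is \emph{not} true that $\utw\bm\tau$ has the same endpoints as $\rar\bm\tau$. The first entry agrees, but the last entry of $\utw\bm\tau$ is $\phi_r(\rar\tau_N)$ for the diffeomorphism $\phi_r$ of Definition~\ref{def:untwistedsequence}, which is a nontrivial product of Dehn twists in general. The conclusion you want (that every entry of $\utw\bm\tau$ is birecurrent) still holds, but the correct reason is that each $\utw\tau_j$ equals $\phi_t(\rar\tau_{j'})$ for some diffeomorphism $\phi_t$ and some index $j'$, and birecurrence is preserved by diffeomorphisms; alternatively, transverse recurrence propagates forward from the unchanged first entry $\rar\tau_0$, while recurrence propagates backward from the last entry $\phi_r(\rar\tau_N)$, which is recurrent because it is a diffeomorphic image of the recurrent $\rar\tau_N$. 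Also, since $\utw X=\phi_X(X)$ is diffeomorphic to $X$, one simply has $N_4(\utw X)=N_4(X)$ — no maximum over topological types is needed.
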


Similarly as noted at the beginning of \S \ref{sub:twistcurvebound},  for all $C\geq M_6(X)$ there are constants $e_0(S,C)$, $e_1(S,C)$ such that, for all $k,l\in J_X$,
\begin{equation}\label{eqn:mmsubsurfaceproj_mixed}
e_0^{-1} d_{\pa(X)}\left(\tau_k, \tau_l\right) - e_1 -C_1 \leq
\sum_{\substack{Y\subset X\text{ essential} \\ \text{and non-annular}}} [d_Y(\tau_k,\tau_l)]_C \leq
e_0 d_{\pa(X)}\left(\tau_k, \tau_l\right) + e_1 + C_1
\end{equation}
where $C_1=C_1(X)$.

\begin{lemma}\label{lem:mms613}\emph{(Plays the role of \cite{mms}, Lemma 6.13)}
Let $I\subseteq J_X$ be a straight subinterval. Then $|\mathcal T_X(I)|\leq_A d_X(I)$, for a constant $A=A(X)$.
\end{lemma}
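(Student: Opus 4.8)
The plan is to mimic the proof of Lemma 6.13 in \cite{mms}, adapting it to the untwisted-sequence setting. The key point is that along a straight subinterval $I\subseteq J_X$, the induced tracks $\utw\tau_{\dn j}|\utw X$ behave exactly as in the original argument of Masur--Mosher--Schleimer: being straight means that no proper non-annular subsurface $Y\subsetneq X$ accumulates more than $\mathsf{T}_1(X)$ of $\cc(Y)$-diameter over $I$, so the obstruction that would have forced us to descend into an inductive subsurface is absent. First I would invoke Theorem \ref{thm:mms_cc_geodicity} to get that $\left(\pi_X V(\utw\tau_{\dn j})\right)_{j\in I}$ is a $Q$-unparametrized quasi-geodesic in $\cc(X)$ (using, for the annular and $S_{0,4}$, $S_{1,1}$ cases, the adaptations already discussed in \S\ref{sub:goodbehaviour} and \S\ref{sub:diagext}). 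Then the counting argument of \cite{mms}, Lemma 6.13, applies: one partitions $I$ into at most $d_X(I)+O(1)$ sub-blocks on which the projection to $\cc(X)$ moves by a bounded amount, and on each such block one shows, via Lemma \ref{lem:mms614} (restated here as the second Claim), that $|\mathcal T_X(\cdot)|$ is bounded by $N_4$.

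More concretely, the key observation to transplant is that a split visible in $\utw X$ between $\utw\tau_{j'}$ and $\utw\tau_{j'+1}$ changes $V(\utw\tau_{j'}|\utw X)$; if a block $[k,l]\subseteq I$ has $d_X(k,l)$ bounded and $I$ is straight, then the hierarchy/distance formula (\ref{eqn:mmsubsurfaceproj_mixed}) together with straightness forces $d_{\pa(\utw X)}(\dn k,\dn l)^\utw$ to be bounded --- because all the non-annular-subsurface contributions are either small (straightness) or come from $\cc(X)$ itself (which is bounded on the block) --- whence $V(\utw\tau_{\dn k}|\utw X)$ and $V(\utw\tau_{\dn l}|\utw X)$ are a bounded distance apart in $\pa(\utw X)$, so after subdividing once more into boundedly many pieces we may assume $V(\utw\tau_{\dn k}|\utw X)=V(\utw\tau_{\dn l}|\utw X)$ on each piece, and Lemma \ref{lem:mms614} gives $|\mathcal T_X|\leq N_4$ there. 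Here I would use Proposition \ref{prp:locallyfinite} to control $d_{\pa(\utw X)}^\utw$ in terms of $d_{\pa(X)}$ and vice versa, and Lemma \ref{lem:pantsquasiisom} / Theorem \ref{thm:mmprojectiondist} to convert between $\pa(\utw X)$-distance and the summation $d'_{\pa(\utw X)}$. Summing the $N_4$ bounds over the $=_A d_X(I)$ many pieces yields $|\mathcal T_X(I)|\leq_A d_X(I)$.

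The one genuinely new feature compared with \cite{mms} is that $\utw X$ is not literally $X$ but $\phi_X(X)$, and the correspondence between indices of $\bm\tau$ and of $\utw\bm\tau$ is mediated by the maps $\dn,\up,[t]\dn$ of Definition \ref{def:untwistedsequence}; so I must be careful that, on a straight interval $I\subseteq J_X$, the identity $\utw\tau_{\dn j}|\utw X=\hat\phi_{t(\cdot)}(\tau_j|X)$ holds, which is exactly the content of claim 7 of Lemma \ref{lem:untwistedsubsurfaces} (after checking that straightness plus $I\subseteq J_X$ keeps us away from the Dehn blocks $DI_t$ of curves essentially intersecting $X$, except for a bounded number of indices, by claim 1 of that lemma --- these boundedly many exceptional indices contribute $O(1)$ to everything and can be absorbed into the additive constant). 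The main obstacle I anticipate is precisely this bookkeeping: making sure that the quasi-geodesic property and the distance formula are applied to the right sequence of induced tracks, and that the ``straightness transfers from $X$ to $\utw X$'' step is airtight despite the reindexing. Once that is in place, the combinatorial heart of the argument is a line-by-line copy of \cite{mms}, Lemma 6.13, and I would state it as such rather than reproducing it.
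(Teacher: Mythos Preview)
Your outline has a genuine gap at the step ``after subdividing once more into boundedly many pieces we may assume $V(\utw\tau_{\dn k}|\utw X)=V(\utw\tau_{\dn l}|\utw X)$ on each piece.'' Bounded distance in $\pa(\utw X)$ does \emph{not} imply that only boundedly many distinct vertex sets $V(\utw\tau_{j'}|\utw X)$ occur: the pants graph is not locally finite (infinitely many pants decompositions differ from a given one by a single Dehn twist), so a ball of fixed radius there has infinite cardinality. This is precisely the obstruction that motivates the entire untwisting construction, as the Preface explains.

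The paper's fix is to use the full strength of Proposition~\ref{prp:locallyfinite}: after partitioning $I$ into blocks on which $d_{\pa(X)}$ is bounded by some $\mathsf{R}_1$, that proposition gives a bound $\Psi'_S(\mathsf{R}_1)$ on $d_{\ma(\utw X)}(\dn\cdot,\dn\cdot)^\utw$ over each block. The \emph{marking} graph $\ma(\utw X)$ \emph{is} locally finite, so a ball of radius $\Psi'_S(\mathsf{R}_1)$ there has some finite cardinality $\mathsf{V}$; hence at most $\mathsf{V}$ distinct vertex sets appear on each block, and Lemma~\ref{lem:mms614} then bounds the visible splits there by $N_4\mathsf{V}$. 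Your proposal invokes Proposition~\ref{prp:locallyfinite} only for the $d_{\pa}$--to--$d_{\pa}^\utw$ comparison and never passes to $\ma$, which is the essential ingredient. (Your concern about the reindexing via $\dn$ and claim~7 of Lemma~\ref{lem:untwistedsubsurfaces} is comparatively minor; that bookkeeping is already absorbed into the definition of $\mathcal T_X$ and into Proposition~\ref{prp:locallyfinite}, and the paper's proof does not need to revisit it here.)
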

\begin{proof}
Let $[p,q]=I$. Fix $C\coloneqq 1+\max \{M_6(X),\mathsf{T}_1(X)\}$ and let\linebreak $\mathsf{R}_1\coloneqq \max\{e_0(X,C),e_1(X,C)+C_1(X)\}+1$. Define a map $\rho:[0,M]\rightarrow I$, starting with $\rho(0)\coloneqq p$, and recursively setting $\rho(n+1)$ to be the smallest element in $[\rho(n),q]$ with $d_{\pa(X)}\left(\rho(n),\rho(n+1)\right)=\mathsf{R}_1$, until this is no longer defined: when this stage is reached, set $M\coloneqq n+1$ and $\rho(M)=q$.

Then for all $0\leq n \leq M-1$, and all $\rho(n)\leq j\leq \rho(n+1)$, we get that $d_{\pa(X)}(\tau_{\rho(n)},\tau_j)\leq \mathsf{R}_1$, so $d_{\ma(\utw X)}(\dn \rho(n), \dn j)^\utw \leq \Psi'_S(\mathsf R_1)$ according to Proposition \ref{prp:locallyfinite}.

We define $\mathsf V$ to be the maximum cardinality for a ball of radius $\Psi'_S(\mathsf R_1)$ in $\ma(X)$: then the sequence $\utw\bm\tau(\dn\rho(n),\dn\rho(n+1))$ has all the respective entries $V(\tau_j|X)$ lying within one of these balls. So, via the above claim, $|\mathcal T_X(\rho(n),\rho(n+1)|\leq N_4\mathsf V$ and therefore $|\mathcal T_X(I)|\leq N_4 \mathsf V \cdot M$.

Claim 6.15 in \cite{mms} holds for us (just replace $\mathcal M(X)$ with $\pa(X)$ and employ formula \ref{eqn:mmsubsurfaceproj_mixed} above; in particular, perform the summation over non-annular subsurfaces $Y$ only):
\begin{claim}
For $0\leq n\leq M-2$, $d_X\left(\rho(n),\rho(n+1)\right)\geq \mathsf{R}_0+1$.
\end{claim}

The remainder of the proof of the present lemma is the same the final part in the proof of Lemma 6.13 in \cite{mms}, where $M \leq d_X(I)+1$ is obtained. Hence the bound on the number of splits.
\end{proof}

When $X\cong S_{1,1}$ or $S_{0,4}$, the proof of the key claim ends here because, as $X$ has no subsurfaces, all intervals can be supposed to be straight. Theorem \ref{thm:mmprojectiondist} and Lemma \ref{lem:pantsquasiisom}, applied for these subsurfaces, assert that $\pa(X)$ is quasi-isometric to $\cc(X)$. So the key claim, for these subsurfaces, coincides with the statement of the above lemma. We now continue for the induction step.

All that is said in \cite{mms} from Definition 6.17 to Lemma 6.20 applies completely in our setting. Our proof continues with that chunk of proof, replacing all occurrences of $\mathcal S_X$ there with $\mathcal T_X$; the constants $\mathsf{N}_1,\mathsf{N}_2$ there with $K_1,K_2$ respectively; references to Lemmas 6.12 and 6.13 there with our Lemmas \ref{lem:mms612}, \ref{lem:mms613} respectively.

\begin{defin}
\nw{Assign} an index $r\in \mathcal T_X(J_X)$ to a subsurface $Y\in \mathsf{Ind}'$ if $r\in J_Y$, if the split from $\utw\tau_r$ to $\utw \tau_{r+1}$ is visible in $\utw Y$, and there is no other $Z\in \mathsf{Ind}$, $Z\subsetneq Y$, with these properties.

For $I\subseteq J_X$, denote the set of indices in $\mathcal T_X(I)$ which are assigned to $Y$ by $\mathsf{AI}_Y(I)\subseteq \mathcal T_Y(J_Y)$.
\end{defin}

\begin{lemma}\label{lem:mms616}
Let $Y\in\mathsf{Ind}$. There is a constant $B=B(X)$ such that, if $I\subseteq J_Y$ has all indices in $\mathcal T_X(I)$ assigned to $X$, then $|\mathcal T_X(I)|\leq B$.
\end{lemma}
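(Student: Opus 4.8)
The plan is to follow verbatim the proof of Lemma 6.16 in \cite{mms}, transposed to our setting by the same translation dictionary used throughout Proposition \ref{prp:hardttbound}: replace $\mathcal S_X$ with $\mathcal T_X$, the constants $\mathsf N_1,\mathsf N_2$ with $K_1,K_2$, and the references to the marking graph with references to $\pa(X)$, invoking formula \eqref{eqn:mmsubsurfaceproj_mixed} in place of the original subsurface projection estimate. The key point, exactly as in the source, is that if $r\in\mathcal T_X(I)$ is assigned to $X$ rather than to some proper inductive subsurface, then the split from $\utw\tau_r$ to $\utw\tau_{r+1}$ must be \emph{not} visible in any $Z\in\mathsf{Ind}$ with $Z\subsetneq Y$; hence for each such $Z$ the induced track $\utw\tau_r|\utw Z$ agrees (up to comb equivalence and taking subtracks) with $\utw\tau_{r+1}|\utw Z$, so this split cannot contribute to $d_Z$ of the extremes of $I$.

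First I would set $[p,q]\coloneqq I$ and, as in the original argument, control the diameters $d_Z(p,q)$ for $Z$ ranging over the proper, non-annular inductive subsurfaces strictly contained in $Y$. Since all indices of $\mathcal T_X(I)$ are assigned to $X$, none of these $Z$ can `absorb' any of them, which via Theorem \ref{thm:mmsstructure} applied in $Y$ (here $Y\subsetneq X$, so the original accessible interval is in force and the theorem holds) bounds $d_Z(p,q)$ by a constant depending only on $\mathsf T_0(X)$. Together with Lemma \ref{lem:mms613} applied inside $Y$ — which is legitimate because $\xi(Y)<\xi(X)$, so the key claim, hence straightness estimates, are available in $Y$ — and the definition of straight interval, this forces $d_Y(p,q)$ itself to be bounded by a constant $B'=B'(X)$. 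Then I would feed this back: by Lemma \ref{lem:mms613} (or rather the version of the key claim already established for $Y$), $|\mathcal T_Y(J_Y\cap I)|$ is bounded in terms of $d_{\pa(Y)}(p,q)$, which by Proposition \ref{prp:locallyfinite} and formula \eqref{eqn:mmsubsurfaceproj_mixed} is bounded in terms of $d_Y(p,q)\le B'$; and $\mathcal T_X(I)\subseteq \mathcal T_Y(J_Y\cap I)$ up to a loss controlled by Lemma \ref{lem:mms614} (the $N_4$-bound for stretches on which $V(\cdot|\utw Y)$ is constant). Combining these yields the required uniform bound $B=B(X)$.

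The main obstacle I anticipate is purely bookkeeping rather than conceptual: matching the interval $I\subseteq J_Y$ and its image $\dn I$ correctly under the untwisting map, so that `visible in $\utw X$' and `visible in $\utw Y$' line up with the subsurface bijection $\Sigma(\bm\tau)\to\Sigma(\utw\bm\tau)$ of Corollary \ref{cor:subsurface_bijection}. One must check, using claims 4, 5 and 7 of Lemma \ref{lem:untwistedsubsurfaces}, that $\utw Y$ is indeed the subsurface of $\Sigma(\utw\bm\tau)$ corresponding to $Y$, that $I_{\utw Y}^\utw=[t_+]\dn I_Y$, and that the induced tracks $\utw\tau_{\dn j}|\utw Y$ are obtained from $\tau_j|Y$ by the appropriate lift $\hat\phi$ of a composition of Dehn twists about curves disjoint from $Y$; once this identification is in place, all the distance inequalities transport verbatim. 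The other delicate point is ensuring that the constant $B$ depends only on $X$ (equivalently, on $S$) and not on the particular inductive subsurface $Y$ or on $\bm\tau$ — this follows because $\mathsf T_0(X),\mathsf T_1(X),N_4,C_2,K_1,K_2$ and the constants of Proposition \ref{prp:locallyfinite} are all functions of the topological type alone, and the finitely many topological types of subsurfaces of $S$ let us take a maximum.
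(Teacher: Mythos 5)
Your high-level plan — show that all indices assigned to $X$ force invisibility of splits in proper inductive subsurfaces, control subsurface projections, and conclude via Lemma \ref{lem:mms613} and the subsurface bookkeeping of Lemma \ref{lem:untwistedsubsurfaces} — is pointed in the right direction, but the chain of deductions as you set it up does not close, for two concrete reasons.

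First, you never produce a bound for $d_X(p,q)$, which is what Lemma \ref{lem:mms613} needs once $I$ has been shown straight. You instead aim at a bound for $d_Y(p,q)$, invoking ``Lemma \ref{lem:mms613} applied inside $Y$''; but that lemma turns a distance bound into a split-count bound, not vice versa, so it cannot be used to bound $d_Y(p,q)$. The paper's mechanism for bounding $d_X(p,q)$ is entirely different and is the key idea you are missing: since $p,q\in J_Y\subseteq I_Y$ and $Y\subsetneq X$, Theorem \ref{thm:mmsstructure} asserts that $\partial Y$ is \emph{wide} in efficient position with respect to $\tau_p$ and $\tau_q$. A wide multicurve meets each branch rectangle of $\tau_p^X$ in at most two arcs, and a vertex cycle $\alpha\in V(\tau_p|X)$ does too, so $i(\alpha,\partial Y)\le 8N_1$ (no bigons). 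By Lemma \ref{lem:cc_distance} this gives a uniform bound for $d_X(\alpha,\partial Y)$; the same holds at $q$, and the triangle inequality bounds $d_{\cc(X)}(V(\tau_p|X),V(\tau_q|X))$ and hence $d_X(p,q)$ by a constant depending only on $X$.

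Second, your final step claims $\mathcal T_X(I)\subseteq\mathcal T_Y(J_Y\cap I)$ up to a loss controlled by Lemma \ref{lem:mms614}. This inclusion is not correct: $\mathcal T_X$ counts splits visible in $\utw X$, and a split visible in $\utw X$ need not be visible in $\utw Y$ — indeed, the hypothesis that every index of $\mathcal T_X(I)$ is assigned to $X$ is precisely saying that those splits are \emph{not} absorbed by any proper inductive $Z\subsetneq X$ (in particular not by $Y$). So no useful comparison of $\mathcal T_X(I)$ with $\mathcal T_Y(J_Y\cap I)$ comes for free. What the hypothesis does buy, and what the paper uses, is that for each proper non-annular $Z$, the induced tracks $\utw\tau_{j'}|\utw Z$ along $\dn(J\cap I_Z)$ undergo no splits, only comb equivalences and subtrack extractions; this gives a bound of $N_1 K_2$ for the distance in $\cc(Z)$ spanned on that piece, plus bounded contributions from any adjoining Dehn interval (Lemma \ref{lem:pantsboundunderdt} and Remark \ref{rmk:pantsboundunderdt_cutting}) and from the parts of $J$ lying outside $I_Z$ (Theorem \ref{thm:mmsstructure}). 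Combining these bounds over all proper non-annular $Z$ shows that $I$ is \emph{straight}. With $I$ straight and $d_X(I)$ bounded, Lemma \ref{lem:mms613} directly gives $|\mathcal T_X(I)|\le B(X)$ — no detour through $\mathcal T_Y$, the key claim for $Y$, or Lemma \ref{lem:mms614} is needed.
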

\begin{proof}
Let $[p,q]=I$. First we follow the approach of Lemma 6.16 in \cite{mms} to prove the existence of a bound, depending on $X$ only, for $d_{\cc(X)}\left(V(\tau_p|X),V(\tau_q|X)\right)$. Since $p,q\in J_Y\subseteq I_Y$, Theorem \ref{thm:mmsstructure} asserts that $\partial Y$ is wide when put in efficient position with respect to $\tau_p$ or to $\tau_q$; and, via lift, also when put in efficient position with respect to $\tau_p^X$ or $\tau_q^X$. Let $\alpha\in V(\tau_p|X)$ and identify it with a carried realization, which we may suppose to realize the intersection number of $\alpha$ with $\partial Y$ --- if it does not, one may fix this with the bigon removal technique employed in the proof of Corollary 4.3 from \cite{mms}. 

As $\partial Y$ is wide in $\tau_p$, it crosses each of the branch rectangles in $\bar\nei(\tau_p)$ --- which are at most $N_1$: see Lemma \ref{lem:vertexsetbounds} --- in at most two connected components each. So it intersects at most $2N_1$ branch rectangles of $\tau_p^X$. For each branch rectangle $R_b$ of $\tau_p^X$, the intersections $\alpha\cap R_b$ and $\partial Y\cap R_b$ consist of at most two arcs each; so $\alpha\cap\partial Y\cap R_b$ is at most $4$ points, else there is a bigon. This bounds $i\left(\alpha,\partial Y\right)$; and, via Lemma \ref{lem:cc_distance}, also $d_X\left(\alpha,\partial Y\right)$.

The same bound applies for all $\alpha\in V(\tau_q|X)$. So, via triangle equality, we also have a bound for $d_{\cc(X)}\left(V(\tau_p| X),V(\tau_q|X)\right)$; and, via Lemma \ref{lem:induction_vertices_commute}, for $d_X(p, q)$.

Now we prove that $I$ is straight i.e. we look for a bound for $d_Z\left(J\right)$, for $Z\in \mathsf{Ind}$ and $\emptyset\not=J\subseteq I$ a subinterval. First we find a bound for $d_Z(J\cap I_Z)$, provided that the given interval $J\cap I_Z=[k,l]$ is nonempty. For ease of notation, let also $I'\coloneqq I\cap I_Z$.

Since $\dn [k,l]$ contains only indices of $\mathcal T_X(J_X)$ assigned to $X$, and not to any proper, inductive subsurface, $(\utw\tau_{j'}|\utw Z)_{j'=\dn k}^{\dn l}$ includes no split move. Define $\tau_0\coloneqq t(\min I')$, and $t_1$ as the highest $t$ such that $\max I' \geq \max DI_t$. Then $\bigcup_{t=t_0}^{t_1} (DI_t\cup NI_t)\subseteq DI_{t_0}\cup NI_{t_0}\cup I'$ and $I'\subseteq \left(\bigcup_{t=t_0}^{t_1} (DI_t\cup NI_t)\right)\cup DK_{t_1+1}$ (by convention, $DK_{r+1}=\{N\}$).

We know from claim 1 in Lemma \ref{lem:untwistedsubsurfaces} that, for all $t_0<t\leq t_1$: $\gamma_t$ does not cut $\partial Z$; moreover, the splits in $\utw\bm\tau$, indexed by $DI_t^\utw$, shall be invisible when inducing the tracks to $\utw Z$. This means that $\utw\gamma_t$ does not intersect $\utw Z$ at all, and that $\gamma_t$ does not intersect $Z$.

Claim 7 in Lemma \ref{lem:untwistedsubsurfaces} applies for $I'$, and therefore for $J$. The entries of $\bm\tau(I')$ which do not reflect in an entry of $\utw\bm\tau(\dn I')$ are contained in some $DI_t\setminus DL_t$ for $t_0+1\leq t\leq t_1$ or in $DK_{t_1+1}$. Set $J''\coloneqq \left(DK_{t_1+1}\cap [k,l]\right)\cup \{l\}$ (so that $l\in J''$ anyway). By said claim, the sequence $(\utw\tau_{j'}|\utw Z)_{j'\in \dn J}$ is obtained from $(\tau_j|Z)_{j=k}^{\min J''}$ by application of $\hat \phi_{t_0}:S^Z\rightarrow S^{\utw Z}$ to each entry, and possibly removal of some repeated ones. If $\tau_j|Z$ splits to $\tau_{j+1}|Z$ for $j,j+1\in [k,l]$, then neither of the two belongs to $DI_t\setminus DL_t$, for $t_0+1\leq t\leq t_1$, otherwise disjointness of $\gamma_t$ from $Z$ would be contradicted. So in that case $j,j+1\in DK_{t_1+1}\cap [k,l]$.

When one induces to $Z$ the subsequence $\bm\tau(k, \min J'')$, only comb equivalences and subtrack extractions are seen. While a subtrack always has less non-mixed branches than the almost track it is taken from, comb equivalences are unable to alter the count of these branches. So $d_{\cc(Z)}\left(V(\tau_k|Z),V(\tau_{\min J''}|Z)\right)\leq N_1K_2$.

And the distance spanned in $J''$, when this interval is not trivial, is cared after with Lemma \ref{lem:pantsboundunderdt} and Remark \ref{rmk:pantsboundunderdt_cutting}\footnote{These two exclude the case of $\gamma_{t_1+1}$ disjoint from $Z$, but the way to deal with it is obvious.}: $d_{\cc(Z)}\left(V(\tau_{\min J''}|Z),V(\tau_l|Z)\right)\leq C_2$. Combining the two estimates, and using Lemma \ref{lem:induction_vertices_commute}, we have $d_Z(k,l)\leq N_1K_2 + C_2 + 2 F\left(8N_1(S^X)\right)$.

In general, we may need to add a contribution for the distance spanned in $[\min J,k]$ and in $[l,\max J]$ (if either, or both, are nonempty): so $d_Z(J)\leq N_1K_2 + C_2 + 2 F\left(8N_1(S^X)\right)+2\mathsf{K_0}$, by Theorem \ref{thm:mmsstructure}. This last bound holds also if $J\cap I_Z$ is empty.

This proves that $I$ is a straight interval, and there is a bound for $d_X(I)$: but then, Lemma \ref{lem:mms613} ensures that there is a bound $|\mathcal T_X(I)|\leq B$, too.
\end{proof}

\begin{lemma}\emph{(Plays the role of \cite{mms}, Lemma 6.22)}
There is a constant $A=A(X)$ such that
$$
\left|\bigcup_{Y\in \mathsf{Ind}}\mathcal T_X(J_Y)\right|\leq_A |\mathsf{Ind}|+ \sum_{Y\in \mathsf{Ind}} d_Y(J_X).
$$
\end{lemma}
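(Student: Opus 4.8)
The plan is to mimic the proof of Lemma 6.22 in \cite{mms}, splitting the count of indices in $\bigcup_{Y\in\mathsf{Ind}}\mathcal T_X(J_Y)$ into two parts according to the assignment procedure: the indices assigned to some $Y\in\mathsf{Ind}$ itself, and the indices assigned to $X$ (i.e.\ those $r$ lying in some $J_Y$ but with the split visible in $\utw X$ and no proper inductive subsurface $Z\subsetneq Y$ absorbing them). For the first part, for each $Y\in\mathsf{Ind}$ the inductive hypothesis on the key claim (which holds since $\xi(Y)<\xi(X)$, and $Y\in\Sigma_2(\bm\tau)$ as noted right after the definition of inductive subsurface) gives $|\mathsf{AI}_Y(J_Y)|\leq |\mathcal T_Y(J_Y)|\leq_{C_7(Y)} d_{\pa(Y)}(J_Y)$. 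Summing over $Y\in\mathsf{Ind}$ and bounding $d_{\pa(Y)}(J_Y)$ by $d_{\pa(Y)}(J_X)$ — and further, via Theorem \ref{thm:mmprojectiondist} with Lemma \ref{lem:pantsquasiisom} applied on $Y$, by a linear function of $\sum_{W\subseteq Y} [d_W(J_X)]$, but really it suffices to keep $d_Y(J_X)$ as the dominant term since $d_Y(J_X)\geq \mathsf T_0(X)$ — yields the $\sum_{Y\in\mathsf{Ind}} d_Y(J_X)$ term on the right, with an additive $|\mathsf{Ind}|$ coming from the $C_7$-type additive constants, one per subsurface.

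For the second part, I would control the indices of $\bigcup_Y\mathcal T_X(J_Y)$ that are assigned to $X$. Order the intervals $J_Y$, $Y\in\mathsf{Ind}$, by their left endpoints; the overlap structure is governed by Corollary \ref{cor:subsurface_bijection} and Lemma \ref{lem:untwistedsubsurfaces} (accessible intervals in $\utw\bm\tau$ are unbroken), and in particular the nesting forest of the $I_Y$'s has boundedly many "roots" along any straight interval — this is where Lemma \ref{lem:mms616} does the work: any subinterval $I\subseteq J_Y$ all of whose $\mathcal T_X$-indices are assigned to $X$ has $|\mathcal T_X(I)|\leq B(X)$. The point is that between consecutive nesting events one gets a straight interval, and there are at most $O(|\mathsf{Ind}|)$ such events, so the total count of $X$-assigned indices inside $\bigcup_Y J_Y$ is $\leq_{B}|\mathsf{Ind}|$, absorbed into the $|\mathsf{Ind}|$ term. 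Concretely I would follow the combinatorial bookkeeping of \cite{mms} Lemma 6.22 verbatim, replacing $\mathcal S_X$ by $\mathcal T_X$, $\mathcal M$ by $\pa$, the reference to their Proposition 6.9 by our key claim (inductive hypothesis), and the reference to their Lemma 6.21 by our Lemma \ref{lem:mms616}.

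The main obstacle I anticipate is \emph{not} the counting argument itself — which transcribes cleanly — but making sure the passage between $\bm\tau$ and $\utw\bm\tau$ does not corrupt the assignment structure: one needs that a split visible in $\utw Y$ corresponds under $\dn$ to a genuinely recorded elementary move, that $\dn J_Y\subseteq I_{\utw Y}^\utw$ (claim 4 of Lemma \ref{lem:untwistedsubsurfaces}), and that the inductive subsurfaces of $X$ are exactly the $\utw$-images of inductive subsurfaces, via the bijection $\Sigma(\bm\tau)\to\Sigma(\utw\bm\tau)$. All of these are supplied by Corollary \ref{cor:subsurface_bijection} and Lemma \ref{lem:untwistedsubsurfaces}, together with Proposition \ref{prp:locallyfinite} which guarantees the distances $d_{\pa(Y)}$ and $d_{\pa(\utw Y)}^\utw$ are polynomially comparable; the one genuinely delicate point will be verifying that the additive errors accumulated over the $\leq|\mathsf{Ind}|$ pieces really do sum to something $O(|\mathsf{Ind}|)$ rather than something worse, which forces the constants $\mathsf T_0(X),\mathsf T_1(X)$ to have been chosen large enough relative to $C_2$, $K_1$, $K_2$, $\mathsf R_0$ and the inductive constants $C_7(Y)$ — exactly the role of the inequalities displayed just before the definition of inductive subsurface.
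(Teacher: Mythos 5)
Your overall plan — split the count into indices assigned to $X$ and indices assigned to proper inductive subsurfaces, handle the former via Lemma \ref{lem:mms616} (bounded contribution per maximal interval, at most $O(|\mathsf{Ind}|)$ such intervals), handle the latter via the inductive hypothesis applied to each $Y$, then defer the final bookkeeping to \cite{mms} Lemma~6.22 — matches the paper's proof exactly.

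However, your parenthetical shortcut in the first part contains a genuine error: the claim that ``it suffices to keep $d_Y(J_X)$ as the dominant term since $d_Y(J_X)\geq\mathsf T_0(X)$'' is false. The quantity $d_{\pa(Y)}(J_Y)$ is coarsely a \emph{sum} $\sum_{Z\subseteq Y}[d_Z(J_Y)]_C$ over all non-annular subsurfaces $Z\subseteq Y$, and this is in general much larger than the single term $d_Y(J_X)$; dropping the $Z\subsetneq Y$ terms is not justified. The actual argument — which is what the ``verbatim'' reference to \cite{mms} Lemma~6.22 is carrying for both you and the paper — expands $d_{\pa(Y)}(J_Y)$ via the distance formula with threshold $C\geq\mathsf T_0(X)+2\mathsf R_0+1$, uses the reverse triangle inequality (Lemma \ref{lem:reversetriangle}) to observe that any $Z$ contributing a nonzero term $[d_Z(J_Y)]_C$ is itself an inductive subsurface of $X$ with $d_Z(J_Y)\leq d_Z(J_X)+2\mathsf R_0$, and then reorganizes the resulting double sum $\sum_{Y\in\mathsf{Ind}}\sum_{Z\subseteq Y}$ as a sum indexed by $Z\in\mathsf{Ind}$, controlling the multiplicity with which each $Z$ can appear. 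That reorganization is the actual content of \cite{mms} Lemma~6.22 (in combination with what plays the role of their Lemma~6.21 there); it is not a cosmetic transcription, and your summary misrepresents it as one. Since you do explicitly plan to follow the \cite{mms} bookkeeping, the proof you would ultimately write is correct, but as stated the proposal gives the wrong impression that the hard step can be bypassed by a single ``dominant term'' observation.
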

\begin{proof}
Given any maximal interval $I$ among the ones treated in the lemma above, we have just shown that $|\mathcal T_X(I)|\leq B$. This implies that, for a suitable constant $A'$,
$$\left|\bigcup_{Y\in \mathsf{Ind}}\mathcal T_X(J_Y)\right|\leq_{A'}  \left|\bigcup_{Y\in \mathsf{Ind}}\mathsf{AI}_Y(J_Y)\right|\leq \sum_{Y\in\mathsf{Ind}} |\mathcal T_Y(J_Y)|.$$
We now apply the inductive hypothesis given by the key claim: $|\mathcal T_Y(J_Y)|\leq_A d_{\pa(Y)}(J_Y)$ for a constant $A=A(X)$.

Let $C\coloneqq 1+\max\{ M_6(Y), \mathsf{T}_0(X)+2\mathsf{R}_0\}$. Then, as shown in formula \ref{eqn:mmsubsurfaceproj_mixed} above,
$$
d_{\pa(Y)}(J_Y)\leq_{\mathsf{E}} \sum_{Z\subseteq Y\text{ non-annular}} [d_Z(J_Y)]_C
$$
for a suitable constant $\mathsf E(S,C)>1$. The proof ends as the one of Lemma 6.22 of \cite{mms}, verbatim.
\end{proof}

The proof ends for us the same way as the proof of Proposition 6.9 in \cite{mms} after proving Lemma 6.22. It just suffices to replace occurrences of $\mathcal S_X$ there with $\mathcal T_X$; the ones of $\mathcal M(X)$ with $\pa(S)$; and references to lemmas previously proved there with the lemmas above. 

It is just worth marking that the final estimate $|\mathsf{Ind}'|\leq_A d_{\pa(X)}(J_X)$ may be proved simply by saying that
$$
|\mathsf{Ind}'|\leq \frac{1}{\mathsf T_0}\sum_{Y\in\mathsf{Ind'}} [d_Y(J_X)]_{\mathsf T_0} \leq_{\mathsf E(X;\mathsf T_0)} \frac{1}{\mathsf T_0}d_{\pa(X)}(J_X)
$$
again by application of formula \ref{eqn:mmsubsurfaceproj_mixed}.
\cvd

\begin{coroll}\label{cor:hardttbound}
There is a constant $C_8=C_8(S)$ such that the following is true. Let $\bm\tau=(\tau_j)_{j=0}^N$ be a generic splitting sequence of birecurrent, cornered train tracks on $S$. Let $X$ be a subsurface of $S$ with $X\supseteq S'$ the subsurface, not necessarily connected, filled by $V(\tau_0)$, and suppose that $V(\tau_N|X)$ is a vertex of $\pa(X)$: then
$$d_{\pa(X)}\left(V(\tau_0|X),V(\tau_N|X)\right)\geq_{C_8} |\utw(\rar\bm\tau)|.$$
\end{coroll}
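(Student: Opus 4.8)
The plan is to derive this Corollary by combining the two one-sided bounds already established, namely Proposition~\ref{prp:hardttbound} and the elementary estimate in Proposition~\ref{prp:easyttbound}, together with the decomposition of an arbitrary splitting sequence into chunks that evolve firmly in a fixed subsurface. First I would observe that, since the statement only concerns $|\utw(\rar\bm\tau)|$ and the pants distance in $X$, and since $X$ plays a passive role (it merely contains $S'$), it suffices to prove the inequality with $X=S$ when $\bm\tau$ evolves firmly in a connected component $T$ of $S'$, and then patch.

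More precisely, write $\bm\tau=\bm\tau^1*\bm\epsilon^2*\bm\tau^2*\cdots*\bm\epsilon^w*\bm\tau^w$ as in the discussion before Definition~\ref{def:not_firmly}, where each $\bm\tau^u$ evolves firmly in some (possibly disconnected) subsurface $S'_u$ of $S$ and each $\bm\epsilon^u$ is a single split dropping complexity; recall $w\le\xi(S)$. For each $\bm\tau^u$ and each non-annular connected component $T$ of $S'_u$, Proposition~\ref{prp:hardttbound} gives $|\utw(\rar\bm\tau^u)|_T\le_{C_7} d_{\pa(T)}(V(\tau^u_{\min}),V(\tau^u_{\max}))$. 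Summing the visible-split counts over the (boundedly many) connected components $T$ of $S'_u$ recovers $|\utw(\rar\bm\tau^u)|$ up to a multiplicative constant, because every split in $\utw(\rar\bm\tau^u)$ is visible in at least one component of the filling subsurface (a split that is invisible in all components would not change any induced track, hence not change the track at all). On the right-hand side, $d_{\pa(T)}$ is controlled by $d_{\pa(X)}$ via subsurface-projection monotonicity (Theorem~\ref{thm:mmprojectiondist}, Lemma~\ref{lem:pantsquasiisom}, Lemma~\ref{lem:induction_vertices_commute}), provided $V(\tau_N|X)$ — and hence $V(\tau^u_{\max}|X)$ — is a vertex of $\pa(X)$, which is our hypothesis; one uses here that the filling subsurfaces are decreasing along $\bm\tau$ so the intermediate vertex sets remain vertices of the relevant quasi-pants graphs (remark after Lemma~\ref{lem:decreasingfilling}).

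The remaining bookkeeping is to assemble these chunk-wise estimates into a single inequality. Using the definition of $\utw$ and $\rar$ for non-firm sequences (Definitions~\ref{def:not_firmly} and the extension of $\utw$ there), one has $|\utw(\rar\bm\tau)|=\sum_{u=1}^w\bigl(|\utw(\rar\bm\tau^u)|+1\bigr)-1$, the extra $+1$'s accounting for the single splits $\bm\epsilon^u$. Each of those single splits contributes at most $1$ to $d_{\pa(X)}$ by the choice of parameters in Remark~\ref{rmk:pickparameters}. Then, bounding each $|\utw(\rar\bm\tau^u)|$ by $C_7\cdot\xi(S)\cdot d_{\pa(X)}(\tau^u_{\min},\tau^u_{\max})$ and chaining the per-chunk pants distances by the triangle inequality, one obtains $|\utw(\rar\bm\tau)|\le C_8\, d_{\pa(X)}(V(\tau_0|X),V(\tau_N|X))+C_8$ for a suitable $C_8=C_8(S)$ absorbing $C_7$, $\xi(S)$, $w\le\xi(S)$, and the additive $+1$ terms.

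The step I expect to be the real obstacle is the reduction from $|\utw(\rar\bm\tau)|$ to $\sum_T |\utw(\rar\bm\tau)|_T$: one must check that counting splits that are visible in \emph{some} component of the evolving subsurface is comparable to the total split count, and that splits visible in several components are not over- or under-counted in a way that breaks the constant. This is where a careful use of Lemma~\ref{lem:mms614} (bounding visible splits between repeats of an induced vertex set) and of the structure of $\utw$ — in particular that the Dehn intervals $DI_t^\utw$ have bounded rotation $2\mathsf K_0+4$, so carry a bounded number of splits by Lemma~\ref{lem:twistsplitnumber} — is needed to show no component absorbs more than a bounded-by-$C_7$ proportion of the splits it does not see. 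Once that combinatorial accounting is pinned down, the rest is the routine concatenation above.
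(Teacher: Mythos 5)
Your overall decomposition into chunks $\bm\tau^1*\bm\epsilon^2*\cdots*\bm\tau^w$ and the use of Proposition~\ref{prp:hardttbound} on each chunk is indeed the paper's strategy, and your attention to visible vs.\ total splits (via Lemma~\ref{lem:mms614} and Lemma~\ref{lem:vertexsetbounds}) is also the right bookkeeping for Step~1. However, the concatenation step is where the argument breaks.

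You write that one should bound each $|\utw(\rar\bm\tau^u)|$ by a constant times the per-chunk pants distance $d_{\pa(X)}(J^u)$ and then finish by ``chaining the per-chunk pants distances by the triangle inequality.'' This cannot work: you need $\sum_u d_{\pa(X)}(J^u) \leq C\cdot d_{\pa(X)}(0,N) + C$, and the ordinary triangle inequality only gives $d_{\pa(X)}(0,N) \leq \sum_u d_{\pa(X)}(J^u)$, i.e.\ the wrong direction. The inequality you need is a \emph{reverse} triangle inequality, and for $d_{\pa(X)}$ itself this is precisely the quasi-geodicity statement one is trying to establish --- using it directly would be circular. The paper instead descends to the level of curve-complex projections: for each non-annular $Y\subset X$ one has, from Theorem~\ref{thm:mms_cc_geodicity} and Lemma~\ref{lem:reversetriangle}, the reverse triangle inequality $d_Y(\tau_0,\tau_N)\geq \sum_{u=1}^w d_Y(J^u)-2(w-1)\mathsf{R}_0$, since $(\pi_Y V(\tau_j))_j$ is an unparametrized quasi-geodesic in $\cc(Y)$. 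One then sums over subsurfaces $Y$ (handling the threshold $[\cdot]_M$ carefully by enlarging $M$ by a multiple of $\mathsf{R}_0$), uses formula~\eqref{eqn:mmsubsurfaceproj_mixed} to convert the summation back into $d_{\pa(X)}(0,N)$, and observes (as you do) that subsurfaces cut by or disjoint from $\partial S^u$ contribute nothing, so that the per-chunk sums $\sum_Y [d_Y(J^u)]_M$ reduce to $\sum_i d_{\pa(T_i^u)}(J^u)$. Without Lemma~\ref{lem:reversetriangle} at the subsurface-projection level, there is no way to force $d_{\pa(X)}(0,N)$ to be large.

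A smaller but real imprecision: you assert that ``a split that is invisible in all components would not change any induced track, hence not change the track at all.'' This is false --- a split of $\tau_j$ can change $\tau_j$ yet only produce comb equivalences or subtrack extractions in every induced track $\tau_j|T_i^u$. Such invisible splits genuinely occur; the paper's Step~1 does not claim they cannot, but instead bounds their number between consecutive visible splits by $N_0 N_4$ using the decreasing-vertex-set argument and Lemma~\ref{lem:mms614}. Your final paragraph gestures at this correctly, so the point is minor, but the sentence as written would mislead a reader.
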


\begin{proof}
Subdivide $\bm\tau=\bm\tau^1*\bm\epsilon^2*\bm\tau^2*\ldots*\bm\epsilon^w*\bm\tau^w$ as it is done before Definition \ref{def:not_firmly}. To simplify notations, we may replace each $\bm\tau^u$ with the respective $\rar\bm\tau^u$. For $1\leq u\leq w$, let $J^u$ be the interval of indices in $\bm\tau$ supplied by $\bm\tau^u$. The subsequence $\bm\tau^u$ will evolve firmly in a (possibly disconnected) subsurface $S^u\subset S$ and so will do $\utw\tau^u$; let $S^u=T_1^u\sqcup\ldots\sqcup T_{k(u)}^u$ be the decomposition into connected components.

\step{1} for each $1\leq u\leq w$, $|\utw\bm\tau^u|\leq \frac{1}{N_0N_4+1} \sum_{i=1}^{k(u)} |\utw\bm\tau^u|_{T_i^u}$. When using this notation, we agree that $|\utw\bm\tau^u|_{T_i^u}=0$ when $T_i^u$ is an annulus.

Let $p,q\in \dn J^u$ be two indices such that no split in $\utw\bm\tau(p,q)$ is visible in any of the non-annular connected components of $S^u$. Let $T^u_i$ be non-annular. Our assumption on the interval of indices $[p,q]$ implies that, within this interval, the induced tracks $\utw\tau_j|T^u_i$ may change only under comb equivalences and subtrack extractions. Consequently, if $j'>j$, then $V(\utw\tau_{j'}|T^u_i)\subseteq V(\utw\tau_j|T^u_i)$. In Lemma \ref{lem:decreasingfilling} we have shown that $V(\utw\tau_j|T^u_i)=V(\utw\tau_j)\cap \cc(T^u_i)$.

So we have $V(\utw\tau_j)=\Xi^u\cup\bigcup_{T^u_i\text{ non-annular}} V(\utw\tau_j|T^u_i)$, where $\Xi^u$ is the collection of all core curves of the $T^u_i$ which are annuli; therefore this set is also decreasing as $j$ increases in $[p,q]$. As a consequence of Lemma \ref{lem:vertexsetbounds}, then, $V(\utw\tau_j)$ changes at most $N_0$ times within the interval $[p,q]$. By Lemma \ref{lem:mms614} applied on the entire surface $S$, then, $|\utw\bm\tau^u(p,q)|\leq N_4N_0$.

In other words, among every $N_0N_4+1$ consecutive splits in $\utw\bm\tau^u$, at least one of them must be visible in one of the connected components of $S^u$.

\step{2} we claim that there is a constant $A=A(S)$ such that, for each $1\leq u\leq w$,
$$
|\utw\bm\tau^u|\leq_A \sum_{\substack{Y\subset X\text{ essential} \\ \text{and non-annular}}} [d_Y(J^u)]_M
$$
where $M\geq \max \left(\{M_6(X)|X\text{ subsurface of }S\}\cup\{2\}\right)$ is fixed.

According to formula \ref{eqn:mmsubsurfaceproj_mixed} above, for all $Z\subseteq X$, $Z$ subsurface of $S$, there is a constant $e=e(S,M)$ such that:
$$
d_{\pa(Z)}(J^u)=_{e} \sum_{\substack{Y\subset Z\text{ essential} \\ \text{and non-annular}}} [d_Y(J^u)]_M \eqqcolon \mathrm{sum}(Z,M,u)
$$
for a suitable constant $M$ which we may suppose to be depending only on $S$. The formula may be given sense also for $Z$ an annulus: in that case the summation is empty, and the $d_{\pa(Z)}(J^u)$ can also be set to $0$.

Clearly (see the Remark following Lemma \ref{lem:decreasingfilling}), $S^u\subseteq S'\subseteq X$. The family of all non-annular subsurfaces $Y\subset X$ can be partitioned into:
\begin{itemize}
\item the sub-families of subsurfaces $Y\subset T_i^u$ --- one for each $1\leq i\leq k(u)$;
\item the sub-family of all subsurfaces $Y$ which are cut by $\partial S^u$;
\item the sub-family of all subsurfaces $Y$ essentially disjoint from $S^u$.
\end{itemize}
If $Y$ is any of the subsurfaces as in the second bullet and $j,j'\in J^u$, then any $\alpha\in \pi_Y(V(\tau_j)), \beta\in \pi_Y(V(\tau_{j'}))$  will not intersect $\pi_Y(\partial S^u)\not=\emptyset$. So $d_Y(\alpha,\beta)\leq 2\leq M$. The surfaces as in the third bullet, instead, just do not exist, because $V(\tau_N|X)\in \pa^0(X)$ implies $V(\tau_j|X)\in \pa^0(X)$ for all $j\in J^u$, too. So $\ol{X\setminus S^u}$ consists of discs, punctured discs, annuli and pairs of pants.

This yields that only the subsurfaces as in the first bullet count in the summation $\mathrm{sum}(X,M,u)$, which is thus equal to $\sum_{i=1}^{k(u)} \mathrm{sum}(T_i^u,M,u)$.

From Step 1 above and Proposition \ref{prp:hardttbound} we have

\begin{center}
$|\utw\bm\tau^u| \leq \frac{1}{N_0 N_4+1} \sum_{i=1}^{k(u)} |\utw\bm\tau^u|_{T_i} \leq_{C_7(N_0 N_4+1)} \sum_{i=1}^{k(u)} d_{\pa(T_i)}(J^u).$
\end{center}

According to equalities we have established previously, then, there is a constant $A=A(S)$ such that

\begin{center}
$|\utw\bm\tau^u|\leq_A \sum_{i=1}^{k(u)}\mathrm{sum}(T_i^u,M,u) = \mathrm{sum}(X,M,u).$
\end{center}

\step{3} proof of the statement.

Recall the constant $K_2$ introduced in the proof of Proposition \ref{prp:hardttbound}. Fix\linebreak $M\geq \max\left( \{M_6(X)|X\text{ subsurface of }S\}\cup\{2\}\right) + 2(\xi(S)-1)\mathsf{R}_0$.

Note that, for each subsurface $Y\subset X$, $d_Y(\tau_0,\tau_N)\geq \sum_{u=1}^w d_Y(J^u)-2(w-1)\mathsf{R}_0$ by repeated application of Lemma \ref{lem:reversetriangle}.

Let $E\coloneqq \max_{0\leq j\leq\xi(S)}e\left(S,M-2j\mathsf{R}_0\right)$; and let $M'\coloneqq M-2(w-1)\mathsf{R}_0$. Then
\begin{eqnarray*}
 & d_{\pa(X)}(0,N)=_E \sum_{\substack{Y\subset X\text{ essential} \\ \text{and non-annular}}} [d_Y(0,N)]_{M'} & \\
 & \geq \sum_{\substack{Y\subset S\text{ essential} \\ \text{and non-annular}}} \left([d_Y(J^1) + \ldots + d_Y(J^w)]_M-\mathsf r(X))\right)
\end{eqnarray*}
where $\mathsf r(X)=2(w-1)\mathsf{R}_0$ if the other summand is nonzero, and $0$ otherwise. A simple computation shows that $[x]_M-\mathsf r(X)\geq \left(1-\frac{2(\xi(S)-1)\mathsf{R}_0}{M}\right)[x]_M$ (the bracketed term is positive). So, if $E'=E\left(1-\frac{2(\xi(S)-1)\mathsf{R}_0}{M}\right)^{-1}$, then
\begin{eqnarray*}
 & d_{\pa(X)}(0,N) \geq_{E'} \sum_{\substack{Y\subset X\text{ essential} \\ \text{and non-annular}}} [d_Y(J^1) + \ldots + d_Y(J^w)]_M & \\
 & \geq \sum_Y \left([d_Y(J^1)]_M + \ldots + \sum_X [d_Y(J^w)]_M\right) =\sum_{u=1}^w \mathrm{sum}(X,M,u). &
\end{eqnarray*}

Step 1 above gives $\sum(X,M,u)\geq_A |\utw\bm\tau^u|=|\psi_u\cdot \utw\bm\tau^u|$: therefore\linebreak $\sum_{u=1}^w \mathrm{sum}(X,M,u)\geq_A |\utw\bm\tau|-\xi(S)+1$. This concludes the proof.
\end{proof}

From Proposition \ref{prp:easyttbound} and Corollary \ref{prp:hardttbound} we easily derive:
\begin{coroll}\label{cor:ttsamelength}
Let $\bm\tau=(\tau_j)_{j=0}^N$ be a generic splitting sequence of birecurrent, cornered train tracks on $S$. Let $X$ be a subsurface of $S$ with $X\supseteq S'$ the subsurface, not necessarily connected, filled by $V(\tau_0)$, and suppose that $V(\tau_N|X)$ is a vertex of $\pa(X)$. Let $\bm\tau'$ be another splitting sequence, beginning and ending with the same train tracks as $\bm\tau$. Then
$$
|\utw(\rar\bm\tau)| =_{C_6C_8} |\utw(\rar\bm\tau')|.
$$
\end{coroll}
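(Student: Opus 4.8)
The plan is to obtain the two-sided coarse equality by combining the one-sided bounds of Proposition \ref{prp:easyttbound} and Corollary \ref{cor:hardttbound}, evaluated at the common endpoints of the two sequences. Write $\tau_0$ and $\tau_N$ for the first and last entries shared by $\bm\tau$ and $\bm\tau'$. Then $V(\tau_0)$ fills the same (possibly disconnected) subsurface $S'$ whichever sequence we start from, $X\supseteq S'$ is as in the hypothesis, and $V(\tau_N|X)$ is a vertex of $\pa(X)$. The crucial observation is that the number $d_{\pa(X)}(\tau_0,\tau_N)=d_{\pa(X)}\bigl(V(\tau_0|X),V(\tau_N|X)\bigr)$ is determined by the ordered pair of endpoints alone, hence it is the same whether we view $\tau_0,\tau_N$ as the extremes of $\bm\tau$ or of $\bm\tau'$.

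First I would apply Corollary \ref{cor:hardttbound} to $\bm\tau$ itself, which gives
\[
|\utw(\rar\bm\tau)|\leq C_8\, d_{\pa(X)}(\tau_0,\tau_N)+C_8 .
\]
Then I would apply Proposition \ref{prp:easyttbound} to $\bm\tau'$ — which, like $\bm\tau$, is a generic splitting sequence of birecurrent cornered train tracks: birecurrence of all its entries follows from Remark \ref{rmk:recurrence_at_extremes} since its extremes $\tau_0,\tau_N$ are birecurrent, and we take $\bm\tau'$ cornered so that $\rar$ and $\utw$ are defined on it — obtaining
\[
d_{\pa(X)}(\tau_0,\tau_N)\leq C_6\, |\utw(\rar\bm\tau')| .
\]
Substituting the second inequality into the first and using $C_6\geq 1$,
\[
|\utw(\rar\bm\tau)|\leq C_6C_8\, |\utw(\rar\bm\tau')|+C_8\leq C_6C_8\, |\utw(\rar\bm\tau')|+C_6C_8 ,
\]
i.e.\ $|\utw(\rar\bm\tau)|\leq_{C_6C_8}|\utw(\rar\bm\tau')|$. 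Since the hypotheses on $\bm\tau$ and $\bm\tau'$ are symmetric, interchanging their roles gives $|\utw(\rar\bm\tau')|\leq_{C_6C_8}|\utw(\rar\bm\tau)|$ as well, and the two inequalities together are exactly $|\utw(\rar\bm\tau)|=_{C_6C_8}|\utw(\rar\bm\tau')|$.

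There is no genuinely hard step: the whole content is the remark above that $d_{\pa(X)}(\tau_0,\tau_N)$ is an invariant of the endpoints. The only point worth checking carefully is that the hypotheses of both invoked statements hold with the same subsurface $X$ and for both sequences — recurrence and transverse recurrence of all entries (Remark \ref{rmk:recurrence_at_extremes}), the cornered condition (assumed of $\bm\tau'$, as it is implicit in $\rar$ and $\utw$ being defined on it), and the vertex condition on $V(\tau_N|X)$ that both Proposition \ref{prp:easyttbound} and Corollary \ref{cor:hardttbound} require (which, via the remark following Lemma \ref{lem:decreasingfilling} and the inclusion $V(\tau_N)\subseteq V(\tau_0)$ together with $X\supseteq S'$, can be read identically for the two statements). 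Once this bookkeeping is done, the corollary is precisely the three displayed inequalities.
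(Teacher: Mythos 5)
Your proof is correct and is precisely what the paper has in mind: the paper's own proof of this corollary is a single sentence (``From Proposition \ref{prp:easyttbound} and Corollary \ref{prp:hardttbound} we easily derive:''), and you have spelled out exactly that argument -- apply the upper bound from Corollary \ref{cor:hardttbound} to one sequence, the lower bound from Proposition \ref{prp:easyttbound} to the other, exploit that $d_{\pa(X)}(\tau_0,\tau_N)$ depends only on the common endpoints, and finish by symmetry. One small bookkeeping remark: Proposition \ref{prp:easyttbound} is stated with the hypothesis ``$V(\tau_N)\in\pa^0(S)$'' rather than ``$V(\tau_N|X)\in\pa^0(X)$'', so there is a slight hypothesis mismatch with the corollary (and with Theorem \ref{thm:main_full}, which cites the same two ingredients); this is a discrepancy in the paper's phrasing rather than a defect of your argument, and your parenthetical on the vertex condition reads the hypothesis the way the paper actually uses it.
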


We may now prove a slightly generalized statement for Theorem \ref{thm:core}:

\begin{theo}\label{thm:main_full}
Let $\bm\tau=(\tau_j)_{j=0}^N$ be a generic splitting sequence of birecurrent train tracks on $S$, \emph{not necessarily cornered ones}. Let $X$ be a subsurface of $S$ with $X\supseteq S'$ the subsurface, not necessarily connected, filled by $V(\tau_0)$, and suppose that $V(\tau_N|X)$ is a vertex of $\pa(X)$. Then, for a constant $C_9=C_9(S)$ independent of $\bm\tau$,
$$d_{\pa(X)}(0,N)=_{C_9} |\utw(\rar(\cnr\bm\tau))|.$$
\end{theo}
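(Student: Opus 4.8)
The statement to prove, Theorem \ref{thm:main_full}, differs from Theorem \ref{thm:core} in two ways: it allows non-cornered train tracks and it allows a larger ambient subsurface $X$ (not just the subsurface filled by $V(\tau_0)$). The plan is to reduce everything to the two halves already proved, namely Proposition \ref{prp:easyttbound} (the upper bound $d_{\pa(X)}(0,N) \leq C_6 |\utw(\rar\bm\tau)|$ for a splitting sequence of recurrent cornered train tracks) and Corollary \ref{cor:hardttbound} (the lower bound $d_{\pa(X)}(0,N) \geq_{C_8}|\utw(\rar\bm\tau)|$ for a splitting sequence of birecurrent cornered train tracks). The new piece of machinery needed is the cornerization $\cnr\bm\tau$ of Definition \ref{def:cornerizedseq}, whose properties are catalogued in Lemma \ref{lem:ctauproperties}: it produces a splitting sequence of cornered train tracks, and the number of splits changes only in a bounded way, $|\bm\tau|-N_1 \leq |\cnr\bm\tau| \leq 6^{N_1}|\bm\tau|$.

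\textbf{Key steps.} First I would observe that $\bm\tau$ consists of birecurrent train tracks, so by Lemma \ref{lem:ctauproperties} (items 1, 2, 3) the cornerization $\cnr\bm\tau$ is a splitting sequence of birecurrent, cornered train tracks. Next, $\cnr\bm\tau$ begins with a cornerization $\cnr\tau_0$ of $\tau_0$, which is obtained from $\tau_0$ by a sequence of central splits and comb equivalences; in particular $V(\cnr\tau_0)$ and $V(\tau_0)$ fill the \emph{same} subsurface $S'$ of $S$ (central splits and comb moves do not change the set of carried curves up to the relevant equivalences — cf. Remark \ref{rmk:decreasingmeasures} and Lemma \ref{lem:decreasingfilling}). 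Similarly $\cnr\tau_M$, the last entry of $\cnr\bm\tau$, is comb equivalent to $\tau_N$, so $V(\cnr\tau_M|X) = V(\tau_N|X)$ up to the identifications in play, hence it is still a vertex of $\pa(X)$ and moreover $d_{\pa(X)}(V(\tau_0|X),V(\tau_N|X)) = d_{\pa(X)}(V(\cnr\tau_0|X),V(\cnr\tau_M|X))$ up to a bounded additive error (the two endpoints of $\bm\tau$ and of $\cnr\bm\tau$ have vertex sets at bounded distance, by the choice of parameters in Remark \ref{rmk:pickparameters}, Remark \ref{rmk:subsurf_inters_bound}, Lemma \ref{lem:cc_distance} and Lemma \ref{lem:induction_vertices_commute}). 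Therefore I can apply Proposition \ref{prp:easyttbound} and Corollary \ref{cor:hardttbound} to the cornered splitting sequence $\cnr\bm\tau$ in place of $\bm\tau$: they give
$$
\tfrac{1}{C_8}|\utw(\rar(\cnr\bm\tau))| - C_8 \ \leq\ d_{\pa(X)}(V(\cnr\tau_0|X),V(\cnr\tau_M|X)) \ \leq\ C_6\,|\utw(\rar(\cnr\bm\tau))|.
$$
Combining with the bounded correction between the two pairs of endpoints yields $d_{\pa(X)}(0,N) =_{C_9} |\utw(\rar(\cnr\bm\tau))|$ for a suitable $C_9 = C_9(S)$ built from $C_6$, $C_8$, and the finitely many combinatorial constants above.

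\textbf{Loose ends and the main obstacle.} A couple of points need care. The reduction to the case ``$X$ is the subsurface filled by $V(\tau_0)$'' is already accommodated, since Proposition \ref{prp:easyttbound} and Corollary \ref{cor:hardttbound} are both stated for an arbitrary subsurface $X \supseteq S'$ with $V(\tau_N|X) \in \pa^0(X)$; so nothing extra is needed there. More delicate is verifying that $\utw(\rar(\cnr\bm\tau))$ is actually well-defined — this requires $\rar(\cnr\bm\tau)$ to be effectively arranged (Definition \ref{def:arranged}, Definition \ref{def:not_firmly}), which is the content of Proposition \ref{prp:rearrang2} applied componentwise along the firm-evolution decomposition of $\cnr\bm\tau$; the curves $\gamma_t$ relevant to $\rar$ and $\utw$ are the effective twist curves of $\cnr\bm\tau$, not of $\bm\tau$, and one should note (via Lemma \ref{lem:ctauproperties} item 4 and the comb-equivalence correspondence of tracks) that these match up appropriately so that the statement of the theorem — phrased in terms of $\utw(\rar(\cnr\bm\tau))$ — is the correct one. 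I expect the main obstacle to be precisely the bookkeeping that the endpoints of $\cnr\bm\tau$ induce the same filled subsurface $S'$ and the same (up to bounded distance) pants-graph points as $\bm\tau$, so that the hypotheses of the two cited results are genuinely met; once that is in place the theorem is a formal consequence. No genuinely new geometric input is required beyond what is already assembled.
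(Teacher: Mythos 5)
Your approach is essentially identical to the paper's: cornerize $\bm\tau$, use Lemma \ref{lem:ctauproperties} to get a birecurrent cornered splitting sequence with endpoints at bounded distance (via Remark \ref{rmk:centralsplitbound}) from those of $\bm\tau$, apply Proposition \ref{prp:easyttbound} and Corollary \ref{cor:hardttbound} to $\cnr\bm\tau$, and close with the triangle inequality. One small correction to your parenthetical justification: central splits \emph{do} change the set of carried curves (Remark \ref{rmk:decreasingmeasures} reserves the equality $\cc(\tau_j)=\cc(\tau_{j+1})$ for non-split moves), so $V(\cnr\tau_0)$ need not fill exactly $S'$; the relevant point, as the paper states it, is the inclusion $V(\tau_0|X)\subseteq\cc(\cnr\tau_0|X)$, which suffices to show $V(\cnr\tau_0|X)$ and $V(\cnr\tau_M|X)$ remain vertices of $\pa(X)$.
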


\begin{proof}
Consider a cornerization $\cnr\bm\tau=(\cnr\tau_j)_{j=0}^M$ of the splitting sequence $\bm\tau$. Point 4 in Lemma \ref{lem:ctauproperties} gives, in particular, that $\cnr\tau_0$ is a cornerization of $\tau_0$ and $\cnr\tau_M$ is a cornerization of $\tau_N$. In particular, as $V(\tau_0|X)\subseteq \cc(\cnr\tau_0|X)$ and $V(\tau_N|X)\subseteq \cc(\cnr\tau_M|X)$, an application of Lemma \ref{lem:decreasingfilling} and following observations gives that $V(\cnr\tau_0|X)$, $V(\cnr\tau_M|X)$ are vertices of $\pa(X)$.

By Remark \ref{rmk:centralsplitbound} the number of splits turning $\cnr\tau_0$ into $\tau_0$, and $\cnr\tau_M$ into $\tau_N$, is bounded in terms of the topology of $S$. Therefore $d_\pa(\tau_0,\cnr\tau_0)$ and $d_\pa(\cnr\tau_M,\tau_N)$ are also bounded.

Finally,
$$
d_{\pa(X)}(\cnr\tau_0,\cnr\tau_M)=_{\max\{C_6,C_8\}} |\utw(\rar(\cnr\bm\tau))|,
$$
by a combination of Proposition \ref{prp:easyttbound} and of Corollary \ref{cor:subsurface_bijection}.

The triangle inequality completes the proof.
\end{proof}

\begin{coroll}
Let $\bm\tau=(\tau_j)_{j=0}^N$ be a generic splitting sequence of birecurrent train tracks on $S$. Let $X$ be a subsurface of $S$ with $X\supseteq S'$ the subsurface, not necessarily connected, filled by $V(\tau_0)$, and suppose that $V(\tau_N|X)$ is a vertex of $\pa(X)$. Then $(\sigma_j)_{j=0}^M=\bm\sigma\coloneqq \rar(\cnr\bm\tau)$ describes an unparametrized quasi-geodesic in the pants graph.

If $J$ is the sequence of indices $0\leq j< M$ such that $j,j+1 \in DL_t\cup NI_t$ for some $0\leq t \leq q$ (see Definition \ref{def:untwistedsequence} and following constructions) and $\sigma_j$ splits to $\sigma_{j+1}$, then $\left(V(\sigma_j)\right)_{j\in J}$ describes a $\max\{C_6,C_8\}$-quasi-geodesic in $\pa(X)$.
\end{coroll}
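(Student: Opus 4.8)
Looking at this corollary, the plan is to deduce both statements from the machinery already in place, chiefly Theorem \ref{thm:main_full} and its proof ingredients (Propositions \ref{prp:easyttbound}, \ref{prp:hardttbound}, \ref{prp:locallyfinite}, and the bijection Corollary \ref{cor:subsurface_bijection}).

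\begin{proof}[Proof sketch]
First I would address the unparametrized quasi-geodesic statement for $\bm\sigma=\rar(\cnr\bm\tau)$ in the pants graph. The idea is that the reparametrizing map $\rho$ witnessing the unparametrized quasi-geodesic should collapse to a point each Dehn interval $DI^\rar_t$ (together with any adjacent slide moves, which do not alter vertex cycles), and also collapse each maximal run between two visible splits, keeping one representative per `visible' split. Along each $DI^\rar_t$, Lemma \ref{lem:pantsboundunderdt} bounds $d_{\pa(X)}$ by $C_2$; a single split displaces $\pa(X)$-distance by at most $C_6$ (Proposition \ref{prp:easyttbound} and Remark \ref{rmk:pickparameters}); so $g\circ\rho$ has controlled step size. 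For the lower bound on $d_{\pa(X)}$ between two points $g\circ\rho(a)$, $g\circ\rho(b)$, I would apply Theorem \ref{thm:main_full} to the subsequence $\bm\sigma(\up a,\up b)$ (which again begins and ends with cornered birecurrent tracks, being a subsequence of $\cnr\bm\tau$ up to $\rar$): the distance is $=_{C_9}|\utw(\rar(\cnr(\bm\sigma(\up a,\up b))))|$, and since $\rar$ and $\utw$ are idempotent-like on an already (effectively) arranged sequence (Corollary \ref{cor:subsurface_bijection}: $\utw(\utw\bm\tau)=\utw\bm\tau$), the right-hand side is comparable to the number of parameter-steps $|a-b|$ up to the multiplicative constant coming from $N_0N_4+1$ (Step 1 of Corollary \ref{cor:hardttbound}). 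Combining the two directions with uniform constants yields a $Q$-unparametrized quasi-geodesic, with $Q=Q(S)$.

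For the second assertion, the key observation is that the subsequence indexed by $J$ is, up to the identification provided by the maps $\phi_t$ (cf.\ Definition \ref{def:untwistedsequence} and the natural bijections $\dn$ between $DL_t\cup NI_t$ and $DI_t^\utw\cup NI_t^\utw$), precisely the sequence of vertex cycles along the `non-Dehn' portions of $\utw(\rar(\cnr\bm\tau))$ at which a visible split occurs. Thus $\left(V(\sigma_j)\right)_{j\in J}$ biject, preserving order and preserving $\pa(X)$-distance (the $\phi_t$ are diffeomorphisms, hence act as isometries on $\pa(X)$ after the corresponding relabelling of subsurfaces), with a subset of the vertices of $\utw(\rar(\cnr\bm\tau))$. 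The quasi-geodesic bound then follows by combining the upper estimate $d_{\pa(X)}(\sigma_k,\sigma_l)\leq C_6|\utw(\rar(\cnr(\bm\tau(\cdot,\cdot))))|\leq C_6|l-k|_J$ from Proposition \ref{prp:easyttbound} with the lower estimate $d_{\pa(X)}(\sigma_k,\sigma_l)\geq_{C_8}|\utw(\rar(\cnr\bm\tau))|_{[k,l]}\geq |l-k|_J$ from Proposition \ref{prp:hardttbound} (or rather Corollary \ref{cor:hardttbound}), since between any two consecutive indices in $J$ there is exactly one visible split by construction and, conversely, every visible split in a non-Dehn block contributes an index to $J$. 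This gives the claimed $\max\{C_6,C_8\}$-quasi-geodesic.

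The main obstacle I anticipate is bookkeeping the reparametrization $\rho$ carefully enough that \emph{every} gap between consecutive $\rho$-values has uniformly bounded $\pa(X)$-diameter — in particular handling the Dehn intervals $DI_t$ for which $\gamma_t$ cuts $\partial X$ (where one must invoke Remark \ref{rmk:pantsboundunderdt_cutting} rather than Lemma \ref{lem:pantsboundunderdt}) and the boundary splits $\bm\epsilon^u$ between the firm-evolution blocks. One must also check that the indices in $J$ are `dense enough' along $\utw(\rar(\cnr\bm\tau))$ that no large $\pa(X)$-distance is hidden between two consecutive ones: this is exactly the content of Lemma \ref{lem:mms614} applied to the non-Dehn portions, bounding the number of splits between repetitions of $V(\sigma_j|X)$, so the argument goes through, but assembling these pieces with a single pair of constants requires some care.
\end{proof}
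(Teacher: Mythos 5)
Your overall strategy coincides with the paper's: establish a two-sided estimate
$|(\utw\bm\sigma)(\dn k,\dn l)|\leq_{C_8} d_{\pa(X)}(\sigma_k,\sigma_l)\leq_{C_6}|(\utw\bm\sigma)(\dn k,\dn l)|$
by adapting Propositions \ref{prp:easyttbound}, \ref{prp:hardttbound} and Corollary \ref{cor:hardttbound}, identify the split count with $|[k,l]\cap J|$ to read off the second claim, and deduce the first claim from the observation that between consecutive $J$-indices the $\pa(X)$-diameter is uniformly bounded. However, several intermediate steps are muddled or wrong in detail.

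The claim that $(V(\sigma_j))_{j\in J}$ ``bijects, preserving order and preserving $\pa(X)$-distance,'' with vertices of $\utw\bm\sigma$ because the $\phi_t$ are diffeomorphisms is false, or at best seriously misleading. The identification $\utw\sigma_{\dn j}=\phi_{t(j)}(\sigma_j)$ uses a \emph{different} $\phi_{t(j)}$ for different $j$; exactly because those diffeomorphisms differ, the map $j\mapsto\dn j$ does \emph{not} preserve pants distance. Indeed, the whole purpose of $\utw$ is to remove the twist contribution, and Proposition~\ref{prp:locallyfinite} provides only a quadratic (not isometric) relationship between the two distances. Fortunately the corollary does not require any such isometry: the second claim concerns $d_{\pa(X)}$ measured along $\bm\sigma$ and compares it with the split \emph{count} in $\utw\bm\sigma$, a statement that follows by adapting the three cited results to subintervals of the already-arranged sequence rather than by any transfer of distances.

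Your lower-bound route for the first claim, applying Theorem~\ref{thm:main_full} to $\bm\sigma(\up a,\up b)$ and invoking idempotency of $\utw$, is more roundabout than necessary and has a gap: restricting to a subinterval does not obviously preserve effective arrangement (an effective twist curve of the full $\bm\sigma$ need not remain effective for the subsequence, since the projection distance over the subinterval can drop below the threshold), so $\utw(\rar(\cnr(\bm\sigma(\up a,\up b))))$ need not equal $(\utw\bm\sigma)(a,b)$. The cleaner path --- the one the paper takes --- is to establish the second claim first and then deduce the first by bounding the gaps: between consecutive indices $a,b\in J$ there is a single $t$ such that every move in $[a,b]\setminus DI_t$ is a slide, and Lemma~\ref{lem:pantsboundunderdt} (when $\gamma_t$ is essential in $X$) or Remark~\ref{rmk:pantsboundunderdt_cutting} (when $\gamma_t$ cuts $\partial X$) bounds $d_{\pa(X)}(\sigma_a,\sigma_b)$ by $C_2$. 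Your invocation of Lemma~\ref{lem:mms614} for this density statement is the wrong lemma: it controls the number of \emph{visible splits} between repeated induced vertex sets, not the pants-graph diameter of a Dehn interval, which is what is actually needed here.
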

\begin{proof}
With minor adaptations to Proposition \ref{prp:easyttbound}, Proposition \ref{prp:hardttbound}, Corollary \ref{cor:hardttbound}, one proves that for any two indices $0\leq k\leq l\leq M$,
$$
|(\utw\bm\sigma)(\dn k,\dn l)|\leq_{C_8} d_{\pa(X)}\left(V(\sigma_k),V(\sigma_l)\right)\leq_{C_6} |(\utw\bm\sigma)(\dn k,\dn l)|
$$
which is just a restatement of our second claim.

In order to prove the first claim, it is sufficient to prove that\linebreak $d_{\pa(X)}\left(V(\sigma_a),V(\sigma_b)\right)\leq C_2$ for all $a,b$ comprised between two consecutive indices in $J$. For any two indices as such, there is a $t$ such that, for all $j\in [a,b]\setminus DI_t$, $\sigma_{j+1}$ is obtained from $\sigma_j$ with a slide. We conclude with Lemma \ref{lem:pantsboundunderdt} and Remark \ref{rmk:pantsboundunderdt_cutting}, combined.
\end{proof}
\chapter{Hyperbolic volume estimates}

\section{Pseudo-Anosov mapping tori}

In all this section, $S$ is a \emph{closed} surface and $\psi:S\rightarrow S$ is a pseudo-Anosov diffeomorphism. We give an application of our main theorem and of Theorem \ref{thm:brockmappingtori} proved by Brock. Recall the introductory notions given in \S \ref{sub:mappingtori}.

Given a measured lamination $(\lambda,\mu)$ and a train track $\tau$ on $S$, one may say that $\lambda$ is \nw{carried} by $\tau$ if $\lambda$ can be ambient-isotoped to lie in $\nei(\tau)$, in a way that is transverse to all ties: we are just repeating Definition \ref{def:carried} to suit this setting. Uniqueness of carrying (Lemma 1.7.11 in \cite{penner}, previously simplified in Proposition \ref{prp:carryingunique} here) still holds. The bijection between rational transverse measures on a train track and weighted multicurves, described in Proposition \ref{prp:measurecurvecorresp} here, is a simplified version of Theorem 1.7.12 in \cite{penner}, establishing a similar bijection between \emph{real} transverse measures and carried measured laminations.

We say that $\tau$ is \nw{suited} to $(\lambda,\mu)$ if $\lambda$ is fully carried by $\tau$ and $\bar\nei(\tau)$ does not admit any carried, properly embedded arc which is disjoint from some carried realization of $\lambda$ --- this is equivalent to the definition given in \cite{papadopoulos} in terms of foliations, via the correspondence explained in \cite{levitt}. Note that, when such an arc exists instead, it is impossible that its ends lie along the same component of $\partial_v\bar\nei(\tau)$. We have that
\begin{claim}
Every measured lamination $(\lambda,\mu)$ has a birecurrent train track $\tau$ which is suited to it.
\end{claim}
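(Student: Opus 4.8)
The plan is to produce $\tau$ by the classical squeezing construction applied to $(\lambda,\mu)$, and then to verify that the output can be taken birecurrent and suited. First I would recall (as the excerpt already does, citing \cite{penner}, Theorem 1.6.5) that, starting from a geodesic realization of $\lambda$ with a transverse measure, one may choose a small $\epsilon>0$ and collapse $\epsilon$-thin rectangular bands of $\lambda$ --- each band a product of parallel leaf-segments --- to branches of a $1$-complex, in such a way that the resulting complex $\tau_0$ is a train track on $S$ carrying $\lambda$. Genericity is not needed here, but we may assume $\tau_0$ semigeneric (indeed $\tau_0$ can be taken generic after some uncombings); what matters is that $\lambda$ sits in $\bar\nei(\tau_0)$ transverse to the ties, and that every branch of $\tau_0$ is crossed by $\lambda$, i.e. $\lambda$ is \emph{fully} carried. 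Since $S$ is closed and $(\lambda,\mu)$ fills $S$ (this is the case we care about, as it will be applied to the stable/unstable lamination of a pseudo-Anosov), each complementary component of $\nei(\tau_0)$ is a polygon, so $\tau_0$ is in particular a train track rather than merely an almost track, and it fills $S$.

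Next I would arrange that $\tau_0$ is suited to $(\lambda,\mu)$. By definition this fails exactly when $\bar\nei(\tau_0)$ admits a carried, properly embedded arc disjoint from a carried realization of $\lambda$; as the excerpt notes, such an arc must have its two endpoints on distinct components of $\partial_v\bar\nei(\tau_0)$, hence it runs through a complementary region of $\lambda$ inside $\nei(\tau_0)$ and can be absorbed: one performs a folding/zipping move along the arc (an inverse of a split, in the language of \S\ref{sec:traintracks}), decreasing the number of such regions. Equivalently, this is precisely the operation of passing to a train track carrying $\lambda$ with one fewer complementary band-region; since $\tau_0$ is compact, only finitely many such folds are possible, and when none remains we have obtained $\tau$ suited to $(\lambda,\mu)$, still fully carrying $\lambda$ and still filling $S$. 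One should check that each fold keeps the track a genuine train track on the closed surface $S$ --- i.e. no complementary monogon or bigon appears --- which follows because the arc being folded along cuts a complementary polygon of $\lambda$ with at least two sides, exactly as in the cornerization argument of Lemma \ref{lem:cornerization}.

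It remains to see that $\tau$ can be taken birecurrent. Recurrence is automatic: $\lambda$ is fully carried by $\tau$, so every branch is traversed by (a leaf of) $\lambda$, and since $\mu$ has full support one may approximate $\lambda$ by a carried weighted multicurve traversing all branches, exactly via Proposition \ref{prp:measurecurvecorresp} (its real-measure version, Theorem 1.7.12 in \cite{penner}); hence $\tau$ is recurrent. For transverse recurrence one passes to the \emph{unstable} partner: if $(\lambda,\mu)=(\lambda^s,\mu^s)$ is the stable lamination of some pseudo-Anosov, then $\lambda^u$ is a measured lamination transverse to $\lambda^s$ that fills $S$, and one can realize $\lambda^u$ by a family of pairwise disjoint curves dual to $\tau$ meeting every branch --- this is the standard way birecurrent tracks arise (see \cite{penner}, \S1.3--1.4); alternatively, in full generality, one invokes the fact that any recurrent, large train track on a surface admits a birecurrent subtrack still carrying $\lambda$, and since $\lambda$ is fully carried this subtrack must be all of $\tau$. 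The main obstacle I anticipate is this last point: making sure the squeezing/folding construction does not destroy transverse recurrence, and handing it cleanly without redoing Penner's duality argument; the cleanest route is probably to build $\tau$ simultaneously from a bigon-track neighbourhood of $\lambda^s\cup\lambda^u$ in the filling case, so that the transverse family is produced alongside $\tau$ rather than recovered afterwards.
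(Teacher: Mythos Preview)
Your proposal has the right overall architecture --- start from a track fully carrying $(\lambda,\mu)$, then modify it to achieve suitedness while controlling birecurrence --- but two of the three steps go wrong.

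First, the operation you use to achieve suitedness points the wrong way. You write that one should perform ``a folding/zipping move along the arc (an inverse of a split)'' to absorb a carried arc $\alpha$ disjoint from $\lambda$. But a fold produces a \emph{coarser} track $\tau_{-1}$ which carries $\tau_0$, hence carries everything $\tau_0$ carries, including $\alpha$; the offending arc is still there. The correct move is a \emph{multiple central split} along $\alpha$ (Definition~\ref{def:multiplesplit}): this cuts $\bar\nei(\tau_0)$ open along $\alpha$, ejecting the band-region containing $\alpha$ from the tie neighbourhood. Because $\lambda$ is disjoint from $\alpha$, the carried realization of $\lambda$ lies entirely in the new, smaller tie neighbourhood, and one checks that $\lambda$ still traverses every branch of the resulting track, so full carrying is preserved. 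Your reference to Lemma~\ref{lem:cornerization} is misleading here: that lemma does use folds, but for the unrelated purpose of \emph{creating} corners, not eliminating carried arcs. Note also that ``only finitely many such folds are possible'' is false for folds but true for central splits (each central split increases the total number of cusps in $\partial\bar\nei_0(\tau)$, which is bounded).

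Second, your treatment of transverse recurrence does not cover the general case. The unstable-lamination argument is available only when $(\lambda,\mu)$ is the stable lamination of a pseudo-Anosov. Your fallback --- that a recurrent large track admits a birecurrent subtrack still fully carrying $\lambda$ --- collapses immediately: since $\lambda$ is fully carried, any such subtrack must equal $\tau$, so you are asserting that every recurrent large train track is automatically transversely recurrent, which is not true and in any case not justified.

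The paper sidesteps both issues at once: it starts from a \emph{birecurrent} track fully carrying $(\lambda,\mu)$, whose existence is Corollary~1.7.6 in \cite{penner}, and then performs multiple central splits along the offending arcs. Splits preserve transverse recurrence (Remark~\ref{rmk:recurrence_at_extremes}, citing \cite{penner}, Lemma~1.3.3(b)), and the resulting track still fully carries $(\lambda,\mu)$, hence is recurrent (\cite{penner}, Proposition~1.3.1). So birecurrence is maintained throughout, and no separate argument for transverse recurrence is needed.
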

This is a consequence of Corollary 1.7.6 in \cite{penner}: its statement only guarantees the existence of a birecurrent $\tau$ which fully carries $(\lambda,\mu)$; but, if $\tau$ is not suited to $\lambda$, it is sufficient to perform a multiple central split (see Definition \ref{def:multiplesplit}) along any of the carried arcs of $\tau$ which are disjoint from the carried realization of $\lambda$: this keeps the train track transversely recurrent, and also recurrent because the new track still fully carries $(\lambda,\mu)$ and we may apply Proposition 1.3.1 in \cite{penner}. Repeat until there are no more carried arcs disjoint from the carried realization of $\lambda$: this takes a finite number of steps.

In \cite{agol_pa} it described how, starting from $(\lambda,\mu)$ and a generic track $\tau_0$ suited to it, one may define the \nw{maximal splitting sequence} $\bm\tau=(\tau_j)_{j=0}^{+\infty}$: if $\tau_j$ has been defined, split simultaneously all branches of $\tau_j$ which are given maximal weight by $(\lambda,\mu)$, with the only parity which keeps $(\lambda,\mu)$ carried by the new track $\tau_{j+1}$. This sequence will not feature any central split. And in Theorem 3.5 of that work, which improves Theorem 4.1 in \cite{papadopoulos}, it is proved that
\begin{theo}
Let $(\lambda^s,\mu^s)$ be the stable lamination of $\psi$, and let $\tau_0$ be a train track suited to $(\lambda^s,\mu^s)$. The maximal splitting sequence $\bm\tau$ built from $\tau_0$ has two associated numbers $m,n$ such that, for all $j\geq m$,
$$
\tau_{j+n}=\psi(\tau_j).
$$
Moreover, if $\mu_j\in\mathcal M(\tau)$ is the transverse measure induced by $(\lambda^s,\mu^s)$ on $\tau_j$, then $\mu_{j+n}=c^{-1}\psi_*(\mu_j)$ where $c>1$ is the constant associated to the pseudo-Anosov diffeomorphism $\psi$.
\end{theo}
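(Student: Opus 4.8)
The plan is to establish the periodicity statement by leveraging uniqueness of carrying for train tracks together with the defining property of the stable lamination. First I would observe that the maximal splitting sequence $\bm\tau$ built from $\tau_0$ is entirely determined, as a sequence of isotopy classes of train tracks, by the pair $(\tau_0,(\lambda^s,\mu^s))$: at each stage $\tau_{j+1}$ is obtained from $\tau_j$ by splitting exactly the branches of maximal weight under the induced measure $\mu_j$, with the unique parity keeping $(\lambda^s,\mu^s)$ carried. This is because a central split would lose part of the measure, and between the two parity splits only one keeps the lamination carried; uniqueness of carrying (Proposition \ref{prp:carryingunique}, or Lemma 1.7.11 in \cite{penner}) guarantees that the induced measure $\mu_j$, hence the set of maximal-weight branches, is well-defined. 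Thus the rule producing $\bm\tau$ is \emph{equivariant} under any diffeomorphism of $S$ that preserves the relevant data.

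The key step is then to apply $\psi$. Since $\psi$ is pseudo-Anosov with stable lamination $(\lambda^s,\mu^s)$, we have $(\psi(\lambda^s),\psi_*\mu^s)=(\lambda^s,c^{-1}\mu^s)$ by the Thurston classification theorem quoted earlier in the excerpt. The track $\psi(\tau_0)$ is then a train track which fully carries $(\lambda^s,c^{-1}\mu^s)$, equivalently $(\lambda^s,\mu^s)$ up to scaling, and is again suited to it (suitedness is a topological condition preserved by diffeomorphisms). Applying the equivariance from the previous paragraph: the maximal splitting sequence starting from $\psi(\tau_0)$ is exactly $(\psi(\tau_j))_{j\geq 0}$, because multiplying all weights by a positive constant does not change which branches are maximal. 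Now I would invoke the fact that the space of birecurrent train tracks suited to $(\lambda^s,\mu^s)$ (up to isotopy and up to scaling of the induced measure) is \emph{finite} — this follows from the uniform combinatorial bounds on train tracks carrying a fixed lamination, in the same spirit as Lemma \ref{lem:vertexsetbounds}. Therefore the sequence $(\tau_j)_{j\geq 0}$, which is a subsequence of a finite set of possibilities once we pass to the eventually-periodic regime, must eventually be preperiodic: there exist $m\geq 0$ and $n\geq 1$ with $\tau_{j+n}$ lying in the same diffeomorphism-orbit as $\tau_j$ for all $j\geq m$. Combining this with the previous observation pins down that diffeomorphism to be $\psi$ (up to isotopy), giving $\tau_{j+n}=\psi(\tau_j)$ for $j\geq m$.

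For the measure statement, once $\tau_{j+n}=\psi(\tau_j)$ is known, I would simply transport the measure: $\mu_{j+n}$ is the transverse measure induced by $(\lambda^s,\mu^s)$ on $\tau_{j+n}=\psi(\tau_j)$. Pulling back by $\psi$, this is the measure induced by $(\psi^{-1}(\lambda^s),(\psi^{-1})_*\mu^s)=(\lambda^s,c\,\mu^s)$ on $\tau_j$, which is $c$ times the measure induced by $(\lambda^s,\mu^s)$ on $\tau_j$. Hence $\mu_{j+n}=\psi_*(c^{-1}\mu_j)$ after accounting for the direction of the pushforward; rewriting, $\mu_{j+n}=c^{-1}\psi_*(\mu_j)$ as claimed. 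The point is that $\psi$ scales $\mu^s$ by $c^{-1}$ on the stable lamination, and the induced-measure construction is natural with respect to carrying maps.

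The main obstacle I expect is proving the preperiodicity rigorously: the naive finiteness argument shows the orbit of $(\tau_j)$ under $\mathrm{Diff}(S)$ is finite, but to conclude that the same diffeomorphism $\psi$ realizes the periodicity — rather than some power or some composition with a mapping class fixing $\lambda^s$ — requires care. The cleanest route is probably the one in \cite{agol_pa}: track not just the isotopy class of $\tau_j$ but the marked data (the train track together with the carried lamination and a basepoint or framing), so that two markings agree if and only if they differ by a diffeomorphism \emph{isotopic to the identity through maps preserving the lamination}; pigeonhole on this refined invariant forces $\tau_{j+n}$ and $\tau_j$ to agree after applying a fixed power of $\psi$, and minimality of the period together with the fact that $\psi$ generates the relevant cyclic stabilizer (no proper power of $\psi$ fixes $(\lambda^s,\mu^s)$, by uniqueness of $c$ up to scaling combined with $c>1$) identifies the period with the single application of $\psi$. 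The remaining steps — equivariance of the maximal splitting rule, suitedness being diffeomorphism-invariant, and naturality of induced measures — are routine once this is set up.
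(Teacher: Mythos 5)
This statement is quoted in the paper from Theorem~3.5 of \cite{agol_pa} (improving Theorem~4.1 of \cite{papadopoulos}); the paper offers no proof of its own, so there is nothing internal to compare against. Assessed on its own terms, your first paragraph (equivariance of the maximal splitting rule) and your last paragraph's treatment of the measure scaling are sound. The problem is the finiteness claim in the middle: the set of isotopy classes of (birecurrent) train tracks suited to a fixed $(\lambda^s,\mu^s)$ is \emph{infinite}, not finite. Indeed the maximal splitting sequence itself produces infinitely many pairwise non-isotopic suited tracks: each $\tau_{j+1}$ is properly (not fully both ways) carried by $\tau_j$, so they are all distinct up to isotopy. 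What Lemma~\ref{lem:vertexsetbounds}-type bounds give is finitely many \emph{diffeomorphism} types, which is a much weaker statement and does not by itself yield preperiodicity, because it produces, for each pair $j<j'$, an auxiliary diffeomorphism $\phi_{j,j'}$ that depends on the pair and has no reason to carry $\lambda^s$ to $\lambda^s$ or to lie in the cyclic group generated by $\psi$.

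Your final paragraph correctly senses the difficulty but does not resolve it. The ``pigeonhole on a refined marking'' plan does not work as stated, again because any invariant fine enough to identify isotopy classes (or isotopy classes rel.\ $\lambda^s$) takes infinitely many values along the sequence. The actual argument in Agol's proof replaces finiteness by \emph{compactness} of the projectivised weight simplex over each of the finitely many combinatorial types, and then needs a genuinely dynamical input: the transition matrices produced by maximal splittings are eventually Perron--Frobenius-expanding, so the normalised weight vectors converge exponentially to the unique ray determined by $\lambda^s$, and this — together with the fact that $\psi$ generates (up to finite index) the stabiliser of $\lambda^s$ and is the only stabiliser element rescaling $\mu^s$ by $c^{-1}$ — is what forces the eventual equality $\tau_{j+n}=\psi(\tau_j)$ rather than mere accumulation. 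Without this expansion step your argument establishes at best that the sequence eventually lies in a compact set, not that it is preperiodic.
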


We prove the following
\begin{theo}\label{thm:agol_volume}
Let $(\lambda^s,\mu^s)$ be the stable lamination of $\psi$, and let $\tau_0$ be a birecurrent train track suited to $(\lambda^s,\mu^s)$. Let $\bm\rho\coloneqq \bm\tau(m,m+n)$, using the notation of the above theorem. Let $M\coloneqq \faktor{S\times [0,1]}{\sim_\psi}$ be the mapping torus built from $S$ and $\psi$ --- which is hyperbolic. Then there is a constant $C_{10}$, only depending on $S$, such that
$$
\vol(M)=_{C_{10}} |\utw(\rar(\bm\rho))|.
$$
\end{theo}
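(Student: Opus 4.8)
The plan is to combine Brock's volume estimate (Theorem~\ref{thm:brockmappingtori}) with the main distance formula (Theorem~\ref{thm:main_full}), using the periodicity of Agol's maximal splitting sequence to convert the translation distance of $\psi$ into the combinatorial quantity $|\utw(\rar(\bm\rho))|$. By Theorem~\ref{thm:brockmappingtori} we have $\vol(M)=_{(e_2,e_3)}|\psi|_{\pc(S)}$, and by Remark~\ref{rmk:stable_dist_pc} this is coarsely equal to the stable translation distance $|\psi|_{\pc(S)}^{st}$; since $\pc(S)$ and $\pa(S)$ are quasi-isometric (Lemma~\ref{lem:pantsquasiisom}), the same holds for the corresponding quantities in $\pa(S)$. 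So it suffices to show that $|\psi|^{st}_{\pa(S)}=_{C}|\utw(\rar(\bm\rho))|$ for a constant depending only on $S$.

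First I would fix a vertex $v_0\coloneqq V(\tau_m)\in\pa(S)$ (after passing to a cornerization if needed, which only changes things by bounded amounts via Remark~\ref{rmk:centralsplitbound}, exactly as in the proof of Theorem~\ref{thm:main_full}), and consider, for each $N\geq 1$, the finite splitting sequence $\bm\tau(m,m+Nn)$. Using $\tau_{j+n}=\psi(\tau_j)$ for $j\geq m$, this sequence is the concatenation $\bm\rho*\psi(\bm\rho)*\psi^2(\bm\rho)*\cdots*\psi^{N-1}(\bm\rho)$, so its endpoints are $V(\tau_m)$ and $V(\tau_{m+Nn})=\psi^N(V(\tau_m))=\psi^N\cdot v_0$. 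Theorem~\ref{thm:main_full} (with $X=S$; the vertex-set hypothesis holds since $\psi$ is pseudo-Anosov and $\tau_0$ is suited to a filling lamination, so every $V(\tau_j)$ fills $S$) gives $d_{\pa(S)}(v_0,\psi^N\cdot v_0)=_{C_9}|\utw(\rar(\bm\tau(m,m+Nn)))|$. The key combinatorial point is that $|\utw(\rar(\cdot))|$ is, up to bounded error per block, additive over this concatenation and $\psi$-equivariant: the rearrangement $\rar$ and untwisting $\utw$ commute with the action of $\psi$ (they are defined via $\mcg(S)$-equivariant choices, cf.\ the remarks after Proposition~\ref{prp:rearrang1} and Corollary~\ref{cor:subsurface_bijection}), so each block $\psi^i(\bm\rho)$ contributes $|\utw(\rar(\bm\rho))|$ splits up to an additive constant absorbing the interaction between consecutive blocks (this interaction is bounded because the gluing is along a single track, exactly the kind of estimate used in the concatenation arguments of Propositions~\ref{prp:easyttbound} and~\ref{cor:hardttbound}). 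Hence $|\utw(\rar(\bm\tau(m,m+Nn)))|=_{(1,b)}N\cdot|\utw(\rar(\bm\rho))|$ for a constant $b=b(S)$.

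Dividing by $N$ and letting $N\to\infty$ then yields $|\psi|^{st}_{\pa(S)}=\lim_N d_{\pa(S)}(v_0,\psi^N\cdot v_0)/N=_{C_9}|\utw(\rar(\bm\rho))|$, where I use that $C_9$-coarse equality survives the limit after division by $N$ (the additive constant washes out), giving in fact an honest multiplicative comparison $|\psi|^{st}_{\pa(S)}\asymp|\utw(\rar(\bm\rho))|$ with multiplicative constant bounded in terms of $C_9$ and additive constant $0$; re-inserting the bounded comparisons $|\psi|^{st}_{\pa(S)}=_{C}|\psi|_{\pa(S)}=_{C}|\psi|_{\pc(S)}=_{(e_2,e_3)}\vol(M)$ (Remark~\ref{rmk:stable_dist_pc}, Lemma~\ref{lem:pantsquasiisom}, Theorem~\ref{thm:brockmappingtori}) produces the desired $\vol(M)=_{C_{10}}|\utw(\rar(\bm\rho))|$. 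One should also check the lower bound $|\utw(\rar(\bm\rho))|\geq 1$ so that the multiplicative form of the estimate is not vacuous: this holds because $\bm\rho$ contains at least one split (otherwise $\psi$ would be carried-by-carrying equivalent to the identity on $\tau_m$, contradicting that $\psi$ is pseudo-Anosov, by Proposition~\ref{prp:carriediffsplit} and Remark~\ref{rmk:decreasingmeasures}).

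The main obstacle I anticipate is making the additivity/equivariance of $|\utw(\rar(\cdot))|$ over the $\psi$-periodic concatenation fully rigorous: $\rar$ and $\utw$ are global operations on a whole splitting sequence (they rearrange twist splits across the entire sequence and cap Dehn intervals), so a priori $\rar(\bm\rho*\psi(\bm\rho)*\cdots)$ is \emph{not} literally $\rar(\bm\rho)*\psi(\rar(\bm\rho))*\cdots$ --- effective twist curves may be shared across block boundaries and their rotation numbers add up. The clean way around this is to avoid manipulating the concatenated sequence directly and instead argue purely on the pants-graph side: apply Theorem~\ref{thm:main_full} separately to $\bm\tau(m,m+Nn)$ to get $d_{\pa(S)}(v_0,\psi^N v_0)=_{C_9}|\utw(\rar(\bm\tau(m,m+Nn)))|$, then apply it again \emph{block by block} (each block $\psi^i(\bm\rho)$ has endpoints $\psi^i v_0$, $\psi^{i+1}v_0$, which are genuine vertices of $\pa(S)$) to get $d_{\pa(S)}(\psi^i v_0,\psi^{i+1}v_0)=_{C_9}|\utw(\rar(\bm\rho))|$ by $\psi$-invariance of $d_{\pa(S)}$; the triangle inequality and the reverse triangle inequality along the quasi-geodesic (the last Corollary, combined with Lemma~\ref{lem:reversetriangle} transported to $\pa(S)$ via the quasi-isometry) then force $d_{\pa(S)}(v_0,\psi^N v_0)=_{(1,b')}N\,d_{\pa(S)}(v_0,\psi v_0)$, and dividing by $N$ gives $|\psi|^{st}_{\pa(S)}=d_{\pa(S)}(v_0,\psi v_0)=_{C_9}|\utw(\rar(\bm\rho))|$ directly. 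This route sidesteps the equivariance-of-$\utw$ subtlety entirely and only uses statements already proved in the excerpt.
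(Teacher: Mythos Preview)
Your overall architecture---reduce to $|\psi|^{st}_{\pa(S)}$ via Brock, Remark~\ref{rmk:stable_dist_pc} and Lemma~\ref{lem:pantsquasiisom}, then compare with $|\utw(\rar(\bm\rho))|$ through the concatenation $\bm\rho*\psi(\bm\rho)*\cdots*\psi^{N-1}(\bm\rho)$---matches the paper's Step~1 exactly, and your upper bound $|\psi|^{st}\leq d_{\pa}(v_0,\psi v_0)\leq_{C_9}|\utw(\rar(\bm\rho))|$ is fine. You also correctly isolate the real difficulty: $\rar$ and $\utw$ are global, so $\rar$ of the concatenation is not the concatenation of the $\rar$'s, because an effective twist curve can have its accessible interval spread across several $\psi$-blocks.

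The gap is in your workaround for the lower bound. You invoke ``Lemma~\ref{lem:reversetriangle} transported to $\pa(S)$ via the quasi-isometry'', but that lemma is a statement about $\delta$-hyperbolic curve complexes, and $\pa(S)$ (quasi-isometric to $\pc(S)$, hence to Weil--Petersson Teichm\"uller space) is \emph{not} Gromov-hyperbolic for $\xi(S)\geq 2$, so no additive reverse triangle inequality is available there. The quasi-geodesic from the final Corollary does not rescue this either: it is a quasi-geodesic \emph{parametrized by the split indices of} $\rar(\bm\tau(m,m+Nn))$, and after rearrangement the intermediate vertices $\psi^i v_0=V(\tau_{m+in})$ need not sit at any controlled positions along it (only the two endpoints are preserved by $\rar$). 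So you cannot conclude $d_{\pa}(v_0,\psi^N v_0)\geq_{(1,b')}N\,d_{\pa}(v_0,\psi v_0)$ from the tools in the paper.

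The paper closes this gap by a genuine extra argument (its Steps~2--5): first it shows any fixed curve $\gamma$ is a twist curve in at most $N_0+1$ consecutive $\psi$-blocks (using that $\psi$ is pseudo-Anosov and $|W(\tau_m)|\leq N_0$), so although the curves $(\gamma_t)_{t=1}^{zr}$ coming from the $z$ blocks are not all distinct, each repeats only boundedly many times. It then defines a generalized untwisting $\utw^{*z}\bm\omega$ of the concatenation $\bm\omega^{*z}$ which is \emph{exactly} $z$-periodic, so that $|\utw^{*z}\bm\omega|=z\,|\utw(\rar(\bm\rho))|$ on the nose, and re-proves the analogues of Proposition~\ref{prp:locallyfinite} and Proposition~\ref{prp:hardttbound} in this ``arranged with bounded repetitions'' setting. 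That yields $d_{\pa}(v_0,\psi^z v_0)\geq_{C'_7}|\utw^{*z}\bm\omega|=z\,|\utw(\rar(\bm\rho))|$ directly, which is precisely the inequality your workaround was trying to reach.
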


Note: for the purposes of this theorem and in order to connect it with the previously developed theory, $\bm\rho$ may be considered as a sequence where exactly one split occurs at each move: if more are split simultaneously, we just insert more intermediate steps.

\begin{lemma}
All train tracks $\tau_j$ in a splitting sequence $\bm\tau$ as in Theorem \ref{thm:agol_volume} are cornered and birecurrent, and $\bm\tau$ evolves firmly in $S$.
\end{lemma}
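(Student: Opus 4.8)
The plan is to verify the three assertions---that each $\tau_j$ is cornered, that each $\tau_j$ is birecurrent, and that $\bm\tau$ evolves firmly in $S$---essentially independently, drawing on the structural facts already established. First I would handle birecurrence. We are given that $\tau_0$ is birecurrent. Since $\bm\tau$ is a maximal splitting sequence featuring no central splits (this is part of the statement of the theorem from \cite{agol_pa} quoted just above), every move is either a parity split or a slide. Transverse recurrence then propagates forward along the whole sequence by the second bullet of Remark \ref{rmk:recurrence_at_extremes} (equivalently \cite{penner}, Lemma 1.3.3(b)), since $\tau_0$ is transversely recurrent. For recurrence, I would use the periodicity $\tau_{j+n}=\psi(\tau_j)$ for $j\geq m$: each $\tau_j$ with $j\geq m$ is a diffeomorphic image of some earlier $\tau_{j'}$ with $j'$ in the ``preperiod'' $[m,m+n)$, hence recurrent iff that one is; and every $\tau_j$ fully carries the stable lamination $(\lambda^s,\mu^s)$ (this is exactly the defining property of the maximal splitting sequence, and $(\lambda^s,\mu^s)$ fills $S$ by Thurston's theorem), so by Proposition 1.3.1 in \cite{penner} each $\tau_j$ is recurrent. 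Combining, all $\tau_j$ are birecurrent.

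Next I would address filling and firm evolution. Since $\psi$ is pseudo-Anosov, its stable measured lamination $(\lambda^s,\mu^s)$ is minimal and fills $S$; being fully carried by each $\tau_j$, it exhibits $\tau_j$ as a train track whose carried set $\cc(\tau_j)$ contains curves that fill $S$ (a minimal filling lamination is a limit of carried curves filling $S$, or one argues directly that a track fully carrying a filling lamination fills the surface). Hence each $\tau_j$ fills $S$, so $S\setminus\nei(\tau_j)$ consists of discs and once-punctured discs; since $S$ is closed there are no punctures, so all complementary components are discs with negative index---this gives that $\tau_j$ is a train track (each complementary component has negative index) and moreover $V(\tau_j)$ fills $S$. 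By Lemma \ref{lem:decreasingfilling}, $V(\tau_j)$ and $\cc(\tau_j)$ fill the same subsurface, namely $S$, for every $j$; this is precisely the statement that $\bm\tau$ evolves firmly in $S$.

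It remains to check that each $\tau_j$ is \emph{cornered}, i.e.\ that each connected component of $\partial\bigl(S\setminus\nei(\tau_j)\bigr)$ has a corner. Here I would use both that $\tau_j$ fills $S$ and that $\tau_j$ is suited to $(\lambda^s,\mu^s)$. Recall that $\tau_0$ is assumed suited to $(\lambda^s,\mu^s)$, and since each $\tau_{j+1}$ is obtained from $\tau_j$ by splitting only branches of maximal weight with the parity preserving the carried lamination, $\tau_j$ remains suited to $(\lambda^s,\mu^s)$ for all $j$---more carefully, the maximal split does not introduce any new carried arc disjoint from a carried realization of $\lambda^s$, since such an arc would have to traverse a branch that the lamination does not, contradicting that $\lambda^s$ is fully carried and fills. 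Now suppose some complementary component $C$ of $\nei(\tau_j)$ had smooth boundary with no corner: then $\partial C$ would be an embedded loop isotopic to a wide carried curve of $\tau_j$, and there would be a ``diagonal'' arc---or more precisely a carried properly embedded arc in $\bar\nei(\tau_j)$ parallel into $\partial C$---disjoint from the carried realization of $\lambda^s$, because $\lambda^s$ is minimal and filling and so misses no essential region, which contradicts $\tau_j$ being suited to $\lambda^s$ as soon as $\lambda^s$ fills $S$. Thus every complementary region has a corner and $\tau_j$ is cornered.

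The main obstacle I anticipate is the cornered claim: the notions of ``suited to a measured lamination'' and ``cornered'' are defined via slightly different machineries (the former through disjoint carried arcs, the latter through corners of complementary regions), so the argument needs to exhibit, from a corner-free complementary region, a carried arc disjoint from $\lambda^s$, and this requires knowing precisely how a filling minimal lamination sits inside a tie neighbourhood. The cleanest route is probably to note that a corner-free complementary component $C$ forces $\partial_h\bar\nei(\tau_j)$ to contain a smooth closed curve, along which one can run a carried arc (a segment of a tie pushed off into $\nei(\tau_j)$) missing $\lambda^s$ entirely since $\lambda^s$ is carried with all weights positive yet stays away from $\partial_h\bar\nei(\tau_j)$---contradicting suitedness. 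The other two claims (birecurrence, firm evolution in $S$) should be routine given the results already quoted.
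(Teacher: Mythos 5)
Your argument for birecurrence essentially coincides with the paper's: transverse recurrence propagates forward from $\tau_0$ (Remark~\ref{rmk:recurrence_at_extremes}), and recurrence comes from each $\tau_j$ fully carrying $(\lambda^s,\mu^s)$ together with Proposition~1.3.1 of \cite{penner}. The periodicity you invoke for recurrence is harmless but superfluous.

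For the cornered claim, your route diverges from the paper's, and I do not think it closes as written. You want to convert a smooth component $\alpha$ of $\partial\bar\nei(\tau_j)$ into a ``carried, properly embedded arc disjoint from a carried realization of $\lambda^s$'' so as to contradict suitedness. But a carried arc is required to have both endpoints on $\partial_v\bar\nei(\tau_j)$, and a smooth boundary component meets no segment of $\partial_v\bar\nei(\tau_j)$; the ``segment of a tie pushed off into $\nei(\tau_j)$'' you describe has its ends on $\partial_h$, not on $\partial_v$, so it is not a carried arc. What you actually get from $\alpha$ is a carried closed \emph{curve} $\alpha'$ lying in the wide collar next to $\alpha$, and suitedness (as defined in the text, following \cite{papadopoulos}) speaks only about arcs. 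You also implicitly treat the offending complementary region $C$ as bounded only by $\alpha$; but that case ($\partial C$ smooth and connected) is already impossible since it forces $\idx(C)>0$, and the genuinely dangerous case is $C$ with several boundary components only one of which is smooth, which your description does not engage with. The paper's own one-line argument instead derives from the smooth component that $\lambda^s$ would have to contain a closed geodesic, contradicting minimality; a still shorter route (available because you are also about to show $\tau_j$ fills $S$, and $S$ is closed) is that all complementary regions are then discs of negative index, hence have at least three cusps.

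Your argument for firm evolution is a genuinely different route from the paper's and I believe it is correct, although one step is elided. The paper deduces $V(\tau_j)$ fills $S$ from the periodicity $\tau_{j+n}=\psi(\tau_j)$: the subsurface filled would be $\psi$-invariant and of the right complexity, which forces it to be all of $S$ by irreducibility of the pseudo-Anosov $\psi$. You instead argue directly from $\lambda^s$ filling and being carried. The step that needs care is why ``$\tau_j$ fully carries a filling lamination'' implies ``$V(\tau_j)$ fills $S$''; the clean justification is linearity of the intersection form on the cone of transverse measures: if an essential curve $\gamma$ were disjoint from every vertex cycle of $\tau_j$, then $i(\gamma,\lambda)=0$ for every measured lamination carried by $\tau_j$, in particular $i(\gamma,\lambda^s)=0$, contradicting that $\lambda^s$ fills. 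Filling in this step, your approach has the advantage of being independent of the pre-periodic structure of the maximal splitting sequence, so it applies to any birecurrent splitting sequence that carries a filling lamination.
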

\begin{proof}
Each $\tau_j$ is cornered: if $\partial\bar\nei(\tau_j)$ includes a smooth component, then necessarily $\lambda^s$ includes a component which is a closed geodesic; but this contradicts minimality of the stable lamination.

Since $\tau_0$ is transversely recurrent, all $\tau_j$ are. Also, they are recurrent because they all fully carry the measured lamination $(\lambda^s,\mu^s)$, so Proposition 1.3.1 in \cite{penner} applies.

Let $S'$ be the subsurface of $S$, possibly a disconnected one, which is filled by $\cc(\tau_m)$ --- and by $V(\tau_m)$, by Lemma \ref{lem:decreasingfilling}. Then $\cc(\tau_{m+n})=\psi\cdot\cc(\tau_m)$ fills $\psi(S')$; moreover, up to isotopies, $\psi(S')\subseteq S'$, because of the decreasing filling properties of splitting sequences, stated after Lemma \ref{lem:decreasingfilling}. On the other hand, $\xi(S')=\xi\left(\psi(S')\right)$\footnote{We may just define $\xi(S')$ as the sum of the complexities of each connected component.}, so $S',\psi(S')$ are isotopic in $S$. If $S'\subsetneq S$, then $\psi$ fixes $\partial S'$: and this contradicts the fact that $\psi$, being pseudo-Anosov, is in particular irreducible.

So each set $\cc(\tau_{m+in})$ or $V(\tau_{m+in})$ for $i\geq 0$ fills $S$. Since the filled surface decreases along a splitting sequence, all $V(\tau_j)$, $j\geq 0$, fill $S$.
\end{proof}

\begin{proof}[of Theorem \ref{thm:agol_volume}]
Note that $M$ is hyperbolic because of Theorem \ref{thm:mappingtorushyperbolic}.

\step{1} we prove that is sufficient to show the existence of a constant $A=A(S)$ such that, for all $z\in \mathbb Z_{>0}$,\footnote{Here we use again the notation $d_{\pa(S)}(m,m+zn)$ when the splitting sequence is understood, as it was introduced in \S \ref{sub:untwistedsequence}.}
$$
d_{\pa(S)}(m,m+zn)\geq_A z|\utw(\rar(\bm\rho))|.
$$

Suppose that this condition is true, and note that actually $d_{\pa(S)}(m,m+zn)=\linebreak d_{\pa(S)}\left(V(\tau_m),\psi^z\cdot V(\tau_m)\right)$. Lemma \ref{lem:pantsquasiisom} proves that the inclusion $\pc(S)\hookrightarrow \pa(S)$ is a quasi-isometry, and this implies that there exists a pants decomposition $p$ of $S$ such that $d_{\pc(S)}(p, \psi^z(p))\geq_{A'} z|\utw(\rar(\bm\rho))|$ for another constant $A'=A'(S)$, and for all $z\in \mathbb Z_{>0}$. So the stable translation distance in $\pc(S)$ is $|\psi|^{st} \geq_{A'} |\utw(\rar(\bm\rho))|$.

On the other hand, Proposition \ref{prp:easyttbound} gives that $d_{\pa(S)}(m,m+n)\leq_{A''} |\utw(\rar(\bm\rho))|$, and this proves that also $|\psi|\leq_{A'''} |\utw(\rar(\bm\rho))|$ (again here $A''$, $A'''$ depend on $S$ only). Remark \ref{rmk:stable_dist_pc} and Brock's Theorem \ref{thm:brockmappingtori} conclude the argument.

Let $\bm\omega\coloneqq\rar\bm\rho$, indexed as $(\omega_j)_{j=0}^N$; for $z\in \mathbb Z_{>0}$, let $\bm\omega^{*z}\coloneqq \bm\omega*(\psi\cdot \bm\omega)*\ldots*(\psi^{z-1}\cdot \bm\omega)$: $\bm\omega^{*z}$ begins and ends with the same train tracks as $\bm\tau(m,m+zn)$.

\step{2} given any curve $\gamma\in \cc(S)$, the indices $0\leq i \leq z-1$ such that $\gamma$ is a twist curve for at least one entry in the splitting sequence $\bm\omega^i\coloneqq \psi^i\cdot\bm\omega =\bm\omega^{*z}\left(iN,(i+1)N\right)$ are at most $N_0+1$, and they are all consecutive. $N_0$ was introduced in Lemma \ref{lem:vertexsetbounds}.

Let $i$, $i'$ be two indices such that $\gamma$ is an effective twist curve in $\bm\rho^i$, $\bm\rho^{i'}$. Suppose, for a contradiction, that $i'-i\geq N_0+2$. Then $\gamma$ is a twist curve for at least one entry of $\bm\rho^i$ and one of $\bm\rho^{i'}$. Necessarily (Lemma \ref{lem:twistcurvebasics} applied to a sufficiently long initial segment of the sequence $\bm\tau$), it is a twist curve in all entries in $\bm\rho^j$ for all $i<j<i'$; and in particular $\gamma$ is a twist curve in $\tau_{m+jn}=\psi^j(\tau_m)$ for all $i<j\leq i'$.

Therefore $\psi^{-j}(\gamma)$ is a twist curve in $\tau_m$ for all $i<j\leq i'$. No two curves in this family are isotopic, because all $\psi^{-j}$ are pseudo-Anosov (see Remark \ref{rmk:power_pa}). They also all belong to $W(\tau_m)$, whose size is $\leq N_0$. This is a contradiction.

\step{3} we define a generalized version of the untwisted sequence, fitted to our scenario, and prove some basic properties.

If $\gamma_1,\ldots,\gamma_r$ are the effective twist curves of $\bm\omega$ (and of $\bm\rho$), it is possible to subdivide $[0,N]$ into $NI_0,DI_1,NI_1,\ldots, DI_r, NI_r$ as shown in \S \ref{sub:untwistedsequence}. Since $\bm\omega^{*z}$ is the concatenation of $z$ `copies' of $\bm\omega$, each transformed under a suitable power of $\psi$, one may similarly subdivide $[0,zN]$ into
$$NI^*_0,DI^*_1,NI^*_1,\ldots, DI^*_{zr}, NI^*_{zr}$$
with the maximum of each interval coinciding with the minimum of the following one. Note that each $NI^*_{ir}$, $0< i < z$, is the concatenation of a `copy' of $NI_r$ and a `copy' of $NI_0$. One defines also $DK^*_t, DL^*_t\subseteq DI^*_t$ just as seen in \S \ref{sub:untwistedsequence}.

For $1\leq t \leq r$, $1\leq i \leq z-1$, define $\gamma_{ri+t}=\psi^i(\gamma_t)$: then $\bm\omega^{*z}(DI^*_t)$ has twist nature about $\gamma_t$ for all $1\leq t\leq zr$. The curves in $(\gamma_t)_{t=1}^{zr}$ are \emph{not} necessarily all distinct, but we have proved in Step 2 that each of them occurs at most $N_0+1$ times in this enumeration.

If they were all distinct, then $\bm\omega^{*z}$ would be $(\gamma_t)_{t=1}^{zr}$-arranged: so we may say that the above notation is a `variation' of the notation used to describe arranged sequences, in a more general case. We will see now how to generalize the constructions developed in \S \ref{sec:traintrackconclusion}.

For notational convenience, let $\eta_1,\ldots, \eta_Q$ be an enumeration of the $(\gamma_t)_{t=1}^{zr}$ such that no curve occurs twice. The splitting sequence $\bm\omega^{*z}$ is $(\eta_1,\ldots, \eta_Q)$-arranged, even if in general each $\eta_u$ admits more than one choice for a Dehn interval; and one may need to change the order in which these curves are listed, in order to have $\left(\min DI_{\eta_u}\right)_{u=1}^Q$ increasing.

The definition of $\utw\bm\omega=(\utw\omega_j)_{j=0}^{N'}$ from $\bm\omega$ in Definition \ref{def:untwistedsequence} involves application of diffeomorphisms $\phi_t$ for $1 \leq t \leq r$. Let $\Psi\coloneqq \phi_r^{-1}\circ \psi$, and let
$$
\utw^{*z}\bm\omega\coloneqq \utw\bm\omega*(\Psi\cdot \utw\bm\omega)*\ldots*(\Psi^{z-1}\cdot \utw\bm\omega).
$$

$\utw^{*z}\bm\omega=(\utw^{*z}\omega_j)_{j=0}^{zN'}$ serves as a generalization of the concept of untwisted sequence for $\bm\omega^{*z}$: informally, the latter sequence is `arranged except that the sequence $(\gamma_t)$ may include repetitions of the same curve'. The construction to get $\utw^{*z}\bm\omega$ from $\bm\omega^{*z}$ is indeed exactly the same as in \S \ref{sub:untwistedsequence}, except that here we do not require the curves $(\gamma_t)_{t=1}^{zr}$ to be distinct. A sequence of subintervals in $[0, zN']$:
$$NI^{\utw*}_0,DI^{\utw*}_1,NI^{\utw*}_1,\ldots, DI^{\utw*}_{zq}, NI^{\utw*}_{zr}$$
is naturally defined. The function $\dn:[0,zN]\rightarrow [0,zN']$, which, for all $1\leq t \leq zr$, maps $DL^*_t$ onto the respective $DI^{\utw*}_t$ and $NI^*_t$ onto the respective $NI^{\utw*}_t$, is defined just as in \S \ref{sub:untwistedsequence}, and so are the maps $\up$, $[t]\dn$, $[t]\up$. For $X\subseteq S$ a subsurface, denote $I^*_X, I^{\utw *}_X$ its accessible interval with respect to the splitting sequences $\bm\omega^{*z}$, $\utw^{*z}\bm\omega$ respectively.

For $j\in [0,zN]$, let $t(j)$ be the least index $t$ such that $j\in DI^*_t$ or $j\in NI^*_t$. For $j'\in [0,zN']$, let $\utw^* t(j')$ be the least index $t$ such that $j\in DI_t^{\utw *}$ or $j\in NI_t^{\utw *}$. There are diffeomorphisms $\phi^*_t$ for $1\leq t\leq zq$ such that, for all $j\in \bigcup_{t=0}^{zr} (DL^*_t\cup NI^*_t)$, $\utw^{*z}\omega_{\dn j}=\phi_{t(j)}(\omega^{*z}_j)$; and for all $j\in[0,N']$, $\utw^{*z}\omega_{j'}=\phi_{\utw^* t(j')}(\omega^{*z}_{\up j'})$. 

All claims in Lemma \ref{lem:untwistedsubsurfaces} work also in this setting (replace $\bm\tau$ with $\bm\omega^{*z}$, $\utw\bm\tau$ with $\utw^{*z}\bm\omega$, all index intervals with the starred versions defined here): this is because the proof of that lemma makes no use of the fact that the curves $\gamma_t$ were distinct in its original setting. Similarly, the terminology introduced in Corollary \ref{cor:subsurface_bijection} is immediately adapted so that it will work here.

And using that terminology, if one defines the sequence $\utw^*\gamma_1\coloneqq \phi^*_{\nei(\gamma_1)}(\gamma_1)$, \ldots, $\utw^*\gamma_{zr}\coloneqq \phi^*_{\nei(\gamma_{zr})}(\gamma_{zr})$, each $\utw^{*z}\bm\omega(DI^{\utw*}_t)$ has twist nature with respect to $\utw^*\gamma_t$. Two $\gamma_t,\gamma_{t'}$ coincide (up to isotopy) if and only if $\utw^*\gamma_t,\utw^*\gamma_{t'}$ do. So $\utw^{*z}\bm\omega$ is $(\utw^*\eta_1,\ldots,\utw^*\eta_Q)$-arranged.

\step{4} we claim a modified version of Proposition \ref{prp:locallyfinite}:
\begin{claim}
There is a constant $C'_5(S)$ such that the following is true.

Let $X\in \Sigma(\bm\omega^{*z})$, $X$ not an annulus, and let $[k,l]\subseteq [0,zN]$, with $[k,l]\subseteq I^*_X$ if $X\not=S$. Then
$$
d_{\pa(\utw^* X)}(\dn k,\dn l)^{\utw*} \leq C'_5 \left(d_{\pa(X)}(k,l)\right)^2
\text{ and }
d_{\pa(X)}(k,l) \leq C'_5 \left(d_{\pa(\utw^* X)}(\dn k,\dn l)^{\utw*}\right)^2.
$$

There are two increasing functions $\Psi''_S, \Psi'''_S:[0,+\infty)\rightarrow [0,+\infty)$ such that
$$
d_{\ma(\utw^* X)}(\dn k,\dn l)^{\utw*} \leq \Psi''_S\left(d_{\pa(\utw^* X)}(\dn k,\dn l)^{\utw*}\right) \leq \Psi'''_S\left( d_{\pa(X)}(k,l) \right).
$$
\end{claim}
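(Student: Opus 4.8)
The statement is a direct analogue of Proposition \ref{prp:locallyfinite}, and the plan is to mimic its proof while carefully tracking the single new feature: along $\bm\omega^{*z}$ a given curve may serve as a twist curve inside several consecutive blocks $\bm\omega^i$ rather than just in one Dehn interval. First I would fix $X\in\Sigma(\bm\omega^{*z})$, not an annulus, and $[k,l]\subseteq[0,zN]$ with $[k,l]\subseteq I^*_X$ when $X\ne S$. Following Step 1 of Proposition \ref{prp:locallyfinite}, I would isolate the indices $1\le u_1<\dots<u_p\le Q$ such that $\eta_{u_s}$ is essentially not disjoint from $X$ and has some Dehn interval meeting $[k,l]$, and set up a ``local'' notation $XDI_s,XDL_s,XNI_s$ (and their $\utw^*$-counterparts) exactly as in that proof. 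Crucially, $\bm\omega^{*z}$ is $(\eta_1,\dots,\eta_Q)$-arranged, so $\bm\omega^{*z}$ is in particular $(\eta_{u_1},\dots,\eta_{u_p})$-arranged, and Proposition \ref{prp:tcbound} applies: it gives a bound $p\le c\cdot d_{\pa(X)}(k,l)$ with $c=c(S)$, and symmetrically $p\le c\cdot d_{\pa(X)}(\dn k,\dn l)^{\utw *}$ using that $\utw^{*z}\bm\omega$ is $(\utw^*\eta_1,\dots,\utw^*\eta_Q)$-arranged (this is exactly what was recorded at the end of Step 3 of the present theorem's proof, and it is the place where the $\Sigma(\cdot)$-bijection package from Corollary \ref{cor:subsurface_bijection}, carried over verbatim to the starred setting, is needed).

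Next I would run Steps 2--4 of the proof of Proposition \ref{prp:locallyfinite} with no essential change. Step 2 (reciprocal pants-graph bounds): on each block $XDL_s\cup XNI_s$ the diffeomorphisms $\phi^*_t$ are constant equal to $\phi^*_{t(\min XDL_s)}$ (claim 7 of Lemma \ref{lem:untwistedsubsurfaces}, in its starred form), so $d_{\pa(\utw^* X)}(XDI_s^{\utw*}\cup XNI_s^{\utw*})^{\utw*}=d_{\pa(X)}(XDL_s\cup XNI_s)$, and the reverse triangle inequality (Lemma \ref{lem:reversetriangle}) plus Theorem \ref{thm:mmprojectiondist} and Lemma \ref{lem:pantsquasiisom} give $d_{\pa(X)}(\tau_a,\tau_b)\le c_0 d_{\pa(X)}(k,l)+c_1$ on each such block; the twist segments $\bm\omega^{*z}(\min XDI_s,\min XDL_s)$ contribute at most $C_2$ each via Lemma \ref{lem:pantsboundunderdt} together with Remark \ref{rmk:pantsboundunderdt_cutting}. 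Summing over the $p+1$ blocks and using $p\le c\cdot d_{\pa(X)}(k,l)$ (resp. $p\le c\cdot d_{\pa(\utw^* X)}(\dn k,\dn l)^{\utw*}$) yields the two quadratic estimates, defining $C'_5(S)$. For the $\ma$-bound, Steps 3--4 of that proof go through: since $\gamma_1,\dots,\gamma_r$ are the effective twist curves of $\bm\omega$ and the blocks $\bm\omega^i$ are $\psi^i$-translates, the effective-arrangement distance bounds of Definition \ref{def:arranged} hold inside each copy, giving uniform bounds on $d_{\nei(\alpha)}$ over each block for every annular $\nei(\alpha)$, hence $d_{\nei(\alpha)}(\utw^{*z}\omega_{\dn k}|\utw^*X,\utw^{*z}\omega_{\dn l}|\utw^*X)\le c'(p+1)$; choosing $M''=\max\{M,c'(p+1)+1\}$ kills the annular terms in the summation of Theorem \ref{thm:mmprojectiondist} on $\utw^*X$, producing $d_{\ma(\utw^*X)}(\dn k,\dn l)^{\utw*}\le f_0\,d_{\pa(\utw^*X)}(\dn k,\dn l)^{\utw*}+f_1$ with $f_0,f_1$ increasing in $M''$, and substituting $p\le c\cdot d_{\pa(\utw^*X)}(\dn k,\dn l)^{\utw*}\le c\cdot C'_5(d_{\pa(X)}(k,l))^2$ gives the claimed $\Psi''_S,\Psi'''_S$.

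\textbf{Main obstacle.} The only genuinely new point is that the curves $\gamma_1,\dots,\gamma_{zr}$ enumerating the twist data of $\bm\omega^{*z}$ are no longer distinct — a fixed curve can recur up to $N_0+1$ times (Step 2 of the present proof). I expect this to be the delicate step, because Proposition \ref{prp:tcbound}, Lemma \ref{lem:untwistedsubsurfaces}, and Corollary \ref{cor:subsurface_bijection} were all stated for arranged sequences, where the listed curves \emph{are} distinct. The resolution is precisely the device already introduced in Step 3 of the theorem's proof: pass to the reduced list $\eta_1,\dots,\eta_Q$ of distinct curves, for which $\bm\omega^{*z}$ genuinely \emph{is} $(\eta_1,\dots,\eta_Q)$-arranged (after reordering so that $(\min DI_{\eta_u})_u$ is increasing and allowing each $\eta_u$ several candidate Dehn intervals), and to observe that none of the proofs of Proposition \ref{prp:tcbound}, Lemma \ref{lem:untwistedsubsurfaces}, or Corollary \ref{cor:subsurface_bijection} ever uses distinctness of the $\gamma_t$'s beyond what is already built into the definition of ``arranged''; the remark to that effect is in Step 3 above. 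So the bound $p\le Q\le\xi(S)c$ or, better, the direct application of Proposition \ref{prp:tcbound} to $(\eta_{u_1},\dots,\eta_{u_p})$, is legitimate, and the rest is a routine transcription. I would only be careful to note, once, that $p$ counts \emph{distinct} curves meeting $X$, which is all that matters for the summation estimates, and that the constants produced are independent of $z$ (they come from $S$ alone, exactly as in Proposition \ref{prp:locallyfinite}), which is what makes the statement useful for the volume estimate in Step 1 of the proof of Theorem \ref{thm:agol_volume}.
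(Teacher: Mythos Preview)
Your overall plan is the paper's, and you correctly identify repetition of twist curves as the one new obstacle. But there is a genuine gap in how you handle it.

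You define $p$ as the number of \emph{distinct} curves $\eta_{u_s}$ meeting $X$ with a Dehn interval touching $[k,l]$, and then you ``sum over the $p+1$ blocks''. This is where the argument breaks. The untwisting $\utw^{*z}$ is built with respect to the full list $DI^*_1,\dots,DI^*_{zr}$, so the natural subdivision of $[k,l]$ into pieces $XDI_s\cup XNI_s$ must place a boundary at \emph{every} Dehn interval whose curve intersects $X$ --- including the repeated occurrences of the same $\eta_u$. If you subdivide only once per distinct curve, the resulting $XNI_s$ will swallow repeated Dehn intervals for curves that \emph{do} meet $X$; on those intervals the untwisting genuinely alters the induced track on $X$, so claim~7 of Lemma~\ref{lem:untwistedsubsurfaces} (constancy of $\hat\phi^*_t$ on the block) fails and the identity $d_{\pa(\utw^* X)}(XDI_s^{\utw*}\cup XNI_s^{\utw*})^{\utw*}=d_{\pa(X)}(XDL_s\cup XNI_s)$ is lost. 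The paper's fix is to keep the subdivision indexed by \emph{all} such $t$ (so $q$ blocks, counting repetitions), and then bound $q$ by observing that the curves with $DI^*_{t_s}\subseteq[k,l]$ are contained in $X$ (claim~1 of Lemma~\ref{lem:untwistedsubsurfaces}), hence are among the distinct $\eta_u$'s counted at most $N_0+1$ times each by Step~2; Proposition~\ref{prp:tcbound} applied to the distinct list then gives $q\le c\cdot d_{\pa(X)}(k,l)$, and symmetrically on the $\utw^*$-side.

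A smaller point: in your annular step you say the effective-arrangement bounds ``hold inside each copy'', which is true, but you must also control how many copies $\bm\omega^i$ the interval $I^*_{\nei(\alpha)}\cap[j,j']$ can span. The paper does this explicitly by writing $[j,j']=J_-\cup\bigl(\bigcup_{i\in\Lambda}H_i\bigr)\cup J_+$ with $H_i=I^*_{\nei}\cap[j,j']\cap[iN,(i+1)N]$, and invoking Step~2 again to get $|\Lambda|\le N_0+1$; together with Theorem~\ref{thm:mmsstructure} on $J_\pm$ this yields the constant $c'$. Without that count, your $c'$ would depend on $z$.
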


With the notation $d_{\pa(X)}(\cdot,\cdot)$, this time, we measure pants distances along the sequence $\bm\omega^{*z}$; and with $d_{\pa(\utw^* X)}(\cdot,\cdot)^{\utw*}$, distances along $\utw^{*z}\bm\omega$.

The proof of this fact follows the proof of Proposition \ref{prp:locallyfinite}, with few modifications. As explained above, the intervals which index the sequences $\bm\omega^{*z}$ and $\utw^{*z}\bm\omega$ admit a subdivision with respect to the sequences of curves $(\gamma_t)_{t_1}^{zr}$, $(\utw^*\gamma_t)_{t=1}^{zr}$, respectively, similarly to arranged splitting sequences.

In Step 1 of the original proof several definition were given, which we repeat without substantial modifications here. The only thing that needs to be clarified is why the number $q$ of curves to be considered is bounded by the pants distance, since those curve may appear multiple times in the current setting.

For each $1\leq u\leq Q$ let $\langle u\rangle$ be a choice of $t$ such that $\gamma_t=\eta_u$ and, if possible, such that $DI^*_{\langle u\rangle}\subseteq [k,l]$. Consider $\bm\omega$ as $(\eta_1,\ldots,\eta_Q)$-arranged (possibly reindexing these curves). 
As a consequence of Proposition \ref{prp:tcbound}, then, the number of indices $u$ such that $\eta_u\subseteq X$ and $DI^*_{\langle u\rangle}\subseteq [k,l]$ is bounded from above by $C_3 d_{\pa(X)}(k,l)+ C_4$.

So, as we define $\delta_1=\gamma_{t_1},\ldots,\delta_q=\gamma_{t_q}$ to be the curves (here listed with repetitions allowed) such that $DI^*_{t_s}\cap [k,l] \not=\emptyset$ and $\gamma_t$ intersects $X$ essentially, we find that the ones such that $DI^*_{t_s}\subseteq [k,l]$ are disjoint from $\partial X$ (by claim 1 in Lemma \ref{lem:untwistedsubsurfaces} adapted to this setting). Therefore they are exactly the curves $\eta_u$ with $DI^*_{\langle u\rangle}\subset[k,l]$, each counted at most $N_0+1$ times. Moreover, it is only for $s=1,q$ one may have simultaneously $DI^*_{t_s} \setminus [k,l], DI^*_{t_s}\cap [k,l]\not=\emptyset$. So $q\leq c \cdot d_{\pa(X)}(k,l)$ for a suitable $c=c(S)$.

The same ideas may be applied to bound $q$ in terms of pants distance in $\utw^{*z}\bm\omega$. Under the correspondence $\utw^*$ given by Corollary \ref{cor:subsurface_bijection} in this modified setting, the indices $t_s$, $1 \leq s \leq q$, turn out to be almost exactly the values of $t$ such that $\utw^*\gamma_t$ intersects $\utw^* X$, $DI^{\utw*}_{t_s}\cap [\dn k,\dn l] \not=\emptyset$. We say `almost', because the only exception to this last sentence is that $\utw^*\gamma_{t_q}$ may have $DI^{\utw*}_{t_q}\cap [\dn k,\dn l] =\emptyset$. So it is also true that $q\leq c \cdot d_{\pa(\utw^* X)}(\dn k,\dn l)^{\utw*}$, if $c=c(S)$ is chosen suitably.

After these modifications, reconstructing Step 2 in the proof of Proposition \ref{prp:locallyfinite} is straightforward.

Following Step 3 of that proof, we wish to prove the existence of constants depending on $S$ such that, however one picks $\alpha\in\cc(X)$ with $\nei=\nei(\alpha)$ a regular neighbourhood, for all $0\leq s\leq q$, and all $j,j'\in XDL^*_s\cup XNI^*_s$, $d_\nei(\omega^{*z}_j|X,\omega^{*z}_{j'}|X)$ is bounded by this constant. Let $I^*_\nei$ be the accessible interval for $\nei$ in the sequence $\bm\omega^{*z}$.

For each $0\leq i\leq z$, let $[a(i),b(i)]=H_i\coloneqq I^*_\nei\cap [j,j']\cap [iN, (i+1)N]$. Step 2 of this proof implies immediately that there are at most $N_0+1$ values of $i$ such that $H_i\not=\emptyset$, and they are consecutive --- let $\Lambda$ be their family. Then one may write $[j,j']=J_-\cup \left(\bigcup_{i\in\Lambda}H_i\right)\cup J_+$ where $J_-, J_+$, if nonempty, are intervals sharing only one element with $I^*_\nei$.

As all $\bm\omega^i$ are effectively arranged, the original proof of Proposition \ref{prp:locallyfinite} provides bounds for $d_\nei(\omega^{*z}_{a(i)}|X,\omega^{*z}_{b(i)}|X)$, for all $i\in \Lambda$. For what concerns $J_-\eqqcolon[j,b]$, Theorem \ref{thm:mmsstructure} gives that $d_\nei(\omega^{*z}_{j},\omega^{*z}_{b})\leq \mathsf{K}_0$. Similarly for $J_+$. So we have a bound for the distance covered in each of the pieces in which we have split $[j,j']$, and these pieces are at most $N_0+3$: hence the existence of a constant, $c'$, bounding $d_\nei(\omega^{*z}_j|X,\omega^{*z}_{j'}|X)$ from above.

The remainder of Step 3, and Step 4, of Proposition \ref{prp:locallyfinite} may be applied here with no substantial modifications, thus completing the proof of our claim.

\step{5} The sufficient condition declared in Step 1 above holds.

Step 3 above acts in place of Proposition \ref{prp:locallyfinite} to prove the following version of Proposition \ref{prp:hardttbound}, using exactly the same line of proof.
\begin{claim}
There is a constant $C'_7(S)$ such that
$$d_{\pa(S)}(V(\omega^{*z}_0),V(\omega^{*z}_0)))\geq_{C'_7}|\utw^{*z}\bm\omega|.$$
\end{claim}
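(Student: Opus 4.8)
The plan is to re-run the proof of Proposition~\ref{prp:hardttbound} essentially verbatim, with the generalized version of Proposition~\ref{prp:locallyfinite} obtained in Step~4 playing the role of the original one, and with the $(\eta_1,\ldots,\eta_Q)$-arranged structure of $\bm\omega^{*z}$ described in Step~3 substituted wherever the Masur--Mosher--Schleimer argument used the strict arranged structure of Definition~\ref{def:arranged}. First I would record that $\bm\omega^{*z}$ is a genuine splitting sequence of cornered birecurrent train tracks evolving firmly in $S$: $\bm\rho$ evolves firmly in $S$ and $\rar$ preserves this property together with cornered-birecurrence, each block $\psi^i\cdot\bm\omega$ is the image of $\bm\omega$ under a homeomorphism, and the blocks glue because $\omega_N=\psi(\omega_0)$; since $V(\tau_m)$ fills $S$ (the lemma preceding the proof of Theorem~\ref{thm:agol_volume}), every $V(\omega^{*z}_j)$ fills $S$. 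In particular $V(\omega^{*z}_{zN})$ is a vertex of $\pa(S)$, so the statement makes sense.

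The core of the argument is, for each non-annular $X\in\Sigma(\bm\omega^{*z})$ and each $J_X\subseteq I^*_X$, the key claim $|\mathcal T^*_X(J_X)|\leq_{C'_7(X,S)}d_{\pa(X)}(J_X)$, where $\mathcal T^*_X(p,q)$ denotes the set of indices $\dn p\leq j\leq (\dn q)-1$ such that $\utw^{*z}\omega_{j+1}$ is obtained from $\utw^{*z}\omega_j$ by a split visible in $\utw^*X$. Taking $X=S$ (redefining $I^*_S=[0,zN]$ as in Proposition~\ref{prp:hardttbound}) and $J_S=[0,zN]$ then gives $|\utw^{*z}\bm\omega|=|\mathcal T^*_S([0,zN])|\leq_{C'_7}d_{\pa(S)}(V(\omega^{*z}_0),V(\omega^{*z}_{zN}))$, which is the claim. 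The key claim would be proved by induction on $\xi(X)$, following Proposition~\ref{prp:hardttbound} step by step: fix $\mathsf T_0(X),\mathsf T_1(X)$ as there; call $Y\subsetneq X$ \emph{inductive} when $d_Y(J_X)\geq\mathsf T_0(X)$, with inductive subinterval $J_Y=I^*_Y\cap J_X$; use the analogue of Lemma~\ref{lem:mms612} to see that intervals disjoint from all inductive subintervals are straight; bound the number of visible splits over a straight interval by the analogue of Lemma~\ref{lem:mms613}, where the passage between $d_{\pa(X)}$ and $d_{\ma(\utw^*X)}$ is supplied by the generalized Proposition~\ref{prp:locallyfinite} of Step~4, followed by local finiteness of $\ma(\utw^*X)$ and Lemma~\ref{lem:mms614}; and control the indices assigned to proper inductive subsurfaces by the analogues of Lemma~\ref{lem:mms616} and of Lemma~6.22 of \cite{mms}, using Lemma~\ref{lem:untwistedsubsurfaces} and Corollary~\ref{cor:subsurface_bijection} in the variant forms noted in Step~3, and the bound on the number of effective twist curves relevant to $X$ coming from Proposition~\ref{prp:tcbound} applied to $\bm\omega$ regarded as $(\eta_1,\ldots,\eta_Q)$-arranged, together with the fact (Step~2) that each $\eta_u$ occurs at most $N_0+1$ times among the $\gamma_t$.

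The main obstacle I expect is the bookkeeping caused by $\bm\omega^{*z}$ not being arranged in the strict sense of Definition~\ref{def:arranged}: the sequence $(\gamma_t)_{t=1}^{zr}$ of Dehn curves repeats, so the auxiliary intervals $G_{t\pm},I_{t\pm},DK_t,DL_t$ and the distance bounds of that definition must be re-read in the variant form of Step~3, and one must check that none of the lemmas imported from \cite{mms} secretly used distinctness of the Dehn curves. I plan to deal with this exactly as in the proof of the generalized Proposition~\ref{prp:locallyfinite}: each block $\bm\omega^i=\psi^i\cdot\bm\omega$ is itself effectively arranged, so inside a block all estimates of Definition~\ref{def:arranged} hold unchanged, while across each junction $NI^*_{ir}$ one splits the relevant interval into at most $N_0+3$ pieces, on each of which either Theorem~\ref{thm:mmsstructure} or the block-level bounds apply, Step~2 bounding how many pieces can see a given curve as a twist curve. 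Consequently every constant of \cite{mms} is replaced by a constant depending on $S$ and on $N_0=N_0(S)$ (Lemma~\ref{lem:vertexsetbounds}), the Masur--Mosher--Schleimer induction goes through once Propositions~\ref{prp:tcbound} and~\ref{prp:locallyfinite} have been upgraded as above, and the argument yields the desired $C'_7=C'_7(S)$.
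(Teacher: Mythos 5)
Your proposal is correct and takes essentially the same route as the paper: the paper's own argument at this point is precisely to invoke the Step~4 generalization of Proposition~\ref{prp:locallyfinite} and re-run the proof of Proposition~\ref{prp:hardttbound} verbatim, with the repetition-allowing arranged structure and the multiplicity bound $N_0+1$ from Step~2 handling the bookkeeping you flag. Your elaboration of which lemmas need re-reading in the variant form is a faithful unpacking of what the paper compresses into one line.
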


Finally, just note that $V(\omega^{*z}_0)=V(\rho_0)=V(\tau_m)$; $V(\omega^{*z}_0)=V(\tau_{m+zn})$;\linebreak $|\utw^{*z}\bm\omega|=z|\utw\bm\omega|= z|\utw(\rar\bm\rho)|$.
\end{proof}

\section{Braids in the solid torus}\label{sec:dw}

Among mapping tori, the ones coming from \emph{braids} in the way we are about to describe admit an application of the train track machinery different from the one given in the previous section. See \cite{farb}, \S 9.1 for details about braids and interpretations of braid groups.

Recall that the \nw{braid group} on $n$ strands,
\begin{equation}\label{eqn:braidgroup}
B_n\coloneqq \left\langle\sigma_1,\ldots,\sigma_{n-1}\left|
\begin{array}{lr}
\sigma_i\sigma_j=\sigma_j\sigma_i & \text{for }|i-j|\geq2;\\
\sigma_i\sigma_{i+1}\sigma_i=\sigma_{i+1}\sigma_i\sigma_{i+1} & \text{for }1\leq i\leq n-1
\end{array}
\right.\right\rangle
\end{equation}
has a natural identification with $\mcg(D^2_n)$, where with $D^2_n$ we mean the closed disk, punctured $n$ times. Here we identify $D^2_n$ with 
$$
\left\{z\in \R^2\left| \|z\|\leq 1\right.\right\}\setminus \left\{\left.\left(-1+\frac{2}{n+1}j,0\right) \right|j=1,\ldots,n\right\}.
$$
This model, in particular, places all punctures along the horizontal axis $\R\times\{0\}$; and, for $1\leq i\leq n-1$, the generator $\sigma_i$ of $B_n$ corresponds to a \emph{half-twist} in $D^2_n$, swapping the $i$-th and the $(i+1)$-th punctures counting from the left.

From now on, it will always be assumed that $n\geq 3$, which implies $\xi\left(\inte(D^2_n)\right)\geq 4$ i.e. $\inte(D^2_n)$ is a surface in the sense we have stuck with in all the previous work. For simplicity, we will identify $\R$ with $\R\times\{0\}$ and similarly for points and intervals in the two sets.

If $\psi\in B_n\cong \mcg(D^2_n)$ has a restriction to $\inte(D^2_n)$ which is pseudo-Anosov, then we know that the mapping torus $M\coloneqq \faktor{\inte(D^2_n)\times [0,1]}{\sim_\psi}$ is hyperbolic (see Theorem \ref{thm:mappingtorushyperbolic}). This mapping torus actually admits a (diffeomorphic) embedding in $\R^3$ as follows: if $w$ is any word representing $\psi$ in $B_n$, let $\ul{\ul w}$ be a braid representation of the word $w$, embedded in $\inte(D^2)\times [0,1]$ with the property that $\ul{\ul w}\cap \left(\inte(D^2)\times\{0,1\}\right)= \left\{\left.\left(-1+\frac{2}{n+1}j,0\right) \right|j=1,\ldots,n\right\}\times\{0,1\}$.

The identification of each point of $\inte(D^2)\times\{0\}$ with the corresponding point of $\inte(D^2)\times\{1\}$ (via the identity map $\inte(D^2)\times\{0\}\rightarrow \inte(D^2)\times\{1\}$) produces a solid torus $T\cong \inte(D^2)\times\mathbb S^1$, which admits an embedding $T\hookrightarrow \R^3$ (so we identify it with a chosen embedding). Through this quotient, $\ul{\ul w}$ projects to a \nw{closed braid} $\ul w$ in $\R^3$ which is fitted to $T$: each strand of $\ul{\ul w}$ will project to a path or loop which intersects the image of $\inte(D^2)\times\{t\}$ only once for each $0<t<1$. Our mapping torus $M$ is seen to be diffeomorphic to $T\setminus\ul w$.

\subsection{Strip decompositions}

We give here a construction very similar to the one introduced in \cite{dynnikovwiest}, even if our definitions will have a more `visual' flavour which will make it easier to relate them with train track splitting sequences (see Figure \ref{fig:stripdecomposition}).

\begin{figure}
\def\svgwidth{.65\textwidth}
\centering{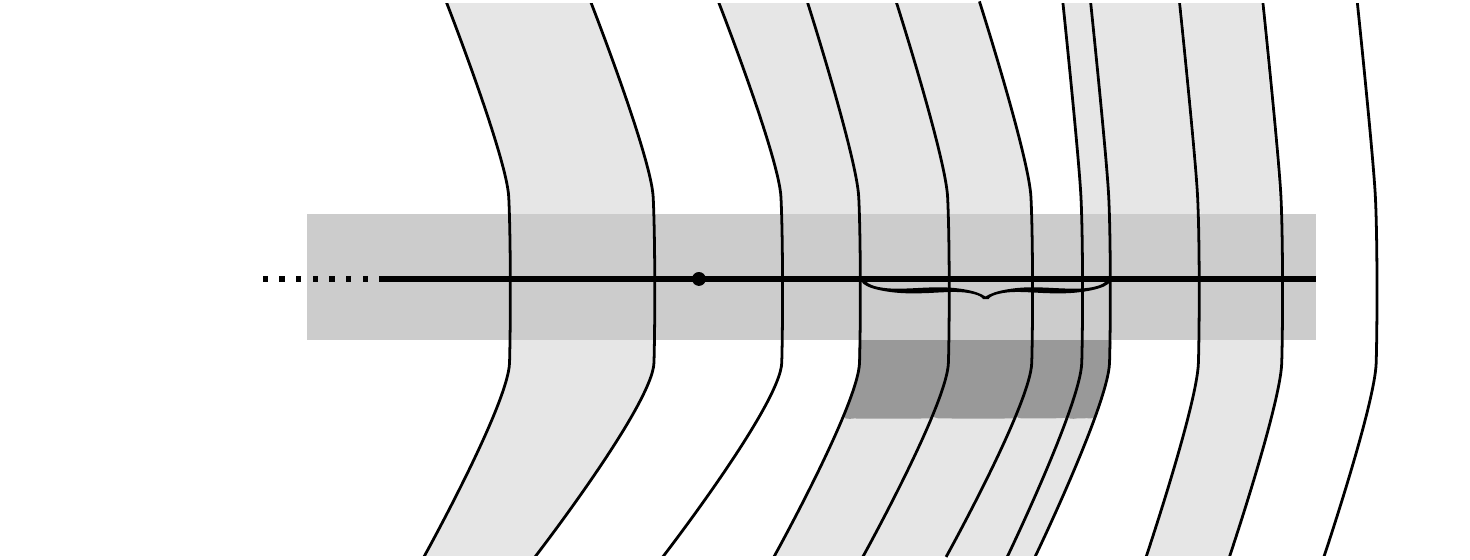}
\caption{\label{fig:stripdecomposition}The basic construction of a strip decomposition, with a marker $O$ and a cutter $H$. A puncture of $D^2_n$ lies along the cutter. Strips are filled in light grey.}
\end{figure}

Let $p$ be (the union of all curves in) a pants decomposition on $D^2_n$; suppose that the curves of $p$ are realized in a way that intersects $\R$ transversely, and such that $p\cup\R$ bounds no bigons. Fix $O\in \R$, which we call a \nw{marker}; and let $H\coloneqq (-\infty,O]$ be the corresponding \nw{cutter}. Fix $\nei(\R)\coloneqq\left(\R \times (-\epsilon,\epsilon)\right)$, where $\epsilon>0$ is chosen so that $p\cap \R \times (-2\epsilon,2\epsilon)$ consists of a set of arcs joining the two opposite boundary components $\R\times\{-\epsilon\}$, $\R\times\{\epsilon\}$. We can suppose, up to isotopies, that these arcs are all vertical. Define also $\nei(H)\coloneqq H \times (-\epsilon, \epsilon)$.

The \nw{strip decomposition} $\beta(p,O)$ is then defined as follows.

Let $AC(p, H)$ be the set of all connected components of $p\setminus\nei(H)$. In general, $AC(p,O)$ will consist of arcs and loops: we subdivide $AC(p,O)=A(p,O)\sqcup C(p,O)$ accordingly. We say that two arcs $\alpha_1,\alpha_2\in A(p,O)$ are \nw{consecutive} if there is a closed region $R=R(\alpha_1,\alpha_2)\subseteq D^2_n$, diffeomorphic to a rectangle, such that $\alpha_1,\alpha_2$ are two opposite sides of $\partial R$, the other two sides are two intervals along $\partial\bar\nei(\R)$, and $\inte(R)\cap p=\emptyset$.

We say that two arcs in $A(p,O)$ are \nw{parallel} if they are in the same class under the equivalence relation generated by consecutiveness: an equivalence class for parallelism will be called a \nw{strip}. The \nw{width} of a strip is its size as a set. The strip decomposition $\beta(p,O)$ is then the disjoint union of $C(p,0)$ with the set of all strips in $A(p,O)$.

If $s\in \beta(p,O)$ is a strip, then 
$$
R(s)\coloneqq \left(\bigcup_{\alpha\in s}\alpha\right)\cup \left(\bigcup_{\substack{\alpha_1,\alpha_2\in s\\\text{consecutive}}}R(\alpha_1,\alpha_2)\right)
$$
is again diffeomorphic to a rectangle, and the elements of $s$ are then uniquely defined from $R(s)$ as the connected components of $R(s)\cap p$. With this in mind, we may confuse a strip $s$ with the corresponding $R(s)$.

In particular, each strip $s$ has two \nw{bases}, i.e. the two intervals $I_1,I_2\in \R$ such that $I_1\times\{\epsilon_1\}$ and $I_2\times\{\epsilon_2\}$ are the two connected components of $R(s)\cap \partial\bar\nei(H)$ for a suitable (unique) choice of $\epsilon_1,\epsilon_2\in\{\pm\epsilon\}$, and two \nw{ends}, i.e. the two connected components of $R(s)\cap (H\times\left(-2\epsilon,2\epsilon)\right)$. 

We say that two strip ends $e_1,e_2$, belonging to either the same strip in $\beta(p,O)$ or distinct ones, \nw{overlap} if the corresponding strip bases $I_1,I_2$ have $I_1\cap I_2\not=\emptyset$. The two ends are said to \nw{completely overlap} if $I_1=I_2$. If $e_1,e_2$ completely overlap and belong to the same strip $s$, then $s$ consists of a single arc which, together with a vertical arc in $\nei(H)$, makes up a curve in $p$: we cannot have more than one arc, else $p$ contains a pair of isotopic curves. In all other cases, the bases of $s$ can be distinguished into a \nw{left} and a \nw{right} one, according to the relative order of their minima in $\R$. The same terminology applies to the two ends of $s$.

A \nw{strip cut} is an elementary move on a strip decomposition, defined as follows.

There are exactly two strip ends $e_1,e_2$ which are `closest' to the marker $O$, i.e. such that the corresponding bases $I_1,I_2$ have $\max I_1=\max I_2$ and $p\cap\R \cap (\max I_1,O]=\emptyset$. Place the indices so that $I_1$ is shorter, or equal, to $I_2$. Let $O'\coloneqq \max \left((\R\cap p)\setminus I_1\right)$: then $O'<O$. We define the strip cut of $\beta(p,O)$ to be the strip decomposition $\beta(p,O')$. A \nw{strip cutting sequence} is a sequence of strip decompositions, each obtained from the previous with a strip cut: an example is given in Figure \ref{fig:stripcut}.

\begin{figure}
\includegraphics[width=\textwidth]{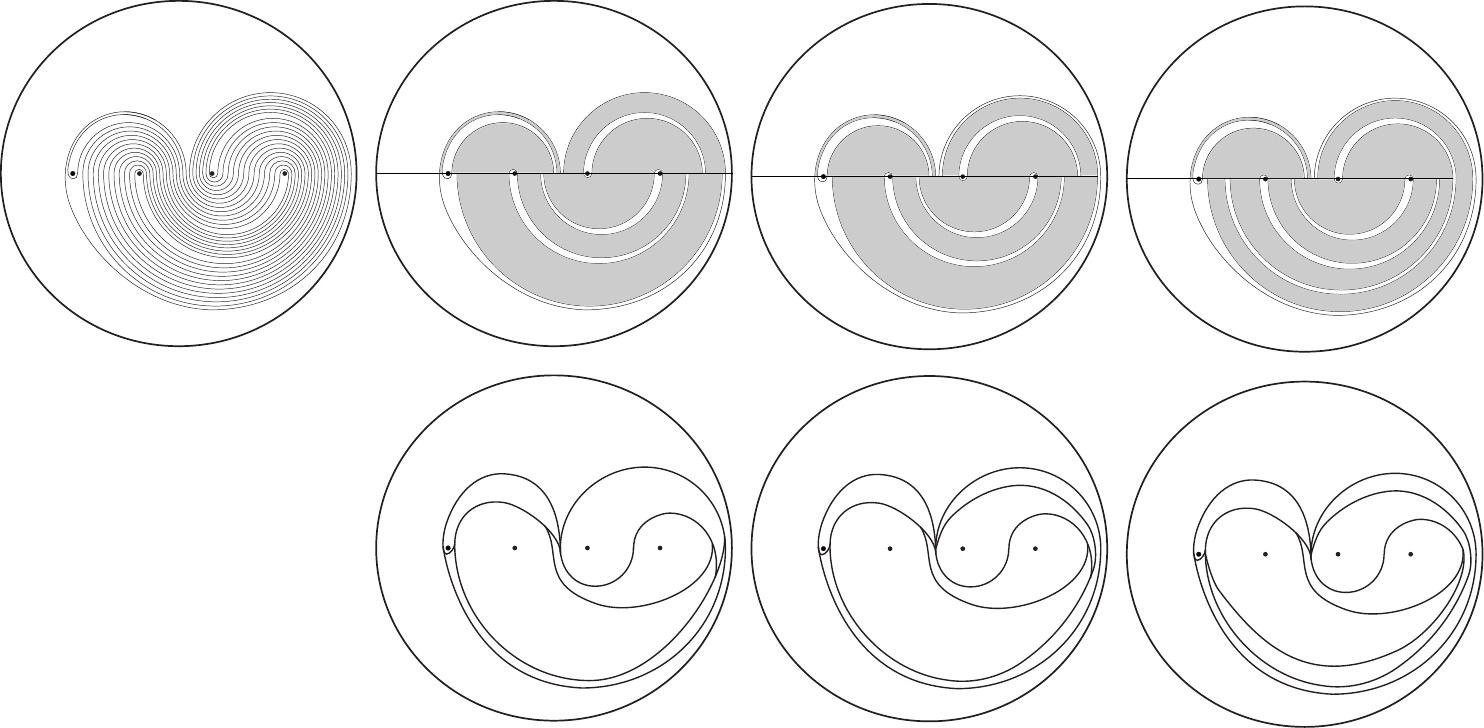}
\caption{\label{fig:stripcut}In the upper line is an example of strip cutting sequence obtained from the pants decomposition drawn in the leftmost picture. In the lower line are the train tracks obtained accordingly. Note how the cutter (the horizontal line) in the upper pictures gets shorter and shorter. (This figure has been derived from Figure 1 in \cite{dynnikovwiest}. Many thanks to Bert Wiest for having kindly agreed to its reuse.)}
\end{figure}

It is convenient to have a closer look at what happens with a strip cut. When $e_1,e_2$ belong to one same strip $s$, then necessarily $I_1=I_2$ and they must consist of a single point, else $p$ includes two isotopic curves --- similarly to what has been noted above. In this case, $\beta(p,O)$ differs from $\beta(p,O')$ only in that a strip in the former, consisting of a single arc, has been replaced with a loop.

When the strips $s_1,s_2$ to which $e_1,e_2$ belong are different, it may still be the case that $I_1=I_2$ i.e. $s_1,s_2$ contain the same number of arcs. Then the move's effect is that $\beta(p,O)\setminus\{s_1,s_2\}=\beta(p,O')\setminus\{s'\}$ for a strip $s'$ such that $R(s)=R(s_1)\cup R(s_2)\cup \left(I_1\times (-\epsilon,\epsilon)\right)$. We say that $s_1$ and $s_2$ \nw{merge} to $s'$.

If $I_1\subsetneq I_2$ instead, there are two (unique) strips $s'_1$, $s'_2$ such that $\beta(p,O)\setminus\{s_1,s_2\}=\beta(p,O')\setminus\{s'_1,s'_2\}$ and the following is true. Assign each arc $\alpha\in s_2$ to a family $s_{21}$ or $s_{22}$ according to whether $\alpha\cap e_2$ is contained in $I_1\times (-2\epsilon,2\epsilon)$ or not, respectively. Define $R(s_{21}),R(s_{22})$ exactly at it has been done above for a strip: they are again two rectangles. Then $R(s'_1)=R(s_1)\cup R(s_{21})\cup \left(I_1\times (-\epsilon,\epsilon)\right)$ while $s'_2=s_{22}$ --- and in particular $R(s'_2)=R(s_{22})$. We say that $s_1$ \nw{stretches} to $s'_1$ while $s_2$ \nw{shrinks} to $s'_2$.




Given a pants decomposition $p$, there is a \nw{canonical} strip cutting sequence $\bm\beta^p$ defined from $\beta(p,1)$ and performing strip cuts until it is no longer possible (i.e. there is no strip left).

\subsection{Strip decompositions turn into train tracks}

Strip cutting sequences are, morally, a particular case of train track splitting sequences. Given a strip decomposition $\beta=\beta(p,O)$, we define a semigeneric train track $\trk\beta$ as follows: let $G\subseteq AC(p,O)$ be a collection consisting of all elements of $C(p,O)$, and exactly one arc $a(s)$ for each strip $s\in\beta(p,O)$. If $e$ is a strip end, let $a(e)$ be the only endpoint of $a(s)$ which is contained in $e$. Let $E$ be the family of all pairs of overlapping strip ends. Finally, let $\Gamma\subset D^2_n$ be a $1$-complex obtained as the union of the elements of $G$, plus a straight segment $a(e_1,e_2)$ joining $a(e_1)$ to $a(e_2)$, for each pair $(e_1,e_2)\in E$. If two of these new segments intersect each other, or one intersects an element of $G$, then the point they share is an endpoint for both arcs. Finally, homotope $\Gamma$ to make sure that, for each strip $s$, each end $e$ of $s$, and each pair $(e,e')\in E$ for $e'$ another strip end, $a(s)\cup a(e,e')$ is smoothly embedded.

Call $\trk\beta$ the result of this operation: it is a pretrack. 

\begin{lemma}\label{lem:birecurrent}
If $\beta=\beta(p,O)$ is a strip decomposition, then $\trk\beta$ is a birecurrent train track.
\end{lemma}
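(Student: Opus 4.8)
The plan is to verify the three defining properties of a birecurrent, cornered train track for $\trk\beta$: that it is a genuine train track (each complementary region has negative index), that it is recurrent, and that it is transversely recurrent. Since $\trk\beta$ is constructed as a semigeneric pretrack, one could also pass to a generic equivalent, but working with the semigeneric model directly is cleaner here.

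First I would set up the correspondence between the complementary regions of $D^2_n \setminus \nei(\trk\beta)$ and the combinatorics of $\beta = \beta(p,O)$. Each strip $s$, thought of as a rectangle $R(s)$, contributes a band to a tie neighbourhood $\bar\nei(\trk\beta)$ (the arc $a(s)$ running along it), and each pair of overlapping strip ends gives a small branch where bands are glued near the cutter $H$. The complementary regions fall into two types: the components of $D^2_n \setminus \bar\nei(p)$ (the pairs of pants and once-punctured discs cut out by the pants decomposition $p$), and the regions near $\nei(H)$ where strips overlap but do not fully overlap. The first type inherits the complementary regions of a pants decomposition, which — once $\nei(H)$ is taken into account — are pairs of pants with corners, hence have strictly negative index; I would check this via the index formula, counting the corners introduced at the cutter. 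The overlap regions near $H$ are, by the local picture of a strip cut (the merge/stretch/shrink analysis given just before this lemma), small triangles or punctured bigons: a puncture of $D^2_n$ may lie on the cutter, so some of these regions contain a puncture, and a once-punctured region of any index $\leq 0$ combined with the corner contributions still has negative index. The key point to nail down is that no complementary region is a smooth disc or once-punctured disc with a smooth (corner-free) boundary — i.e. that $\trk\beta$ is cornered — and this follows because every boundary arc of every complementary region meets the cutter and thus acquires a corner there. I expect this bookkeeping — matching strips and overlaps to branches, and complementary regions to index contributions — to be the main obstacle, mostly because it requires being careful about punctures lying on $H$ and about the semigeneric switches where several strip ends meet.

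Next, for recurrence: each branch of $\trk\beta$ lies either in some $a(s)$ (the body of a strip $s$) or is a segment $a(e_1,e_2)$ joining overlapping strip ends near $H$. Each curve $\gamma$ of the pants decomposition $p$ is carried by $\trk\beta$ — its realization runs along the strips it passes through and crosses $H$ through the gluing branches — and collectively the curves of $p$ traverse every branch: each strip $s$ comes from at least one arc of $p$, and each overlap branch $a(e_1,e_2)$ is crossed by whichever curve(s) of $p$ give rise to the overlapping ends. Hence $p$ itself, viewed as a carried multicurve, has carrying image all of $\trk\beta$, which by the characterization of recurrence recalled after Definition \ref{def:recurrent} (a train track is recurrent iff some carried multicurve fully carries it) gives recurrence. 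For transverse recurrence: the vertical arcs $\{x\} \times (-2\epsilon, 2\epsilon)$ in $\nei(\R)$, for suitable $x$, together with arcs running through the complementary regions, form a family of simple closed curves dual to $\trk\beta$ that hit every branch — indeed the segments of $\partial \bar\nei(H)$ and of $\R$ are transverse to all the branches by construction, and one completes them to closed curves through the pairs of pants; one checks this family realizes efficient position and meets every branch, giving transverse recurrence via the characterization after Definition \ref{def:recurrent}.

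Finally I would assemble: negative index of all complementary regions makes $\trk\beta$ a train track; the cornered-ness check upgrades it to cornered; recurrence plus transverse recurrence gives birecurrence. One small point to record along the way is that $\trk\beta$ is connected-complementary-region-wise well-behaved because $n \geq 3$ ensures $\inte(D^2_n)$ has complexity $\geq 4$, so that the pants-decomposition machinery and the index arithmetic behave as in the general surface case. I would present the proof by first fixing a tie neighbourhood adapted to the strip structure, then doing the index count region by region (distinguishing punctured from unpunctured overlap regions), then exhibiting the carried multicurve $p$ for recurrence and the dual family for transverse recurrence.
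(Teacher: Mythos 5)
Your overall strategy — verify the train track property, recurrence, and transverse recurrence separately — matches the paper's, and the recurrence part is essentially identical (every branch is traversed by a component of $p$, and $p$ is fully carried). Your train-track step is phrased as explicit index bookkeeping, whereas the paper shortcuts this by observing directly what each forbidden complementary region type (smooth disc, smooth once-punctured disc, monogon, bigon) would imply about $p$ (homotopically trivial component, peripheral component, bigon with $\R$, pair of isotopic components); both work, though the paper's version is quicker and avoids the punctured-region bookkeeping you anticipate as the main obstacle.

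Two points to flag. First, you also assert that $\trk\beta$ is \emph{cornered}. The lemma does not claim this, and in fact it can fail: for the terminal decomposition of a canonical strip cutting sequence (no strips, only loops), $\trk\beta$ is literally a disjoint union of smooth circles, so $\partial\bar\nei(\trk\beta)$ is entirely smooth. Fortunately the claim is not needed, so this is harmless here, but your stated justification (``every boundary arc of every complementary region meets the cutter and thus acquires a corner'') is not true in general.

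Second, and more substantively, your transverse recurrence argument is too vague to close the gap. You propose to take vertical arcs in $\nei(\R)$ and ``complete them to closed curves through the pairs of pants.'' Dual curves in the sense used here must be \emph{essential}, and nothing in your description guarantees that the closed-up curve is essential rather than, say, a small inessential loop near one puncture or a boundary-parallel circle. The paper avoids this by naming an explicit family: the round curves encircling each pair of consecutive punctures, plus the two curves encircling all punctures but the leftmost (respectively rightmost). Each of these is visibly essential, and the paper explains (via Figure \ref{fig:dualcurves}) how different realizations of these curves meet every branch — the $a(e_1,e_2)$ segments near the cutter and the $a(s)$ segments along the strips. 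If you want to keep your approach you would need to specify how the closure through the pairs of pants yields essential curves and argue efficient position for them; otherwise, switching to the explicit round-curve family is the cleaner route.
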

\begin{proof}[Sketch]
We only give a sketch of the argument proving that $\trk\beta$ is a train track. One has to make sure that $S\setminus \nei_0(\trk\beta)$ includes no connected component which is a:
\begin{itemize}
\item disc with smooth boundary: the existence of one would imply that $p$ includes a homotopically trivial curve;
\item 1-punctured disc with smooth boundary: the existence of one would imply that $p$ includes a curve homotopic into a puncture;
\item monogon: the existence of one would imply that $p$ forms a bigon with $\R$, which was excluded at the beginning of the construction;
\item bigon: the existence of one would imply that $p$ has two distinct, isotopic components.
\end{itemize}

$\trk\beta$ is recurrent because each branch is traversed by a connected component of $p$. As for transverse recurrence, it suffices to exhibit a collection of curves in $\trk\beta$ such that, for each branch $\in\br(\trk\beta)$, there is a curve in the collection which can be put in dual position with respect to $\trk\beta$, intersecting $b$. Recall the notation $a(s), a(e_1,e_2)$ used above for smooth segments in $\trk\beta$ --- which are parts of branches, but not necessarily \emph{entire} ones.

\begin{itemize}
\item For each pair of consecutive punctures $\left(-1+\frac{2}{n+1}j,0\right), \left(-1+\frac{2}{n+1}(j+1),0\right)$, $0\leq j\leq n-1$, include in the collection the round curve encircling them. There is a realization of this curve, dual to $\trk\beta$, which intersect all segments $a(e_1,e_2)$ for $e_1$, $e_2$ strip ends located between the two punctures. And there is a different realization which will intersect all segments $a(s)$ for $s$ a strip with an end located between the two punctures. Both realizations intersect the elements of $C(p,O)$ which pass between the two given punctures.
\item Include the curve encircling all punctures but the leftmost one $\left(-1+\frac{2}{n+1},0\right)$. More precisely, realize it in a way that encircles all $D^2_n$ except for a small neighbourhood of $\partial D^2_n$ and of $\left[-1, -1+\frac{2}{n+1}+\epsilon\right]\times \{0\}$. Depending on what particular realization of the curve has been chosen, the curve will be dual to $\trk\beta$ and will intersect all segments $a(e_1,e_2)$ for $e_1$, $e_2$ strip ends located to the left of the leftmost puncture; or all segments $a(s)$ for $s$ a strip with an end in the same locations. Both realizations all the elements of $C(p,O)$ which pass to the left of the leftmost puncture.
\item Similarly, include the curve encircling all punctures but the rightmost one $\left(-1+\frac{2}{n+1}n,0\right)$.
\end{itemize}

Figure \ref{fig:dualcurves} gives a local picture for these curves. In order to show that these curves are actually in efficient position with respect to $\trk\beta$, one may apply an argument similar as the one applied above to show that $\trk\beta$ is a train track.
\end{proof}

\begin{figure}[h]
\centering
\includegraphics[width=.7\textwidth]{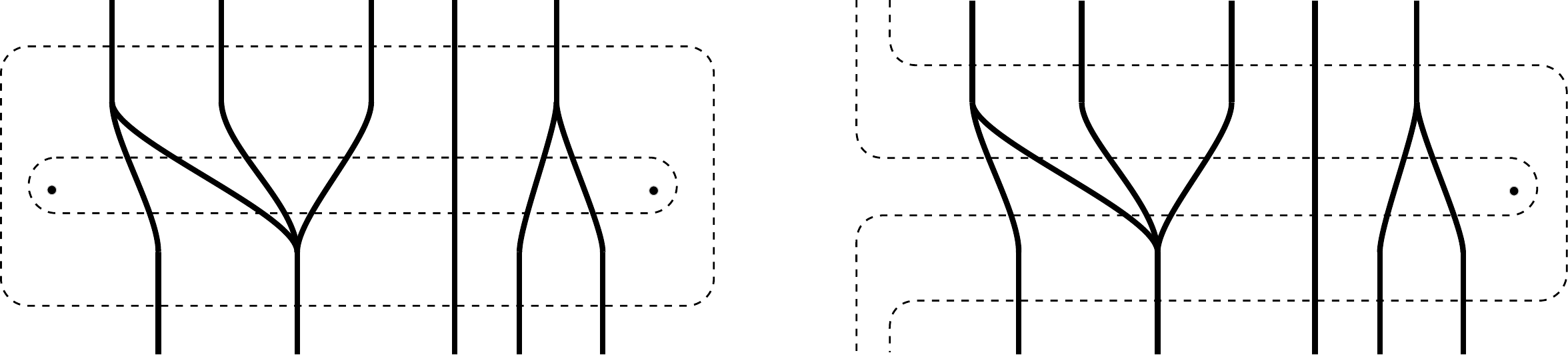}
\caption{\label{fig:dualcurves}The curves showing transverse recurrence of $\trk\beta$ as in Lemma \ref{lem:birecurrent}. The picture to the left shows, dashed, a round curve encircling two consecutive punctures of $D^2_n$, and how it can be isotoped to be dual to $\trk\beta$, and intersect any of the branches located `between' the two punctures. The picture to the right shows that a similar property holds for the round curve encircling all punctures of $D^2_n$  but the leftmost one --- embedded in two different ways, as prescribed in the proof of that lemma.}
\end{figure}

\begin{figure}
\centering
\def\svgwidth{.8\textwidth}
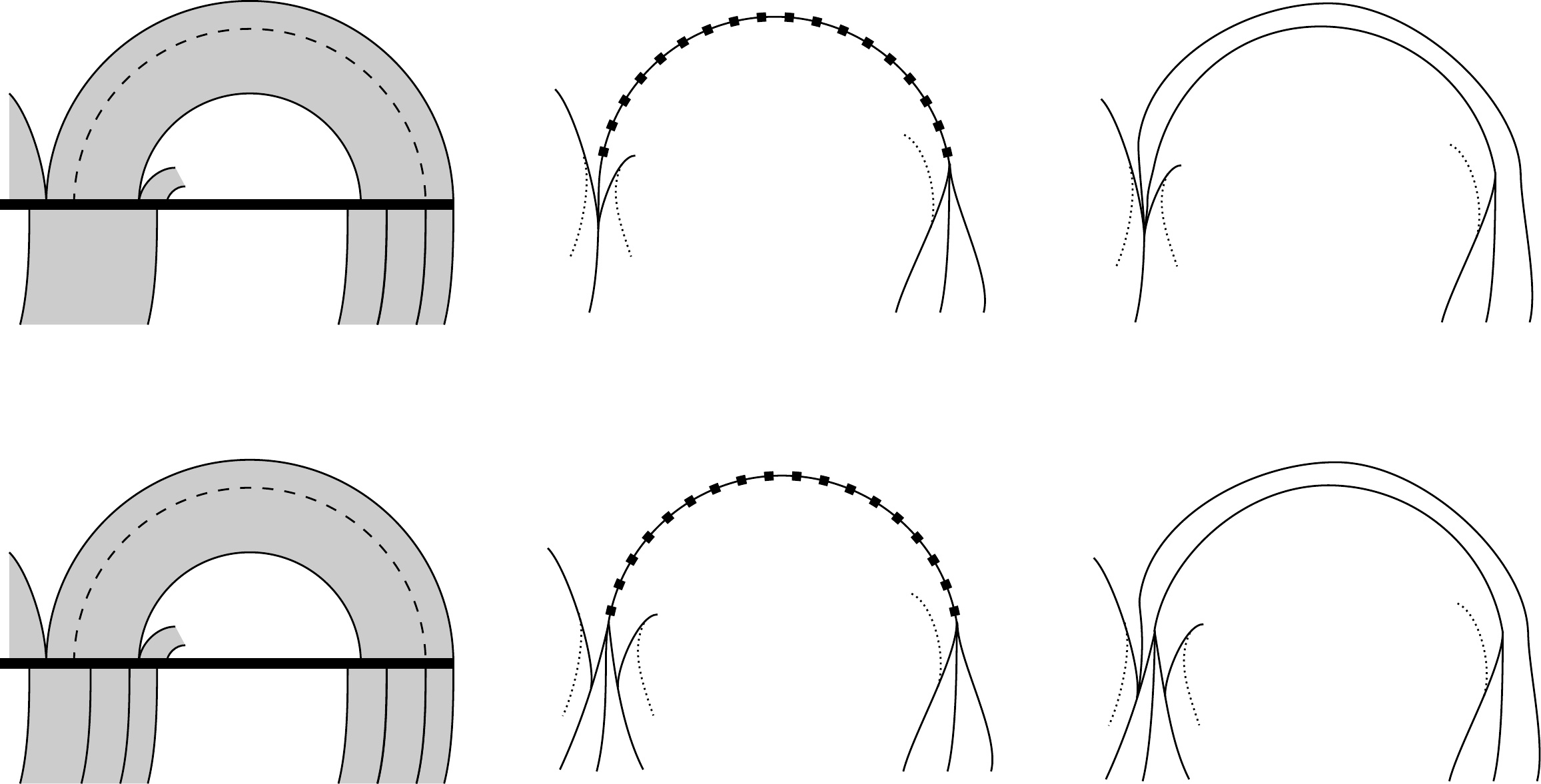
\caption{\label{fig:cutissplit}When $\beta$ strip cuts to $\beta'$ and the strip cut causes a strip $s_1$ to stretch and another one, $s_2$ to shrink, either $\trk\beta'$ is comb equivalent to $\trk\beta$ (line above), or there is a sequence of elementary moves, which are all comb/uncomb moves except for one split, turning $\trk\beta$ to $\trk\beta'$. The first case occurs when the left base of $s_2$ is entirely contained in the base of another strip end, and the same case otherwise. The split parity, in this latter case, is determined by the strip widths before and after the cut.}
\end{figure}

How do $\trk\beta$ and $\trk\beta'$ relate, when $\beta'$ is a strip decomposition obtained from $\beta$ with a split?

When the strip cut causes the replacement of a strip containing a single arc with a loop, or causes two strips to merge, $\trk\beta=\trk\beta'$. Otherwise, there is a strip $s_1$ which stretches and another one, $s_2$, which shrinks. Let $e_1,e_2$ be the right ends of the two. The strip cut reflects on $\trk\beta$ the following way: first of all, let $\kappa$ be a zipper in $\trk\beta$ such that $\kappa_P$ begins at $a(e_2)$ and runs along $a(s_2)$. Unzip $\trk\beta$ along $\kappa$: this gives a track which is comb equivalent to $\trk\beta$. Now, in order to get $\trk\beta'$, another elementary move is needed, which is either a comb or a split. Some more detail is given in Figure \ref{fig:cutissplit}.

This means that, for a strip cutting sequence $\bm\beta=(\beta_j)_{j=0}^N$, the corresponding sequence $(\trk\beta_j)_{j=0}^N$ may be completed to a splitting sequence by deleting any redundant entry and then possibly inserting, immediately after each $\trk\beta_j$, a train track with is obtained from it with an uncomb move. We call $\trk\bm\beta$ the splitting sequence thus obtained.

\begin{defin}
A pants decomposition $p$ of $D^2_n$ is \nw{round} if each connected component in the pants decomposition intersects $\R$ in exactly 2 points.
\end{defin}

\begin{coroll}\label{cor:dwgivesdistance}
There is a constant $A_1=A_1(n)$ such that, if $r$ is a round pants decomposition in $D^2_n$ and $\psi\in B_n\cong \mcg(D^2_n)$, then
$$d_{\pa(D^2_n)}\left(r,\psi(r)\right)=_{A_1}|\utw\left(\rar\left(\cnr(\trk\bm\beta^{\psi(r)})\right)\right)|.$$
\end{coroll}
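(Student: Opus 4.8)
\textbf{Proof plan for Corollary \ref{cor:dwgivesdistance}.}

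The statement should follow by combining the main theorem (Theorem \ref{thm:main_full}), applied to the splitting sequence $\trk\bm\beta^{\psi(r)}$, with a comparison between the two endpoints of the canonical strip cutting sequence and the pair $(r,\psi(r))$. First I would verify the hypotheses of Theorem \ref{thm:main_full}: by Lemma \ref{lem:birecurrent} each $\trk\beta_j$ is a birecurrent train track, so $\trk\bm\beta^{\psi(r)}$ is a generic (after genericization) splitting sequence of birecurrent train tracks, and $\inte(D^2_n)$ is a legitimate surface since $n\geq 3$. I would take $X=D^2_n$. The initial track is $\trk\beta(\psi(r),1)$, whose vertex cycles fill some subsurface $S'$; I would check that $V$ of the last track is a vertex of $\pa(D^2_n)$ — this holds because the canonical strip cutting sequence terminates with a strip decomposition having no strips left, at which point the vertex set is (close to) a pants decomposition, and in any case $V(\trk\beta_N)$ must be a vertex of the quasi-pants graph by the constraints set in Remark \ref{rmk:pickparameters}. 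So Theorem \ref{thm:main_full} gives $d_{\pa(D^2_n)}(V(\trk\beta_0),V(\trk\beta_N))=_{C_9}|\utw(\rar(\cnr(\trk\bm\beta^{\psi(r)})))|$.

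The remaining task is to bound $d_{\pa(D^2_n)}(V(\trk\beta_0),V(\trk\beta_N))$ above and below by $d_{\pa(D^2_n)}(r,\psi(r))$ up to constants depending only on $n$. For the first track: $\beta(\psi(r),1)$ is the strip decomposition of the round pants decomposition $\psi(r)$ at the marker $1$. Since the vertex cycles of $\trk\beta(\psi(r),1)$ are carried curves of a train track built directly out of $\psi(r)$, there is a uniform bound — depending only on $n$ because round pants decompositions of $D^2_n$ form finitely many $\mcg(D^2_n)$-orbits, or more simply because the combinatorics are controlled — on the intersection number between $\psi(r)$ and any element of $V(\trk\beta(\psi(r),1))$; via Remark \ref{rmk:subsurf_inters_bound}, Lemma \ref{lem:cc_distance}, Theorem \ref{thm:mmprojectiondist} and Lemma \ref{lem:pantsquasiisom} this yields $d_{\pa(D^2_n)}(\psi(r),V(\trk\beta(\psi(r),1)))\leq a$ for some $a=a(n)$. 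For the last track: when the canonical sequence ends, the final strip decomposition consists only of the curves in $C(p,O')$ (no strips), and these curves, together with $V(\trk\beta_N)$, stay within bounded distance in $\pa(D^2_n)$ of a pants decomposition — one may, for instance, check that the strip cutting sequence applied to a round pants decomposition terminates at a round pants decomposition, or simply invoke that $V(\trk\beta_N)$ is within distance $1$ of a pants decomposition by definition of $\pa$. However, note that the \emph{target} of the identity here is not $\psi(r)$ but $r$: the point is that the canonical strip cutting sequence started from $\psi(r)$ terminates at a pants decomposition equivalent to $r$ up to the $\mcg$-action becoming trivial, or rather: since all round pants decompositions are bounded distance apart in $\pa(D^2_n)$ (finitely many orbits, and the quasi-pants graph is connected with finite $\mcg$-quotient), $d_{\pa(D^2_n)}(V(\trk\beta_N),r)\leq a'$ for some $a'=a'(n)$. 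Combining the two bounds with the triangle inequality gives $d_{\pa(D^2_n)}(r,\psi(r))=_{a+a'}d_{\pa(D^2_n)}(V(\trk\beta_0),V(\trk\beta_N))$.

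Putting the two displayed quasi-equalities together with one more triangle inequality yields the claim with $A_1=A_1(n)$ built from $C_9$, $a$, $a'$. The main obstacle I anticipate is the precise identification of the terminal strip decomposition of the canonical sequence and the verification that its vertex cycles are bounded distance from $r$ in $\pa(D^2_n)$ (as opposed to $\psi(r)$): this requires a careful reading of how the canonical strip cutting sequence $\bm\beta^{\psi(r)}$ behaves — in particular that "round" is preserved in a suitable sense, or that the terminal configuration only depends on the curve system $\psi(r)$ in a way that places it near $r$ — together with invoking finiteness of $\mcg(D^2_n)$-orbits of round pants decompositions to absorb the discrepancy into the additive constant. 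A secondary, more routine, obstacle is checking that the genericization of $\trk\bm\beta^{\psi(r)}$ and the insertion of uncomb moves do not change $|\utw(\rar(\cnr(\cdot)))|$ beyond the tolerances already built into Lemma \ref{lem:ctauproperties} and Corollary \ref{cor:ttsamelength}.
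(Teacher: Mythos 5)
Your overall strategy---apply Theorem \ref{thm:main_full} to the splitting sequence and then compare the two endpoints with the pair $(r,\psi(r))$ by triangle inequality---is exactly what the paper does. But you have the roles of the two endpoints reversed, and this produces a genuine gap.

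You claim that $V(\trk\beta_0)$ is within bounded distance of $\psi(r)$, arguing that since both $\psi(r)$ and the vertex cycles are carried by the single track $\tau_0 = \trk\beta(\psi(r),1)$, their intersection numbers must be bounded in terms of $n$. This is false: two curves carried by the same train track can have arbitrarily large intersection number (the transverse measures carried by $\tau_0$ realize infinitely many distinct isotopy classes, and on even the simplest tracks the pairwise intersection numbers among carried curves are unbounded---think of distinct slopes on a punctured torus). As $\psi$ ranges over $B_n$, the weights that $\psi(r)$ assigns to the branches of $\tau_0$ grow without bound, and so does $i\bigl(\psi(r),V(\tau_0)\bigr)$.

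You then claim $V(\trk\beta_N)$ is close to $r$ because the canonical sequence ``terminates at a pants decomposition equivalent to $r$.'' This is also not the right statement: the canonical strip cutting sequence $\bm\beta^{\psi(r)}$ terminates precisely when no strip is left, which happens when the cutter has passed all intersection points of $\psi(r)$ with $\R$; at that point the strip decomposition consists exactly of the loops constituting $\psi(r)$, so $\trk\beta_N=\psi(r)$ and $V(\tau_N)=\psi(r)$ \emph{on the nose} (this is recorded in Proposition \ref{prp:dwcutnumber}: ``in particular $V(\tau_N)=p$'').

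The correct pairing is the opposite one. $V(\tau_N)=\psi(r)$ exactly, so no bound is needed on that side. For the other endpoint, the track $\tau_0=\trk\beta(\psi(r),1)$ has a shape that is one of \emph{finitely many} possibilities, \emph{regardless} of how complicated $\psi(r)$ is: a strip in $\beta(\cdot,1)$ is determined up to isotopy by whether it lies above or below $\R$ and by which punctures its two ends fall between. Since there are finitely many candidates for $\tau_0$, the quantity $d_{\pa(D^2_n)}\bigl(r,V(\tau_0)\bigr)$ is bounded by a constant depending only on $n$. That bound, together with Theorem \ref{thm:main_full} applied to give $d_{\pa(D^2_n)}\bigl(V(\tau_0),\psi(r)\bigr)=_{C_9}|\utw(\rar(\cnr\bm\tau))|$, finishes the argument. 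Everything else in your sketch (verifying birecurrence via Lemma \ref{lem:birecurrent}, checking the vertex conditions for applying Theorem \ref{thm:main_full}, genericization) is fine.
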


Note that, in this statement, we are letting it be understood that $\cnr(\trk\bm\beta^{\psi(r)})$ must be turned into a generic splitting sequence in order for $\rar$ and $\utw$ to make sense (at least if we wish to rely on the approach of \S \ref{sec:twistcurves}, \ref{sec:traintrackconclusion}, which was meant for generic splitting sequences only).

\begin{proof}
For simplicity, let $\trk\bm\beta^{\psi(r)}\eqqcolon \bm\tau =(\tau_j)_{j=0}^N$. Theorem \ref{thm:main_full} gives
$$|\utw\left(\rar\left(\cnr\bm\tau\right)\right)|=_{C_9} d_{\pa(D^2_n)}\left(V(\tau_0),\psi(r)\right).$$
Note that the number of strips in $\beta\left(\psi(r),1\right)$ is bounded in terms of $n$ --- a strip $s$ is uniquely determined by whether it lies above or below $\R$, and by the punctures of $D^2_n$ the two ends of $s$ lie between. This implies that the number of distinct possibilities for $\tau_0$ is bounded in terms of $n$, so there is a bound for $d_{\pa(D^2_n)}\left(r,V(\tau_0)\right)$. This proves our claim.
\end{proof}

\begin{coroll}\label{cor:dwgivesvolume}
There is a constant $A_2=A_2(n)$ such that the following is true. Let $r$ be a round pants decomposition in $D^2_n$ and $\psi\in B_n\cong \mcg(D^2_n)$ such that $\psi$ defines a pseudo-Anosov mapping class on $\inte(D^2_n)$. Let $M\coloneqq \faktor{\inte(D^2_n)\times [0,1]}{\sim_\psi}$ be the related mapping torus. Then, defining $r(m)\coloneqq \psi^m(r)$,
$$
\vol(M) =_{A_2}\limsup_{m\rightarrow+\infty} \frac{1}{m} |\utw\left(\rar\left(\cnr(\trk\bm\beta^{r(m)})\right)\right)|
$$
and also
$$
\vol(M) =_{A_2} \min_{\phi\in \mathrm{Conj}(\psi)} |\utw\left(\rar\left(\cnr(\trk\bm\beta^{\phi(r)})\right)\right)|
$$
where $\mathrm{Conj}(\psi)$ is the conjugacy class of $\psi$ in $B_n$.
\end{coroll}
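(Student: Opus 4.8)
The plan is to reduce everything to Theorem~\ref{thm:brockmappingtori}, Theorem~\ref{thm:agol_volume}, and Corollary~\ref{cor:dwgivesdistance}, treating the two displayed formulas almost in parallel. First I would observe that $M\cong\faktor{\inte(D^2_n)\times[0,1]}{\sim_\psi}$ is hyperbolic by Theorem~\ref{thm:mappingtorushyperbolic} (since $\psi$ is pseudo-Anosov on $\inte(D^2_n)$), so Brock's Theorem~\ref{thm:brockmappingtori} applies: $\vol(M)=_{(e_2,e_3)}|\psi|_{\pc(D^2_n)}$, the translation distance in the pants graph. By Remark~\ref{rmk:stable_dist_pc} one also has $|\psi|_{\pc(D^2_n)}=_{e_4}|\psi|_{\pc(D^2_n)}^{st}$, so it suffices to estimate the stable translation distance, and then the minimal (non-stable) one, up to the quasi-isometry $\pc(D^2_n)\hookrightarrow\pa(D^2_n)$ given by Lemma~\ref{lem:pantsquasiisom}.

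For the $\limsup$ formula: fix the round pants decomposition $r$ and set $r(m)=\psi^m(r)$. By Corollary~\ref{cor:dwgivesdistance}, $d_{\pa(D^2_n)}(r,r(m))=_{A_1}|\utw(\rar(\cnr(\trk\bm\beta^{r(m)})))|$ for each $m$ --- here one applies Corollary~\ref{cor:dwgivesdistance} with $\psi$ replaced by $\psi^m$, which is again pseudo-Anosov by Remark~\ref{rmk:power_pa}, noting that the statement of Corollary~\ref{cor:dwgivesdistance} does not actually require pseudo-Anosovness, only membership in $B_n$. Dividing by $m$ and letting $m\to\infty$, the left side converges to $|\psi|_{\pa(D^2_n)}^{st}$, which equals $|\psi|_{\pc(D^2_n)}^{st}$ up to the Lemma~\ref{lem:pantsquasiisom} constants; I would use the existence and quasi-isometry-invariance of the stable translation distance (as in Remark~\ref{rmk:stable_dist_pc}, via \cite{bridson}, II.6). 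The additive errors $A_1$ and the quasi-isometry additive constants are killed by the division by $m$, so the $\limsup$ of the right side is $=_{A_2}$ the stable translation distance, hence $=_{A_2}\vol(M)$ after enlarging the constant to absorb $e_2,e_3,e_4$ and the Lemma~\ref{lem:pantsquasiisom} constants.

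For the $\min$ formula: I would show that, up to uniform additive and multiplicative error, $\min_{\phi\in\mathrm{Conj}(\psi)}|\utw(\rar(\cnr(\trk\bm\beta^{\phi(r)})))|$ equals $|\psi|_{\pc(D^2_n)}$. One inequality is easy from Corollary~\ref{cor:dwgivesdistance}: for any $\phi=w\psi w^{-1}$, $|\utw(\rar(\cnr(\trk\bm\beta^{\phi(r)})))|=_{A_1}d_{\pa}(r,\phi(r))\geq d_{\pa}(\phi(r),\phi(\phi(r)))-\text{(stuff)}$, and more cleanly $d_{\pa}(r,\phi(r))=d_{\pa}(w^{-1}(r),\psi(w^{-1}(r)))\geq |\psi|_{\pa}$ by $\mcg$-invariance of the metric and the definition of translation distance; combined with Lemma~\ref{lem:pantsquasiisom} this bounds the min below by $|\psi|_{\pc}$ up to constants, hence below by $\vol(M)$ up to constants. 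For the reverse inequality I would use that $|\psi|_{\pc(D^2_n)}$ is realized at some vertex $v\in\pc(D^2_n)$, and that round pants decompositions form a single $\mcg(D^2_n)$-orbit up to bounded pants-distance (there are finitely many $\mcg$-orbits of vertices within bounded distance of a round one — or more directly, the pants graph has finite-diameter quotient only in trivial cases, so instead I would argue: pick $w\in\mcg(D^2_n)=B_n$ with $w(r)$ within bounded $d_{\pc}$-distance $D_0=D_0(n)$ of $v$, which is possible because $\mcg(D^2_n)$ acts coboundedly-up-to-$D_0$ on the set of vertices reachable from $r$, in fact transitively on pants decompositions up to the action... ); then $d_{\pc}(w(r),\psi^{w}(w(r)))=d_{\pc}(v',\psi(v'))$ with $v'=w(r)$ close to $v$, whence $d_{\pc}(w(r),(w\psi w^{-1})(w(r)))\leq |\psi|_{\pc}+2D_0$ by the triangle inequality, so $\min_\phi d_{\pc}(r,\phi(r))\leq |\psi|_{\pc}+2D_0$ and, via Lemma~\ref{lem:pantsquasiisom} and Corollary~\ref{cor:dwgivesdistance}, $\min_\phi|\utw(\rar(\cnr(\trk\bm\beta^{\phi(r)})))|\leq_{A_2}|\psi|_{\pc}\leq_{A_2}\vol(M)$.

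The main obstacle I anticipate is the bookkeeping of constants: each of Corollary~\ref{cor:dwgivesdistance}, Lemma~\ref{lem:pantsquasiisom}, Theorem~\ref{thm:brockmappingtori}, Remark~\ref{rmk:stable_dist_pc} contributes its own multiplicative and additive error, and for the $\min$ formula one additionally needs the fact that all round pants decompositions of $D^2_n$ — and all $B_n$-translates entering the conjugation — stay within a bounded pants-distance window, so that replacing the minimizing vertex $v$ by a translate of $r$ costs only $O(1)$; making this "$D_0=D_0(n)$ exists" step precise (essentially a finiteness-of-orbits argument, parallel to the ones used throughout Chapter~1, e.g. in the proof of Lemma~\ref{lem:pantsquasiisom}) is the one genuinely new piece of reasoning. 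A secondary subtlety is that $\trk\bm\beta^{\phi(r)}$ is only a semigeneric splitting sequence, so one must first pass to a generic one (as the note after the statement of Corollary~\ref{cor:dwgivesvolume} warns), but this changes $|\utw(\rar(\cnr(\cdot)))|$ by at most a bounded amount by Remark~\ref{rmk:centralsplitbound} and Lemma~\ref{lem:ctauproperties}, so it is harmless.
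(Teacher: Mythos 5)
Your proposal follows essentially the same route as the paper's own proof: reduce to Brock's theorem via translation distance, identify the $\limsup$ with the stable translation distance through Corollary~\ref{cor:dwgivesdistance} and Remark~\ref{rmk:stable_dist_pc}, and handle the $\min$ formula via $\mcg$-invariance together with a finiteness argument about round pants decompositions. The one place where your write-up is looser than the paper's is the ``find $w$ with $w(r)$ within bounded $d_{\pc}$-distance of $v$'' step, which you flag yourself; the paper resolves it more cleanly by choosing $p$ realizing $|\psi|_{\pc}$, picking $\lambda\in\mcg(D^2_n)$ with $\lambda(p)$ round, and then using that round pants decompositions form a \emph{finite} set (hence bounded $d_{\pc}$-diameter) to get $d_{\pc}(\lambda(p),r)\leq c(n)$, after which the triangle inequality on $d_{\pc}(r,\lambda\psi\lambda^{-1}(r))$ gives the upper bound on the minimum --- so your ``$D_0=D_0(n)$ exists'' heuristic is exactly what you need, and is justified precisely by the finiteness of round pants decompositions rather than any cobounded-action argument.
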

The same considerations as the ones after the statement of Corollary \ref{cor:dwgivesdistance} apply.
\begin{proof}
We use a simplified notation $s(\nu)\coloneqq |\utw\left(\rar\left(\cnr(\trk\bm\beta^{\nu(r)})\right)\right)|$ for $\nu\in\mcg(D^2_n)$.

As a consequence of Corollary \ref{cor:dwgivesdistance} plus Lemma \ref{lem:pantsquasiisom}, there is a constant $A'_1=A'_1(n)$ such that, for each $m>0$, one has $d_{\pc(S)}\left(r,r(m)\right)=_{A'_1}s(\psi^m)$.

So $|\psi|^{st}=_{A'_1} \limsup_{m\rightarrow+\infty} \frac{1}{m} s(\psi^m)$ and, via Remark \ref{rmk:stable_dist_pc}, there is a further constant $A''_1=A''_1(n)$ such that $|\psi|=_{A''_1} \limsup_{m\rightarrow+\infty} \frac{1}{m} s(\psi^m)$.

Moreover, let $p$ be a pants decomposition such that $d_{\pc(S)}\left(p,\psi(p)\right)=|\psi|$. There is a $\lambda\in \mcg(D^2_n)$ such that $\lambda(p)$ is round. The pants decompositions $\lambda(p),r$ both belong to the finite set in $\pc(D^2_n)$ consisting of round pants decompositions, so there is an upper bound $d_{\pc(S)}\left(\psi(p),\psi\circ \lambda^{-1}(r)\right)=d_{\pc(S)}\left(p,\lambda^{-1}(r)\right)=d_{\pc(S)}\left(\lambda(p),r\right)\leq c$ depending only on the number of strands $n$.

Thus $d_{\pc(S)}\left(r,\lambda\circ \psi \circ \lambda^{-1} (r)\right)= d_{\pc(S)}\left(\lambda^{-1}(r), \psi \circ \lambda^{-1} (r)\right)\leq d_{\pc(S)}\left(\lambda^{-1}(r), p\right) + \linebreak d_{\pc(S)}\left(p, \psi(p)\right) + d_{\pc(S)}\left(\psi(p), \psi \circ \lambda^{-1} (r)\right) \leq |\psi|+2c$, while $d_{\pc(S)}\left(r,\lambda\circ \psi \circ \lambda^{-1} (r)\right) =_{A'_1}
s(\lambda\circ \psi \circ \lambda^{-1})$ from Corollary \ref{cor:dwgivesdistance} above.

On the other hand, $|\psi|\leq d_{\pc(S)}\left(\nu^{-1}(r), \psi \circ \nu^{-1} (r)\right)$ however $\nu\in \mcg(D^2_n)$ is chosen. Combining the two bounds, $|\psi|=_{A''_1} \min_{\phi\in \mathrm{Conj}(\psi)} s(\phi)$. Here we choose, for simplicity, an $A''_1=A''_1(n)$ such that both this coarse equality and the one proved above hold.

Theorem \ref{thm:brockmappingtori} concludes.
\end{proof}

\subsection{Bounds without train tracks (sketch)}

The splitting sequences arising as $\trk\bm\beta$ have several good properties, which we state in Proposition \ref{prp:dwcutnumber}. Due to these properties, it is not really necessary to switch to the train track formalism and employ a derived splitting sequence such as\linebreak $|\utw\left(\rar\left(\cnr(\trk\bm\beta^{\psi(r)})\right)\right)|$ to have Corollaries \ref{cor:dwgivesdistance} and \ref{cor:dwgivesvolume}. As a complete proof of Proposition \ref{prp:dwcutnumber} would go partly beyond the scope of this thesis, we only outline it.

We need a definition. Let a strip decomposition $\beta(p,M)$ be given, with the property that the strip cut on this decomposition will result in a strip $s_1$ to stretch, and another one $s_2$ to shrink to a strip $s'_2$. Let $e$ be the right end of $s_2$, $e'$ be the left one. If the two strip ends of $s_2$ partially overlap and so do the ones of $s'_2$, then the move is a \nw{spiralling} of $s_2$. Let $I$ be the left base of $s_2$, and let $M'\coloneqq \max I$. Successive strip cuts will turn $\beta(p,M)$ into $\beta(p,M')$, and generate a strip cutting sequence $\bm\varsigma$ with a sequence $(s_2^{(j)})_{j=1}^k$ of strips, one for each decomposition in $\bm\varsigma$, such that $s_2^{(j)}$ shrinks to $s_2^{(j+1)}$ for all $1\leq j<k$. If each cut in $\bm\varsigma$ is a spiralling of the respective $s_2^{(j)}$ --- i.e. if the ends of $s_2^{(k)}$ still overlap --- then we say that $\bm\varsigma$ is a \nw{complete spiralling} of $s_2$. See Figure \ref{fig:complete_spiralling}.

Let $\gamma$ be the only curve of $V(\trk\beta(p,M))$ which traverses only $a(s_2)$ and $a(e,e')$, both exactly once. Then $\gamma$ is a twist curve for $\trk\beta(p,M)$. A spiralling move reflects into an elementary move which is twist\footnote{We have not defined what this means in a semigeneric setting. We may say that an elementary move on a track $\tau$ with respect to a twist curve $\gamma$ is twist if, given $A_\gamma$ a twist collar, the move is the result of unzipping along a zipper $\kappa:[-\epsilon, t]\rightarrow\bar\nei(\tau)$, which does not intersect $\tau.\gamma$ and such that $\kappa([0,t])\subseteq A_\gamma$. Also, we may define the sign of $\gamma$ as the sign $\gamma$ has in any generic track which is comb equivalent to $\tau$.} with respect to $\gamma$, and if $\bm\varsigma$ is a complete spiralling turning $\beta(p,M)$ into $\beta(p,M')$, then $\trk\beta(p,M')=D_\gamma^\epsilon(\trk\beta(p,M))$, with $\epsilon$ the sign of $\gamma$ as a twist curve. We call $\gamma$ the \nw{spiralled curve}.

Given a strip cutting sequence $\bm\beta$, the subsequence $\bm\beta(k,l)$ is a \nw{maximal spiralling} if there is a sequence $(s_2^{(j)})_{j=k}^l$ of strips, one for each decomposition in $\bm\beta(k,l)$, such that $s_2^{(j)}$ shrinks to $s_2^{(j+1)}$ for all $k\leq j<l$, each strip cut in $\beta(k,l)$ is a spiralling of the respective $s_2^{(j)}$, and $\bm\beta(k,l)$ is not contained in any longer subsequence of $\bm\beta$ with the same properties. It may not be possible to subdivide a maximal spiralling into complete spirallings, but note that $\trk\left(\bm\beta(k,l)\right)$ has twist nature about one same spiralled curve $\gamma$.

\begin{figure}
\def\svgwidth{.5\textwidth}\centering
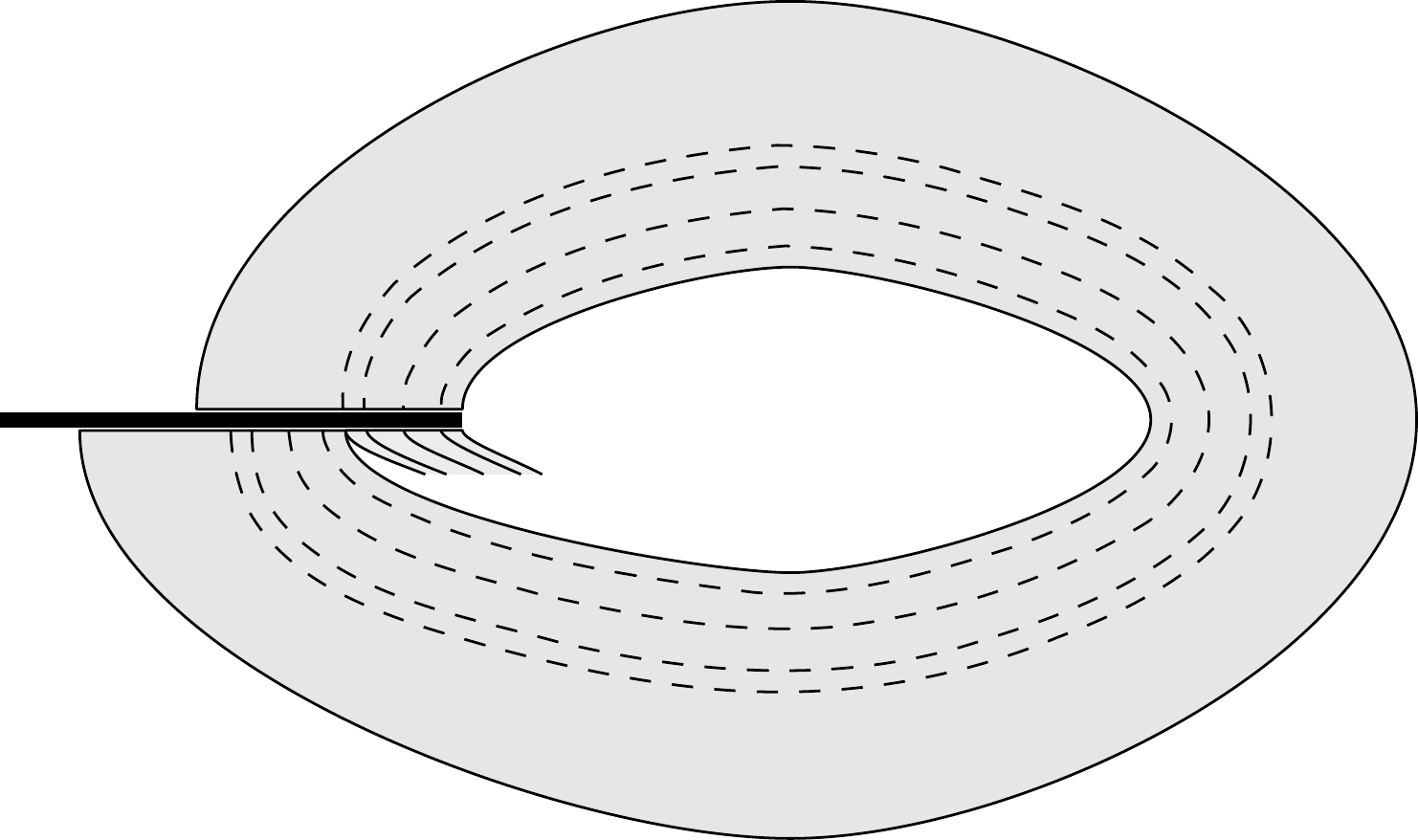
\caption{\label{fig:complete_spiralling}The successive cuts occurring in a complete spiralling.}
\end{figure}

We will sketch the proof of the following Proposition.
\begin{prop}\label{prp:dwcutnumber}
Let $p$ be a pants decomposition of $D^2_n$, and let $(\tau_j)_{j=0}^N=\bm\tau\coloneqq \trk\bm\beta^p$: in particular, $V(\tau_N)=p$. Then there is a constant $A_3=A_3(S)$ such that
$$d_{\pa(D^2_n)}(\cnr\tau_0,\cnr\tau_N),\ d_{\pa(D^2_n)}(\tau_0,\tau_N),\ d_{\pc}(r,p)=_{A_3}\|\bm\beta^p\|,$$
where $\|\bm\beta^p\|$ is the number of strip cuts in the sequence $\bm\beta^p$, renormalized so that each maximal spiralling is counted as only $1$; and $r$ is any round pants decomposition of $D^2_n$.
\end{prop}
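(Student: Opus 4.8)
The plan is to reduce Proposition~\ref{prp:dwcutnumber} to Theorem~\ref{thm:main_full} by showing that the `cleaned-up' count $\|\bm\beta^p\|$ is, up to multiplicative and additive constants, the same as $|\utw(\rar(\cnr\bm\tau))|$, where $\bm\tau=\trk\bm\beta^p$. Once that is established, the chain of coarse equalities follows: Theorem~\ref{thm:main_full} gives $d_{\pa(D^2_n)}(\tau_0,\tau_N)=_{C_9}|\utw(\rar(\cnr\bm\tau))|$; the bound $d_{\pa}(\tau_0,\cnr\tau_0),\,d_{\pa}(\cnr\tau_N,\tau_N)$ is uniform (Remark~\ref{rmk:centralsplitbound}), so $d_{\pa(D^2_n)}(\cnr\tau_0,\cnr\tau_N)$ is coarsely the same; the inclusion $\pc(D^2_n)\hookrightarrow\pa(D^2_n)$ is a quasi-isometry (Lemma~\ref{lem:pantsquasiisom}) and $V(\tau_N)=p$, while $d_{\pa}(V(\tau_0),r)$ is uniformly bounded because the number of possible initial tracks $\tau_0$ (equivalently, strip decompositions $\beta(p,1)$ up to the symmetry of the construction) depends only on $n$, exactly as in the proof of Corollary~\ref{cor:dwgivesdistance}.

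The core of the work, then, is the comparison $\|\bm\beta^p\|=_{A}|\utw(\rar(\cnr\bm\tau))|$. First I would record the dictionary, partly explained in the excerpt: a strip cut either leaves $\trk\beta$ comb-equivalent to $\trk\beta'$ (when a single-arc strip becomes a loop, or two strips merge), or reflects into a comb move followed by one elementary move that is a comb or a single split (when one strip stretches and another shrinks); see Figure~\ref{fig:cutissplit}. Thus $|\trk\bm\beta^p|$ (the raw number of splits) is bounded above and below, up to constants, by the number of strip cuts that are not merges/loopings. The key extra observation is the one the excerpt sets up with \emph{maximal spirallings}: a maximal spiralling of $\bm\beta^p$ translates into a subsequence of $\bm\tau$ having twist nature about a single spiralled curve $\gamma$, and conversely the Dehn intervals produced by $\rar$ (and then capped by $\utw$) are exactly accounting for the stretches of already-long curves. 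So the renormalization in the definition of $\|\bm\beta^p\|$ --- counting each maximal spiralling as $1$ --- mirrors, on the strip side, precisely what $\rar$ followed by $\utw$ does on the train-track side: it collapses the long runs of twist moves. I would make this precise by showing (i) every maximal spiralling of $\bm\beta^p$ gives rise to a twist-nature subsequence whose rotation number is at least the spiralling length minus a constant, and whose effective twist curve is $\gamma$; and (ii) conversely, every effective twist curve of $\bm\tau$ arises (up to bounded error and finitely many exceptions) from such a spiralling, so that the Dehn intervals $DI_t$ in $\rar(\cnr\bm\tau)$ correspond to maximal spirallings of $\bm\beta^p$. Point (i) uses that a complete spiralling realizes $D_\gamma^{\epsilon}$ and Lemma~\ref{lem:functiongivestwist}/Remark~\ref{rmk:rotbasics} to compute the rotation number; point (ii) uses Theorem~\ref{thm:mmsstructure} (an effective twist curve forces $I_\gamma\neq\emptyset$) together with the fact that, along $\bm\tau$, a curve $\gamma$ is a twist curve precisely when the strip cutting sequence is spiralling about the corresponding strip, which is a local/combinatorial matter.

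With (i) and (ii) in hand, the count goes through: the number of splits in $\utw(\rar(\cnr\bm\tau))$ equals the number of non-spiralling strip cuts (those outside any maximal spiralling), plus a capped contribution of $2\mathsf K_0+4$ splits per Dehn interval, plus a bounded error per component of $S'$ and per change of filled subsurface --- and the number of Dehn intervals is itself bounded by the number of maximal spirallings, which is $\leq\|\bm\beta^p\|$. The reverse inequality $\|\bm\beta^p\|\leq_{A}|\utw(\rar(\cnr\bm\tau))|$ follows because each non-spiralling strip cut contributes at least one split (after removing comb-equivalences), and each maximal spiralling, being nonempty, contributes at least one split after untwisting too, while the number of maximal spirallings is at most the number of effective-twist-curve Dehn intervals plus a bounded number of `short' spirallings that survive the cap. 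The main obstacle I anticipate is point (ii): matching the effective twist curves of $\bm\tau$ --- a notion defined through subsurface projection distances (Definition~\ref{def:etc}) --- with the spiralled curves of $\bm\beta^p$ --- a purely combinatorial notion about strip overlaps --- will require a careful argument that a curve accumulates large annular projection distance along $\trk\bm\beta^p$ \emph{only} when the strip cutting sequence spirals about the relevant strip, and that the converse estimate (spiralling produces large annular distance) holds up to the constants in Definition~\ref{def:etc}. This is the step where the `visual' reformulation of strip decompositions was designed to pay off, but it is also where the bookkeeping is heaviest; everything else is assembling already-proven pieces.
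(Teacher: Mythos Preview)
Your plan is sound in outline and identifies the correct technical heart of the matter --- your point (ii), matching effective twist curves with spiralled curves --- but it takes a slightly longer route than the paper's sketch. The paper's key structural observation is that $\cnr(\trk\bm\beta^p)$ is \emph{already effectively arranged}: via Lemma~\ref{lem:nonspiralbound} and the properties listed after it (spirallings give Dehn twists; after a maximal spiralling $\gamma$ ceases to be carried; large annular distance forces spiralling), every effective twist curve already has its Dehn interval sitting inside a maximal spiralling, and the distance bounds of Definition~\ref{def:arranged} hold, possibly with adjusted constants. So $\rar$ is unnecessary, and the paper re-runs the machinery of \S\ref{sub:twistcurvebound}--\S\ref{sec:traintrackconclusion} directly on $\cnr(\trk\bm\beta^p)$ to obtain $d_{\pa}=_{A_8}|\utw(\cnr(\trk\bm\beta^p))|$, then matches this split-count with $\|\bm\beta^p\|$ using a final lemma bounding the length of comb-only subsequences. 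This is precisely the point advertised in the Preface: the Dynnikov--Wiest formalism is attractive because no rearrangement is needed.

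Your approach, going through $\rar(\cnr\bm\tau)$ and invoking Theorem~\ref{thm:main_full} as a black box, requires essentially the same core lemma (your (ii) is the paper's Lemma~\ref{lem:nonspiralbound} plus the subsequent unnamed lemma), but then adds the extra bookkeeping of comparing the Dehn intervals produced by $\rar$ with the maximal spirallings --- an indirect comparison, since $\rar$ is defined recursively and does not a priori respect the spiralling structure. The paper sidesteps this by recognizing that the sequence was arranged to begin with. Both routes arrive at the same destination and need the same key ingredient; the paper's framing is cleaner and delivers the conceptual payoff, while yours has the advantage of citing Theorem~\ref{thm:main_full} rather than reproving it.
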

A restatement of Corollary \ref{cor:dwgivesvolume} follows from the above Proposition:
\begin{coroll}\label{cor:dwgivesvolume_cutnumber}
There is a constant $A_4=A_4(n)$ such that the following is true. Let $r$ be a round pants decomposition in $D^2_n$ and $\psi\in B_n\cong \mcg(D^2_n)$ be such that $\psi$ defines a pseudo-Anosov mapping class on $\inte(D^2_n)$. Let $M\coloneqq \faktor{\inte(D^2_n)\times [0,1]}{\sim_\psi}$ be the related mapping torus. Then, defining $r(m)\coloneqq \psi^m(r)$,
$$
\vol(M) =_{A_4}\limsup_{m\rightarrow+\infty} \frac{1}{m} \|\bm\beta^{r(m)})\|
$$
and also
$$
\vol(M) =_{A_4} \min_{\phi\in \mathrm{Conj}(\psi)} \|\bm\beta^{\phi(r)})\|
$$
where $\mathrm{Conj}(\psi)$ is the conjugacy class of $\psi$ in $B_n$.
\end{coroll}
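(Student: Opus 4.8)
The plan is to derive Corollary~\ref{cor:dwgivesvolume_cutnumber} from Proposition~\ref{prp:dwcutnumber} in exactly the same way that Corollary~\ref{cor:dwgivesvolume} was derived from Corollary~\ref{cor:dwgivesdistance}: the renormalized cut number $\|\bm\beta^p\|$ will play the role that $|\utw(\rar(\cnr(\trk\bm\beta^p)))|$ played there, with Proposition~\ref{prp:dwcutnumber} furnishing the bridge $d_{\pc}(r,p)=_{A_3}\|\bm\beta^p\|$, valid for every round pants decomposition $r$ and every pants decomposition $p$ of $D^2_n$ (note $V(\tau_N)=p$, so the proposition applies to $p=\psi^m(r)$ and to $p=\phi(r)$). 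Since $\psi$ is pseudo-Anosov, $M$ is hyperbolic by Theorem~\ref{thm:mappingtorushyperbolic}, so Brock's Theorem~\ref{thm:brockmappingtori} applies.

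First I would treat the $\limsup$ formula. Fix a round pants decomposition $r$ and set $r(m)=\psi^m(r)$. Proposition~\ref{prp:dwcutnumber} gives $d_{\pc}\big(r,r(m)\big)=_{A_3}\|\bm\beta^{r(m)}\|$ for every $m$; dividing by $m$ and letting $m\to\infty$ the additive part of the coarse equality is killed, so
$$\limsup_{m\to\infty}\frac1m\|\bm\beta^{r(m)}\|\ =_{A_3}\ \limsup_{m\to\infty}\frac1m\,d_{\pc}\big(r,\psi^m(r)\big)\ =\ |\psi|^{st}_{\pc(D^2_n)},$$
the last equality because the stable translation distance is a genuine limit. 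Remark~\ref{rmk:stable_dist_pc} gives $|\psi|^{st}_{\pc(D^2_n)}=_{e_4}|\psi|_{\pc(D^2_n)}$, and Theorem~\ref{thm:brockmappingtori} gives $|\psi|_{\pc(D^2_n)}=_{(e_2,e_3)}\vol(M)$. Composing these coarse equalities produces a constant $A_4=A_4(n)$ for which the first displayed formula of the statement holds.

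Next the $\min$ formula, which I would handle following verbatim the argument in the proof of Corollary~\ref{cor:dwgivesvolume}. On one side, for any $\nu\in\mcg(D^2_n)$ and $\phi\coloneqq\nu\psi\nu^{-1}$ one has $d_{\pc}\big(r,\phi(r)\big)=d_{\pc}\big(\nu^{-1}(r),\psi\nu^{-1}(r)\big)\ge|\psi|_{\pc(D^2_n)}$ by definition of translation distance, so Proposition~\ref{prp:dwcutnumber} yields $\|\bm\beta^{\phi(r)}\|\ge_{A_3}|\psi|_{\pc(D^2_n)}$ for every $\phi$ in the conjugacy class. On the other side, pick a pants decomposition $p$ with $d_{\pc}(p,\psi(p))=|\psi|_{\pc(D^2_n)}$ and a $\lambda\in\mcg(D^2_n)$ with $\lambda(p)$ round; since the round pants decompositions of $D^2_n$ form a finite set, $d_{\pc}(r,\lambda(p))\le c$ for a $c=c(n)$, whence by the triangle inequality $d_{\pc}\big(r,\lambda\psi\lambda^{-1}(r)\big)\le|\psi|_{\pc(D^2_n)}+2c$, and Proposition~\ref{prp:dwcutnumber} converts this into $\|\bm\beta^{\lambda\psi\lambda^{-1}(r)}\|\le_{A_3}|\psi|_{\pc(D^2_n)}+2c$. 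Combining the two sides gives $\min_{\phi\in\mathrm{Conj}(\psi)}\|\bm\beta^{\phi(r)}\|=_{A}|\psi|_{\pc(D^2_n)}$ for a suitable $A=A(n)$, and one more application of Theorem~\ref{thm:brockmappingtori} gives the second displayed formula; enlarging $A_4$ if necessary, both formulas hold with the same constant.

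The only genuinely substantive input is Proposition~\ref{prp:dwcutnumber}, whose proof is merely sketched in the text; granting it, the present corollary is a purely formal combination of that proposition with Brock's theorem, Remark~\ref{rmk:stable_dist_pc}, and the finiteness of the set of round pants decompositions of $D^2_n$ — exactly the ingredients already assembled for Corollary~\ref{cor:dwgivesvolume}. Accordingly I expect no new difficulty here; the care required is only bookkeeping of constants, in particular checking that the multiplicative errors (which, unlike the additive ones, do not disappear under the normalization $\frac1m$) remain controlled by quantities depending on $n$ alone — which they do, since $A_3$, $e_4$, $e_2$, $e_3$ and $c$ all do.
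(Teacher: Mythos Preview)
Your proposal is correct and matches the paper's approach exactly: the paper does not give a detailed proof of this corollary but simply states that it is ``a restatement of Corollary~\ref{cor:dwgivesvolume}'' which ``follows from the above Proposition'' (i.e.\ Proposition~\ref{prp:dwcutnumber}). You have filled in precisely the intended argument --- substituting the coarse equality $d_{\pc}(r,p)=_{A_3}\|\bm\beta^p\|$ from Proposition~\ref{prp:dwcutnumber} into the proof template of Corollary~\ref{cor:dwgivesvolume} --- and your bookkeeping of the constants (noting that additive errors vanish under $\tfrac1m$ while multiplicative ones survive) is accurate.
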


The important property of spiralling moves is that they encode almost all elementary moves affecting twist curves:

\begin{lemma}\label{lem:nonspiralbound}
Let $\bm\beta$ be a strip cutting sequence $D^2_n$, and let $\bm\tau\coloneqq \trk\bm\beta$. Let $\gamma\in\cc(D^2_n)$ be a twist curve at some stage along the splitting sequence $\bm\tau$; more precisely let $I_\gamma$ be the accessible interval of $\nei(\gamma)$, and suppose that $k,l$ are the indices such that $\bm\tau(I_\gamma)=\trk\left(\bm\beta(k,l)\right)$.

Then there is a number $A_5=A_5(n)\geq 3$ such that, if more than $A_5$ strip cuts in $\bm\beta(k,l)$ reflect into moves on a train track level which are \emph{not} far from $\gamma$, then the ones after the $A_5$-th are all spirallings with $\gamma$ their spiralled curve (in particular they all occur consecutively).
\end{lemma}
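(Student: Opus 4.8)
The plan is to analyse a strip cutting sequence $\bm\beta(k,l)$ restricted to the accessible interval $I_\gamma$ of a twist curve $\gamma$, and to show that once `enough' non-far moves have occurred, the combinatorics of the strips near the cutter is so constrained that only spirallings of a single strip can follow. The key point to keep in mind throughout is the dictionary established above between strip cuts and elementary moves: a strip cut which stretches $s_1$ and shrinks $s_2$ corresponds, after a comb equivalence, either to a comb or to a single split (Figure \ref{fig:cutissplit}); and a complete (resp. maximal) spiralling of $s_2$ with spiralled curve $\delta$ corresponds to a sequence of twist moves about $\delta$, with $\trk\beta(p,M')=D_\delta^\epsilon(\trk\beta(p,M))$ in the complete case. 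By Lemma \ref{lem:birecurrent}, all tracks $\tau_j$ are birecurrent, so the structure theorem (Theorem \ref{thm:mmsstructure}) applies along $\bm\tau=\trk\bm\beta$; and by Lemma \ref{lem:twistininduced}, $\gamma$ being a twist curve for $\tau_j$ is exactly the condition that $j\in I_\gamma$ (under the identification $\bm\tau(I_\gamma)=\trk(\bm\beta(k,l))$ assumed in the statement).

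First I would translate `not far from $\gamma$' into the strip language: by Remark \ref{rmk:spurious} (and its strip-level analogue), a strip cut whose associated move is not far from $\gamma$ must be one that touches the strips carrying $\tau.\gamma$ — concretely, either a spiralling of the strip $s_2$ whose defining arc $a(s_2)$ and diagonal $a(e,e')$ are the two branches traversed once by $\gamma$, or a move that merges, stretches into, or shrinks one of these strips in a non-spiralling way. The first step is then a \emph{finiteness} argument: there is a uniform bound $N_3(S)$ (Lemma \ref{lem:twistsplitnumber}, combined with Lemma \ref{lem:vertexsetbounds}) on the number of branch ends of $\trk\beta^{\nei(\gamma)}.\gamma$ on each side of $\gamma$; a strip cut which is non-far and non-spiralling either creates a new such branch end, merges two strips, or turns a strip into a loop, and each of these events can happen only boundedly many times before $\gamma$ stops being a twist curve (by Lemma \ref{lem:twistcurvebasics}, along a recurrent sequence $\gamma$ can only leave $I_\gamma$ by ceasing to be carried, and the combinatorics near $\tau_j.\gamma$ can only `simplify' boundedly often). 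Setting $A_5(n)$ equal to this bound plus, say, $3$, after the $A_5$-th non-far move no merge, no loop-creation, and no creation of a new adverse or favourable branch end adjacent to $\tau_j.\gamma$ is possible.

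The second and main step is to show that, once that threshold is passed, every subsequent non-far cut is forced to be a spiralling of \emph{the same} strip, hence of the spiralled curve $\gamma$. Here I would argue as follows: after the threshold, the two strips $s_1,s_2$ (the stretching and shrinking ones) involved in the next non-far cut have fixed combinatorial type; since no new branch ends can appear and no merging can occur, the only move consistent with keeping $\gamma$ carried and a twist curve (which it must remain, as $I_\gamma$ is an interval and we are inside it by hypothesis) is one in which $s_2$'s two ends partially overlap and $s_2$ shrinks to a strip $s_2'$ whose ends again partially overlap — precisely the definition of a spiralling of $s_2$. An alternative parity, or a strip cut hitting $\tau_j.\gamma$ differently, would either introduce an adverse branch end (contradicting that $\gamma$ stays combed once it becomes combed, or contradicting the branch-end count) or would fail to keep $\gamma$ carried (as in the last paragraph of Lemma \ref{lem:twistcurvebasics}, a wrong-parity split destroys recurrence, but $\tau_{j+1}$ is recurrent by Lemma \ref{lem:birecurrent}). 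That the spiralled curve of all these spirallings is $\gamma$ itself follows because the spiralled curve is, by construction, the vertex cycle traversing $a(s_2)$ and its diagonal once each, and $s_2$ is the same strip throughout; consecutiveness of these cuts is then automatic, since any far move interleaved would not reset the strip combinatorics and the very next non-far move would still be a spiralling of $s_2$, so we may as well take $A_5$ large enough that all non-far moves past it are strip cuts of $s_2$ in an uninterrupted block.

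The main obstacle I anticipate is making the bookkeeping in the second step genuinely precise: one must verify that the strip-level picture really does exclude every non-spiralling non-far cut once the branch-end count and merge count are saturated, and this requires a careful case analysis of how a strip cut can interact with the strip carrying $a(s_2)$ and with the diagonal arc $a(e,e')$ — in particular ruling out the possibility that a far-looking strip cut elsewhere secretly changes which strips are `closest to the marker' in a way that later produces a non-spiralling non-far move. I expect this to be handled by combining the monotonic decrease of $\#W(\tau_j)$ and of the complexity of $S\setminus\nei(\tau_j)$ along the splitting sequence (Remark \ref{rmk:decreasingmeasures}, Lemma \ref{lem:vertexsetbounds}) with the observation that, inside $I_\gamma$, Theorem \ref{thm:mmsstructure}(2) already tells us each split is either a twist split about $\gamma$ or contributes boundedly to $d_{\nei(\gamma)}$; so the count of non-far, non-twist splits in $\bm\tau(I_\gamma)$ is bounded by a function of $\mathsf K_0(S)$ and $n$ alone, which is exactly what $A_5(n)$ packages. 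Once that is in place, the conclusion — that past the $A_5$-th non-far cut everything is a spiralling with $\gamma$ as spiralled curve, occurring consecutively — drops out.
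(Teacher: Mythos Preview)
Your overall architecture is right --- bound the number of non-spiralling, non-far cuts, then argue the remainder must be spirallings of a fixed strip --- and this matches the paper's outline. But the finiteness step has a real gap, and the paper fills it with a mechanism you do not use.

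You try to bound the count of non-spiralling non-far cuts by appealing to the \emph{static} bound $N_3$ on the number of branch ends adjacent to $\tau_j.\gamma$ at any fixed time. That does not, on its own, bound how many times such branch ends can be created and destroyed over the course of the sequence: a priori a non-far cut could remove one branch end and introduce another, keeping the count constant indefinitely. What you need is a \emph{monotone} quantity that strictly decreases with each non-spiralling non-far move. The paper's sketch supplies exactly this, using a feature specific to strip cuts on $D^2_n$: each cut either removes a branch end of $\tau_j$ adjacent to $\tau_j.\gamma$ or replaces it with one located strictly to the \emph{left} along the horizontal axis. So the branch ends drift leftward and concentrate, which forces the number of strips $s$ with $a(s)\subseteq \tau_j.\gamma$ to decrease. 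Once only one such strip remains, $\gamma$ is combed and any further non-far cut is a spiralling of that strip. The bound $A_5(n)$ comes from the number of cuts it can take to reach that single-strip stage.

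Your fallback appeal to Theorem~\ref{thm:mmsstructure}(2) does not rescue the argument: that clause only controls how a single split of $\tau_j$ reflects on $\tau_j|X$, not the total number of non-twist non-far splits inside $I_\gamma$; and in any case the tracks $\trk\beta_j$ are not known to be cornered (the paper cornerizes them explicitly in Corollary~\ref{cor:dwgivesdistance}), so the structure theorem is not directly available here. The leftward-drift argument is genuinely needed and is the piece you are missing.
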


The proof of this lemma is based on the fact that, every time a strip is cut, at least one branch end of the respective $\tau_j$ disappears, or is replaced with a new one located to the left of the old one. Whilst $j$ increases within the interval $[k,l]$, and branch ends get concentrated to the left hand side of $D^2_n$, the number of strips $s$ such that $a(s)\subseteq \tau_j.\gamma$ decreases. When there is only one such $s$, $\gamma$ is certainly a combed curve in the respective $\tau_j$. And, provided that $\gamma$ stays carried, it takes a bounded number of cuts, affecting the respective $\tau_j.\gamma$'s, before we reach that stage.

Some further properties hold:
\begin{itemize}
\item After a maximal spiralling with $\gamma$ its spiralled curve, the next strip cut reflects into $\gamma$ not being carried any longer by the new train track;
\item A spiralling cannot change the subsurface filled by the set of vertex cycles of the related train tracks.
\item Let $\bm\tau=\trk\bm\beta$ for $\bm\beta$ a strip cutting sequence. Then there is a number $A_6$ such that, if a curve $\gamma\in\cc(D^2_n)$ and two indices $j,j'$ are such that $d_{\nei(\gamma)}(\tau_j,\tau_{j'})\geq A_6$, then not only $\gamma$ needs become a twist curve at some stage between $j$ and $j'$, but it undergoes elementary moves which are produced by spiralling.
\item If $\trk\bm\beta(k,l)$ consists of $m>1$ complete spiralling with $\gamma$ the spiralled curve, and $\bm\sigma(k',l')=\cnr\left(\trk\bm\beta(k,l)\right)$, then $\sigma_{l'}=D_\gamma^{\epsilon (m-1)}\sigma_i$ for an index $k'\leq i<l'$ such that $\gamma$ is a twist curve in $\sigma_i$.
\end{itemize}

All this means that:
\begin{lemma}
There is a constant $A_7=A_7(n)$ such that the following is true.

Let $\bm\beta$ be a strip cutting sequence in $D^2_n$, let $(\tau_j)_{j=0}^N=\bm\tau\coloneqq \cnr\left(\trk\bm\beta\right)$, and let $\gamma\in\cc(D^2_n)$ be a curve such that two indices $0\leq k<l\leq N$ exist with $d_{\nei(\gamma)}(\tau_k,\tau_l)\geq A_7$. Then there is an interval $DI_\gamma=[p,q]\subseteq [0,N]$ such that $\gamma$ is a twist curve for all $\tau_j$, $j\in DI_\gamma$, with some sign $\epsilon$, $\tau_q=D_\gamma^{\epsilon m}(\tau_p)$ for some $m\geq 2\mathsf K_0+4$, and the moves in $\bm\tau(DI_\gamma)$ all derive from spirallings where $\gamma$ is the spiralled curve.

Moreover, if $[j,j']\subset [0,N]$ intersects $DI_\gamma$ at an endpoint at most, then\linebreak $d_{\nei(\gamma)}(\tau_j,\tau_{j'})\leq A_6$.

A similar property is true for $\trk\bm\beta$ instead of $\bm\tau$.
\end{lemma}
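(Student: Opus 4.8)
The plan is to prove the statement first for the uncornerized sequence $\bm\upsilon:=\trk\bm\beta$, using Lemma~\ref{lem:nonspiralbound} together with the ``further properties'' of spirallings recorded after it, and then transport the conclusion across the cornerization via the fourth of those bullet points (the one comparing $\trk\bm\beta(k,l)$ with $\cnr(\trk\bm\beta(k,l))$). First I would reduce the hypothesis: by Lemma~\ref{lem:ctauproperties}, $\bm\tau=\cnr\bm\upsilon$ differs from $\bm\upsilon$ only by prepending a bounded initial segment (a cornerization of $\upsilon_0$, of length bounded in terms of $n$ by Remark~\ref{rmk:centralsplitbound}) and by postponing at most $N_1$ central splits; comb equivalences leave all vertex sets, hence all annular projections, unchanged, while each central split displaces annular distance by a bounded amount (Remark~\ref{rmk:pickparameters}, Remark~\ref{rmk:subsurf_inters_bound}, Lemma~\ref{lem:cc_distance}). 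So there are indices $k',l'$ in $\bm\upsilon$ and a constant $c_0=c_0(n)$ with $d_{\nei(\gamma)}(\upsilon_{k'},\upsilon_{l'})\ge A_7-c_0$, and the same estimates will let me push the final interval and the ``moreover'' clause back across $\cnr$ at the end.

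Next I would locate the relevant spiralling inside $\bm\upsilon$. Since $\trk\bm\beta$ has birecurrent, cornered, recurrent tracks (Lemma~\ref{lem:birecurrent}), Lemma~\ref{lem:twistcurvebasics} applies: the set $I_\gamma$ of indices where $\gamma$ is a twist curve in $\bm\upsilon$ is an interval, and $\gamma$ leaves it only by ceasing to be carried. Choosing $A_7$ at least $A_6$ plus the above overheads, the third bullet after Lemma~\ref{lem:nonspiralbound} forces $\gamma$ to undergo spiralling moves between $k'$ and $l'$, so $I_\gamma\neq\emptyset$. By Theorem~\ref{thm:mmsstructure}(1) the annular distance accumulated on intervals disjoint from $I_\gamma$ (except at one endpoint) is $\le\mathsf K_0$. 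Inside $I_\gamma$, Lemma~\ref{lem:nonspiralbound} gives at most $A_5$ moves ``not far from $\gamma$'' before every subsequent not-far move is a spiralling about $\gamma$, and these form a single consecutive maximal spiralling $\bm\upsilon(p_0,q_0)$; by the first bullet after Lemma~\ref{lem:nonspiralbound} it ends exactly where $\gamma$ becomes uncarried, i.e.\ $q_0=\max I_\gamma$. Far-from-$\gamma$ moves leave $\upsilon_j|\nei(\gamma)$ (hence its vertex set) unchanged up to isotopy, and each of the $\le A_5$ spurious moves changes it by a bounded amount (point~\ref{itm:farisininfluent} after Definition~\ref{def:horizontallength} and Lemma~\ref{lem:vertexsetnontwist}), so the part of $I_\gamma$ before $p_0$ contributes at most some $c_1=c_1(n)$. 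Applying the triangle inequality across $k'\le p_0\le q_0\le l'$ then yields $d_{\nei(\gamma)}(\upsilon_{p_0},\upsilon_{q_0})\ge A_7-c_0-c_1-2\mathsf K_0$; since $\bm\upsilon(p_0,q_0)$ has twist nature about $\gamma$ with some sign $\epsilon$, its rotation number $m_0:=\rot_{\bm\upsilon}(\gamma;p_0,q_0)$ satisfies $m_0\ge d_{\nei(\gamma)}(\upsilon_{p_0},\upsilon_{q_0})-4$ by point~\ref{itm:rot_vs_dist} of Remark~\ref{rmk:rotbasics}.

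Then I would extract the Dehn-twist block and cornerize. A maximal spiralling about $\gamma$ is a run of consecutive complete spirallings, each replacing a track by $D_\gamma^\epsilon$ of it and so contributing exactly $1$ to the rotation number (point~\ref{itm:rotofdehn} of Remark~\ref{rmk:rotbasics}); thus $\bm\upsilon(p_0,q_0)$ is, up to a bounded partial winding that I absorb into $m_0-O(1)$, a concatenation of $m_0$ complete spirallings. Applying the fourth bullet after Lemma~\ref{lem:nonspiralbound} to $\trk\bm\beta(p_0,q_0)$, its cornerization has last track $D_\gamma^{\epsilon(m_0-1)}$ of an intermediate track $\sigma_i$ in which $\gamma$ is still a twist curve, all its moves being cornerized images of spiralling moves. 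As the moves of $\bm\upsilon$ on $(p_0,q_0)$ are slides and parity splits (no central splits), the restriction of $\cnr\bm\upsilon$ to the corresponding index range is comb equivalent to $\cnr(\trk\bm\beta(p_0,q_0))$ (Proposition~\ref{prp:combpersistence}), so this block appears inside $\bm\tau$; I set $DI_\gamma=[p,q]$ to be it, with $\tau_p$ the copy of $\sigma_i$. Then $\gamma$ is a twist curve throughout $[p,q]$ (twist nature is preserved, Lemma~\ref{lem:twistcurvebasics}), $\tau_q=D_\gamma^{\epsilon m}(\tau_p)$ with $m=m_0-1$, every move in $\bm\tau(DI_\gamma)$ derives from a spiralling about $\gamma$, and choosing $A_7$ large enough (say $A_7=c_0+c_1+4\mathsf K_0+13$) forces $m\ge 2\mathsf K_0+4$. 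For the ``moreover'': all spirallings about $\gamma$ in $\trk\bm\beta$ lie in the unique maximal spiralling $\bm\upsilon(p_0,q_0)$ (Lemma~\ref{lem:twistcurvebasics} confines $\gamma$'s twist stages to $I_\gamma$, and the first bullet after Lemma~\ref{lem:nonspiralbound} leaves room for only one there), which after cornerizing is exactly $\bm\tau(DI_\gamma)$; hence any $[j,j']$ meeting $DI_\gamma$ at most at an endpoint contains no spiralling about $\gamma$ between $j$ and $j'$, so the contrapositive of the third bullet (after enlarging $A_6$ once to absorb the $c_0$-overhead of transporting back across $\cnr$) gives $d_{\nei(\gamma)}(\tau_j,\tau_{j'})\le A_6$. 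The statement for $\trk\bm\beta$ is the same argument without Step~1 and without cornerizing: take $DI_\gamma=[p_0,q_0]$, so $m=m_0$.

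The hard part will be the bookkeeping between cornerizing a \emph{subsequence} and the restriction of the cornerized \emph{whole} sequence --- i.e.\ justifying that $DI_\gamma$ genuinely sits inside $\bm\tau$ as a clean $D_\gamma^{\epsilon m}$-block with $\gamma$ a twist curve at its left endpoint --- which requires tracking, via Lemma~\ref{lem:ctauproperties} and Proposition~\ref{prp:combpersistence}, that cornerization introduces no central splits over a stretch of twist moves and acts only comb-equivalently there, and also that the constants transported across $\cnr$ in the ``moreover'' clause can be absorbed into $A_6$ and $A_7$. A secondary difficulty is confirming from the definitions that a maximal spiralling about $\gamma$ is (up to a bounded remainder) a concatenation of complete spirallings, so that the winding count $m_0$ matches the rotation number; the inertness of far-from-$\gamma$ moves for the annular induced track is already supplied by the remarks after Definition~\ref{def:horizontallength}.
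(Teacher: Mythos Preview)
Your approach is essentially the one the paper intends: this lemma sits in a sketch section, introduced by ``All this means that:'' immediately after Lemma~\ref{lem:nonspiralbound} and the four bulleted properties, so the paper offers no proof beyond pointing to those ingredients --- exactly the ones you invoke.

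Two points to flag. First, you assert that a maximal spiralling is a concatenation of complete spirallings (up to bounded remainder), but the paper explicitly warns ``It may not be possible to subdivide a maximal spiralling into complete spirallings''; your hedge ``up to a bounded partial winding'' is the right instinct, but you should make precise that one can pass to the largest sub-run that \emph{does} consist of complete spirallings, losing only a bounded amount of rotation, before invoking the fourth bullet (which requires complete spirallings as hypothesis). Second, you correctly identify as ``the hard part'' the discrepancy between $\cnr$ applied to a subsequence versus the restriction of $\cnr$ applied to the whole; your appeal to Proposition~\ref{prp:combpersistence} and the absence of central splits on the twist stretch is the right lever, but note that cornerization (Definition~\ref{def:cornerizedseq}) \emph{prepends} central splits which are then postponed past subsequent parity splits, so you need Lemma~\ref{lem:ctauproperties}(4) to argue that over a parity-only stretch the correspondence $f$ is essentially a comb equivalence at each index, and that the postponed central splits, being bounded in number by $N_1$, perturb the endpoints of $DI_\gamma$ and the annular distances only by amounts absorbable into $A_6,A_7$. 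This bookkeeping is genuinely where the work lies, and the paper leaves it unwritten too.
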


This means, in particular, that \emph{any subsequence of $\bm\tau$ which evolves firmly in some (not necessarily connected) subsurface of $D^2_n$ is effectively arranged} (see Definition \ref{def:arranged}), possibly changing the constants involved in the definition of effectively arranged sequence.

Subdivide $\bm\tau= \bm\tau^1*\bm\epsilon^2*\bm\tau^2*\ldots*\bm\epsilon^w*\bm\tau^w$ as seen in \S \ref{sub:untwistedsequence}: one proves, with no substantial changes from \S \ref{sub:twistcurvebound} and \S \ref{sec:traintrackconclusion}, that
\begin{claim}
There is a constants $A_8=A_8(n)$, such that the following is true.

Let $p$ be a pants decomposition on $D^2_n$, and let $r$ be a round pants decomposition. Let $(\tau_j)_{j=0}^N=\bm\tau\coloneqq\cnr(\trk\bm\beta^p)$. Then
$$
d_{\pc}(r,p),\ d_{\pa}(r,p),\ d_{\pa}(\tau_0,\tau_N)=_{A_8} \left|\utw\bm\tau\right|.
$$

Here, $\utw$ is defined piece-by-piece as in Definition \ref{def:not_firmly}.
\end{claim}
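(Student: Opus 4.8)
First I would record that, after the usual conversion to a generic splitting sequence, $\bm\tau\coloneqq\cnr(\trk\bm\beta^p)$ is a splitting sequence of birecurrent, cornered train tracks: birecurrence of each $\trk\beta_j$ is Lemma \ref{lem:birecurrent}, while corneredness of the entries of $\bm\tau$ and preservation of (transverse) recurrence along $\cnr$ are points 1--3 of Lemma \ref{lem:ctauproperties}. Since the last portion of $\trk\bm\beta^p$ consists of comb moves only (a strip cut contributes at most one parity split), point 4 of Lemma \ref{lem:ctauproperties} shows that the terminal track $\tau_N$ of $\bm\tau$ is comb equivalent to a cornerization of the multicurve $p$; hence $V(\tau_N)\supseteq p$, the complement of $V(\tau_N)$ refines that of $p$ and is therefore a union of discs, punctured discs, annuli and pairs of pants, so $V(\tau_N|D^2_n)$ is a vertex of $\pa(D^2_n)$, and $d_{\pa}(V(\tau_N),p)$ is bounded in terms of $n$. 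By the decreasing--filling property (Lemma \ref{lem:decreasingfilling} and the remark following it) the same then holds for every $V(\tau_j|D^2_n)$, which is precisely the hypothesis needed below.

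\textbf{The structural input.} Next I would decompose $\bm\tau=\bm\tau^1*\bm\epsilon^2*\bm\tau^2*\cdots*\bm\epsilon^w*\bm\tau^w$ into maximal firmly--evolving pieces separated by single complexity--dropping splits, as just before Definition \ref{def:not_firmly}. The key fact, supplied by the lemmas of this subsection (culminating in the italicised statement that every firmly--evolving subsequence of $\bm\tau$ is effectively arranged, up to enlarging the constants of Definition \ref{def:arranged}), is that each $\bm\tau^u$ is effectively arranged with respect to its own effective twist curves; here the one delicate point is that a maximal spiralling need not subdivide into complete spirallings, so the requisite Dehn interval inside a block of spiralling moves is located using the last bullet of the lemma list, namely that $\cnr$ of a block of complete spirallings with spiralled curve $\gamma$ is $D_\gamma^{\epsilon(m-1)}$ applied to an earlier track. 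Granting this, $\utw\bm\tau$ is well defined piece--by--piece as in Definition \ref{def:not_firmly}, and moreover $\bm\tau$ is already in exactly the state that the proofs of Proposition \ref{prp:easyttbound} and Proposition \ref{prp:hardttbound} operate in --- the latter opens by identifying $\rar\bm\tau$ with $\bm\tau$ and assuming $\bm\tau$ effectively arranged, which is now genuinely true.

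\textbf{The two inequalities and the endpoint bookkeeping.} With those identifications the two bounds are obtained by transcribing §\ref{sec:traintrackconclusion} with every occurrence of $\utw(\rar\bm\tau)$ replaced by $\utw\bm\tau$: Proposition \ref{prp:easyttbound} gives $d_{\pa(D^2_n)}(\tau_0,\tau_N)\leq_{C_6}|\utw\bm\tau|$, and Proposition \ref{prp:hardttbound} together with Corollary \ref{cor:hardttbound} (applied with $X=D^2_n$, using that each $\bm\tau^u$ is effectively arranged so that Proposition \ref{prp:locallyfinite} applies with $\rar$ deleted) gives $d_{\pa(D^2_n)}(\tau_0,\tau_N)\geq_{C_8}|\utw\bm\tau|$; hence $d_{\pa}(\tau_0,\tau_N)=_{\max\{C_6,C_8\}}|\utw\bm\tau|$. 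Finally I would pass to $r$ and $p$: the train tracks arising as $\trk\beta(q,1)$ have a number of branches bounded in terms of $n$ (a strip is determined by its side of $\R$ and the punctures between which its two ends lie), so they are finite in number up to diffeomorphism, whence $V(\cnr\tau_0)$ lies within bounded $\pa$--distance of some round pants decomposition; since all round pants decompositions are pairwise within bounded $\pa$--distance, two applications of the triangle inequality give $d_{\pa}(r,p)=_{A}d_{\pa}(V(\tau_0),p)=_{A}d_{\pa}(\tau_0,\tau_N)$, while Lemma \ref{lem:pantsquasiisom} (the injection $\pc(D^2_n)\hookrightarrow\pa(D^2_n)$ is a quasi--isometry) yields $d_{\pc}(r,p)=_{A}d_{\pa}(r,p)$. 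Choosing $A_8=A_8(n)$ larger than all the constants produced proves the claim.

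\textbf{Where the difficulty lies.} The substantive obstacle is the second step: certifying that $\cnr(\trk\bm\beta^p)$ is effectively arranged in the precise sense of Definition \ref{def:arranged}. Two frictions appear --- that $\cnr$ must be shown to commute (up to comb equivalence, and without destroying the ``spiralling $=$ twist move'' dictionary) with the twist structure of $\trk\bm\beta^p$, which is exactly the point the present sketch is content to leave as a sketch; and that the mismatch between \emph{maximal} and \emph{complete} spirallings forces the Dehn intervals to be pinned down by hand inside the blocks of spiralling moves, with the remainder absorbed via the last bullet of the lemma list. Everything downstream --- the adaptations of Propositions \ref{prp:easyttbound}, \ref{prp:hardttbound}, \ref{prp:locallyfinite} and Corollary \ref{cor:hardttbound} --- is then purely a matter of erasing $\rar$ from the existing arguments and checking that no constant blows up.
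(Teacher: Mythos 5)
Your proof follows the same route as the paper's one-line sketch ("one proves, with no substantial changes from \S\ref{sub:twistcurvebound} and \S\ref{sec:traintrackconclusion}"): decompose into firmly-evolving pieces, invoke the preceding italicised statement that each piece is already effectively arranged, rerun Propositions \ref{prp:easyttbound}, \ref{prp:hardttbound}, \ref{prp:locallyfinite} and Corollary \ref{cor:hardttbound} with $\rar$ erased, and handle $r$, $p$ and $\pc$ versus $\pa$ by boundedness of $d_{\pa}(V(\tau_0),r)$, $d_{\pa}(V(\tau_N),p)$ and Lemma \ref{lem:pantsquasiisom}. You also correctly flag the two places where the section's sketch status leaves genuine work (compatibility of $\cnr$ with the spiralling/twist dictionary, and maximal versus complete spirallings), which matches the paper's level of rigour here.
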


Note that counting the number of splits in $\utw\bm\tau$ gives roughly the same number as counting their number in $\bm\tau$, but assigning a fixed weight to each interval $DI_\gamma$, \emph{no matter how many splits it includes}. And if one counts splits in this latter way in $\trk\bm\beta$ obtains again the same number, roughly (see Lemma \ref{lem:ctauproperties} and the last bullet above).

The only passage left to prove that this last number is roughly $\|\bm\beta^p\|$ and complete the proof of Proposition \ref{prp:dwcutnumber} is the following Lemma:

\begin{lemma}
There is a bound $A_9(n)$ such that, if $\bm\beta$ is a strip cutting sequence on $D^2_n$, and $\trk\left(\bm\beta(k,l)\right)$ consists of comb equivalences only, then $l-k\leq A_9$.
\end{lemma}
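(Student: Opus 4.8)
The plan is to reduce the statement to a counting argument about strip decompositions of the \emph{fixed} pants decomposition $p$, exploiting that the hypothesis ``$\trk(\bm\beta(k,l))$ consists of comb equivalences only'' forbids every \emph{split}-inducing strip cut in the range $[k,l]$, and in particular every move which realises a twist split about some curve.

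First I would record the easy structural facts. By Lemma~\ref{lem:birecurrent} each $\trk\beta_j$ is a birecurrent train track, so by Lemma~\ref{lem:vertexsetbounds} it has at most $N_1=N_1(\inte(D^2_n))$ branches; since each strip $s$ of $\beta_j$ contributes its spine $a(s)$ as (most of) a distinct branch of $\trk\beta_j$, the number of strips of $\beta_j$ is at most $N_1$. Next I would recall the classification of strip cuts from the discussion around Figure~\ref{fig:cutissplit}: a strip cut is either a \emph{merge} of two strips, a conversion of a single-arc strip into a loop, or a \emph{stretch/shrink}; the first two induce the identity on $\trk\beta$ and strictly decrease the number of strips, while the third induces a comb equivalence or a split on $\trk\beta$ and leaves the number of strips unchanged. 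Under the hypothesis, every strip cut in $\bm\beta(k,l)$ is therefore a merge, a loop-conversion, or a comb-inducing stretch/shrink.

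Since the number of strips stays between $0$ and $N_1$ and drops by at least one at each merge and each loop-conversion, at most $N_1$ of the strip cuts in $\bm\beta(k,l)$ are of those two kinds. It remains to bound the length of a run of consecutive comb-inducing stretch/shrink cuts between two such ``structural'' cuts (or before the first, or after the last one). Here I would use that such cuts, on the train track level, are slides (comb equivalences in the semigeneric world) and never twist splits: viewing such a run as a subsequence of $\trk\bm\beta$ containing no split at all, the marker moves strictly leftward through the finite set $\R\cap p$, and --- because a \emph{complete} spiralling about a curve would force a twist split (equivalently, would raise the rotation number; cf.\ the bullet list following Lemma~\ref{lem:nonspiralbound} and Lemma~\ref{lem:twistsplitnumber}) --- no strip end can be slid a full turn around a puncture without producing a split. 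Consequently each comb-inducing stretch/shrink strictly simplifies the combinatorial type of the strip decomposition --- the datum, for each of the at most $2N_1$ strip ends, of whether it lies above or below $\R$, of which gaps between consecutive punctures its base endpoints lie in, and of the nesting pattern of the bases --- a datum which takes at most some $g(n)$ values; since the marker never returns to a position once it has left it, such a run has length at most $g(n)$. Combining, $l-k\le A_9(n):=N_1\bigl(g(n)+1\bigr)$.

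The main obstacle is this last step: making precise, and then exploiting, the assertion that ``no twist split'' confines a run of comb-inducing strip cuts to strictly less than one full spiral around each puncture, hence to a configuration governed by the finite combinatorial data above. This is the strip-decomposition counterpart of the counting in the proof of Lemma~\ref{lem:twistsplitnumber}: between the rare split moves the intervening slide-type moves are few because the ``obstacles'' past which a branch end may slide are bounded in number; transported to strips, the role of obstacles is played by the $n+1$ gaps between punctures together with the boundedly many crossing points of $p$ with $\R$ that a given strip end sees before it would have to wind around a puncture. I would expect the proof to spend most of its effort setting up the right monotone combinatorial quantity on strip decompositions so that ``comb-inducing, not a twist split'' forces a strict decrease, with the spiralling analysis of Lemma~\ref{lem:nonspiralbound} supplying the key non-monotone exception.
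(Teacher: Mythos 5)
Your high-level plan -- separating merge/loop-conversion cuts (which lower the strip count, hence are bounded by $N_1$) from the remaining stretch/shrink cuts, and then bounding the latter by a finiteness argument that invokes the absence of twist splits -- matches the natural structure one would expect, and your observation that a complete spiralling forces a twist split (because $D_\gamma^{\pm 1}(\trk\beta)$ cannot be comb equivalent to $\trk\beta$) is correct and is indeed the key external fact to use. So the skeleton is sound.

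The gap is in the step you yourself flagged: the assertion that ``each comb-inducing stretch/shrink strictly simplifies the combinatorial type'' as you have defined it. Your type records, for each strip end, its side of $\R$, the gap between punctures containing its base endpoints, and the nesting pattern -- but a \emph{partial} spiralling about some $\gamma$ can advance the right end of $s_2$ past several strip bases while all base endpoints remain in the same gaps between punctures, all ends stay on the same sides, and the nesting pattern (however one reads it at this resolution) is unchanged; indeed during a spiralling the train track moves correspond to slides, which by construction do not alter any of the data you list. So the type can be constant across a run of several consecutive comb-inducing cuts, and the ``strict simplification'' needed for the counting argument does not hold for this invariant. What the proof requires instead is an invariant fine enough to distinguish consecutive strip decompositions, together with a monotonicity reason (rather than just strict change) for why configurations cannot recur; the natural candidate is to track, for each ordered pair of branch ends of $\trk\beta_j$, whether one has been slid past the other, and to argue that the marker moving strictly leftward and the absence of a complete spiral force each such pair to be traversed at most a bounded number of times. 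This gives a bound of order $(2N_1)^2$ per run and closes the gap, but it is a genuinely different invariant from the one you wrote down, and the ``no complete spiral $\Rightarrow$ bounded passes per pair'' implication still needs to be written out using the structure of spirallings (for instance via the discussion preceding Lemma~\ref{lem:nonspiralbound}) rather than asserted. Also, a small bookkeeping point: with at most $N_1$ structural cuts there are up to $N_1+1$ maximal runs of stretch/shrinks, so the final bound should read $A_9 = N_1 + (N_1+1)g(n)$ rather than $N_1(g(n)+1)$.
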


As a conclusion to this work, we note that Corollary \ref{cor:dwgivesvolume_cutnumber} implies a closed formula for hyperbolic volume, of which David Futer has an independent proof, unpublished at the time of writing. A set of generators for the $B_n$, larger than the standard one given in the presentation (\ref{eqn:braidgroup}), is given by
$$\bm\Delta\coloneqq \{\Delta_{ij}\coloneqq (\sigma_i\cdot\cdots\cdot\sigma_{j-1})(\sigma_i\cdot\cdots\cdot\sigma_{j-2})\ldots\sigma_i|0\leq i<j\leq n\}$$
which, rather than representing a half-twist switching two consecutive punctures of $D^2_n$, give a half twist reversing the position of all punctures from the $i$-th to the $j$-th. These generators are the ones used in \cite{dynnikovwiest}.

For $\psi\in B_n$, denote
$$g_\Delta(\psi) \coloneqq \min \{l|\exists\ a_1,\ldots,a_l \mbox{ so that } \psi=\delta_1^{a_1}\ldots\delta_l^{a_l}\mbox{ for some choice of }\delta_k\in \bm\Delta\}.$$

\begin{prop}\label{prp:futervolume}
There is a constant $A=A(n)$ such that the following is true. Let $\psi\in B_n\cong \mcg(D^2_n)$ be such that $\psi$ defines a pseudo-Anosov mapping class on $\inte(D^2_n)$. Let $M\coloneqq \faktor{\inte(D^2_n)\times [0,1]}{\sim_\psi}$ be the related mapping torus. Then
$$
\vol(M) =_{A}\limsup_{m\rightarrow+\infty} \frac{1}{m} g_\Delta(\psi^m)
$$
and
$$
\vol(M) =_{A} \min_{\phi\in \mathrm{Conj}(\psi)} g_\Delta(\phi)
$$
where $\mathrm{Conj}(\psi)$ is the conjugacy class of $\psi$ in $B_n$.
\end{prop}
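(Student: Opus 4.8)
The plan is to reduce the statement, via the work already done in this section, to a purely combinatorial comparison in the pants graph of $D^2_n$ between the band--generator syllable length $g_\Delta$ and the renormalized strip--cut number $\|\bm\beta^{(\cdot)}\|$. Indeed, Corollary \ref{cor:dwgivesvolume_cutnumber} already gives
$$\vol(M)=_{A_4}\limsup_{m\to+\infty}\tfrac1m\|\bm\beta^{r(m)}\|\quad\text{and}\quad \vol(M)=_{A_4}\min_{\phi\in\mathrm{Conj}(\psi)}\|\bm\beta^{\phi(r)}\|,$$
while Proposition \ref{prp:dwcutnumber} identifies $\|\bm\beta^{\phi(r)}\|$ with $d_{\pc}(r,\phi(r))$ up to constants. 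So it suffices to prove a constant $A=A(n)$ with $g_\Delta(\phi)=_A d_{\pc(D^2_n)}(r,\phi(r))$ for every $\phi\in B_n$ whose mapping class is pseudo-Anosov (and in fact for all $\phi$, since the $\limsup$ and the $\min$ only see such $\phi$ among its iterates/conjugates); the handling of the fact that $\phi(r)$ need not itself be round, only within bounded distance of a round decomposition, is done exactly as in the proof of Corollary \ref{cor:dwgivesvolume}, using that round pants decompositions form a finite $\mcg(D^2_n)$--orbit representative set and that the stabilizer of $r$ is finite.

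\emph{Direction $\|\bm\beta^{\phi(r)}\|\lesssim g_\Delta(\phi)$.} Given an optimal factorization $\phi=\delta_1^{a_1}\cdots\delta_l^{a_l}$ with $\delta_k\in\bm\Delta$, I would observe that a power $\Delta_{ij}^{a}$ equals, up to multiplication by an element of $B_n$ of length bounded in terms of $n$, a power $T_{ij}^{b}$ of the Dehn twist about the round curve $c_{ij}$ encircling the punctures $i,\dots,j$; applying $T_{ij}^{b}$ to a round pants decomposition $q$ is realized by a strip cutting sequence in which all strip cuts but $O_n(1)$ of them assemble into a single maximal spiralling about $c_{ij}$ (this is the content of the bullets following Lemma \ref{lem:nonspiralbound}, together with the identification of $\trk$ of a spiralling with a twist move). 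Hence each syllable $\delta_k^{a_k}$ contributes $O_n(1)$ to $\|\cdot\|$. Concatenating the strip cutting sequences for the successive partial products, inserting between consecutive ones a ``bridging'' sequence of bounded length (legitimate because all intermediate decompositions are $\mcg$--images, by bounded elements, of round decompositions, hence lie in a bounded subset of the graph), and invoking the triangle inequality, one gets $\|\bm\beta^{\phi(r)}\|=_{A_3}d_{\pc}(r,\phi(r))\leq A(g_\Delta(\phi)+1)$.

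\emph{Direction $g_\Delta(\phi)\lesssim\|\bm\beta^{\phi(r)}\|$.} Here I would run $\bm\beta^{\phi(r)}$ and chop it at its maximal spirallings. Between two consecutive maximal spirallings only $O_n(1)$ strip cuts occur (by Lemma \ref{lem:nonspiralbound} and the last Lemma of the subsection, combined with the $\|\cdot\|$ renormalization), so such a chunk realizes a mapping class expressible as a band--generator word of bounded length. Each maximal spiralling realizes a power $T_{ij}^{a}$ of a Dehn twist about its spiralled curve, and the crucial point is that this curve must be of ``band type'', i.e.\ it bounds a subdisk containing a consecutive block of punctures of $D^2_n$: this is where one uses that the sequence starts from a round pants decomposition and that roundness propagates to a combinatorial ``niceness'' condition preserved along strip cutting sequences (an analogue of the invariance bullets after Lemma \ref{lem:nonspiralbound}). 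Then $T_{ij}^{a}$ is $\Delta_{ij}^{2a}$ times a bounded word, so it is one syllable up to bounded length. Multiplying all chunks, $\phi(r)=g(r)$ where $g$ is a product of at most $A(\|\bm\beta^{\phi(r)}\|+1)$ band--generator powers; since the stabilizer of $r$ in $\mcg(D^2_n)$ is finite, $\phi$ equals $g$ followed by a bounded element, whence $g_\Delta(\phi)\leq A(\|\bm\beta^{\phi(r)}\|+1)$.

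\emph{Conclusion and main obstacle.} Combining the two directions with $\|\bm\beta^{\phi(r)}\|=_{A_3}d_{\pc}(r,\phi(r))$ and then with Corollary \ref{cor:dwgivesvolume_cutnumber} yields $\vol(M)=_{A}\limsup_m\frac1m g_\Delta(\psi^m)$ and $\vol(M)=_{A}\min_{\phi\in\mathrm{Conj}(\psi)}g_\Delta(\phi)$, as claimed. The step I expect to be the main obstacle is the ``hard'' direction, specifically the bookkeeping that matches each maximal spiralling to a definite band generator $\Delta_{ij}$: one must show that the spiralled curve always encircles a consecutive block of punctures, which requires setting up the right invariant of strip decompositions arising from round pants decompositions and checking it survives strip cuts and spirallings; and one must control the bounded ``glue'' between spirallings and between chunks using finiteness of the stabilizer of $r$ and of the set of round decompositions up to that finite ambiguity. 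The rest is an assembly of results already available in this subsection.
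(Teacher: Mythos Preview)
Your two directions deserve separate remarks.

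\textbf{The direction $\|\bm\beta^{\phi(r)}\|\lesssim g_\Delta(\phi)$ (equivalently $\vol(M)\lesssim g_\Delta$).} Your argument here is correct and in fact cleaner than the paper's. Since $\Delta_{ij}^2$ is the Dehn twist about the round curve $c_{ij}$, and since $\Delta_{ij}$ sends round pants decompositions to round pants decompositions, each syllable $\Delta_{ij}^{a_k}$ moves any round $r$ only a bounded distance in $\pc(D^2_n)$; the triangle inequality then gives $d_{\pc}(r,\phi(r))\leq C(n)\,g_\Delta(\phi)$, and Proposition~\ref{prp:dwcutnumber} converts this to the strip-cut bound. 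The paper instead proves $\vol(M)\lesssim g_\Delta(\nu)$ via a Gromov-norm argument: it embeds the mapping torus as $\mathbb S^3\setminus(\ul w\cup L_0)$, adds augmenting circles $L_j$ around each syllable of $w$, untwists via Dehn surgery so that each exponent becomes $0$ or $1$, and triangulates with a number of tetrahedra linear in $l$. Your route stays entirely in the pants graph and avoids $3$-manifold topology; the paper's route is independent of the train-track machinery.

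\textbf{The direction $g_\Delta(\phi)\lesssim\|\bm\beta^{\phi(r)}\|$.} Here your proposal has a real gap, and you correctly identify it yourself. The claim that every spiralled curve in $\bm\beta^{\phi(r)}$ encircles a \emph{consecutive} block of punctures is not true in general, and the sequence does not start from a round pants decomposition---it starts from $\beta(\phi(r),1)$, whose train track is only \emph{close} to round. There is no invariant of the kind you propose that forces spiralled curves to be round throughout the sequence: a strip $s_2$ with overlapping ends can wander through $D^2_n\setminus\nei(H)$ arbitrarily before closing up, and the core curve of the resulting annulus need not be of band type.

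The paper does not attempt this. Instead it invokes the Dynnikov--Wiest \emph{relaxing} process (\cite{dynnikovwiest}, \S2.4): to each entry $\beta_j$ one applies an explicit element of $\mcg(D^2_n)$, expressed as a short band-generator word, which straightens $\beta_j$ into a strip decomposition $\beta'_j$ in which every loop meets $\R$ at most twice and every strip meets $\R$ in a connected set. Accumulating these moves along the sequence produces a band-generator word for some $\lambda$ with $\lambda(\phi(r))$ round, of syllable length $\leq 3\|\bm\beta^{\phi(r)}\|$; and a further bounded word corrects $\lambda$ to $\phi^{-1}$. The point is that relaxing \emph{actively conjugates} the situation into band-type at each step, rather than asserting that it was band-type to begin with. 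Your outline of ``chunks between maximal spirallings, each spiralling giving one syllable'' is morally what the relaxing process achieves after straightening, but it does not hold for the raw sequence.
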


Given Corollary \ref{cor:dwgivesvolume_cutnumber}, one produces the $\geq_A$ part of these two statements by constructing a braid word $w=\delta_1^{a_1}\ldots\delta_l^{a_l}$ for $\psi^m$ (resp. for $\phi\in \mathrm{Conj}(\psi)$), with $\|\bm\beta^{r(m)}\|$ (resp. $\|\bm\beta^{\phi(r)}\|$) $\geq l/3-(2n+1)$. More precisely, one constructs $w^{-1}$ adapting the process described in \cite{dynnikovwiest}, \S 2.4 to \emph{relax} the strips of the strip decomposition. Given the strip cutting sequence $\bm\beta^{r(m)}$ (resp. $\bm\beta^{\phi(r)}$) for $r$ a round pants decomposition, the process describes how to apply, to each entry $\beta_j$ of the sequence, an element of $\mcg(D^2_n)$ which will turn it into a strip decomposition $\beta'_j$ for a different pants decomposition, with the property that if $\alpha\in \beta'_j$ is a loop then $\#(\alpha\cap\R)\leq 2$ while, if $s\in \beta'_j$ is a strip, then $R(s)\cap\R$ is empty or connected. The diffeomorphism $\lambda$ to be applied to the last entry of $\bm\beta^{r(m)}$ (resp. of $\bm\beta^{\phi(r)}$), which it just $r(m)$ (resp. $\phi(r)$), will make it round. This does not mean that $\lambda=\psi^{-m}$ (resp. that $\lambda=\phi^{-1}$), but note that the process gives a word $w'$ for $\lambda$ having $w=\delta_1^{a_1}\ldots\delta_{l'}^{a_{l'}}$ for $l' \leq 3\|\bm\beta^{r(m)}\|$ (resp. $3\|\bm\beta^{\phi(r)}\|$). Moreover, it is easy to realize that there is a further $\nu\in\mcg(D^2_n)$, with $g_\Delta(\nu)\leq 2n+1$, such that $\nu\circ\lambda=\psi^{-m}$ (resp. $\nu\circ\lambda=\phi^{-1}$).

The $\leq_A$ inequality does not depend on Corollary \ref{cor:dwgivesvolume_cutnumber}, but rather on a Gromov norm argument (see \cite{thurstonnotes}, \S 6 and in particular \S 6.5), based on a construction very similar to the one in \cite{lackenby}, \S 2. Given any $\phi\in\mathrm{Conj}(\psi)$, one has $\faktor{\inte(D^2_n)\times [0,1]}{\sim_\psi}\cong \faktor{\inte(D^2_n)\times [0,1]}{\sim_\phi}$; while $\faktor{\inte(D^2_n)\times [0,1]}{\sim_{\psi^m}}$ is an $m$-fold cyclic cover of $\faktor{\inte(D^2_n)\times [0,1]}{\sim_\psi}$, hence the ratio between the respective volumes is $m$. So one may prove that, if $\faktor{\inte(D^2_n)\times [0,1]}{\sim_\nu}$ for $\nu$ pseudo-Anosov, then $\vol(M) \leq_{A} g_\Delta(\nu)$. Both statements of the Corollary will follow, varying $\nu$ suitably.

Let $w=\delta_1^{a_1}\ldots \delta_l^{a_l}$ be a word in $B_n$ which realizes $g_\Delta(\nu)$. Note that $M\cong T\setminus\ul w$ is also diffeomorphic to $\mathbb S^3\setminus (\ul w\cup L_0)$, for $L_0$ any meridian circle of $\partial\bar T$. One may regard $\ul w$ as subdivided into a number of braids, each corresponding to a factor $\delta_j^{a_j}$ ($1\leq j\leq l$): encircle each of these braids with a further \emph{augmenting} loop $L_j$, similarly to what is done in \cite{lackenby}.

Given a disc $D_j$ bounded by $L_j$ in $\mathbb S^3$, cut $\mathbb S^3\setminus (\ul w\cup L_0\cup\ldots \cup L_l)$ along $D_j$, twist one of the two copies of $D_j$ by a multiple of $2\pi$ and attach it back onto the other: this is a homeomorphism. This property gives, in particular, $\mathbb S^3\setminus (\ul w\cup L_0\cup\ldots \cup L_l)\cong \mathbb S^3\setminus (\ul{w'}\cup L_0\cup\ldots \cup L_l)$, where $w'=\delta_1^{\epsilon_1}\ldots \delta_l^{\epsilon_l}$ with each $\epsilon_j\in\{0,1\}$.

When $W\coloneqq \ul{w'}\cup L_0\cup\ldots \cup L_l$ is isotoped close to the equator of $\mathbb S^3$ to give a link diagram, the decomposition of $\mathbb S^3\setminus W$ into two 3-cells with ideal vertices, as shown in \cite{benedetti}, \S E.5-iv, may be refined, without adding any new vertices or ideal vertices, into a triangulation of this manifold, as it is done in \cite{lackenby}: the number of triangles employed is linear in $l$, therefore the Gromov norm of $\mathbb S^3\setminus W$ is also at most linear in $l$. But then so is the Gromov norm of $\mathbb S^3\setminus (\ul w \cup L_0)$, which is obtained from the former manifold by Dehn surgery (this is immediately implied by Proposition 6.5.2, Corollary 6.5.3 and Lemma 6.5.4 of \cite{thurstonnotes}): and the latter is proportional to $\vol\left(\mathbb S^3\setminus (\ul w \cup L_0)\right)$. This ends the sketch of proof.

\newpage
\addcontentsline{toc}{chapter}{Bibliography}
\bibliographystyle{alpha}
{\small
\bibliography{Biblio}}
\end{document}